\DeclareMathOperator{\dive}{div} 
\numberwithin{equation}{section}
\newcolumntype{C}{>{$\displaystyle} c <{$}}
\def\env@dmatrix{\hskip -\arraycolsep
	\let\@ifnextchar\new@ifnextchar
	\def\arraystretch{2}%
	\array{*{\c@MaxMatrixCols}{>{\displaystyle}c}}%
}
\DeclareFontShape{OMX}{cmex}{m}{n}{
	<-7.5> cmex7
	<7.5-8.5> cmex8
	<8.5-9.5> cmex9
	<9.5-> cmex10
}{}
\begin{document}

	\title{Morse Index Stability of Branched Willmore Immersions}

	\author{Alexis Michelat\footnote{Institute of Mathematics, EPFL B, Station 8, CH-1015 Lausanne, Switzerland.\hspace{.5em} \href{alexis.michelat@epfl.ch}{alexis.michelat@epfl.ch}}}%\and \;Tristan \selectlanguage{french}Rivière\selectlanguage{english}\footnote{Department of Mathematics, ETH Zentrum, CH-8093 Zürich, Switzerland.\hspace{2.85em}\href{mailto:tristan.riviere@math.ethz.ch}{tristan.riviere@math.ethz.ch}}}
	\date{\today}
	
	\maketitle
	
	\vspace{-0.5em}
	
	\begin{abstract} 
    We show that the sum of the Morse index and the nullity of Willmore immersions of bounded energy is lower semi-continuous without assuming that the limiting immersion and the bubbles are free of branch points. Our proof is based on a refined analysis of the properties of two families of fourth-order differential operators with regular singularities that depend on a parameter equal to the order of the branch points. The most technical results that justify the length of the article are Gagliardo–Nirenberg–Rellich inequalities in degenerating annuli that are necessary to show that the eigenvalues of the index operator with respect to a suitable weight are bounded from below.  
    %We show the Morse index stability for sequences of Willmore immersions of bounded energy without assuming that the limiting immersion and the bubbles are free of branch points. The analysis is based on our previous work in collaboration with Tristan Rivière (Memoirs of the European Mathematical Society, 2025)
	\end{abstract}

	\tableofcontents
	\vspace{0cm}
	\begin{center}
		{Mathematical subject classification : 49Q10 (primary),
        35J35, 35J48, 53A05, 53A10, 53C42.

        %• 49Q10: "Optimization of shapes other than minimal surfaces" (MSC2020)

%Secondary(optional)	

%• 35J35: "Variational methods for higher-order elliptic equations" (MSC2020)

%• 35J48: "Higher-order elliptic systems" (MSC2020)

%• 53A05: "Surfaces in Euclidean and related spaces" (MSC2020)

%• 53A10: "Minimal surfaces in differential geometry, surfaces with prescribed mean curvature" (MSC2020)

%• 53C42: "Differential geometry of immersions (minimal, prescribed curvature, tight, etc.)" (MSC2020)
		 }
	\end{center}

	\theoremstyle{plain}
	\newtheorem*{theorem*}{Theorem}
	\newtheorem{theorem}{Theorem}[section]
	\newenvironment{theorembis}[1]
	{\renewcommand{\thetheorem}{\ref{#1}$'$}%
		\addtocounter{theorem}{-1}%
		\begin{theorem}}
		{\end{theorem}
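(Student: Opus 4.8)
The plan is to sandwich $\mathrm{Ind}_W(\vec\Phi_k)$ between a lower bound obtained by \emph{transplanting} negative directions from the limiting objects onto $\vec\Phi_k$, and an upper bound — for $\mathrm{Ind}_W(\vec\Phi_k)+\mathrm{Null}_W(\vec\Phi_k)$ — obtained from a concentration analysis of the low eigenfunctions of the Jacobi operator $L_{\vec\Phi_k}$ along the bubble tree. We use the bubble-tree decomposition of the sequence: after passing to a subsequence and precomposing with M\"obius transformations of the domain, $\vec\Phi_k$ converges to the macroscopic branched Willmore immersion $\vec\Phi_\infty$ on $\Sigma$ in $C^\ell_{\mathrm{loc}}$ away from finitely many concentration points $a^1,\dots,a^p$, and, after the appropriate rescaling near each $a^i$, to a branched Willmore sphere $\vec\eta_i$, with no residual Willmore energy in the connecting necks. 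Since the immersions carry branch points, the ambient space for the second variation is not $H^2$ but a weighted space $\mathcal H(\vec\Phi)$ adapted to the branch divisor — variations of finite generalised energy whose behaviour at a branch point of order $\theta$ makes the Jacobi form $Q_{\vec\Phi}$ finite — and $\mathrm{Ind}_W$, $\mathrm{Null}_W$ denote the index and the kernel dimension of $Q_{\vec\Phi}$ on $\mathcal H(\vec\Phi)$. The preliminary task is therefore to assemble the weighted elliptic theory for the branched fourth-order operator $L_{\vec\Phi}$ (in conformal coordinates a perturbation of $\Delta^2$ with coefficients degenerating at the branch points like powers of the conformal factor): weighted Calder\'on--Zygmund and Schauder estimates, a Pohozaev-type identity at each branch point guaranteeing that finite-energy Jacobi fields extend across it with controlled growth, and the convergence of $L_{\vec\Phi_k}$ to $L_{\vec\Phi_\infty}$ in $C^\ell_{\mathrm{loc}}$ off the $a^i$ and, after rescaling, to $L_{\vec\eta_i}$ near each $a^i$.

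\textbf{Step 1 (lower bound on the index).} Put $\Lambda:=\mathrm{Ind}_W(\vec\Phi_\infty)+\sum_{i=1}^p\mathrm{Ind}_W(\vec\eta_i)$ and fix $\Lambda$-dimensional subspaces of negativity $V_\infty\subset\mathcal H(\vec\Phi_\infty)$ and $V_i\subset\mathcal H(\vec\eta_i)$. Using logarithmic cut-offs supported in the necks I truncate each negative direction into an admissible, compactly supported variation living either on $\Sigma\setminus\bigcup_i B(a^i,\varepsilon)$ or on a fixed annulus around a single bubble; since the neck energy vanishes, the truncation perturbs $Q$ by $o(1)$ and preserves negativity. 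These variations are transplanted to $\vec\Phi_k$ through the conformal charts (no rescaling for the $V_\infty$-directions, the bubble rescaling for the $V_i$-directions); by the $C^\ell_{\mathrm{loc}}$ and rescaled convergence of the Jacobi forms each transplanted family is negative definite for $k$ large, and since directions coming from different components of the bubble tree have disjoint supports (or overlap only where the energy is negligible) the resulting $\Lambda$-dimensional space is negative definite for $\vec\Phi_k$. Hence $\mathrm{Ind}_W(\vec\Phi_k)\ge\Lambda$ for $k$ large. The one delicate point is that the transplanted variations must be admissible for the \emph{branched} form on $\vec\Phi_k$; this is where the weighted estimates of the preliminary step enter, together with the convergence of the branch points of $\vec\Phi_k$ to those of $\vec\Phi_\infty$ and of the $\vec\eta_i$.

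\textbf{Step 2 (upper bound on index plus nullity; the hard part).} Set $N:=\mathrm{Ind}_W(\vec\Phi_\infty)+\mathrm{Null}_W(\vec\Phi_\infty)+\sum_{i=1}^p\big(\mathrm{Ind}_W(\vec\eta_i)+\mathrm{Null}_W(\vec\eta_i)\big)$ and suppose, toward a contradiction, that $\mathrm{Ind}_W(\vec\Phi_k)+\mathrm{Null}_W(\vec\Phi_k)\ge N+1$ along a subsequence. Choose $L^2$-orthonormal eigenfunctions $w_k^1,\dots,w_k^{N+1}$ of $L_{\vec\Phi_k}$ with non-positive eigenvalues $\lambda_k^1,\dots,\lambda_k^{N+1}$. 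The heart of the argument is a concentration/quantization analysis of the $w_k^j$ driven by the bubble tree: using the weighted elliptic estimates and the operator convergence, after a further extraction each $w_k^j$ decomposes, up to an $L^2$-error that tends to zero, into a weak-limit profile $w_\infty^j$ on $\vec\Phi_\infty$ together with rescaled profiles $w_i^j$ on the $\vec\eta_i$, with $L_{\vec\Phi_\infty}w_\infty^j=\lambda^j w_\infty^j$ and $L_{\vec\eta_i}w_i^j=\lambda^j w_i^j$ for $\lambda^j:=\lim_k\lambda_k^j\le0$; hence these profiles lie in the union of the non-positive spectral subspaces of the limiting objects, whose total dimension is $N$. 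The decisive — and hardest — ingredient is to exclude loss of negative or null directions in the necks. On each degenerating neck, which in conformal coordinates is a long flat cylinder $[-T_k,T_k]\times S^1$ with $T_k\to\infty$ on which $L_{\vec\Phi_k}$ is, in the appropriate weighted sense, a small perturbation of the flat $\Delta^2$ (the branch-point contributions being confined to the ends of the bubble tree), I must prove that the quadratic form is coercive on variations supported in the cylinder's interior, so no eigenfunction can carry non-negligible mass there. This rests on a Pohozaev / three-annulus lemma on the cylinder tailored to the branched Jacobi operator, controlling the Fourier decomposition in the $S^1$-variable and showing that the bottom of the Dirichlet spectrum of $L_{\vec\Phi_k}$ over a long cylinder stays bounded away from $-\infty$ — in fact non-negative for $k$ large. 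Granting this, the limit profiles capture all the mass of the $w_k^j$, so the limiting Gram matrix of the $(w_\infty^j,w_1^j,\dots,w_p^j)$ is the identity; since these $N+1$ vectors live in an $N$-dimensional space they are linearly dependent, a contradiction. One must still verify that the profiles are genuine admissible variations — finite weighted energy, no extra concentration of mass at the branch points — which again uses the Pohozaev identity at branch points.

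\textbf{Step 3 (conclusion).} A refinement of Step 2 that keeps track of the sign and the rate of decay of the eigenvalues $\lambda_k^j$ shows that an eigenfunction whose eigenvalue stays bounded away from $0$ produces a profile lying in the union of the \emph{strictly} negative spectral subspaces of the limiting objects, of dimension $\mathrm{Ind}_W(\vec\Phi_\infty)+\sum_i\mathrm{Ind}_W(\vec\eta_i)=\Lambda$; combined with the no-loss-in-the-neck statement this yields $\limsup_k\mathrm{Ind}_W(\vec\Phi_k)\le\Lambda$. Together with the lower bound $\mathrm{Ind}_W(\vec\Phi_k)\ge\Lambda$ from Step 1 this gives $\mathrm{Ind}_W(\vec\Phi_k)=\Lambda$ for $k$ large, while Step 2 gives $\mathrm{Ind}_W(\vec\Phi_k)+\mathrm{Null}_W(\vec\Phi_k)\le N$ for $k$ large, which is exactly the asserted Morse index stability. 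I expect the combination of the branch-point-adapted weighted elliptic theory with the neck coercivity estimate of Step 2 to be the principal obstacle; the remaining arguments are careful but essentially standard adaptations of the unbranched Morse-index-stability machinery.
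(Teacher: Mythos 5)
Your architecture — coercivity of the second variation on the degenerating necks, plus a concentration-compactness/diagonal argument on a family of low eigenfunctions, concluded by a Gram-matrix contradiction — is indeed the skeleton of the proof, which follows the Da Lio--Gianocca--Rivi\`ere scheme as adapted in the author's earlier work. However, there are two concrete gaps that would prevent your Step 2 from closing.

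First, you normalise the eigenfunctions in the plain $L^2(\mathrm{vol}_{g_k})$ norm. The proof does not work with the unweighted spectral problem: one must introduce the weight $\omega_{1,\alpha,k}$, which in the neck behaves like $|x|^{-4m}\bigl((|x|/\alpha)^{4\beta_1}+(\alpha^{-1}\rho_k/|x|)^{4\beta_1}+\log^{-2}(\alpha^2/\rho_k)\bigr)$, and study the eigenvalue problem for $\mathcal{L}_{\alpha,k}=\omega_{1,\alpha,k}^{-1}\mathcal{L}_{g_k}$, normalising $\int_\Sigma u_k^2\,\omega_{1,\alpha,k}\,d\mathrm{vol}_{g_k}=1$. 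This weight is what simultaneously (i) makes the negative eigenvalues uniformly bounded from below (your ``bottom of the spectrum stays bounded'' claim is exactly the statement that requires the weighted Gagliardo--Nirenberg inequality combined with a Bochner identity in the conical metric), and (ii) forces the weak limits of the eigenfunctions to be non-trivial admissible variations of the limit surface or of a bubble: with the unweighted normalisation the mass can escape into the neck and all limit profiles can vanish, so the $(N+1)\times(N+1)$ Gram matrix need not converge to the identity and the dimension count fails. (A further subtlety, handled by Fatou, is that even the weighted normalisation is only preserved as an inequality $\le 1$ in the limit.)

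Second, your neck analysis treats the cylinder as a small perturbation of the flat bilaplacian, but the conformal factor degenerates like $e^{\lambda_k}\sim|x|^{m-1}$ at a branch point of multiplicity $m$, so the relevant model operators are $\mathscr{L}_m=\Delta+2(m-1)\frac{x}{|x|^2}\cdot\nabla+\frac{(m-1)^2}{|x|^2}$ and its complex counterpart $\mathfrak{L}_m$. The doubly weighted Rellich-type inequalities needed for the eigenvalue lower bound \emph{fail} for solutions of $\mathscr{L}_m^{\ast}\mathscr{L}_m u=0$ (the natural generalisation of biharmonic functions) and must be replaced by estimates for solutions of $\mathfrak{D}_m u=16\,\Re(\bar{\mathfrak{L}}_m^{\ast}\mathfrak{L}_m u)=0$; even then the key Gagliardo--Nirenberg inequality only holds for $m>m_0=2.039\ldots$. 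This is precisely the origin of the hypothesis in the statement that either $n=3$ (where branch points of even order are excluded by the Laurain--Rivi\`ere parity argument) or no limiting object has a branch point of order $2$. A proof along your lines that never meets this obstruction is proving too much; any correct argument must isolate and confront the failure of the weighted inequalities at $m=2$.
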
}
	\renewcommand*{\thetheorem}{\Alph{theorem}}
	\newtheorem{lemme}[theorem]{Lemma}
	\newtheorem*{lemme*}{Lemma}
	\newtheorem{propdef}[theorem]{Definition-Proposition}
	\newtheorem*{propdef*}{Definition-Proposition}
	\newtheorem{prop}[theorem]{Proposition}
	\newtheorem{cor}[theorem]{Corollary}
	\theoremstyle{definition}
	\newtheorem*{definition}{Definition}
	\newtheorem{defi}[theorem]{Definition}
	\newtheorem{rem}[theorem]{Remark}
	\newtheorem*{rem*}{Remark}
	\newtheorem{rems}[theorem]{Remarks}
	\newtheorem{remimp}[theorem]{Important Remark}
	\newtheorem{exemple}[theorem]{Example}
	\newtheorem{defi2}{Definition}
	\newtheorem{propdef2}[defi2]{Proposition-Definition}
	\newtheorem{remintro}[defi2]{Remark}
	\newtheorem{remsintro}[defi2]{Remarks}
	\newtheorem{conj}{Conjecture}
	\newtheorem{question}{Open Question}
	\renewcommand\hat[1]{%
		\savestack{\tmpbox}{\stretchto{%
				\scaleto{%
					\scalerel*[\widthof{\ensuremath{#1}}]{\kern-.6pt\bigwedge\kern-.6pt}%
					{\rule[-\textheight/2]{1ex}{\textheight}}%WIDTH-LIMITED BIG WEDGE
				}{\textheight}% 
			}{0.5ex}}%
		\stackon[1pt]{#1}{\tmpbox}
	}
	\parskip 1ex
	\newcommand{\totimes}{\ensuremath{\,\dot{\otimes}\,}}
	\newcommand{\vc}[3]{\overset{#2}{\underset{#3}{#1}}}
	\newcommand{\conv}[1]{\ensuremath{\underset{#1}{\longrightarrow}}}
	\newcommand{\A}{\ensuremath{\vec{A}}}
	\newcommand{\B}{\ensuremath{\vec{B}}}
	\newcommand{\C}{\ensuremath{\mathbb{C}}}
	\newcommand{\D}{\ensuremath{\nabla}}
	\newcommand{\Disk}{\ensuremath{\mathbb{D}}}
	\newcommand{\E}{\ensuremath{\vec{E}}}
	\newcommand{\I}{\ensuremath{\mathbb{I}}}
	\newcommand{\Q}{\ensuremath{\vec{Q}}}
	\newcommand{\loc}{\ensuremath{\mathrm{loc}}}
	\newcommand{\z}{\ensuremath{\bar{z}}}
	\newcommand{\hh}{\ensuremath{\mathscr{H}}}
	\newcommand{\h}{\ensuremath{\vec{h}}}
	\newcommand{\vol}{\ensuremath{\mathrm{vol}}}
	\newcommand{\hs}[3]{\ensuremath{\left\Vert #1\right\Vert_{\mathrm{H}^{#2}(#3)}}}
	\newcommand{\R}{\ensuremath{\mathbb{R}}}
	\renewcommand{\P}{\ensuremath{\mathbb{P}}}
	\newcommand{\N}{\ensuremath{\mathbb{N}}}
	\newcommand{\Z}{\ensuremath{\mathbb{Z}}}
    \newcommand{\wconv}[1]{\ensuremath{\underset{#1}{\rightharpoonup}}}
	\newcommand{\p}[1]{\ensuremath{\partial_{#1}}}
	\newcommand{\Res}{\ensuremath{\mathrm{Res}}}
	\newcommand{\lp}[2]{\ensuremath{\mathrm{L}^{#1}(#2)}}
	\renewcommand{\wp}[3]{\ensuremath{\left\Vert #1\right\Vert_{\mathrm{W}^{#2}(#3)}}}
	\newcommand{\wpn}[3]{\ensuremath{\Vert #1\Vert_{\mathrm{W}^{#2}(#3)}}}
	\newcommand{\np}[3]{\ensuremath{\left\Vert #1\right\Vert_{\mathrm{L}^{#2}(#3)}}}
	\newcommand{\hp}[3]{\ensuremath{\left\Vert #1\right\Vert_{\mathrm{H}^{#2}(#3)}}}
	\newcommand{\ck}[3]{\ensuremath{\left\Vert #1\right\Vert_{\mathrm{C}^{#2}(#3)}}}
	\newcommand{\hardy}[2]{\ensuremath{\left\Vert #1\right\Vert_{\mathscr{H}^{1}(#2)}}}
	\newcommand{\lnp}[3]{\ensuremath{\left| #1\right|_{\mathrm{L}^{#2}(#3)}}}
	\newcommand{\npn}[3]{\ensuremath{\Vert #1\Vert_{\mathrm{L}^{#2}(#3)}}}
	\newcommand{\nc}[3]{\ensuremath{\left\Vert #1\right\Vert_{C^{#2}(#3)}}}
	\renewcommand{\Re}{\ensuremath{\mathrm{Re}\,}}
	\renewcommand{\Im}{\ensuremath{\mathrm{Im}\,}}
	\newcommand{\dist}{\ensuremath{\mathrm{dist}}}
	\newcommand{\diam}{\ensuremath{\mathrm{diam}\,}}
	\newcommand{\leb}{\ensuremath{\mathscr{L}}}
	\newcommand{\supp}{\ensuremath{\mathrm{supp}\,}}
	\renewcommand{\phi}{\ensuremath{\vec{\Phi}}}
	\renewcommand{\H}{\ensuremath{\vec{H}}}
	\renewcommand{\L}{\ensuremath{\vec{L}}}
	\renewcommand{\lg}{\ensuremath{\mathscr{L}_g}}
	\renewcommand{\ker}{\ensuremath{\mathrm{Ker}}}
	\renewcommand{\epsilon}{\ensuremath{\varepsilon}}
	\renewcommand{\bar}{\ensuremath{\overline}}
	\newcommand{\s}[2]{\ensuremath{\langle #1,#2\rangle}}
	\newcommand{\pwedge}[2]{\ensuremath{\,#1\wedge#2\,}}
	\newcommand{\bs}[2]{\ensuremath{\left\langle #1,#2\right\rangle}}
	\newcommand{\scal}[2]{\ensuremath{\langle #1,#2\rangle}}
	\newcommand{\sg}[2]{\ensuremath{\left\langle #1,#2\right\rangle_{\mkern-3mu g}}}
	\newcommand{\n}{\ensuremath{\vec{n}}}
	\newcommand{\ens}[1]{\ensuremath{\left\{ #1\right\}}}
	\newcommand{\lie}[2]{\ensuremath{\left[#1,#2\right]}}
	\newcommand{\g}{\ensuremath{g}}
	\newcommand{\dzeta}{\ensuremath{\det\hphantom{}_{\kern-0.5mm\zeta}}}
	\newcommand{\e}{\ensuremath{\vec{e}}}
	\newcommand{\f}{\ensuremath{\vec{f}}}
	\newcommand{\ig}{\ensuremath{|\vec{\mathbb{I}}_{\phi}|}}
	\newcommand{\ik}{\ensuremath{\left|\mathbb{I}_{\phi_k}\right|}}
	\newcommand{\w}{\ensuremath{\vec{w}}}
	\newcommand{\hooklongrightarrow}{\lhook\joinrel\longrightarrow}
	\renewcommand{\tilde}{\ensuremath{\widetilde}}
	\newcommand{\vg}{\ensuremath{\mathrm{vol}_g}}
	\newcommand{\im}{\ensuremath{\mathrm{W}^{2,2}_{\iota}(\Sigma,N^n)}}
	\newcommand{\imm}{\ensuremath{\mathrm{W}^{2,2}_{\iota}(\Sigma,\R^3)}}
	\newcommand{\timm}[1]{\ensuremath{\mathrm{W}^{2,2}_{#1}(\Sigma,T\R^3)}}
	\newcommand{\tim}[1]{\ensuremath{\mathrm{W}^{2,2}_{#1}(\Sigma,TN^n)}}
	\renewcommand{\d}[1]{\ensuremath{\partial_{x_{#1}}}}
	\newcommand{\dg}{\ensuremath{\mathrm{div}_{g}}}
	\renewcommand{\Res}{\ensuremath{\mathrm{Res}}}
	\newcommand{\un}[2]{\ensuremath{\bigcup\limits_{#1}^{#2}}}
	\newcommand{\res}{\mathbin{\vrule height 1.6ex depth 0pt width
			0.13ex\vrule height 0.13ex depth 0pt width 1.3ex}}
    \newcommand{\antires}{\mathbin{\vrule height 0.13ex depth 0pt width 1.3ex\vrule height 1.6ex depth 0pt width 0.13ex}}
	\newcommand{\ala}[5]{\ensuremath{e^{-6\lambda}\left(e^{2\lambda_{#1}}\alpha_{#2}^{#3}-\mu\alpha_{#2}^{#1}\right)\left\langle \nabla_{\e_{#4}}\vec{w},\vec{\mathbb{I}}_{#5}\right\rangle}}
	\setlength\boxtopsep{1pt}
	\setlength\boxbottomsep{1pt}
	\newcommand\norm[1]{%
		\setbox1\hbox{$#1$}%
		\setbox2\hbox{\addvbuffer{\usebox1}}%
		\stretchrel{\lvert}{\usebox2}\stretchrel*{\lvert}{\usebox2}%
	}
	\allowdisplaybreaks
	\newcommand*\mcup{\mathbin{\mathpalette\mcapinn\relax}}
	\newcommand*\mcapinn[2]{\vcenter{\hbox{$\mathsurround=0pt
				\ifx\displaystyle#1\textstyle\else#1\fi\bigcup$}}}
	\def\Xint#1{\mathchoice
		{\XXint\displaystyle\textstyle{#1}}%
		{\XXint\textstyle\scriptstyle{#1}}%
		{\XXint\scriptstyle\scriptscriptstyle{#1}}%
		{\XXint\scriptscriptstyle\scriptscriptstyle{#1}}%
		\!\int}
	\def\XXint#1#2#3{{\setbox0=\hbox{$#1{#2#3}{\int}$ }
			\vcenter{\hbox{$#2#3$ }}\kern-.58\wd0}} 
	\def\ddashint{\Xint=}
	\newcommand{\dashint}[1]{\ensuremath{{\Xint-}_{\mkern-10mu #1}}}
	\newcommand\ccancel[1]{\renewcommand\CancelColor{\color{red}}\cancel{#1}}
	\newcommand\colorcancel[2]{\renewcommand\CancelColor{\color{#2}}\cancel{#1}}
	\newcommand{\abs}[1]{\left\lvert #1 \right \rvert}
	
	\renewcommand{\thetheorem}{\thesection.\arabic{theorem}}

 \section{Introduction}
 
 \subsection{Overture}
 
 Geometers, as Chern predicted in a famous interview, can no longer restrict themselves to the smooth category. In this article, we address the second analytic step of the long-standing research program initiated by Tristan Rivière a decade ago (\cite{eversion}) to solve Kusner's conjecture—that consists in finding the optimal sphere eversion. After Smale proved in $1957$ that the path of immersions from the round sphere $S^2$ into $\R^3$ is path-connected (\cite{smale}), geometers struggled to find an explicit path of immersions that connects the standard immersion of the round sphere $\iota:S^2\rightarrow \R^3$ to the antipodal embedding $-\iota$. Examples were eventually constructed by several mathematicians, but were quite involved and it was unclear if one could make them simpler. In $1983$, Robert Bryant classified the Willmore spheres in $\R^3$ and showed that they are all inversions of minimal surfaces with embedded planar ends (\cite{bryant})—this result was partially extended to various settings (\cite{lamm,classification}) before Dorian Martino finally established it to all branched immersions a couple of years ago (\cite{martino_classification}). The relevance of this result lies in two facts: the Willmore energy can be seen as a distance function from the immersion of the round sphere and it should be possible to compute the Morse index (for the Willmore energy) of inversions of minimal surfaces. The second point was actually not considered until T. Rivière shared his vision with me at the end of $2015$ (\cite{eversion}). It should not come as a surprise to anyone aware of the important recent developments of geometric analysis that once more, this vision was correct (\cite{indexS3,index2}; see also \cite{hirsch,elena}). The first observation led Robert Kusner (\cite{kusner}) to imagine that one may flow the Willmore energy from a given Willmore surface to the round sphere. The idea was to find a surface that would have the suitable geometric properties to be also flowed to the antipodal embedding of the sphere. Amongst the surfaces classified by R. Bryant, the first non-trivial family has energy $16\pi$. It was therefore natural to assume that the \emph{cost} of the sphere eversion, or in other words, the width of the min-max associated with Kusner's conjecture, is equal to $16\pi$.

 Although the approach using gradient flows did not bear fruits in this direction so far (see however \cite{palmurella_flow1,palmurella_flow2}), T. Rivière designed an alternative approach using min-max methods. Before describing it, let us introduce the Willmore energy by a rather unusual approach that draws ideas from particle physics. It makes the Willmore energy proceed from an alternative to the problem of Plateau, that consists in finding a surface of prescribed boundary that minimises the area. This problem caught the attention of countless mathematicians and constitutes the third important development in the calculus of variations after a proof of the existence of a solution to the Dirichlet's problem by Hilbert in 1900 and a proof of the existence of a non-trivial closed geodesic on any simply connected surface by Birkhoff in 1917. Assuming the reader familiar with the rudiments of Sobolev spaces, we recall that for any domain $\Omega\subset \R^d$ (where $d\geq 1$), the Dirichlet energy $E:W^{1,2}(\Omega)\rightarrow \R$ is defined by:
\begin{align*}
   E(u)=\frac{1}{2}\int_{\Omega}|\D u|^2dx.
\end{align*}
Independently solved in 1930 by Radó and in 1931 by Douglas for curves in Euclidean spaces, the problem of Plateau can also be formulated in Riemannian manifolds and treated by similar methods. The most obvious generalisation is to look at model spaces, namely the round sphere equipped with its induced metric from its embedding into the Euclidean space and the hyperbolic space. In the latter example, we place ourselves in the half-space model $\mathbb{\R}^3_+=\R^2\times\R_+^*$ equipped with the metric
\begin{align*}
g_{\mathrm{hyp}}=\frac{dx^2+dy^2+dz^2}{z^2}
\end{align*}
of constant sectional curvature $-1$. The problem of Plateau in this setting can be formulated as follows: for any Jordan curve $\Gamma\subset\partial_{\infty}\mathbb{H}^3=\R^2\times\ens{0}$ (here, this notation of asymptotic boundary means that we take the boundary of the space for a conformal change of metric that makes it smooth up to the boundary), does there exist a surface $\Sigma\subset\mathbb{H}^3$ that locally minimises the area and such that $\partial_{\infty}\Sigma=\Gamma$? Anderson (\cite{anderson}) proved the existence of a solution to this problem for any rectifiable curve, and a solution happens to be a current of finite mass as a subset of this Euclidean space. However, due to the singularity of the metric at infinity, its area is infinite. This is where an important idea of physics—renormalisation—comes into play. The idea, that in this specific context can be attributed to Graham--Witten (\cite{graham}), consists in computing the area of
\begin{align*}
\Sigma_{\epsilon}=\Sigma\cap\ens{(x,y,z): z>\epsilon}
\end{align*}
and to remove the singularity terms in $\epsilon$ to get a finite quantity. Although the final result depends on the choice of the defining function and the limit has no reason to be finite, it was identified by Alexakis--Mazzeo (\cite{alexakismazzeo1}; see also \cite{alexakismazzeo2}) who proved the following result.
\begin{theorem}[Alexakis--Mazzeo]
Let $0<\alpha<1$ and $\Gamma\subset \partial_{\infty}\mathbb{H}^3$ be a $C^{3,\alpha}$ curve and $\Sigma\subset\mathbb{H}^3$ be a minimal surface such that $\partial_{\infty}\Sigma=\Gamma$. Then, the renormalised area defined by
\begin{align*}
\mathscr{RA}(\Sigma)=\lim_{\epsilon\rightarrow 0}\left(\mathrm{Area}(\Sigma_{\epsilon})-\frac{\mathscr{L}(\partial \Sigma_{\epsilon})}{\epsilon}\right)
\end{align*}
is finite and the following identity holds:
\begin{align*}
\mathscr{RA}(\Sigma)=-2\pi\chi(\Sigma)-\frac{1}{2}\int_{\Sigma}|\mathring{A}|^2\,d\mathrm{vol}_{\Sigma},
\end{align*}
where $\chi(\Sigma)$ is the Euler characteristic and $\mathring{A}$ is the trace-free second fundamental form of $\Sigma$.
\end{theorem}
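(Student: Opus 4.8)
The plan is to derive the identity by combining the Gauss--Bonnet theorem, applied to the truncated surfaces $\Sigma_\epsilon$, with the asymptotic structure of minimal surfaces near the conformal infinity of $\mathbb{H}^3$. The first thing I would establish is precisely this asymptotic structure. Since $\Gamma$ is $C^{3,\alpha}$, boundary regularity for minimal submanifolds at conformal infinity (the Fefferman--Graham/Graham--Witten expansion, here in codimension one for a surface) shows that near $\Gamma$, in geodesic coordinates $(z,s)$ adapted to a boundary defining function $z$ of $\partial_{\infty}\mathbb{H}^3$, the surface $\Sigma$ is a normal graph over the half-cylinder $\Gamma\times(0,\delta)$ whose expansion coefficients are, up to the relevant order, local invariants of $\Gamma$, the first ``free'' coefficient being trace-free and of order $z^{3}$ in the graph function. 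From this expansion I would read off the two facts actually needed: that $|\mathring{A}|_{g_{hyp}}=O(z)$ near $\Gamma$, so that $|\mathring{A}|^{2}\,d\mathrm{vol}_{\Sigma}$ is integrable on $\Sigma$; and that for $\epsilon$ small $\Sigma_\epsilon$ is a compact smooth surface with boundary onto which $\Sigma$ deformation retracts, whence $\chi(\Sigma_\epsilon)=\chi(\Sigma)$ for all small $\epsilon$ (and $\mathscr{L}(\partial\Sigma_\epsilon)$ converges to the length of $\Gamma$).

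The algebraic core is the Gauss equation. Since $\Sigma$ is minimal its mean curvature vanishes, so $\mathring{A}=A$ and $\det A=-\tfrac12|\mathring{A}|^{2}$; as $\mathbb{H}^{3}$ has constant sectional curvature $-1$, the Gauss equation gives $K_{\Sigma}=-1-\tfrac12|\mathring{A}|^{2}$ pointwise. Applying Gauss--Bonnet to the compact surface with boundary $\Sigma_\epsilon$,
\begin{align*}
\int_{\Sigma_\epsilon}K_{\Sigma}\,d\mathrm{vol}_{\Sigma}+\int_{\partial\Sigma_\epsilon}k_{g}\,ds=2\pi\chi(\Sigma_\epsilon)=2\pi\chi(\Sigma),
\end{align*}
where $k_{g}$ and $ds$ are the geodesic curvature and arc length of $\partial\Sigma_\epsilon$ for the induced metric, and substituting the Gauss equation,
\begin{align*}
\mathrm{Area}(\Sigma_\epsilon)=-2\pi\chi(\Sigma)-\frac{1}{2}\int_{\Sigma_\epsilon}|\mathring{A}|^{2}\,d\mathrm{vol}_{\Sigma}+\int_{\partial\Sigma_\epsilon}k_{g}\,ds.
\end{align*}
This reduces the theorem to two limit statements: that $\int_{\Sigma_\epsilon}|\mathring{A}|^{2}\,d\mathrm{vol}_{\Sigma}\to\int_{\Sigma}|\mathring{A}|^{2}\,d\mathrm{vol}_{\Sigma}$, which is the integrability above, and that
\begin{align*}
\int_{\partial\Sigma_\epsilon}k_{g}\,ds=\frac{\mathscr{L}(\partial\Sigma_\epsilon)}{\epsilon}+o(1)\qquad(\epsilon\to 0).
\end{align*}
Granting the second, letting $\epsilon\to 0$ in the displayed identity yields at once the finiteness of $\mathscr{RA}(\Sigma)$ and the stated formula.

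The remaining task — the technical heart — is the boundary estimate. Here I would use the expansion of the first step to compute the induced metric, the second fundamental form and the geodesic curvature of the level curve $\partial\Sigma_\epsilon=\Sigma\cap\{z=\epsilon\}$ inside $\Sigma$. To leading order $\Sigma$ looks like a totally geodesic surface spanning $\Gamma$, whose $z$-level sets are horocycles of geodesic curvature $1$ and hyperbolic length $\epsilon^{-1}\mathscr{L}(\partial\Sigma_\epsilon)$, which reproduces precisely the counterterm; the substance is to show that the contributions of the subleading terms of the expansion to $\int_{\partial\Sigma_\epsilon}k_{g}\,ds$ sum to $o(1)$ and, crucially, that the coefficient of the would-be $\log\epsilon$ term vanishes. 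This cancellation is exactly where minimality of $\Sigma$ — and the regularity threshold $C^{3,\alpha}$ on $\Gamma$, which makes the expansion valid one order past the naive requirement — enters in an essential way: for a generic non-minimal surface a logarithmic anomaly would survive and $\mathscr{RA}$ would depend on the choice of defining function. I expect this asymptotic bookkeeping to be the main obstacle; everything else is Gauss--Bonnet and elementary algebra. A more computational route would avoid Gauss--Bonnet and instead expand $\mathrm{Area}(\Sigma_\epsilon)$ directly from the Graham--Witten energy expansion, then identify the finite part with $-2\pi\chi(\Sigma)-\tfrac12\int_{\Sigma}|\mathring{A}|^{2}\,d\mathrm{vol}_{\Sigma}$ via an integration by parts; routing through Gauss--Bonnet keeps the topological term transparent and is shorter.
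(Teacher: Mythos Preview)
The paper does not prove this theorem: it is stated in the introductory section as background motivation for the Willmore energy, with attribution to Alexakis--Mazzeo and a citation to \cite{alexakismazzeo1} (see also \cite{alexakismazzeo2}). There is therefore no ``paper's own proof'' to compare against.

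That said, your outline is correct and is essentially the argument of the original Alexakis--Mazzeo paper. The reduction via the Gauss equation $K_\Sigma=-1-\tfrac12|\mathring{A}|^2$ and Gauss--Bonnet on $\Sigma_\epsilon$ is exactly right, and you have correctly located the only nontrivial step: the asymptotic analysis showing $\int_{\partial\Sigma_\epsilon}k_g\,ds=\epsilon^{-1}\mathscr{L}(\partial\Sigma_\epsilon)+o(1)$, with the vanishing of a potential logarithmic term being the place where minimality (and the $C^{3,\alpha}$ regularity on $\Gamma$, which controls the Fefferman--Graham-type expansion to the required order) is genuinely used. Your remark that $|\mathring{A}|=O(z)$, hence $|\mathring{A}|^2\,d\mathrm{vol}_\Sigma$ is integrable, follows from the fact that $\Sigma$ meets $\partial_\infty\mathbb{H}^3$ orthogonally to leading order, which is itself a consequence of the expansion. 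The alternative route you mention---expanding $\mathrm{Area}(\Sigma_\epsilon)$ directly and identifying the finite part---is the Graham--Witten computation; your Gauss--Bonnet route is indeed the cleaner way to see the topological term.
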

Now, we will adopt a parametric approach and fix a Riemann surface $\Sigma$ and an immersion $\phi:\Sigma\rightarrow\mathbb{H}^3$. The quantity
\begin{align*}
\mathscr{W}(\phi)=\frac{1}{2}\int_{\Sigma}|\mathring{A}|^2\,d\mathrm{vol}_g,
\end{align*}
where $g=\phi^*g_{\mathrm{hyp}}$ is the induced metric on $\Sigma$, is known as the Willmore energy. For immersions into the Euclidean space, the Willmore energy is usually defined as follows:
\begin{align*}
W(\phi)=\int_{\Sigma}H^2\,d\mathrm{vol}_g
\end{align*}
where $H$ is the mean curvature. 
Thanks to Gauss--Bonnet theorem, both functionals only differ by a topological constant and are therefore analytically equivalent. A property first established by Blaschke and Thomsen in 1923 (to be precise, one can find it already in the Doktorarbeit of Thomsen in 1922, and a certain W. Schadow—whose entire existence is shrouded with mystery—is also credited for deriving the Euler--Lagrange of the Willmore energy) is the conformal invariance of the Willmore energy (\cite{blaschke,thomsen}). In other words, it is invariant by ambient translations, rotations, dilations, and inversions. More precisely, the quantity $|\mathring{A}|^2d\vg$ is a pointwise conformally invariant quantity, which implies that inversions of complete minimal surfaces—for which the mean curvature $H$ vanishes identically—with finite total curvature are compact Willmore surfaces. However, in general, those surfaces may have branch points, where the map $\phi$ can be expanded in a conformally chart as a function which is harmonic at first order and satisfies (according to work of Kuwert--Schätzle \cite{kuwert} and Bernard--Rivière \cite{beriviere})
\begin{align*}
    \phi(z)=\Re\left(\vec{A}_0z^m\right)+O(|z|^{m+1}),
\end{align*}
where $m\geq 2$ is an integer. Those are the immersions that we consider in this article, or more precisely, we place ourselves in the setting of \cite{willmore_scs} but allow the limiting immersions and the bubbles to develop branch points.

\subsection{Technical Aspects}

As in \cite{willmore_scs}, an important first step in the proof is to show that negative variations cannot be located in neck regions. This is accomplished by using the Rellich and Hardy--Rellich inequalities already established in \cite[Chapter 2]{willmore_scs}, and more refined estimates using the properties of the family of operators:
\begin{align*}
\mathscr{L}_m=|x|^{1-m}\Delta(|x|^{m-1}(\,\cdot\,))=\Delta+2(m-1)\frac{x}{|x|^2}\cdot \D+\frac{(m-1)^2}{|x|^2},
\end{align*}
where $m\geq 1$ is the \emph{integer} multiplicity of the branch point. 
Using the decomposition of the forth-order elliptic differential operator with regular singularities $\leb_m^{\ast}\leb_m$ as the non-trivial sum of two positive operators and integrating by parts the weighted $L^2$ norm of $\leb_mu$, it is rather straightforward to show that for all $m>1$ and $\alpha>0$, there exists a universal constant $C_{\alpha}<\infty$ such that for all $0<a<b<\infty$, defining $\Omega=B_b\setminus \bar{B}_a(0)$, for all $u\in W^{2,2}_0(\Omega)$, we have
\begin{align*}
\int_{\Omega}\left(\frac{u^2}{|x|^4}+\frac{|\D u|^2}{|x|^2}\right)\left(\left(\frac{|x|}{b}\right)^{2\alpha}+\left(\frac{a}{|x|}\right)^{2\alpha}\right)dx\leq C_{\alpha}\int_{\Omega}\left(\leb_mu\right)^2dx.
\end{align*}
From this inequality and the refined estimates established for the second fundamental form of Willmore immersions in neck regions, one can show that negative variations cannot be located in the neck regions. However, we face another difficulty that is far more challenging than in the unbranched case $m=1$ when we try to establish that eigenvalues for the weighted Willmore operator are bounded from below. As in \cite{willmore_scs}, the proof is based on a rather involved weighted Gagliardo--Nirenberg inequality. In the case $m=1$, by a standard decomposition of any function $u\in W^{2,2}(\Omega)$ between a function that vanishes on the boundary and a function that solves a Dirichlet problem, the estimate followed from establishing the inequality for biharmonic functions. The obvious generalisation is to consider solutions of $\leb_m^{\ast}\leb_mu=0$, but if the required estimate does indeed hold, it does not suffice for our purpose. Indeed, in the next argument using the Bochner identity, due to the non-flatness of the conical metric in neck regions, comparing the Bochner identity for the flat metric on which a weight is applied \emph{a posteriori} (this is the expression furnished by our Gagliardo--Nirenberg inequality), in order to estimate one of the extra terms, it is necessary to have a weight on the right-hand side of the Gagliardo--Nirenberg inequality. However, such an estimate is \emph{neither} verified for functions in $W^{2,2}_0(\Omega)$ \emph{nor} for solutions of $\leb_m^{\ast}\leb_mu=0$ (due to the loss of the logarithmic singularity once integrated with respect to a weight). The situation may seem hopeless but the remedy is to notice that the correct generalisation of biharmonic functions in our context is not solutions of the partial differential equation $\leb_m^{\ast}\leb_mu=0$ but (real) solutions of
\begin{align*}
\mathfrak{D}_mu=16\,\Re\left(\bar{\mathfrak{L}_m}^{\ast}\mathfrak{L}_mu\right)=0,
\end{align*}
where
\begin{align*}
\mathfrak{L}_m=|z|^{1-m}\p{z}^2\left(|z|^{m-1}(\,\cdot\,)\right)=\p{z}^2+\frac{(m-1)}{z}\p{z}+\frac{(m-1)(m-3)}{4z^2}.
\end{align*}
Notice that both operators both have leading term given by the bi-Laplacian and have regular singularities (\cite{PR}). Otherwise, their properties radically differ, although it may not be obvious to see on their expanded expressions:
\begin{align*}
            \mathfrak{D}_m&=\Delta^2-\frac{6(m^2-1)}{|x|^2}\Delta +4(m^2-1)\left(\frac{x}{|x|^2}\right)^t\cdot \D^2(\,\cdot\,)\cdot \left(\frac{x}{|x|^2}\right)+\frac{8(m-1)^2}{|x|^2}\frac{x}{|x|^2}\cdot \D \\
            &+\frac{(m+1)(m-1)^2(m-3)}{|x|^4},
        \end{align*}
and
        \begin{align*}
            \leb_m^{\ast}\leb_m=\Delta^2+\frac{2(m^2-1)}{|x|^2}\Delta -4(m^2-1)\left(\frac{x}{|x|^2}\right)^t\cdot \D^2(\,\cdot\,)\cdot \left(\frac{x}{|x|^2}\right)+\frac{(m-1)^2}{|x|^4}.
        \end{align*}
In other words, instead of seeing the biharmonic energy as 
\begin{align*}
\int_{\Omega}\left(\Delta u\right)^2dx=16\int_{\Omega}\left(\p{z\z}^2u\right)^2dx,
\end{align*}
we see it as
\begin{align*}
16\int_{\Omega}|\p{z}^2u|^2|dz|^2.
\end{align*}
and by an immediate integration by parts, both functionals coincide on $W^{2,2}_0(\Omega)$. From the analytical viewpoint, the advantage of working with $\mathfrak{D}_m$ (seen as an operator acting on \emph{real-valued} functions) has a four-dimensional Kernel (while the Kernel of $\leb_m^{\ast}\leb_m$ is infinite-dimensional—is it equal to the space of harmonic functions weighted by $|x|^{1-m}$). This explains why the doubly weighted Rellich and Hardy--Rellich inequalities for $\mathfrak{L}_m$ do hold true on $W^{2,2}_0(\Omega)$, as long as we assume that $m\geq 3$. This restriction on $m$ may seem problematic, but in codimension $1$, Laurain--Rivière showed that the branch point must be odd due to a connexity argument on the first homotopy group of $\mathrm{SO}(3)$ (\cite{quantamoduli}). Although it seems reductive, for the previous Gagliardo--Nirenberg inequality, we need to assume 
\begin{align*}
    m&>\frac{2}{3}+\frac{1}{3}\sqrt{-1-\frac{2^{\frac{2}{3}}}{\sqrt[3]{32+3\sqrt{114}}}+\sqrt[3]{2(32+3\sqrt{114})}}\\
        &+\frac{1}{2}\sqrt{-\frac{8}{9}+\frac{2^{\frac{8}{3}}}{9\sqrt[3]{32+3\sqrt{114}}}-\frac{4}{9}\sqrt[3]{2(32+3\sqrt{114})}+\frac{88}{9\sqrt{-1-\frac{2^{\frac{2}{3}}}{\sqrt[3]{32+3\sqrt{114}}}+\sqrt[3]{2(32+3\sqrt{114})}}}}\\
        &=2.039423\cdots
\end{align*}
and this condition seems unavoidable. Furthermore, the radial terms can be estimated without having to use the delicate argument involving a refinement of the Cauchy--Schwarz inequality that was necessary for the Gagliardo--Nirenberg inequality for solutions of $\leb_m^{\ast}\leb_mu=0$ due to the logarithm components. However, the Gagliardo--Nirenberg inequality for solutions of $\mathfrak{D}_mu=0$ is excessively more technical for the Taylor expansion mostly (that is, for all except the first two Fourier frequencies) involves transcendent functions that have no primitive expressible with respect to standard functions (and the roots are quite involved too), which makes it especially difficult to estimate the \enquote{crossed terms} of the Fourier coefficients in the various $L^2$ norms. This forces us to use the refined Cauchy--Schwarz inequality for most frequencies, but the exact technical details would be tedious to explain here so let us simply state the useful lemma that involves the Cauchy--Schwarz identity that is instrumental in the proof. 
\begin{lemme}
Let $(X,\mu)$ be a $\sigma$-finite measured space and $f_1,f_2\in L^2(X,\mu,\C)$. Then, for all $\lambda_1,\lambda_2\in \C$, we have
\begin{align*}
\frac{|\lambda_1||\lambda_2|}{2}\int_{X\times X}\left|\det\begin{pmatrix}f_1(x) & f_2(x)\\
f_1(y) & f_2(y)
\end{pmatrix}\right|^2d\mu(x)\,d\mu(y)\leq \np{f_1}{2}{X}\np{f_2}{2}{X}\int_{X}\left|\lambda_1\,f_1+\lambda_2\,f_2\right|^2d\mu.
\end{align*}
\end{lemme}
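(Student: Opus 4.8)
The plan is to reduce the stated inequality to a one-line algebraic estimate after recognising the double integral as a Lagrange-type identity. If $\lambda_1=0$ or $\lambda_2=0$ the left-hand side vanishes and there is nothing to prove, so assume $\lambda_1,\lambda_2\in\C^{*}$ and put $u:=\lambda_1 f_1$ and $v:=\lambda_2 f_2$, both in $\lp{2}{X}$. Multilinearity of the determinant gives $\lambda_1\lambda_2\big(f_1(x)f_2(y)-f_2(x)f_1(y)\big)=u(x)v(y)-v(x)u(y)$, while $\|\lambda_j f_j\|_{\lp{2}{X}}=|\lambda_j|\,\|f_j\|_{\lp{2}{X}}$ for $j=1,2$. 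Hence, multiplying the claimed inequality through by $|\lambda_1||\lambda_2|$, it becomes equivalent to
\[
\frac12\int_{X\times X}\big|u(x)v(y)-v(x)u(y)\big|^{2}\,d\mu(x)\,d\mu(y)\;\leq\;\np{u}{2}{X}\,\np{v}{2}{X}\,\np{u+v}{2}{X}^{2}.
\]

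Next I would invoke the Lagrange (Cauchy--Schwarz) identity. Since $u,v\in\lp{2}{X}$ and $\mu$ is $\sigma$-finite, Fubini's theorem applies to the integrand, which expands as
\[
\big|u(x)v(y)-v(x)u(y)\big|^{2}=|u(x)|^{2}|v(y)|^{2}+|v(x)|^{2}|u(y)|^{2}-2\,\Re\big(u(x)\overline{v(x)}\,v(y)\overline{u(y)}\big),
\]
and integrating over $X\times X$ yields
\[
\frac12\int_{X\times X}\big|u(x)v(y)-v(x)u(y)\big|^{2}\,d\mu(x)\,d\mu(y)=\np{u}{2}{X}^{2}\,\np{v}{2}{X}^{2}-\big|\s{u}{v}\big|^{2},
\]
where $\s{u}{v}=\int_X u\,\overline{v}\,d\mu$ denotes the Hermitian product on $\lp{2}{X}$.

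It remains to establish a purely algebraic inequality. Writing $A=\np{u}{2}{X}\geq0$, $B=\np{v}{2}{X}\geq0$, $C=\s{u}{v}\in\C$, and noting $\np{u+v}{2}{X}^{2}=A^{2}+B^{2}+2\,\Re C$, the reduced inequality reads $A^{2}B^{2}-|C|^{2}\leq AB\,(A^{2}+B^{2}+2\,\Re C)$. Using $\Re C\geq-|C|$ and $AB\geq0$,
\begin{align*}
AB\,(A^{2}+B^{2}+2\,\Re C)-\big(A^{2}B^{2}-|C|^{2}\big)
&\geq AB\,(A^{2}+B^{2}-2|C|)-A^{2}B^{2}+|C|^{2}\\
&=\big(|C|-AB\big)^{2}+AB\,(A-B)^{2}\;\geq\;0,
\end{align*}
the middle equality being a direct expansion. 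This proves the lemma.

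The only steps requiring (routine) care are the justification of Fubini on the $\sigma$-finite space and the bookkeeping of the scalars $\lambda_1,\lambda_2$ in the reduction; there is no genuine analytic obstacle here, the entire content being the recognition of the Lagrange identity together with the sum-of-squares decomposition $\big(|C|-AB\big)^{2}+AB\,(A-B)^{2}$ of the difference of the two sides. In particular, the Cauchy--Schwarz bound $|C|\leq AB$ itself is not even needed.
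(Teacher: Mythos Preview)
Your proof is correct and rests on the same key identity as the paper's own argument, namely the Lagrange/Cauchy--Schwarz equality
\[
\frac{1}{2}\int_{X\times X}\big|f_1(x)f_2(y)-f_1(y)f_2(x)\big|^{2}\,d\mu(x)\,d\mu(y)=\np{f_1}{2}{X}^{2}\,\np{f_2}{2}{X}^{2}-\big|\s{f_1}{f_2}\big|^{2},
\]
which the paper records as a separate lemma. The difference lies only in the algebraic endgame: the paper bounds $\Re\s{f_1}{f_2}\geq-|\s{f_1}{f_2}|$, substitutes the square root coming from the Lagrange identity, and then uses the conjugate-expression trick $a-\sqrt{a-b}=b/(a+\sqrt{a-b})$ to isolate the determinant integral, finally dropping the nonnegative term $\big(\np{u}{2}{X}-\np{v}{2}{X}\big)^{2}$. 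Your route is more direct: after the same reduction you expand the difference of the two sides and recognise it at once as $\big(|C|-AB\big)^{2}+AB(A-B)^{2}\geq0$. This sum-of-squares decomposition is arguably cleaner---it avoids any square-root manipulation and makes the two nonnegative contributions explicit---and your preliminary rescaling $u=\lambda_1 f_1$, $v=\lambda_2 f_2$ also disposes of the scalars up front rather than carrying them through. Substantively, though, the two proofs are the same.
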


\subsection{Main Theorem}

\begin{theorem}\label{main_theorem}
    Let $n\geq 3$. Then, there exists a universal constant $0<\Lambda_n<\infty$ with the following property. Let $\Sigma$ be a closed Riemann surface and $\{\phi_k\}_{k\in\N}\subset \mathrm{Imm}(\Sigma,\R^n)$ be a sequence of Willmore immersions of bounded energy:
    \begin{align*}
        \limsup_{k\rightarrow \infty}W(\phi_k)<\infty.
    \end{align*}
    Assume that $\{\phi_k\}_{k\in \N}$ bubble-converges to $(\phi_{\infty}^1,\cdots,\phi_{\infty}^m,\vec{\Psi}_1,\cdots,\vec{\Psi}_p,\vec{\chi}_1,\cdots,\vec{\chi}_q)$, and assume that \textbf{either} $n=3$, \textbf{or} that no limiting Willmore surface has branched points of order $m=2$ and let $\ens{\gamma_k^1,\cdots,\gamma_k^N}\subset \Sigma$ be the set of shrinking geodesics of $(\Sigma,g_k)$. There exists a universal constant $\Lambda_n>0$ such that the bound
    \begin{align}\label{smallness_residue_2}
        \limsup_{k\rightarrow \infty}\max_{1\leq l\leq N}\frac{|\vec{\gamma}_1(\phi_k,\gamma_k^l)|}{\leb(\gamma_k^l)}\leq \Lambda_n
    \end{align}
    implies that 
    \begin{align}\label{scs_branched}
        \limsup_{k\rightarrow \infty}\left(\mathrm{Ind}_W(\phi_k)+\mathrm{Null}_W(\phi_k)\right)\leq \sum_{l=1}^m\mathrm{Ind}_W^0(\phi_{\infty}^l)+\sum_{i=1}^p\mathrm{Ind}_W^0(\vec{\Psi}_{i})+\sum_{j=1}^q\mathrm{Ind}_W^0(\vec{\chi}_{j})<\infty
    \end{align}
    where $\mathrm{Ind}^0_W=\mathrm{Ind}_W+\mathrm{Null}_W$.
\end{theorem}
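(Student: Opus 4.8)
The plan is to argue by contradiction with the architecture of \cite{willmore_scs}: realise a family of non-positive directions of the second variation, show that none of their mass concentrates in the degenerating neck collars, pass to the limit on the remaining pieces, and conclude by a dimension count; the new input is the analysis of the \emph{branched} necks through the operators $\leb_m$, $\mathfrak{L}_m$ and $\mathfrak{D}_m$. First, the right-hand side of \eqref{scs_branched} is finite, since each of $\phi_\infty^l$, $\vec{\Psi}_i$, $\vec{\chi}_j$ is a branched Willmore immersion of finite total curvature and hence has finite extended index $\mathrm{Ind}_W^0=\mathrm{Ind}_W+\mathrm{Null}_W$; write $L$ for that sum and suppose \eqref{scs_branched} fails. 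Then, along a subsequence, $\mathrm{Ind}_W^0(\phi_k)\geq L+1$, so there is an $L^2_{g_k}$-orthonormal family $u_k^1,\dots,u_k^{L+1}$ spanning a subspace $V_k\subset W^{2,2}(\Sigma,\R^n)$ of admissible variations on which the second variation $Q_{\phi_k}$ of the Willmore energy is non-positive. For $k$ large, $(\Sigma,g_k)$ decomposes into finitely many macroscopic pieces (converging in $C^\infty_{\loc}$, off a finite set, to the $\phi_\infty^l$), finitely many bubble pieces (converging after rescaling to the $\vec{\Psi}_i$ and $\vec{\chi}_j$), and the neck collars $A_k^l$ around the shrinking geodesics $\gamma_k^l$, each bi-Lipschitz to a degenerating annulus carrying a conical metric of integer order $m$; under the standing hypothesis---either $n=3$, where the parity argument of \cite{quantamoduli} forces $m$ to be odd, hence $m\geq 3$, or $n\geq 4$ with order-two branch points excluded---one has $m\geq 3$ on every neck. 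Since $Q_{\phi_k}(u)=\int_\Sigma(\ldots)\,d\vol_{g_k}$ is an integral, it is additive over this decomposition, $Q_{\phi_k}(u)=\sum_P Q_P(u|_P)+\sum_l Q_{A_k^l}(u|_{A_k^l})$, with one piece $P$ per limiting immersion.

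The heart of the proof is to show that each neck term is non-negative for $k$ large. Fix a neck $A_k^l$ of conical order $m\geq 3$. By the refined second fundamental form estimates for Willmore immersions in neck regions, $\ik$ is bounded in the relevant weighted norms by $o(1)+C\,|\vec{\gamma}_1(\phi_k,\gamma_k^l)|/\leb(\gamma_k^l)$, which controls the zeroth-order part of $Q_{\phi_k}$ on the neck. For the principal part I would write $u|_{A_k^l}=u_0+h$ with $u_0\in W^{2,2}_0(A_k^l)$ and $h$ the real solution of $\mathfrak{D}_m h=0$ sharing the Cauchy data of $u$ on $\partial A_k^l$, bound $u_0$ using the doubly weighted Rellich and Hardy--Rellich inequalities for $\mathfrak{L}_m$ (available on $W^{2,2}_0$ precisely because $m\geq 3$) together with the displayed doubly weighted inequality for $\leb_m$, and bound $h$ using the weighted Gagliardo--Nirenberg inequality for solutions of $\mathfrak{D}_m u=0$---the Cauchy--Schwarz-type determinantal identity stated above being what controls the crossed Fourier terms there, and the weight on its right-hand side (which forces the threshold $m>2.039423\cdots$) being exactly what makes the ensuing Bochner-type comparison absorb the extra terms produced by the non-flatness of the conical neck metric. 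Altogether one obtains, for a universal constant $c>0$ and every $u$,
\begin{align*}
Q_{A_k^l}(u|_{A_k^l})\geq c\int_{A_k^l}\left(|\D^2 u|^2+\frac{|\D u|^2}{|x|^2}+\frac{u^2}{|x|^4}\right)dx-C\left(o(1)+\frac{|\vec{\gamma}_1(\phi_k,\gamma_k^l)|}{\leb(\gamma_k^l)}\right)\int_{A_k^l}\left(\frac{|\D u|^2}{|x|^2}+\frac{u^2}{|x|^4}\right)dx,
\end{align*}
so, choosing $\Lambda_n=c/(2C)$, the smallness hypothesis \eqref{smallness_residue_2} makes the right-hand side at least $\frac{c}{2}\int_{A_k^l}(\ldots)\,dx\geq 0$ for $k$ large; in particular $\sum_l\|u|_{A_k^l}\|_{W^{2,2}(A_k^l)}\to 0$ whenever $Q_{\phi_k}(u)\leq 0$ and $\|u\|_{L^2_{g_k}}=1$. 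Thus the non-positive directions are carried by the non-neck pieces.

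On each non-neck piece $P$ the coefficients of $Q_P$ converge smoothly off the finitely many branch points of the corresponding limiting immersion $\phi_\infty^{a(P)}$, and, since the eigenvalues attached to $V_k$ are bounded below---again via the Gagliardo--Nirenberg inequality---the $u_k^i|_P$ are bounded in $W^{2,2}_{\loc}(P)$; hence, after extraction, $u_k^i|_P$ converges weakly in $W^{2,2}_{\loc}$ and strongly off the bad set to some $v_P^i$. A removability argument at the branch points---finite-energy $W^{2,2}$ variations extend across points, and the linearised Willmore operator there has regular singularities of the $\mathfrak{D}_m$ type---places $v_P^i$ in the space $\mathcal{V}_{a(P)}$ defining $\mathrm{Ind}_W^0(\phi_\infty^{a(P)})$, and a secondary no-concentration estimate of the same nature as above (around regular points the linearisation is $\Delta^2$ plus lower order; around branch points of the limit one re-uses the $\mathfrak{D}_m$ machinery) prevents the $u_k^i$ from developing their own bubbles, so lower semicontinuity of the quadratic form holds piece by piece. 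Because no mass escapes to the necks, the vectors $(v_P^i)_P$, $1\leq i\leq L+1$, are linearly independent: if $\sum_i c_i v_P^i=0$ for every $P$, then $w_k:=\sum_i c_i u_k^i$ has $\|w_k|_P\|_{L^2}\to 0$ for every $P$ and $\sum_l\|w_k|_{A_k^l}\|_{L^2}\to 0$, forcing $\|w_k\|_{L^2_{g_k}}\to 0$, whereas $\|w_k\|_{L^2_{g_k}}^2=\sum_i c_i^2$ is constant. Hence the $(v_P^i)_P$ span an $(L+1)$-dimensional subspace of $\bigoplus_a\mathcal{V}_a$ on which, using that each neck term is non-negative, that $Q_{\phi_k}(w_k)\leq 0$, and lower semicontinuity, the direct-sum second variation $\mathbf{Q}_\infty=\bigoplus_a Q_{\phi_\infty^a}$ is non-positive. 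Since the extended index of $\mathbf{Q}_\infty$ is $\sum_a\mathrm{Ind}_W^0(\phi_\infty^a)=L$, this is absurd, and \eqref{scs_branched} follows.

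The step I expect to be the genuine obstacle is the neck coercivity estimate of the second paragraph in the branched regime. It rests on the full strength of the weighted Rellich, Hardy--Rellich and, above all, Gagliardo--Nirenberg inequalities for $\mathfrak{L}_m$ and $\mathfrak{D}_m$, together with the refined curvature estimates in branched necks and the smallness of the residue $\vec{\gamma}_1$. The subtle point is that the conical neck metric is not flat, so a weight must be propagated through a Bochner-type comparison; this is precisely why the right-hand side of the Gagliardo--Nirenberg inequality must itself carry a weight---an estimate that holds neither on $W^{2,2}_0(\Omega)$ nor for solutions of $\leb_m^\ast\leb_m u=0$, but only for solutions of $\mathfrak{D}_m u=0$ and only when $m>2.039423\cdots$. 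Everything else is a careful but essentially routine adaptation of the unbranched scheme of \cite{willmore_scs}.
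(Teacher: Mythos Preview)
Your architecture is correct and matches the paper's: contradiction, neck coercivity via the $\leb_m$/$\mathfrak{L}_m$/$\mathfrak{D}_m$ inequalities (with the $m\geq 3$ reduction exactly as you state), eigenvalue lower bound via the weighted Gagliardo--Nirenberg estimate, and passage to the limit on the non-neck pieces. The one substantive divergence is your normalisation. You take the $u_k^i$ to be $L^2_{g_k}$-orthonormal; the paper instead works with the weighted eigenvalue problem for $\mathcal{L}_{\alpha,k}=\omega_{1,\alpha,k}^{-1}\mathcal{L}_{g_k}$ and normalises by $\int_\Sigma u_k^2\,\omega_{1,\alpha,k}\,d\mathrm{vol}_{g_k}=1$, where $\omega_{1,\alpha,k}\sim |x|^{-4m}$ near a branch point of order $m$. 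This is not cosmetic: the weighted bound, passed to the limit by Fatou, forces $\int u_\infty^2|x|^{-4m}\,dx<\infty$, which is precisely the decay that makes $u_\infty$ an \emph{admissible} variation of the branched limit (the paper cites \cite{index4} for this). Your ``removability argument at the branch points---finite-energy $W^{2,2}$ variations extend across points'' gives you a $W^{2,2}$ limit, but that alone does not guarantee the pointwise vanishing rate needed for $Q_{\phi_\infty^a}(v)$ to be defined at a genuine branch point; you would have to extract that decay from your neck estimate, whereas the weighted normalisation delivers it for free. The paper's short proof of the theorem is literally ``follow \cite{willmore_scs}, replace the normalisation by the weighted one, use Fatou, observe admissibility and non-triviality''; everything else is the technical groundwork you correctly identified.
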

For the definition of bubble-convergence, see \cite{quanta} and \cite[Definition 1.1.6]{willmore_scs}. Notice that the quantization and bubble-convergence always hold thanks to the combined work of Bernard--Rivière \cite{quanta} and Laurain--Rivière \cite{quantamoduli} (in the latter case, this is because our condition \eqref{smallness_residue_2} implies the condition of Laurain--Rivière for the quantization of energy).  
 
 \subsection{Finale}
  
 What would be the next step in Rivière's program? It consists in generalising the present article to the case of branched Willmore spheres obtained by min-max (\cite{eversion}). The first difficulty is to control the non-local terms arising in the second derivative. Otherwise, the main obstacle is to understand how Rellich-type estimates can show that the contribution coming from the viscosity term is negligible. Here, the analogy with harmonic maps from Da Lio--Gianocca--Rivière's theory (\cite{riviere_morse_scs}) breaks down for min-max need not work for Sacks–Uhlenbeck-type estimates. However, the Willmore energy is more flexible and there are no topological obstructions for the upper semi-continuity of the Morse index plus nullity.

\section{Inequalities for the Classical Family of Weighted Inequalities}

In this technical section, we establish two fundamental estimates that play a crucial role in the proof. They are by no means immediate, and when it comes to the second inequality, it does not follow from a mere application of the methods first developed in \cite{willmore_scs} and we have to come up with several refinements of the proof.

\subsection{Weighted Inequalities Associated to a Family of Fourth-Order Elliptic Operators}

Compared to \cite{willmore_scs}, for $m>1$, the proof is significantly more technical, but also holds for a larger range of exponents.% (and more precise if $m>1$ is small enough, but we did not precise that in the statement of the theorem).
\begin{theorem}\label{poincare_weight_m}
    Let $m>1$ and define
    \begin{align*}
        \leb_m=\Delta +2(m-1)\frac{x}{|x|^2}\cdot \D +\frac{(m-1)^2}{|x|^2}.
    \end{align*}
    For all $\alpha>0$, there exist a universal constant $C_{m,\alpha}<\infty$ with the following property.
    Fix $0<a<b<\infty$ and let $\Omega=B_b\setminus\bar{B}_a(0)$. Then, for all $u\in W^{2,2}_0(\Omega)$, we have
    \begin{align}
    \left\{\begin{alignedat}{2}
        &\int_{\Omega}\frac{u^2}{|x|^4}\left(\left(\frac{|x|}{b}\right)^{\alpha}+\left(\frac{a}{|x|}\right)^{\alpha}\right)dx&&\leq C_{m,\alpha}\int_{\Omega}\left(\leb_mu\right)^2dx\\
        &\int_{\Omega}\frac{|\D u|^2}{|x|^2}\left(\left(\frac{|x|}{b}\right)^{\alpha}+\left(\frac{a}{|x|}\right)^{\alpha}\right)dx&&\leq C_{m,\alpha}\int_{\Omega}\left(\leb_mu\right)^2dx.
        \end{alignedat}\right.
    \end{align}
\end{theorem}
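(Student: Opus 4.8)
The plan is to decompose $u$ into Fourier modes in the angular variable and analyze each mode as a one-dimensional weighted ODE problem in $r = |x|$. Writing $u(r,\theta) = \sum_{n\in\Z} u_n(r)e^{in\theta}$, the operator $\leb_m$ diagonalizes: acting on the $n$-th mode it becomes a second-order ODE operator $L_{m,n}$ in $r$ whose indicial roots (for $r^s$ homogeneous solutions) can be computed explicitly from $s(s-1) + (2m-1)s + (m-1)^2 - n^2 = 0$, i.e. $s = -(m-1) \pm \sqrt{n^2}$, so $s \in \{n - m + 1, -n - m + 1\}$ for $n \geq 0$. Crucially, since $m > 1$, for \emph{every} frequency $n$ the two indicial roots straddle or avoid the critical exponents in a controlled way, and in particular there is no resonance producing logarithmic solutions that would wreck the weighted estimates (this is exactly the improvement over $m=1$). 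The first step is therefore to set up this mode decomposition carefully, record the indicial roots, and reduce both inequalities to the family of scalar estimates
\begin{align*}
    \int_a^b \frac{u_n(r)^2}{r^4}\left(\left(\tfrac{r}{b}\right)^{\alpha} + \left(\tfrac{a}{r}\right)^{\alpha}\right) r\,dr + \int_a^b \frac{|u_n'(r)|^2 + n^2 r^{-2}|u_n|^2}{r^2}\left(\cdots\right) r\,dr \leq C_{m,\alpha}\int_a^b |L_{m,n}u_n|^2\, r\,dr,
\end{align*}
uniformly in $n$ and in $0 < a < b < \infty$, with $u_n$ satisfying the boundary conditions inherited from $u \in W^{2,2}_0(\Omega)$ (namely $u_n(a) = u_n(b) = u_n'(a) = u_n'(b) = 0$).

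Next I would prove the scalar estimate by the substitution $r = e^t$, which turns $L_{m,n}$ into a constant-coefficient operator $P_{m,n}(\partial_t)$ on the interval $[\log a, \log b]$ (of some length $T$), with the weights becoming $e^{\mp \alpha(t - \log b)}$ and $e^{\mp\alpha(\log a - t)}$, i.e. one-sided exponential weights anchored at the two endpoints. The key point is that $\leb_m^{\ast}\leb_m$ splits, as noted in the introduction, as a sum of two positive operators; at the level of each frequency this gives a factorization $|L_{m,n}u_n|^2 \geq c\big(|(\partial_t - s_1)(\partial_t - s_2)u_n|^2\big)$ with real indicial shifts $s_1, s_2$ bounded away from the relevant weight exponents. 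One then runs a straightforward integration-by-parts / Hardy-type argument on the line with the exponential weight: because the weight $e^{2\alpha t}$ (resp. $e^{-2\alpha t}$) is log-convex and the homogeneous solutions of $P_{m,n}$ decay/grow at rates separated from $\pm\alpha$, the weighted $L^2$ norm of $u_n$ and $\partial_t u_n$ is controlled by that of $P_{m,n}u_n$, with a constant depending only on $m$ and $\alpha$ but \emph{not} on $T$ (hence not on $a,b$) nor on $n$ once $|n|$ is large (for large $|n|$ the operator is coercive for trivial reasons). The finitely many small-$|n|$ frequencies are handled individually, checking that no indicial root equals $\mp\alpha$ or $2\mp\alpha$; since $\alpha > 0$ is arbitrary but the bad set of $\alpha$ is discrete, and we are allowed a constant $C_{m,\alpha}$ blowing up as $\alpha$ approaches a resonance, this is fine.

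The main obstacle I anticipate is \textbf{uniformity in the annulus width and in the frequency simultaneously}: one must ensure the constant does not degenerate as $b/a \to \infty$ (the genuinely interesting neck regime) while also not degenerating as $|n| \to \infty$. The trick is that the two one-sided weights $(r/b)^\alpha$ and $(a/r)^\alpha$ decouple the problem into a "near the outer boundary" estimate and a "near the inner boundary" estimate, each of which is effectively a half-line problem where the exponential weight provides the needed coercivity independently of $T$; the overlap region is where both weights are small, so the bound there is cheap. Care is needed because $W^{2,2}_0$ boundary conditions couple $u_n$ and $u_n'$ at both ends, so one cannot literally work on a half-line — instead one keeps both endpoints but exploits that the weight kills contributions from the far endpoint. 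A secondary technical point is justifying the mode decomposition and the interchange of the sum with the integrals in $W^{2,2}_0(\Omega)$, which follows from density of finite trigonometric sums and the fact that all estimates are term-by-term with summable constants after summing the Parseval identities. Assembling the per-mode estimates and summing over $n$ then yields the two stated inequalities.
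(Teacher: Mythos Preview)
Your approach is viable but takes a genuinely different and more labor-intensive route than the paper's. The paper avoids Fourier decomposition entirely: it applies the operator identity
\[
\leb_m^{\ast}\leb_m \;=\; \leb_{m,1}^{\ast}\leb_{m,1} + 4(m^2-1)\,\mathscr{D}_2^{\ast}\mathscr{D}_2,
\qquad \leb_{m,1}=\Delta+\frac{m^2-1}{|x|^2},\quad \mathscr{D}_2=\frac{x}{|x|^2}\cdot\D-\frac{1}{|x|^2},
\]
directly and globally on $W^{2,2}_0(\Omega)$, which immediately yields $\int_\Omega(\mathscr{D}_2 u)^2\,dx\le \frac{1}{4(m^2-1)}\int_\Omega(\leb_m u)^2\,dx$. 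The weighted radial bounds on $u^2/|x|^4$ and $(x\cdot\D u)^2/|x|^4$ then follow from two or three short integration-by-parts identities with the weight $|x|^{\alpha}$ (Hardy-type manipulations, no spectral input). For the tangential piece $|\partial_\theta u|^2/|x|^4$ the paper writes out the full weighted expansion of $\int_\Omega(\leb_m u)^2|x|^{\alpha}\,dx$ and isolates the tangential term using the radial estimates already in hand. Uniformity in $a,b$ and in the Fourier index is automatic because no mode splitting ever occurs.

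Your substitution $r=e^t$ and per-mode analysis would reach the same conclusion after summing, but requires verifying the constant is uniform in $n$ explicitly, which the paper's global argument sidesteps. One correction: your claim that $m>1$ removes logarithmic resonances is wrong at $n=0$, where the indicial equation has the double root $s=1-m$ and the kernel of $L_{m,0}$ does contain $r^{1-m}\log r$. This is harmless for your argument, though, since you are estimating $W^{2,2}_0$ functions rather than kernel elements, and for those the identity $\int|(\partial_t+(m-1))^2 W|^2\,dt\ge (m-1)^4\int W^2\,dt$ on $W^{2,2}_0$ of an interval (which is precisely the $n=0$ shadow of the paper's decomposition) gives the required bound without any reference to the kernel structure.
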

\begin{proof}
Recall that (see \cite[(2.2.5) p.97]{willmore_scs})
\begin{align}\label{fourth_order_m}
    \leb_m^{\ast}\leb_m=\Delta^2+2(m^2-1)\frac{1}{|x|^2}\Delta-4(m^2-1)\left(\frac{x}{|x|^2}\right)^t\cdot  \D^2(\,\cdot\,)\cdot\left(\frac{x}{|x|^2}\right)+\frac{(m^2-1)^2}{|x|^4}.
\end{align}
Furthermore, for all $u\in W^{2,2}_0(\Omega)$, the following identity holds:
\begin{align}\label{ipp_m}
    \int_{\Omega}(\mathscr{L}_mu)^2dx&=\int_{\Omega}\left(\Delta u+(m^2-1)\frac{u}{|x|^2}\right)^2dx    +4(m^2-1)\int_{\Omega}\left(\frac{x}{|x|^2}\cdot \D u-\frac{u}{|x|^2}\right)^2dx.
\end{align}
This identity is based on the following decomposition
\begin{align}\label{decomp_adjoint}
    \leb_{m,1}^{\ast}\leb_{m,1}+4(m^2-1)\mathscr{D}_2^{\ast}\mathscr{D}_2=\leb_m^{\ast}\leb_m,
\end{align}
where
\begin{align*}
    \leb_{m,1}=\Delta+\frac{(m^2-1)}{|x|^2}\qquad\text{and}\qquad \mathscr{D}_2=\frac{x}{|x|^2}\cdot \D-\frac{1}{|x|^2}=\frac{1}{r}\p{r}-\frac{1}{r^2}
\end{align*}
Let us check the identity \eqref{decomp_adjoint}. Since $\leb_{m,1}$ is a self-adjoint operator, we have
\begin{align}\label{adjoint_part1}
    \Delta \left(\leb_{m,1}\right)&=\Delta^2+(m^2-1)\frac{1}{|x|^2}\Delta -4(m^2-1)\frac{x}{|x|^4}\cdot \D (\,\cdot\,)+\frac{4(m^2-1)}{|x|^4}\nonumber\\
    \leb_{m,1}^{\ast}\leb_{m,1}&=\Delta^2+2(m^2-1)\frac{1}{|x|^2}\Delta -4(m^2-1)\frac{x}{|x|^4}\cdot \D (\,\cdot\,)+\frac{(m^2-1)^2+4(m^2-1)}{|x|^4}.
\end{align}
On the other hand, we have
\begin{align*}
    \p{r}\mathscr{D}_2=\frac{1}{r}\p{r}^2-\frac{2}{r^2}\p{r}+\frac{2}{r^3},
\end{align*}
which shows that
\begin{align}\label{adjoint_part2}
    \mathscr{D}_2^{\ast}\mathscr{D}_2&=-\frac{1}{r^2}\p{r}^2+\frac{2}{r^3}\p{r}-\frac{2}{r^4}-\frac{1}{r^3}\p{r}+\frac{1}{r^4}\nonumber\\
    &=-\frac{1}{r^2}\p{r}^2+\frac{1}{r^3}\p{r}-\frac{1}{r^4}\nonumber\\
    &=-\left(\frac{x}{|x|^2}\right)^t\cdot \D^2(\,\cdot\,)\cdot \left(\frac{x}{|x|^2}\right)+\frac{x}{|x|^4}\cdot \D (\,\cdot\,)-\frac{1}{|x|^4}
\end{align}
and we deduce that 
\begin{align*}
    &\leb_{m,1}^{\ast}\leb_{m,1}+4(m^2-1)\mathscr{D}_2^{\ast}\mathscr{D}_2=\Delta^2+2(m^2-1)\frac{1}{|x|^2}\Delta -4(m^2-1)\frac{x}{|x|^4}\cdot \D (\,\cdot\,)+\frac{(m^2-1)^2+4(m^2-1)}{|x|^4}\\
    &-4(m^2-1)\left(\frac{x}{|x|^2}\right)^t\cdot \D^2(\,\cdot\,)\cdot \left(\frac{x}{|x|^2}\right)+4(m^2-1)\frac{x}{|x|^4}\cdot \D (\,\cdot\,)-\frac{4(m^2-1)}{|x|^4}\\
    &=\Delta^2+2(m^2-1)\frac{1}{|x|^2}\Delta-4(m^2-1)\left(\frac{x}{|x|^2}\right)^t\cdot  \D^2(\,\cdot\,)\cdot\left(\frac{x}{|x|^2}\right)+\frac{(m^2-1)^2}{|x|^4},
\end{align*}
which allows us to recover \eqref{fourth_order_m}, and provides a new proof of \eqref{ipp_m}. 

\textbf{Part 1: Radial estimates.} 

Let $\alpha>0$. Thanks to \eqref{ipp_m}, we have
\begin{align*}
    \int_{\Omega}\left(\frac{x}{|x|^2}\cdot \D u-\frac{u}{|x|^2}\right)^2dx\leq \frac{1}{4(m^2-1)}\int_{\Omega}\left(\leb_mu\right)^2dx.
\end{align*}
Furthermore, for all $\alpha\in \R$, notice that
\begin{align}\label{simple_lm_1}
    \int_{\Omega}\frac{u}{|x|^2}\left(\frac{x}{|x|^2}\cdot \D u\right)|x|^{\alpha}dx&=\frac{1}{2}\int_{\Omega}\dive\left(\frac{x}{|x|^2}u^2\right)|x|^{\alpha-2}dx=-\frac{1}{2}\int_{\Omega}u^2\left(\frac{x}{|x|^2}\cdot \D \left(|x|^{\alpha}\right)\right)dx\nonumber\\
    &=-\frac{\alpha-2}{2}\int_{\Omega}\frac{u^2}{|x|^4}|x|^{\alpha}dx.
\end{align}
where we used that $\D\log|x|=\dfrac{x}{|x|^2}$ and the harmonicity of $\log$ in dimension $2$. Therefore, we have by \eqref{simple_lm_1}
\begin{align}\label{simple_lm_2}
    \int_{\Omega}\left(\frac{x}{|x|^2}\cdot \D u-\frac{u}{|x|^2}\right)^2|x|^{\alpha}dx&=\int_{\Omega}\left(\frac{x}{|x|^2}\cdot \D u\right)^2|x|^{\alpha}dx-2\int_{\Omega}\frac{u}{|x|^2}\left(\frac{x}{|x|^2}\cdot \D u\right)|x|^{\alpha}dx+\int_{\Omega}\frac{u^2}{|x|^4}|x|^{\alpha}dx\nonumber\\
    &=\int_{\Omega}\left(\frac{x}{|x|^2}\cdot \D u\right)^2|x|^{\alpha}dx+(\alpha-1)\int_{\Omega}\frac{u^2}{|x|^4}|x|^{\alpha}dx.
\end{align}
Therefore, for all $\alpha\geq 1$, we deduce that
\begin{align}\label{simple_lm_3}
    \int_{\Omega}\left(\frac{x}{|x|^2}\cdot \D u\right)^2\left(\frac{|x|}{b}\right)^{\alpha}dx&\leq \int_{\Omega}\left(\frac{x}{|x|^2}\cdot \D u-\frac{u}{|x|^2}\right)^2\left(\frac{|x|}{b}\right)^{\alpha}dx\leq \int_{\Omega}\left(\frac{x}{|x|^2}\cdot \D u-\frac{u}{|x|^2}\right)^2dx\nonumber\\
    &\leq \frac{1}{4(m^2-1)}\int_{\Omega}\left(\leb_mu\right)^2dx.
\end{align}
Likewise, for all $\alpha>1$, we get
\begin{align}\label{simple_lm_4}
    \int_{\Omega}\frac{u^2}{|x|^4}\left(\frac{|x|}{b}\right)^{\alpha}dx\leq \frac{1}{4(\alpha-1)(m^2-1)}\int_{\Omega}\left(\leb_mu\right)^2dx.
\end{align}
Now, notice that \eqref{simple_lm_1} and Cauchy--Schwarz inequality show that for all $\alpha\neq 2$, we have
\begin{align*}
    \int_{\Omega}\frac{u^2}{|x|^4}|x|^{\alpha}dx&=\frac{-2}{\alpha-2}\int_{\Omega}\frac{u}{|x|^2}\left(\frac{x}{|x|^2}\cdot \D u\right)|x|^{\alpha}dx\\
    &\leq \frac{2}{|\alpha-2|}\left(\int_{\Omega}\frac{u^2}{|x|^4}|x|^{\alpha}dx\right)^{\frac{1}{2}}\left(\int_{\Omega}\left(\frac{x}{|x|^2}\cdot \D u\right)^2|x|^{\alpha}dx\right)^{\frac{1}{2}}.
\end{align*}
Therefore, we deduce that for all $\alpha\neq 2$
\begin{align}\label{simple_lm_5}
    \int_{\Omega}\frac{u^2}{|x|^4}|x|^{\alpha}dx\leq \frac{4}{(\alpha-2)^2}\int_{\Omega}\left(\frac{x}{|x|^2}\cdot \D u\right)^2|x|^{\alpha}dx.
\end{align}
Therefore, if $\alpha<2$, we get by \eqref{simple_lm_2} and \eqref{simple_lm_5}
\begin{align}\label{simple_lm_6}
    \int_{\Omega}\left(\frac{x}{|x|^2}\cdot \D u\right)^2|x|^{\alpha}dx+(\alpha-1)\int_{\Omega}\frac{u^2}{|x|^4}|x|^{\alpha}dx&\geq \left(1-\frac{4(1-\alpha)}{(\alpha-2)^2}\right)\int_{\Omega}\left(\frac{x}{|x|^2}\cdot \D u\right)^2|x|^{\alpha}dx\nonumber\\
    &=\frac{\alpha^2}{(\alpha-2)^2}\int_{\Omega}\left(\frac{x}{|x|^2}\cdot \D u\right)^2|x|^{\alpha}dx.
\end{align}
Therefore, for all $0<\alpha<2$, we deduce that
\begin{align}\label{simple_lm_7}
    \int_{\Omega}\left(\frac{x}{|x|^2}\cdot \D u\right)^2\left(\frac{|x|}{b}\right)^{\alpha}dx\leq \frac{(2-\alpha)^2}{4\alpha^2(m^2-1)}\int_{\Omega}\left(\leb_mu\right)^2dx,
\end{align}
which shows that for all $0<\alpha<2$, we have by \eqref{simple_lm_5} and \eqref{simple_lm_7}
\begin{align}\label{simple_lm_8}
    \int_{\Omega}\frac{u^2}{|x|^4}\left(\frac{|x|}{b}\right)^{\alpha}dx\leq \frac{1}{\alpha^2(m^2-1)}\int_{\Omega}\left(\leb_mu\right)^2dx.
\end{align}
Furthermore, since the estimate also holds for negative $\alpha$, we also get for all $\alpha>0$
\begin{align}\label{simple_lm_9}
    \int_{\Omega}\left(\frac{x}{|x|^2}\cdot \D u\right)^2\left(\frac{a}{|x|}\right)^{\alpha}dx\leq \frac{(2+\alpha)^2}{4\alpha^2(m^2-1)}\int_{\Omega}\left(\leb_mu\right)^2dx
\end{align}
and
\begin{align}\label{simple_lm_10}
    \int_{\Omega}\frac{u^2}{|x|^4}\left(\frac{a}{|x|}\right)^{\alpha}dx\leq \frac{1}{\alpha^2(m^2-1)}\int_{\Omega}\left(\leb_mu\right)^2dx.
\end{align}
Therefore, we need only establish the tangential estimate.

\textbf{Part 2: Tangential estimates.}

Interestingly enough, in the most technical lemma of the article, this is the radial estimate that is the most delicate to estimate, while here, the tangential estimate is slightly more difficult to establish. For all $u\in C^{\infty}_c(\Omega)$ and $v\in C^{\infty}(\Omega)$, we have
\begin{align*}
    \leb_m(uv)&=\left(\Delta +2(m-1)\frac{x}{|x|^2}\cdot \D u+\frac{(m-1)^2}{|x|^2}\right)(uv)=v\left(\Delta u+2(m-1)\frac{x}{|x|^2}\cdot \D u+\frac{(m-1)^2}{|x|^2}u\right)\\
    &+2\,\D u\cdot \D v+u\left(\Delta v+2(m-1)\frac{x}{|x|^2}\cdot \D v\right)\\
    &=v\,\leb_mu+2\,\D u\cdot \D v+u\left(\Delta v+2(m-1)\frac{x}{|x|^2}\cdot \D v\right).
\end{align*}
Therefore, we have
\begin{align}\label{gen_weight1}
    \int_{\Omega}u\left(\leb_m^{\ast}\leb_mu\right)v\,dx&=\int_{\Omega}\left(\leb_mu\right)^2v\,dx+2\int_{\Omega}\left(\D u\cdot \D v\right)\leb_mu\,dx\\
    &+\int_{\Omega}u\,\leb_mu\left(\Delta v+2(m-1)\frac{x}{|x|^2}\cdot \D v\right)dx.
\end{align}
On the other hand, we have
\begin{align*}
    \leb_{m,1}\left(uv\right)=\left(\Delta +\frac{m^2-1}{|x|^2}\right)(uv)=v\leb_{m,1}u+2\,\D u\cdot \D v+u\,\Delta v,
\end{align*}
and
\begin{align*}
    \mathscr{D}_2(uv)=\left(\frac{x}{|x|^2}\cdot \D-\frac{1}{|x|^2}\right)\left(uv\right)=v\,\mathscr{D}_2u+u\left(\frac{x}{|x|^2}\cdot \D v\right).
\end{align*}
Using the identity \eqref{decomp_adjoint}, we deduce that 
\begin{align}\label{gen_weight2}
    &\int_{\Omega}u\left(\leb_{m}^{\ast}\leb_mu\right)\,v\,dx=\int_{\Omega}u\left(\leb_{m,1}^{\ast}\leb_{m,1}u\right)v\,dx+4(m^2-1)\int_{\Omega}u\left(\mathscr{D}_2^{\ast}\mathscr{D}_2u\right)v\,dx\nonumber\\
    &=\int_{\Omega}\left(\leb_{m,1}u\right)^2v\,dx+4(m^2-1)\int_{\Omega}\left(\mathscr{D}_2u\right)^2v\,dx+2\int_{\Omega}\left(\D u\cdot \D v\right)\leb_{m,1}u\,dx+\int_{\Omega}\left(u\,\leb_{m,1}u\right)\Delta v\,dx\nonumber\\
    &+4(m^2-1)\int_{\Omega}u\,\mathscr{D}_2u\left(\frac{x}{|x|^2}\cdot \D v\right)dx.
\end{align}
Then, we have
\begin{align}\label{gen_weight3}
    \leb_m-\leb_{m,1}&=\Delta +2(m-1)\frac{x}{|x|^2}\cdot \D u+\frac{(m-1)^2}{|x|^2}-\left(\Delta +\frac{m^2-1}{|x|^2}\right)=2(m-1)\frac{x}{|x|^2}\cdot \D-\frac{2(m-1)}{|x|^2}\nonumber\\
    &=2(m-1)\mathscr{D}_2.
\end{align}
Comparing \eqref{gen_weight1} and \eqref{gen_weight2}, we deduce by \eqref{gen_weight3} that
\begin{align}\label{ipp_weight}
    &\int_{\Omega}\left(\leb_mu\right)^2v\,dx=\int_{\Omega}\left(\leb_{m,1}u\right)^2v\,dx+4(m^2-1)\int_{\Omega}\left(\mathscr{D}_2u\right)^2v\,dx-4(m-1)\int_{\Omega}\left(\D u\cdot \D v\right)\mathscr{D}_2u\,dx\nonumber\\
    &-2(m-1)\int_{\Omega}\left(u\,\mathscr{D}_2u\right)\Delta v\,dx-2(m-1)\int_{\Omega}u\,\leb_mu\left(\frac{x}{|x|^2}\cdot \D v\right)dx+4(m^2-1)\int_{\Omega}u\,\mathscr{D}_2u\left(\frac{x}{|x|^2}\cdot \D v\right)dx.
\end{align}
Now, if we take $v(x)=|x|^{\alpha}$ (where $\alpha\in \R$), we get
\begin{align*}
    &\D v=\alpha\frac{x}{|x|^2}|x|^{\alpha}\\
    &\Delta v=\left(\p{r}^2+\frac{1}{r}\p{r}\right)r^{\alpha}=\alpha^2|x|^{\alpha-2}.
\end{align*}
Therefore, the identity \eqref{ipp_weight} becomes
\begin{align}\label{gen_weight_4}
    &\int_{\Omega}\left(\leb_mu\right)^2|x|^{\alpha}dx=\int_{\Omega}\left(\leb_{m,1}u\right)^2|x|^{\alpha}\,dx+4(m^2-1)\int_{\Omega}\left(\mathscr{D}_2u\right)^2|x|^{\alpha}dx\nonumber\\
    &-4\alpha(m-1)\int_{\Omega}\left(\frac{x}{|x|^2}\cdot \D u\right)\left(\frac{x}{|x|^2}\cdot \D u-\frac{u}{|x|^2}\right)|x|^{\alpha}dx-2\alpha(m-1)\int_{\Omega}u\,\leb_mu\,|x|^{\alpha-2}dx\nonumber\\
    &+\left(4\alpha(m^2-1)-2\alpha^2(m-1)\right)\int_{\Omega}\frac{u}{|x|^2}\left(\frac{x}{|x|^2}\cdot \D u-\frac{u}{|x|^2}\right)|x|^{\alpha}dx.
\end{align}
Using \eqref{simple_lm_1}, we deduce that 
\begin{align}\label{gen_weight_5}
    &-4\alpha(m-1)\int_{\Omega}\left(\frac{x}{|x|^2}\cdot \D u\right)\left(\frac{x}{|x|^2}\cdot \D u-\frac{u}{|x|^2}\right)|x|^{\alpha}dx=-4\alpha(m-1)\int_{\Omega}\left(\frac{x}{|x|^2}\cdot \D u\right)^2|x|^{\alpha}dx\nonumber\\
    &-2\alpha(\alpha-2)(m-1)\int_{\Omega}\frac{u^2}{|x|^4}|x|^{\alpha}dx.
\end{align}
Then, we have
\begin{align}\label{gen_weight_6}
    &-2\alpha(m-1)\int_{\Omega}u\,\Delta u\,|x|^{\alpha-2}dx=2\alpha(m-1)\int_{\Omega}\frac{|\D u|^2}{|x|^2}|x|^{\alpha}dx\nonumber\\
    &+2\alpha(\alpha-2)(m-1)\int_{\Omega}\frac{u}{|x|^2}\left(\frac{x}{|x|^2}\cdot \D u\right)|x|^{\alpha}dx=2\alpha(m-1)\int_{\Omega}\frac{|\D u|^2}{|x|^2}|x|^{\alpha}dx\nonumber\\
    &-\alpha(\alpha-2)^2(m-1)\int_{\Omega}\frac{u^2}{|x|^4}|x|^{\alpha}dx.
\end{align}
Therefore, we get
\begin{align}\label{gen_weight_7}
    &-2\alpha(m-1)\int_{\Omega}u\,\leb_mu\,|x|^{\alpha-2}dx=-2\alpha(m-1)\int_{\Omega}u\,\Delta u\,|x|^{\alpha-2}dx\\
    &-4\alpha(m-1)^2\int_{\Omega}\frac{u}{|x|^2}\left(\frac{x}{|x|^2}\cdot \D u\right)|x|^{\alpha}dx
    -2\alpha(m-1)^3\int_{\Omega}\frac{u^2}{|x|^4}|x|^{\alpha}dx\nonumber\\
    &=2\alpha(m-1)\int_{\Omega}\frac{|\D u|^2}{|x|^2}|x|^{\alpha}dx+\left(-\alpha(\alpha-2)^2(m-1)+2\alpha(\alpha-2)(m-1)^2-2\alpha(m-1)^3\right)\int_{\Omega}\frac{u^2}{|x|^4}|x|^{\alpha}dx.
\end{align}
Finally, we have 
\begin{align}\label{gen_weight_8}
    \int_{\Omega}\frac{u}{|x|^2}\left(\frac{x}{|x|^2}\cdot \D u-\frac{u}{|x|^2}\right)|x|^{\alpha}dx=-\frac{(\alpha-2)}{2}\int_{\Omega}\frac{u^2}{|x|^4}|x|^{\alpha}dx-\int_{\Omega}\frac{u^2}{|x|^4}|x|^{\alpha}dx=-\frac{\alpha}{2}\int_{\Omega}\frac{u^2}{|x|^4}|x|^{\alpha}dx
\end{align}
and since $4\alpha(m^2-1)-2\alpha^2(m-1)=2\alpha(m-1)(2(m+1)-\alpha)$, we get
\begin{align}\label{gen_weight_9}
    &\left(4\alpha(m^2-1)-2\alpha^2(m-1)\right)\int_{\Omega}\frac{u}{|x|^2}\left(\frac{x}{|x|^2}\cdot \D u-\frac{u}{|x|^2}\right)|x|^{\alpha}dx\nonumber\\
    &=-\alpha^2(m-1)(2(m+1)-\alpha)\int_{\Omega}\frac{u^2}{|x|^4}|x|^{\alpha}dx.
\end{align}
Gathering \eqref{gen_weight_4}, \eqref{gen_weight_5}, \eqref{gen_weight_6}, \eqref{gen_weight_7},  and \eqref{gen_weight_9}, we deduce that 
\begin{align*}
    &\int_{\Omega}\left(\leb_mu\right)^2|x|^{\alpha}dx=\int_{\Omega}\left(\Delta u+\frac{m^2-1}{|x|^2}u\right)^2+4(m^2-1)\int_{\Omega}\left(\frac{x}{|x|^2}\cdot \D u-\frac{u}{|x|^2}\right)|x|^{\alpha}dx\\
    &-2\alpha(m-1)\int_{\Omega}\left(\frac{x}{|x|^2}\cdot \D u\right)^2|x|^{\alpha}dx+2\alpha(m-1)\int_{\Omega}\frac{\left|\p{\theta}u\right|^2}{|x|^4}|x|^{\alpha}dx\\
    &+\Big(-2\alpha(\alpha-2)(m-1)-\alpha(\alpha-2)^2(m-1)+2\alpha(\alpha-2)(m-1)^2-2\alpha(m-1)^3\\
    &-\alpha^2(m-1)(2(m+1)-\alpha)\Big)\int_{\Omega}\frac{u^2}{|x|^4}|x|^{\alpha}dx\\
    &=\int_{\Omega}\left(\Delta u+\frac{m^2-1}{|x|^2}u\right)^2|x|^{\alpha}dx+4(m^2-1)\int_{\Omega}\left(\frac{x}{|x|^2}\cdot \D u-\frac{u}{|x|^2}\right)^2|x|^{\alpha}dx\\
    &-2\alpha(m-1)\int_{\Omega}\left(\frac{x}{|x|^2}\cdot \D u\right)^2|x|^{\alpha}dx+2\alpha(m-1)\int_{\Omega}\frac{\left|\p{\theta}u\right|^2}{|x|^4}|x|^{\alpha}dx\\
    &-2\alpha(m-1)\Big(m^2-1+\alpha\Big)\int_{\Omega}\frac{u^2}{|x|^4}|x|^{\alpha}dx.
\end{align*}
Therefore, we can compactly rewrite the identity as
\begin{align}\label{gen_final1}
    &\int_{\Omega}\left(\leb_mu\right)^2|x|^{\alpha}dx=\int_{\Omega}\left(\Delta u+\frac{m^2-1}{|x|^2}u\right)^2|x|^{\alpha}dx+4(m^2-1)\int_{\Omega}\left(\frac{x}{|x|^2}\cdot \D u-\frac{u}{|x|^2}\right)^2|x|^{\alpha}dx\nonumber\\
    &-2\alpha(m-1)\int_{\Omega}\left(\frac{x}{|x|^2}\cdot \D u\right)^2|x|^{\alpha}dx+2\alpha(m-1)\int_{\Omega}\frac{\left|\p{\theta}u\right|^2}{|x|^4}|x|^{\alpha}dx\nonumber\\
    &-2\alpha(m-1)\Big(m^2-1+\alpha\Big)\int_{\Omega}\frac{u^2}{|x|^4}|x|^{\alpha}dx.
\end{align}
Therefore, if $\alpha>0$, we get by \eqref{simple_lm_3}, \eqref{simple_lm_4}, \eqref{simple_lm_7}, and \eqref{simple_lm_8}
\begin{align}\label{lm_weight_final1}
    &\int_{\Omega}\frac{|\p{\theta}u|^2}{|x|^4}\left(\frac{|x|}{b}\right)^{\alpha}dx\leq \frac{1}{2\alpha(m-1)}\int_{\Omega}\left(\leb_mu\right)^{2}\left(\frac{|x|}{b}\right)^{\alpha}dx+\int_{\Omega}\left(\frac{x}{|x|^2}\cdot \D u\right)^2\left(\frac{|x|}{b}\right)^{\alpha}dx\nonumber\\
    &+(m^2-1+\alpha)\int_{\Omega}\frac{u^2}{|x|^4}\left(\frac{|x|}{b}\right)^{\alpha}dx\nonumber\\
    &\leq \left\{\begin{alignedat}{2}
        &\left(\frac{1}{2\alpha(m-1)}+\frac{1}{4(m^2-1)}+\frac{m^2-1+\alpha}{4(\alpha-1)(m^2-1)}\right)\int_{\Omega}\left(\leb_mu\right)^2dx\qquad &&\text{if}\;\,\alpha>1\\
        &\left(\frac{1}{2\alpha(m-1)}+\frac{(2-\alpha)^2}{4\alpha^2(m^2-1)}+\frac{m^2-1+\alpha}{\alpha^2(m^2-1)}\right)\int_{\Omega}\left(\leb_mu\right)^2dx\qquad &&\text{if}\;\, 0<\alpha<2,
    \end{alignedat}\right.
\end{align}
while we get by \eqref{simple_lm_9} and \eqref{simple_lm_10} (recall \eqref{ipp_m} too) for that for $\alpha>0$ 
\begin{align}\label{lm_weight_final2}
    &\int_{\Omega}\frac{|\p{\theta}u|^2}{|x|^4}\left(\frac{a}{|x|}\right)^{\alpha}dx\leq \frac{1}{2\alpha(m-1)}\left(\int_{\Omega}\left(\Delta u+\frac{m^2-1}{|x|^2}u\right)^2\left(\frac{a}{|x|}\right)^{\alpha}dx\right.\nonumber\\
    &\left.+4(m^2-1)\int_{\Omega}\left(\frac{x}{|x|^2}\cdot \D u-\frac{u}{|x|^2}\right)^2\left(\frac{a}{|x|}\right)^{\alpha}dx\right)+\int_{\Omega}\left(\frac{x}{|x|^2}\cdot \D u\right)^2\left(\frac{a}{|x|}\right)^{\alpha}dx\nonumber\\
    &+(m^2-1-\alpha)_+\int_{\Omega}\frac{u^2}{|x|^4}\left(\frac{a}{|x|}\right)^{\alpha}dx\nonumber\\
    &\leq \left(\frac{1}{2\alpha(m-1)}+\frac{(2+\alpha)^2}{4\alpha^2(m^2-1)}+\frac{1}{\alpha^2(m^2-1)}\right)\int_{\Omega}\left(\leb_mu\right)^2dx
\end{align}
and this concludes the proof of the theorem. 
\end{proof}

We need to strengthen this estimate by adding weights on the right-hand side, but the cost is to have to add a term corresponding to the $L^2$ norm of $\p{z}^2u$ once we make a change of variable.

\subsection{Refined Weighted Inequalities}

If we want to add a weight in the right-hand side of the inequality, we need to replace $\leb_m$ by the operator $\mathfrak{L}_m$ that correspond to $|z|^{1-m}\p{z}^2$ after a change of variation $v=|z|^{m-1}u$.
 
\begin{theorem}\label{poincare_weight_m_2}
    For all $3\leq m<\infty$, define
    \begin{align*}
        \mathfrak{L}_m=|z|^{1-m}\p{z}^2(|z|^{m-1}(\,\cdot\,))=\p{z}^2+\frac{m-1}{z}\p{z}+\frac{(m-1)(m-3)}{4z^2}.
    \end{align*}
    Fix $0<a<b<\infty$ and let $\Omega=B_b\setminus\bar{B}_a(0)$. Then, for all $u\in W^{2,2}_0(\Omega)$, we have for all $-\infty<\beta<\dfrac{m-1}{4}$ \emph{(}such that $\beta\neq 1/2$\emph{)} and for all $-\infty<\gamma<\dfrac{m-1}{2}$ \emph{(}such that $\gamma\neq 1$\emph{)}
    \begin{align}\label{ineq_frak_m}
    \left\{\begin{alignedat}{1}
        &\int_{\Omega}\frac{u^2}{|x|^4}|x|^{4\beta}dx\leq \frac{8}{(m-1)(m-1-4\beta)(1-2\beta)^2}\int_{\Omega}\left|\mathfrak{L}_mu\right|^2|x|^{4\beta}dx\\
        &\int_{\Omega}\frac{|\D u|^2}{|x|^2}|x|^{2\gamma}\leq \frac{8}{(m-1)(m-1-2\gamma)}\int_{\Omega}\left|\mathfrak{L}_mu\right|^2|x|^{2\gamma}dx.
        \end{alignedat}\right.
    \end{align}
\end{theorem}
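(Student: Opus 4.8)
The plan is to follow the heuristic stated just before the theorem: remove the first- and zeroth-order coefficients of $\mathfrak{L}_m$ by the change of variable $v=|z|^{m-1}u$, which turns everything into a doubly weighted Rellich--Hardy inequality for the flat operator $\p{z}^2$, and then to prove the latter by a weighted integration by parts in the spirit of the proof of Theorem~\ref{poincare_weight_m}. Throughout, by density of $C^\infty_c(\Omega)$ in $W^{2,2}_0(\Omega)$ one may assume $u$, hence $v$, smooth and compactly supported, so that every integration by parts below is legitimate and all boundary terms vanish.

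\textbf{Step 1: the change of variable.} Since $\Omega$ is an annulus bounded away from the origin, $|z|^{m-1}$ is smooth with bounded derivatives on $\bar\Omega$, so $v=|z|^{m-1}u\in W^{2,2}_0(\Omega)$, and by the very definition of $\mathfrak{L}_m$ one has $\mathfrak{L}_mu=|z|^{1-m}\p{z}^2v$. Rewriting the first line of \eqref{ineq_frak_m} in terms of $v$ gives the equivalent statement $\int_\Omega\frac{v^2}{|x|^4}|x|^{2s}dx\leq C\int_\Omega|\p{z}^2v|^2|x|^{2s}dx$ with $s=2\beta+1-m$, and the admissible range $\beta<\tfrac{m-1}{4}$, $\beta\neq\tfrac12$ becomes exactly $s<-\tfrac{m-1}{2}$, $s\neq 2-m$, which is precisely where this weighted Rellich inequality holds with an $(a,b)$-independent constant (Step~2). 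For the second line, $\p{z}(|z|^{1-m}v)=|z|^{1-m}\big(\p{z}v+\tfrac{1-m}{2z}v\big)$ gives $|\D u|^2=4|z|^{2-2m}\big|\p{z}v+\tfrac{1-m}{2z}v\big|^2$, so after expanding the square and a Cauchy--Schwarz step the gradient inequality reduces to: (i) the first inequality applied at the weight $|x|^{2\gamma-2m+2}$, whose admissibility condition is exactly $\gamma<\tfrac{m-1}{2}$, $\gamma\neq 1$; and (ii) a weighted gradient bound $\int_\Omega|\p{z}v|^2|x|^{2\tau}dx\lesssim\int_\Omega|\p{z}^2v|^2|x|^{2\tau+2}dx$, itself of Hardy type.

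\textbf{Step 2: weighted Rellich for $\p{z}^2$.} Here I would run the scheme of the proof of Theorem~\ref{poincare_weight_m}: integrate $\int_\Omega|\p{z}^2v|^2|x|^{2s}dx$ by parts twice using $\p{z}^\ast=-\p{\bar z}$, so that the leading contribution is $\tfrac1{16}\int_\Omega(\Delta v)^2|x|^{2s}dx$ together with a genuinely positive ``tangential'' square, plus correction terms carrying $\p{z}|x|^{2s}$ and $\p{z}^2|x|^{2s}$, i.e.\ proportional to $s$ --- exactly the mechanism producing the $\propto\alpha$ terms in \eqref{gen_weight_4}. Then, mirroring Part~1 of that proof, the elementary radial identities and Cauchy--Schwarz steps \eqref{simple_lm_1}--\eqref{simple_lm_10} allow one to absorb all the correction terms into $\int_\Omega\frac{v^2}{|x|^4}|x|^{2s}dx$, which is thereby isolated with a coefficient that is positive precisely in the stated range; rewriting $s$ in terms of $\beta$ produces the explicit constant $\frac{8}{(m-1)(m-1-4\beta)(1-2\beta)^2}$, and $\beta=\tfrac12$ (weight $|x|^2$) is excluded because there the gain becomes only logarithmic.

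\textbf{Step 3: resonant modes, and the role of $m\geq 3$.} The absorption in Step~2 breaks down for the weight exponents at which a homogeneous solution of $\p{z}^2$ in some Fourier mode sits exactly on the boundary of the weighted $L^2$ space. Decomposing $v=\sum_n v_n(r)e^{in\theta}$, one computes that $\p{z}^2$ sends the $n$-th mode to the $(n-2)$-th through the radial operator $\phi\mapsto\tfrac14\,r^{1-n}\p{r}\big(r^{-1}\p{r}(r^n\phi)\big)$, whose homogeneities are $r^{-n}$ and $r^{2-n}$; the resonant modes are those $n$ for which $n$ or $n-2$ equals the critical power, which for the first inequality happens only when $2\beta\in\Z$. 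For such an $n$ the mode-wise estimate has no $(a,b)$-independent constant, but $v$ is real-valued, so $v_{-n}=\bar v_n$, and one checks that as soon as $m\geq 3$ and $\beta<\tfrac{m-1}{4}$ the conjugate mode $-n$ is never itself resonant; the pair $(n,-n)$ is then controlled through its non-resonant member, which restores the universal constant. This is exactly where the hypothesis $m\geq 3$ enters. The genuinely hard part of the whole argument is the combination of Step~2 --- carrying the weighted integration by parts out precisely enough to keep the coefficients positive across the \emph{entire} range $\beta<\tfrac{m-1}{4}$, which (as in the delicate Part~2 of the proof of Theorem~\ref{poincare_weight_m}) forces one to retain the first-order $\propto s$ terms rather than discard them and to choose the Cauchy--Schwarz splittings with care --- and Step~3, which has no counterpart in the $\leb_m$ case and is what pins down the restriction $m\geq 3$.
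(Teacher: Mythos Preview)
Your change of variables in Step~1 is natural (and indeed is how the paper motivates $\mathfrak{L}_m$), but the paper's proof does \emph{not} use it: it works directly with $u$, expands $|\mathfrak{L}_mu|^2|z|^\alpha$ as a sum of squares plus cross terms, integrates each cross term by parts (these are three short computations in $\p{z}$), and lands on the explicit lower bound
\[
\int_{\Omega}|\mathfrak{L}_mu|^2|z|^\alpha\geq\int_{\Omega}|\p{z}^2u|^2|z|^\alpha+\tfrac12(m-1)(m-1-\alpha)\int_{\Omega}\tfrac{|\p{z}u|^2}{|z|^2}|z|^\alpha+\tfrac{(m-1)(m-3)}{16}\,Q(\alpha)\int_{\Omega}\tfrac{u^2}{|z|^4}|z|^\alpha,
\]
where $Q(\alpha)=2\alpha^2-2(m+1)\alpha+(m^2-1)$. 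The gradient inequality follows immediately from the middle term (positive for $\alpha<m-1$), and the $u^2$ inequality from that term combined with the elementary Hardy bound $\int\frac{u^2}{|x|^4}|x|^\alpha\leq\frac{4}{(\alpha-2)^2}\int\frac{|\D u|^2}{|x|^2}|x|^\alpha$. The role of $m\geq 3$ is nothing more than the discriminant of $Q$ being $-4(m+1)(m-3)\leq 0$, so that the last coefficient is nonnegative for \emph{all} $\alpha$ and can be dropped. No Fourier decomposition, no resonance argument.

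Your route has a real gap. After passing to $v$, the operator becomes the pure $\p{z}^2$, i.e.\ $\mathfrak{L}_1$; but if you apply the paper's expansion with $m=1$ every useful coefficient vanishes and you recover only the tautology $\int|\p{z}^2v|^2|x|^{2s}\geq\int|\p{z}^2v|^2|x|^{2s}$. So ``mirroring Part~1 of the proof of Theorem~\ref{poincare_weight_m}'' does not work here: it is precisely the lower-order terms of $\mathfrak{L}_m$ (which you have stripped away) that feed the gradient and zeroth-order estimates. More seriously, your Step~3 misidentifies the mechanism for $m\geq 3$. After your substitution, $m$ has disappeared entirely from the problem --- the inequality you want is a property of $\p{z}^2$ and of the single exponent $s=2\beta+1-m$ alone --- so there is nothing left for a ``conjugate-mode'' pairing depending on $m$ to do; the constraint $m\geq 3$ simply becomes $s<-1$ on the weight. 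Whether the weighted Rellich for $\p{z}^2$ actually holds on that range is a separate question that your sketch does not settle, whereas the paper's direct computation bypasses it completely.
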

\begin{proof}
    First, 
the identity \eqref{simple_lm_1}
\begin{align}\label{full_hessian_1}
    \int_{\Omega}\frac{u}{|x|^2}\left(\frac{x}{|x|^2}\cdot \D u\right)|x|^{\alpha}dx=-\frac{(\alpha-2)}{2}\int_{\Omega}\frac{u^2}{|x|^4}|x|^{\alpha}dx.
\end{align}
It implies that for all $\alpha\neq 2$, we have
\begin{align}\label{elementary_inequality}
    \int_{\Omega}\frac{u^2}{|x|^4}|x|^{\alpha}dx\leq \frac{4}{(\alpha-2)^2}\int_{\Omega}\frac{|\D u|^2}{|x|^2}|x|^{\alpha}dx.
\end{align}
    Now, we compute for all $\alpha\neq 0$
    \begin{align}\label{full_hessian_2}
    	&\int_{\Omega}|\mathfrak{L}_mu|^2|z|^{\alpha}|dz|^2=\int_{\Omega}\left|\p{z}^2 u+\frac{m-1}{z}\p{z}u+\frac{(m-1)(m-3)}{4z^2}u\right|^2|z|^{\alpha}|dz|^2\nonumber\\
        &=\int_{\Omega}|\p{z}^2u|^2|z|^{\alpha}|dz|^2+(m-1)^2\int_{\Omega}\frac{|\p{z}u|^2}{|z|^2}|z|^{\alpha}|dz|^2+\frac{(m-1)^2(m-3)^2}{16}\int_{\Omega}\frac{u^2}{|z|^4}|z|^{\alpha}|dz|^2\nonumber\\
    	&+2(m-1)\Re\int_{\Omega}\p{z}^2u\,\p{\z}u\frac{|z|^{\alpha}}{\z}|dz|^2+\frac{(m-1)(m-3)}{2}\Re\int_{\Omega}u\,\p{z}^2u\,\frac{|z|^{\alpha}}{\z^2}|dz|^2\nonumber\\
    	&+\frac{(m-1)^2(m-3)}{2}\Re\int_{\Omega}\frac{\p{z}u}{z}u\frac{|z|^{\alpha}}{\z^2}|dz|^2.
    \end{align}
    We have the elementary computation $\p{z}|z|^{\alpha}=\p{z}(z\z)^{\frac{\alpha}{2}}=\dfrac{\alpha}{2}\z|z|^{\alpha-2}$, which yields
    \begin{align}\label{full_hessian_3}
    	\int_{\Omega}\p{z}^2u\,\p{\z}u\frac{|z|^{\alpha}}{\z}|dz|^2&=-\int_{\Omega}\p{z}u\,\p{z\z}^2u\frac{|z|^{\alpha}}{\z}|dz|^2-\frac{\alpha}{2}\int_{\Omega}\p{z}u\cdot\p{\z}u|z|^{\alpha-2}|dz|^2\nonumber\\
        &=-\frac{1}{2}\int_{\Omega}\p{\z}\left(\p{z}u\right)^2\frac{|z|^{\alpha}}{\z}|dz|^2-\frac{\alpha}{2}\int_{\Omega}\frac{|\p{z}u|^2}{|z|^{2}}|z|^{\alpha}|dz|^2\nonumber\\
        &=\frac{(\alpha-2)}{4}\int_{\Omega}\left(\frac{z}{|z|^2}\,\p{z}u\right)^2|z|^{\alpha}|dz|^2-\frac{\alpha}{2}\int_{\Omega}\frac{|\p{z}u|^2}{|z|^2}|z|^{\alpha}|dz|^2.
    \end{align}
%    where we used that
 %   \begin{align*}
  %      \p{\z}\left(\frac{|z|^{\alpha}}{\z}\right)=-\frac{|z|^{\alpha}}{\z^2}+\frac{\alpha}{2}\frac{z}{\z}|z|^{\alpha-2}=\frac{(\alpha-2)}{2}z^2|z|^{\alpha-4},
   % \end{align*}
    %which can also be found by writing
    %\begin{align*}
     %   \p{\z}\left(\frac{|z|^{\alpha}}{\z}\right)=\p{\z}\left(z|z|^{\alpha-2}\right)=\frac{(\alpha-2)}{2}z^2|z|^{\alpha-4}.
    %\end{align*}
    Next, we have
    \begin{align}\label{full_hessian_4}
        \int_{\Omega}u\,\p{z}^2u\frac{|z|^{\alpha}}{\z^2}|dz|^2&=-\int_{\Omega}\left(\frac{z}{|z|^2}\p{z}u\right)^2|z|^{\alpha}|dz|^2-\frac{\alpha}{2}\int_{\Omega}u\,\p{z}u\frac{|z|^{\alpha-2}}{\z}|dz|^2\nonumber\\
        &=-\int_{\Omega}\left(\frac{z}{|z|^2}\p{z}u\right)^2|z|^{\alpha}|dz|^2-\frac{\alpha}{4}\int_{\Omega}\p{z}\left(u^2\right)\frac{|z|^{\alpha-2}}{\z}|dz|^2\nonumber\\
        &=-\int_{\Omega}\left(\frac{z}{|z|^2}\p{z}u\right)^2|z|^{\alpha}|dz|^2+\frac{\alpha(\alpha-2)}{8}\int_{\Omega}\frac{u^2}{|z|^4}|z|^{\alpha}|dz|^2.
    \end{align}
    Finally, we also have
    \begin{align}\label{full_hessian_5}
        \int_{\Omega}\frac{\p{z}u}{z}u\frac{|z|^{\alpha}}{\z^2}|dz|^2=\int_{\Omega}u\,\p{z}u\frac{|z|^{\alpha-2}}{\z}|dz|^2=-\frac{(\alpha-2)}{4}\int_{\Omega}\frac{u^2}{|z|^4}|z|^{\alpha}|dz|^2.
    \end{align}
    Gathering \eqref{full_hessian_2}, \eqref{full_hessian_3}, \eqref{full_hessian_4}, and \eqref{full_hessian_5}, we find that
    \begin{align}\label{full_hessian_6}
        &\int_{\Omega}\left|\mathfrak{L}_mu\right|^2|z|^{\alpha}|dz|^2=\int_{\Omega}|\p{z}^2u|^2|z|^{\alpha}|dz|^2+(m-1)^2\int_{\Omega}\frac{|\p{z}u|^2}{|z|^2}|z|^{\alpha}|dz|^2\nonumber\\
        &+\frac{(m-1)^2(m-3)^2}{16}\int_{\Omega}\frac{u^2}{|z|^4}|z|^{\alpha}|dz|^2
        +\frac{(m-1)(\alpha-2)}{2}\int_{\Omega}\Re\left\{\left(\frac{z}{|z|^2}\p{z}u\right)^2\right\}|z|^{\alpha}|dz|^2\nonumber\\
        &-(m-1)\alpha\int_{\Omega}\frac{|\p{z}u|^2}{|z|^2}|dz|^2
        -\frac{(m-1)(m-3)}{2}\int_{\Omega}\Re\left\{\left(\frac{z}{|z|^2}\p{z}u\right)^2\right\}|z|^{\alpha}|dz|^2\nonumber\\
        &+\frac{(m-1)(m-3)\alpha(\alpha-2)}{8}\int_{\Omega}\frac{u^2}{|z|^4}|z|^{\alpha}|dz|^2
        -\frac{(m-1)^2(m-3)(\alpha-2)}{8}\int_{\Omega}\frac{u^2}{|z|^4}|z|^{\alpha}|dz|^2.
    \end{align}
    Using the elementary inequality
    \begin{align*}
        \left|\Re\int_{\Omega}\left(\frac{z}{|z|^2}\p{z}u\right)|z|^{\alpha}|dz|^2\right|\leq \int_{\Omega}\frac{|\p{z}u|^2}{|z|^2}|z|^{\alpha}|dz|^2,
    \end{align*}
    we deduce that for all $\alpha\in \R$
    \begin{align}\label{full_hessian_7}
        &\int_{\Omega}\left|\mathfrak{L}_mu\right|^2|z|^{\alpha}|dz|^2\geq \int_{\Omega}|\p{z}^2u|^2|z|^{\alpha}|dz|^2\nonumber\\
        &+\left((m-1)^2+\frac{(m-1)(\alpha-2)}{2}-\frac{(m-1)(m-3)}{2}-(m-1)\alpha\right)\int_{\Omega}\frac{|\p{z}u|^2}{|z|^2}|z|^{\alpha}|dz|^2\nonumber\\
        &+\left(\frac{(m-1)^2(m-3)^2}{16}+\frac{(m-1)(m-3)\alpha(\alpha-2)}{8}-\frac{(m-1)^2(m-3)(\alpha-2)}{8}\right)\int_{\Omega}\frac{u^2}{|z|^4}|z|^{\alpha}|dz|^2\nonumber\\
        &=\int_{\Omega}|\p{z}^2u|^2|z|^{\alpha}|dz|^2+\frac{1}{2}(m-1)\left(m-1-\alpha\right)\int_{\Omega}\frac{|\p{z}u|^2}{|z|^2}|z|^{\alpha}|dz|^2\nonumber\\
        &+\frac{(m-1)(m-3)}{16}\left(2\alpha^2-2(m+1)\alpha+(m^2-1)\right)\int_{\Omega}\frac{u^2}{|z|^4}|z|^{\alpha}|dz|^2.
    \end{align}
    The discriminant of the polynomial $2\,X^2-2(m+1)X+(m^2-1)$ is equal to 
    \begin{align*}
        D=-4(m^2-2m-3)=-4(m+1)(m-3)\leq 0,
    \end{align*}
    which shows that for all $m\geq 3$ and for all $\alpha<m-1$, we have the estimate
    \begin{align*}
        \int_{\Omega}\frac{|\D u|^2}{|x|^2}|x|^{\alpha}dx\leq \frac{8}{(m-1)(m-1-\alpha)}\int_{\Omega}\left|\mathfrak{L}_mu\right|^2|z|^{\alpha}|dz|^2,
    \end{align*}
    and for all $\alpha\neq 2$ and $\alpha<m-1$, the inequality \eqref{elementary_inequality} shows that
    \begin{align*}
        \int_{\Omega}\frac{u^2}{|x|^4}|x|^{\alpha}dx\leq \frac{32}{(m-1)(m-1-\alpha)(\alpha-2)^2}\int_{\Omega}\left|\mathfrak{L}_mu\right|^2|z|^{\alpha}|dz|^2.
    \end{align*}
    Notice that if $m>3$, for all $\alpha\leq m-1$, we get the estimate
    \begin{align*}
        \int_{\Omega}\frac{u^2}{|x|^4}|x|^{\alpha}dx\leq \frac{16}{(m-1)(m-3)(2\alpha^2-2(m+1)\alpha+(m^2-1))}\int_{\Omega}\left|\mathfrak{L}_mu\right|^2|z|^{\alpha}|dz|^2.
    \end{align*}
    Now, if $1<m<3$, we have
    \begin{align*}
        2\alpha^2-2(m+1)\alpha+(m^2-1)&=2\left(\alpha-\frac{(m+1)-\sqrt{(m+1)(3-m)}}{2}\right)\\
        &\times\left(\alpha-\frac{(m+1)+\sqrt{(m+1)(3-m)}}{2}\right),
    \end{align*}
    which shows that 
    \begin{align*}
        \frac{(m-1)(m-3)}{16}\left(2\alpha^2-2(m+1)\alpha+(m^2-1)\right)\left(2\alpha^2-2(m+1)\alpha+(m^2-1)\right)\geq 0
    \end{align*}
    if and only if
    \begin{align*}
        \frac{(m+1)-\sqrt{(m+1)(3-m)}}{2}\leq \alpha\leq \frac{(m+1)+\sqrt{(m+1)(3-m)}}{2}.
    \end{align*}
    Since
    \begin{align*}
        \frac{(m+1)-\sqrt{(m+1)(3-m)}}{2}=\frac{m^2-1}{m+1+\sqrt{(m+1)(3-m)}}=\alpha_m\in ]0,2[,
    \end{align*}
    we have to find a new argument for $\alpha<\alpha_m$. Therefore, assuming that $\alpha<\alpha_m$, we get thanks to \eqref{elementary_inequality} and \eqref{full_hessian_7} (notice also that $|\D u|^2=4|\p{z}u|^2$)
    \begin{align*}
        &\int_{\Omega}\left|\mathfrak{L}_mu\right|^2|z|^{\alpha}|dz|^2\geq \int_{\Omega}|\p{z}^2u|^2|z|^{\alpha}|dz|^2+\frac{1}{2}(m-1)\left(m-1-\alpha\right)\int_{\Omega}\frac{|\p{z}u|^2}{|z|^2}|z|^{\alpha}|dz|^2\\
        &+\frac{(m-1)(m-3)}{(\alpha-2)^2}\left(2\alpha^2-2(m+1)\alpha+(m^2-1)\right)\int_{\Omega}\frac{|\p{z}u|^2}{|z|^2}|z|^{\alpha}|dz|^2\\
        &=\int_{\Omega}|\p{z}^2u|^2|z|^{\alpha}|dz|^2\\
        &+\frac{(m-1)}{2(\alpha-2)^2}\left(-\alpha^3+(5m-9)\alpha^2+4(m^2-m-3)\alpha+2(m^3-3m^2+m+1)\right)\int_{\Omega}\frac{|\p{z}u|^2}{|z|^2}|z|^{\alpha}|dz|^2.
    \end{align*}
    We see that in general, we will only get an estimate for a restricted range of $\alpha$, but we are mostly interested to prove an estimate for $|\alpha|$ small. Furthermore, since we are mostly interested in the case $m=2$, we now assume that $m=2$, which yields
    \begin{align*}
        -\alpha^3+(5m-9)\alpha^2+4(m^2-m-3)\alpha+2(m^3-3m^2+m+1)=-\alpha^3+\alpha^2+4\alpha-2. 
    \end{align*}
    However, the roots of $Q=-X^3+X^2+4\,X-2$ are all real and given by
    \begin{align*}
        r_1&=\frac{1}{3}-\frac{13(1+i\sqrt{3})}{6\sqrt[3]{-8+3i\sqrt{237}}}-\frac{1}{6}(1-i\sqrt{3})\sqrt[3]{-8+3i\sqrt{237}}=-1.81360\cdots\\
        r_2&=\frac{1}{3}-\frac{13(1-i\sqrt{3})}{6\sqrt[3]{-8+3i\sqrt{237}}}-\frac{1}{6}(1+i\sqrt{3})\sqrt[3]{-8+3i\sqrt{237}}=0.47068\cdots\\
        r_3&=\frac{1}{3}+\frac{13}{3\sqrt[3]{-8+3i\sqrt{237}}}+\frac{1}{3}\sqrt[3]{-8+3i\sqrt{237}}=2.34429\cdots
    \end{align*}
    In particular, the quantity is negative for $r_1<\alpha<r_2$, and we cannot obtain the expected estimate for negative $\alpha$. However, the estimate holds true for $r_2<\alpha<r_3$, and it holds in particular if $\alpha=2\gamma$ with $\frac{1}{4}\leq \gamma<1$. However, for $\alpha<0$, even if we use \eqref{gen_final1}, we will not be able to get the needed estimate since the tangential part of the gradient has the wrong sign. The breakdown of estimates for $m=2$ is an issue that will reappear in the next estimates. 
\end{proof}

\subsection{Weighted Gagliardo--Nirenberg-type Inequality}

\begin{theorem}\label{lm_last_lemma}
    Let $P=3\,X^4-8\,X^3+10\,X^2-16\,X+7\in \R[X]$ and let 
    \begin{align*}
        m_0&=\frac{2}{3}+\frac{1}{3}\sqrt{-1-\frac{2^{\frac{2}{3}}}{\sqrt[3]{32+3\sqrt{114}}}+\sqrt[3]{2(32+3\sqrt{114})}}\\
        &+\frac{1}{2}\sqrt{-\frac{8}{9}+\frac{2^{\frac{8}{3}}}{9\sqrt[3]{32+3\sqrt{114}}}-\frac{4}{9}\sqrt[3]{2(32+3\sqrt{114})}+\frac{88}{9\sqrt{-1-\frac{2^{\frac{2}{3}}}{\sqrt[3]{32+3\sqrt{114}}}+\sqrt[3]{2(32+3\sqrt{114})}}}}\\
        &=2.039423\cdots
    \end{align*}
    be its largest real root. Fix $m_0<m<\infty$, and let 
    \begin{align*}
        \leb_m=\Delta+2(m-1)\frac{x}{|x|^2}\cdot \D +\frac{(m-1)^2}{|x|^2}
    \end{align*}
    and
    \begin{align*}
        \mathfrak{L}_m=\p{z}^2+\frac{m-1}{z}\p{z}+\frac{(m-1)(m-3)}{4z^2}.
    \end{align*}
    For all $0<\beta<1/2$ and $0<\gamma<1$, there exists $\Lambda_{\gamma}<\infty$ such that for all $0<a<b<\infty$, assuming that
    \begin{align*}
        \log\left(\frac{b}{a}\right)\geq \Lambda_{\gamma},
    \end{align*}
    the following property is verified.
    Define $\Omega=B_b\setminus\bar{B}_a(0)\subset \R^2$. For all function $u\in W^{2,2}(\Omega)$ such that $\leb_m^{\ast}\leb_mu=0$, there exists a universal constant $C_{m,\beta,\gamma}<\infty$ such that
     \begin{align}\label{lm_last_lemma_ineq}
        &\int_{\Omega}\frac{1}{|x|^2}\left|\D u+(m-1)\frac{x}{|x|^2}u\right|^2\left(\left(\frac{|x|}{b}\right)^{2\gamma}+\left(\frac{a}{|x|}\right)^{2\gamma}\right)dx\leq C_{m,\beta,\gamma}\left(\int_{\Omega}\frac{u^2}{|x|^4}\left(\left(\frac{|x|}{b}\right)^{4\beta}+\left(\frac{a}{|x|}\right)^{4\beta}\right)dx\right.\nonumber\\
        &\left.+\int_{\Omega}\left(\Delta u+2(m-1)\frac{x}{|x|^2}\cdot \D u+\frac{(m-1)^2}{|x|^2}u\right)^2dx+\int_{\Omega}\left|\p{z}^2u+\frac{m-1}{z}\p{z}u+\frac{(m-1)(m-3)}{4z^2}u\right|^2|dz|^2\right).
    \end{align}
\end{theorem}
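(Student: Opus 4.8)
The plan is to pass to Fourier series in the angular variable, which reduces the statement to a family of one-dimensional inequalities — one per Fourier mode — that can be made completely explicit, since solutions of $\leb_m^{\ast}\leb_m u=0$ have a \emph{polynomial} Taylor expansion; this follows the template of the case $m=1$ from \cite{willmore_scs}, the new feature being the interplay between the two fourth-order operators $\leb_m$ and $\mathfrak{L}_m$ on the right-hand side. First I would normalise $b=1$: each of the four integrals in \eqref{lm_last_lemma_ineq} scales like $\lambda^{2}$ under $u\mapsto u(\lambda\,\cdot\,)$ together with $(a,b)\mapsto(a/\lambda,b/\lambda)$, so after rescaling the hypothesis reads $\log(1/a)\geq\Lambda_{\gamma}$. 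Setting $v=|x|^{m-1}u$, one reads off from \eqref{fourth_order_m} (or from the formal adjoint $\leb_m^{\ast}\psi=|x|^{m-1}\Delta(|x|^{1-m}\psi)$) that $\leb_m^{\ast}\leb_m u=0$ on $\Omega$ if and only if $|x|^{2-2m}\Delta v$ is harmonic, and that $\leb_m u=|x|^{1-m}\Delta v$, $\mathfrak{L}_m u=|z|^{1-m}\p{z}^2v$, $\D u+(m-1)\tfrac{x}{|x|^2}u=|x|^{1-m}\D v$. Expanding $v=\sum_{n\in\Z}v_n(r)e^{in\theta}$, the operators $\Delta$, $\leb_m$ and $\mathfrak{L}_m$ all act diagonally on the modes — $\p{z}^2$ merely shifting $n$ to $n-2$, a bijection of $\Z$ — so by Parseval each integral in \eqref{lm_last_lemma_ineq} splits as a sum over $n$ of explicit one-dimensional weighted integrals of $v_n$, and it suffices to prove the inequality one mode at a time with a constant independent of $n$.

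Next I would solve the modal ODEs explicitly. For $n\neq0$ with $|n|\neq m$, harmonicity of $|x|^{2-2m}\Delta v$ forces $\Delta v_n=\alpha_n r^{2m-2+|n|}+\beta_n r^{2m-2-|n|}$, hence
\begin{align*}
v_n(r)=c_n\,r^{|n|}+d_n\,r^{-|n|}+\frac{\alpha_n}{4m(m+|n|)}\,r^{2m+|n|}+\frac{\beta_n}{4m(m-|n|)}\,r^{2m-|n|};
\end{align*}
so $v_n$ is a combination of the four power functions with indicial exponents $\{\,|n|,\,-|n|,\,2m+|n|,\,2m-|n|\,\}$, the first two forming the harmonic part of $v_n$ and the last two the non-harmonic part. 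At the finitely many resonant modes $n=0$ and (when $m\in\N$) $|n|=m$, two exponents collide and one adjoins a factor $\log r$; these are the only modes carrying a logarithm. Since $\int_a^1 r^{s}\,dr$ and $\int_a^1 r^{s}(\log r)^{j}\,dr$ are elementary, inserting these expansions turns the per-mode version of \eqref{lm_last_lemma_ineq} into an inequality $Q_L^{(n)}\leq C\,Q_R^{(n)}$ between two explicit Hermitian quadratic forms in $(c_n,d_n,\alpha_n,\beta_n)$, and the task is to verify it with $C=C_{m,\beta,\gamma}$ uniform in $n$.

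For the diagonal (squared-coefficient) terms I would invoke four structural facts. (i) The constant solution ($n=0$, exponent $0$) lies in the kernel of $\D(\,\cdot\,)+(m-1)\tfrac{x}{|x|^2}(\,\cdot\,)$, so $c_0$ never enters the left-hand side. (ii) The non-harmonic coefficients $\alpha_n,\beta_n$ are reproduced exactly by $\leb_m u=|x|^{1-m}\Delta v$, so they are controlled by the unweighted term $\int_{\Omega}(\leb_m u)^2$. (iii) The harmonic coefficients $c_n$ (for $|n|\geq2$) are invisible to $\Delta v$ but \emph{visible} to $\p{z}^2 v$, because $\p{z}^2(z^{n})=n(n-1)\,z^{n-2}\not\equiv0$; moreover $\p{z}^2$ supplies precisely the two extra powers of $n$ needed to dominate the $n^{2}$ carried on the left by $|\D v_n|^{2}$, and — using that $u$ is real, so the mode $-n$ is conjugate to the mode $n$ — the coefficient $d_n$ is controlled the same way via the conjugate mode; this is exactly why $\int_{\Omega}|\mathfrak{L}_m u|^2$, and not just $\int_{\Omega}(\leb_m u)^2$, is indispensable on the right. (iv) The only survivors are then the linear harmonic solutions $c_{\pm1}$ together with the logarithmic and resonant contributions; on these pieces the left-hand side is bounded by the \emph{weighted} term $\int_{\Omega}\tfrac{u^2}{|x|^4}\big((|x|/b)^{4\beta}+(a/|x|)^{4\beta}\big)$ up to a factor that degenerates only like $1/\log^{2}(1/a)$, which the hypothesis $\log(1/a)\geq\Lambda_{\gamma}$ absorbs. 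The residual low-degree positivity conditions in $m$ produced by (i)--(iv) are where the quartic $P=3X^4-8X^3+10X^2-16X+7$ appears, and the restriction $m>m_0$ is exactly what secures $P(m)>0$ and closes these estimates.

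The main obstacle, I expect, will be the \emph{crossed} terms — the off-diagonal entries of $Q_L^{(n)}-C^{-1}Q_R^{(n)}$. After inserting the power expansions, the coefficient of a crossed term such as $c_n\overline{\alpha_n}$ or $c_n\overline{\beta_n}$ is an integral $\int_a^1 r^{s_1+s_2-2}(\cdots)\,dr$ which, relative to the two diagonal integrals $\int_a^1 r^{2s_i-2}(\cdots)\,dr$, can be far smaller than a bare Cauchy--Schwarz bound would suggest — precisely because over a short annulus the profiles $r^{s_1}$ and $r^{s_2}$ are nearly collinear. The remedy is the refined Cauchy--Schwarz lemma stated above: applied with $f_1,f_2$ the two relevant monomial profiles (weighted by the appropriate power of $r$) and $\lambda_1,\lambda_2$ the corresponding coefficients, it bounds the relevant right-hand side piece $\int_X|\lambda_1f_1+\lambda_2f_2|^2\,d\mu$ from below by $\tfrac{|\lambda_1||\lambda_2|}{2}\int_{X\times X}|\det(\,\cdot\,)|^2$, a Gram-type quantity insensitive to near-collinearity as soon as the annulus is long enough — a further reason for demanding $\log(b/a)\geq\Lambda_{\gamma}$ — and this lower bound then dominates the crossed term on the left carrying the same total power of $r$. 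Carrying this out for every pair of exponents and summing the per-mode inequalities over $n\in\Z$ would give \eqref{lm_last_lemma_ineq}; the real labour is the bookkeeping, namely proving that the finitely many \emph{types} of crossed coefficients are dominated \emph{with a constant uniform in $n$}, which needs the sharp asymptotics both of the weighted power integrals $\int_a^1 r^{s}w\,dr$ and of the binomial factors $(s+n)(s+n-2)$ coming from $\p{z}^2$, plus careful tracking of which of the three right-hand side terms controls which monomial.
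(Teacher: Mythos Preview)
Your proposal is correct in outline and follows essentially the same route as the paper: Fourier decomposition of $u$ (equivalently, of $v=|x|^{m-1}u$), explicit polynomial/log solutions of the modal ODEs, control of the ``non-harmonic'' coefficients by $\int(\leb_m u)^2$ and of the ``harmonic'' ones by $\int|\mathfrak{L}_m u|^2$, with the weighted $L^2$ norm of $u$ picking up what remains.  Two points where your description diverges from the actual mechanics are worth flagging.

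First, the refined Cauchy--Schwarz lemma is not used for the generic power--versus--power crossed terms.  Those (the many $\Re(a_n\bar b_n)$, $\Re(a_na_{-n})$, $\Re(a_n\bar b_{-n})$, \dots) are all dispatched by the ordinary Young inequality $2|ab|\leq\epsilon a^2+\epsilon^{-1}b^2$; the long cascade of ``conformal class'' conditions $\log(b/a)\geq\cdots$ in the paper comes from absorbing these into the diagonals.  The refined lemma (Lemma~\ref{estimée_somme_cauchy-schwarz_theorem}, and its specialisation Lemma~\ref{log_estimate_sum_theorem}) is invoked only for the \emph{radial} component, where the two profiles at play are $r^{m-1}\log r$ and $r^{m-1}$: these are the nearly-collinear pair, and the lemma is what allows one to extract $\alpha_2^2 b^{2m}$ from $\int(\leb_m\mathrm{Rad}(u))^2$ and then $\alpha_1^2 a^{-2m}$ from the combination with $\int|\mathfrak{L}_m\mathrm{Rad}(u)|^2$.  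So your paragraph on crossed terms has the right tool but the wrong target.

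Second, the quartic $P$ does not emerge from a vague ``residual low-degree condition'' but very specifically from the mode $|n|=m$ in the $\mathfrak{L}_m u$ analysis.  There one must control both $|a_m|^2\log(b/a)$ and $|b_{-m}|^2\log(b/a)$ (these are the coefficients whose diagonal contribution to $\int|\mathfrak{L}_m u|^2$ carries a $\log$ rather than a power of $b$), and the crossed term $\Re(a_m b_{-m})\log(b/a)$ coupling them is handled by Young with a parameter $\eta$.  The two resulting inequalities in $\eta$ are simultaneously solvable precisely when
\[
4(m-1)^2\bigl((m-1)^2+1\bigr)-(m^2-4m+1)^2>0,
\]
and the left-hand side is $P(m)$.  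Your (iv), which folds the logs into the $u^2/|x|^4$ term, would not produce this condition.
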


    The proof, as the one of the corresponding inequality for $m=1$ proved in \cite{willmore_scs}, simply consists in showing that the $L^2$ norm of a $m$-biharmonic function (a function such that $\leb_m^{\ast}\leb_mu=0$) controls the $l^2$ norm of its Fourier coefficients. In fact, not all coefficients of the radial part of $u$ will be directly estimable, so we will have to use another argument. Since it is the most technical part of the proof, we give the fundamental technical lemmas separately. Since the lack of direct estimate comes from the insufficiency of the Cauchy--Schwarz inequality, we first compute the missing term in the inequality.

    \subsubsection{Refinements of the Cauchy--Schwarz Inequality}
    
    \begin{lemme}\label{Cauchy--Schwarz_missing-term_theorem}
        Let $(X,\mu)$ be a $\sigma$-finite measured space. Then, for all $f_1,f_2\in L^2(X,\mu,\C)$, we have
        \begin{align}\label{Cauchy--Schwarz_missing-term}
            \left(\int_{X}|f_1|^2d\mu\right)\left(\int_{X}|f_2|^2d\mu\right)-\left|\int_{X}f_1\bar{f_2}\,d\mu\right|^2=\frac{1}{2}\int_{X\times X}\left|f_1(x)f_2(y)-f_1(y)f_2(y)\right|^2d\mu(x)d\mu(y).
        \end{align}
    \end{lemme}
    \begin{proof}

        Indeed, Fubini's theorem implies that
        \begin{align}\label{symmetry_int}
            \left(\int_{X}|f_1|^2d\mu\right)\left(\int_{X}|f_2|^2d\mu\right)=\int_{X\times X}|f_1(x)|^2|f_2(y)|^2d\mu(x)d\mu(y).
        \end{align}
        On the other hand, we have
        \begin{align}\label{eq2}
            \left|\int_{X}f_1\bar{f_2}d\mu\right|^2&=\left(\int_{X}f_1(x)\bar{f_2(x)}d\mu(x)\right)\left(\int_{X}\bar{f_1(y)}f_2(y)d\mu(y)\right)\nonumber\\
            &=\int_{X\times X}f_1(x)f_2(y)\bar{f_1(y)f_2(x)}d\mu(x)d\mu(y)\nonumber\\
            &=\Re\left(\int_{X\times X}f_1(x)f_2(y)\bar{f_1(y)f_2(x)}d\mu(x)d\mu(y)\right)\nonumber\\
            &=\int_{X\times X}\Re\left(f_1(x)f_2(y)\bar{f_1(y)f_2(x)}\right)d\mu(x)d\mu(y)
        \end{align}
        Furthermore, we have by \eqref{symmetry_int} and symmetry
        \begin{align}\label{eq3}
            \left(\int_{X}|f_1|^2d\mu\right)\left(\int_{X}|f_2|^2d\mu\right)=\frac{1}{2}\int_{X\times X}\left(|f_1(x)|^2|f_2(y)|^2+|f_1(y)|^2|f_2(x)|^2\right)d\mu(x)d\mu(y),
        \end{align}
        and finally, we have by \eqref{eq2} and \eqref{eq3}
        \begin{align}
            &\left(\int_{X}|f_1|^2d\mu\right)\left(\int_{X}|f_2|^2d\mu\right)-\left|\int_{X}f_1\bar{f_2}\,d\mu\right|^2\nonumber\\
            &=\frac{1}{2}\int_{X\times X}\left(|f_1(x)|^2|f_2(y)|^2+|f_1(y)|^2|f_2(x)|^2-2\,\Re\left(f_1(x)f_2(y)\bar{f_1(y)f_2(x)}\right)\right)d\mu(x)d\mu(y)\nonumber\\
            &=\frac{1}{2}\int_{X\times X}\left|f_1(x)f_2(y)-f_1(y)f_2(x)\right|^2d\mu(x)d\mu(y),
        \end{align}
        which concludes the proof.
    \end{proof}
    \begin{lemme}\label{estimée_somme_Cauchy--Schwarz_theorem}
    Let $(X,\mu)$ be a $\sigma$-finite measured space. For all $f_1,f_2\in L^2(X,\mu,\C)$, for all $\lambda_1,\lambda_2\in \R$ we have
    \begin{align}\label{estimée_somme_Cauchy--Schwarz}
        \frac{|\lambda_1\lambda_2|}{2}\int_{X\times X}\left|\det\begin{pmatrix}
            f_1(x) & f_2(x)\\
            f_1(y) & f_2(y)
        \end{pmatrix}\right|^2d\mu(x)d\mu(y)\leq \np{f_1}{2}{X}\np{f_2}{2}{X}\int_{X}\left|\lambda_1\,f_1+\lambda_2\,f_2\right|^2d\mu.
    \end{align}    
    \end{lemme}
    \begin{proof}
        For all $\lambda\in \R$, we estimate thanks to the identity \eqref{Cauchy--Schwarz_missing-term} of Lemma \ref{Cauchy--Schwarz_missing-term_theorem}
        \small
    \begin{align*}
        &\int_{X}\left|f_1+\lambda\,f_2\right|^2d\mu=\int_{X}|f_1|^2d\mu+\lambda^2\int_{X}|f_1|^2d\mu+2\lambda\,\Re\left(\int_{X}f_1\bar{f_2}d\mu\right)\\
        &\geq \int_{X}|f_1|^2d\mu+\lambda^2\int_{X}|f_2|^2d\mu-2|\lambda|\left|\int_{X}f_1\bar{f_2}d\mu\right|\\
        &=\int_{X}|f_1|^2d\mu+\lambda^2\int_{X}|f_2|^2d\mu\\
        &-2|\lambda|\sqrt{\left(\int_{X}|f_1|^2d\mu\right)\left(\int_{X}|f_2|^2d\mu\right)-\frac{1}{2}\int_{X\times X}\left|f_1(x)f_2(y)-f_1(y)f_2(x)\right|^2d\mu(x)d\mu(y)}\\
        &=\int_{X}|f_1|^2d\mu-2|\lambda|\sqrt{\left(\int_{X}|f_1|^2d\mu\right)\left(\int_{X}|f_2|^2d\mu\right)}+\lambda^2\int_{X}|f_2|^2d\mu\\
        &+2|\lambda|\sqrt{\left(\int_{X}|f_1|^2d\mu\right)\left(\int_{X}|f_2|^2d\mu\right)}\\
        &-2|\lambda|\sqrt{\left(\int_{X}|f_1|^2d\mu\right)\left(\int_{X}|f_2|^2d\mu\right)-\frac{1}{2}\int_{X\times X}\left|f_1(x)f_2(y)-f_1(y)f_2(x)\right|^2d\mu(x)d\mu(y)}\\
        &=\left(\sqrt{\int_{X}|f_1|^2d\mu}-|\lambda|\sqrt{\int_{X}|f_2|^2d\mu}\right)^2\\
        &+\frac{|\lambda| \int_{X\times X}\left|f_1(x)f_2(y)-f_1(y)f_2(x)\right|^2d\mu(x)d\mu(y)}{\sqrt{\left(\int_{X}|f_1|^2d\mu\right)\left(\int_{X}|f_2|^2d\mu\right)}+\sqrt{\left(\int_{X}|f_1|^2d\mu\right)\left(\int_{X}|f_2|^2d\mu\right)-\frac{1}{2}\int_{X\times X}\left|f_1(x)f_2(y)-f_1(y)f_2(x)\right|^2d\mu(x)d\mu(y)}}.
    \end{align*}
    \normalsize
    Now, replacing $f_1$ by $\lambda_1f_1$ and $\lambda$ by $\lambda_2$, we have (provided that $\lambda_1\neq 0$, but the estimate is trivial if $\lambda_1$ or $\lambda_2$ vanishes)
    \small
    \begin{align}\label{improved_cauchy_schwarz}
        &\int_{X}|\lambda_1\,f_1+\lambda_2\,f_2|^2d\mu=\lambda_1^2\int_{X}\left|f_1+\frac{\lambda_2}{\lambda_1}f_2\right|^2d\mu\nonumber\\
        &\geq \left(|\lambda_1|\sqrt{\int_{X}|f_1|^2d\mu}-|\lambda_2|\sqrt{\int_{X}|f_2|^2d\mu}\right)^2\nonumber\\
        &+\frac{|\lambda_1\lambda_2|\int_{X\times X}\left|f_1(x)f_2(y)-f_1(y)f_2(x)\right|^2d\mu(x)d\mu(y)}{\sqrt{\left(\int_{X}|f_1|^2d\mu\right)\left(\int_{X}|f_2|^2d\mu\right)}+\sqrt{\left(\int_{X}|f_1|^2d\mu\right)\left(\int_{X}|f_2|^2d\mu\right)-\frac{1}{2}\int_{X\times X}\left|f_1(x)f_2(y)-f_1(y)f_2(x)\right|^2d\mu(x)d\mu(y)}}.
    \end{align}
    \normalsize
    In particular, we get the inequality
    \begin{align}\label{improved_cauchy_schwarz2}
        \frac{|\lambda_1\lambda_2|}{2}\frac{\int_{X\times X}\left|f_1(x)f_2(y)-f_1(y)f_2(x)\right|^2d\mu(x)d\mu(y)}{\sqrt{\left(\int_{X}|f_1|^2d\mu\right)\left(\int_{X}|f_2|^2d\mu\right)}}\leq \int_{X}\left|\lambda_1\,f_1+\lambda_2\,f_2\right|^2d\mu,
    \end{align}
    which concludes the proof of the lemma.
    \end{proof}

    \begin{lemme}\label{log_estimate_sum_theorem}
        Let $0<a<b\leq e^{-1}$ and $\gamma\in \R\setminus\ens{0}$. Define $f_1(r)=r^{\gamma-\frac{1}{2}}\log(r)$ and $f_2(r)=r^{\gamma-\frac{1}{2}}$. For all $\lambda>0$, we have 
        \begin{align}\label{log_estimate_sum}
            \lambda\, \frac{b^{2\gamma}}{\log\left(\frac{1}{b}\right)}\leq \frac{8\gamma^2(1+\gamma)}{\left(1-\left(\frac{a}{b}\right)^{2\gamma}\right)^2-4\gamma^2\left(\frac{a}{b}\right)^{2\gamma}\log^2\left(\frac{b}{a}\right)}\int_{a}^b\left(f_1+\lambda\,f_2\right)^2dr.
        \end{align}
    \end{lemme}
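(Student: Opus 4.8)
The plan is to recognize \eqref{log_estimate_sum} as a consequence of the refined Cauchy--Schwarz inequality of Lemma~\ref{estimée_somme_cauchy-schwarz_theorem}. Apply \eqref{estimée_somme_cauchy-schwarz} on $X=(a,b)$ with Lebesgue measure, with $\lambda_1=1$, $\lambda_2=\lambda$ and the functions $f_1,f_2$ of the statement; abbreviate $I_0=\int_a^bf_2^2\,dr$, $I_1=\int_a^bf_1f_2\,dr$, $I_2=\int_a^bf_1^2\,dr$, and $\ell=\log(1/b)\geq1$, $L=\log(b/a)$. Since $f_1,f_2$ are real-valued, Lemma~\ref{cauchy-schwarz_missing-term_theorem} identifies the double integral appearing in \eqref{estimée_somme_cauchy-schwarz} with $2(I_0I_2-I_1^2)$, so that \eqref{estimée_somme_cauchy-schwarz} becomes
\begin{align*}
    \int_a^b\left(f_1+\lambda f_2\right)^2dr\ \geq\ \lambda\,\frac{I_0I_2-I_1^2}{\sqrt{I_0I_2}}.
\end{align*}
Thus it suffices to establish the two facts
\begin{align*}
    I_0I_2-I_1^2=\frac{b^{4\gamma}}{16\gamma^4}\left(\left(1-\left(\tfrac{a}{b}\right)^{2\gamma}\right)^2-4\gamma^2\left(\tfrac{a}{b}\right)^{2\gamma}\log^2\left(\tfrac{b}{a}\right)\right),\qquad \sqrt{I_0I_2}\ \leq\ \frac{(1+\gamma)\,\ell}{2\gamma^2}\,b^{2\gamma},
\end{align*}
for then, inserting both into the preceding display and rearranging (the displayed bracket being $=16\gamma^4b^{-4\gamma}(I_0I_2-I_1^2)>0$ by Cauchy--Schwarz and $a<b$), one obtains precisely \eqref{log_estimate_sum}.

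For the first identity I would compute $I_0,I_1,I_2$ outright: the substitution $r=e^u$ turns them into the moments $\int_{\log a}^{\log b}u^{j}e^{2\gamma u}\,du$, $j=0,1,2$, each of which is evaluated by iterated integration by parts from $\int u^2e^{2\gamma u}\,du=\tfrac{e^{2\gamma u}}{2\gamma}\big(u^2-\tfrac u\gamma+\tfrac1{2\gamma^2}\big)$ and its lower-order analogues. When $I_0I_2-I_1^2$ is assembled, every term containing $\log b$ cancels --- as it must, the quantity picking up only the factor $\rho^{4\gamma}$ under the dilation $a,b\mapsto\rho a,\rho b$ --- and the stated closed form remains. (One can also run the computation starting from the symmetric expression $I_0I_2-I_1^2=\tfrac12\iint_{(a,b)^2}(r\rho)^{2\gamma-1}\log^2(r/\rho)\,dr\,d\rho$ provided by Lemma~\ref{cauchy-schwarz_missing-term_theorem}.) This step is purely bookkeeping.

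The substantive point, where the hypothesis $b\leq e^{-1}$ is used, is the bound on $\sqrt{I_0I_2}$. Substituting $s=\log(b/r)\in[0,L]$ one gets $I_0=b^{2\gamma}\int_0^Le^{-2\gamma s}\,ds$ and $I_2=b^{2\gamma}\int_0^Le^{-2\gamma s}(\ell+s)^2\,ds$ (here $(\log r)^2=(\ell+s)^2$). For $\gamma>0$ --- the range used in Theorem~\ref{lm_last_lemma} --- the weight $e^{-2\gamma s}$ is integrable over $[0,\infty)$, so enlarging the domain of integration gives $I_0\leq\tfrac{b^{2\gamma}}{2\gamma}$ and $I_2\leq b^{2\gamma}\big(\tfrac{\ell^2}{2\gamma}+\tfrac{\ell}{2\gamma^2}+\tfrac{1}{4\gamma^3}\big)$, whence
\begin{align*}
    I_0I_2\ \leq\ b^{4\gamma}\left(\frac{\ell^2}{4\gamma^2}+\frac{\ell}{4\gamma^3}+\frac{1}{8\gamma^4}\right),
\end{align*}
and the target $I_0I_2\leq\tfrac{(1+\gamma)^2\ell^2}{4\gamma^4}b^{4\gamma}$ reduces, after clearing denominators, to the elementary inequality $2(1+2\gamma)\ell^2\geq 2\gamma\ell+1$. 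The latter holds for all $\ell\geq1$, $\gamma>0$, because its left side minus its right side is increasing in $\ell$ on $[1,\infty)$ and equals $1+2\gamma>0$ at $\ell=1$. This finishes the argument. The main obstacle is twofold: organizing the computation of $I_0I_2-I_1^2$ so the massive cancellation is visible, and --- more delicately --- choosing the enlargement of the integral in the last step tightly enough that the sharp constant $8\gamma^2(1+\gamma)$ in \eqref{log_estimate_sum} is recovered, a looser estimate there being insufficient for the way this lemma feeds into the proof of Theorem~\ref{lm_last_lemma}.
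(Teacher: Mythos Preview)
Your proposal is correct and follows essentially the same strategy as the paper: apply Lemma~\ref{estimée_somme_cauchy-schwarz_theorem}, evaluate the Cauchy--Schwarz defect (the paper does this by computing the double integral $\iint r^{2\gamma-1}s^{2\gamma-1}\log^2(r/s)\,dr\,ds$ directly, you via $I_0I_2-I_1^2$; these are the same quantity by Lemma~\ref{cauchy-schwarz_missing-term_theorem}), and then bound $\sqrt{I_0I_2}$ from above using $b\leq e^{-1}$. The only noteworthy difference is that the paper bounds $\sqrt{I_0I_2}\leq \tfrac{1}{2\gamma}\sqrt{1+\tfrac{1}{\gamma}+\tfrac{1}{2\gamma^2}}\,b^{2\gamma}\log(1/b)$, obtaining the sharper constant $4\sqrt{2}\,\gamma^2\sqrt{2\gamma^2+2\gamma+1}$ in place of the $8\gamma^2(1+\gamma)$ of the statement (the former being $\leq$ the latter for $\gamma>0$), whereas your elementary inequality $2(1+2\gamma)\ell^2\geq 2\gamma\ell+1$ lands exactly on the stated constant. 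Both proofs tacitly work only for $\gamma>0$, which is the range actually needed downstream.
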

    \begin{proof}
    We have
    \begin{align*}
        &\int_{a}^bf_1^2(r)dr=\int_{a}^br^{2\gamma-1}\log^2(r)dr=\left[\frac{1}{2\gamma}r^{2\gamma}\log^2(r)\right]_a^b-\frac{1}{\gamma}\int_{a}^br^{2\gamma-1}\log(r)dr\\
        &=\frac{1}{2\gamma}\left(b^{2\gamma}\log^2\left(\frac{1}{b}\right)-a^{2\gamma}\log^2\left(\frac{1}{a}\right)\right)+\left[\frac{1}{2\gamma^2}r^{2\gamma}\log\left(\frac{1}{b}\right)\right]_a^b+\frac{1}{2\gamma^2}\int_{a}^br^{2\gamma-1}dr\\
        &=\frac{1}{2\gamma}\left(b^{2\gamma}\log^2\left(\frac{1}{b}\right)-a^{2\gamma}\log^2\left(\frac{1}{a}\right)\right)+\frac{1}{2\gamma^2}\left(b^{2\gamma}\log\left(\frac{1}{b}\right)-a^{2\gamma}\log\left(\frac{1}{a}\right)\right)+\frac{1}{4\gamma^3}b^{2\gamma}\left(1-\left(\frac{a}{b}\right)^{2\gamma}\right).
    \end{align*}
    Then, we have
    \begin{align*}
        &\int_{a}^bf_1(r)f_2(r)dr=\int_{a}^br^{2\gamma-1}\log(r)dr=-\frac{1}{2\gamma}\left(b^{2\gamma}\log\left(\frac{1}{b}\right)-a^{2\gamma}\log\left(\frac{1}{a}\right)\right)-\frac{1}{4\gamma^2}b^{2\gamma}\left(1-\left(\frac{a}{b}\right)^{2\gamma}\right),
    \end{align*}
    and finally
    \begin{align*}
        \int_{a}^bf_2^2(r)=\int_{a}^br^{2\gamma-1}dr=\frac{1}{2\gamma}b^{2\gamma}\left(1-\left(\frac{a}{b}\right)^{2\gamma}\right).
    \end{align*}
    We readily see that the triangle does not give us any non-trivial estimate. Therefore, we need the refinenment of the Cauchy--Schwarz inequality proven above.    
    Now, we get
    \begin{align*}
        \int_{a}^b\int_{a}^b\left(f_1(r)f_2(s)-f_1(s)f_2(r)\right)^2drds=\int_{a}^b\int_{a}^br^{2\gamma-1}s^{2\gamma-1}\log^2\left(\frac{r}{s}\right)dr\,ds.
    \end{align*}
    We have
    \begin{align*}
        &\int_{a}^br^{2\gamma-1}\log^2\left(\frac{r}{s}\right)ds=s^{2\gamma}\int_{\frac{a}{s}}^{\frac{b}{s}}t^{2\gamma-1}\log^2(t)dt=s^{2\gamma}\left[\frac{1}{2\gamma}t^{2\gamma}\log^2\left(t\right)-\frac{1}{2\gamma^2}t^{2\gamma}\log\left(t\right)+\frac{1}{4\gamma^3}t^{2\gamma}\right]_{\frac{a}{s}}^{\frac{b}{s}}\\
        &=\frac{1}{2\gamma}\left(b^{2\gamma}\log^2\left(\frac{s}{b}\right)-a^{2\gamma}\log^2\left(\frac{s}{a}\right)\right)+\frac{1}{2\gamma^2}\left(b^{2\gamma}\log\left(\frac{s}{b}\right)-a^{2\gamma}\log\left(\frac{s}{a}\right)\right)+\frac{1}{4\gamma^3}b^{2\gamma}\left(1-\left(\frac{a}{b}\right)^{2\gamma}\right)
    \end{align*}
    Therefore, we have
    \begin{align}\label{I_V}
        \int_{a}^b\int_{a}^br^{2\gamma-1}s^{2\gamma-1}\log^2\left(\frac{r}{s}\right)dr\,ds&=\mathrm{(I)}+\mathrm{(II)}+\mathrm{(III)}+\mathrm{(IV)}+\mathrm{(V)}.
    \end{align}
    We first have
    \begin{align}\label{estimate_I}
        \mathrm{(I)}&=\frac{1}{2\gamma}b^{2\gamma}\int_{a}^{b}s^{2\gamma-1}\log^2\left(\frac{s}{b}\right)ds=\frac{1}{2\gamma}b^{4\gamma}\int_{\frac{a}{b}}^1t^{2\gamma-1}\log^2(t)dt\nonumber\\
        &=\frac{1}{2\gamma}b^{4\gamma}\left[\frac{1}{2\gamma}t^{2\gamma}\log^2(t)-\frac{1}{2\gamma^2}\log(t)+\frac{1}{4\gamma^3}t^{2\gamma}\right]_{\frac{a}{b}}^{1}\nonumber\\
        &=\frac{1}{2\gamma}b^{4\gamma}\left\{\frac{1}{4\gamma^3}\left(1-\left(\frac{a}{b}\right)^{2\gamma}\right)-\frac{1}{2\gamma}\left(\frac{a}{b}\right)^{2\gamma}\log^2\left(\frac{b}{a}\right)-\frac{1}{2\gamma^2}\left(\frac{a}{b}\right)^{2\gamma}\log\left(\frac{b}{a}\right)\right\}\nonumber\\
        &=b^{4\gamma}\left\{\frac{1}{8\gamma^4}\left(1-\left(\frac{a}{b}\right)^{2\gamma}\right)-\frac{1}{4\gamma^2}\left(\frac{a}{b}\right)^{2\gamma}\log^2\left(\frac{b}{a}\right)-\frac{1}{4\gamma^3}\left(\frac{a}{b}\right)^{2\gamma}\log\left(\frac{b}{a}\right)\right\}.
    \end{align}
    Then, we compute
    \begin{align}\label{estimate_II}
        \mathrm{(II)}&=-\frac{1}{2\gamma}a^{2\gamma}\int_{a}^bs^{2\gamma-1}\log^2\left(\frac{s}{a}\right)ds=-\frac{1}{2\gamma}a^{4\gamma}\int_{1}^{\frac{b}{a}}t^{2\gamma-1}\log^2(t)dt\nonumber\\
        &=-\frac{1}{2\gamma}a^{4\gamma}\left[\frac{1}{2\gamma}t^{2\gamma}\log^2(t)-\frac{1}{2\gamma^2}t^{2\gamma}\log(t)+\frac{1}{4\gamma^3}r^{2\gamma}\right]_1^{\frac{b}{a}}\nonumber\\
        &=-\frac{1}{2\gamma}a^{4\gamma}\left\{\frac{1}{2\gamma}\left(\frac{b}{a}\right)^{2\gamma}\log^2\left(\frac{b}{a}\right)-\frac{1}{2\gamma^2}\left(\frac{b}{a}\right)^{2\gamma}\log\left(\frac{b}{a}\right)+\frac{1}{4\gamma^3}\left(\left(\frac{b}{a}\right)^{2\gamma}-1\right)\right\}\nonumber\\
        &=b^{4\gamma}\left\{-\frac{1}{4\gamma^2}\left(\frac{a}{b}\right)^{2\gamma}+\frac{1}{4\gamma^3}\left(\frac{a}{b}\right)^{2\gamma}\log\left(\frac{b}{a}\right)-\frac{1}{8\gamma^4}\left(1-\left(\frac{a}{b}\right)^{2\gamma}\right)\left(\frac{a}{b}\right)^{2\gamma}\right\}.
    \end{align}
    Likewise, we have
    \begin{align}\label{estimate_III}
        \mathrm{(III)}&=\frac{1}{2\gamma^2}b^{2\gamma}\int_{a}^{b}s^{2\gamma-1}\log\left(\frac{s}{b}\right)ds=\frac{1}{2\gamma^2}b^{4\gamma}\int_{\frac{a}{b}}^1t^{2\gamma-1}\log(t)dt=\frac{1}{2\gamma^2}b^{4\gamma}\left[\frac{1}{2\gamma}t^{2\gamma}\log(t)-\frac{1}{4\gamma^2}t^{2\gamma}\right]_{\frac{a}{b}}^1\nonumber\\
        &=\frac{1}{2\gamma^2}b^{4\gamma}\left\{\frac{1}{2\gamma}\left(\frac{a}{b}\right)^{2\gamma}\log\left(\frac{b}{a}\right)-\frac{1}{4\gamma^2}\left(1-\left(\frac{a}{b}\right)^{2\gamma}\right)\right\}\nonumber\\
        &=b^{4\gamma}\left\{-\frac{1}{8\gamma^4}\left(1-\left(\frac{a}{b}\right)^{2\gamma}\right)^{2\gamma}+\frac{1}{4\gamma^3}\left(\frac{a}{b}\right)^{2\gamma}\log\left(\frac{b}{a}\right)\right\}.
    \end{align}
    Likewise, we have
    \begin{align}\label{estimate_IV}
        \mathrm{(IV)}&=-\frac{1}{2\gamma^2}a^{2\gamma}\int_{a}^bs^{2\gamma-1}\log\left(\frac{s}{a}\right)ds=-\frac{1}{2\gamma^2}a^{4\gamma}\int_{1}^{\frac{b}{a}}t^{2\gamma-1}\log(t)dt\nonumber\\
        &=-\frac{1}{2\gamma^2}a^{4\gamma}\left[\frac{1}{2\gamma}t^{2\gamma}\log(t)-\frac{1}{4\gamma^2}t^{2\gamma}\right]_{1}^{\frac{b}{a}}\nonumber\\
        &=-\frac{1}{2\gamma^2}a^{4\gamma}\left\{\frac{1}{2\gamma}\left(\frac{b}{a}\right)^{2\gamma}\log\left(\frac{b}{a}\right)-\frac{1}{4\gamma^2}\left(\left(\frac{b}{a}\right)^{2\gamma}-1\right)\right\}\nonumber\\
        &=b^{4\gamma}\left\{-\frac{1}{4\gamma^3}\left(\frac{a}{b}\right)^{2\gamma}\log\left(\frac{b}{a}\right)+\frac{1}{8\gamma^4}\left(1-\left(\frac{a}{b}\right)^{2\gamma}\right)\left(\frac{a}{b}\right)^{2\gamma}\right\}.
    \end{align}
    Finally, we have
    \begin{align}\label{estimate_V}
        \mathrm{(V)}=\frac{1}{4\gamma^3}b^{2\gamma}\left(1-\left(\frac{a}{b}\right)^{2\gamma}\right)\int_{a}^bs^{2\gamma-1}ds=\frac{1}{8\gamma^4}b^{4\gamma}\left(1-\left(\frac{a}{b}\right)^{2\gamma}\right)^2.
    \end{align}
    Finally, gathering the estimates \eqref{I_V}, \eqref{estimate_I}, \eqref{estimate_II}, \eqref{estimate_III}, \eqref{estimate_IV}, and \eqref{estimate_V}, we deduce that
    \begin{align}\label{rest_cs}
        \int_{a}^b\int_{a}^br^{2\gamma-1}s^{2\gamma-1}\log^2\left(\frac{r}{s}\right)dr\,ds&=b^{4\gamma}\left\{\frac{1}{8\gamma^4}b^{4\gamma}\left(1-\left(\frac{a}{b}\right)^{2\gamma}\right)^{2}-\frac{1}{2\gamma^2}\left(\frac{a}{b}\right)^{2\gamma}\log^2\left(\frac{b}{a}\right)\right\}\nonumber\\
        &=\frac{1}{8\gamma^4}b^{4\gamma}\left\{\left(1-\left(\frac{a}{b}\right)^{2\gamma}\right)^2-4\gamma^2\left(\frac{a}{b}\right)^{2\gamma}\log^2\left(\frac{b}{a}\right)\right\},
    \end{align}
    which notably furnishes the elementary inequality
    \begin{align}\label{elementary}
        2\gamma\,x^{\gamma}\log\left(\frac{1}{x}\right)< 1-x^{2\gamma}\quad \text{for all}\;\gamma>0,\;\, \text{for all}\; 0<x<1
    \end{align}
    which can also be checked directly. Now, we have the elementary estimate
    \begin{align*}
        \int_{a}^bf_1^2(r)dr\leq \frac{1}{2\gamma}\left(1+\frac{1}{\gamma}+\frac{1}{2\gamma^2}\right)b^{2\gamma}\log^2\left(\frac{1}{b}\right),
    \end{align*}
    which implies that 
    \begin{align*}
        \sqrt{\int_{a}^bf_1^2(r)dr\int_{a}^bf_2^2(r)dr}&\leq \sqrt{\frac{1}{2\gamma}\left(1+\frac{1}{\gamma}+\frac{1}{2\gamma^2}\right)b^{2\gamma}\log^2\left(\frac{1}{b}\right)\times \frac{1}{2\gamma}b^{2\gamma}}\\
        &\leq \frac{1}{2\gamma}\sqrt{1+\frac{1}{\gamma}+\frac{1}{2\gamma^2}}b^{2\gamma}\log\left(\frac{1}{b}\right).
    \end{align*}
    Therefore, we have
    \begin{align*}
        &\frac{\int_{a}^b\int_{a}^b\big(f_1(r)f_2(s)-f_1(s)f_2(r)\big)^2}{\sqrt{\int_{a}^bf_1^2(r)dr\int_{a}^bf_2^2(r)dr}}\geq \frac{1}{8\gamma^4}b^{4\gamma}\left\{\left(1-\left(\frac{a}{b}\right)^{2\gamma}\right)^2-4\gamma^2\left(\frac{a}{b}\right)^{2\gamma}\log^2\left(\frac{b}{a}\right)\right\}\\
        &\times \frac{2\sqrt{2}\gamma^2}{\sqrt{2\gamma^2+2\gamma+1}}\frac{1}{b^{2\gamma}\log\left(\frac{1}{b}\right)}\\
        &=\frac{1}{2\sqrt{2}\gamma^2\sqrt{2\gamma^2+2\gamma+1}}\frac{b^{2\gamma}}{\log\left(\frac{1}{b}\right)}\left\{\left(1-\left(\frac{a}{b}\right)^{2\gamma}\right)^2-4\gamma^2\left(\frac{a}{b}\right)^{2\gamma}\log^2\left(\frac{b}{a}\right)\right\}.
    \end{align*}
    Finally, using inequality \eqref{Cauchy--Schwarz_missing-term} from Lemma \ref{Cauchy--Schwarz_missing-term_theorem}, we deduce that for all $\lambda>0$
    \begin{align*}
        \frac{\lambda}{4\sqrt{2}\gamma^2\sqrt{2\gamma^2+2\gamma+1}}\frac{b^{2\gamma}}{\log\left(\frac{1}{b}\right)}\left\{\left(1-\left(\frac{a}{b}\right)^{2\gamma}\right)^2-4\gamma^2\left(\frac{a}{b}\right)^{2\gamma}\log^2\left(\frac{b}{a}\right)\right\}\leq \int_{a}^b\left(f_1+\lambda\,f_2\right)^2dr,
    \end{align*}
    which concludes the proof of the lemma. 
    \end{proof}

    \subsubsection{Proof of the Main Inequality}

\begin{proof}
    \textbf{Step 1.} Decomposition in Fourier modes.
    Thanks to the appendix of \cite{willmore_scs}, the inequality is verified for $m=1$. We can therefore assume without loss of generality that $m>1$.
    Expand $u$ as 
    \begin{align*}
        u(r,\theta)=\sum_{n\in \Z}u_n(r)e^{in\theta}.
    \end{align*}
    Recall that 
    \begin{align*}
        \leb_m^{\ast}\leb_m=\Delta^2+2(m^2-1)\frac{1}{|x|^2}\Delta-4(m^2-1)\left(\frac{x}{|x|}\right)^t\cdot \D^2(\,\cdot\,)\cdot\left(\frac{x}{|x|^2}\right)+\frac{(m^2-1)^2}{|x|^4}.
    \end{align*}
    We also have
    \begin{align*}
        \Delta&=\p{r}^2+\frac{1}{r}\p{r}+\frac{1}{r^2}\p{\theta}^2\\
        \p{r}\Delta&=\p{r}^3+\frac{1}{r}\p{r}^2-\frac{1}{r^2}\p{r}+\frac{1}{r^2}\p{r}\p{\theta}^2-\frac{2}{r^3}\p{\theta}^2\\
        \p{r}^2\Delta&=\p{r}^4+\frac{1}{r}\p{r}^3-\frac{2}{r^2}\p{r}^2+\frac{2}{r^3}\p{r}+\frac{1}{r^2}\p{r}^2\p{\theta}^2-\frac{4}{r^3}\p{r}\p{\theta}^2+\frac{6}{r^4}\p{\theta}^2\\
        \Delta^2&=\p{r}^2\Delta+\frac{1}{r}\p{r}\Delta+\frac{1}{r^2}\p{\theta}^2\Delta\\
        &=\p{r}^4+\frac{1}{r}\p{r}^3-\frac{2}{r^2}\p{r}^2+\frac{2}{r^3}\p{r}+\frac{1}{r^2}\p{r}^2\p{\theta}^2-\frac{4}{r^3}\p{r}\p{\theta}^2+\frac{6}{r^4}\p{\theta}^2\\
        &+\frac{1}{r}\p{r}^3+\frac{1}{r^2}\p{r}^2-\frac{1}{r^3}\p{r}+\frac{1}{r^3}\p{r}\p{\theta}^2-\frac{2}{r^4}\p{\theta}^2\\
        &+\frac{1}{r^2}\p{r}^2\p{\theta}^2+\frac{1}{r^3}\p{r}\p{\theta}^2+\frac{1}{r^4}\p{\theta}^4\\
        &=\p{r}^4+\frac{2}{r}\p{r}^3-\frac{1}{r^2}\p{r}^2+\frac{1}{r^3}\p{r}+\frac{2}{r^2}\p{r}^2\p{\theta}^2-\frac{2}{r^3}\p{r}\p{\theta}^2+\frac{4}{r^4}\p{\theta}^2+\frac{1}{r^4}\p{\theta}^4.
    \end{align*}
    Therefore, we have
    \begin{align*}
        \leb_m^{\ast}\leb_m&=\p{r}^4+\frac{2}{r}\p{r}^3-\frac{1}{r^2}\p{r}^2+\frac{1}{r^3}\p{r}+\frac{2}{r^2}\p{r}^2\p{\theta}^2-\frac{2}{r^3}\p{r}\p{\theta}^2+\frac{4}{r^4}\p{\theta}^2+\frac{1}{r^4}\p{\theta}^4\\
        &+2(m^2-1)\left(\frac{1}{r^2}\p{r}^2+\frac{1}{r^3}\p{r}+\frac{1}{r^4}\p{\theta}^2\right)-4(m^2-1)\p{r}^2+\frac{(m^2-1)^2}{r^4}\\
        &=\p{r}^4+\frac{2}{r}\p{r}^3-\frac{2m^2-1}{r^2}\p{r}^2+\frac{2m^2-1}{r^3}\p{r}+\frac{(m^2-1)^2}{r^4}+\frac{2}{r^2}\p{r}^2\p{\theta}^2-\frac{2}{r}\p{r}\p{\theta}^2+\frac{2(m^2+1)}{r^4}\p{\theta}^2+\frac{1}{r^4}\p{\theta}^4.
    \end{align*}
    Notice that 
    \begin{align*}
        \leb_m=\Delta+2(m-1)\frac{x}{|x|^2}\cdot \D +\frac{(m-1)^2}{|x|^2}=\p{r}^2+(2m-1)\frac{1}{r}\p{r}+\frac{(m-1)^2}{r^2}+\frac{1}{r^2}\p{\theta}^2u.
    \end{align*}
    Therefore, we have
    \begin{align*}
        \Pi_{n^2}(\leb_m)=\p{r}^2+(2m-1)\frac{1}{r}\p{r}+\frac{(m-1)^2-n^2}{r^2}.
    \end{align*}
    If $u=Y(\log(r))$, then
    \begin{align*}
    \left\{\begin{alignedat}{1}
        u'&=\frac{1}{r}Y'\\
        u''&=\frac{1}{r^2}(Y''-Y'),
        \end{alignedat}\right.
    \end{align*}
    and we get
    \begin{align*}
        \Pi_{n^2}(\leb_m)u&=\frac{1}{r^2}\left(Y''-Y'+(2m-1)Y'+(m-1-n)(m-1+n)Y\right)\\
        &=\frac{1}{r^2}\left(Y''+2(m-1)Y'+(m-1-n)(m-1+n)Y\right).
    \end{align*}
    Its characteristic polynomial is given by 
    \begin{align*}
        P_m=X^2+2(m-1)X+(m-1)^2-n^2,
    \end{align*}
    whose discriminant is given by
    \begin{align*}
        \Delta_m=4(m-1)^2-4((m-1)^2-n^2)=4n^2,
    \end{align*}
    which shows that the roots of $P_m$ are given by $1-m\pm n$. Therefore, if $n\neq 0$, we have
    \begin{align*}
        \mathrm{Ker}\left(\Pi_{n^2}\left(\leb_m\right)\right)=\mathrm{Span}(r^{1-m+n},r^{1-m-n}).
    \end{align*}
    For $n=0$, we have
    \begin{align*}
        \mathrm{Ker}\left(\Pi_{0}\left(\leb_m\right)\right)=\mathrm{Span}\left(r^{1-m},r^{1-m}\log(r)\right).
    \end{align*}
    Then, we have
    \begin{align*}
        \leb_m^{\ast}=\Delta-2(m-1)\frac{x}{|x|^2}\cdot \D+\frac{(m-1)^2}{|x|^2}=\p{r}^2-(2m-3)\frac{1}{r}\p{r}+\frac{(m-1)^2}{r^2}+\frac{1}{r^2}\p{\theta}^2.
    \end{align*}
    We deduce that
    \begin{align*}
        \Pi_{n^2}(\leb_m^{\ast})=\p{r}^2-(2m-3)\frac{1}{r}\p{r}+\frac{(m-1)^2-n^2}{r^2}.
    \end{align*}
    Making the same change of variable $u=Y(\log(r))$, we get
    \begin{align*}
        \Pi_{n^2}(\leb_m^{\ast})u=\frac{1}{r^2}\left(Y''-2(m-1)Y'+(m-1-n)(m-1+n)Y\right),
    \end{align*}
    which shows that (this result can also be inferred from \cite[p. 118]{willmore_scs})
    \begin{align*}
        \mathrm{Ker}\left(\Pi_{n^2}\left(\leb_m^{\ast}\right)\right)=\mathrm{Span}(r^{1+m+n},r^{1+m-n}).
    \end{align*}
    For $n=0$, we have
    \begin{align*}
        \mathrm{Ker}\left(\Pi_{0}\left(\leb_m^{\ast}\right)\right)=\mathrm{Span}\left(r^{1+m},r^{1+m}\log(r)\right).
    \end{align*}
    Therefore, if $\leb_m^{\ast}\leb_mu=0$ in $\Omega=B_b\setminus\bar{B}_a(0)$, we deduce that there exists $\ens{a_n}_{n\in \Z},\ens{b_n}_{n\in\Z}\subset \C$ and $\alpha_1,\alpha_2\in \R$ such that
    \begin{align}\label{formula_u_m-biharmonic}
        u(z)&=|z|^{1-m}\left(\alpha_1\log|z|+2\,\Re\left(\sum_{n\in \Z}a_nz^n\right)\right)+|z|^{m+1}\left(\alpha_2\log|z|+2\,\Re\left(\sum_{n\in \Z}b_nz^n\right)\right)\\
        &=\alpha_1\,r^{1-m}\log(r)+2\,\Re\left(\sum_{n\in \Z}a_n\,r^{1-m+n}e^{in\theta}\right)+\alpha_2r^{m+1}\log(r)+2\,\Re\left(\sum_{n\in \Z}b_n\,r^{m+1+n}e^{in\theta}\right).\nonumber
    \end{align}

    \textbf{Step 2.} Estimations of the $L^2$ norm of $\leb_mu$.

    For all  $\alpha\in \R$, we have
    \begin{align*}
        &\p{r}\left(r^{\alpha}\log(r)\right)=\alpha\, r^{\alpha-1}\log(r)+r^{\alpha-1}\\
        &\p{r}^2\left(r^{\alpha}\log(r)\right)=\alpha(\alpha-1)r^{\alpha-2}\log(r)+(2\alpha-1)r^{\alpha-2}\\
        &\leb_m\left(r^{\alpha}\log(r)\right)=\left(\p{r}^2+(2m-1)\frac{1}{r}\p{r}+\frac{(m-1)^2}{r^2}\right)\left(r^{\alpha}\log(r)\right)\\
        &=\alpha(\alpha-1)r^{\alpha-2}\log(r)+(2\alpha-1)r^{\alpha-2}+(2m-1)\alpha\,r^{\alpha-2}\log(r)+(2m-1)r^{\alpha-2}\\
        &+(m-1)^2r^{\alpha-2}\log(r)\\
        &=\left((m-1)^2+\alpha\left(2(m-1)+\alpha\right)\right)r^{\alpha-2}\log(r)+2(m-1+\alpha)r^{\alpha-2}.
    \end{align*}
    Therefore, we have
    \begin{align*}
        \leb_m\left(r^{m+1}\log(r)\right)&=\left((m-1)^2+(m+1)\left(2(m-1)+m+1\right)\right)r^{m-1}\log(r)+2(2m-1)r^{m-1}\\
        &=4m^2\,r^{m-1}\log(r)+4mr^{m-1}.
    \end{align*}
    Then, we have
    \begin{align*}
        \leb_m\left(r^{m+1+n}e^{in\theta}\right)&=\left(\p{r}^2+(2m-1)\frac{1}{r}\p{r}+\frac{(m-1)^2}{r^2}+\frac{1}{r^2}\p{\theta}^2\right)\left(r^{m+1+n}e^{in\theta}\right)\\
        &=\left((m+n+1)(m+n)+(2m-1)(m+n+1)+(m-1)^2-n^2\right)r^{m+n-1}e^{in\theta}\\
        &=4m(m+n)r^{m+n-1}e^{in\theta}.
    \end{align*}
    Therefore, we deduce that 
    \begin{align*}
        \leb_mu&=4m\bigg(m\alpha_2r^{m-1}\log(r)+\left(\alpha_2+2mb_0\right)r^{m-1}\\
        &+\sum_{n\in \Z^{\ast}}\left((m+n)b_nr^{m+n-1}+(m-n)\bar{b_{-n}}r^{m-n-1}\right)e^{in\theta}\bigg).
    \end{align*}
    Therefore, if $\mathrm{Rad}$ stands for the radial projection, we have
    \begin{align*}
        &\mathrm{Rad}\left(\leb_mu\right)^2=16m^2\left(m^2\alpha_2^2r^{2m-2}\log^2(r)+\left(\alpha_2+2mb_0\right)^2r^{2m-2}+2m\alpha_2\left(\alpha_2+2mb_0\right)r^{2m-2}\log(r)\right.\\
        &\left.+\sum_{n\in \Z^{\ast}}\left|(m+n)b_nr^{m+n-1}+(m-n)\bar{b_{-n}}r^{m-n-1}\right|^2\right)\\
        &=16m^2\left(m^2\alpha_2^2r^{2m-2}\log^2(r)+\left(\alpha_2+2mb_0\right)^2r^{2m-2}+2m\alpha_2\left(\alpha_2+2mb_0\right)r^{2m-2}\log(r)\right.\\
        &\left.+2\sum_{n\in \Z^{\ast}}(m+n)^2|b_n|^2r^{2m+2n-2}+4\sum_{n=1}^{\infty}\left(m^2-n^2\right)\Re\left(b_nb_{-n}\right)r^{2m-2}\right).
    \end{align*}
    Then, we have for $\alpha\neq -1$
    \begin{align*}
        \int r^{\alpha}\log(r)dr&=%\frac{r^{\alpha+1}}{\alpha+1}\log(r)-\frac{1}{\alpha+1}\int r^{\alpha}dr=
        \frac{r^{\alpha+1}}{\alpha+1}\log(r)-\frac{1}{(\alpha+1)^2}r^{\alpha+1}\\
        \int r^{\alpha}\log^2(r)dr&=%\frac{r^{\alpha+1}}{\alpha+1}\log^2(r)-\frac{2}{\alpha+1}\int r^{\alpha}\log(r)dr\\
        %&=
        \frac{r^{\alpha+1}}{\alpha+1}\log^2(r)-\frac{2}{(\alpha+1)^2}r^{\alpha+1}\log(r)+\frac{2}{(\alpha+1)^3}r^{\alpha+1},
    \end{align*}
    while for all $\beta>0$
    \begin{align*}
        \int \frac{\log^{\beta}(r)}{r}dr=\frac{1}{\beta+1}\log^{\beta+1}(r).
    \end{align*}
    Therefore, we get
    \begin{align}\label{l2_norm_lmu}
        &\int_{\Omega}\left(\leb_mu\right)^2dx=32\pi\, m^2\int_{a}^b\left(m^2\alpha_2^2r^{2m-1}\log^2(r)+\left(\alpha_2+2mb_0\right)^2r^{2m-1}\right.\\
        &+2m\alpha_2\left(\alpha_2+2mb_0\right)r^{2m-1}\log(r)
        +2\sum_{n\in \Z^{\ast}}(m+n)^2|b_n|^2r^{2m+2n-1}\nonumber\\
        &\left.+4\sum_{n=1}^{\infty}\left(m^2-n^2\right)\Re\left(b_nb_{-n}\right)r^{2m-1}\right)dr\nonumber\\
        &=32\pi\,m^2\bigg\{\frac{m}{2}\alpha_2^2\left(b^{2m}\log^2\left(\frac{1}{b}\right)+\frac{1}{m}b^{2m}\log\left(\frac{1}{b}\right)+\frac{1}{2m^2}b^{2m}\right.\nonumber\\
        &\left.-\left(a^{2m}\log^2\left(\frac{1}{a}\right)+\frac{1}{m}a^{2m}\log\left(\frac{1}{a}\right)+\frac{1}{2m^2}a^{2m}\right)\right)\nonumber\\
        &+\frac{1}{2m}\left(\alpha_2+2mb_0\right)^2b^{2m}\left(1-\left(\frac{a}{b}\right)^{2m}\right)\nonumber\\
        &-\alpha_2\left(\alpha_2+2mb_0\right)\left(b^{2m}\log\left(\frac{1}{b}\right)+\frac{1}{2m}b^{2m}-\left(a^{2m}\log\left(\frac{1}{a}\right)+\frac{1}{2m}a^{2m}\right)\right)\nonumber\\
        &+\sum_{n=1-m}^{\infty}(m+n)|b_n|^2b^{2(m+n)}\left(1-\left(\frac{a}{b}\right)^{2(m+n)}\right)+\sum_{n=m+1}^{\infty}(n-m)|b_{-n}|^2\frac{1}{a^{2(n-m)}}\left(1-\left(\frac{a}{b}\right)^{2(n-m)}\right)\nonumber\\
        &+\frac{2}{m}\sum_{n=1}^{\infty}(m^2-n^2)\Re\left(b_nb_{-n}\right)b^{2m}\left(1-\left(\frac{a}{b}\right)^{2m}\right)\bigg\}.
    \end{align}

    \textbf{Step 3.} Estimation of the frequencies for $|n|\geq m+1$.
    
    First, for all $n\geq m+1$, we have by Cauchy's inequality $2ab\leq \epsilon\, a^2+\dfrac{1}{\epsilon}b^2$ (for $\epsilon=\dfrac{1}{2}$)
    \begin{align*}
        &m^2(m+n)|b_n|^2b^{2(m+n)}\left(1-\left(\frac{a}{b}\right)^{2(m+n)}\right)+m^2(n-m)|b_{-n}|^2\frac{1}{a^{2(n-m)}}\left(1-\left(\frac{a}{b}\right)^{2(n-m)}\right)\\
        &+2m(m+n)(m-n)\Re\left(b_n{b_{-n}}\right)b^{2m}\left(1-\left(\frac{a}{b}\right)^{2m}\right)\\
        &\geq \frac{1}{2}m^2(m+n)|b_n|^{2(n+m)}\left(1-\left(\frac{a}{b}\right)^{2(m+n)}\right)\\
        &+m^2(n-m)|b_{-n}|^2\left(\frac{1}{a^{2(n-m)}}\left(1-\left(\frac{a}{b}\right)^{2(n-m)}\right)-2b^{2(n-m)}\frac{\left(1-\left(\frac{a}{b}\right)^{2m}\right)^2}{1-\left(\frac{a}{b}\right)^{2(m+n)}}\right)\\
        &=\frac{1}{2}m^2(m+n)|b_n|^{2(n+m)}\left(1-\left(\frac{a}{b}\right)^{2(m+n)}\right)\\
        &+\frac{m^2(n-m)}{1-\left(\frac{a}{b}\right)^{2(n+m)}}|b_{-n}|^2\frac{1}{a^{2(n-m)}}\left(\left(1-\left(\frac{a}{b}\right)^{2(n-m)}\right)\left(1-\left(\frac{a}{b}\right)^{2(n+m)}\right)-2\left(\frac{a}{b}\right)^{2(n-m)}\left(1-\left(\frac{a}{b}\right)^{2m}\right)^2\right)\\
        &\geq \frac{1}{2}m^2(m+n)|b_n|^{2(n+m)}\left(1-\left(\frac{a}{b}\right)^{2(m+n)}\right)+\frac{m^2(n-m)}{1-\left(\frac{a}{b}\right)^{2(n+m)}}|b_{-n}|^2\frac{1}{a^{2(n-m)}}\left(1-4\left(\frac{a}{b}\right)^2\right).
    \end{align*}
    Therefore, assuming that
    \begin{align*}
        \log\left(\frac{b}{a}\right)\geq \log\left(\frac{4}{\sqrt{3}}\right),
    \end{align*}
    we deduce that
    \begin{align}\label{m_biharmonique_1}
        &m^2(m+n)|b_n|^2b^{2(m+n)}\left(1-\left(\frac{a}{b}\right)^{2(m+n)}\right)+m^2(n-m)|b_{-n}|^2\frac{1}{a^{2(n-m)}}\left(1-\left(\frac{a}{b}\right)^{2(n-m)}\right)\nonumber\\
        &+2m(m+n)(m-n)\Re\left(b_n{b_{-n}}\right)b^{2m}\left(1-\left(\frac{a}{b}\right)^{2m}\right)\nonumber\\
        &\geq \frac{1}{2}m^2(m+n)|b_n|^{2(n+m)}\left(1-\left(\frac{a}{b}\right)^{2(m+n)}\right)+\frac{1}{4}m^2(n-m)|b_{-n}|^2\frac{1}{a^{2(n-m)}}\nonumber\\
        &\geq \frac{1}{4}m^2\left((m+n)|b_n|^{2}b^{2(n+m)}+(n-m)|b_{-n}|^2\frac{1}{a^{2(n-m)}}\right).
    \end{align}
    In particular, we have
    \begin{align}\label{bound_lm1}
        8\pi\,m^2\sum_{n=m+1}^{\infty}(m+n)|b_n|^2b^{2(m+n)}+8\pi\,m^2\sum_{n=m+1}^{\infty}(n-m)|b_{-n}|^2\frac{1}{a^{2(n-m)}}\leq \int_{\Omega}\left(\leb_mu\right)^2dx.
    \end{align}

    \textbf{Step 4.} Estimation of the frequencies for $1\leq |n|\leq m$.
    
    Notice that the crossed terms vanish for $m=n$, so we get for free
    \begin{align}\label{bound_lm1_extra}
        &64\pi\,m^3|b_m|^2b^{4m}\left(1-\left(\frac{a}{b}\right)^{4m}\right)+8\pi\,m^2\sum_{n=m+1}^{\infty}(m+n)|b_n|^2b^{2(m+n)}+8\pi\,m^2\sum_{n=m+1}^{\infty}(n-m)|b_{-n}|^2\frac{1}{a^{2(n-m)}}\nonumber\\
        &\leq \int_{\Omega}\left(\leb_mu\right)^2dx.
    \end{align}
    Now, for $1\leq n\leq m-1$, we have
    \begin{align*}
        &m^2(m+n)|b_n|^2b^{2(m+n)}\left(1-\left(\frac{a}{b}\right)^{2(m+n)}\right)+m^2(m-n)|b_{-n}|^2b^{2(m-n)}\left(1-\left(\frac{a}{b}\right)^{2(m-n)}\right)\\
        &+2m(m+n)(m-n)\Re\left(b_n{b_{-n}}\right)b^{2m}\left(1-\left(\frac{a}{b}\right)^{2m}\right)\\
        &\geq m^2(m+n)|b_n|^2b^{2(m+n)}\left(1-\left(\frac{a}{b}\right)^{2(m+n)}\right)-(m+n)^2(m-n)|b_n|^2b^{2(m+n)}\frac{\left(1-\left(\frac{a}{b}\right)^{2m}\right)^{2}}{1-\left(\frac{a}{b}\right)^{2(m-n)}}\\
        &=\frac{(m+n)|b_n|^2}{1-\left(\frac{a}{b}\right)^{2(m-n)}}b^{2(m+n)}\left(m^2\left(1-\left(\frac{a}{b}\right)^{2(m+n)}\right)\left(1-\left(\frac{a}{b}\right)^{2(m-n)}\right)-\left(m^2-n^2\right)\left(1-\left(\frac{a}{b}\right)^{2m}\right)^2\right)\\
        &\geq \frac{(m+n)|b_n|^2}{1-\left(\frac{a}{b}\right)^{2(m-n)}}\left(n^2-2m^2\left(\frac{a}{b}\right)^{2(m-n)}\right).
    \end{align*}
    Therefore, assuming that 
    \begin{align*}
        2m^2\left(\frac{a}{b}\right)^{2(m-n)}\leq \frac{n^2}{2},
    \end{align*}
    or
    \begin{align}\label{additional_conformal_class1}
        \log\left(\frac{b}{a}\right)\geq \frac{1}{m-n}\log\left(\frac{2m}{n}\right),
    \end{align}
    we deduce that 
    \begin{align*}
        &m^2(m+n)|b_n|^2b^{2(m+n)}\left(1-\left(\frac{a}{b}\right)^{2(m+n)}\right)+m^2(m-n)|b_{-n}|^2b^{2(m-n)}\left(1-\left(\frac{a}{b}\right)^{2(m-n)}\right)\\
        &+2m(m+n)(m-n)\Re\left(b_n{b_{-n}}\right)b^{2m}\left(1-\left(\frac{a}{b}\right)^{2m}\right)\geq \frac{1}{2}(m+n)n^2|b_n|^2b^{2(m+n)}.
    \end{align*}
    Likewise, we have
    \begin{align*}
        &m^2(m+n)|b_n|^2b^{2(m+n)}\left(1-\left(\frac{a}{b}\right)^{2(m+n)}\right)+m^2(m-n)|b_{-n}|^2b^{2(m-n)}\left(1-\left(\frac{a}{b}\right)^{2(m-n)}\right)\\
        &+2m(m+n)(m-n)\Re\left(b_n{b_{-n}}\right)b^{2m}\left(1-\left(\frac{a}{b}\right)^{2m}\right)\\
        &\geq m^2(m-n)|b_{-n}|^2b^{2(m-n)}\left(1-\left(\frac{a}{b}\right)^{2(m-n)}\right)-(m+n)(m-n)^2|b_{-n}|^2b^{2(m-n)}\frac{\left(1-\left(\frac{a}{b}\right)^{2m}\right)^2}{1-\left(\frac{a}{b}\right)^{2(m+n)}}\\
        &=\frac{(m-n)|b_{-n}|^2}{1-\left(\frac{a}{b}\right)^{2(m+n)}}b^{2(m-n)}\left(m^2\left(1-\left(\frac{a}{b}\right)^{2(m-n)}\right)\left(1-\left(\frac{a}{b}\right)^{2(m+n)}\right)-\left(m^2-n^2\right)\left(1-\left(\frac{a}{b}\right)^{2m}\right)^{2}\right)\\
        &\geq \frac{(m-n)|b_{-n}|^2}{1-\left(\frac{a}{b}\right)^{2(m+n)}}b^{2(m-n)}\left(n^2-2m^2\left(\frac{a}{b}\right)^{2(m-n)}\right)\\
        &\geq \frac{1}{2}n^2(m-n)|b_{-n}|^2b^{2(m-n)}
    \end{align*}
    thanks to the previous condition \eqref{additional_conformal_class1} on the conformal class. Gathering both estimates, we finally deduce by \eqref{bound_lm1} that
    \begin{align}\label{bound_lm1_improved}
        &\sum_{n=1}^{m-1}n^2(m+n)|b_n|^2b^{2(m+n)}+\sum_{n=1}^{m-1}n^2(m-n)|b_{-n}|^2b^{2(m-n)}+8m^3|b_m|^2b^{4m}\left(1-\left(\frac{a}{b}\right)^{4m}\right)\nonumber\\
        &+m^2\sum_{n=m+1}^{\infty}(m+n)|b_n|^2b^{2(m+n)}+m^2\sum_{n=m+1}^{\infty}(n-m)|b_{-n}|^2\frac{1}{a^{2(n-m)}}\leq \frac{1}{8\pi}\int_{\Omega}\left(\leb_mu\right)^2dx.
    \end{align}
    Notice that we could control all $b_n$ frequencies except for $b_{-m}$ and $b_0$.

    \textbf{Step 5.} Estimation of the $L^2$ norm of $\mathfrak{L}_mu$.
    
    Then, consider the following quantity
    \begin{align*}
        \int_{\Omega}|z|^{2-2m}\left|\p{z}^2v\right|^2|dz|^2.  
    \end{align*}
    If $v=|z|^{m-1}u=(z\z)^{\frac{m-1}{2}}u$, we get
    \begin{align*}
        \p{z}v&=|z|^{m-1}\p{z}u+\frac{m-1}{2}\z|z|^{m-3}u\\
        \p{z}^2v&=|z|^{m-1}\p{z}^2u+(m-1)\z|z|^{m-3}u+\frac{(m-1)(m-3)}{4}\z^2|z|^{m-5}u\\
        &=|z|^{m-1}\left(\p{z}^2u+\frac{m-1}{z}\p{z}u+\frac{(m-1)(m-3)}{4z^2}u\right).
    \end{align*}
    Therefore, we have
    \begin{align*}
        \int_{\Omega}|z|^{2-2m}\left|\p{z}^2v\right|^2|dz|^2=\int_{\Omega}\left|\p{z}^2u+\frac{m-1}{z}\p{z}u+\frac{(m-1)(m-3)}{4z^2}u\right|^2|dz|^2,
    \end{align*}
    which justifies the above choice in the inequality. We therefore define the complex elliptic operator (acting on real functions) with regular singularities
    \begin{align}\label{frak_lm}
       \mathfrak{L}_m=\p{z}^2+\frac{m-1}{z}\p{z}+\frac{(m-1)(m-3)}{4z^2}.
    \end{align}
    Recall that
    \begin{align*}
        u(z)&=|z|^{1-m}\left(\alpha_1\log|z|+2\,\Re\left(\sum_{n\in \Z}a_nz^n\right)\right)+|z|^{m+1}\left(\alpha_2\log|z|+2\,\Re\left(\sum_{n\in \Z}b_nz^n\right)\right).
    \end{align*}
    We get
    \begin{align*}
        \p{z}u&=\frac{1-m}{2}\z|z|^{-m-1}\left(\alpha_1\log|z|+2\,\Re\left(\sum_{n\in \Z}a_nz^n\right)\right)+|z|^{1-m}\left(\frac{\alpha_1}{2z}+\sum_{n\in \Z}n\,a_nz^{n-1}\right)\\
        &+\frac{m+1}{2}\z|z|^{m-1}\left(\alpha_2\log|z|+2\,\Re\left(\sum_{n\in \Z}b_nz^n\right)\right)+|z|^{m+1}\left(\frac{\alpha_2}{2z}+\sum_{n\in \Z}n\,b_nz^{n-1}\right)\\
        &=\frac{1-m}{2}|z|^{1-m}\frac{1}{z}\left(\alpha_1\log|z|+2\,\Re\left(\sum_{n\in \Z}a_nz^n\right)\right)+|z|^{1-m}\left(\frac{\alpha_1}{2z}+\sum_{n\in \Z}n\,a_nz^{n-1}\right)\\
        &+\frac{m+1}{2}|z|^{m+1}\frac{1}{z}\left(\alpha_2\log|z|+2\,\Re\left(\sum_{n\in \Z}b_nz^n\right)\right)+|z|^{m+1}\left(\frac{\alpha_2}{2z}+\sum_{n\in \Z}n\,b_nz^{n-1}\right)
    \end{align*}
    and
    \begin{align*}
        \p{z}^2u&=\frac{m^2-1}{4}\z^2|z|^{-m-3}\left(\alpha_1\log|z|+2\,\Re\left(\sum_{n\in \Z}a_nz^n\right)\right)+(1-m)\z|z|^{-m-1}\left(\frac{\alpha_1}{2z}+\sum_{n\in\Z}n\,a_nz^{n-1}\right)\\
        &+|z|^{1-m}\left(-\frac{\alpha_1}{2z^2}+\sum_{n\in \Z}n(n-1)a_nz^{n-2}\right)+\frac{m^2-1}{4}\z^2|z|^{m-3}\left(\alpha_2\log|z|+2\,\Re\left(\sum_{n\in \Z}b_nz^n\right)\right)\\
        &+(m+1)\z|z|^{m-1}\left(\frac{\alpha_2}{2z}+\sum_{n\in \Z}n\,b_nz^{n-1}\right)+|z|^{m+1}\left(-\frac{\alpha_2}{2z^2}+\sum_{n\in \Z}n(n-1)b_n\,z^{n-2}\right)\\
        &=\frac{m^2-1}{4}|z|^{1-m}\frac{1}{z^2}\left(\alpha_1\log|z|+2\,\Re\left(\sum_{n\in \Z}a_nz^n\right)\right)+(1-m)|z|^{1-m}\left(\frac{\alpha_1}{2z^2}+\sum_{n\in \Z}n\,a_nz^{n-2}\right)\\
        &+|z|^{1-m}\left(-\frac{\alpha_1}{2z^2}+\sum_{n\in \Z}n(n-1)a_nz^{n-2}\right)+\frac{m^2-1}{4}|z|^{m+1}\frac{1}{z^2}\left(\alpha_2\log|z|+2\,\Re\left(\sum_{n\in \Z}b_nz^n\right)\right)\\
        &+(m+1)|z|^{m+1}\left(\frac{\alpha_2}{2z^2}+\sum_{n\in \Z}n\,b_nz^{n-2}\right)+|z|^{m+1}\left(-\frac{\alpha_2}{2z^2}+\sum_{n\in \Z}n(n-1)b_nz^{n-2}\right).
    \end{align*}
    Therefore, we have
    \begin{align*}
       \mathfrak{L}_mu&=\p{z}^2u+\frac{m-1}{z}\p{z}u+\frac{(m-1)(m-3)}{4z^2}u\\
         &=|z|^{1-m}\left(\alpha_1\frac{\log|z|}{z^2}-\frac{\alpha_1}{z^2}+\sum_{n\in \Z}\left(n(n-1)-(m-1)\right)a_nz^{n-2}-\sum_{n\in \Z}(m-1)\bar{a_n}\frac{\z^n}{z^2}\right)\\
        &+|z|^{m+1}\left(m(m-1)\alpha_2\frac{\log|z|}{z^2}+m\frac{\alpha_2}{z^2}+\sum_{n\in \Z}\left(n(2m+n-1)+m(m-1)\right)b_nz^{n-2}\right.\\
        &\left.+m(m-1)\sum_{n\in \Z}\bar{b_n}\frac{\z^n}{z^2}\right).
    \end{align*}
    Therefore, we get
    \begin{align*}
        &\mathrm{Rad}\left(|\mathfrak{L}_mu|^2\right)=|z|^{2-2m}\left(\alpha_1^2\frac{\log^2|z|}{|z|^4}+\frac{\alpha_1^2}{|z|^4}+\sum_{n\in \Z}\left(\left(n(n-1)-(m-1)\right)^2+(m-1)^2\right)|a_n|^2|z|^{2n-4}\right.\\
        &\left.-2\alpha_1^2\frac{\log|z|}{|z|^4}-4(m-1)\alpha_1a_0\frac{\log|z|}{|z|^4}+4(m-1)\frac{\alpha_2a_0}{|z|^4}\right.\\
        &\left.-2(m-1)\sum_{n\in \Z}\left(n(n-1)-(m-1)\right)\Re\left(a_n\bar{a_{-n}}\right)\frac{1}{|z|^4}\right)\\
        &+|z|^{2m+2}\left(m^2(m-1)^2\alpha_2^2\frac{\log^2|z|}{|z|^4}+m^2\frac{\alpha_2^2}{|z|^4}\right.\\
        &\left.+\sum_{n\in \Z}\left(\left(n(2m+n-1)+m(m-1)\right)^2+m^2(m-1)^2\right)|b_n|^2|z|^{2n-4}\right.\\
        &+2m^2(m-1)\alpha_2^2\frac{\log|z|}{|z|^4}+4m^2(m-1)^2\alpha_2b_0\frac{\log|z|}{|z|^4}+4m^2(m-1)\frac{\alpha_2b_0}{|z|^4}\\
        &\left.+2m(m-1)\sum_{n\in \Z}\left(n(2m+n-1)+m(m-1)\right)\Re\left(b_n\bar{b_{-n}}\right)\frac{1}{|z|^4}\right)\\
        &+2m(m-1)\alpha_1\alpha_2\frac{\log^2|z|}{|z|^2}-m(m-3)\alpha_1\alpha_2\frac{\log|z|}{|z|^2}-2m\frac{\alpha_1\alpha_2}{|z|^2}\\
        &+\left(4m(m-1)\alpha_1b_0-4m(m-1)^2\alpha_2a_0\right)\frac{\log|z|}{|z|^2}
        -\left(4m(m-1)\alpha_1b_0+4m(m-1)\alpha_2a_0\right)\frac{1}{|z|^2}\\
        &+2\sum_{n\in \Z}\Big\{\left(n(n-1)-(m-1)\right)\left(n(2m+n-1)+m(m-1)\right)-m(m-1)^2\Big\}\Re\left(a_n\bar{b_{n}}\right)|z|^{2n-2}\\
        &+2\sum_{n\in \Z}\left(m(m-1)\left(n(n-1)-(m-1)\right)+(m-1)\left(n(n-2m-1)-m(m-1)\right)\right)\Re\left(a_n\bar{b_{-n}}\right)\frac{1}{|z|^2}.
    \end{align*}
    Then, we have for $\alpha\neq -1$
    \begin{align*}
        \int r^{\alpha}\log(r)dr&=\frac{r^{\alpha+1}}{\alpha+1}\log(r)-\frac{1}{\alpha+1}\int r^{\alpha}dr=\frac{r^{\alpha+1}}{\alpha+1}\log(r)-\frac{1}{(\alpha+1)^2}r^{\alpha+1}\\
        \int r^{\alpha}\log^2(r)dr%&=\frac{r^{\alpha+1}}{\alpha+1}\log^2(r)-\frac{2}{\alpha+1}\int r^{\alpha}\log(r)dr\\
        &=\frac{r^{\alpha+1}}{\alpha+1}\log^2(r)-\frac{2}{(\alpha+1)^2}r^{\alpha+1}\log(r)+\frac{2}{(\alpha+1)^3}r^{\alpha+1},
    \end{align*}
    and for all $\beta>0$
    \begin{align*}
        \int \frac{\log^{\beta}(r)}{r}dr=\frac{1}{\beta+1}\log^{\beta+1}(r).
    \end{align*}
    Therefore, we get
    \small
    \begin{align}\label{frak_m_l2}
        &\int_{\Omega}\left|\mathfrak{L}_mu\right|^2|dz|^2=2\pi\int_{a}^b\bigg(\alpha_1^2r^{-2m-1}\log^2(r)+\alpha_1^2r^{-2m-1}\nonumber\\
        &+\sum_{n\in \Z}\left(\left(n(n-1)-(m-1)^2\right)^2+(m-1)^2\right)|a_n|^2r^{2n-2m-1}-2\alpha_1^2r^{-2m-1}\log(r)\nonumber\\
        &-4(m-1)\alpha_1a_0r^{-2m-1}
        -2(m-1)\sum_{n\in \Z}\left(n(n-1)-(m-1)\right)\Re\left(a_n\bar{a_{-n}}\right)r^{-2m-1}\nonumber\\
        &+m^2(m-1)^2\alpha_2^2r^{2m-1}\log^2(r)+m^2\alpha^2_2r^{2m-1}\nonumber\\
        &+
        \sum_{n\in \Z}\left(\left(n(2m+n-1)+m(m-1)\right)^2+m^2(m-1)^2\right)|b_n|^2r^{2n+2m-1}\nonumber\\
        &+2m^2(m-1)\alpha_2^2r^{2m-1}\log(r)+4m^2(m-1)^2\alpha_2b_0r^{2m-1}\log(r)+4m^2(m-1)r^{2m-1}\alpha_2b_0\nonumber\\
        &+2m(m-1)\sum_{n\in \Z}\left(n(2m+n-1)+m(m-1)\right)\Re\left(b_n\bar{b_{-n}}\right)r^{2m-1}\nonumber\\
        &+2m(m-1)\alpha_1\alpha_2r^{-1}\log^2(r)-m(m-3)\alpha_1\alpha_2r^{-1}\log(r)-2m\alpha_1\alpha_2r^{-1}\nonumber\\
        &+4m(m-1)\left(\alpha_1b_0-(m-1)\alpha_2a_0\right)r^{-1}\log(r)-4m(m-1)\left(\alpha_1b_0+\alpha_2a_0\right)r^{-1}\nonumber\\
        &+2\sum_{n\in \Z}\Big\{\left(n(n-1)-(m-1)\right)\left(n(2m+n-1)+m(m-1)\right)-m(m-1)^2\Big\}\Re\left(a_n\bar{b_{n}}\right)r^{2n-1}\nonumber\\
        &+2\sum_{n\in \Z}\left(m(m-1)\left(n(n-1)-(m-1)\right)+(m-1)\left(n(n-2m-1)-m(m-1)\right)\right)\Re\left(a_n\bar{b_{-n}}\right)r^{-1}\bigg)dr\nonumber\\
        &=2\pi\bigg(\frac{1}{2m}\alpha_1^2\left(\frac{1}{a^{2m}}\log^2(a)-\frac{1}{m}\frac{1}{a^{2m}}\log\left(\frac{1}{a}\right)-\frac{1}{2m^2}\frac{1}{a^{2m}}-\left(\frac{1}{b^{2m}}\log^2(b)+\frac{1}{m}\frac{1}{b^{2m}}\log\left(\frac{1}{b}\right)+\frac{1}{2m^2}\frac{1}{b^{2m}}\right)\right)\nonumber\\
        &+\frac{1}{2m}\alpha_1^2\frac{1}{a^{2m}}\left(1-\left(\frac{a}{b}\right)^{2m}\right)+\frac{1}{2}\sum_{n=m+1}^{\infty}\frac{\left(n(n-1)-(m-1)\right)^2+(m-1)^2}{n-m}|a_n|^2b^{2(n-m)}\left(1-\left(\frac{a}{b}\right)^{2(n-m)}\right)\nonumber\\
        &+(m-1)^2\left((m-1)^2+1\right)|a_m|^2\log\left(\frac{b}{a}\right)\nonumber\\
        &+\frac{1}{2}\sum_{n=1-m}^{\infty}\frac{\left(n(n+1)-(m-1)^2\right)^2+(m-1)^2}{n+m}|a_{-n}|^2\frac{1}{a^{2(n+m)}}\left(1-\left(\frac{a}{b}\right)^{2(n+m)}\right)\nonumber\\
        &+\frac{1}{m}\alpha_1^2\left(\frac{1}{a^{2m}}\log\left(\frac{1}{a}\right)+\frac{1}{2m}\frac{1}{a^{2m}}-\left(\frac{1}{b^{2m}}\log\left(\frac{1}{b}\right)+\frac{1}{2m}\frac{1}{b^{2m}}\right)\right)-\frac{2(m-1)}{m}\alpha_1a_0\frac{1}{a^{2m}}\left(1-\left(\frac{a}{b}\right)^{2m}\right)\nonumber\\
        &-\frac{m-1}{m}\sum_{n\in \Z}\left(n(n-1)-(m-1)\right)\Re\left(a_n\bar{a_{-n}}\right)\frac{1}{a^{2m}}\left(1-\left(\frac{a}{b}\right)^{2m}\right)\nonumber\\
        &+\frac{m(m-1)^2}{2}\alpha_2^2\left(b^{2m}\log^2(b)+\frac{1}{m}b^{2m}\log\left(\frac{1}{b}\right)+\frac{1}{2m^2}b^{2m}\log\left(\frac{1}{b}\right)\right.\nonumber\\
        &\left.-\left(a^{2m}\log^2(a)+\frac{1}{m}a^{2m}\log\left(\frac{1}{a}\right)+\frac{1}{2m^2}a^{2m}\right)\right)+\frac{m}{2}\alpha_2^2b^{2m}\left(1-\left(\frac{a}{b}\right)^{2m}\right)\nonumber\\
        &+\frac{1}{2}\sum_{n=1-m}^{\infty}\frac{\left(n(2m+n-1)+m(m-1)\right)^2+m^2(m-1)^2}{n+m}|b_n|^2b^{2(n+m)}\left(1-\left(\frac{a}{b}\right)^{2(n+m)}\right)\nonumber\\
        &+m^2(m-1)^2|b_{-m}|^2\log\left(\frac{b}{a}\right)\nonumber\\
        &+\frac{1}{2}\sum_{n=m+1}^{\infty}\frac{\left(n(n-2m-1)+m(m-1)\right)^2+m^2(m-1)^2}{n-m}|b_{-n}|^2\frac{1}{a^{2(n-m)}}\left(1-\left(\frac{a}{b}\right)^{2(n-m)}\right)\nonumber\\
        &-m(m-1)\alpha_2^2\left(b^{2m}\log\left(\frac{1}{b}\right)+\frac{1}{2m}b^{2m}-\left(a^{2m}\log\left(\frac{1}{a}\right)+\frac{1}{2m}a^{2m}\right)\right)\nonumber\\
        &-2m(m-1)^2\alpha_2b_0\left(b^{2m}\log\left(\frac{1}{b}\right)+\frac{1}{2m}b^{2m}-\left(a^{2m}\log\left(\frac{1}{a}\right)+\frac{1}{2m}a^{2m}\right)\right)\nonumber\\     
        &+2m(m-1)\alpha_0b_0b^{2m}\left(1-\left(\frac{a}{b}\right)^{2m}\right)\nonumber\\
        &+(m-1)\sum_{n\in \Z}\left(n(2m+n-1)+m(m-1)\right)\Re\left(b_n\bar{b_{-n}}\right)b^{2m}\left(1-\left(\frac{a}{b}\right)^{2m}\right)\nonumber\\
        &+\frac{2m(m-1)}{3}\alpha_1\alpha_2\left(\log^3(b)-\log^3(a)\right)+\frac{m(m-3)}{2}\alpha_1\alpha_2\left(\log^2(b)-\log^2(a)\right)-2m\,\alpha_1\alpha_2\log\left(\frac{b}{a}\right)\nonumber\\
        &-4m(m-1)\left(\alpha_1b_0-(m-1)\alpha_2a_0\right)\left(\log^2(b)-\log^2(a)\right)-4(m-1)\left(\alpha_1b_0+\alpha_2a_0\right)\log\left(\frac{b}{a}\right)\nonumber\\
        &+\sum_{n=1}^{\infty}\frac{\left(n(n-1)-(m-1)\right)\left(n(2m+n-1)+m(m-1)\right)-m(m-1)^2}{n}\Re\left(a_n\bar{b_n}\right)b^{2n}\left(1-\left(\frac{a}{b}\right)^{2n}\right)\nonumber\\
        &-4m(m-1)^2a_0b_0\log\left(\frac{b}{a}\right)\nonumber\\
        &+\sum_{n=1}^{\infty}\frac{\left(n(n+1)-(m-1)\right)\left(n(n-2m-1)+m(m-1)\right)-m(m-1)^2}{n}\Re\left(a_{-n}\bar{b_{-n}}\right)\frac{1}{a^{2n}}\left(1-\left(\frac{a}{b}\right)^{2n}\right)\nonumber\\
        &+2\sum_{n\in \Z}\Big\{m(m-1)\left(n(n-1)-(m-1)\right)+(m-1)\left(n(n-2m-1)-m(m-1)\right)\Big\}\Re\left(a_n\bar{b_{-n}}\right)\log\left(\frac{b}{a}\right)\bigg).
    \end{align}
    \normalsize
    Let us first estimate the coefficients involving $a_n$ since we have already estimated all coefficients involving $b_n$ in the $L^2$ norm of $\leb_mu$.
    
    \textbf{Step 6. Estimation of the coefficients for $|n|\geq m+1$.}
    
    For all $n\geq m+1$, consider the following quantity
    \small
    \begin{align*}
        &\frac{1}{2}\frac{\left(n(n-1)-(m-1)\right)^2+(m-1)^2}{n-m}|a_n|^2b^{2(n-m)}\left(1-\left(\frac{a}{b}\right)^{2(n-m)}\right)\\
        &+\frac{1}{2}\frac{\left(n(2m+n-1)+m(m-1)\right)^2+m^2(m-1)^2}{n+m}|b_n|^2b^{2(n+m)}\left(1-\left(\frac{a}{b}\right)^{2(n+m)}\right)\\
        &+\frac{\left(n(n-1)-(m-1)\right)\left(n(2m+n-1)+m(m-1)\right)-m(m-1)^2}{n}\Re\left(a_n\bar{b_n}\right)b^{2n}\left(1-\left(\frac{a}{b}\right)^{2n}\right)\\
        &\geq \frac{1}{2\left(1-\left(\frac{a}{b}\right)^{2(n+m)}\right)}|a_n|^2b^{2(n-m)}\left(\frac{\left(n(n-1)-(m-1)\right)^2+(m-1)^2}{n-m}\left(1-\left(\frac{a}{b}\right)^{2(n-m)}\right)\left(1-\left(\frac{a}{b}\right)^{2(n+m)}\right)\right.\\
        &-\left(\frac{\left(n(n-1)-(m-1)\right)\left(n(2m+n-1)+m(m-1)\right)-m(m-1)^2}{n}\right)^2\\
        &\left.\times \frac{n+m}{\left(n(2m+n-1)+m(m-1)\right)^2+m^2(m-1)^2}\left(1-\left(\frac{a}{b}\right)^{2n}\right)^2\right).
    \end{align*}
    \normalsize
    Define
    \begin{align*}
        \left\{\begin{alignedat}{1}
            X&=n(2m+n-1)+m(m-1)\\
            Y&=n(n-1)-(m-1).
        \end{alignedat}\right.
    \end{align*}
    We have 
    \begin{align*}
        &\frac{\left(n(n-1)-(m-1)\right)^2+(m-1)^2}{n-m}\\
        &-\left(\frac{\left(n(n-1)-(m-1)\right)\left(n(2m+n-1)+m(m-1)\right)-m(m-1)^2}{n}\right)^2\\
        &\times \frac{n+m}{\left(n(2m+n-1)+m(m-1)\right)^2+m^2(m-1)^2}\\
        &=\frac{Y^2+(m-1)^2}{n-m}-\frac{(XY-m(m-1)^2)^2}{n^2}\frac{n+m}{X^2+m^2(m-1)^2}\\
        &=\frac{1}{n^2(n-m)\left(X^2+m^2(m-1)^2\right)}\\
        &\times\bigg(n^2\left(X^2+m^2(m-1)^2\right)\left(Y^2+(m-1)^2\right)-\left(n^2-m^2\right)\left(XY-m(m-1)^2\right)^2\bigg),
    \end{align*}
    and
    \begin{align*}
        &n^2\left(X^2+m^2(m-1)^2\right)\left(Y^2+(m-1)^2\right)-\left(n^2-m^2\right)\left(XY-m(m-1)^2\right)^2\\
        &=n^2X^2Y^2+(m-1)^2n^2X^2+m^2(m-1)^2Y^2+m^2(m-1)^4n^2\\
        &-\left(n^2-m^2\right)\left(X^2Y^2-2m(m-1)XY+m^2(m-1)^4\right)\\
        &=m^2X^2Y^2+(m-1)^2n^2X^2+m^2(m-1)^2Y^2+m^2(m-1)^4n^2+2m(m-1)^2\left(n^2-m^2\right)XY\\
        &+m^4(m-1)^4\geq m^2X^2Y^2.
    \end{align*}
    Now, we have
    \begin{align*}
        X=n^2+(2m-1)n+m(m-1)\geq n^2.
    \end{align*}
    On the other hand, we have
    \begin{align*}
        Y=n^2-n-(m-1),
    \end{align*}
    which implies that $Y\geq \frac{1}{2}n^2$ if and only if
    $
        n^2-2n-2(m-1)\geq 0
    $,
    which is equivalent to
    \begin{align*}
        n\geq 1+\sqrt{2m-1}.
    \end{align*}
    Since $n\geq m+1$ and $m+1\geq 1+\sqrt{2m-1}$, the condition is always verified and we finally get 
    \begin{align*}
        n^2\left(X^2+m^2(m-1)^2\right)\left(Y^2+(m-1)^2\right)-\left(n^2-m^2\right)\left(XY-m(m-1)\right)^2\geq \frac{m^2}{4}n^8.
    \end{align*}
    Since $n\geq m+1$, we have
    \begin{align*}
        X=n(2m+n-1)+m(m-1)\leq n(3n-3)+(n-1)(n-2)\leq 4n^2, 
    \end{align*}
    which implies that
    \begin{align*}
        n^2(n-m)\left(X^2+m^2(m-1)^2\right)\leq n^3\left(16n^4+(n-1)^2(n-2)^2\right)\leq 17n^7,
    \end{align*}
    and finally
    \begin{align}\label{bound_coefficient}
        &\frac{\left(n(n-1)-(m-1)\right)^2+(m-1)^2}{n-m}\nonumber\\
        &-\left(\frac{\left(n(n-1)-(m-1)\right)\left(n(2m+n-1)+m(m-1)\right)-m(m-1)^2}{n}\right)^2\nonumber\\
        &\times \frac{n+m}{\left(n(2m+n-1)+m(m-1)\right)^2+m^2(m-1)^2}\geq \frac{m^2}{68}n.
    \end{align}
    Therefore, we have
    \begin{align*}
        &\frac{\left(n(n-1)-(m-1)\right)^2+(m-1)^2}{n-m}\left(1-\left(\frac{a}{b}\right)^{2(n-m)}\right)\left(1-\left(\frac{a}{b}\right)^{2(n+m)}\right)\\
        &-\left(\frac{\left(n(n-1)-(m-1)\right)\left(n(2m+n-1)+m(m-1)\right)-m(m-1)^2}{n}\right)^2\\
        &\times \frac{n+m}{\left(n(2m+n-1)+m(m-1)\right)^2+m^2(m-1)^2}\left(1-\left(\frac{a}{b}\right)^{2n}\right)^2\\
        &\geq \frac{m^2}{68}n-\frac{2\left(\left(n(n-1)-(m-1)\right)^2+(m-1)^2\right)}{n-m}\left(\frac{a}{b}\right)^{2(n-m)}.
    \end{align*}
    We now look for a condition on the conformal class so that
    \begin{align*}
        \frac{2\left(\left(n(n-1)-(m-1)\right)^2+(m-1)^2\right)}{n-m}\left(\frac{a}{b}\right)^{2(n-m)}\leq \frac{1}{68}-\frac{1}{100},
    \end{align*}
    or
    \begin{align*}
        \left(\frac{a}{b}\right)^{2(n-m)}\leq \frac{1}{425}\frac{n-m}{\left(n(n-1)-(m-1)\right)^2+m^2(m-1)^2},
    \end{align*}
    or
    \begin{align*}
        \log\left(\frac{b}{a}\right)\geq \frac{1}{2(n-m)}\log\left(\frac{425\left(\left(n(n-1)-(m-1)\right)^2+m^2(m-1)^2\right)}{n-m}\right).
    \end{align*}
    Since the right-hand side converges to $0$ as $n\rightarrow \infty$, this quantity is bounded in $n\geq m+1$.
    Let
    \begin{align*}
        f(x)=\frac{1}{x}\log\left(\frac{425\left(\left(x^2+(2m-1)x+(m-1)^2\right)^2+m^2(m-1)^2\right)^2}{x}\right)
    \end{align*}
    for $x\geq 1$. We have
    \begin{align*}
        \left(x^2+(2m-1)x+(m-1)^2\right)^2+m^2(m-1)^2\leq (m^2+1)^2x^4+m^2(m-1)^2x^4,
    \end{align*}
    which yields
    \begin{align*}
        f(x)\leq \frac{1}{x}\log\left(425\left((m^2+1)^2+m^2(m-1)^2\right)x^3\right)\leq \frac{1}{3e}+\log\left(425\left((m^2+1)^2+m^2(m-1)^2\right)^2\right),
    \end{align*}
    since
    \begin{align*}
        \max_{x\geq 1}\frac{\log(x)}{x}=\frac{1}{e}
    \end{align*}
    and this function is strictly decreasing. Finally, the condition on the conformal class holds provided that
    \begin{align}\label{lm_log1}
        \log\left(\frac{b}{a}\right)\geq \frac{1}{3e}+\log\left(425\left((m^2+1)^2+m^2(m-1)^2\right)^2\right).
    \end{align}
    If the condition is satisfied, we get
    \begin{align}\label{borne_an_1}
        &\frac{1}{2}\frac{\left(n(n-1)-(m-1)\right)^2+(m-1)^2}{n-m}|a_n|^2b^{2(n-m)}\left(1-\left(\frac{a}{b}\right)^{2(n-m)}\right)\nonumber\\
        &+\frac{1}{2}\frac{\left(n(2m+n-1)+m(m-1)\right)^2+m^2(m-1)^2}{n+m}|b_n|^2b^{2(n+m)}\left(1-\left(\frac{a}{b}\right)^{2(n+m)}\right)\nonumber\\
        &+\frac{\left(n(n-1)-(m-1)\right)\left(n(2m+n-1)+m(m-1)\right)-m(m-1)^2}{n}\Re\left(a_n\bar{b_n}\right)b^{2n}\left(1-\left(\frac{a}{b}\right)^{2n}\right)\nonumber\\
        &\geq \frac{m^2}{200}n|a_n|^2b^{2(n-m)}.
    \end{align}
    Then, for $n\geq m+1$, consider
    \begin{align*}
        &\frac{1}{2}\frac{\left(n(n+1)-(m-1)\right)^2}{n+m}|a_{-n}|^2\frac{1}{a^{2(n+m)}}\left(1-\left(\frac{a}{b}\right)^{2(n+m)}\right)\\
        &+\frac{1}{2}\frac{\left(n(n-2m-1)+m(m-1)\right)^2+m^2(m-1)^2}{n-m}|b_{-n}|^2\frac{1}{a^{2(n-m)}}\left(1-\left(\frac{a}{b}\right)^{2(n-m)}\right)\\
        &+\frac{\left(n(n+1)-(m-1)\right)\left(n(n-2m-1)+m(m-1)\right)-m(m-1)^2}{n}\Re\left(a_{-n}\bar{b_{-n}}\right)\frac{1}{a^{2n}}\left(1-\left(\frac{a}{b}\right)^{2n}\right)\\
        &\geq \frac{1}{2}\frac{\left(n(n+1)-(m-1)\right)^2}{n+m}|a_{-n}|^2\frac{1}{a^{2(n+m)}}\left(1-\left(\frac{a}{b}\right)^{2(n+m)}\right)\\
        &-\frac{1}{2}\left(\frac{\left(n(n+1)-(m-1)\right)\left(n(n-2m-1)+m(m-1)\right)-m(m-1)^2}{n}\right)^2\\
        &\times \frac{n-m}{\left(n(n-2m-1)+m(m-1)\right)^2+m^2(m-1)^2}|a_{-n}|^2\frac{1}{a^{2(n+m)}}\frac{\left(1-\left(\frac{a}{b}\right)^{2n}\right)^2}{1-\left(\frac{a}{b}\right)^{2(n-m)}}.
    \end{align*}
    Define as above
    \begin{align*}
        \left\{\begin{alignedat}{1}
            X&=n(n-2m-1)+m(m-1)\\
            Y&=n(n+1)-(m-1).            
        \end{alignedat}\right.
    \end{align*}
    Then we have
    \begin{align*}
        &\frac{\left(n(n+1)-(m-1)\right)^2}{n+m}\\
        &-\left(\frac{\left(n(n+1)-(m-1)\right)\left(n(n-2m-1)+m(m-1)\right)-m(m-1)^2}{n}\right)^2\\
        &\times \frac{n-m}{\left(n(n-2m-1)+m(m-1)\right)^2+m^2(m-1)^2}\\
        &=\frac{Y^2}{n+m}-\frac{\left(XY-m(m-1)^2\right)^2}{n^2}\frac{n-m}{X^2+m^2(m-1)^2}\\
        &=\frac{1}{n^2(n+m)\left(X^2+m^2(m-1)^2\right)}\left(m^2X^2Y^2+(m-1)^2n^2X^2+m^2(m-1)^2Y^2+m^2(m-1)^4n^2\right.\\\
        &\left.+2m(m-1)^2\left(n^2-m^2\right)XY+m^4(m-1)^2\right)\\
        &\geq \frac{m^2X^2Y^2+m^4(m-1)^2}{n^2(n+m)\left(X^2+m^2(m-1)^2\right)}.
    \end{align*}
    Notice that $X=0$ if
    \begin{align*}
        n=m+\frac{1+\sqrt{8m+1}}{2}.
    \end{align*}
    We see that for $n\geq 2m+1$, we have
    \begin{align*}
        X\geq m(m-1),
    \end{align*}
    and for $n\geq 3m$, we easily get 
    \begin{align*}
        X\geq \frac{m-1}{3m}n^2.
    \end{align*}
    On the other hand, since $n\geq m+1$, we have $Y\geq n^2$. Then, we trivially estimate for all $m+1\leq n\leq 3m-1$
    \begin{align*}
        m^4(m-1)^2\geq \frac{m^4(m-1)^2}{(3m-1)^7}n^7.
    \end{align*}
    Therefore, we finally get that for all $n\geq m+1$, we have
    \begin{align*}
        m^2X^2Y^2+m^4(m-1)^2\geq \frac{m^4(m-1)^2}{(3m-1)^7}n^7.
    \end{align*}
    On the other hand, we have
    \begin{align*}
        n^2(n+m)(X^2+m^2(m-1)^2)\leq n^2(2n+1)(4n^2)\leq 8n^6,
    \end{align*}
    and finally, we get
    \begin{align*}
        &\frac{\left(n(n+1)-(m-1)\right)^2}{n+m}\\
        &-\left(\frac{\left(n(n+1)-(m-1)\right)\left(n(n-2m-1)+m(m-1)\right)-m(m-1)^2}{n}\right)^2\\
        &\times \frac{n-m}{\left(n(n-2m-1)+m(m-1)\right)^2+m^2(m-1)^2}\geq \frac{m^4(m-1)^2}{8(3m-1)^7}n.
    \end{align*}
    This condition implies that 
    \begin{align*}
        &\frac{1}{2}\frac{\left(n(n+1)-(m-1)\right)^2}{n+m}|a_{-n}|^2\frac{1}{a^{2(n+m)}}\left(1-\left(\frac{a}{b}\right)^{2(n+m)}\right)\\
        &-\frac{1}{2}\left(\frac{\left(n(n+1)-(m-1)\right)\left(n(n-2m-1)+m(m-1)\right)-m(m-1)^2}{n}\right)^2\\
        &\times \frac{n-m}{\left(n(n-2m-1)+m(m-1)\right)^2+m^2(m-1)^2}|a_{-n}|^2\frac{1}{a^{2(n+m)}}\frac{\left(1-\left(\frac{a}{b}\right)^{2n}\right)^2}{1-\left(\frac{a}{b}\right)^{2(n-m)}}\\
        &\geq \frac{1}{2\left(1-\left(\frac{a}{b}\right)^{2(n-m)}\right)}|a_{-n}|^2\frac{1}{a^{2(n+m)}}\left(\frac{m^4(m-1)^2}{8(3m-1)^7}n-\frac{2\left((n(n+1)-(m-1)\right)^2}{n+m}\left(\frac{a}{b}\right)^{2(n-m)}\right).
    \end{align*}
    Therefore, we look for a condition on the conformal class such that 
    \begin{align*}
        \frac{2\left((n(n+1)-(m-1)\right)^2}{n+m}\left(\frac{a}{b}\right)^{2(n-m)}\leq \frac{m^4(m-1)^2}{16(3m-1)^7}n,
    \end{align*}
    or that for all $n\geq m+1$
    \begin{align*}
        \log\left(\frac{b}{a}\right)\geq \frac{1}{2(n-m)}\log\left(\frac{32(3m-1)^7}{m^4(m-1)^2}\frac{\left(n(n+1)-(m-1)\right)^2}{n(n+m)}\right).
    \end{align*}
    Since 
    \begin{align*}
        \frac{\left(n(n+1)-(m-1)\right)^2}{n(n+m)}\leq n^2,
    \end{align*}
    we deduce that the condition is satisfied for
    \begin{align}\label{lm_log_2}
        \log\left(\frac{b}{a}\right)\geq \frac{1}{e}+\frac{1}{2}\log\left(\frac{32(3m-1)^7}{m^4(m-1)^2}\right). 
    \end{align}
    If this condition is satisfied, we get for all $n\geq m+1$
    \begin{align}\label{borne_an_2}
        &\frac{1}{2}\frac{\left(n(n+1)-(m-1)\right)^2}{n+m}|a_{-n}|^2\frac{1}{a^{2(n+m)}}\left(1-\left(\frac{a}{b}\right)^{2(n+m)}\right)\nonumber\\
        &+\frac{1}{2}\frac{\left(n(n-2m-1)+m(m-1)\right)^2+m^2(m-1)^2}{n-m}|b_{-n}|^2\frac{1}{a^{2(n-m)}}\left(1-\left(\frac{a}{b}\right)^{2(n-m)}\right)\nonumber\\
        &+\frac{\left(n(n+1)-(m-1)\right)\left(n(n-2m-1)+m(m-1)\right)-m(m-1)^2}{n}\Re\left(a_{-n}\bar{b_{-n}}\right)\frac{1}{a^{2n}}\left(1-\left(\frac{a}{b}\right)^{2n}\right)\nonumber\\
        &\geq \frac{m^4(m-1)^2}{32(3m-1)^7}\,n|a_{-n}|^2\frac{1}{a^{2(n+m)}}.
    \end{align}
    Thanks to \eqref{frak_m_l2}, \eqref{borne_an_1}, and \eqref{borne_an_2}, we deduce that
    \small
    \begin{align*}
        &\frac{m^2}{200}\sum_{n=m+1}^{\infty}n|a_n|^2b^{2(n-m)}+\frac{m^4(m-1)^2}{32(3m-1)^7}\sum_{n=m+1}^{\infty}n|a_{-n}|^2\frac{1}{a^{2(n+m)}}\\
        &-\frac{(m-1)}{m}\sum_{n=m+1}^{\infty}\left(n(n-1)-(m-1)\right)\Re\left(a_n\bar{a_{-n}}\right)\frac{1}{a^{2m}}\left(1-\left(\frac{a}{b}\right)^{2m}\right)\\
        &-\frac{(m-1)}{m}\sum_{n=m+1}^{\infty}\left(n(n+1)-(m-1)\right)\Re\left(a_n\bar{a_{-n}}\right)\frac{1}{a^{2m}}\left(1-\left(\frac{a}{b}\right)^{2m}\right)\\
        &+2\sum_{n=m+1}^{\infty}\Big\{m(m-1)\left(n(n-1)-(m-1)\right)+(m-1)\left(n(n-2m-1)-m(m-1)\right)\Big\}\Re\left(a_n\bar{b_{-n}}\right)\log\left(\frac{b}{a}\right)\\
        &+2\sum_{n=m+1}^{\infty}\Big\{m(m-1)\left(n(n+1)-(m-1)\right)+(m-1)\left(n(n+2m+1)-m(m-1)\right)\Big\}\Re\left(a_{-n}\bar{b_{n}}\right)\log\left(\frac{b}{a}\right)\\
        &\leq \frac{1}{2\pi}\int_{\Omega}\left|\mathfrak{L}_mu\right|^2|dz|^2.
    \end{align*}
    \normalsize
    On the other hand, we have by \eqref{bound_lm1}
    \begin{align*}
        4\,m^2\sum_{n=m+1}^{\infty}(m+n)|b_n|^2b^{2(n+m)}+4\,m^2\sum_{n=m+1}^{\infty}(n-m)|b_{-n}|^2\frac{1}{a^{2(n-m)}}\leq \frac{1}{2\pi}\int_{\Omega}\left(\leb_mu\right)^2dx.
    \end{align*}
    We estimate
    \begin{align*}
        -\frac{2(m-1)}{m}\left(n^2-(m-1)\right)\Re\left(a_n\bar{a_{-n}}\right)\frac{1}{a^{2m}}&\geq -\frac{64(3m-1)^7}{m^6}n^3|a_n|^2\frac{1}{a^{2(m-n)}}\\
        &-\frac{m^4(m-1)^2}{64(3m-1)^7}\,n|a_{-n}|^2\frac{1}{a^{2(n+m)}}.
    \end{align*}
    Therefore, we get
    \begin{align*}
        &\frac{m^2}{200}\sum_{n=m+1}^{\infty}n|a_n|^2b^{2(n-m)}+\frac{m^4(m-1)^2}{32(3m-1)^7}\sum_{n=m+1}^{\infty}n|a_{-n}|^2\frac{1}{a^{2(n+m)}}\\
        &-\frac{(m-1)}{m}\sum_{n=m+1}^{\infty}\left(n^2-(m-1)\right)\Re\left(a_n\bar{a_{-n}}\right)\frac{1}{a^{2m}}\left(1-\left(\frac{a}{b}\right)^{2m}\right)\\
        &\geq \frac{m^2}{200}\sum_{n=m+1}^{\infty}n|a_n|^2b^{2(n-m)}\left(1-\frac{12800(3m-1)^7}{m^8}n^2\left(\frac{a}{b}\right)^{2(n-m)}\right)\nonumber\\
        &+\frac{m^4(m-1)^2}{64(3m-1)^7}\sum_{n=m+1}^{\infty}n|a_{-n}|^2\frac{1}{a^{2(n+m)}}.
    \end{align*}
    We choose a conformal class such that for all $n\geq m+1$,
    \begin{align*}
        \frac{12800(3m-1)^7}{m^7}n^2\left(\frac{a}{b}\right)^{2(n-m)}\leq \frac{1}{2},
    \end{align*}
    or
    \begin{align*}
        \log\left(\frac{b}{a}\right)\geq \frac{1}{2(n-m)}\log\left(\frac{25600(3m-1)^7}{m^8}n^2\right).
    \end{align*}
    Therefore, the condition is satisfied if 
    \begin{align}\label{lm_log_3}
        \log\left(\frac{b}{a}\right)\geq \frac{1}{e}+m+\frac{1}{2}\log\left(\frac{25600(3m-1)^7}{m^8}\right).
    \end{align}
    Indeed, we have
    \begin{align*}
        \frac{\log(n)}{n-m}=\frac{\log(n-m)}{n-m}+\frac{\log\left(1+\frac{m}{n-m}\right)}{n-m}\leq \frac{1}{e}+\frac{m}{(n-m)^2}\leq \frac{1}{e}+m,
    \end{align*}
    where we used that
    \begin{align*}
        \sup_{x\geq 1}\frac{\log(x)}{x}=\frac{1}{e}\qquad\text{and}\;\, \log(1+x)\leq x\qquad\text{for all}\;\,x\geq 0.
    \end{align*}
    If \eqref{lm_log_3} is satisfied, we get 
    \begin{align*}
        &\frac{m^2}{200}\sum_{n=m+1}^{\infty}n|a_n|^2b^{2(n-m)}+\frac{m^4(m-1)^2}{32(3m-1)^7}\sum_{n=m+1}^{\infty}n|a_{-n}|^2\frac{1}{a^{2(n+m)}}\\
        &-\frac{2(m-1)}{m}\sum_{n=m+1}^{\infty}\left(n^2-(m-1)\right)\Re\left(a_n\bar{a_{-n}}\right)\frac{1}{a^{2m}}\left(1-\left(\frac{a}{b}\right)^{2m}\right)\\
        &\geq \frac{m^2}{400}\sum_{n=m+1}^{\infty}n|a_n|^2b^{2(n-m)}+\frac{m^4(m-1)^2}{64(3m-1)^7}\sum_{n=m+1}^{\infty}n|a_{-n}|^2\frac{1}{a^{2(n+m)}}.
    \end{align*}
    We have
    \begin{align*}
        \Big|m(m-1)\left(n(n-1)-(m-1)\right)+(m-1)\left(n(n-2m-1)-m(m-1)\right)\Big|\leq 4m(m-1)n^2.
    \end{align*}
    Therefore, we get 
    \begin{align*}
        &2\Big\{m(m-1)\left(n(n-1)-(m-1)\right)+(m-1)\left(n(n-2m-1)-m(m-1)\right)\Big\}\Re\left(a_n\bar{b_{-n}}\right)\log\left(\frac{b}{a}\right)\\
        &\geq -\frac{8(m-1)^2n^4}{n-m}|a_n|^2a^{2(n-m)}\log^2\left(\frac{b}{a}\right)-2m^2(n-m)|b_{-n}|^2\frac{1}{a^{2(n-m)}}.
    \end{align*}
    Now, we have
    \begin{align*}
        &\frac{m^2}{400}\sum_{n=m+1}^{\infty}\left(n|a_n|^2b^{2(n-m)}-\frac{8(m-1)^2n^4}{n-m}|a_n|^2a^{2(n-m)}\log^2\left(\frac{b}{a}\right)\right)\\
        &=\frac{m^2}{400}\sum_{n=m+1}^{\infty}n|a_n|^2b^{2(n-m)}\left(1-\frac{3200(m-1)^2n^3}{n-m}\log^2\left(\frac{b}{a}\right)\left(\frac{a}{b}\right)^{2(n-m)}\right).
    \end{align*}
    Therefore, we look for a conformal class such that for all $n\geq m+1$, we have
    \begin{align}\label{conformal_class_condition}
        \frac{3200(m-1)^2n^3}{n-m}\log^2\left(\frac{b}{a}\right)\left(\frac{a}{b}\right)^{2(n-m)}\leq \frac{1}{2}.
    \end{align}
    Consider the function
    \begin{align*}
        f(x)=x\log^2(x)\qquad 0<x<1.
    \end{align*}
    We have
    \begin{align*}
        f(x)=\log^2(x)+2\log(x)=\log(x)\left(\log(x)+2\right)=\log\left(\frac{1}{x}\right)\left(-2+\log\left(\frac{1}{x}\right)\right).
    \end{align*}
    We deduce that $f$ is strictly increasing on $]0,e^{-2}[$ and strictly decreasing $]e^{-2},1[$. Therefore, we have
    \begin{align*}
        \sup_{0<x<1}f(x)=f(e^{-2})=4\,e^{-2}.
    \end{align*}
    Therefore, we get 
    \begin{align*}
        \frac{3200(m-1)^2n^3}{n-m}\log^2\left(\frac{b}{a}\right)\left(\frac{a}{b}\right)^{2(n-m)}\leq \frac{12800(m-1)^2n^3}{e^2(n-m)}\left(\frac{a}{b}\right)^{2(n-m-\frac{1}{2})},
    \end{align*}
    and \eqref{conformal_class_condition} is satisfied provided that 
    \begin{align*}
        \log\left(\frac{b}{a}\right)\geq \frac{1}{2n-(2m+1)}\log\left(\frac{25600(m-1)^2n^3}{e^2(n-m)}\right).
    \end{align*}
    Finally, we have for all $n\geq m+1$
    \begin{align*}
        \frac{1}{2n-(2m+1)}&\log\left(\frac{25600(m-1)^2n^3}{e^2(n-m)}\right)\leq \log\left(\frac{25600(m-1)^2}{e^2}\right)+\frac{3}{2n-(2m+1)}\log(n)\\
        &\leq \log\left(\frac{25600(m-1)^2}{e^2}\right)+\frac{3}{2e}+3(2m+1).
    \end{align*}
    Indeed, we have
    \begin{align*}
        \frac{\log(n)}{2n-(2m+1)}&=\frac{n-(m+\frac{1}{2})+(m+\frac{1}{2})}{2n-(2m+1)}=\frac{\log(n-(m+\frac{1}{2}))}{2n-(2m+1)}+\frac{\log\left(1+\frac{2m+1}{2n-(2m+1)}\right)}{2n-(2m+1)}\\
        &\leq \frac{1}{2e}+\frac{2m+1}{\left(2n-(2m+1)\right)^2}\leq \frac{1}{2e}+2m+1,
    \end{align*}
    Therefore, provided that 
    \begin{align}\label{lm_log_4}
         \log\left(\frac{b}{a}\right)\geq \log\left(\frac{25600(m-1)^2}{e^2}\right)+\frac{3}{2e}+3(2m+1).
    \end{align}
    Assuming that this estimate holds, we deduce that  
    \begin{align*}
        &\frac{m^2}{200}\sum_{n=m+1}^{\infty}n|a_n|^2b^{2(n-m)}+\frac{m^4(m-1)^2}{32(3m-1)^7}\sum_{n=m+1}^{\infty}n|a_{-n}|^2\frac{1}{a^{2(n+m)}}\\
        &+4\,m^2\sum_{n=m+1}^{\infty}(n-m)|b_{-n}|^2\frac{1}{a^{2(n-m)}}\\
        &-\frac{2(m-1)}{m}\sum_{n=m+1}^{\infty}\left(n^2-(m-1)\right)\Re\left(a_n\bar{a_{-n}}\right)\frac{1}{a^{2m}}\left(1-\left(\frac{a}{b}\right)^{2m}\right)\\
        &+2\sum_{n=m+1}^{\infty}\Big\{m(m-1)\left(n(n-1)-(m-1)\right)+(m-1)\left(n(n-2m-1)-m(m-1)\right)\Big\}\\
        &\times \Re\left(a_n\bar{b_{-n}}\right)\log\left(\frac{b}{a}\right)\\
        &\geq \frac{m^2}{800}\sum_{n=m+1}^{\infty}n|a_n|^2b^{2(n-m)}+\frac{m^4(m-1)^2}{64(3m-1)^7}\sum_{n=m+1}^{\infty}n|a_{-n}|^2\frac{1}{a^{2(n+m)}}+2\,m^2\sum_{n=m+1}^{\infty}(n-m)|b_{-n}|^2\frac{1}{a^{2(n-m)}}.
    \end{align*}
    Now, we estimate 
    \begin{align*}
        &m(m-1)\left(n(n+1)-(m-1)\right)+(m-1)\left(n(n+2m+1)-m(m-1)\right)\\
        &\leq 2m(m-1)n^2+4(m-1)n^2\leq 6m(m-1)n^2,
    \end{align*}
    which implies that
    \begin{align*}
        &2\Big\{m(m-1)\left(n(n+1)-(m-1)\right)+(m-1)\left(n(n+2m+1)-m(m-1)\right)\Big\}\Re\left(a_{-n}\bar{b_{n}}\right)\log\left(\frac{b}{a}\right)\\
        &\geq -\frac{m^4(m-1)^2}{128(3m-1)^7}n|a_{-n}|^2\frac{1}{a^{n+m)}}-\frac{4608(3m-1)^7n^3}{m^2}|b_n|^2a^{2(n+m)}\log^2\left(\frac{b}{a}\right).
    \end{align*}
    We have
    \begin{align*}
        &4m^2\sum_{n=m+1}^{\infty}(m+n)|b_n|^2b^{2(n+m)}-\sum_{n=m+1}^{\infty}\frac{4608(3m-1)^7n^3}{m^2}|b_n|^2a^{2(n+m)}\log^2\left(\frac{b}{a}\right)\\
        &=4m^2\sum_{n=m+1}^{\infty}(m+n)|b_n|^2b^{2(n+m)}\left(1-\frac{1152(3m-1)^7n^3}{m^4(n+m)}\left(\frac{a}{b}\right)^{2(n+m)}\log^2\left(\frac{b}{a}\right)\right).
    \end{align*}
    Therefore, we look for a condition on the conformal class such that 
    \begin{align*}
        \frac{1152(3m-1)^7n^3}{m^4(n+m)}\left(\frac{a}{b}\right)^{2(n+m)}\log^2\left(\frac{b}{a}\right)\leq \frac{1}{2}.
    \end{align*}
    Using once more the inequality $x\log^2(x)\leq 4\,e^{-2}$ for all $0<x<1$, the condition is verified if 
    \begin{align*}
        \frac{1152(3m-1)^7n^3}{m^4(n+m)}\left(\frac{a}{b}\right)^{2(n+m-\frac{1}{2})}\leq \frac{1}{2},
    \end{align*}
    or
    \begin{align*}
        \log\left(\frac{b}{a}\right)\geq \frac{1}{2m+2n-1}\log\left(\frac{2304(3m-1)^7n^3}{m^4(n+m)}\right).
    \end{align*}
    We have 
    \begin{align*}
        \frac{1}{2m+2n-1}\log\left(\frac{n^3}{n+m}\right)\leq \frac{1}{2m+2n-1}\log\left((2m+2n-1)^2\right)\leq \frac{2}{e},
    \end{align*}
    so the new condition on the conformal class is given by 
    \begin{align}\label{lm_log_5}
        \log\left(\frac{b}{a}\right)\geq \frac{2}{e}+\frac{1}{4m+1}\log\left(\frac{2304(3m-1)^7}{m^4}\right).
    \end{align}
    If it is satisfied, we finally get 
    \begin{align*}
    &\frac{m^2}{200}\sum_{n=m+1}^{\infty}n|a_n|^2b^{2(n-m)}+\frac{m^4(m-1)^2}{32(3m-1)^7}\sum_{n=m+1}^{\infty}n|a_{-n}|^2\frac{1}{a^{2(n+m)}}\\
        &+4\,m^2\sum_{n=m+1}(n+m)|b_n|^2b^{2(n+m)}+4\,m^2\sum_{n=m+1}^{\infty}(n-m)|b_{-n}|^2\frac{1}{a^{2(n-m)}}\\
        &-\frac{2(m-1)}{m}\sum_{n=m+1}^{\infty}\left(n^2-(m-1)\right)\Re\left(a_n\bar{a_{-n}}\right)\frac{1}{a^{2m}}\left(1-\left(\frac{a}{b}\right)^{2m}\right)\\
        &+2\sum_{n=m+1}^{\infty}\Big\{m(m-1)\left(n(n-1)-(m-1)\right)+(m-1)\left(n(n-2m-1)-m(m-1)\right)\Big\}\\
        &\times \Re\left(a_n\bar{b_{-n}}\right)\log\left(\frac{b}{a}\right)\\
        &+2\sum_{n=m+1}^{\infty}\Big\{m(m-1)\left(n(n+1)-(m-1)\right)+(m-1)\left(n(n+2m+1)-m(m-1)\right)\Big\}\\
        &\times\Re\left(a_{-n}\bar{b_{n}}\right)\log\left(\frac{b}{a}\right)\\
        &\geq 2\,m^2\sum_{n=m+1}(n+m)|b_n|^2b^{2(n+m)}+2\,m^2\sum_{n=m+1}^{\infty}(n-m)|b_{-n}|^2\frac{1}{a^{2(n-m)}}\\
        &+\frac{m^2}{800}\sum_{n=m+1}^{\infty}n|a_n|^2b^{2(n-m)}+\frac{m^4(m-1)^2}{128(3m-1)^7}\sum_{n=m+1}^{\infty}n|a_{-n}|^2\frac{1}{a^{2(n+m)}}.
    \end{align*} 
    Using \eqref{bound_lm1} and \eqref{frak_m_l2}, we finally get 
    \begin{align}\label{bound_lm2}
        &2\,m^2\sum_{n=m+1}^{\infty}(n+m)|b_n|^2b^{2(n+m)}+2\,m^2\sum_{n=m+1}^{\infty}(n-m)|b_{-n}|^2\frac{1}{a^{2(n-m)}}\nonumber\\
        &+\frac{m^2}{800}\sum_{n=m+1}^{\infty}n|a_n|^2b^{2(n-m)}+\frac{m^4(m-1)^2}{128(3m-1)^7}\sum_{n=m+1}^{\infty}n|a_{-n}|^2\frac{1}{a^{2(n+m)}}\nonumber\\
        &\leq \frac{1}{2\pi}\int_{\Omega}\left(\leb_mu\right)^2dx+\frac{1}{2\pi}\int_{\Omega}\left|\mathfrak{L}_mu\right|^2|dz|^2.
    \end{align}

    \textbf{Step 7.}    Now, recall that
    \begin{align*}
        u(r,\theta)=\alpha_1\,r^{1-m}\log(r)+2\,\Re\left(\sum_{n\in \Z}a_nr^{1-m+n}e^{in\theta}\right)+\alpha_2r^{m+1}\log(r)+2\,\Re\left(\sum_{n\in \Z}b_nr^{m+1+n}e^{in\theta}\right).
    \end{align*}
    If $v=|x|^{m-1}u$
    \begin{align*}
        |\D v|^2|x|^{2-2m}=\left|\D u+(m-1)\frac{x}{|x|^2}u\right|^2,
    \end{align*}
    while
    \begin{align*}
        |\p{r}v|^2|x|^{2-2m}=\left|\p{r}u+\frac{m-1}{r}u\right|^2\qquad \text{and}\qquad \left|\frac{1}{r}\p{\theta}v\right|^2|x|^{2-2m}=\frac{1}{r^2}|\p{\theta}u|^2,
    \end{align*}
    which readily shows that 
    \begin{align*}
        \left|\D u+(m-1)\frac{x}{|x|^2}u\right|^2=\left|\p{r}u+\frac{m-1}{r}u\right|^2+\frac{1}{r^2}|\p{\theta}u|^2.
    \end{align*}
    Now, we compute 
    \begin{align*}
        \p{r}u&=(1-m)\alpha_1\,r^{-m}\log(r)+\alpha_1r^{-m}+2\,\Re\left(\sum_{n\in \Z}(1-m+n)a_nr^{-m+n}e^{in\theta}\right)\\
        &+(m+1)\alpha_2\,r^m\log(r)+\alpha_2r^{m}+2\,\Re\left(\sum_{n\in \Z}(m+1+n)b_nr^{m+n}e^{in\theta}\right),
    \end{align*}
    which implies that
    \begin{align*}
        \p{r}u+\frac{m-1}{r}u&=\colorcancel{(1-m)\alpha_1\,r^{-m}\log(r)}{red}+\alpha_1r^{-m}+2\,\Re\left(\sum_{n\in \Z}(\colorcancel{(1-m)}{blue}+n)a_nr^{-m+n}e^{in\theta}\right)\\
        &+(m+1)\alpha_2\,r^m\log(r)+\alpha_2r^{m}+2\,\Re\left(\sum_{n\in \Z}(m+1+n)b_nr^{m+n}e^{in\theta}\right)\\
        &+\colorcancel{(m-1)\alpha_1\,r^{-m}\log(r)}{red}+\colorcancel{2\,\Re\left(\sum_{n\in \Z}(m-1)a_n\,r^{-m+n}e^{in\theta}\right)}{blue}\\
        &+(m-1)\alpha_2\,r^m\log(r)+2\,\Re\left(\sum_{n\in \Z}(m-1)b_n\,r^{m+n}e^{in\theta}\right)\\
        &=\alpha_1\,r^{-m}+2\,\Re\left(\sum_{n\in \Z}n\,a_n\,r^{-m+n}e^{in\theta}\right)\\
        &+2m\,\alpha_2\,r^m\log(r)+\alpha_2\,r^m+2\,\Re\left(\sum_{n\in \Z}(2m+n)b_n\,r^{m+n}e^{in\theta}\right).
    \end{align*}
    On the other hand, we trivially have
    \begin{align*}
        \frac{1}{r}\p{\theta}u=2\,\Re\left(\sum_{n\in \Z^{\ast}}n\,a_n\,r^{-m+n}e^{in\theta}\right)+2\,\Re\left(\sum_{n\in \Z}n\,b_n\,r^{m+n}e^{in\theta}\right).
    \end{align*}
    Using the elementary inequality
    \begin{align*}
        \left|\sum_{i=1}^dz_i\right|^2\leq d\sum_{i=1}^d|z_n|^2\qquad \forall (z_1,\cdots,z_d)\in \C^d,
    \end{align*}
    we deduce that
    \begin{align*}
        &\mathrm{Rad}\left(\left|\D u+(m-1)\frac{x}{|x|^2}u\right|^2\right)=\mathrm{Rad}\left(\left|\p{r}u+\frac{m-1}{r}u\right|^2+\frac{1}{r^2}|\p{\theta}u|^2\right)\\
        &\leq 5\alpha_1^2r^{-2m}+20\sum_{n\in \Z^{\ast}}|n|^2|a_n|^2r^{-2m+2n}+20m^2\alpha_2^2r^{2m}\log^2(r)+5\alpha_2^2r^{2m}+20\sum_{n\in \Z}(2m+n)^2|b_n|^2r^{2m+2n}\\
        &+8\sum_{n\in \Z^{\ast}}n^2|a_n|^2r^{-2m+2n}+8\sum_{n\in \Z^{\ast}}n^2|b_n|^2r^{2m+2n}\\
        &=5\alpha_1^2r^{-2m}+28\sum_{n\in \Z^{\ast}}|n|^2|a_n|^2r^{-2m+2n}+20m^2\alpha_2^2r^{2m}\log^2(r)+5\alpha_2^2r^{2m}\\
        &+4\sum_{n\in \Z}\left(5(2m+n)^2+2n^2\right)|b_n|^2r^{2m+2n}.
    \end{align*}
    Therefore, we have for all $0<|\gamma|<1$
    \begin{align*}
        &\int_{\Omega}\frac{1}{|x|^2}\left|\D u+(m-1)\frac{x}{|x|^2}u\right|^2|x|^{2\gamma}dx\leq 2\pi\int_{a}^b\bigg(5\alpha_1^2r^{-2m-1+2\gamma}+28\sum_{n\in \Z^{\ast}}|n|^2|a_n|^2r^{2n-2m-1+2\gamma}\\
        &+20m^2\alpha_2^2r^{2m-1+2\gamma}\log^2(r)+5\alpha_2^2r^{2m-1+2\gamma}+4\sum_{n\in \Z}\left(5(2m+n)^2+2n^2\right)|b_n|^2r^{2m+2n-1+2\gamma}\bigg)dr\\
        &=2\pi\bigg(\frac{5\alpha_1^2}{2(m-\gamma)}\frac{1}{a^{2(m-\gamma)}}\left(1-\left(\frac{a}{b}\right)^{2(m-\gamma)}\right)+14\sum_{n\in\Z^{\ast}}\frac{|n|^2}{|n-m+\gamma|}|a_n|^2\left|b^{2(n-m+\gamma)}-a^{2(n-m+\gamma)}\right|\\
        &+\frac{10m^2}{m+\gamma}\alpha_2^2\left(b^{2m+2\gamma}\log^2(b)+\frac{1}{m+\gamma}b^{2m+2\gamma}\log\left(\frac{1}{b}\right)+\frac{1}{2(m+\gamma)^2}b^{2m+2\gamma}\right.\\
        &\left.-\left(a^{2m+2\gamma}\log^2(a)+\frac{1}{m+\gamma}a^{2m+2\gamma}\log\left(\frac{1}{a}\right)+\frac{1}{2(m+\gamma)^2}a^{2m+2\gamma}\right)\right)\\
        &+\frac{5\alpha_2^2}{2(m+\gamma)}b^{2(m+\gamma)}\left(1-\left(\frac{a}{b}\right)^{2(m+\gamma)}\right)+4\sum_{n\in \Z}\frac{5(2m+n)^2+2n^2}{|m+n+\gamma|}|b_n|^2\left|b^{2(n+m+\gamma)}-a^{2(n+m+\gamma)}\right|\bigg).
    \end{align*}
    Assuming that $0<\gamma<1$, we deduce that 
    \begin{align*}
        &\int_{\Omega}\frac{1}{|x|^2}\left|\D u+(m-1)\frac{x}{|x|^2}u\right|^2|x|^{2\gamma}dx\leq 2\pi\bigg(\frac{5\alpha_1^2}{2(m-\gamma)}\frac{1}{a^{2(m-\gamma)}}\left(1-\left(\frac{a}{b}\right)^{2(m-\gamma)}\right)\\
        &+14\sum_{n=m}^{\infty}\frac{n^2}{n-m+\gamma}|a_n|^2b^{2(n-m+\gamma)}\left(1-\left(\frac{a}{b}\right)^{2(n-m+\gamma)}\right)\\
        &+14\sum_{n=1-m}^{\infty}\frac{n^2}{n+m-\gamma}|a_{-n}|^2\frac{1}{a^{2(n+m-\gamma)}}\left(1-\left(\frac{a}{b}\right)^{2(n+m-\gamma)}\right)\\
        &+\frac{10m^2}{m+\gamma}\alpha_2^2\left(b^{2m+2\gamma}\log^2(b)+\frac{1}{m+\gamma}b^{2m+2\gamma}\log\left(\frac{1}{b}\right)+\frac{1}{2(m+\gamma)^2}b^{2m+2\gamma}\right.\\
        &\left.-\left(a^{2m+2\gamma}\log^2(a)+\frac{1}{m+\gamma}a^{2m+2\gamma}\log\left(\frac{1}{a}\right)+\frac{1}{2(m+\gamma)^2}a^{2m+2\gamma}\right)\right)\\
        &+\frac{5\alpha_2^2}{2(m+\gamma)}b^{2(m+\gamma)}\left(1-\left(\frac{a}{b}\right)^{2(m+\gamma)}\right)+4\sum_{n=-m}^{\infty}\frac{5(2m+n)^2+2n^2}{n+m+\gamma}|b_n|^2b^{2(n+m+\gamma)}\left(1-\left(\frac{a}{b}\right)^{2(n+m+\gamma)}\right)\\
        &+4\sum_{n=1-m}^{\infty}\frac{5(n-2m)^2+2n^2}{n-m-\gamma}|b_{-n}|^2\frac{1}{a^{2(n-m-\gamma)}}\left(1-\left(\frac{a}{b}\right)^{2(n-m-\gamma)}\right)\bigg).
    \end{align*}
    Now, recall \eqref{bound_lm2}. 
    \begin{align}\label{bound_lm2*}
        &\frac{m^2}{800}\sum_{n=m+1}^{\infty}n|a_n|^2b^{2(n-m)}+\frac{m^4(m-1)^2}{128(3m-1)^7}\sum_{n=m+1}^{\infty}n|a_{-n}|^2\frac{1}{a^{2(n+m)}}\nonumber\\
        &+2\,m^2\sum_{n=m+1}^{\infty}(n+m)|b_n|^2b^{2(n+m)}+2\,m^2\sum_{n=m+1}^{\infty}(n-m)|b_{-n}|^2\frac{1}{a^{2(n-m)}}\nonumber\\
        &\leq \frac{1}{2\pi}\int_{\Omega}\left(\leb_mu\right)^2dx+\frac{1}{2\pi}\int_{\Omega}\left|\mathfrak{L}_mu\right|^2|dz|^2.
    \end{align}
    We trivially estimate
    \begin{align*}
        &28\pi\sum_{n=m+1}^{\infty}\frac{n^2}{n-m+\gamma}b^{2(n-m+\gamma)}\left(1-\left(\frac{a}{b}\right)^{2(n-m+\gamma)}\right)\leq 28\pi\,b^{2\gamma}\sum_{n=m+1}^{\infty}n|a_n|^2b^{2(n-m)}\\
        &\leq \frac{11200}{m^2}b^{2\gamma}\left(\int_{\Omega}\left(\leb_mu\right)^2dx+\int_{\Omega}|\mathfrak{L}_mu|^2|dz|^2\right).
    \end{align*}
    Likewise, we have
    \begin{align*}
        &28\pi\sum_{n=m+1}^{\infty}\frac{n^2}{n+m-\gamma}|a_{-n}|^2\frac{1}{a^{2(n+m-\gamma)}}\left(1-\left(\frac{a}{b}\right)^{2(n+m-\gamma)}\right)\\
        &\leq \frac{1792(3m-1)^7}{m^4(m-1)^2}a^{2\gamma}\left(\int_{\Omega}\left(\leb_mu\right)^2dx+\int_{\Omega}|\mathfrak{L}_mu|^2|dz|^2\right).
    \end{align*}
    Then, we readily obtain for all $n\geq m+1$
    \begin{align*}
        5(2m+n)^2+2n^2\leq 47n^2,
    \end{align*}
    which shows that 
    \begin{align*}
        &8\pi\sum_{n=m+1}^{\infty}\frac{5(2m+n)^2+2n^2}{n+m+\gamma}|b_n|^2b^{2(n+m+\gamma)}\left(1-\left(\frac{a}{b}\right)^{2(n+m+\gamma)}\right)\leq 376\pi\,b^{2\gamma}\sum_{n=m+1}^{\infty}(n+m)|b_n|^2b^{2(n+m)}\\
        &\leq \frac{188}{m^2}b^{2\gamma}\left(\int_{\Omega}\left(\leb_mu\right)^2dx+\int_{\Omega}|\mathfrak{L}_mu|^2|dz|^2\right),
    \end{align*}
    and likewise
    \begin{align*}
        &4\sum_{n=m+1}^{\infty}\frac{5(n-2m)^2+2n^2}{n-m-\gamma}|b_{-n}|^2\frac{1}{a^{2(n-m-\gamma)}}\left(1-\left(\frac{a}{b}\right)^{2(n-m-\gamma)}\right)\\
        &\leq \frac{188}{m^2}\left(\int_{\Omega}\left(\leb_mu\right)^2dx+\int_{\Omega}|\mathfrak{L}_mu|^2|dz|^2\right).
    \end{align*}
    Therefore, we only have finitely many terms left to estimate in the series, and also the logarithm terms. We first estimate $a_n$ for $1-m\leq n\leq m-1$. 
    Since $1<2\beta<2$, we have
    \begin{align*}
        &\sum_{n\in \Z^{\ast}}\frac{|a_n|^2}{|-m+n+2\beta|}\left|b^{-2m+2n+4\beta}-a^{-2m+2n+4\beta}\right|\\
        &=\sum_{n=m-1}^{\infty}\frac{|a_n|^2}{n-m+2\beta}b^{2(n-m+2\beta)}\left(1-\left(\frac{a}{b}\right)^{2(n-m+2\beta)}\right)\\
        &+\sum_{n=2-m,n\neq 0}^{\infty}\frac{|a_{-n}|^2}{n+m-2\beta}\frac{1}{a^{2(n+m-2\beta)}}\left(1-\left(\frac{a}{b}\right)^{2(n+m-2\beta)}\right).
    \end{align*}
    Likewise, we have
    \begin{align*}
        &\sum_{n\in \Z^{\ast}}\frac{|b_n|^2}{|m+n+2\beta|}\left|b^{2m+2n+4\beta}-a^{2m+2n+4\beta}\right|\\
        &=\sum_{n=-m-1,n\neq 0}^{\infty}\frac{|b_n|^2}{m+n+2\beta}b^{2(m+n+2\beta)}\left(1-\left(\frac{a}{b}\right)^{2(m+n+2\beta)}\right)\\
        &+\sum_{n=m+2}^{\infty}\frac{|b_{-n}|^2}{n-m-2\beta}\frac{1}{a^{2(n-m-2\beta)}}\left(1-\left(\frac{a}{b}\right)^{2(n+m-2\beta)}\right).
    \end{align*}
    Then, we have
    \begin{align*}
        &2\sum_{n\in \Z^{\ast}}\frac{\Re\left(a_n\bar{b_n}\right)}{n+2\beta}\left(b^{2n+4\beta}-a^{2n-2+4\beta}\right)=2\sum_{n=1}^{\infty}\frac{\Re\left(a_n\bar{b_n}\right)}{n+2\beta}b^{2(n+2\beta)}\left(1-\left(\frac{a}{b}\right)^{2(n+2\beta)}\right)\\
        &+2\frac{\Re\left(a_{-1}b_{-1}\right)}{2\beta-1}b^{2(2\beta-1)}\left(1-\left(\frac{a}{b}\right)^{2(2\beta-1)}\right)+2\sum_{n=2}^{\infty}\frac{\Re\left(a_{-n}\bar{b_{-n}}\right)}{n-2\beta}\frac{1}{a^{2(n-2\beta)}}\left(1-\left(\frac{a}{b}\right)^{2(n-2\beta)}\right).
    \end{align*}
    First, consider the quantity
    \small
    \begin{align*}
        &\sum_{n=2}^{m-2}\frac{|a_n|^2}{m-n-2\beta}\frac{1}{a^{2(m-n-2\beta)}}\left(1-\left(\frac{a}{b}\right)^{2(m-n-2\beta)}\right)+\sum_{n=2}^{m-2}\frac{|b_n|^2}{m+n+2\beta}b^{2(m+n+2\beta)}\left(1-\left(\frac{a}{b}\right)^{2(m+n+2\beta)}\right)\\
        &+\sum_{n=2}^{m-2}\frac{|a_{-n}|^2}{n+m-2\beta}\frac{1}{a^{2(m+n-2\beta)}}\left(1-\left(\frac{a}{b}\right)^{2(m+n-2\beta)}\right)+\sum_{n=2}^{m-2}\frac{|b_{-n}|^2}{m-n+2\beta}b^{2(m-n+2\beta)}\left(1-\left(\frac{a}{b}\right)^{2(m-n+2\beta)}\right)\\
        &-\frac{2}{m-2\beta}\sum_{n=2}^{m-2}\Re\left(a_na_{-n}\right)\frac{1}{a^{2m-4\beta}}\left(1-\left(\frac{a}{b}\right)^{2m-4\beta}\right)+\frac{2}{m+2\beta}\sum_{n=2}^{m-2}\Re\left(b_nb_{-n}\right)b^{2m+4\beta}\left(1-\left(\frac{a}{b}\right)^{2m+4\beta}\right).%\\
        %&+4\sum_{n=2}^{m-2}\Re\left(a_nb_{-n}+a_{-n}b_n\right)\log\left(\frac{b}{a}\right).
    \end{align*}
    \normalsize
    We first have for all $2\leq n\leq m-2$
    \begin{align*}
        &\frac{|a_n|^2}{m-n-2\beta}\left(1-\left(\frac{a}{b}\right)^{2(m-n-2\beta)}\right)+\frac{|a_{-n}|^2}{m+n-2\beta}\frac{1}{a^{2(n+m-2\beta)}}\left(1-\left(\frac{a}{b}\right)^{2(m+n-2\beta)}\right)\\
        &-\frac{2}{m-2\beta}\Re\left(a_na_{-n}\right)\frac{1}{a^{2m-4\beta}}\left(1-\left(\frac{a}{b}\right)^{2m-4\beta}\right)\\
        &\geq \frac{|a_n|^2}{m-n-2\beta}\frac{1}{a^{2(m-n-2\beta)}}\left(1-\left(\frac{a}{b}\right)^{2(m-n-2\beta)}\right)-\frac{m+n-2\beta}{(m-2\beta)^2}|a_{-n}|^2\frac{1}{a^{2(m-n-2\beta)}}\frac{\left(1-\left(\frac{a}{b}\right)^{2m-4\beta}\right)^2}{1-\left(\frac{a}{b}\right)^{2(m+n-2\beta)}}\\
        &=\frac{|a_n|^2}{1-\left(\frac{a}{b}\right)^{2(m+n-2\beta)}}\frac{1}{a^{2(m-n-2\beta)}}\left(\frac{1}{m-n-2\beta}\left(1-\left(\frac{a}{b}\right)^{2(m-n-2\beta)}\right)\left(1-\left(\frac{a}{b}\right)^{2(m+n-2\beta)}\right)\right.\\
        &\left.-\frac{m+n-2\beta}{(m-2\beta)^2}\left(1-\left(\frac{a}{b}\right)^{2m-4\beta}\right)^2\right)\\
        &\geq \frac{|a_n|^2}{1-\left(\frac{a}{b}\right)^{2(m+n-2\beta)}}\frac{1}{a^{2(m-n-2\beta)}}\left(\frac{n^2}{(m-n-2\beta)(m-2\beta)^2}-\frac{2}{m-n-2\beta}\left(\frac{a}{b}\right)^{2(m-n-2\beta)}\right),
    \end{align*}
    where we used that
    \begin{align*}
        \frac{1}{m-n-2\beta}-\frac{m+n-2\beta}{(m-2\beta)^2}=\frac{(m-\beta)^2-\left((m-2\beta)^2-n^2\right)}{(m-n-2\beta)(m-2\beta)^2}=\frac{n^2}{(m-n-2\beta)(m-2\beta)^2}.
    \end{align*}
    Therefore, we assume that 
    \begin{align*}
        \frac{2}{m-n-2\beta}\left(\frac{a}{b}\right)^{2(m-n-2\beta)}\leq \frac{n^2}{2(m-n-2\beta)(m-2\beta)^2},
    \end{align*}
    which is equivalent to
    \begin{align*}
        \log\left(\frac{b}{a}\right)\geq \frac{1}{2(m-n-2\beta)}\log\left(\frac{4(m-2\beta)^2}{n^2}\right).
    \end{align*}
    Since $2\leq n\leq m-2$, the condition is satisfied provided that
    \begin{align}\label{lm_log_6}
        \log\left(\frac{b}{a}\right)\geq \frac{1}{2(1-\beta)}\log\left(m-2\beta\right).
    \end{align}
    If this estimate is satisfied, we obtain
    \begin{align}\label{extra_an1}
        &\frac{|a_n|^2}{m-n-2\beta}\left(1-\left(\frac{a}{b}\right)^{2(m-n-2\beta)}\right)+\frac{|a_{-n}|^2}{m+n-2\beta}\frac{1}{a^{2(n+m-2\beta)}}\left(1-\left(\frac{a}{b}\right)^{2(m+n-2\beta)}\right)\nonumber\\
        &-\frac{2}{m-2\beta}\Re\left(a_na_{-n}\right)\frac{1}{a^{2m-4\beta}}\left(1-\left(\frac{a}{b}\right)^{2m-4\beta}\right)\nonumber\\
        &\geq \frac{n^2}{2(m-n-2\beta)(m-2\beta)^2}|a_n|^2\frac{1}{a^{2(m-n-2\beta)}}.
    \end{align}
    Likewise, we estimate
    \begin{align*}
        &\frac{|a_n|^2}{m-n-2\beta}\left(1-\left(\frac{a}{b}\right)^{2(m-n-2\beta)}\right)+\frac{|a_{-n}|^2}{m+n-2\beta}\frac{1}{a^{2(n+m-2\beta)}}\left(1-\left(\frac{a}{b}\right)^{2(m+n-2\beta)}\right)\\
        &-\frac{2}{m-2\beta}\Re\left(a_na_{-n}\right)\frac{1}{a^{2m-4\beta}}\left(1-\left(\frac{a}{b}\right)^{2m-4\beta}\right)\\
        &\geq \frac{|a_{-n}|^2}{1-\left(\frac{a}{b}\right)^{2(m-n-2\beta)}}\frac{1}{a^{2(m+n-2\beta)}}\left(\frac{n^2}{(m+n-2\beta)(m-2\beta)^2}-\frac{2}{m+n-2\beta}\left(\frac{a}{b}\right)^{2(m-n-2\beta)}\right).
    \end{align*}
    Therefore, we impose the following condition on the conformal class
    \begin{align*}
        \log\left(\frac{b}{a}\right)\geq \frac{1}{2(m-n-2\beta)}\log\left(\frac{4(m-2\beta)^2}{n^2}\right),
    \end{align*}
    that is satisfied thanks to \eqref{lm_log_6}. Therefore, we deduce that 
    \begin{align}\label{extra_an2}
        &\frac{|a_n|^2}{m-n-2\beta}\left(1-\left(\frac{a}{b}\right)^{2(m-n-2\beta)}\right)+\frac{|a_{-n}|^2}{m+n-2\beta}\frac{1}{a^{2(n+m-2\beta)}}\left(1-\left(\frac{a}{b}\right)^{2(m+n-2\beta)}\right)\nonumber\\
        &-\frac{2}{m-2\beta}\Re\left(a_na_{-n}\right)\frac{1}{a^{2m-4\beta}}\left(1-\left(\frac{a}{b}\right)^{2m-4\beta}\right)\geq \frac{n^2}{2(m+n-2\beta)(m-2\beta)^2}|a_{-n}|^2\frac{1}{a^{2(m+n-2\beta)}}.
    \end{align}
    Therefore, combining \eqref{extra_an1} with \eqref{extra_an2}, we deduce that 
    \begin{align}\label{extra_an}
        &\frac{|a_n|^2}{m-n-2\beta}\left(1-\left(\frac{a}{b}\right)^{2(m-n-2\beta)}\right)+\frac{|a_{-n}|^2}{m+n-2\beta}\frac{1}{a^{2(n+m-2\beta)}}\left(1-\left(\frac{a}{b}\right)^{2(m+n-2\beta)}\right)\nonumber\\
        &-\frac{2}{m-2\beta}\Re\left(a_na_{-n}\right)\frac{1}{a^{2m-4\beta}}\left(1-\left(\frac{a}{b}\right)^{2m-4\beta}\right)\nonumber\\
        &\geq \frac{n^2}{4(m-n-2\beta)(m-2\beta)^2}|a_n|^2\frac{1}{a^{2(m-n-2\beta)}}+\frac{n^2}{4(m+n-2\beta)(m-2\beta)^2}|a_{-n}|^2\frac{1}{a^{2(m+n-2\beta)}}.
    \end{align}
    Now, we estimate the terms involving $b_n$. 
    We have for $1\leq n\leq m$
    \begin{align*}
        &\frac{|b_n|^2}{m+n+2\beta}b^{2(m+n+2\beta)}\left(1-\left(\frac{a}{b}\right)^{2(m+n+2\beta)}\right)+\frac{|b_{-n}|^2}{m-n+2\beta}b^{2(m-n+2\beta)}\left(1-\left(\frac{a}{b}\right)^{2(m-n+2\beta)}\right)\\
        &+\frac{2\,\Re\left(b_nb_{-n}\right)}{m+2\beta}b^{2m+4\beta}\left(1-\left(\frac{a}{b}\right)^{2(m+4\beta)}\right)\\
        &\geq \frac{|b_n|^2}{1-\left(\frac{a}{b}\right)^{2(m-n+2\beta)}}b^{2(m+n+2\beta)}\left(\frac{1}{m+n+2\beta}\left(1-\left(\frac{a}{b}\right)^{2(m+n+2\beta)}\right)\left(1-\left(\frac{a}{b}\right)^{2(m-n+2\beta)}\right)\right.\\
        &\left.-\frac{m-n+2\beta}{(m+2\beta)^2}\left(1-\left(\frac{a}{b}\right)^{2m+4\beta}\right)^2\right)\\
        &\geq \frac{|b_n|^2}{1-\left(\frac{a}{b}\right)^{2(m-n+2\beta)}}b^{2(m+n+2\beta)}\left(\frac{n^2}{(m+n+2\beta)(m+2\beta)^2}-\frac{2}{m+n+2\beta}\left(\frac{a}{b}\right)^{2(m-n+2\beta)}\right).
    \end{align*}
    Therefore, we impose the condition
    \begin{align*}
        \frac{2}{m+n+2\beta}\left(\frac{a}{b}\right)^{2(m-n+2\beta)}\leq \frac{n^2}{(m+n+2\beta)(m+2\beta)^2},
    \end{align*}
    which yields the condition
    \begin{align*}
        \log\left(\frac{b}{a}\right)\geq \frac{1}{2(m-n+2\beta)}\log\left(\frac{4n^2}{(m+2\beta)^2}\right),
    \end{align*}
    which is satisfied provided that 
    \begin{align}\label{lm_log_7}
        \log\left(\frac{b}{a}\right)\geq \frac{1}{2\beta}\log\left(\frac{2m}{m+2\beta}\right).
    \end{align}
    Therefore, we get 
    \begin{align}\label{extra_bn1}
        &\frac{|b_n|^2}{m+n+2\beta}b^{2(m+n+2\beta)}\left(1-\left(\frac{a}{b}\right)^{2(m+n+2\beta)}\right)+\frac{|b_{-n}|^2}{m-n+2\beta}b^{2(m-n+2\beta)}\left(1-\left(\frac{a}{b}\right)^{2(m-n+2\beta)}\right)\nonumber\\
        &+\frac{2\,\Re\left(b_nb_{-n}\right)}{m+2\beta}b^{2m+4\beta}\left(1-\left(\frac{a}{b}\right)^{2(m+4\beta)}\right)\nonumber\\
        &\geq \frac{n^2}{2(m+n+2\beta)(m+2\beta)^2}|b_n|^2b^{2(m+n+2\beta)}.
    \end{align}
    The same argument (that holds when the same condition on the conformal class is satisfied) also shows that
    \begin{align}\label{extra_bn2}
        &\frac{|b_n|^2}{m+n+2\beta}b^{2(m+n+2\beta)}\left(1-\left(\frac{a}{b}\right)^{2(m+n+2\beta)}\right)+\frac{|b_{-n}|^2}{m-n+2\beta}b^{2(m-n+2\beta)}\left(1-\left(\frac{a}{b}\right)^{2(m-n+2\beta)}\right)\nonumber\\
        &+\frac{2\,\Re\left(b_nb_{-n}\right)}{m+2\beta}b^{2m+4\beta}\left(1-\left(\frac{a}{b}\right)^{2(m+4\beta)}\right)\nonumber\\
        &\geq \frac{n^2}{2(m-n+2\beta)(m+2\beta)^2}|b_{-n}|^2b^{2(m-n+2\beta)}.
    \end{align}
    Combining \eqref{extra_bn1} and \eqref{extra_bn2} yields
    \begin{align}\label{extra_bn}
        &\frac{|b_n|^2}{m+n+2\beta}b^{2(m+n+2\beta)}\left(1-\left(\frac{a}{b}\right)^{2(m+n+2\beta)}\right)+\frac{|b_{-n}|^2}{m-n+2\beta}b^{2(m-n+2\beta)}\left(1-\left(\frac{a}{b}\right)^{2(m-n+2\beta)}\right)\nonumber\\
        &+\frac{2\,\Re\left(b_nb_{-n}\right)}{m+2\beta}b^{2m+4\beta}\left(1-\left(\frac{a}{b}\right)^{2(m+4\beta)}\right)\nonumber\\
        &\geq \frac{n^2}{4(m+n+2\beta)(m+2\beta)^2}|b_n|^2b^{2(m+n+2\beta)}+\frac{n^2}{4(m-n+2\beta)(m+2\beta)^2}|b_{-n}|^2b^{2(m-n+2\beta)}.
    \end{align}
    Therefore, we get by \eqref{extra_an} and \eqref{extra_bn}
    \small
    \begin{align*}
        &\sum_{n=2}^{m-2}\frac{|a_n|^2}{m-n-2\beta}\frac{1}{a^{2(m-n-2\beta)}}\left(1-\left(\frac{a}{b}\right)^{2(m-n-2\beta)}\right)+\sum_{n=2}^{m-2}\frac{|b_n|^2}{m+n+2\beta}b^{2(m+n+2\beta)}\left(1-\left(\frac{a}{b}\right)^{2(m+n+2\beta)}\right)\\
        &+\sum_{n=2}^{m-2}\frac{|a_{-n}|^2}{n+m-2\beta}\frac{1}{a^{2(m+n-2\beta)}}\left(1-\left(\frac{a}{b}\right)^{2(m+n-2\beta)}\right)+\sum_{n=2}^{m-2}\frac{|b_{-n}|^2}{m-n+2\beta}b^{2(m-n+2\beta)}\left(1-\left(\frac{a}{b}\right)^{2(m-n+2\beta)}\right)\\
        &-\frac{2}{m-2\beta}\sum_{n=2}^{m-2}\Re\left(a_na_{-n}\right)\frac{1}{a^{2m-4\beta}}\left(1-\left(\frac{a}{b}\right)^{2m-4\beta}\right)+\frac{2}{m+2\beta}\sum_{n=2}^{m-2}\Re\left(b_nb_{-n}\right)b^{2m+4\beta}\left(1-\left(\frac{a}{b}\right)^{2m+4\beta}\right)\\
        &\geq \sum_{n=2}^{m-2}\frac{n^2}{4(m-n-2\beta)(m-2\beta)^2}|a_n|^2\frac{1}{a^{2(m-n-2\beta)}}+\sum_{n=2}^{m-2}\frac{n^2}{4(m+n-2\beta)(m-2\beta)^2}|a_{-n}|^2\frac{1}{a^{2(m+n-2\beta)}}\\
        &+\sum_{n=2}^{m-2}\frac{n^2}{4(m+n+2\beta)(m+2\beta)^2}|b_n|^2b^{2(m+n+2\beta)}+\sum_{n=2}^{m-2}\frac{n^2}{4(m-n+2\beta)(m+2\beta)^2}|b_{-n}|^2b^{2(m-n+2\beta)}.
    \end{align*}
    \normalsize
    Then, we estimate
    \small
    \begin{align*}
        &4\sum_{n=2}^{m-2}\Re\left(a_nb_{-n}+a_{-n}b_n\right)\log\left(\frac{b}{a}\right)\\
        &\geq -32\sum_{n=2}^{m-2}\frac{(m-n+2\beta)(m+2\beta)^2}{n^2}|a_n|^2b^{2(n-m+2\beta)}\log^2\left(\frac{b}{a}\right)-\sum_{n=2}^{m-2}\frac{n^2}{8(m-n+2\beta)(m+2\beta)^2}|b_{-n}|^2b^{2(m-n+2\beta)}\\
        &-32\sum_{n=2}^{m-2}\frac{(m+n+2\beta)(m+2\beta)^2}{n^2}|a_{-n}|^2b^{2(-m-n+2\beta)}\log^2\left(\frac{b}{a}\right)-\sum_{n=2}^{m-2}\frac{n^2}{8(m+n+2\beta)(m+2\beta)^2}|b_n|^2b^{2(m+n+2\beta)}.
    \end{align*}
    \normalsize
    Therefore, we deduce that 
    \small
    \begin{align*}
        &\sum_{n=2}^{m-2}\frac{|a_n|^2}{m-n-2\beta}\frac{1}{a^{2(m-n-2\beta)}}\left(1-\left(\frac{a}{b}\right)^{2(m-n-2\beta)}\right)+\sum_{n=2}^{m-2}\frac{|b_n|^2}{m+n+2\beta}b^{2(m+n+2\beta)}\left(1-\left(\frac{a}{b}\right)^{2(m+n+2\beta)}\right)\\
        &+\sum_{n=2}^{m-2}\frac{|a_{-n}|^2}{n+m-2\beta}\frac{1}{a^{2(m+n-2\beta)}}\left(1-\left(\frac{a}{b}\right)^{2(m+n-2\beta)}\right)+\sum_{n=2}^{m-2}\frac{|b_{-n}|^2}{m-n+2\beta}b^{2(m-n+2\beta)}\left(1-\left(\frac{a}{b}\right)^{2(m-n+2\beta)}\right)\\
        &-\frac{2}{m-2\beta}\sum_{n=2}^{m-2}\Re\left(a_na_{-n}\right)\frac{1}{a^{2m-4\beta}}\left(1-\left(\frac{a}{b}\right)^{2m-4\beta}\right)+\frac{2}{m+2\beta}\sum_{n=2}^{m-2}\Re\left(b_nb_{-n}\right)b^{2m+4\beta}\left(1-\left(\frac{a}{b}\right)^{2m+4\beta}\right)\\
        &+4\sum_{n=2}^{m-2}\Re\left(a_nb_{-n}+a_{-n}b_n\right)\log\left(\frac{b}{a}\right)\\
        &\geq \sum_{n=2}^{m-2}|a_n|^2\frac{1}{a^{2(m-n-2\beta)}}\left(\frac{n^2}{4(m-n-2\beta)(m-2\beta)^2}-\frac{32(m-n+2\beta)(m+2\beta)^2}{n^2}\left(\frac{a}{b}\right)^{2(m-n-2\beta)}\log^2\left(\frac{b}{a}\right)\right)\\
        &+\sum_{n=2}^{\infty}|a_{-n}|^2\frac{1}{a^{2(m+n-2\beta)}}\left(\frac{n^2}{4(m+n-2\beta)(m-2\beta)^2}-\frac{32(m+n+2\beta)(m+2\beta)^2}{n^2}\left(\frac{a}{b}\right)^{2(m+n-2\beta)}\log^2\left(\frac{b}{a}\right)\right)\\
        &+\sum_{n=2}^{m-2}\frac{n^2}{8(m+n+2\beta)(m+2\beta)^2}|b_n|^2b^{2(m+n+2\beta)}+\sum_{n=2}^{m-2}\frac{n^2}{8(m-n+2\beta)(m+2\beta)^2}|b_{-n}|^2b^{2(m-n+2\beta)}.
    \end{align*}
    \normalsize
    Therefore, we impose
    \begin{align*}
        &\frac{32(m-n+2\beta)(m+2\beta)^2}{n^2}\left(\frac{a}{b}\right)^{2(m-n-2\beta)}\log^2\left(\frac{b}{a}\right)\leq \frac{n^2}{8(m-n-2\beta)(m-2\beta)^2}\\
        &\frac{32(m+n+2\beta)(m+2\beta)^2}{n^2}\left(\frac{a}{b}\right)^{2(m+n-2\beta)}\log^2\left(\frac{b}{a}\right)\leq \frac{n^2}{8(m+n-2\beta)(m-2\beta)^2}.
    \end{align*}
    For all $\epsilon>0$, consider the function
    \begin{align*}
        f(x)=x^{\epsilon}\log^2(x)\qquad 0<x<1.
    \end{align*}
    We have
    \begin{align*}
        f'(x)=\epsilon\,x^{\epsilon-1}\log^2(x)+2\,x^{\epsilon-1}\log(x)=x^{\epsilon-1}\log\left(\frac{1}{x}\right)\left(\epsilon\log\left(\frac{1}{x}\right)-2\right).
    \end{align*}
    Therefore, $f$ is strictly increasing on $]0,e^{-\frac{2}{\epsilon}}[$ and strictly decreasing on $]e^{-\frac{2}{\epsilon}},1[$, which implies that
    \begin{align*}
        \sup_{0<x<1}f(x)=f(e^{-\frac{2}{\epsilon}})=\frac{4\,e^{-2}}{\epsilon}.
    \end{align*}
    Therefore, if $0<\epsilon<2(1-\beta)$, the first estimate is implied provided that 
    \begin{align*}
        \frac{32(m-n+2\beta)(m+2\beta)^2}{n^2}\left(\frac{a}{b}\right)^{2(m-n-2\beta-\epsilon)}\leq \frac{e^2\epsilon\, n^2}{16(m-n+2\beta)(m-2\beta)^2},
    \end{align*}
    or
    \begin{align*}
        \log\left(\frac{b}{a}\right)\geq \frac{1}{2(m-n-2\beta-\epsilon)}\log\left(\frac{1024((m-n)^2-4\beta^2)(m^2-4\beta^2)^2}{e^2\,\epsilon\,n^4}\right).
    \end{align*}
    Choosing $\epsilon=1-\beta$, the estimate is satisfied provided that 
    \begin{align}\label{lm_log_8}
        \log\left(\frac{b}{a}\right)\geq \frac{1}{2(1-\beta)}\log\left(\frac{1024(m^2-4\beta^2)^3}{e^2(1-\beta)}\right).
    \end{align}
    Likewise, the second condition is satisfied provided that 
    \begin{align}\label{lm_log_9}
         \log\left(\frac{b}{a}\right)\geq \frac{1}{2(1-\beta)}\log\left(\frac{4048m^2(m^2-4\beta^2)^2}{e^2(1-\beta)}\right).
    \end{align}
    Therefore, we finally get
    \small
    \begin{align*}
        &\sum_{n=2}^{m-2}\frac{|a_n|^2}{m-n-2\beta}\frac{1}{a^{2(m-n-2\beta)}}\left(1-\left(\frac{a}{b}\right)^{2(m-n-2\beta)}\right)+\sum_{n=2}^{m-2}\frac{|b_n|^2}{m+n+2\beta}b^{2(m+n+2\beta)}\left(1-\left(\frac{a}{b}\right)^{2(m+n+2\beta)}\right)\\
        &+\sum_{n=2}^{m-2}\frac{|a_{-n}|^2}{n+m-2\beta}\frac{1}{a^{2(m+n-2\beta)}}\left(1-\left(\frac{a}{b}\right)^{2(m+n-2\beta)}\right)+\sum_{n=2}^{m-2}\frac{|b_{-n}|^2}{m-n+2\beta}b^{2(m-n+2\beta)}\left(1-\left(\frac{a}{b}\right)^{2(m-n+2\beta)}\right)\\
        &-\frac{2}{m-2\beta}\sum_{n=2}^{m-2}\Re\left(a_na_{-n}\right)\frac{1}{a^{2m-4\beta}}\left(1-\left(\frac{a}{b}\right)^{2m-4\beta}\right)+\frac{2}{m+2\beta}\sum_{n=2}^{m-2}\Re\left(b_nb_{-n}\right)b^{2m+4\beta}\left(1-\left(\frac{a}{b}\right)^{2m+4\beta}\right)\\
        &+4\sum_{n=2}^{m-2}\Re\left(a_nb_{-n}+a_{-n}b_n\right)\log\left(\frac{b}{a}\right)\\
        &\geq \sum_{n=2}^{m-2}\frac{n^2}{8(m-n-2\beta)(m-2\beta)^2}|a_n|^2\frac{1}{a^{2(m-n-2\beta)}}+\sum_{n=2}^{m-2}\frac{n^2}{8(m+n-2\beta)(m-2\beta)^2}|a_{-n}|^2\frac{1}{a^{2(m+n-2\beta)}}\\
        &+\sum_{n=2}^{m-2}\frac{n^2}{8(m+n+2\beta)(m+2\beta)^2}|b_n|^2b^{2(m+n+2\beta)}+\sum_{n=2}^{m-2}\frac{n^2}{8(m-n+2\beta)(m+2\beta)^2}|b_{-n}|^2b^{2(m-n+2\beta)}.
    \end{align*}
    \normalsize
    The remaining cross term is given by 
    \begin{align*}
        \sum_{n=2}^{m-2}\frac{2\,\Re\left(a_n\bar{b_n}\right)}{n+2\beta}b^{2(n+2\beta)}\left(1-\left(\frac{a}{b}\right)^{2(n+2\beta)}\right)+\sum_{n=2}^{m-2}\frac{2\,\Re\left(a_{-n}\bar{b_{-n}}\right)}{n-2\beta}\frac{1}{a^{2(n-2\beta)}}\left(1-\left(\frac{a}{b}\right)^{2(n-2\beta)}\right).
    \end{align*}
    We first have
    \begin{align*}
        \frac{2\,\Re\left(a_n\bar{b_n}\right)}{n+2\beta}b^{2(n+2\beta)}\left(1-\left(\frac{a}{b}\right)^{2(n+2\beta)}\right)&\geq -\frac{n^2}{16(m+n+2\beta)(m+2\beta)^2}|b_n|^2b^{2(m+n+2\beta)}\\
        &-\frac{16(m+n+2\beta)(m+2\beta)^2}{n^2(n+2\beta)^2}|a_n|^2b^{2(-m+n+2\beta)}.
    \end{align*}
    We have 
    \begin{align*}
        &\frac{n^2}{8(m-n-2\beta)(m-2\beta)^2}|a_n|^2\frac{1}{a^{2(m-n-2\beta)}}-\frac{16(m+n+2\beta)(m+2\beta)^2}{n^2(n+2\beta)^2}|a_n|^2b^{2(-m+n+2\beta)}\\
        &=|a_n|^2\frac{1}{a^{2(m-n-2\beta)}}\left(\frac{n^2}{8(m-n-2\beta)(m-2\beta)^2}-\frac{16(m+n+2\beta)(m+2\beta)^2}{n^2(n+2\beta)^2}\left(\frac{a}{b}\right)^{2(m-n-2\beta)}\right).
    \end{align*}
    Therefore, we impose the condition 
    \begin{align*}
        \frac{16(m+n+2\beta)(m+2\beta)^2}{n^2(n+2\beta)^2}\left(\frac{a}{b}\right)^{2(m-n-2\beta)}\leq \frac{n^2}{16(m-n-2\beta)(m-2\beta)^2},
    \end{align*}
    which is equivalent to
    \begin{align*}
        \log\left(\frac{b}{a}\right)\geq \frac{1}{2(m-n-2\beta)}\log\left(\frac{256\left(m^2-(n+2\beta)^2\right)\left(m^2-4\beta^2\right)^2}{n^4(n+2\beta)^2}\right).
    \end{align*}
    This condition is satisfied if 
    \begin{align}\label{lm_log_10}
        \log\left(\frac{b}{a}\right)\geq \frac{1}{4(1-\beta)}\log\left(\frac{4\left(m^2-4\beta^2\right)^3}{(1+\beta)^2}\right).
    \end{align}
    Likewise, we get
    \begin{align*}
        \frac{2\,\Re\left(a_{-n}\bar{b_{-n}}\right)}{n-2\beta}\frac{1}{a^{2(n-2\beta)}}\left(1-\left(\frac{a}{b}\right)^{2(n-2\beta)}\right)&\geq -\frac{n^2}{16(m-n+2\beta)(m+2\beta)^2}|b_{-n}|^2b^{2(m-n+2\beta)}\\
        &-\frac{16(m-n+2\beta)(m+2\beta)^2}{n^2(n-2\beta)^2}|a_{-n}|^2b^{2(-m-n+2\beta)}.
    \end{align*}
    Then, we get
    \begin{align*}
        &\frac{n^2}{8(m+n-2\beta)(m-2\beta)^2}|a_{-n}|^2\frac{1}{a^{2(m+n-2\beta)}}-\frac{16(m-n+2\beta)(m+2\beta)^2}{n^2(m-2\beta)^2}|a_{-n}|^2b^{2(-m-n+2\beta)}\\
        &=|a_{-n}|^2\frac{1}{a^{2(m+n-2\beta)}}\left(\frac{n^2}{8(m+n-2\beta)(m-2\beta)^2}-\frac{16(m-n+2\beta)(m+2\beta)^2}{n^2(m-2\beta)^2}\left(\frac{a}{b}\right)^{2(m+n-2\beta)}\right).
    \end{align*}
    Therefore, we impose
    \begin{align*}
        \frac{16(m-n+2\beta)(m+2\beta)^2}{n^2(n-2\beta)^2}\left(\frac{a}{b}\right)^{2(m+n-2\beta)}\leq \frac{n^2}{16(m+n-2\beta)(m-2\beta)^2},
    \end{align*}
    or
    \begin{align*}
        \log\left(\frac{b}{a}\right)\geq \frac{1}{2(m+n-2\beta)}\log\left(\frac{256\left(m^2-(n-2\beta)^2\right)\left(m^2-4\beta^2\right)^2}{n^4(n-2\beta)^2}\right),
    \end{align*}
    which holds for $2\leq n\leq m-4$ provided that
    \begin{align}\label{lm_log_11}
        \log\left(\frac{b}{a}\right)\geq \frac{1}{2(m+2(1-\beta)}\log\left(\frac{4\left(m^2-4(1-\beta)^2\right)\left(m^2-4\beta^2\right)^2}{(1-\beta)^2}\right).
    \end{align}
    Finally, we deduce that 
    \small
    \begin{align}
        &\sum_{n=2}^{m-2}\frac{|a_n|^2}{m-n-2\beta}\frac{1}{a^{2(m-n-2\beta)}}\left(1-\left(\frac{a}{b}\right)^{2(m-n-2\beta)}\right)+\sum_{n=2}^{m-2}\frac{|b_n|^2}{m+n+2\beta}b^{2(m+n+2\beta)}\left(1-\left(\frac{a}{b}\right)^{2(m+n+2\beta)}\right)\nonumber\\
        &+\sum_{n=2}^{m-2}\frac{|a_{-n}|^2}{n+m-2\beta}\frac{1}{a^{2(m+n-2\beta)}}\left(1-\left(\frac{a}{b}\right)^{2(m+n-2\beta)}\right)+\sum_{n=2}^{m-2}\frac{|b_{-n}|^2}{m-n+2\beta}b^{2(m-n+2\beta)}\left(1-\left(\frac{a}{b}\right)^{2(m-n+2\beta)}\right)\nonumber\\
        &-\frac{2}{m-2\beta}\sum_{n=2}^{m-2}\Re\left(a_na_{-n}\right)\frac{1}{a^{2m-4\beta}}\left(1-\left(\frac{a}{b}\right)^{2m-4\beta}\right)+\frac{2}{m+2\beta}\sum_{n=2}^{m-2}\Re\left(b_nb_{-n}\right)b^{2m+4\beta}\left(1-\left(\frac{a}{b}\right)^{2m+4\beta}\right)\nonumber\\
        &+4\sum_{n=2}^{m-2}\Re\left(a_nb_{-n}+a_{-n}b_n\right)\log\left(\frac{b}{a}\right)\nonumber\\
        &+\sum_{n=2}^{m-2}\frac{2\,\Re\left(a_n\bar{b_n}\right)}{n+2\beta}b^{2(n+2\beta)}\left(1-\left(\frac{a}{b}\right)^{2(n+2\beta)}\right)+\sum_{n=2}^{m-2}\frac{2\,\Re\left(a_{-n}\bar{b_{-n}}\right)}{n-2\beta}\frac{1}{a^{2(n-2\beta)}}\left(1-\left(\frac{a}{b}\right)^{2(n-2\beta)}\right)\nonumber\\
        &\geq \sum_{n=2}^{m-2}\frac{n^2}{16(m-n-2\beta)(m-2\beta)^2}|a_n|^2\frac{1}{a^{2(m-n-2\beta)}}+\sum_{n=2}^{m-2}\frac{n^2}{16(m+n-2\beta)(m-2\beta)^2}|a_{-n}|^2\frac{1}{a^{2(m+n-2\beta)}}\nonumber\\
        &+\sum_{n=2}^{m-2}\frac{n^2}{16(m+n+2\beta)(m+2\beta)^2}|b_n|^2b^{2(m+n+2\beta)}+\sum_{n=2}^{m-2}\frac{n^2}{16(m-n+2\beta)(m+2\beta)^2}|b_{-n}|^2b^{2(m-n+2\beta)}.
    \end{align}
    \normalsize
    Besides the logarithm components that will require a special (also difficult and lengthy) argument, we will estimate the finitely many additional terms by the weighted $L^2$ norm of $u$. Recall that $u$ is given by 
    \begin{align*}
        u(r,\theta)&=\alpha_1\,r^{1-m}\log(r)+2\,\Re\left(\sum_{n\in \Z}a_n\,r^{1-m+n}e^{in\theta}\right)+\alpha_2\,r^{m+1}\log(r)+2\,\Re\left(\sum_{n\in \Z}b_n\,r^{m+1+n}e^{in\theta}\right)\\
        &=\alpha_1\,r^{1-m}\log(r)+\alpha_2\,r^{m+1}\log(r)+2\,a_0\,r^{1-m}+2\,b_0\,r^{m+1}\\
        &+\sum_{n\in \Z^{\ast}}\left(a_n\,r^{1-m+n}+\bar{a_{-n}}r^{1-m-n}+b_n\,r^{m+1+n}+\bar{b_{-n}}r^{m+1-n}\right)e^{in\theta}.
    \end{align*}
    Therefore, we have
    \begin{align}\label{sum_squares}
        &\mathrm{Rad}(u^2)=\left(\alpha_1\,r^{1-m}\log(r)+\alpha_2\,r^{m+1}\log(r)+2\,a_0\,r^{1-m}+2\,b_0\,r^{m+1}\right)^2\nonumber\\
        &+\sum_{n\in \Z^{\ast}}\left|a_n\,r^{1-m+n}+\bar{a_{-n}}r^{1-m-n}+b_n\,r^{m+1+n}+\bar{b_{-n}}r^{m+1-n}\right|^2\nonumber\\
        &=\alpha_1^2r^{2-2m}\log^2(r)+\alpha_2^2r^{2m+2}\log^2(r)+4a_0^2r^{2-2m}+4b_0^2r^{2m+2}+2\alpha_1\alpha_2\log^2(r)+4\,\alpha_1a_0r^{2-2m}\log(r)\nonumber\\
        &+4\left(\alpha_1b_0+\alpha_2a_0\right)\log(r)+4\,\alpha_2b_0r^{2m+2}\log(r)+8\,a_0b_0\nonumber\\
        &+2\sum_{n\in \Z^{\ast}}|a_n|^2r^{2-2m+2n}+2\sum_{n\in \Z^{\ast}}|b_n|^2r^{2m+2+2n}+2\,\sum_{n\in \Z^{\ast}}\Re\left(a_na_{-n}\right)r^{2-2m}+2\,\sum_{n\in \Z^{\ast}}\Re\left(b_nb_{-n}\right)r^{2m+2}\nonumber\\
        &+4\sum_{n\in \Z^{\ast}}\Re\left(a_n\bar{b_n}\right)r^{2n+2}+4\sum_{n\in \Z^{\ast}}\Re\left(a_nb_{-n}\right)r^2.
    \end{align}
    Then, we have for $\alpha\neq -1$
    \begin{align*}
        \int r^{\alpha}\log(r)dr&=%\frac{r^{\alpha+1}}{\alpha+1}\log(r)-\frac{1}{\alpha+1}\int r^{\alpha}dr=
        \frac{r^{\alpha+1}}{\alpha+1}\log(r)-\frac{1}{(\alpha+1)^2}r^{\alpha+1}\\
        \int r^{\alpha}\log^2(r)dr%&=\frac{r^{\alpha+1}}{\alpha+1}\log^2(r)-\frac{2}{\alpha+1}\int r^{\alpha}\log(r)dr\\
        &=\frac{r^{\alpha+1}}{\alpha+1}\log^2(r)-\frac{2}{(\alpha+1)^2}r^{\alpha+1}\log(r)+\frac{2}{(\alpha+1)^3}r^{\alpha+1},
    \end{align*}
    and for all $\beta>0$
    \begin{align*}
        \int \frac{\log^{\beta}(r)}{r}dr=\frac{1}{\beta+1}\log^{\beta+1}(r).
    \end{align*}
    This implies that for all $1/2<\beta<\mathrm{min}\ens{1,\frac{m}{2}}$
    \begin{align}\label{int_u_square}
        &\int_{\Omega}\frac{u^2}{|x|^4}|x|^{4\beta}dx\nonumber\\
        &=2\pi\int_{\Omega}\bigg(\alpha_1^2r^{-2m-1}\log^2(r)+\alpha_2^2r^{2m-1}\log^2(r)+4a_0^2r^{-2m-1}+4b_0^2r^{2m-1}+2\alpha_1\alpha_2r^{-3}\log^2(r)\nonumber\\
        &+4\,\alpha_1a_0r^{-2m-1}\log(r)+4\left(\alpha_1b_0+\alpha_2a_0\right)r^{-3}\log(r)+4\,\alpha_2b_0r^{2m-1}\log(r)+8\,a_0b_0\,r^{-3}\nonumber\\
        &+2\sum_{n\in \Z^{\ast}}|a_n|^2r^{-2m+2n-1}+2\sum_{n\in \Z^{\ast}}|b_n|^2r^{2m+2n-1}+2\,\sum_{n\in \Z^{\ast}}\Re\left(a_na_{-n}\right)r^{-2m-1}+2\,\sum_{n\in \Z^{\ast}}\Re\left(b_nb_{-n}\right)r^{2m-1}\nonumber\\
        &+4\sum_{n\in \Z^{\ast}}\Re\left(a_n\bar{b_n}\right)r^{2n-1}+4\sum_{n\in \Z^{\ast}}\Re\left(a_nb_{-n}\right)r^{-1}\bigg)r^{4\beta}dr\nonumber\\
        &=2\pi\left(\frac{\alpha_1^2}{2m-4\beta}\left(\frac{1}{a^{2m-4\beta}}\log^2(a)+\frac{1}{m-2\beta}\frac{1}{a^{2m-4\beta}}\log\left(\frac{1}{a}\right)+\frac{1}{2(m-2\beta)^2}\frac{1}{a^{2m-4\beta}}\right.\right.\nonumber\\
        &\left.-\left(\frac{1}{b^{2m-4\beta}}\log^2(b)+\frac{1}{m-2\beta}\frac{1}{b^{2m}}\log\left(\frac{1}{b}\right)+\frac{1}{2(m-2\beta)^2}\frac{1}{b^{2m-4\beta}}\right)\right)\nonumber\\
        &+\frac{\alpha_2^2}{2m+4\beta}\left(b^{2m+4\beta}\log^2(b)+\frac{1}{m+2\beta}b^{2m+4\beta}\log\left(\frac{1}{b}\right)+\frac{1}{2(m+2\beta)^2}b^{2m+4\beta}\right.\nonumber\\
        &\left.\left(a^{2m+4\beta}\log^2(a)+\frac{1}{m+2\beta}a^{2m+4\beta}\log\left(\frac{1}{a}\right)+\frac{1}{2(m+2\beta)^2}a^{2m+4\beta}\right)\right)\nonumber\\
        &+\frac{2}{m-2\beta}a_0^2\frac{1}{a^{2m-4\beta}}\left(1-\left(\frac{a}{b}\right)^{2m-4\beta}\right)+\frac{2}{m+2\beta}b_0^2b^{2m+4\beta}\left(1-\left(\frac{a}{b}\right)^{2m+4\beta}\right)\nonumber\\
        &+\frac{\alpha_1\alpha_2}{2\beta-1}\left(b^{4\beta-2}\log^2(b)+\frac{1}{2\beta-1}b^{4\beta-2}\log\left(\frac{1}{b}\right)+\frac{1}{2(2\beta-1)^2}b^{4\beta-2}\right.\nonumber\\
        &\left.-\left(a^{4\beta-2}\log^2(a)+\frac{1}{2\beta-1}a^{4\beta-2}\log\left(\frac{1}{a}\right)+\frac{1}{2(2\beta-1)^2}a^{4\beta-2}\right)\right)\nonumber\\
        &-\frac{2\alpha_1a_0}{m-2\beta}\left(\frac{1}{a^{2m-4\beta}}\log\left(\frac{1}{a}\right)+\frac{1}{2m-4\beta}\frac{1}{a^{2m-4\beta}}-\left(\frac{1}{b^{2m-4\beta}}\log\left(\frac{1}{b}\right)+\frac{1}{2m-4\beta}\frac{1}{b^{2m-4\beta}}\right)\right)\nonumber\\
        &-\frac{2(\alpha_1b_0+\alpha_2a_0)}{2\beta-1}\left(b^{2\beta-1}\log\left(\frac{1}{b}\right)+\frac{1}{4\beta-2}b^{2\beta-1}-\left(a^{2\beta-1}\log\left(\frac{1}{a}\right)+\frac{1}{4\beta-2}a^{2\beta-1}\right)\right)\nonumber\\
        &-\frac{2\alpha_2b_0}{m+2\beta}\left(b^{2m+4\beta}\log\left(\frac{1}{b}\right)+\frac{1}{2m+4\beta}b^{2m+4\beta}-\left(a^{2m+4\beta}\log\left(\frac{1}{a}\right)+\frac{1}{2m+4\beta}a^{2m+4\beta}\right)\right)\nonumber\\
        &+\frac{4a_0b_0}{2\beta-1}b^{4\beta-2}\left(1-\left(\frac{a}{b}\right)^{4\beta-2}\right)+\sum_{n\in \Z^{\ast}}\frac{|a_n|^2}{|-m+n+2\beta|}\left|b^{-2m+2n+4\beta}-a^{-2m+2n+4\beta}\right|\nonumber\\
        &+\sum_{n\in \Z^{\ast}}\frac{|b_n|^2}{|m+n+2\beta|}\left|b^{2m+2n+4\beta}-a^{2m+2n+4\beta}\right|-\frac{1}{m-2\beta}\sum_{n\in \Z^{\ast}}\Re\left(a_na_{-n}\right)\frac{1}{a^{2m-4\beta}}\left(1-\left(\frac{a}{b}\right)^{2m-4\beta}\right)\nonumber\\
        &+\frac{1}{m+2\beta}\sum_{n\in \Z^{\ast}}\Re\left(b_nb_{-n}\right)b^{2m+4\beta}\left(1-\left(\frac{a}{b}\right)^{2m+4\beta}\right)+2\sum_{n\in \Z^{\ast}}\frac{\Re\left(a_n\bar{b_n}\right)}{n+2\beta}\left(b^{2n+4\beta}-a^{2n+4\beta}\right)\nonumber\\
        &\left.+4\sum_{n\in \Z^{\ast}}\Re\left(a_nb_{-n}\right)\log\left(\frac{b}{a}\right)\right).
    \end{align}
    Provided that $m>m_0=2.03\cdots$, we also get the missing estimate on $a_m$, $a_{-m}$, and $b_m$ (that does not follow from the estimation of the $L^2$ norm of $u$). Now, we also need to estimate for $0<\beta<\dfrac{1}{2}$
    \begin{align*}
        &\frac{1}{2\pi}\int_{\Omega}\frac{u_0^2}{|x|^4}\frac{dx}{|x|^{4\beta}}=2\sum_{n\in \Z^{\ast}}\int_{a}^b|a_n|^2r^{-2m+2n-4\beta-1}dr+2\sum_{n\in \Z^{\ast}}|b_n|^2r^{2m+2n-4\beta-1}dr\\
        &+2\sum_{n\in \Z^{\ast}}\int_{a}^{b}\Re\left(a_na_{-n}\right)r^{-2m-4\beta-1}dr+2\sum_{n\in \Z^{\ast}}\int_{a}^b\Re\left(b_nb_{-n}\right)r^{2m-1-4\beta}dr\\
        &+4\sum_{n\in \Z^{\ast}}\Re\left(a_n\bar{b_n}\right)r^{2n-4\beta-1}dr+4\sum_{n\in\Z^{\ast}}\Re\left(a_nb_{-n}\right)r^{-4\beta-1}dr\\
        &=\sum_{n=m+1}^{\infty}\frac{|a_n|^2}{n-m-2\beta}b^{2(n-m-2\beta)}\left(1-\left(\frac{a}{b}\right)^{2(n-m-2\beta)}\right)\\
        &+\sum_{n=-m,n\neq 0}^{\infty}\frac{|a_{-n}|^2}{m+n+2\beta}\frac{1}{a^{2(m+n+2\beta)}}\left(1-\left(\frac{a}{b}\right)^{2(m+n+2\beta)}\right)\\
        &+\sum_{n=1-m,n\neq 0}^{\infty}\frac{|b_n|^2}{m+n-2\beta}b^{2(m+n-2\beta)}\left(1-\left(\frac{a}{b}\right)^{2(m+n-2\beta)}\right)\\
        &+\sum_{n=m}^{\infty}\frac{|b_{-n}|^2}{n-m+2\beta}\frac{1}{a^{2(n-m+2\beta)}}\left(1-\left(\frac{a}{b}\right)^{2(n-m+2\beta)}\right)\\
        &+2\sum_{n=1}^{\infty}\frac{\Re\left(a_na_{-n}\right)}{m+2\beta}\frac{1}{a^{2(m+2\beta)}}\left(1-\left(\frac{a}{b}\right)^{2(m+2\beta)}\right)+2\sum_{n=1}^{\infty}\frac{\Re\left(b_nb_{-n}\right)}{m-2\beta}b^{2(m-2\beta)}\left(1-\left(\frac{a}{b}\right)^{2(m-2\beta)}\right)\\
        &+2\sum_{n=1}^{\infty}\frac{\Re\left(a_n\bar{b_n}\right)}{n-2\beta}b^{2(n-2\beta)}\left(1-\left(\frac{a}{b}\right)^{2(n-2\beta)}\right)+2\sum_{n=1}^{\infty}\frac{\Re\left(a_{-n}\bar{b_{-n}}\right)}{n+2\beta}\frac{1}{a^{2(n+2\beta)}}\left(1-\left(\frac{a}{b}\right)^{2(n+2\beta)}\right)\\
        &+\frac{1}{\beta}\sum_{n=1}^{\infty}\left(\Re\left(a_nb_{-n}\right)+\Re\left(a_{-n}b_n\right)\right)\frac{1}{a^{4\beta}}\left(1-\left(\frac{a}{b}\right)^{4\beta}\right).
    \end{align*}
    Consider for $1\leq n\leq m-1$ the quantity
    \begin{align*}
        &\frac{|a_n|^2}{m-n+2\beta}\frac{1}{a^{2(m-n+2\beta)}}\left(1-\left(\frac{a}{b}\right)^{2(m-n+2\beta)}\right)\\
        &+\frac{|a_{-n}|^2}{m+n+2\beta}\frac{1}{a^{2(m+n+2\beta)}}\frac{1}{a^{2(m+n+2\beta)}}\left(1-\left(\frac{a}{b}\right)^{2(m+n+2\beta)}\right)\\
        &+\frac{|b_{-n}|^2}{m-n-2\beta}b^{2(m-n-2\beta)}\left(1-\left(\frac{a}{b}\right)^{2(m-n-2\beta)}\right)+\frac{|b_n|^2}{m+n-2\beta}b^{2(m+n-2\beta)}\left(1-\left(\frac{a}{b}\right)^{2(m+n-2\beta)}\right)\\
        &+\frac{2\,\Re\left(a_na_{-n}\right)}{m+2\beta}\frac{1}{a^{2(m+2\beta)}}\left(1-\left(\frac{a}{b}\right)^{2(m+2\beta)}\right)+\frac{2\,\Re\left(b_nb_{-n}\right)}{m-2\beta}b^{2(m-2\beta)}\left(1-\left(\frac{a}{b}\right)^{2(m-2\beta)}\right)\\
        &+\frac{2\,\Re\left(a_n\bar{b_n}\right)}{n-2\beta}b^{2(n-2\beta)}\left(1-\left(\frac{a}{b}\right)^{2(n-2\beta)}\right)+\frac{2\,\Re\left(a_{-n}\bar{b_{-n}}\right)}{n+2\beta}\frac{1}{a^{2(n+2\beta)}}\left(1-\left(\frac{a}{b}\right)^{2(n+2\beta)}\right)\\
        &+\frac{1}{\beta}\left(\Re\left(a_nb_{-n}\right)+\Re\left(a_{-n}b_n\right)\right)\frac{1}{a^{4\beta}}\left(1-\left(\frac{a}{b}\right)^{4\beta}\right).
    \end{align*}
    We first estimate
    \begin{align*}
        &\frac{|a_n|^2}{m-n+2\beta}\frac{1}{a^{2(m-n+2\beta)}}\left(1-\left(\frac{a}{b}\right)^{2(m-n+2\beta)}\right)\\
        &+\frac{|a_{-n}|^2}{m+n+2\beta}\frac{1}{a^{2(m+n+2\beta)}}\frac{1}{a^{2(m+n+2\beta)}}\left(1-\left(\frac{a}{b}\right)^{2(m+n+2\beta)}\right)\\
        &+\frac{2\,\Re\left(a_na_{-n}\right)}{m+2\beta}\frac{1}{a^{2(m+2\beta)}}\left(1-\left(\frac{a}{b}\right)^{2(m+2\beta)}\right)\\
        &\geq \frac{|a_n|^2}{m-n+2\beta}\frac{1}{a^{2(m-n+2\beta)}}\left(1-\left(\frac{a}{b}\right)^{2(m-n+2\beta)}\right)-\frac{(m+n+2\beta)|a_{n}|^2}{(m+2\beta)^2}\frac{1}{a^{2(m-n+2\beta)}}\frac{\left(1-\left(\frac{a}{b}\right)^{2(m+2\beta)}\right)^2}{1-\left(\frac{a}{b}\right)^{2(m+n+2\beta)}}\\
        &\geq \frac{|a_n|^2}{1-\left(\frac{a}{b}\right)^{2(m+n+2\beta)}}\frac{1}{a^{2(m-n+2\beta)}}\left(\frac{n^2}{(m+2\beta)^2(m-n+2\beta)}-\frac{2}{m-n+2\beta}\left(\frac{a}{b}\right)^{2(m-n+2\beta)}\right).
    \end{align*}
    Therefore, we impose the condition
    \begin{align*}
        \log\left(\frac{b}{a}\right)\geq \frac{1}{m-n+2\beta}\log\left(\frac{2(m+2\beta)}{n}\right),
    \end{align*}
    to get
    \begin{align}\label{neg_weight_an1}
        &\frac{|a_n|^2}{m-n+2\beta}\frac{1}{a^{2(m-n+2\beta)}}\left(1-\left(\frac{a}{b}\right)^{2(m-n+2\beta)}\right)\nonumber\\
        &+\frac{|a_{-n}|^2}{m+n+2\beta}\frac{1}{a^{2(m+n+2\beta)}}\frac{1}{a^{2(m+n+2\beta)}}\left(1-\left(\frac{a}{b}\right)^{2(m+n+2\beta)}\right)\nonumber\\
        &+\frac{2\,\Re\left(a_na_{-n}\right)}{m+2\beta}\frac{1}{a^{2(m+2\beta)}}\left(1-\left(\frac{a}{b}\right)^{2(m+2\beta)}\right)\geq \frac{n^2|a_n|^2}{(m+2\beta)^2(m-n+2\beta)}\frac{1}{a^{2(m-n+2\beta)}}.
    \end{align}
    Likewise, if
    \begin{align*}
        \log\left(\frac{b}{a}\right)\geq \frac{1}{m+n+2\beta}\log\left(\frac{2(m+2\beta)}{n}\right),
    \end{align*}
    we get
    \begin{align}\label{neg_weight_an2}
        &\frac{|a_n|^2}{m-n+2\beta}\frac{1}{a^{2(m-n+2\beta)}}\left(1-\left(\frac{a}{b}\right)^{2(m-n+2\beta)}\right)\nonumber\\
        &+\frac{|a_{-n}|^2}{m+n+2\beta}\frac{1}{a^{2(m+n+2\beta)}}\frac{1}{a^{2(m+n+2\beta)}}\left(1-\left(\frac{a}{b}\right)^{2(m+n+2\beta)}\right)\nonumber\\
        &+\frac{2\,\Re\left(a_na_{-n}\right)}{m+2\beta}\frac{1}{a^{2(m+2\beta)}}\left(1-\left(\frac{a}{b}\right)^{2(m+2\beta)}\right)\geq \frac{n^2|a_{-n}|^2}{2(m+2\beta)^2(m+n+2\beta)^2}\frac{1}{a^{2(m+n+2\beta)}}.
    \end{align}
    Finally, assuming that both conditions on the conformal class hold, we get
    \begin{align*}
        &\frac{|a_n|^2}{m-n+2\beta}\frac{1}{a^{2(m-n+2\beta)}}\left(1-\left(\frac{a}{b}\right)^{2(m-n+2\beta)}\right)\nonumber\\
        &+\frac{|a_{-n}|^2}{m+n+2\beta}\frac{1}{a^{2(m+n+2\beta)}}\frac{1}{a^{2(m+n+2\beta)}}\left(1-\left(\frac{a}{b}\right)^{2(m+n+2\beta)}\right)\nonumber\\
        &+\frac{2\,\Re\left(a_na_{-n}\right)}{m+2\beta}\frac{1}{a^{2(m+2\beta)}}\left(1-\left(\frac{a}{b}\right)^{2(m+2\beta)}\right)\nonumber\\
        &\geq \frac{n^2|a_n|^2}{4(m+2\beta)^2(m-n+2\beta)}\frac{1}{a^{2(m-n+2\beta)}}+\frac{n^2|a_{-n}|^2}{4(m+2\beta)^2(m+n+2\beta)^2}\frac{1}{a^{2(m+n+2\beta)}}.
    \end{align*}
    Likewise, we have 
    \begin{align}\label{neg_weight_bn}
        &\frac{|b_{-n}|^2}{m-n-2\beta}b^{2(m-n-2\beta)}\left(1-\left(\frac{a}{b}\right)^{2(m-n-2\beta)}\right)+\frac{|b_n|^2}{m+n-2\beta}b^{2(m+n-2\beta)}\left(1-\left(\frac{a}{b}\right)^{2(m+n-2\beta)}\right)\nonumber\\
        &+\frac{2\,\Re\left(b_nb_{-n}\right)}{m-2\beta}b^{2(m-2\beta)}\nonumber\\
        &\geq \frac{n^2|b_n|^2}{4(m-2\beta)^2(m+n-2\beta)}b^{2(m+n-2\beta)}+\frac{n^2|b_{-n}|^2}{4(m-2\beta)^2(m-n-2\beta)}b^{2(m-n-2\beta)}.
    \end{align}
    provided that 
    \begin{align*}
        \log\left(\frac{b}{a}\right)\geq \frac{1}{m-n-2\beta}\log\left(\frac{2(m-2\beta)}{n}\right).
    \end{align*}
    Now, we have
    \begin{align*}
        &\frac{2\,\Re\left(a_n\bar{b_n}\right)}{n-2\beta}b^{2(n-2\beta)}\left(1-\left(\frac{a}{b}\right)^{2(n-2\beta)}\right)\geq -\frac{16(m-2\beta)^2(m+n-2\beta)|a_n|^2}{n^2(n-2\beta)^2}\frac{1}{b^{2(m+n-2\beta)}}\\
        &-\frac{n^2|b_n|^2}{16(m-2\beta)^2(m-n-2\beta)}b^{2(m-n-2\beta)}.
    \end{align*}
    Then, we estimate
    \begin{align*}
        &\frac{2\,\Re\left(a_{-n}\bar{b_{-n}}\right)}{n+2\beta}\frac{1}{a^{2(n+2\beta)}}\left(1-\left(\frac{a}{b}\right)^{2(n+2\beta)}\right)\geq -\frac{n^2|a_{-n}|^2}{8(m+2\beta)^2(m+n+2\beta)}\frac{1}{a^{2(m+n+2\beta)}}\\
        &-\frac{8(m+2\beta)^2(m+n+2\beta)|b_{-n}|^2}{n^2(n+2\beta)^2}a^{2(m-n-2\beta)}.
    \end{align*}
    Finally, we have
    \begin{align*}
        &\frac{1}{\beta}\Re\left(a_nb_{-n}\right)\frac{1}{a^{4\beta}}\left(1-\left(\frac{a}{b}\right)^{4\beta}\right)\geq -\frac{n^2|a_n|^2}{8(m+2\beta)^2(m-n+2\beta)}\frac{1}{a^{2(m-n+2\beta)}}\\
        &-\frac{8(m+2\beta)^2(m-n+2\beta)|b_{-n}|^2}{\beta^2n^2}a^{2(m-n-2\beta)}
    \end{align*}
    and
    \begin{align*}
        &\frac{1}{\beta}\Re\left(a_{-n}b_n\right)\frac{1}{a^{4\beta}}\left(1-\left(\frac{a}{b}\right)^{4\beta}\right)\geq -\frac{16(m-2\beta)^2(m-n-2\beta)|a_{-n}|^2}{\beta^2n^2}\frac{1}{b^{2(m+n+2\beta)}}\\
        &-\frac{n^2|b_{n}|^2}{16(m-2\beta)^2(m-n-2\beta)}b^{2(m-n-2\beta)}.
    \end{align*}
    Therefore, we get
    \begin{align*}
        &\frac{|a_n|^2}{m-n+2\beta}\frac{1}{a^{2(m-n+2\beta)}}\left(1-\left(\frac{a}{b}\right)^{2(m-n+2\beta)}\right)\\
        &+\frac{|a_{-n}|^2}{m+n+2\beta}\frac{1}{a^{2(m+n+2\beta)}}\frac{1}{a^{2(m+n+2\beta)}}\left(1-\left(\frac{a}{b}\right)^{2(m+n+2\beta)}\right)\\
        &+\frac{|b_{-n}|^2}{m-n-2\beta}b^{2(m-n-2\beta)}\left(1-\left(\frac{a}{b}\right)^{2(m-n-2\beta)}\right)+\frac{|b_n|^2}{m+n-2\beta}b^{2(m+n-2\beta)}\left(1-\left(\frac{a}{b}\right)^{2(m+n-2\beta)}\right)\\
        &+\frac{2\,\Re\left(a_na_{-n}\right)}{m+2\beta}\frac{1}{a^{2(m+2\beta)}}\left(1-\left(\frac{a}{b}\right)^{2(m+2\beta)}\right)+\frac{2\,\Re\left(b_nb_{-n}\right)}{m-2\beta}b^{2(m-2\beta)}\left(1-\left(\frac{a}{b}\right)^{2(m-2\beta)}\right)\\
        &+\frac{2\,\Re\left(a_n\bar{b_n}\right)}{n-2\beta}b^{2(n-2\beta)}\left(1-\left(\frac{a}{b}\right)^{2(n-2\beta)}\right)+\frac{2\,\Re\left(a_{-n}\bar{b_{-n}}\right)}{n+2\beta}\frac{1}{a^{2(n+2\beta)}}\left(1-\left(\frac{a}{b}\right)^{2(n+2\beta)}\right)\\
        &+\frac{1}{\beta}\left(\Re\left(a_nb_{-n}\right)+\Re\left(a_{-n}b_n\right)\right)\frac{1}{a^{4\beta}}\left(1-\left(\frac{a}{b}\right)^{4\beta}\right)\\
        &\geq \frac{n^2|a_n|^2}{4(m+2\beta)^2(m-n+2\beta)}\frac{1}{a^{2(m-n+2\beta)}}+\frac{n^2|a_{-n}|^2}{4(m+2\beta)^2(m+n+2\beta)^2}\frac{1}{a^{2(m+n+2\beta)}}\\
        &+\frac{n^2|b_n|^2}{4(m-2\beta)^2(m+n-2\beta)}b^{2(m+n-2\beta)}+\frac{n^2|b_{-n}|^2}{4(m-2\beta)^2(m-n-2\beta)}b^{2(m-n-2\beta)}\\
        &-\frac{16(m-2\beta)^2(m+n-2\beta)|a_n|^2}{n^2(n-2\beta)^2}\frac{1}{b^{2(m+n-2\beta)}}
        -\frac{n^2|b_n|^2}{16(m-2\beta)^2(m-n-2\beta)}b^{2(m-n-2\beta)}\\
        &-\frac{n^2|a_{-n}|^2}{8(m+2\beta)^2(m+n+2\beta)}\frac{1}{a^{2(m+n+2\beta)}}-\frac{8(m+2\beta)^2(m+n+2\beta)|b_{-n}|^2}{n^2(n+2\beta)^2}a^{2(m-n-2\beta)}\\
        &-\frac{n^2|a_n|^2}{8(m+2\beta)^2(m-n+2\beta)}\frac{1}{a^{2(m-n+2\beta)}}-\frac{8(m+2\beta)^2(m-n+2\beta)|b_{-n}|^2}{\beta^2n^2}a^{2(m-n-2\beta)}\\
        &-\frac{16(m-2\beta)^2(m-n-2\beta)|a_{-n}|^2}{\beta^2n^2}\frac{1}{b^{2(m+n+2\beta)}}-\frac{n^2|b_{n}|^2}{16(m-2\beta)^2(m-n-2\beta)}b^{2(m-n-2\beta)}\\
        &=\frac{n^2|a_n|^2}{8(m+2\beta)^2(m-n+2\beta)}\frac{1}{a^{2(m-n+2\beta)}}\left(1-\frac{128\left(m^2-4\beta^2\right)^2\left(m^2-(n-2\beta)^2\right)}{n^2(n-2\beta)^2}\left(\frac{a}{b}\right)^{2(m-n+2\beta)}\right)\\
        &+\frac{n^2|a_{-n}|^2}{8(m+2\beta)^2(m+n+2\beta)^2}\frac{1}{a^{2(m+n+2\beta)}}\left(1-\frac{128\left(m^2-4\beta^2\right)^2\left(m^2-(n+2\beta)^2\right)}{\beta^2n^2}\left(\frac{a}{b}\right)^{2(m+n+2\beta)}\right)\\
        &+\frac{n^2|b_n|^2}{8(m-2\beta)^2(m-n-2\beta)}b^{2(m-n-2\beta)}\\
        &+\frac{n^2|b_{-n}|^2}{4(m-2\beta)^2(m-n-2\beta)}b^{2(m-n-2\beta)}\left(1-\frac{64\left(m^2-4\beta^2\right)^2\left(m^2-(n+2\beta)^2\right)}{n^2(n+2\beta)^2}\left(\frac{a}{b}\right)^{2(m-n-2\beta)}\right.\\
        &\left.-\frac{64\left(m^2-4\beta^2\right)^2\left(m^2-(n-2\beta)^2\right)}{\beta^2n^2}\left(\frac{a}{b}\right)^{2(m-n-2\beta)}\right).
    \end{align*}
    Therefore, we impose the conditions
    \begin{align*}
        &\log\left(\frac{b}{a}\right)\geq \frac{1}{2(m-n+2\beta)}\log\left(\frac{256\left(m^2-4\beta^2\right)^2\left(m^2-(n-2\beta)^2\right)}{n^2(n-2\beta)^2}\right)\\
        &\log\left(\frac{b}{a}\right)\geq \frac{1}{2(m+n+2\beta)}\log\left(\frac{256\left(m^2-4\beta^2\right)^2\left(m^2-(n+2\beta)^2\right)}{\beta^2n^2}\right)\\
        &\log\left(\frac{b}{a}\right)\geq \frac{1}{2(m-n-2\beta)}\log\left(\frac{256\left(m^2-4\beta^2\right)^2\left(m^2-(n+2\beta)^2\right)}{n^2(n+2\beta)^2}\right)\\
        &\log\left(\frac{b}{a}\right)\geq \frac{1}{2(m-n-2\beta)}\log\left(\frac{256\left(m^2-4\beta^2\right)^2\left(m^2-(n-2\beta)^2\right)}{\beta^2n^2}\right),
    \end{align*}
    and this finally yields
    \begin{align*}
        &\sum_{n=1}^{m-1}\left(\frac{n^2|a_n|^2}{16(m+2\beta)^2(m-n+2\beta)^2}\frac{1}{a^{2(m-n)}}+\frac{n^2|a_{-n}|^2}{16(m+2\beta)^2(m+n+2\beta)^2}\frac{1}{a^{2(m+n)}}\right)\\
        &+\left(\frac{a}{b}\right)^{4\beta}\sum_{n=1}^{m-1}\left(\frac{n^2|b_n|^2}{8(m-2\beta)^2(m+n-2\beta)}b^{2(m+n)}+\frac{n^2|b_{-n}|^2}{8(m-2\beta)^2(m-n-2\beta)}b^{2(m-n)}\right)\\
        &\leq \frac{1}{2\pi}\int_{\Omega}\frac{u_0^2}{|x|^4}\left(\frac{a}{|x|}\right)^{4\beta}dx
    \end{align*}
    
    Thanks to \eqref{int_u_square} (and \eqref{sum_squares} that shows that the integrated expression is a sum of squares and that bounding a subset of them gives a control by its left-hand side), we deduce that 
    \begin{align*}
        &\sum_{n=2}^{m-2}\frac{n^2}{16(m-n-2\beta)(m-2\beta)^2}|a_n|^2\frac{1}{a^{2(m-n-2\beta)}}+\sum_{n=2}^{m-2}\frac{n^2}{16(m+n-2\beta)(m-2\beta)^2}|a_{-n}|^2\frac{1}{a^{2(m+n-2\beta)}}\nonumber\\
        &+\sum_{n=2}^{m-2}\frac{n^2}{16(m+n+2\beta)(m+2\beta)^2}|b_n|^2b^{2(m+n+2\beta)}+\sum_{n=2}^{m-2}\frac{n^2}{16(m-n+2\beta)(m+2\beta)^2}|b_{-n}|^2b^{2(m-n+2\beta)}\\
        &\leq \frac{1}{2\pi}\int_{\Omega}\frac{u^2}{|x|^4}|x|^{4\beta}dx.
    \end{align*}
    Now, besides the logarithm terms, we need only estimate $a_{n}$ and $b_n$ for $m-1\leq |n|\leq m$ and $0\leq |n|\leq 1$. For $n=m$, we find the quantity
    \begin{align*}
        &\frac{|a_m|^2}{2\beta}b^{4\beta}\left(1-\left(\frac{a}{b}\right)^{4\beta}\right)
        +\frac{|a_{-m}|^2}{2(m-\beta)}\frac{1}{a^{4(m-\beta)}}\left(1-\left(\frac{a}{b}\right)^{4(m-\beta)}\right)\\
        &+\frac{|b_m|^2}{2(m+\beta)}b^{4(m+\beta)}\left(1-\left(\frac{a}{b}\right)^{4(m+\beta)}\right)
        +\frac{|b_{-m}|^2}{2\beta}b^{4\beta}\left(1-\left(\frac{a}{b}\right)^{4\beta}\right)\\
        &-\frac{2}{m-2\beta}\Re(a_ma_{-m})\frac{1}{a^{2m-4\beta}}\left(1-\left(\frac{a}{b}\right)^{2m-4\beta)}\right)\\
        &+\frac{2}{m+2\beta}\Re(b_mb_{-m})b^{2m+4\beta}\left(1-\left(\frac{a}{b}\right)^{2m+4\beta}\right)+\frac{2\,\Re\left(a_m\bar{b_m}\right)}{m+2\beta}b^{2(m+2\beta)}\left(1-\left(\frac{a}{b}\right)^{2(m+2\beta)}\right)\\
        &+\frac{2\,\Re\left(a_{-m}\bar{a_{-m}}\right)}{m-2\beta}\frac{1}{a^{2(m-2\beta)}}\left(1-\left(\frac{a}{b}\right)^{2(m-2\beta)}\right)\\
        &+4\,\Re\left(a_mb_{-m}+a_{-m}b_m\right)\log\left(\frac{b}{a}\right).
    \end{align*}
    As previously, we estimate 
    \begin{align*}
        &\frac{|a_m|^2}{2\beta}b^{4\beta}\left(1-\left(\frac{a}{b}\right)^{4\beta}\right)+\frac{|a_{-m}|^2}{2(m-\beta)}\frac{1}{a^{4(m-\beta)}}\left(1-\left(\frac{a}{b}\right)^{4(m-\beta)}\right)\\
        &-\frac{2}{m-2\beta}\Re(a_ma_{-m})\frac{1}{a^{2m-4\beta}}\left(1-\left(\frac{a}{b}\right)^{2m-4\beta)}\right)\\
        &\geq \frac{|a_m|^2}{2\beta}b^{4\beta}\left(1-\left(\frac{a}{b}\right)^{4\beta}\right)-\frac{4(m-\beta)}{(m-2\beta)^2}|a_m|^2a^{4\beta}\frac{\left(1-\left(\frac{a}{b}\right)^{2m-4\beta}\right)^2}{1-\left(\frac{a}{b}\right)^{4(m-\beta)}}\\
        &+\frac{|a_{-m}|^2}{4(m-\beta)}\frac{1}{a^{4(m-\beta)}}\left(1-\left(\frac{a}{b}\right)^{4(m-\beta)}\right)\\
        &=\frac{|a_m|^2}{1-\left(\frac{a}{b}\right)^{4(m-\beta)}}b^{4\beta}\left(\frac{1}{2\beta}\left(1-\left(\frac{a}{b}\right)^{4\beta}\right)\left(1-\left(\frac{a}{b}\right)^{4(m-\beta)}\right)-\frac{2(m-\beta)}{(m-2\beta)^2}\left(\frac{a}{b}\right)^{4\beta}\left(1-\left(\frac{a}{b}\right)^{2m-4\beta}\right)^2\right)\\
        &+\frac{|a_{-m}|^2}{4(m-\beta)}\frac{1}{a^{4(m-\beta)}}\left(1-\left(\frac{a}{b}\right)^{4(m-\beta)}\right)\\
        &\geq \frac{|a_m|^2}{1-\left(\frac{a}{b}\right)^{4(m-\beta)}}b^{4\beta}\left(\frac{1}{2\beta}-\frac{1}{2\beta}\left(\frac{a}{b}\right)^{4\beta}-\frac{1}{2\beta}\left(\frac{a}{b}\right)^{4(m-\beta)}-\frac{4(m-\beta)}{(m-2\beta)^2}\left(\frac{a}{b}\right)^{4\beta}\right)\\
        &+\frac{|a_{-m}|^2}{4(m-\beta)}\frac{1}{a^{4(m-\beta)}}\left(1-\left(\frac{a}{b}\right)^{4(m-\beta)}\right)
    \end{align*}
    We impose the following conditions on the conformal class
    \begin{align*}
    \left\{\begin{alignedat}{1}
        &\frac{1}{2\beta}\left(\frac{a}{b}\right)^{4\beta}\leq \frac{1}{16\beta}\\
        &\frac{1}{2\beta}\left(\frac{a}{b}\right)^{4(m-\beta)}\leq \frac{1}{16\beta}\\
        &\frac{4(m-\beta)}{(m-2\beta)^2}\left(\frac{a}{b}\right)^{4\beta}\leq \frac{1}{16\beta}.
        \end{alignedat}\right.
    \end{align*}
    Therefore, the condition is equivalent to
    \begin{align}\label{lm_log_12}
        \log\left(\frac{b}{a}\right)\geq \max\ens{\frac{1}{4\beta}\log(8),\frac{1}{4(m-\beta)}\log(8),\frac{1}{4\beta}\log\left(\frac{64\beta(m-\beta)}{(m-2\beta)^2}\right)}.
    \end{align}
    If this condition is satisfied, we deduce that
    \begin{align}\label{am_est1}
        &\frac{|a_m|^2}{2\beta}b^{4\beta}\left(1-\left(\frac{a}{b}\right)^{4\beta}\right)+\frac{|a_{-m}|^2}{2(m-\beta)}\frac{1}{a^{4(m-\beta)}}\left(1-\left(\frac{a}{b}\right)^{4(m-\beta)}\right)\nonumber\\
        &-\frac{2}{m-2\beta}\Re(a_ma_{-m})\frac{1}{a^{2m-4\beta}}\left(1-\left(\frac{a}{b}\right)^{2m-4\beta)}\right)\geq \frac{1}{4\beta}|a_m|^2b^{4\beta}+\frac{|a_{-m}|^2}{4(m-\beta)}\frac{1}{a^{4(m-\beta)}}\left(1-\left(\frac{a}{b}\right)^{4(m-\beta)}\right)\nonumber\\
        &\geq \frac{1}{4\beta}|a_m|^2b^{4\beta}+\frac{|a_{-m}|^2}{8(m-\beta)}\frac{1}{a^{4(m-\beta)}}.
    \end{align}
    Now, for $n=m-1$, the relevant quantity is given by
    \begin{align*}
        &\frac{|a_{m-1}|^2}{2\beta-1}b^{2(2\beta-1)}\left(1-\left(\frac{a}{b}\right)^{2(2\beta-1)}\right)+\frac{|a_{1-m}|^2}{2m-1-2\beta}\frac{1}{a^{2(2m-1-2\beta)}}\left(1-\left(\frac{a}{b}\right)^{2(2m-1-2\beta)}\right)
        \\
        &-\frac{2\,\Re\left(a_{m-1}a_{1-m}\right)}{m-2\beta}\frac{1}{a^{2m-4\beta}}\left(1-\left(\frac{a}{b}\right)^{2m-4\beta}\right)\\
        &\geq \frac{|a_{m-1}|^2}{2\beta-1}b^{2(2\beta-1)}\left(1-\left(\frac{a}{b}\right)^{2(2\beta-1)}\right)-\frac{2(2m-1-2\beta)}{(m-2\beta)^2}|a_{m-1}|^2\frac{1}{a^{2(1-2\beta)}}\frac{\left(1-\left(\frac{a}{b}\right)^{2m-4\beta}\right)^2}{1-\left(\frac{a}{b}\right)^{2(2m-1-2\beta)}}\\
        &+\frac{|a_{1-m}|^2}{2(2m-1-2\beta)}\frac{1}{a^{2(2m-1-2\beta)}}\left(1-\left(\frac{a}{b}\right)^{2(2m-1-2\beta)}\right)\\
        &\geq \frac{|a_{m-1}|^2}{1-\left(\frac{a}{b}\right)^{2(2m-1-2\beta)}}b^{2(2\beta-1)}\left(\frac{1}{2\beta-1}\left(1-\left(\frac{a}{b}\right)^{2(2\beta-1)}\right)\left(1-\left(\frac{a}{b}\right)^{2(2m-1-2\beta)}\right)\right.\\
        &\left.-\frac{2(2m-1-2\beta)}{(m-2\beta)^2}\left(\frac{a}{b}\right)^{2(2\beta-1)}\left(1-\left(\frac{a}{b}\right)^{2m-4\beta}\right)^2\right).
    \end{align*}
    Therefore, we impose the following conditions
    \begin{align*}
        \left\{\begin{alignedat}{1}
            &\frac{1}{2\beta-1}\left(\frac{a}{b}\right)^{2(2\beta-1)}\leq \frac{1}{8(2\beta-1)}\\
            &\frac{1}{2\beta-1}\left(\frac{a}{b}\right)^{2(2m-1-2\beta)}\leq \frac{1}{8(2\beta-1)}\\
            &\frac{2(2m-1-2\beta)}{(m-2\beta)^2}\left(\frac{a}{b}\right)^{2(2\beta-1)}\leq \frac{1}{8(2\beta-1)}
        \end{alignedat}\right.,
    \end{align*}
    which yields the condition
    \begin{align}\label{lm_log_13}
        \log\left(\frac{b}{a}\right)\geq \max\ens{\frac{1}{2(2\beta-1)}\log(8),\frac{1}{2(2m-1-2\beta)}\log(8),\frac{1}{2(2\beta-1)}\log\left(\frac{16(2\beta-1)(2m-1-2\beta)}{(m-2\beta)^2}\right)},
    \end{align}
    and we get
    \begin{align*}
        &\frac{|a_{m-1}|^2}{2\beta-1}b^{2(2\beta-1)}\left(1-\left(\frac{a}{b}\right)^{2(2\beta-1)}\right)+\frac{|a_{1-m}|^2}{2m-1-2\beta}\frac{1}{a^{2(2m-1-2\beta)}}\left(1-\left(\frac{a}{b}\right)^{2(2m-1-2\beta)}\right)
        \\
        &-\frac{2\,\Re\left(a_{m-1}a_{1-m}\right)}{m-2\beta}\frac{1}{a^{2m-4\beta}}\left(1-\left(\frac{a}{b}\right)^{2m-4\beta}\right)\\
        &\geq \frac{|a_{m-1}|^2}{2(2\beta-1)}b^{2(2\beta-1)}+\frac{|a_{1-m}|^2}{2(2m-1-2\beta)}\frac{1}{a^{2(2m-1-2\beta)}}\left(1-\left(\frac{a}{b}\right)^{2(2m-1-2\beta)}\right)\nonumber\\
        &\geq \frac{|a_{m-1}|^2}{2(2\beta-1)}b^{2(2\beta-1)}+\frac{|a_{1-m}|^2}{4(2m-1-2\beta)}\frac{1}{a^{2(2m-1-2\beta)}}.
    \end{align*}
    Now, we consider 
    \begin{align*}
        &\frac{|a_1|^2}{m-1-2\beta}\frac{1}{a^{2(m-1-2\beta)}}\left(1-\left(\frac{a}{b}\right)^{2(m-1-2\beta)}\right)+\frac{|a_{-1}|^2}{m+1-2\beta}\frac{1}{a^{2(m+1-2\beta)}}\left(1-\left(\frac{a}{b}\right)^{2(m+1-2\beta)}\right)\\
        &-\frac{2\,\Re\left(a_1a_{-1}\right)}{m-2\beta}\frac{1}{a^{2m-4\beta}}\left(1-\left(\frac{a}{b}\right)^{2m-4\beta}\right)\\
        &\geq \frac{|a_1|^2}{m-1-2\beta}\frac{1}{a^{2(m-1-2\beta)}}\left(1-\left(\frac{a}{b}\right)^{2(m-1-2\beta)}\right)-\frac{(m+1-2\beta)}{(m-2\beta)^2}|a_1|^2\frac{1}{a^{2(m-1-2\beta)}}\frac{\left(1-\left(\frac{a}{b}\right)^{2m-4\beta}\right)^2}{1-\left(\frac{a}{b}\right)^{2(m+1-2\beta)}}\\
        &\geq \frac{|a_1|^2}{1-\left(\frac{a}{b}\right)^{2(m+1-2\beta)}}\frac{1}{a^{2(m-1-2\beta)}}\left(\frac{1}{(m-1-2\beta)(m-2\beta)^2}-\frac{1}{m-1-2\beta}\left(\frac{a}{b}\right)^{2(m-1-2\beta)}\right.\\
        &\left.-\frac{1}{m-1-2\beta}\left(\frac{a}{b}\right)^{2(m+1-2\beta)}\right).
    \end{align*}
    Therefore, we impose the conditions 
    \begin{align*}
        &\frac{1}{m-1-2\beta}\left(\frac{a}{b}\right)^{2(m-1-2\beta)}\leq \frac{1}{4(m-1-2\beta)(m-2\beta)^2}\\
        &\frac{1}{m-1-2\beta}\left(\frac{a}{b}\right)^{2(m+1-2\beta)}\leq \frac{1}{4(m-1-2\beta)(m-2\beta)^2},
    \end{align*}
    which yields
    \begin{align}\label{lm_log_14}
        \log\left(\frac{b}{a}\right)\geq \max\ens{\frac{1}{2(m+1-2\beta)}\log(4),\frac{1}{2(m-1-2\beta)}\log(4)}=\frac{1}{2(m-1-2\beta)}\log(4),
    \end{align}
    and yields 
    \begin{align*}
        &\frac{|a_1|^2}{m-1-2\beta}\frac{1}{a^{2(m-1-2\beta)}}\left(1-\left(\frac{a}{b}\right)^{2(m-1-2\beta)}\right)+\frac{|a_{-1}|^2}{m+1-2\beta}\frac{1}{a^{2(m+1-2\beta)}}\left(1-\left(\frac{a}{b}\right)^{2(m+1-2\beta)}\right)\\
        &-\frac{2\,\Re\left(a_1a_{-1}\right)}{m-2\beta}\frac{1}{a^{2m-4\beta}}\left(1-\left(\frac{a}{b}\right)^{2m-4\beta}\right)\\
        &\geq \frac{|a_1|^2}{2(m-1-2\beta)}\frac{1}{a^{2(m-1-2\beta)}}.
    \end{align*}
    Likewise, we have (with the same condition on the conformal class)
    \begin{align*}
        &\frac{|a_1|^2}{m-1-2\beta}\frac{1}{a^{2(m-1-2\beta)}}\left(1-\left(\frac{a}{b}\right)^{2(m-1-2\beta)}\right)+\frac{|a_{-1}|^2}{m+1-2\beta}\frac{1}{a^{2(m+1-2\beta)}}\left(1-\left(\frac{a}{b}\right)^{2(m+1-2\beta)}\right)\\
        &-\frac{2\,\Re\left(a_1a_{-1}\right)}{m-2\beta}\frac{1}{a^{2m-4\beta}}\left(1-\left(\frac{a}{b}\right)^{2m-4\beta}\right)\\
        &\geq \frac{|a_{-1}|^2}{2(m+1-2\beta)}\frac{1}{a^{2(m+1-2\beta)}},
    \end{align*}
    and finally, we get
    \begin{align}\label{a1_est}
        &\frac{|a_1|^2}{m-1-2\beta}\frac{1}{a^{2(m-1-2\beta)}}\left(1-\left(\frac{a}{b}\right)^{2(m-1-2\beta)}\right)+\frac{|a_{-1}|^2}{m+1-2\beta}\frac{1}{a^{2(m+1-2\beta)}}\left(1-\left(\frac{a}{b}\right)^{2(m+1-2\beta)}\right)\nonumber\\
        &-\frac{2\,\Re\left(a_1a_{-1}\right)}{m-2\beta}\frac{1}{a^{2m-4\beta}}\left(1-\left(\frac{a}{b}\right)^{2m-4\beta}\right)\nonumber\\
        &\geq \frac{|a_1|^2}{4(m-1-2\beta)}\frac{1}{a^{2(m-1-2\beta)}}+\frac{|a_{-1}|^2}{4(m+1-2\beta)}\frac{1}{a^{2(m+1-2\beta)}}. 
    \end{align}
    Now, we estimate the $b_n$ terms. We consider 
    \begin{align*}
        &\frac{|b_m|^2}{2(m+\beta)}b^{4(m+\beta)}\left(1-\left(\frac{a}{b}\right)^{4(m+\beta)}\right)+\frac{|b_{-m}|^2}{2\beta}b^{4\beta}\left(1-\left(\frac{a}{b}\right)^{4\beta}\right)\\
        &+\frac{2\,\Re\left(b_mb_{-m}\right)}{m+2\beta}b^{2m+4\beta}\left(1-\left(\frac{a}{b}\right)^{2m+4\beta}\right)\\
        &\geq \frac{|b_m|^2}{1-\left(\frac{a}{b}\right)^{4\beta}}b^{4(m+\beta)}\left(\frac{m^2}{2(m+\beta)(m+2\beta)^2}-\frac{1}{2(m+\beta)}\left(\frac{a}{b}\right)^{4(m+\beta)}-\frac{1}{2(m+\beta)}\left(\frac{a}{b}\right)^{4\beta}\right).
    \end{align*}
    Therefore, we impose the conditions
    \begin{align*}
        \frac{1}{2(m+\beta)}\left(\frac{a}{b}\right)^{4(m+\beta)}\leq \frac{m^2}{8(m+\beta)(m+2\beta)^2}\\
        \frac{1}{2(m+\beta)}\left(\frac{a}{b}\right)^{4\beta}\leq \frac{m^2}{8(m+\beta)(m+2\beta)^2},
    \end{align*}
    which yields
    \begin{align}\label{lm_log_15}
        \log\left(\frac{b}{a}\right)\geq \max\ens{\frac{1}{2(m+\beta)}\log\left(\frac{2(m+2\beta)}{m}\right),\frac{1}{2\beta}\log\left(\frac{2(m+2\beta)}{m}\right)}=\frac{1}{2\beta}\log\left(\frac{2(m+2\beta)}{m}\right),
    \end{align}
    and we get the estimate
    \begin{align*}
        &\frac{|b_m|^2}{2(m+\beta)}b^{4(m+\beta)}\left(1-\left(\frac{a}{b}\right)^{4(m+\beta)}\right)+\frac{|b_{-m}|^2}{2\beta}b^{4\beta}\left(1-\left(\frac{a}{b}\right)^{4\beta}\right)\\
        &+\frac{2\,\Re\left(b_mb_{-m}\right)}{m+2\beta}b^{2m+4\beta}\left(1-\left(\frac{a}{b}\right)^{2m+4\beta}\right)\geq \frac{m^2}{4(m+\beta)(m+2\beta)^2}|b_m|^2b^{4(m+\beta)}.
    \end{align*}
    Likewise, if
    \begin{align}\label{lm_log_15bis}
        \log\left(\frac{b}{a}\right)\geq \frac{1}{2\beta}\log\left(\frac{2(m+2\beta)}{m}\right),
    \end{align}
    we get
    \begin{align*}
        &\frac{|b_m|^2}{2(m+\beta)}b^{4(m+\beta)}\left(1-\left(\frac{a}{b}\right)^{4(m+\beta)}\right)+\frac{|b_{-m}|^2}{2\beta}b^{4\beta}\left(1-\left(\frac{a}{b}\right)^{4\beta}\right)\\
        &+\frac{2\,\Re\left(b_mb_{-m}\right)}{m+2\beta}b^{2m+4\beta}\left(1-\left(\frac{a}{b}\right)^{2m+4\beta}\right)\geq \frac{m^2}{4\beta(m+2\beta)^2}|b_{-m}|^2b^{4\beta},
    \end{align*}
    and finally, we have
    \begin{align}\label{bm_est1}
        &\frac{|b_m|^2}{2(m+\beta)}b^{4(m+\beta)}\left(1-\left(\frac{a}{b}\right)^{4(m+\beta)}\right)+\frac{|b_{-m}|^2}{2\beta}b^{4\beta}\left(1-\left(\frac{a}{b}\right)^{4\beta}\right)\nonumber\\
        &+\frac{2\,\Re\left(b_mb_{-m}\right)}{m+2\beta}b^{2m+4\beta}\left(1-\left(\frac{a}{b}\right)^{2m+4\beta}\right)\geq \frac{m^2}{8(m+\beta)(m+2\beta)^2}|b_m|^2b^{4(m+\beta)}+\frac{m^2}{8\beta(m+2\beta)^2}|b_{-m}|^2b^{4\beta}.
    \end{align}
    Then, we have
    \begin{align*}
        &\frac{|b_{m-1}|^2}{2m-1+2\beta}b^{2(2m-1+2\beta)}\left(1-\left(\frac{a}{b}\right)^{2(m-1+2\beta)}\right)+\frac{|b_{1-m}|^2}{2\beta+1}b^{2(2\beta+1)}\left(1-\left(\frac{a}{b}\right)^{2(2\beta+1)}\right)\\
        &+\frac{2\,\Re\left(b_{m-1}b_{1-m}\right)}{m+2\beta}b^{2m+4\beta}\left(1-\left(\frac{a}{b}\right)^{2m+4\beta}\right)\\
        &\geq \frac{|b_{m-1}|^2}{2m-1+2\beta}b^{2(2m-1+2\beta)}\left(1-\left(\frac{a}{b}\right)^{2(m-1+2\beta)}\right)-\frac{2\beta+1}{(m+2\beta)^2}|b_{m-1}|^2b^{2(2m-1+2\beta)}\frac{\left(1-\left(\frac{a}{b}\right)^{2m+4\beta}\right)^2}{1-\left(\frac{a}{b}\right)^{2(2\beta+1)}}\\
        &\geq \frac{|b_{m-1}|^2}{1-\left(\frac{a}{b}\right)^{2(2\beta+1)}}b^{2(2m-1+2\beta)}\left(\frac{(m-1)^2}{(2m-1+2\beta)(m+2\beta)^2}-\frac{1}{2m-1+2\beta}\left(\frac{a}{b}\right)^{2(2m-1+2\beta)}\right.\\
        &\left.-\frac{1}{2m-1+2\beta}\left(\frac{a}{b}\right)^{2(2\beta+1)}\right).
    \end{align*}
    Therefore, we impose the conditions 
    \begin{align*}
        &\frac{1}{2m-1+2\beta}\left(\frac{a}{b}\right)^{2(2m-1+2\beta)}\leq \frac{(m-1)^2}{4(2m-1+2\beta)(m+2\beta)^2}\\
        &\frac{1}{2m-1+2\beta}\left(\frac{a}{b}\right)^{2(2\beta+1)}\leq \frac{(m-1)^2}{4(2m-1+2\beta)(m+2\beta)^2}.
    \end{align*}
    Therefore, the conditions becomes
    \begin{align}\label{lm_log_16}
        \log\left(\frac{b}{a}\right)\geq \max\ens{\frac{1}{2m-1+2\beta}\log\left(\frac{2(m+2\beta)}{m-1}\right),\frac{1}{2\beta+1}\log\left(\frac{2(m+2\beta)}{m-1}\right)},%\max\ens{\frac{1}{2(2m-1+2\beta)}\log\left(\frac{4(m+2\beta)^2}{(m-1)^2}\right),\frac{1}{2(2\beta+1)}\log\left(\frac{4(m+2\beta)^2}{(m-1)^2}\right)}.
    \end{align}
    and the estimate is given by 
    \begin{align*}
         &\frac{|b_{m-1}|^2}{2m-1+2\beta}b^{2(2m-1+2\beta)}\left(1-\left(\frac{a}{b}\right)^{2(m-1+2\beta)}\right)+\frac{|b_{1-m}|^2}{2\beta+1}b^{2(2\beta+1)}\left(1-\left(\frac{a}{b}\right)^{2(2\beta+1)}\right)\\
        &+\frac{2\,\Re\left(b_{m-1}b_{1-m}\right)}{m+2\beta}b^{2m+4\beta}\left(1-\left(\frac{a}{b}\right)^{2m+4\beta}\right)\geq \frac{(m-1)^2}{2(2m-1+2\beta)(m+2\beta)^2}|b_{m-1}|^2b^{2(2m-1+2\beta)}.
    \end{align*}
    Likewise, the hypothesis \eqref{lm_log_16} shows that
    \begin{align*}
        &\frac{|b_{m-1}|^2}{2m-1+2\beta}b^{2(2m-1+2\beta)}\left(1-\left(\frac{a}{b}\right)^{2(m-1+2\beta)}\right)+\frac{|b_{1-m}|^2}{2\beta+1}b^{2(2\beta+1)}\left(1-\left(\frac{a}{b}\right)^{2(2\beta+1)}\right)\\
        &+\frac{2\,\Re\left(b_{m-1}b_{1-m}\right)}{m+2\beta}b^{2m+4\beta}\left(1-\left(\frac{a}{b}\right)^{2m+4\beta}\right)\geq \frac{(m-1)^2}{2(2\beta+1)(m+2\beta)^2}b^{2(2\beta+1)},
    \end{align*}
    and finally, we get
    \begin{align}\label{b_m-1_est1}
        &\frac{|b_{m-1}|^2}{2m-1+2\beta}b^{2(2m-1+2\beta)}\left(1-\left(\frac{a}{b}\right)^{2(m-1+2\beta)}\right)+\frac{|b_{1-m}|^2}{2\beta+1}b^{2(2\beta+1)}\left(1-\left(\frac{a}{b}\right)^{2(2\beta+1)}\right)\nonumber\\
        &+\frac{2\,\Re\left(b_{m-1}b_{1-m}\right)}{m+2\beta}b^{2m+4\beta}\left(1-\left(\frac{a}{b}\right)^{2m+4\beta}\right)\geq \frac{(m-1)^2}{4(2m-1+2\beta)(m+2\beta)^2}|b_{m-1}|^2b^{2(2m-1+2\beta)}\nonumber\\
        &+\frac{(m-1)^2}{4(2\beta+1)(m+2\beta)^2}b^{2(2\beta+1)}.
    \end{align}
    Then, we have
    \begin{align*}
        &\frac{|b_1|^2}{m+1+2\beta}b^{2(m+1+2\beta)}\left(1-\left(\frac{a}{b}\right)^{2(m+1+2\beta)}\right)+\frac{|b_{-1}|^2}{m-1+2\beta}b^{2(m-1+2\beta)}\left(1-\left(\frac{a}{b}\right)^{2(m-1+2\beta)}\right)\\
        &+\frac{2\,\Re\left(b_1b_{-1}\right)}{m+2\beta}b^{2m+4\beta}\left(1-\left(\frac{a}{b}\right)^{2m+4\beta}\right)\\
        &\geq \frac{|b_1|^2}{1-\left(\frac{a}{b}\right)^{2(m-1+2\beta)}}b^{2(m+1+2\beta)}\left(\frac{1}{(m+1+2\beta)(m+2\beta)^2}\right.\\
        &\left.-\frac{1}{m+1+2\beta}\left(\frac{a}{b}\right)^{2(m+1+2\beta)}-\frac{1}{m+1+2\beta}\left(\frac{a}{b}\right)^{2(m-1+2\beta)}\right).
    \end{align*}
    Therefore, we impose the conditions
    \begin{align*}
        &\frac{1}{m+1+2\beta}\left(\frac{a}{b}\right)^{2(m+1+2\beta)}\leq \frac{1}{4(m+1+2\beta)(m+2\beta)^2}\\
        &\frac{1}{m-1+2\beta}\left(\frac{a}{b}\right)^{2(m-1+2\beta)}\leq \frac{1}{4(m+1+2\beta)(m+2\beta)^2},
    \end{align*}
    which yields the condition
    \begin{align}\label{lm_log_17}
        \log\left(\frac{b}{a}\right)&\geq \max\ens{\frac{1}{m+1+2\beta}\log\left(2(m+2\beta)\right),\frac{1}{m-1+2\beta}\log\left(2(m+2\beta)\right)}\nonumber\\
        &=\frac{1}{m-1+2\beta}\log\left(2(m+2\beta)\right),
    \end{align}
    and we get the estimate
    \begin{align*}
        &\frac{|b_1|^2}{m+1+2\beta}b^{2(m+1+2\beta)}\left(1-\left(\frac{a}{b}\right)^{2(m+1+2\beta)}\right)+\frac{|b_{-1}|^2}{m-1+2\beta}b^{2(m-1+2\beta)}\left(1-\left(\frac{a}{b}\right)^{2(m-1+2\beta)}\right)\\
        &+\frac{2\,\Re\left(b_1b_{-1}\right)}{m+2\beta}b^{2m+4\beta}\left(1-\left(\frac{a}{b}\right)^{2m+4\beta}\right)
        \geq \frac{1}{2(m+1+2\beta)(m+2\beta)^2}|b_1|^2b^{2(m+1+2\beta)}.
    \end{align*}
    Likewise, if \eqref{lm_log_17} holds, we get
    \begin{align*}
        &\frac{|b_1|^2}{m+1+2\beta}b^{2(m+1+2\beta)}\left(1-\left(\frac{a}{b}\right)^{2(m+1+2\beta)}\right)+\frac{|b_{-1}|^2}{m-1+2\beta}b^{2(m-1+2\beta)}\left(1-\left(\frac{a}{b}\right)^{2(m-1+2\beta)}\right)\\
        &+\frac{2\,\Re\left(b_1b_{-1}\right)}{m+2\beta}b^{2m+4\beta}\left(1-\left(\frac{a}{b}\right)^{2m+4\beta}\right)
        \geq \frac{1}{2(m-1+2\beta)(m+2\beta)^2}|b_{-1}|^2b^{2(m-1+2\beta)},
    \end{align*}
    and finally, we have
    \begin{align}\label{b1_est1}
        &\frac{|b_1|^2}{m+1+2\beta}b^{2(m+1+2\beta)}\left(1-\left(\frac{a}{b}\right)^{2(m+1+2\beta)}\right)+\frac{|b_{-1}|^2}{m-1+2\beta}b^{2(m-1+2\beta)}\left(1-\left(\frac{a}{b}\right)^{2(m-1+2\beta)}\right)\nonumber\\
        &+\frac{2\,\Re\left(b_1b_{-1}\right)}{m+2\beta}b^{2m+4\beta}\left(1-\left(\frac{a}{b}\right)^{2m+4\beta}\right)
        \geq \frac{1}{4(m+1+2\beta)(m+2\beta)^2}|b_1|^2b^{2(m+1+2\beta)}\nonumber\\
        &+\frac{1}{4(m-1+2\beta)(m+2\beta)^2}|b_{-1}|^2b^{2(m-1+2\beta)}.
    \end{align}
    Then, we have
    \begin{align*}
        &\frac{1}{4\beta}|a_m|^2b^{4\beta}+\frac{|a_{-m}|^2}{8(m-\beta)}\frac{1}{a^{4(m-\beta)}}
        +\frac{m^2}{8(m+\beta)(m+2\beta)^2}|b_m|^2b^{4(m+\beta)}+\frac{m^2}{8\beta(m+2\beta)^2}|b_{-m}|^2b^{4\beta}\\
        &+4\,\Re\left(a_mb_{-m}+a_{-m}b_m\right)\log\left(\frac{b}{a}\right).
    \end{align*}

    \textbf{Step 8}. The coefficients $a_n$ for $|n|=m$.
    
    For $n=m$, we will use the $L^2$ norm of $\mathfrak{L}_m$. The relevant quantity is 
    \begin{align*}
        &2(m-1)^4|a_m|^2\log\left(\frac{b}{a}\right)+\frac{1}{2}\frac{\left(m(m+1)-(m-1)\right)^2+(m-1)^2}{2m}|a_{-m}|^2\frac{1}{a^{4m}}\left(1-\left(\frac{a}{b}\right)^{4m}\right)\\
        &-\frac{2(m-1)}{m}\left(m^2-m+1\right)\Re\left(a_ma_{-m}\right)\frac{1}{a^{2m}}\left(1-\left(\frac{a}{b}\right)^{2m}\right)\\
        &+\frac{1}{2}\frac{\left(m(3m-1)+m(m-1)\right)^2+m^2(m-1)^2}{2m}|b_m|^2b^{4m}\left(1-\left(\frac{a}{b}\right)^{4m}\right)\\
        &+m^2(m-1)^2|b_{-m}|^2\log\left(\frac{b}{a}\right)+2m(m-1)(2m-1)\Re\left(b_mb_{-m}\right)b^{2m}\left(1-\left(\frac{a}{b}\right)^{2m}\right)\\
        &+\frac{\left(m(m-1)-(m-1)\right)\left(m(3m-1)+m(m-1)\right)-m(m-1)^2}{m}\Re\left(a_m\bar{b_m}\right)b^{2m}\left(1-\left(\frac{a}{b}\right)^{2m}\right)\\
        &+\frac{\left(m(m+1)-(m-1)\right)\left(m(-m-1)+m(m-1)\right)-m(m-1)^2}{m}\Re\left(a_{-m}\bar{b_{-m}}\right)\frac{1}{a^{2m}}\left(1-\left(\frac{a}{b}\right)^{2m}\right)\\
        &+2\left(m(m-1)\left(m(m-1)-(m-1)\right)+(m-1)\left(m(-m-1)-m(m-1)\right)\right)\Re\left(a_mb_{-m}\right)\log\left(\frac{b}{a}\right)\\
        &+2\left(m(m-1)\left(m(m+1)-(m-1)\right)+(m-1)\left(m(3m+1)-m(m-1)\right)\right)\Re\left(a_{-m}b_{m}\right)\log\left(\frac{b}{a}\right).
    \end{align*}
    Then, we have
    \begin{align*}
        \left(m(m+1)-(m-1)\right)^2+(m-1)^2&=m^4+3m^2-2m+2\\
        \left(m(3m-1)+m(m-1)\right)^2+m^2(m-1)^2&=m^2(17m^2-18m+5)\\
        \left(m(m-1)-(m-1)\right)\left(m(3m-1)+m(m-1)\right)-m(m-1)^2&=m(m-1)^2(4m-3)\\
        \left(m(m+1)-(m-1)\right)\left(m(-m-1)+m(m-1)\right)-m(m-1)^2&=-m(3m^2-2m+3)\\
        m(m-1)\left(m(m-1)-(m-1)\right)+(m-1)\left(m(-m-1)-m(m-1)\right)&=m(m-1)(m^2-4m+1)\\
        m(m-1)\left(m(m+1)-(m-1)\right)+(m-1)\left(m(3m+1)-m(m-1)\right)&=m(m-1)(m^2+2m+3).
    \end{align*}
    Therefore, this quantity can be rewritten as 
    \begin{align*}
        &(m-1)^2\left((m-1)^2+1\right)|a_m|^2\log\left(\frac{b}{a}\right)+\frac{m^4+3m^2-2m+2}{4m}|a_{-m}|^2\frac{1}{a^{4m}}\left(1-\left(\frac{a}{b}\right)^{4m}\right)\\
        &-\frac{2(m-1)}{m}\left(m^2-m+1\right)\Re\left(a_ma_{-m}\right)\frac{1}{a^{2m}}\left(1-\left(\frac{a}{b}\right)^{2m}\right)\\
        &+\frac{m(17m^2-18m+5)}{4}|b_m|^2b^{4m}\left(1-\left(\frac{a}{b}\right)^{4m}\right)
        +m^2(m-1)^2|b_{-m}|^2\log\left(\frac{b}{a}\right)\\
        &+2m(m-1)(2m-1)\Re\left(b_mb_{-m}\right)b^{2m}\left(1-\left(\frac{a}{b}\right)^{2m}\right)\\
        &+m(m-1)^2(4m-3)\Re\left(a_m\bar{b_m}\right)b^{2m}\left(1-\left(\frac{a}{b}\right)^{2m}\right)\\
        &-m(3m^2-2m+3)\Re\left(a_{-m}\bar{b_{-m}}\right)\frac{1}{a^{2m}}\left(1-\left(\frac{a}{b}\right)^{2m}\right)\\
        &+m(m-1)\left(m^2-4m+1\right)\Re\left(a_mb_{-m}\right)\log\left(\frac{b}{a}\right)\\
        &+m(m-1)\left(m^2+2m+3\right)\Re\left(a_{-m}b_m\right)\log\left(\frac{b}{a}\right).
    \end{align*}
    Fix $\epsilon,\delta>0$. We have
    \begin{align*}
        &(m-1)^2\left((m-1)^2+1\right)|a_m|^2\log\left(\frac{b}{a}\right)+\frac{m^4+3m^2-2m+2}{4m}|a_{-m}|^2\frac{1}{a^{4m}}\left(1-\left(\frac{a}{b}\right)^{4m}\right)\\
        &-\frac{2(m-1)}{m}\left(m^2-m+1\right)\Re\left(a_ma_{-m}\right)\frac{1}{a^{2m}}\left(1-\left(\frac{a}{b}\right)^{2m}\right)\\
        &\geq (m-1)^2\left((m-1)^2+1\right)|a_m|^2\log\left(\frac{b}{a}\right)-\frac{(m-1)^2\left(m^2-m+1\right)^2}{m^2\epsilon}|a_m|^2\frac{\left(1-\left(\frac{a}{b}\right)^{2m}\right)^2}{1-\left(\frac{a}{b}\right)^{4m}}\\
        &+\left(\frac{m^4+3m^2-2m+2}{4m}-\epsilon\right)|a_{-m}|^2\frac{1}{a^{4m}}\left(1-\left(\frac{a}{b}\right)^{4m}\right).
    \end{align*}
    Likewise, we have
    \begin{align*}
        &\frac{m(17m^2-18m+5)}{4}|b_m|^2b^{4m}\left(1-\left(\frac{a}{b}\right)^{4m}\right)+m^2(m-1)^2|b_{-m}|^2\log\left(\frac{b}{a}\right)\\
        &+2m(m-1)(2m-1)\Re\left(b_mb_{-m}\right)b^{2m}\left(1-\left(\frac{a}{b}\right)^{2m}\right)\\
        &\geq \left(\frac{m(17m^2-18m+5)}{4}-\delta\right)|b_m|^2b^{4m}\left(1-\left(\frac{a}{b}\right)^{4m}\right)+m^2(m-1)^2|b_{-m}|^2\log\left(\frac{b}{a}\right)\\
        &-\frac{m^2(m-1)^2(2m-1)^2}{\delta}|b_{-m}|^2\frac{\left(1-\left(\frac{a}{b}\right)^{2m}\right)^2}{1-\left(\frac{a}{b}\right)^{4m}}.
    \end{align*}
    Then, we have 
    \begin{align*}
        m(m-1)^2(4m-3)\Re\left(a_m\bar{b_m}\right)b^{2m}\left(1-\left(\frac{a}{b}\right)^{2m}\right)&\geq -\frac{m^2(m-1)^4(4m-3)^2}{2\delta}\frac{\left(1-\left(\frac{a}{b}\right)^{2m}\right)^2}{1-\left(\frac{a}{b}\right)^{4m}}\\
        &-\delta |b_m|^2b^{4m}\left(1-\left(\frac{a}{b}\right)^{4m}\right),
    \end{align*}
    and
    \begin{align*}
        -m(3m^2-2m+3)\Re\left(a_{-m}b_{-m}\right)\frac{1}{a^{2m}}\left(1-\left(\frac{a}{b}\right)^{2m}\right)&\geq -\epsilon |a_{-m}|^2\frac{1}{a^{4m}}\left(1-\left(\frac{a}{b}\right)^{4m}\right)\\
        &-\frac{m^2(3m^2-2m+3)^2}{\epsilon}|b_{-m}|^2\frac{\left(1-\left(\frac{a}{b}\right)^{2m}\right)^2}{1-\left(\frac{a}{b}\right)^{4m}}.
    \end{align*}
    Then, for all $\eta>0$ and $\xi>0$, we have
    \begin{align*}
        m(m-1)\left(m^2-4m+1\right)\Re\left(a_{m}b_{-m}\right)\log\left(\frac{b}{a}\right)&\geq -\frac{\eta}{2}\,m(m-1)\left|m^2-4m+1\right||a_m|^2\log\left(\frac{b}{a}\right)\\
        &-\frac{1}{2\eta}m(m-1)\left|m^2-4m+1\right||b_{-m}|^2\log\left(\frac{b}{a}\right),
    \end{align*}
    and
    \begin{align*}
        &m(m-1)^2(m^2+2m+3)\Re\left(a_{-m}b_m\right)\log\left(\frac{b}{a}\right)\geq -\xi|b_m|^2b^{4m}\left(1-\left(\frac{a}{b}\right)^{4m}\right)\\
        &-\frac{1}{4\xi}m^2(m-1)^2\left(m^2+2m+3\right)^2|a_{-m}|^2\frac{1}{b^{4m}}\log^2\left(\frac{b}{a}\right)\frac{1}{1-\left(\frac{a}{b}\right)^{4m}}.
    \end{align*}
    Finally, we get the estimate
    \begin{align*}
        &(m-1)^2\left((m-1)^2+1\right)|a_m|^2\log\left(\frac{b}{a}\right)+\frac{m^4+3m^2-2m+2}{4m}|a_{-m}|^2\frac{1}{a^{4m}}\left(1-\left(\frac{a}{b}\right)^{4m}\right)\\
        &-\frac{2(m-1)}{m}\left(m^2-m+1\right)\Re\left(a_ma_{-m}\right)\frac{1}{a^{2m}}\left(1-\left(\frac{a}{b}\right)^{2m}\right)\\
        &+\frac{m(17m^2-18m+5)}{4}|b_m|^2b^{4m}\left(1-\left(\frac{a}{b}\right)^{4m}\right)
        +m^2(m-1)^2|b_{-m}|^2\log\left(\frac{b}{a}\right)\\
        &+2m(m-1)(2m-1)\Re\left(b_mb_{-m}\right)b^{2m}\left(1-\left(\frac{a}{b}\right)^{2m}\right)\\
        &+m(m-1)^2(4m-3)\Re\left(a_m\bar{b_m}\right)b^{2m}\left(1-\left(\frac{a}{b}\right)^{2m}\right)\\
        &-m(3m^2-2m+3)\Re\left(a_{-m}\bar{b_{-m}}\right)\frac{1}{a^{2m}}\left(1-\left(\frac{a}{b}\right)^{2m}\right)\\
        &+m(m-1)\left(m^2-4m+1\right)\Re\left(a_mb_{-m}\right)\log\left(\frac{b}{a}\right)\\
        &+m(m-1)\left(m^2+2m+3\right)\Re\left(a_{-m}b_m\right)\log\left(\frac{b}{a}\right)\\
        &\geq |a_m|^2\log\left(\frac{b}{a}\right)\left\{(m-1)^2\left((m-1)^2+1\right)-\frac{\eta}{2}m(m-1)\left|m^2-4m+1\right|\right.\\
        &\left.-\left(\frac{(m-1)^2\left(m^2-m+1\right)^2}{m^2\epsilon}\frac{\left(1-\left(\frac{a}{b}\right)^{2m}\right)^2}{1-\left(\frac{a}{b}\right)^{4m}}+\frac{m^2(m-1)^4(4m-3)^2}{2\delta}\right)\frac{1}{\log\left(\frac{b}{a}\right)}\right\}\\
        &+\frac{|a_{-m}|^2}{1-\left(\frac{a}{b}\right)^{4m}}\frac{1}{a^{4m}}\left(\left(\frac{m^4+3m^2-2m+2}{4m}-2\epsilon\right)\left(1-\left(\frac{a}{b}\right)^{4m}\right)^2\right.\\
        &\left.-\frac{1}{4\xi}m^2(m-1)^2\left(m^2+2m+3\right)^2\left(\frac{a}{b}\right)^{4m}\log^2\left(\frac{b}{a}\right)\right)\\
        &+\left(\frac{m(17m^2-18m+5)}{4}-2\delta-\xi\right)|b_m|^2\left(1-\left(\frac{a}{b}\right)^{4m}\right)\\
        &+|b_{-m}|^2\log\left(\frac{b}{a}\right)\left\{m^2(m-1)^2-\frac{1}{2\eta}m(m-1)\left|m^2-4m+1\right|\right.\\
        &\left.-\left(\frac{m^2(m-1)^2(2m-1)^2}{\delta}\frac{\left(1-\left(\frac{a}{b}\right)^{2m}\right)^2}{1-\left(\frac{a}{b}\right)^{4m}}+\frac{m^2(3m^2-2m+3)^2}{\epsilon}\frac{\left(1-\left(\frac{a}{b}\right)^{2m}\right)^2}{1-\left(\frac{a}{b}\right)^{4m}}\right)\frac{1}{\log\left(\frac{b}{a}\right)}\right\}.
    \end{align*}
    To get a non-trivial estimate, we must impose the following conditions
    \begin{align*}
        &\frac{\eta}{2}m(m-1)|m^2-4m+1|< (m-1)^2\left((m-1)^2+1\right)\\
        &\epsilon<\frac{m^4+3m^2-2m+2}{8m}\\
        &2\delta+\xi<\frac{m(17m^2-18m+5)}{4}\\
        &\frac{1}{2\eta}m(m-1)\left|m^2-4m+1\right|<m^2(m-1)^2.
    \end{align*}
    Fir $m=2+\sqrt{3}$, the first and last conditions are void, and we can easily solve the remaining equations in $\delta,\epsilon,\xi$. Independently of the value of $m$, we can take
    \begin{align}
        \epsilon&=\frac{m^4+3m^2-2m+2}{16m}\\
        \delta&=\frac{m(17m^2-18m+5)}{16}\\
        \xi&=\frac{m(17m^2-18m+5)}{16}.
    \end{align}
    Notice that we already control $b_m$, but not $b_{-m}$, the first and fourth equation cannot be replaced by milder conditions, even if we take $\delta$ arbitrary large. Now, assume that $m\neq 2+\sqrt{3}$. Then, we can find a solution to the first and fourth inequalities provided that 
    \begin{align*}
        \frac{|m^2-4m+1|}{2m(m-1)}<\frac{2(m-1)\left((m-1)^2+1\right)}{m\left|m^2-4m+1\right|},
    \end{align*}
    which is equivalent to 
    \begin{align*}
        4(m-1)^2\left((m-1)^2+1\right)-\left(m^2-4m+1\right)^2=3m^4-8m^3+10m^2-16m+7>0.
    \end{align*}
    Let
    \begin{align*}
        f(x)=3x^4-8x^3+10x^2-16x+7.
    \end{align*}
    We have
    \begin{align*}
        &f'(x)=12x^3-24x^2+20x-16\\
        &f''(x)=36x^2-48x+20=4(9x^2-12x+5)>0
    \end{align*}
    which implies that $f'$ is strictly increasing. Furthermore, we have 
    \begin{align*}
        f'(2)=12\cdot 8-24\cdot 4+40-16=24>0,
    \end{align*}
    which shows in particular that $f$ is strictly increasing on $[2,\infty[$. However, $f(2)=-1<0$, so we will not get the estimate that we need for $m=2$ (recall that we are mostly interested in the case $m\in \N\setminus\ens{0,1}$). If $m_0=2.039423\cdots$ is the largest real root of $f$ (we gave its exact expression in the statement of the theorem), we deduce that for all $m>m_0$, the above system can be solved in $\eta$ and we simply take 
    \begin{align*}
        \eta=\frac{1}{2}\left(\frac{|m^2-4m+1|}{2m(m-1)}+\frac{2(m-1)\left((m-1)^2+1\right)}{m\left|m^2-4m+1\right|}\right).
    \end{align*}
    We have
    \begin{align*}
        \frac{\eta}{2}m(m-1)\left|m^2-4m+1\right|=\frac{(m^2-4m+1)^2}{8}+\frac{1}{2}(m-1)^2\left((m-1)^2+1\right)
    \end{align*}
    Furthermore, since $3x^2-2x-1\leq 0$ for $0\leq x\leq 1$, we get the estimate
    \begin{align*}
        \frac{\left(1-\left(\frac{a}{b}\right)^2\right)^2}{1-\left(\frac{a}{b}\right)^{4m}}\geq 2.
    \end{align*}
    Therefore, we impose the condition
    \begin{align*}
        &\left(\frac{2(m-1)^2\left(m^2-m+1\right)^2}{m^2\epsilon}+\frac{m^2(m-1)^4(4m-3)^2}{2\delta}\right)\frac{1}{\log\left(\frac{b}{a}\right)}\\
        &\leq \frac{1}{2}\left\{(m-1)^2\left((m-1)^2+1\right)-\frac{\eta}{2}m(m-1)\left|m^2-4m+1\right|\right\},
    \end{align*}
    or
    \small
    \begin{align}\label{lm_log_18}
        \log\left(\frac{b}{a}\right)\geq \frac{128}{4(m-1)^2\left((m-1)^2+1\right)-\left(m^2-4m+1\right)^2}\left(\frac{4(m-1)^2\left(m^2-m+1\right)^2}{m(m^4+3m^2-2m+2)}+\frac{m(m-1)^4(4m-3)^2}{17m^2-18m+5}\right).
    \end{align}
    \normalsize
    Then, imposing that 
    \begin{align}\label{lm_log_19}
        \log\left(\frac{b}{a}\right)\geq \frac{1}{4m}\log\left(\frac{\sqrt{2}}{\sqrt{2}-1}\right),
    \end{align}
    we get
    \begin{align*}
        \left(\frac{m^4+3m^2-2m+2}{4m}-2\epsilon\right)\left(1-\left(\frac{a}{b}\right)^{4m}\right)^2\geq \frac{m^4+3m^2-2m+2}{16m}.
    \end{align*}
    Furthermore, we have
    \begin{align*}
        \frac{1}{4\xi}m^2(m-1)^2\left(m^2+2m+3\right)^2\left(\frac{a}{b}\right)^{4m}\log^2\left(\frac{b}{a}\right)\leq \frac{4m(m-1)^2\left(m^2+2m+3\right)^2}{e^2\left(17m^2-18m+5\right)}\left(\frac{a}{b}\right)^{4m-2}. 
    \end{align*}
    Therefore, we impose the condition 
    \begin{align*}
        \frac{4m(m-1)^2\left(m^2+2m+3\right)^2}{e^2\left(17m^2-18m+5\right)}\left(\frac{a}{b}\right)^{4m-2}\leq \frac{m^4+3m^2-2m+2}{32m},
    \end{align*}
    which yields the condition
    \begin{align}\label{lm_log_20}
        \log\left(\frac{b}{a}\right)\geq \frac{1}{2(2m-1)}\log\left(\frac{64m^2(m-1)^2\left(m^2+2m+3\right)^2}{e^2(17m^2-18m+5)(m^4+3m^2-2m+2)}\right).
    \end{align}
    Then, we have
    \begin{align*}
        &\frac{m^2(m-1)^2(2m-1)^2}{\delta}\frac{\left(1-\left(\frac{a}{b}\right)^{2m}\right)^2}{1-\left(\frac{a}{b}\right)^{4m}}+\frac{m^2(3m^2-2m+3)^2}{\epsilon}\frac{\left(1-\left(\frac{a}{b}\right)^{2m}\right)^2}{1-\left(\frac{a}{b}\right)^{4m}}\\
        &\leq \frac{32m(m-1)^2(2m-1)^2}{17m^2-18m+5}+\frac{32m^3\left(3m^2-2m+3\right)^2}{m^4+3m^2-2m+2},
    \end{align*}
    and since
    \begin{align*}
        &m^2(m-1)^2-\frac{1}{2\eta}m(m-1)\left|m^2-4m+1\right|=\frac{2m^3(m-1)\left(m^2-4m+1\right)^2}{\left(m^2-4m+1\right)^2+2(m-1)^2\left((m-1)^2+1\right)}\\
        &=m^2(m-1)^2\frac{4(m-1)^2\left((m-1)^2+1\right)-\left(m^2-4m+1\right)^2}{4(m-1)^2\left((m-1)^2+1\right)+\left(m^2-4m+1\right)^2},
        %=m^2(m-1)^2\frac{4m^2(m-1)^4\left((m-1)^2+1\right)-m^2(m-1)^2\left(m^2-4m+1\right)^2}{4m^2(m-1)^4\left((m-1)^2+1\right)+m^2(m-1)^2\left(m^2-4m+1\right)^2}.
    \end{align*}
    we also impose the condition 
    \begin{align*}
        &\left(\frac{32m(m-1)^2(2m-1)^2}{17m^2-18m+5}+\frac{32m^3\left(3m^2-2m+3\right)^2}{m^4+3m^2-2m+2}\right)\frac{1}{\log\left(\frac{b}{a}\right)}\\
        &\leq \frac{1}{2}m^2(m-1)^2\frac{4(m-1)^2\left((m-1)^2+1\right)-\left(m^2-4m+1\right)^2}{4(m-1)^2\left((m-1)^2+1\right)+\left(m^2-4m+1\right)^2},
    \end{align*}
    which yields
    \begin{align}\label{lm_log_21}
        \log\left(\frac{b}{a}\right)&\geq \frac{2}{m^2(m-1)^2}\frac{4(m-1)^2\left((m-1)^2+1\right)+\left(m^2-4m+1\right)^2}{4(m-1)^2\left((m-1)^2+1\right)-\left(m^2-4m+1\right)^2}\nonumber\\
        &\times\left(\frac{32m(m-1)^2(2m-1)^2}{17m^2-18m+5}+\frac{32m^3\left(3m^2-2m+3\right)^2}{m^4+3m^2-2m+2}\right).
    \end{align}
    Finally, we deduce that 
    \begin{align}\label{bound_bm}
        &(m-1)^2\left((m-1)^2+1\right)|a_m|^2\log\left(\frac{b}{a}\right)+\frac{m^4+3m^2-2m+2}{4m}|a_{-m}|^2\frac{1}{a^{4m}}\left(1-\left(\frac{a}{b}\right)^{4m}\right)\nonumber\\
        &-\frac{2(m-1)}{m}\left(m^2-m+1\right)\Re\left(a_ma_{-m}\right)\frac{1}{a^{2m}}\left(1-\left(\frac{a}{b}\right)^{2m}\right)\nonumber\\
        &+\frac{m(17m^2-18m+5)}{4}|b_m|^2b^{4m}\left(1-\left(\frac{a}{b}\right)^{4m}\right)
        +m^2(m-1)^2|b_{-m}|^2\log\left(\frac{b}{a}\right)\nonumber\\
        &+2m(m-1)(2m-1)\Re\left(b_mb_{-m}\right)b^{2m}\left(1-\left(\frac{a}{b}\right)^{2m}\right)\nonumber\\
        &+m(m-1)^2(4m-3)\Re\left(a_m\bar{b_m}\right)b^{2m}\left(1-\left(\frac{a}{b}\right)^{2m}\right)\nonumber\\
        &-m(3m^2-2m+3)\Re\left(a_{-m}\bar{b_{-m}}\right)\frac{1}{a^{2m}}\left(1-\left(\frac{a}{b}\right)^{2m}\right)\nonumber\\
        &+m(m-1)\left(m^2-4m+1\right)\Re\left(a_mb_{-m}\right)\log\left(\frac{b}{a}\right)\nonumber\\
        &+m(m-1)\left(m^2+2m+3\right)\Re\left(a_{-m}b_m\right)\log\left(\frac{b}{a}\right)\nonumber\\
        &\geq \frac{1}{16}\left(4(m-1)^2\left((m-1)^2+1\right)-\left(m^2-4m+1\right)^2\right)|a_m|^2\log\left(\frac{b}{a}\right)\nonumber\\
        &+\frac{m^4+3m^2-2m+2}{32m}|a_{-m}|^2\frac{1}{a^{4m}}+\frac{m(17m^2-18m+5)}{32}|b_m|^2\nonumber\\
        &+\frac{1}{2}m^2(m-1)^2\frac{4(m-1)^2\left((m-1)^2+1\right)-\left(m^2-4m+1\right)^2}{4(m-1)^2\left((m-1)^2+1\right)+\left(m^2-4m+1\right)^2}|b_{-m}|^2\log\left(\frac{b}{a}\right).
    \end{align}
    \textbf{Step 8.} The estimate of radial components.

    Finally, we need to estimate $b_0$ and the logarithm terms.  Notice that thanks to Parseval's lemma, we have
    \begin{align*}
        \int_{\Omega}\left(\leb_m\mathrm{Rad}(u)\right)^2dx\leq \int_{\Omega}\left(\leb_mu\right)^2dx.
    \end{align*}
    Likewise, we have
    \begin{align*}
        \int_{\Omega}\left|\mathfrak{L}_m\mathrm{Rad}(u)\right|^2|dz|^2\leq \int_{\Omega}\left|\mathfrak{L}_mu\right|^2|dz|^2. 
    \end{align*}
    Now, we have
    \begin{align*}
        \mathrm{Rad}(u)=\alpha_1|z|^{1-m}\log|z|+2\,a_0|z|^{1-m}+\alpha_2|z|^{m+1}\log|z|+2\,b_0|z|^{m+1}.
    \end{align*}
    Recall that $\alpha_1|z|^{1-m}\log|z|+2\,a_0|z|^{1-m}\in \mathrm{Ker}\left(\leb_m\right)$. Then, we have
    \begin{align*}
        &\p{r}\left(r^{m+1}\log(r)\right)=(m+1)r^m\log(r)+r^m\\
        &\p{r}^2\left(r^{m+1}\log(r)\right)=m(m+1)r^{m-1}\log(r)+(2m+1)r^{m-1}.
    \end{align*}
    On the other hand, we have
    \begin{align*}
        \leb_m&=\Delta+2(m-1)\frac{x}{|x|^2}\cdot \D+\frac{(m-1)^2}{|x|^2}\\
        &=\p{r}^2+\frac{2m-1}{r}\p{r}+\frac{(m-1)^2}{r^2}+\frac{1}{r^2}\p{\theta}^2.
    \end{align*}
    Therefore, we have
    \begin{align*}
        &\leb_m\mathrm{Rad}(u)=m(m+1)\alpha_2r^{m-1}\log(r)+(2m+1)\alpha_2r^{m-1}+(2m-1)(m+1)\alpha_2r^{m-1}\log(r)\\
        &+(2m-1)\alpha_2r^m
        +(m-1)^2\alpha_2r^{m-1}\log(r)+2m(m+1)b_0r^{m-1}+2(2m-1)(m+1)b_0r^{m-1}\\
        &+2(m-1)^2b_0r^{m-1}\\
        &=4m^2\alpha_2r^{m-1}\log(r)+4m\alpha_2r^{m-1}+8m^2b_0r^{m-1}\\
        &=4m^2\,r^{m-1}\left(\alpha_2\log(r)+\frac{1}{m}\alpha_2+2b_0\right).
    \end{align*}
    Then, we have
    \begin{align*}
        \p{z}\mathrm{Rad}(u)&=\frac{1-m}{2}\alpha_1\,\z|z|^{-m-1}\log|z|+\frac{\alpha_1}{2}\z|z|^{-m-1}+(1-m)a_0\z|z|^{-1-m}\\
        &+\frac{m+1}{2}\alpha_2\z|z|^{m-1}\log|z|+\frac{\alpha_2}{2}\z|z|^{m-1}+(m+1)b_0\z|z|^{m-1}\\
        \p{z}^2\mathrm{Rad}(u)&=\frac{m^2-1}{4}\alpha_1\,\z^2|z|^{-m-3}\log|z|-\frac{m}{2}\alpha_1\z^2|z|^{-m-3}+\frac{m^2-1}{2}a_0\z^3|z|^{-m-3}\\
        &+\frac{m^2-1}{4}\alpha_2\z^2|z|^{m-3}\log|z|+\frac{m}{2}\alpha_2\z^2|z|^{m-1}+\frac{m^2-1}{2}b_0\z^2|z|^{m-3}.
    \end{align*}
    Therefore, we get
    \begin{align*}
       \mathfrak{L}_m\mathrm{Rad}(u)&=\frac{1}{z^2}\left(|z|^{1-m}\left(\frac{m^2-1}{4}\alpha_1\log|z|-\frac{m}{2}\alpha_1+\frac{m^2-1}{2}a_0\right.\right.\\
        &\left.-\frac{m^2-1}{2}\alpha_1\log|z|+\frac{m-1}{2}\alpha_1-(m^2-1)a_0+\frac{m^2-4m+3}{4}\alpha_1\log|z|+\frac{m^2-4m+3}{2}a_0\right)\\
        &+|z|^{m+1}\left(\frac{m^2-1}{4}\alpha_2\log|z|+\frac{m}{2}\alpha_2+\frac{m^2-1}{4}b_0+\frac{m^2-1}{2}\alpha_2\log|z|+\frac{m-1}{2}\alpha_2+(m^2-1)b_0\right.\\
        &\left.\left.+\frac{m^2-4m+3}{4}\alpha_2\log|z|+\frac{m^2-4m+3}{2}b_0\right)\right)\\
        &=\frac{1}{z^2}|z|^{1-m}\left(-(m-1)\alpha_1\log|z|-\frac{1}{2}\alpha_1-2(m-1)a_0\right)\\
        &+\frac{1}{z^2}|z|^{m+1}\left(m(m-1)\alpha_2\log|z|+\frac{2m-1}{2}\alpha_2+2m(m-1)b_0\right).
    \end{align*}
    Then, we have
    \begin{align*}
        &\p{r}u+\frac{(m-1)}{r}u=(1-m)\alpha_1r^{-m}\log(r)+\alpha_1r^{-m}+2(1-m)a_0r^{-m}+(m+1)\alpha_2r^m\log(r)+\alpha_2r^m\\
        &+2(m+1)b_0r^{m}
        +(m-1)\alpha_1r^{-m}+2(m-1)a_0r^{-m}+(m-1)\alpha_2r^m\log(r)+2(m-1)b_0r^m\\
        &=\alpha_1r^{-m}+2m\,r^{m}\left(\alpha_2\log(r)+\frac{1}{2m}\alpha_2+2b_0\right).
    \end{align*}
    Now, we have
    \begin{align*}
        &4m\frac{z^2}{|z|^2}\mathfrak{L}_m\mathrm{Rad}(u)-(m-1)\leb_m\mathrm{Rad}(u)=-4m(m-1)r^{-m-1}\left(\alpha_1\log(r)+\frac{\alpha_1}{2(m-1)}+2a_0\right)\\
        &+r^{m-1}\left(4m^2(m-1)\alpha_2\log(r)+2(m-1)(2m-1)\alpha_2+8m(m-1)b_0\right)\\
        &-r^{m-1}\left(4m^2(m-1)\alpha_2\log(r)+4m(m-1)\alpha_2+8m^2(m-1)b_0\right)\\
        &=-4m(m-1)r^{-m-1}\left(\alpha_1\log(r)+\frac{\alpha_1}{2(m-1)}+2a_0\right)-2(m-1)\alpha_2r^{m-1}.
    \end{align*}
    Therefore, we have
    \begin{align*}
        \frac{1}{2}\leb_m\mathrm{Rad}(u)-\frac{2m}{m-1}\frac{z^2}{|z|^2}\mathfrak{L}_m\mathrm{Rad}(u)=\alpha_2r^{m-1}+2m\,r^{-m-1}\left(\alpha_1\log(r)+\frac{\alpha_1}{2(m-1)}+2a_0\right).
    \end{align*}
    Then, we have for $\alpha\neq -1$
    \begin{align*}
        \int r^{\alpha}\log(r)dr&=\frac{r^{\alpha+1}}{\alpha+1}\log(r)-\frac{1}{\alpha+1}\int r^{\alpha}dr=\frac{r^{\alpha+1}}{\alpha+1}\log(r)-\frac{1}{(\alpha+1)^2}r^{\alpha+1}\\
        \int r^{\alpha}\log^2(r)dr%&=\frac{r^{\alpha+1}}{\alpha+1}\log^2(r)-\frac{2}{\alpha+1}\int r^{\alpha}\log(r)dr\\
        &=\frac{r^{\alpha+1}}{\alpha+1}\log^2(r)-\frac{2}{(\alpha+1)^2}r^{\alpha+1}\log(r)+\frac{2}{(\alpha+1)^3}r^{\alpha+1},
    \end{align*}
    and for all $\beta>-1$
    \begin{align*}
        \int \frac{\log^{\beta}(r)}{r}dr=\frac{1}{\beta+1}\log^{\beta+1}(r).
    \end{align*}
    In particular, for $\alpha=2\gamma-1$, we have
    \begin{align*}
        \int r^{2\gamma-1}\log(r)dr&=-\frac{1}{2\gamma}\left(r^{2\gamma}\log\left(\frac{1}{r}\right)+\frac{1}{2\gamma}r^{2\gamma}\right)\\
        \int r^{2\gamma-1}\log^2(r)dr%&=\frac{r^{\alpha+1}}{\alpha+1}\log^2(r)-\frac{2}{\alpha+1}\int r^{\alpha}\log(r)dr\\
        &=\frac{1}{2\gamma}\left(r^{2\gamma}\log^2\left(\frac{1}{r}\right)+\frac{1}{\gamma}r^{2\gamma}\log\left(\frac{1}{r}\right)+\frac{1}{2\gamma^2}r^{2\gamma}\right),
    \end{align*}
    We deduce that
    \begin{align*}
        &\int_{\Omega}\left(\frac{1}{2}\leb_m\mathrm{Rad}(u)-\frac{2m}{m-1}\frac{z^2}{|z|^2}\mathfrak{L}_m\mathrm{Rad}(u)\right)^2dx=2\pi\int_{a}^b\left(\alpha_2^2r^{2m-1}+4m^2r^{-2m-1}\log^2(r)\right.\\
        &\left.+4m^2\left(\frac{\alpha_1}{2(m-1)}+2a_0\right)^2r^{-2m-1}+8m^2\left(\frac{\alpha_1}{2(m-1)}+2a_0\right)\alpha_1\,r^{-2m-1}\log(r)\right.\\
        &\left.+4m\,\alpha_1\alpha_2r^{-1}\log(r)+4m\left(\frac{\alpha_1}{2(m-1)}+2a_0\right)\alpha_2r^{-1}\right)dr\\
        &=2\pi\left\{\frac{\alpha_2^2}{2m}b^{2m}\left(1-\left(\frac{a}{b}\right)^{2m}\right)+2m\,\alpha_1^2\left(\frac{1}{a^{2m}}\log^2\left(\frac{1}{a}\right)-\frac{1}{m}\frac{1}{a^{2m}}\log\left(\frac{1}{a}\right)+\frac{1}{2m^2}\frac{1}{a^{2m}}\right.\right.\\
        &\left.-\left(\frac{1}{b^{2m}}\log^2\left(\frac{1}{b}\right)-\frac{1}{m}\frac{1}{b^{2m}}\log\left(\frac{1}{b}\right)+\frac{1}{2m^2}\frac{1}{b^{2m}}\right)\right)+2m\left(\frac{\alpha_1}{2(m-1)}+2a_0\right)^2\frac{1}{a^{2m}}\left(1-\left(\frac{a}{b}\right)^{2m}\right)\\
        &-4m\left(\frac{\alpha_1}{2(m-1)}+2a_0\right)\alpha_1\left(\frac{1}{a^{2m}}\log\left(\frac{1}{a}\right)-\frac{1}{2m}\frac{1}{a^{2m}}-\left(\frac{1}{b^{2m}}\log\left(\frac{1}{b}\right)-\frac{1}{2m}\frac{1}{b^{2m}}\right)\right)\\
        &\left.+2m\,\alpha_1\alpha_2\left(\log^2(b)-\log^2(a)\right)+2m\left(\frac{\alpha_1}{2(m-1)}+2a_0\right)\alpha_2\log\left(\frac{b}{a}\right)\right\}.
    \end{align*}
    Likewise, we get
    \begin{align*}
        &\int_{\Omega}\left(\leb_m\mathrm{Rad}(u)\right)^2dx\\
        &=32\pi\,m^2\int_{a}^b\left(\alpha_2^2r^{2m-1}\log^2(r)+\left(\frac{1}{m}\alpha_2+2b_0\right)^2r^{2m-1}+2\left(\frac{1}{m}\alpha_2+2b_0\right)\alpha_2\,r^{2m-1}\log(r)\right)dr\\
        &=32\pi\,m^4\left\{\frac{1}{2m}\alpha_2^2\left(b^{2m}\log^2\left(\frac{1}{b}\right)+\frac{1}{m}b^{2m}\log\left(\frac{1}{b}\right)+\frac{1}{2m^2}b^{2m}\right.\right.\\
        &\left.-\left(a^{2m}\log^2\left(\frac{1}{a}\right)+\frac{1}{m}a^{2m}\log\left(\frac{1}{a}\right)+\frac{1}{2m^2}a^{2m}\right)\right)
        +\frac{1}{2m}\left(\frac{1}{m}\alpha_2+2b_0\right)^{2}b^{2m}\left(1-\left(\frac{a}{b}\right)^{2m}\right)\\
        &\left.-\frac{1}{m}\left(\frac{1}{m}\alpha_2+2b_0\right)\alpha_2\left(b^{2m}\log\left(\frac{1}{b}\right)+\frac{1}{2m}b^{2m}-\left(a^{2m}\log\left(\frac{1}{a}\right)+\frac{1}{2m}a^{2m}\right)\right)\right\}.
    \end{align*}
    Now, we have
    \begin{align*}
        &\mathrm{Rad}(u)^2=\left(\alpha_1r^{1-m}\log(r)+2a_0r^{1-m}+\alpha_2r^{m+1}\log(r)+2b_0r^{m+1}\right)^2\\
        &=\alpha_1^2r^{2-2m}\log^2(r)+4a_0^2r^{2-2m}+\alpha_2^2r^{2m+2}\log^2(r)+4b_0^2r^{2m+2}+4\alpha_1a_0r^{2-2m}\log(r)
        +4\alpha_2b_0r^{2m+2}\log(r)\\
        &+2\alpha_1\alpha_2r^{2}\log^2(r)
        +4(\alpha_1b_0+\alpha_2a_0)r^2\log(r)+8a_0b_0r^2,
    \end{align*}
    which yields for all $1/2<\beta<m/2$
    \begin{align*}
        &\int_{\Omega}\frac{\mathrm{Rad}(u)^2}{|x|^4}\left(\left(\frac{|x|}{b}\right)^{4\beta}+\left(\frac{a}{|x|}\right)^{4\beta}\right)dx
        =\frac{2\pi}{b^{4\beta}}\int_{a}^b\left(\alpha_1^2r^{-2m-1+4\beta}\log^2(r)+4a_0^2r^{-2m-1+4\beta}\right.\\
        &\left.+\alpha_2^2r^{2m-1+4\beta}\log^2(r)+4b_0^2r^{2m-1+4\beta}+4\alpha_1a_0r^{-2m-1+4\beta}\log(r)+2\alpha_1\alpha_2r^{-1+4\beta}\log^2(r)\right.\\
        &\left.+4(\alpha_1b_0+\alpha_2a_0)r^{-1+4\beta}\log(r)+8a_0b_0r^{-1+4\beta}\right)dr\\
        &+2\pi\,a^{4\beta}\int_{a}^b\left(\alpha_1^2r^{-2m-1-4\beta}\log^2(r)+4a_0^2r^{-2m-1-4\beta}+\alpha_2^2r^{2m-1-4\beta}\log^2(r)+4b_0^2r^{2m-1-4\beta}\right.\\
        &\left.+4\alpha_1a_0r^{-2m-1-4\beta}\log(r)+2\alpha_1\alpha_2r^{-1-4\beta}\log(r)+4(\alpha_1b_0+\alpha_2a_0)r^{-1-4\beta}\log^2(r)+8a_0b_0r^{-1-4\beta}\right)dr\\
        &=2\pi\bigg\{\frac{\alpha_1^2}{2m-4\beta}\left(\left(\frac{a}{b}\right)^{4\beta}\left(\frac{1}{a^{2m}}\log^2\left(\frac{1}{a}\right)-\frac{1}{m-2\beta}\frac{1}{a^{2m}}\log\left(\frac{1}{a}\right)+\frac{1}{2(m-2\beta)^2}\frac{1}{a^{2m}}\right)\right.\\
        &\left.-\left(\frac{1}{b^{2m}}\log^2\left(\frac{1}{b}\right)-\frac{1}{m-2\beta}\frac{1}{b^{2m}}\log\left(\frac{1}{b}\right)+\frac{1}{2(m-2\beta)^2}\frac{1}{b^{2m}}\right)\right)\\
        &+\frac{2}{m-2\beta}a_0^2\left(\frac{a}{b}\right)^{4\beta}\frac{1}{a^{4m}}\left(1-\left(\frac{a}{b}\right)^{4m-2\beta}\right)\\
        &+\frac{\alpha_2^2}{2m+4\beta}\left(b^{2m}\log^2\left(\frac{1}{b}\right)+\frac{1}{m+2\beta}b^{2m}\log\left(\frac{1}{b}\right)+\frac{1}{2(m+2\beta)^2}b^{2m}\right.\\
        &\left.-\left(\frac{a}{b}\right)^{4\beta}\left(a^{2m}\log^2\left(\frac{1}{a}\right)+\frac{1}{m+2\beta}a^{2m}\log\left(\frac{1}{a}\right)+\frac{1}{2(m+2\beta)}a^{2m}\right)\right)\\
        &+\frac{2}{m+2\beta}b_0^2b^{2m}\left(1-\left(\frac{a}{b}\right)^{2m+4\beta}\right)-\frac{2\alpha_1a_0}{m-2\beta}\left(\left(\frac{a}{b}\right)^{4\beta}\left(\frac{1}{a^{2m}}\log\left(\frac{1}{a}\right)-\frac{1}{2m-4\beta}\frac{1}{a^{2m}}\right)\right.\\
        &\left.-\left(\frac{1}{b^{2m}}\log\left(\frac{1}{b}\right)-\frac{1}{2m-4\beta}\frac{1}{b^{2m}}\right)\right)\\
        &-\frac{2\alpha_2b_0}{m+2\beta}\left(b^{2m}\log\left(\frac{1}{b}\right)+\frac{1}{2m+4\beta}b^{2m}-\left(\frac{a}{b}\right)^{4\beta}\left(a^{2m}\log\left(\frac{1}{a}\right)+\frac{1}{2m+4\beta}a^{2m}\right)\right)\\
        &+\frac{\alpha_1\alpha_2}{4\beta}\left(\log^2\left(\frac{1}{b}\right)+\frac{1}{2\beta}\log\left(\frac{1}{b}\right)+\frac{1}{8\beta^2}-\left(\frac{a}{b}\right)^{4\beta}\left(\log^2\left(\frac{1}{a}\right)+\frac{1}{2\beta}\log\left(\frac{1}{a}\right)+\frac{1}{8\beta^2}\right)\right)\\
        &-\frac{\alpha_1b_0+\alpha_2a_0}{\beta}\left(\log\left(\frac{1}{b}\right)+\frac{1}{8\beta}-\left(\frac{a}{b}\right)^{4\beta}\left(\log\left(\frac{1}{a}\right)+\frac{1}{8\beta}\right)\right)+\frac{2a_0b_0}{\beta}\left(1-\left(\frac{a}{b}\right)^{4\beta}\right)\\
        &+\frac{\alpha_1^2}{2m+4\beta}\left(\frac{1}{a^{2m}}\log^2\left(\frac{1}{a}\right)-\frac{1}{m+2\beta}\frac{1}{a^{2m}}\log\left(\frac{1}{a}\right)+\frac{1}{2(m+2\beta)^2}\frac{1}{a^{2m}}\right.\\
        &\left.-\left(\frac{a}{b}\right)^{4\beta}\left(\frac{1}{b^{2m}}\log^2\left(\frac{1}{b}\right)\right)-\frac{1}{m+2\beta}\log\left(\frac{1}{b}\right)+\frac{1}{2(m+2\beta)}\frac{1}{b^{2m}}\right)\\
        &+\frac{2}{m+2\beta}a_0^2\frac{1}{a^{2m}}\left(1-\left(\frac{a}{b}\right)^{2m+4\beta}\right)\\
        &+\frac{\alpha_2^2}{m-2\beta}\left(\left(\frac{a}{b}\right)^{4\beta}\left(b^{2m}\log^2\left(\frac{1}{b}\right)+\frac{1}{m-2\beta}b^{2m}\log\left(\frac{1}{b}\right)+\frac{1}{2(m-2\beta)^2}b^{2m}\right)\right.\\
        &\left.-\left(a^{2m}\log^2\left(\frac{1}{a}\right)+\frac{1}{m-2\beta}a^{2m}\log\left(\frac{1}{a}\right)\right)+\frac{1}{2(m-2\beta)^2}a^{2m}\right)\\
        &+\frac{2}{m-2\beta}\left(\frac{a}{b}\right)^{4\beta}b^{2m}\left(1-\left(\frac{a}{b}\right)^{2m-4\beta}\right)-\frac{2\alpha_1a_0}{m+2\beta}\left(\frac{1}{a^{2m}}\log\left(\frac{1}{a}\right)-\frac{1}{2m+4\beta}\frac{1}{a^{2m}}\right.\\
        &\left.-\left(\frac{a}{b}\right)^{4\beta}\left(\frac{1}{b^{2m}}\log\left(\frac{1}{b}\right)-\frac{1}{2m+4\beta}\frac{1}{b^{2m}}\right)\right)-\frac{\alpha_1\alpha_2}{2\beta}\left(\log\left(\frac{1}{a}\right)-\frac{1}{4\beta}-\left(\frac{a}{b}\right)^{4\beta}\left(\log\left(\frac{1}{b}\right)-\frac{1}{4\beta}\right)\right)\\
        &+\frac{\alpha_1b_0+\alpha_2a_0}{\beta}\left(\log^2\left(\frac{1}{a}\right)-\frac{1}{2\beta}\log\left(\frac{1}{a}\right)+\frac{1}{8\beta^2}-\left(\frac{a}{b}\right)^{4\beta}\left(\log^2\left(\frac{1}{b}\right)-\frac{1}{4\beta}\log\left(\frac{1}{b}\right)+\frac{1}{8\beta^2}\right)\right)\\
        &+\frac{2a_0b_0}{\beta}\left(1-\left(\frac{a}{b}\right)^{4\beta}\right)\bigg\}.
    \end{align*}
    Finally, we have 
    \begin{align*}
        &\frac{1}{r^2}\left|\p{r}\mathrm{Rad}(u)+\frac{m-1}{r}\mathrm{Rad}(u)\right|^2=\alpha_1r^{-2m-2}\\
        &+4m^2\,r^{2m-2}\left(\alpha_2^2\log^2(r)+\left(\frac{1}{2m}\alpha_2+2b_0\right)^2+2\left(\frac{1}{2m}\alpha_2+2b_0\right)\alpha_2\log(r)\right)\\
        &+4m\alpha_1\alpha_2r^{-2}\log(r)+4m\alpha_1\left(\frac{1}{2m}\alpha_2+2b_0\right)r^{-2}.
    \end{align*}
    We deduce that
    \begin{align*}
        &\int_{\Omega}\frac{1}{|x|^2}\left|\p{r}\mathrm{Rad}(u)+\frac{m-1}{r}\mathrm{Rad}(u)\right|^2\left(\left(\frac{|x|}{b}\right)^{2\gamma}+\left(\frac{a}{|x|}\right)^{2\gamma}\right)dx\\
        &=\frac{2\pi}{b^{2\gamma}}\int_{a}^b\bigg\{\alpha_1^2r^{-2m-1+2\gamma}+4m^2\alpha_2^2r^{2m-1+2\gamma}\log^2(r)+4m^2\left(\frac{1}{2m}\alpha_2+2b_0\right)^2r^{2m-1+2\gamma}\\
        &+8m^2\left(\frac{1}{2m}\alpha_2+2b_0\right)\alpha_2r^{2m-1+2\gamma}\log(r)+4m\,\alpha_1\alpha_2r^{-1+2\gamma}\log(r)+4m\,\alpha_1\left(\frac{1}{2m}\alpha_2+2b_0\right)r^{-1+2\gamma}\bigg\}dr\\
        &+2\pi\,a^{2\gamma}\int_{a}^b\bigg\{\alpha_1^2r^{-2m-1-2\gamma}+4m^2\alpha_2^2r^{2m-1-2\gamma}\log^2(r)+4m^2\left(\frac{1}{2m}\alpha_2+2b_0\right)^2r^{2m-1-2\gamma}\\
        &+8m^2\left(\frac{1}{2m}\alpha_2+2b_0\right)\alpha_2\,r^{2m-1-2\gamma}\log(r)+4m\,\alpha_1\alpha_2r^{-1-2\gamma}\log(r)+4m\,\alpha_1\left(\frac{1}{2m}\alpha_2+2b_0\right)r^{-1-2\gamma}\bigg\}dr\\
        &=2\pi\bigg\{\frac{\alpha_1^2}{2(m-\gamma)}\left(\frac{a}{b}\right)^{2\gamma}\frac{1}{a^{2m}}\left(1-\left(\frac{a}{b}\right)^{2(m-\gamma)}\right)\\
        &+\frac{2m^2}{m+\gamma}\left(b^{2m}\log^2\left(\frac{1}{b}\right)+\frac{1}{m+\gamma}b^{2m}\log\left(\frac{1}{b}\right)+\frac{1}{2(m+\gamma)^2}b^{2m}\right.\\
        &\left.-\left(\frac{a}{b}\right)^{2\gamma}\left(a^{2m}\log^2\left(\frac{1}{a}\right)+\frac{1}{m+\gamma}a^{2m}\log\left(\frac{1}{a}\right)+\frac{1}{2(m+\gamma)^2}a^{2m}\right)\right)\\
        &+\frac{2m^2}{m+\gamma}\left(\frac{1}{m}\alpha_2+2b_0\right)^2b^{2m}\left(1-\left(\frac{a}{b}\right)^{2(m+\gamma)}\right)\\
        &-\frac{4m^2}{m+\gamma}\left(\frac{1}{2m}\alpha_2+2b_0\right)\alpha_2\left(b^{2m}\log\left(\frac{1}{b}\right)-\frac{1}{2(m+\gamma)}b^{2m}-\left(\frac{a}{b}\right)^{2\gamma}\left(a^{2m}\log\left(\frac{1}{a}\right)-\frac{1}{2(m+\gamma)}a^{2m}\right)\right)\\
        &-\frac{2m}{\gamma}\alpha_1\alpha_2\left(\log\left(\frac{1}{b}\right)-\frac{1}{2\gamma}-\left(\frac{a}{b}\right)^{2\gamma}\left(\log\left(\frac{1}{a}\right)-\frac{1}{2\gamma}\right)\right)+\frac{2m}{\gamma}\alpha_1\left(\frac{1}{2m}\alpha_2+2b_0\right)\left(1-\left(\frac{a}{b}\right)^{2\gamma}\right)\\
        &+\frac{\alpha_1^2}{2(m+\gamma)}\frac{1}{a^{2m}}\left(1-\left(\frac{a}{b}\right)^{2(m+\gamma)}\right)\\
        &+\frac{2m^2}{m-\gamma}\left(\left(\frac{a}{b}\right)^{2\gamma}\left(b^{2m}\log^2\left(\frac{1}{b}\right)+\frac{1}{m-\gamma}b^{2m}\log\left(\frac{1}{b}\right)+\frac{1}{2(m-\gamma)^2}b^{2m}\right)\right.\\
        &\left.-\left(a^{2m}\log^2\left(\frac{1}{a}\right)+\frac{1}{m-\gamma}a^{2m}\log\left(\frac{1}{a}\right)+\frac{1}{2(m-\gamma)^2}a^{2m}\right)\right)\\
        &+\frac{2m^2}{m-\gamma}\left(\frac{1}{2m}\alpha_2+2b_0\right)^2\left(\frac{a}{b}\right)^{2\gamma}b^{2m}\left(1-\left(\frac{a}{b}\right)^{2(m-\gamma)}\right)\\
        &-\frac{4m^2}{m-\gamma}\left(\frac{1}{m}\alpha_2+2b_0\right)\alpha_2\left(\left(\frac{a}{b}\right)^{2\gamma}\left(b^{2m}\log\left(\frac{1}{b}\right)-\frac{1}{2(m-\gamma)}b^{2m}\right)-\left(a^{2m}\log\left(\frac{1}{a}\right)-\frac{1}{2(m-\gamma)}a^{2m}\right)\right)\\
        &-\frac{2m}{\gamma}\alpha_1\alpha_2\left(\log\left(\frac{1}{a}\right)-\frac{1}{2\gamma}-\left(\frac{a}{b}\right)^{2\gamma}\left(\log\left(\frac{1}{b}\right)-\frac{1}{2\gamma}\right)\right)+\frac{2m}{\gamma}\alpha_1\left(\frac{1}{2m}\alpha_2+2b_0\right)\left(1-\left(\frac{a}{b}\right)^{2\gamma}\right)\bigg\}.
    \end{align*}

    Recalling that
    \begin{align*}
        \mathrm{Rad}(u)=\alpha_1r^{1-m}\log(r)+2a_0r^{1-m}+\alpha_2r^{m+1}\log(r)+2b_0r^{m+1}
    \end{align*}
    and
    \begin{align}\label{identity_lm}
        \leb_m\mathrm{Rad}(u)=4m^2r^{m-1}\left(\alpha_2\log(r)+\dfrac{1}{m}\alpha_2+2b_0\right),
    \end{align}
    we apply the previous estimate \eqref{improved_cauchy_schwarz2} to $f_1=r^{m-1}\log(r)$ $f_2=\left(\dfrac{1}{m}\alpha_2+2b_0\right)r^{m-1}$, and $(\lambda_1,\lambda_2)=\left(\alpha_2,\dfrac{1}{m}\alpha_2+2b_0\right)$. Notice that we can assume without generality (otherwise, the estimate would be trivial as the integral of a sum of positive functions) that
    \begin{align*}
        \lambda_1\lambda_2=\left(\frac{1}{m}\alpha_2+2b_0\right)\alpha_2>0.
    \end{align*}
    Thanks to \eqref{rest_cs}, we have
    \begin{align*}
        \int_{\Omega}|f_1(x)f_2(x)-f_1(y)f_2(y)|^2dx\,dy&=2\pi\int_{a}^{b}\int_{a}^br^{2m-1}s^{2m-1}\log^2\left(\frac{r}{s}\right)dr\,ds\\
        &=\frac{\pi}{4m^4}b^{4m}\left\{\left(1-\left(\frac{a}{b}\right)^{2m}\right)^2-4m^2\left(\frac{a}{b}\right)^{2m}\log^2\left(\frac{b}{a}\right)\right\}.
    \end{align*}
    On the other hand, we have
    \begin{align*}
        &\int_{\Omega}|f_1|^2dx=2\pi\int_{a}^br^{2m-1}\log^2(r)dr\\
        &=\frac{\pi}{m}\left(b^{2m}\log^2\left(\frac{1}{b}\right)-a^{2m}\log^2\left(\frac{1}{a}\right)+\frac{1}{m}\left(b^{2m}\log\left(\frac{1}{b}\right)-a^{2m}\log\left(\frac{1}{a}\right)\right)+\frac{1}{2m^2}b^{2m}\left(1-\left(\frac{a}{b}\right)^{2m}\right)\right),
    \end{align*}
    while
    \begin{align*}
        \int_{\Omega}|f_2|^2dx=2\pi\int_{a}^br^{2m-1}dr=\frac{\pi}{m}b^{2m}\left(1-\left(\frac{a}{b}\right)^{2m}\right).
    \end{align*}
    Then, assuming that $b\leq e^{-1}$, we trivially have
    \begin{align*}
        \int_{\Omega}|f_1|^2dx\leq \frac{\pi}{m}\left(1+\frac{1}{m}+\frac{1}{2m^2}\right)b^{2m}\log^2\left(\frac{1}{b}\right),
    \end{align*}
    which yields the estimate
    \begin{align*}
        \left(\int_{\Omega}|f_1|^2dx\right)\left(\int_{\Omega}|f_2|^2dx\right)\leq \frac{\pi^2}{2m^2}\left(1+\frac{1}{m}+\frac{1}{2m^2}\right)b^{4m}\log^2\left(\frac{1}{b}\right)\leq \frac{2\pi^2}{m^2}b^{4m}\log^2\left(\frac{1}{b}\right).
    \end{align*}
    Therefore, we get
    \begin{align*}
        &\frac{1}{4m^3\sqrt{2}}\left(\frac{1}{m}\alpha_2+2b_0\right)\alpha_2\frac{b^{2m}}{\log\left(\frac{1}{b}\right)}\left\{\left(1-\left(\frac{a}{b}\right)^{2m}\right)^2-4m^2\left(\frac{a}{b}\right)^{2m}\log^2\left(\frac{b}{a}\right)\right\}\\
        &\leq \int_{\Omega}\left(\alpha_2\log(r)+\frac{1}{2}\alpha_2+2b_0\right)^2r^{2m-1}dr=\frac{1}{16\pi m^4}\int_{\Omega}\left(\leb_m\mathrm{Rad}(u)\right)^2dx,
    \end{align*}
    which implies that 
    \begin{align*}
        \left(\frac{1}{m}\alpha_2+2b_0\right)\alpha_2\frac{b^{2m}}{\log\left(\frac{1}{b}\right)}\leq \frac{1}{2m\pi\sqrt{2}}\frac{1}{\left(1-\left(\frac{a}{b}\right)^{2m}\right)^2-4m^2\left(\frac{a}{b}\right)^{2m}\log^2\left(\frac{b}{a}\right)}\int_{\Omega}\left(\leb_m\mathrm{Rad}(u)\right)^2dx.
    \end{align*}
    Therefore, using the identity 
    \begin{align*}
        &\frac{1}{16\pi m^3}\int_{\Omega}\left(\leb_m\mathrm{Rad}(u)\right)^2dx=\alpha_2^2\left(b^{2m}\log^2\left(\frac{1}{b}\right)-a^{2m}\log^2\left(\frac{1}{a}\right)+\frac{1}{m}\left(b^{2m}\log\left(\frac{1}{b}\right)-a^{2m}\log\left(\frac{1}{a}\right)\right)\right.\\
        &\left.+\frac{1}{2m^2}b^{2m}\left(1-\left(\frac{a}{b}\right)^{2m}\right)\right)
        +\left(\frac{1}{m}\alpha_2+2b_0\right)^{2}b^{2m}\left(1-\left(\frac{a}{b}\right)^{2m}\right)\\
        &-2\left(\frac{1}{m}\alpha_2+2b_0\right)\alpha_2\left(b^{2m}\log\left(\frac{1}{b}\right)+\frac{1}{2m}b^{2m}-\left(a^{2m}\log\left(\frac{1}{a}\right)+\frac{1}{2m}a^{2m}\right)\right),
    \end{align*}
    we deduce that 
    \begin{align*}
        &\alpha_2^2\left(b^{2m}\log^2\left(\frac{1}{b}\right)-a^{2m}\log^2\left(\frac{1}{a}\right)+\frac{1}{m}\left(b^{2m}\log\left(\frac{1}{b}\right)-a^{2m}\log\left(\frac{1}{a}\right)\right)\right.\\
        &\left.+\frac{1}{2m^2}b^{2m}\left(1-\left(\frac{a}{b}\right)^{2m}\right)\right)
        +\left(\frac{1}{m}\alpha_2+2b_0\right)^{2}b^{2m}\left(1-\left(\frac{a}{b}\right)^{2m}\right)\\
        &\leq \left(\frac{1}{16\pi m^3}+\frac{\sqrt{2}}{\pi\,m}\frac{\log^2\left(\frac{1}{b}\right)}{\left(1-\left(\frac{a}{b}\right)^{2m}\right)^2-4m^2\left(\frac{a}{b}\right)^{2m}\log^2\left(\frac{b}{a}\right)}\right)\int_{\Omega}\left(\leb_m\mathrm{Rad}(u)\right)^2dx,
    \end{align*}
    which implies that 
    \begin{align}\label{radial_estimate_alpha2}
        \alpha_2^2b^{2m}\leq \frac{1}{1-\left(\frac{a}{b}\right)^{2m}\frac{\log^2\left(\frac{1}{a}\right)}{\log^2\left(\frac{1}{b}\right)}}\left(\frac{1}{16\pi m^3}+\frac{\sqrt{2}}{\pi\,m}\frac{1}{\left(1-\left(\frac{a}{b}\right)^{2m}\right)^2-4m^2\left(\frac{a}{b}\right)^{2m}\log^2\left(\frac{b}{a}\right)}\right)\int_{\Omega}\left(\leb_m\mathrm{Rad}(u)\right)^2dx.
    \end{align} 
    On can check that using Cauchy--Schwarz inequality, the estimate \emph{cannot} be bootstrapped to recover the $\log^2(1/b)$ factor. There seems to be no way to get estimates for $a_0$ and $b_0$ using the weighted integral of $u^2$ to get the missing logarithm factors, so we will use another argument and estimate $\alpha_1^2a^{-2m}$ in a similar fashion. First, notice that the identity \eqref{rest_cs} holds for all $\gamma\in \C$ by analytic continuation (one needs to compute the limit when $\gamma\rightarrow 0$, but we are only interested in non-zero values of $\gamma$, and since the left-hand side is non-singular in $\gamma$, we actually get the analytic continuation on $\C$ immediately). Notice that for $\gamma=0$, we have
    \begin{align*}
        \int_{a}^b\int_{a}^b\log^2\left(\frac{r}{s}\right)\frac{dr}{r}\frac{ds}{s}&=2\left(\int_{a}^{b}\log^2(r)\frac{dr}{r}\right)\left(\int_{a}^b\frac{ds}{s}\right)-2\left(\int_{a}^{b}\log(t)\frac{dt}{t}\right)^2\\
        &=\frac{2}{3}\log^3\left(\frac{b}{a}\right)\times \log\left(\frac{b}{a}\right)-2\left(\frac{1}{2}\log^2\left(\frac{b}{a}\right)\right)^2=\frac{1}{6}\log^4\left(\frac{b}{a}\right),
    \end{align*}
    which yields
    \begin{align*}
        \lim_{\gamma\rightarrow 0}\frac{1}{8\gamma^4}\left\{\left(1-\left(\frac{a}{b}\right)^{2\gamma}\right)^2-4\gamma^2\left(\frac{a}{b}\right)^{2\gamma}\log^2\left(\frac{b}{a}\right)\right\}=\frac{1}{6}\log^4\left(\frac{b}{a}\right).
    \end{align*}
    Therefore, thanks to \eqref{elementary}, we deduce that for all $\gamma>0$, we have
    \begin{align*}
        \int_{a}^b\log^2\left(\frac{r}{s}\right)\frac{dr}{r^{2\gamma+1}}\frac{ds}{s^{2\gamma+1}}=\frac{1}{8\gamma^4}\frac{1}{a^{4\gamma}}\left\{\left(1-\left(\frac{a}{b}\right)^{2\gamma}\right)^2-4\gamma^2\left(\frac{a}{b}\right)^{2\gamma}\log^2\left(\frac{b}{a}\right)\right\}.
    \end{align*}
    Now, recall that 
    \begin{align*}
        &\frac{1}{2\pi}\int_{\Omega}\left(\frac{1}{2}\leb_m\mathrm{Rad}(u)-\frac{2m}{m-1}\frac{z^2}{|z|^2}\mathfrak{L}_m\mathrm{Rad}(u)\right)^2dx=\frac{\alpha_2^2}{2m}b^{2m}\left(1-\left(\frac{a}{b}\right)^{2m}\right)\\
        &+2m\,\alpha_1^2\left(\frac{1}{a^{2m}}\log^2\left(\frac{1}{a}\right)-\frac{1}{m}\frac{1}{a^{2m}}\log\left(\frac{1}{a}\right)+\frac{1}{2m^2}\frac{1}{a^{2m}}\right.\\
        &\left.-\left(\frac{1}{b^{2m}}\log^2\left(\frac{1}{b}\right)-\frac{1}{m}\frac{1}{b^{2m}}\log\left(\frac{1}{b}\right)+\frac{1}{2m^2}\frac{1}{b^{2m}}\right)\right)\\
        &+2m\left(\frac{\alpha_1}{2(m-1)}+2a_0\right)^2\frac{1}{a^{2m}}\left(1-\left(\frac{a}{b}\right)^{2m}\right)\\
        &-4m\left(\frac{\alpha_1}{2(m-1)}+2a_0\right)\alpha_1\left(\frac{1}{a^{2m}}\log\left(\frac{1}{a}\right)-\frac{1}{2m}\frac{1}{a^{2m}}-\left(\frac{1}{b^{2m}}\log\left(\frac{1}{b}\right)-\frac{1}{2m}\frac{1}{b^{2m}}\right)\right)\\
        &+2m\,\alpha_1\alpha_2\left(\log^2(b)-\log^2(a)\right)+2m\left(\frac{\alpha_1}{2(m-1)}+2a_0\right)\alpha_2\log\left(\frac{b}{a}\right).
    \end{align*}
    In fact, there is an easier way to get the needed estimate since we already have a control on $\alpha_2$. Also write for simplicity
    \begin{align*}
        \mathfrak{D}_m\mathrm{Rad}(u)&=\frac{1}{2}\leb_m\mathrm{Rad}(u)-\frac{2m}{m-1}\frac{z^2}{|z|^2}\mathfrak{L}_m\mathrm{Rad}(u)\\
        &=\alpha_2\,r^{m-1}+2m\,r^{-m-1}\left(\alpha_1\log(r)+\frac{\alpha_1}{2(m-1)}+2a_0\right).
    \end{align*}
    We have
    \begin{align*}
        &\int_{\Omega}\left(\mathfrak{D}_m\mathrm{Rad}(u)\right)^2dx\geq 2m^2\int_{a}^br^{-2m-2}\left(\alpha_1\log(r)+\frac{\alpha_1}{2(m-1)}+2a_0\right)^2r\,dr-\alpha_2^2\int_{a}^br^{2m-1}dr\\
        &=2m^2\int_{a}^br^{-2m-2}\left(\alpha_1\log(r)+\frac{\alpha_1}{2(m-1)}+2a_0\right)^2r\,dr-\frac{\alpha_2^2}{2m}b^{2m}\left(1-\left(\frac{a}{b}\right)^{2m}\right).
    \end{align*}
    Therefore, we get the estimate
    \begin{align}
        &2m^2\int_{a}^br^{-2m-2}\left(\alpha_1\log(r)+\frac{\alpha_1}{2(m-1)}+2a_0\right)^2r\,dr\leq \frac{\alpha_2
        ^2}{2m}b^{2m}\left(1-\left(\frac{a}{b}\right)^{2m}\right)+\int_{\Omega}\left(\mathfrak{D}_m\mathrm{Rad}(u)\right)^2dx\nonumber\\
        &\leq \left(\frac{1}{2}+\frac{1-\left(\frac{a}{b}\right)^{2m}}{1-\left(\frac{a}{b}\right)^{2m}\frac{\log^2\left(\frac{1}{a}\right)}{\log^2\left(\frac{a}{b}\right)}}\left(\frac{1}{16\pi m^3}+\frac{\sqrt
        2}{\pi\,m}\frac{1}{\left(1-\left(\frac{a}{b}\right)^{2m}\right)^2-4m^2\left(\frac{a}{b}\right)^{2m}\log^2\left(\frac{b}{a}\right)}\right)\right)\nonumber\\
        &\times\int_{\Omega}\left(\leb_m\mathrm{Rad}(u)\right)^2dx+\frac{8m^2}{(m-1)^2}\int_{\Omega}\left|\mathfrak{L}_m\mathrm{Rad}(u)\right|^2|dz|^2.
    \end{align}
    Therefore, we are reduced to the previous analysis. If 
    \begin{align*}
        f(x)=|x|^{-(m+1)}\left(\alpha_1\log(r)+\frac{\alpha_1}{2(m-1)}+2a_0\right),
    \end{align*}
    we have
    \begin{align*}
        &\frac{1}{2\pi}\int_{\Omega}|f(x)|^2dx=\alpha_1^2\int_{a}^br^{-2m-1}\log^2(r)dr+\left(\frac{\alpha_1}{2(m-1)}+2a_0\right)^2\int_{a}^br^{-2m-1}dr\\
        &+2\alpha_1\left(\frac{\alpha_1}{2(m-1)}+2a_0\right)\int_{a}^br^{-2m-1}\log(r)dr\\
        &=\alpha_1^2\left[-\frac{1}{2m}r^{-2m}\log^2(r)-\frac{1}{4m^2}r^{-2m}\log(r)-\frac{1}{8m^3}r^{-2m}\right]_{a}^b\\
        &+\left(\frac{\alpha_1}{2(m-1)}+2a_0\right)^2\frac{1}{2m}\frac{1}{a^{2m}}\left(1-\left(\frac{a}{b}\right)^{2m}\right)\\
        &+2\alpha_1\left(\frac{\alpha_1}{2(m-1)}+2a_0\right)\left[-\frac{1}{2m}r^{-2m}\log(r)-\frac{1}{4m^2}r^{-2m}\right]_a^b\\
        &=\frac{\alpha_1^2}{2m}\frac{1}{a^{2m}}\log^2\left(\frac{1}{a}\right)\left(1-\left(\frac{a}{b}\right)^{2m}\frac{\log^2\left(\frac{1}{b}\right)}{\log^2\left(\frac{1}{a}\right)}\right)-\frac{\alpha_1^2}{4m^2}\frac{1}{a^{2m}}\log\left(\frac{1}{a}\right)\left(1-\left(\frac{a}{b}\right)^{2m}\frac{\log\left(\frac{1}{b}\right)}{\log\left(\frac{1}{a}\right)}\right)\\
        &+\frac{\alpha_1^2}{8m^3}\frac{1}{a^{2m}}\left(1-\left(\frac{a}{b}\right)^{2m}\right)
        -\frac{\alpha_1}{m}\left(\frac{\alpha_1}{2(m-1)}+2a_0\right)\frac{1}{a^{2m}}\log\left(\frac{1}{a}\right)\left(1-\left(\frac{a}{b}\right)^{2m}\frac{\log\left(\frac{1}{b}\right)}{\log\left(\frac{1}{a}\right)}\right)\\
        &+\frac{\alpha_1}{2m^2}\left(\frac{\alpha_1}{2(m-1)}+2a_0\right)\frac{1}{a^{2m}}\left(1-\left(\frac{a}{b}\right)^{2m}\right).
    \end{align*}
    As previously, the estimates are trivial if
    \begin{align*}
        \alpha_1\left(\frac{\alpha_1}{2(m-1)}+2a_0\right)\leq 0,
    \end{align*}
    so we assume that
    \begin{align*}
        \alpha_1\left(\frac{\alpha_1}{2(m-1)}+2a_0\right)>0.
    \end{align*}
    Therefore, we will get an estimate thanks to \eqref{improved_cauchy_schwarz2}. We readily estimate
    \begin{align*}
        \int_{a}^br^{-2m-1}\log^2(r)dr\int_{a}^bs^{-2m-1}ds\leq \frac{1}{2m^2}\frac{1}{a^{4m}}\log^2\left(\frac{1}{a}\right).
    \end{align*}
    Therefore, we deduce that
    \begin{align*}
        &\frac{1}{4m^3\sqrt{2}}\alpha_1\left(\frac{\alpha_1}{2(m-1)}+2a_0\right)\frac{1}{a^{2m}}\frac{1}{\log\left(\frac{1}{a}\right)}\left\{\left(1-\left(\frac{a}{b}\right)^{2m}\right)^{2}-4m^2\left(\frac{a}{b}\right)^{2m}\log^2\left(\frac{b}{a}\right)\right\}\\
        &\leq \int_{a}^b\left(\alpha_1\log(r)+\frac{1}{2(m-1)}\alpha_1+2a_0\right)^2r^{-(2m+1)}dr=\frac{1}{2\pi}\int_{\Omega}|f(x)|^2dx,
    \end{align*}
    and we get
    \begin{align*}
        \alpha_1\left(\frac{1}{2(m-1)}\alpha_1+2a_0\right)\frac{1}{a^{2m}}\frac{1}{\log\left(\frac{1}{a}\right)}\leq \frac{2m^3\sqrt{2}}{\left(1-\left(\frac{a}{b}\right)^{2m}\right)^2-4m^2\left(\frac{a}{b}\right)^{2m}\log^2\left(\frac{b}{a}\right)}\int_{\Omega}|f(x)|^2dx.
    \end{align*}
    Therefore, we deduce that
    \begin{align*}
        &\alpha_1^2\frac{1}{a^{2m}}\log^2\left(\frac{1}{a}\right)\left(1-\frac{1}{2m}\frac{1}{\log\left(\frac{1}{a}\right)}\right)\left(1-\left(\frac{a}{b}\right)^{2m}\frac{\log^2\left(\frac{1}{b}\right)}{\log^2\left(\frac{1}{a}\right)}\right)\\
        &\leq \left(\frac{m}{\pi}+\frac{4m^3\sqrt{2}}{\pi}\log^2\left(\frac{1}{a}\right)\frac{1-\left(\frac{a}{b}\right)^{2m}\frac{\log\left(\frac{1}{b}\right)}{\log\left(\frac{1}{a}\right)}}{\left(1-\left(\frac{a}{b}\right)^{2m}\right)^2-4m^2\left(\frac{a}{b}\right)^{2m}\log^2\left(\frac{b}{a}\right)}\right)\int_{\Omega}|f(x)|^2dx.
    \end{align*}
    Since $a<b\leq e^{-1}$ by hypothesis, we deduce that 
    \begin{align}\label{estimate_alpha1_radial}
        &\alpha_1^2\frac{1}{a^{2m}}\leq \frac{1}{1-\left(\frac{a}{b}\right)^{2m}\frac{\log^2\left(\frac{1}{b}\right)}{\log^2\left(\frac{1}{a}\right)}}\left(\frac{2m}{\pi}+\frac{8m^3\sqrt{2}}{\pi}\frac{1-\left(\frac{a}{b}\right)^{2m}\frac{\log\left(\frac{1}{b}\right)}{\log\left(\frac{1}{a}\right)}}{\left(1-\left(\frac{a}{b}\right)^{2m}\right)^2-4m^2\left(\frac{a}{b}\right)^{2m}\log^2\left(\frac{b}{a}\right)}\right)\int_{\Omega}|f(x)|^2dx\nonumber\\
        &\leq \frac{1}{1-\left(\frac{a}{b}\right)^{2m}\frac{\log^2\left(\frac{1}{b}\right)}{\log^2\left(\frac{1}{a}\right)}}\left(\frac{2m}{\pi}+\frac{8m^3\sqrt{2}}{\pi}\frac{1-\left(\frac{a}{b}\right)^{2m}\frac{\log\left(\frac{1}{b}\right)}{\log\left(\frac{1}{a}\right)}}{\left(1-\left(\frac{a}{b}\right)^{2m}\right)^2-4m^2\left(\frac{a}{b}\right)^{2m}\log^2\left(\frac{b}{a}\right)}\right)\nonumber\\
        &\times \left\{\left(\frac{1}{4m^2}+\frac{1-\left(\frac{a}{b}\right)^{2m}}{1-\left(\frac{a}{b}\right)^{2m}\frac{\log^2\left(\frac{1}{a}\right)}{\log^2\left(\frac{a}{b}\right)}}\left(\frac{1}{32\pi m^4}+\frac{\sqrt
        2}{2\pi\,m^2}\frac{1}{\left(1-\left(\frac{a}{b}\right)^{2m}\right)^2-4m^2\left(\frac{a}{b}\right)^{2m}\log^2\left(\frac{b}{a}\right)}\right)\right)\right.\nonumber\\
        &\left.\times \int_{\Omega}\left(\leb_m\mathrm{Rad}(u)\right)^2dx
        +\frac{2}{(m-1)^2}\int_{\Omega}\left|\mathfrak{L}_m\mathrm{Rad}(u)\right|^2|dz|^2\right\}.
    \end{align}
    We can now conclude the proof of the radial estimate. We rewrite and use the elementary estimate
    \begin{align}\label{square_comparison}
        \left(\sum_{i=1}^{d}a_i\right)^2\leq d\sum_{i=1}^{d}a_i^2\qquad\forall d\in \N,\;\forall \ens{a_1,\cdots,a_d}\subset\R, 
    \end{align}
    to show that
    \begin{align*}
        &\frac{1}{r^2}\left|\p{r}\mathrm{Rad}(u)+\frac{m-1}{r}\mathrm{Rad}(u)\right|^2=\frac{1}{r^2}\left|\alpha_1\,r^{-m}+2m\,r^m\left(\alpha_2\log(r)+\frac{1}{2m}\alpha_2+2b_0\right)\right|^2\\
        &=\frac{1}{r^2}\left|\alpha_1\,r^{-m}-\alpha_2\,r^m+2m\,r^m\left(\alpha_2\log(r)+\frac{1}{m}\alpha_2+2b_0\right)\right|^2\\
        &\leq \frac{3}{r^2}\left(\alpha_1^2r^{-2m}+\alpha_2^2r^{2m}+4m^2r^{2m}\left(\alpha_2^2\log^2(r)+\left(\frac{1}{m}\alpha_2+2b_0\right)^2+2\alpha_2\left(\frac{1}{m}\alpha_2+2b_0\right)\log(r)\right)\right).
    \end{align*}
    Therefore, we have by \eqref{identity_lm}, \eqref{radial_estimate_alpha2}, and \eqref{estimate_alpha1_radial}
    \begin{align}\label{radial_estimate_final}
        &\int_{\Omega}\frac{1}{|x|^2}\left|\p{r}\mathrm{Rad}(u)+\frac{m-1}{r}\mathrm{Rad}(u)\right|^2dx\leq 6\pi\int_{a}^b\alpha_1^2\,r^{-(2m+1)}dr+6\pi\int_{a}^{b}\alpha_2^2r^{2m-1}dr\nonumber\\
        &+\frac{1}{4m^2}\int_{\Omega}\left(\leb_m\mathrm{Rad}(u)\right)^2dx\nonumber\\
        &=\frac{3\pi}{m}\alpha_1^2\frac{1}{a^{2m}}\left(1-\left(\frac{a}{b}\right)^{2m}\right)+\frac{3\pi}{m}\alpha_2^2\left(1-\left(\frac{a}{b}\right)^{2m}\right)+\frac{1}{4m^2}\int_{\Omega}\left(\leb_m\mathrm{Rad}(u)\right)^2dx\nonumber\\
        &\leq \frac{1-\left(\frac{a}{b}\right)^{2m}}{1-\left(\frac{a}{b}\right)^{2m}\frac{\log^2\left(\frac{1}{b}\right)}{\log^2\left(\frac{1}{a}\right)}}\left(6+24m^2\sqrt{2}\frac{1-\left(\frac{a}{b}\right)^{2m}\frac{\log\left(\frac{1}{b}\right)}{\log\left(\frac{1}{a}\right)}}{\left(1-\left(\frac{a}{b}\right)^{2m}\right)^2-4m^2\left(\frac{a}{b}\right)^{2m}\log^2\left(\frac{b}{a}\right)}\right)\nonumber\\
        &\times \left\{\left(\frac{1}{4m^2}+\frac{1-\left(\frac{a}{b}\right)^{2m}}{1-\left(\frac{a}{b}\right)^{2m}\frac{\log^2\left(\frac{1}{a}\right)}{\log^2\left(\frac{1}{b}\right)}}\left(\frac{1}{32\pi m^4}+\frac{\sqrt
        2}{2\pi\,m^2}\frac{1}{\left(1-\left(\frac{a}{b}\right)^{2m}\right)^2-4m^2\left(\frac{a}{b}\right)^{2m}\log^2\left(\frac{b}{a}\right)}\right)\right)\right.\nonumber\\
        &\left.\times \int_{\Omega}\left(\leb_m\mathrm{Rad}(u)\right)^2dx
        +\frac{2}{(m-1)^2}\int_{\Omega}\left|\mathfrak{L}_m\mathrm{Rad}(u)\right|^2|dz|^2\right\}\nonumber\\
        &+\frac{1-\left(\frac{a}{b}\right)^{2m}}{1-\left(\frac{a}{b}\right)^{2m}\frac{\log^2\left(a\right)}{\log^2(b)}}\left(\frac{3}{16 m^4}+\frac{3\sqrt{2}}{m^2}\frac{1}{\left(1-\left(\frac{a}{b}\right)^{2m}\right)^2-4m^2\left(\frac{a}{b}\right)^{2m}\log^2\left(\frac{b}{a}\right)}\right)\int_{\Omega}\left(\leb_m\mathrm{Rad}(u)\right)^2dx\nonumber\\
        &+\frac{1}{4m^2}\int_{\Omega}\left(\leb_m\mathrm{Rad}(u)\right)^2dx.
    \end{align}

    \textbf{Step 9.} The final estimates.  
    Therefore, we now need only control the angular part of
    \begin{align*}
        \int_{\Omega}\frac{1}{|x|^2}\left|\D u+(m-1)\frac{x}{|x|^2}u\right|^2\left(\frac{a}{|x|}\right)^{2\gamma}dx.
    \end{align*}
    However, this proceeds in the exact same way as the estimate of 
    \begin{align*}
        \int_{\Omega}\frac{1}{|x|^2}\left|\D u+(m-1)\frac{x}{|x|^2}u\right|^2\left(\frac{a}{|x|}\right)^{2\gamma}dx
    \end{align*}
    and we skip the details due to length considerations. This concludes the proof of the theorem. 
    \end{proof}

    For technical reasons that only become apparent in the involved argument involving the Bochner identity to show that eigenvalues associated to non-trivial eigenspaces are bounded from below, we would have to refine this estimate so that it involves a weight on the two elliptic operators on the right-hand side of the inequality of Theorem \ref{lm_last_lemma}.

However, the estimate for functions in $W^{2,2}_0(\Omega)$ does not hold true when one adds weights to the $L^2$ norm of $\leb_mu$ (see Theorem \ref{poincare_weight_m}), so we will have to replace the operator $\leb_m$ by a new fourth-order elliptic operator with regular singularities that corresponds to the integral
\begin{align*}
    \int_{\Omega_k(\alpha)}|z|^{2-2m}|\p{z}^2u|^2|dz|^2
\end{align*}
before the change of variable. The advantage is that for $W^{2,2}_0(\Omega)$ function, the needed doubly weighted estimate does hold true (see Theorem \ref{poincare_weight_m_2}).

    \section{A New Family of Fourth-Order Operators with Regular Singularities}

    The main issue in those estimates is that if we want to have a weighted estimate for $u$ that involved the weight on the right-hand side, we have to use the $L^2$ norm of $\mathfrak{L}_m$ instead of $\leb_m$. We therefore have to study the (real-valued) solutions to the equation $\bar{\mathfrak{L}_m}^{\ast}\mathfrak{L}_mu$, or equivalently, $\Re\left(\bar{\mathfrak{L}_m}^{\ast}\mathfrak{L}_m\right)u=0$. Recall that the Cauchy-Riemann operator is defined by
    \begin{align*}
        \p{z}=\frac{1}{2}\left(\p{x}-i\,\p{y}\right).
    \end{align*}
    Furthermore, we have
    \begin{align*}
    \left\{\begin{alignedat}{4}
        \p{r}&u=&&\cos(\theta)\p{x}u\,&&+&&\sin(\theta)\p{y}u\\
        \frac{1}{r}\p{\theta}u&=-&&\sin(\theta)\p{x}u\,&&+&&\cos(\theta)\p{y}u,
        \end{alignedat}\right.
    \end{align*}
    which shows that
    \begin{align*}
        \begin{pmatrix}
            \p{x}u\\
            \p{y}u
        \end{pmatrix}&=\begin{pmatrix}
            \cos(\theta) & -\sin(\theta)\\
            \sin(\theta) & \cos(\theta)
        \end{pmatrix}\begin{pmatrix}
            \p{r}u
            \vspace{0.3em}\\
            \dfrac{1}{r}\p{\theta}u
        \end{pmatrix}=\begin{pmatrix}
            \cos(\theta)\p{r}u-\sin(\theta)\dfrac{1}{r}\p{\theta}u
            \vspace{0.3em}\\
            \sin(\theta)\p{r}u+\cos(\theta)\dfrac{1}{r}\p{\theta}u
        \end{pmatrix}.
    \end{align*}
    Therefore, we have
    \begin{align}\label{elementary_cauchy_riemann}
        \p{z}u&=\frac{1}{2}(\p{x}-i\,\p{y}u)=\frac{1}{2}\left(\left(\cos(\theta)\p{r}u-\sin(\theta)\frac{1}{r}\p{\theta}u\right)-i\left(\sin(\theta)\p{r}u+\cos(\theta)\frac{1}{r}\p{\theta}u\right)\right)\nonumber\\
        &=\frac{1}{2}\left(e^{-i\theta}\p{r}u-i\,e^{-i\theta}\frac{1}{r}\p{\theta}u\right),
    \end{align}
    which shows that
    \begin{align}\label{zpz}
        \Re\left(z\,\p{z}u\right)=\frac{1}{2}r\,\p{r}u=\frac{1}{2}x\cdot \D u.
    \end{align}
    Likewise, we compute
    \begin{align*}
        \p{x}^2u&=\cos(\theta)\p{r}(\p{x}u)-\sin(\theta)\frac{1}{r}\p{\theta}(\p{x}u)\\
        &=\cos(\theta)\p{r}\left(\cos(\theta)\p{r}u-\sin(\theta)\frac{1}{r}\p{\theta}u\right)-\sin(\theta)\frac{1}{r}\p{\theta}\left(\cos(\theta)\p{r}u-\sin(\theta)\frac{1}{r}\p{\theta}u\right)\\
        &=\cos^2(\theta)\p{r}^2u-\cos(\theta)\sin(\theta)\frac{1}{r}\p{r\theta}^2u+\cos(\theta)\sin(\theta)\frac{1}{r^2}\p{\theta}u\\
        &-\cos(\theta)\sin(\theta)\frac{1}{r}\p{r\theta}^2u+\frac{\sin^2(\theta)}{r}\p{r}u+\frac{\sin^2(\theta)}{r^2}\p{\theta}^2+\cos(\theta)\sin(\theta)\frac{1}{r^2}\p{\theta}u\\
        &=\cos^2(\theta)\p{\theta}^2u+\frac{\sin^2(\theta)}{r}\p{r}u+\frac{\sin^2(\theta)}{r^2}\p{\theta}^2-2\cos(\theta)\sin(\theta)\frac{1}{r}\p{r\theta}^2u+2\cos(\theta)\sin(\theta)\frac{1}{r^2}\p{\theta}u,
    \end{align*}
    and
    \begin{align*}
        \p{y}^2u&=\sin(\theta)\p{r}(\p{y}u)+\cos(\theta)\frac{1}{r}\p{\theta}(\p{y}u)\\
        &=\sin(\theta)\p{r}\left(\sin(\theta)\p{r}u+\cos(\theta)\frac{1}{r}\p{\theta}u\right)+\cos(\theta)\frac{1}{r}\p{\theta}\left(\sin(\theta)\p{r}u+\cos(\theta)\frac{1}{r}\p{\theta}u\right)\\
        &=\sin^2(\theta)\p{r}^2u+\cos(\theta)\sin(\theta)\frac{1}{r}\p{r\theta}^2u-\cos(\theta)\sin(\theta)\frac{1}{r^2}\p{\theta}u\\
        &+\cos(\theta)\sin(\theta)\frac{1}{r}\p{r\theta}^2u+\frac{\cos^2(\theta)}{r}\p{r}u+\frac{\cos^2(\theta)}{r^2}\p{\theta}^2u-\cos(\theta)\sin(\theta)\frac{1}{r^2}\p{\theta}u\\
        &=\sin^2(\theta)\p{r}^2u+\frac{\cos^2(\theta)}{r}\p{r}u+\frac{\cos^2(\theta)}{r^2}\p{\theta}^2u+2\cos(\theta)\sin(\theta)\frac{1}{r}\p{r\theta}^2u-2\cos(\theta)\sin(\theta)\frac{1}{r^2}\p{\theta}u.
    \end{align*}
    In particular, we recover the identity
    \begin{align}\label{classical_polar}
        \Delta=\p{x}^2+\p{y}^2=\p{r}^2+\frac{1}{r}\p{r}+\frac{1}{r^2}\p{\theta}^2.
    \end{align}
    Then, we have
    \begin{align*}
        &\p{xy}^2u=\cos(\theta)\p{r}(\p{y}u)-\sin(\theta)\frac{1}{r}\p{\theta}(\p{y}u)\\
        &=\cos(\theta)\p{r}\left(\sin(\theta)\p{r}u+\cos(\theta)\frac{1}{r}\p{\theta}u\right)-\sin(\theta)\frac{1}{r}\p{\theta}\left(\sin(\theta)\p{r}u+\cos(\theta)\frac{1}{r}\p{\theta}u\right)\\
        &=\cos(\theta)\sin(\theta)\p{r}^2u+\frac{\cos^2(\theta)}{r}\p{r\theta}^2u-\frac{\cos^2(\theta)}{r^2}\p{\theta}u\\
        &-\frac{\sin^2(\theta)}{r}\p{r\theta}^2u-\cos(\theta)\sin(\theta)\frac{1}{r}\p{r}u-\cos(\theta)\sin(\theta)\frac{1}{r^2}\p{\theta}^2+\frac{\sin^2(\theta)}{r^2}\p{\theta}u\\
        &=\cos(\theta)\sin(\theta)\p{r}^2u-\cos(\theta)\sin(\theta)\frac{1}{r}\p{r}u-\cos(\theta)\sin(\theta)\frac{1}{r^2}\p{\theta}^2+\frac{\cos^2(\theta)-\sin^2(\theta)}{r}\p{r\theta}^2u\\
        &-\frac{\cos^2(\theta)-\sin^2(\theta)}{r^2}\p{\theta}u.
    \end{align*}
    We also compute
    \begin{align*}
        &\p{yx}^2u=\sin(\theta)\p{r}(\p{x}u)+\cos(\theta)\frac{1}{r}\p{\theta}\left(\p{x}u\right)\\
        &=\sin(\theta)\p{r}\left(\cos(\theta)\p{r}u-\sin(\theta)\frac{1}{r}\p{\theta}u\right)+\cos(\theta)\frac{1}{r}\p{\theta}\left(\cos(\theta)\p{r}u-\sin(\theta)\frac{1}{r}\p{\theta}u\right)\\
        &=\cos(\theta)\sin(\theta)\p{r}^2u-\frac{\sin^2(\theta)}{r}\p{r\theta}^2u+\frac{\sin^2(\theta)}{r}\p{\theta}u\\
        &+\frac{\cos^2(\theta)}{r}\p{r\theta}^2u-\frac{\cos(\theta)\sin(\theta)}{r}\p{r}u-\frac{\cos(\theta)\sin(\theta)}{r^2}\p{\theta}^2u-\frac{\cos^2(\theta)}{r^2}\p{\theta}u\\
        &=\cos(\theta)\sin(\theta)\p{r}^2u-\frac{\cos(\theta)\sin(\theta)}{r}\p{r}u-\frac{\cos(\theta)\sin(\theta)}{r^2}\p{\theta}^2u+\frac{\cos^2(\theta)-\sin^2(\theta)}{r}\p{r\theta}^2u\\
        &-\frac{\cos^2(\theta)-\sin^2(\theta)}{r^2}\p{\theta}u.
    \end{align*}
    Then, we have
    \begin{align}\label{z2pz2}
        &\Re\left(z^2\,\p{z}^2\right)=\Re\left((x+i\,y)^2\left(\frac{1}{2}\left(\p{x}-i\,\p{y}\right)\right)^2\right)=\frac{1}{4}\Re\left(\left(x^2-y^2+2i\,xy\right)\left(\p{x}^2-\p{y}^2-2i\,\p{xy}^2\right)\right)\nonumber\\
        &=\frac{1}{4}\Big((x^2-y^2)\left(\p{x}^2-\p{y}^2\right)+4\,xy\,\p{xy}^2\Big)\nonumber\\
        &=\frac{r^2}{4}\left(\left(\cos^2(\theta)-\sin^2(\theta)\right)\left(\left(\cos^2(\theta)-\sin^2(\theta)\right)\p{r}^2-\frac{\cos^2(\theta)-\sin^2(\theta)}{r}\p{r}-\frac{\cos^2(\theta)-\sin^2(\theta)}{r^2}\p{\theta}^2\right.\right.\nonumber\\
        &\left.\left.-\frac{4\cos(\theta)\sin(\theta)}{r}\p{r\theta}^2+\frac{4\cos(\theta)\sin(\theta)}{r^2}\p{\theta}\right)+4\cos(\theta)\sin(\theta)\left(\cos(\theta)\sin(\theta)\p{r}^2-\frac{\cos(\theta)\sin(\theta)}{r}\p{r}\right.\right.\nonumber\\
        &\left.\left.-\frac{\cos(\theta)\sin(\theta)}{r^2}\p{\theta}^2+\frac{\cos^2(\theta)-\sin^2(\theta)}{r}\p{r\theta}^2-\frac{\cos^2(\theta)-\sin^2(\theta)}{r^2}\p{\theta}\right)\right)\nonumber\\
        &=\frac{r^2}{4}\left(\p{r}^2-\frac{1}{r}\p{r}-\frac{1}{r^2}\p{\theta}^2\right)=\frac{r^2}{4}\left(2\,\p{r}^2-\Delta\right)\nonumber\\
        &=\frac{|x|^2}{4}\left(2\left(\frac{x}{|x|}\right)\cdot \D^2(\,\cdot\,)\cdot \left(\frac{x}{|x|}\right)-\Delta\right)=\frac{|x|^4}{2}\left(\frac{x}{|x|^2}\right)\cdot \D^2(\,\cdot\,)\cdot \left(\frac{x}{|x|^2}\right)-\frac{|x|^2}{4}\Delta.
    \end{align}
    Now, notice that for all $u,v\in C^{\infty}_c(\Omega)$, we have
    \begin{align*}
        \int_{\Omega}v\frac{1}{z}\p{z}u|dz|^2=-\int_{\Omega}u\,\p{z}\left(\frac{1}{z}v\right)|dz|^2=-\int_{\Omega}u\left(\frac{1}{z}\p{z}v-\frac{1}{z^2}v\right)|dz|^2.
    \end{align*}
    Therefore, we deduce that 
    \begin{align*}
        \left(\frac{1}{z}\p{z}\right)^{\ast}=-\left(\frac{1}{z}\p{z}-\frac{1}{z^2}\right).
    \end{align*}
    Therefore, we have
    \begin{align*}
        \mathfrak{L}_m^{\ast}&=\left(\p{z}^2+\frac{(m-1)}{z}\p{z}+\frac{(m-1)(m-3)}{4z^2}\right)^{\ast}=\left(\p{z}^2-\frac{(m-1)}{z}\p{z}+\frac{(m-1)(m-3)}{4z^2}+\frac{(m-1)}{z^2}\right)\\
        &=\left(\p{z}^2-\frac{(m-1)}{z}\p{z}+\frac{(m-1)^2}{z^2}+\frac{(m^2-1)}{4z^2}\right).
    \end{align*}
    Now, we have
    \begin{align*}
        &\bar{\mathfrak{L}_m}^{\ast}\mathfrak{L}_m=\left(\p{\z}^2-\frac{(m-1)}{\z}\p{\z}+\frac{(m-1)(m-3)}{4\z^2}+\frac{(m-1)}{\z^2}\right)\left(\p{z}^2+\frac{(m-1)}{z}\p{z}+\frac{(m-1)(m-3)}{4z^2}\right)\\
        &=\frac{1}{16}\Delta^2+\frac{(m-1)}{4z}\p{\z}\Delta+\frac{(m-1)(m-3)}{4|z|^4}\z^2\p{\z}^2-\frac{(m-1)}{4\z}\p{z}\Delta-\frac{(m-1)^2}{4|z|^2}\Delta-\frac{(m-1)^2(m-3)}{4|z|^4}\z\,\p{\z}\\
        &+\frac{(m-1)(m-3)}{4|z|^4}z^2\p{z}^2+\frac{(m-1)^2(m-3)}{4|z|^4}z\,\p{z}+\frac{(m-1)^2(m-3)^2}{16|z|^4}\\
        &+\frac{(m-1)}{|z|^4}z^2\p{z}^2+\frac{(m-1)^2}{|z|^4}z\,\p{z}+\frac{(m-1)^2(m-3)}{4|z|^4}\\
        &=\frac{1}{16}\Delta^2-\frac{i}{2}\frac{(m-1)}{|z|^2}\Im\left(z\,\p{z}\Delta\right)+\frac{i}{2}\frac{(m-1)^2(m-3)}{4|z|^4}\Im\left(z\,\p{z}\right)-\frac{(m-1)^2}{4|z|^2}\Delta \\
        &+\frac{(m-1)(m-3)}{2|z|^4}\Re\left(z^2\p{z}^2\right)+\frac{(m-1)}{|z|^4}z^2\p{z}^2+\frac{(m-1)^2}{|z|^4}z\,\p{z}+\frac{(m+1)(m-1)^2(m-3)}{16|z|^2}.
    \end{align*}
    Therefore, we deduce that
    \begin{align*}
        \Re\left(\bar{\mathfrak{L}_m}^{\ast}\mathfrak{L}_m\right)&=\frac{1}{16}\Delta^2-\frac{(m-1)^2}{4|z|^2}\Delta+\frac{(m-1)^2}{2|z|^4}\Re\left(z^2\p{z}^2\right)+\frac{(m-1)^2}{|z|^4}\Re\left(z\,\p{z}\right)\\
        &+\frac{(m+1)(m-1)^2(m-3)}{16|z|^4}.
    \end{align*}
    Using \eqref{zpz} and \eqref{z2pz2}, we deduce that 
    \begin{align*}
        \Re\left(\bar{\mathfrak{L}_m}^{\ast}\mathfrak{L}_m\right)&=\frac{1}{16}\Delta^2-\frac{(m-1)^2}{4|x|^2}\Delta+\frac{(m-1)^2}{2}\left(\frac{1}{2}\left(\frac{x}{|x|^2}\right)\cdot \D^2(\,\cdot\,)\cdot \left(\frac{x}{|x|^2}\right)-\frac{1}{4|x|^2}\Delta\right)\\
        &+\frac{(m-1)^2}{2|x|^2}\frac{x}{|x|^2}\cdot \D u+\frac{(m+1)(m-1)^2(m-3)}{16|x|^4}\\
        &=\frac{1}{16}\Delta^2-\frac{3(m-1)^2}{8|x|^2}\Delta+\frac{(m-1)^2}{4}\left(\frac{x}{|x|^2}\right)\cdot \D^2(\,\cdot\,)\cdot\left(\frac{x}{|x|^2}\right)+\frac{(m-1)^2}{2|x|^2}\frac{x}{|x|^2}\cdot \D u\\
        &+\frac{(m+1)(m-1)^2(m-3)}{16|x|^4}.
    \end{align*}
    And finally, we have
    \begin{align*}
        \mathfrak{D}_m&=16\,\Re\left(\bar{\mathfrak{L}_m}^{\ast}\mathfrak{L}_m\right)=\Delta^2-\frac{6(m-1)^2}{|x|^2}\Delta +4(m-1)^2\left(\frac{x}{|x|^2}\right)\cdot \D^2(\,\cdot\,)\cdot \left(\frac{x}{|x|^2}\right)\\
        &+\frac{8(m-1)^2}{|x|^2}\frac{x}{|x|^2}\cdot \D u+\frac{(m+1)(m-1)^2(m-3)}{|x|^4}
    \end{align*}
    Recall that in comparison, we have
    \begin{align*}
        \leb_m^{\ast}\leb_m=\Delta^2+\frac{2(m^2-1)}{|x|^2}\Delta -4(m^2-1)\left(\frac{x}{|x|^2}\right)^t\cdot\D^2(\,\cdot\,)\cdot\left(\frac{x}{|x|^2}\right)+\frac{(m^2-1)^2}{|x|^4}.
    \end{align*}
    Let us now find a basis of solutions of $\mathfrak{D}_mu=0$. We have (classically or by the above computation \eqref{classical_polar})
    \begin{align*}
        \Delta=\p{r}^2+\frac{1}{r}\p{r}+\frac{1}{r^2}\p{\theta}^2.
    \end{align*}
    Therefore, we get
    \begin{align*}
        \p{r}\Delta&=\p{r}^3+\frac{1}{r}\p{r}^2-\frac{1}{r^2}\p{r}+\frac{1}{r^2}\p{r}\p{\theta}^2-\frac{2}{r^3}\p{\theta}^2\\
        \p{r}^2\Delta&=\p{r}^4+\frac{1}{r}\p{r}^3-\frac{2}{r^2}\p{r}^2+\frac{2}{r^3}\p{r}+\frac{1}{r^2}\p{r}^2\p{\theta}^2-\frac{4}{r^3}\p{r}\p{\theta}^2+\frac{6}{r^4}\p{\theta}^2.
    \end{align*}
    Therefore, we find that
    \begin{align*}
        \Delta^2&=\p{r}^4+\frac{1}{r}\p{r}^3-\frac{2}{r^2}\p{r}^2+\frac{2}{r^3}\p{r}+\frac{1}{r^2}\p{r}^2\p{\theta}^2-\frac{4}{r^3}\p{r}\p{\theta}^2+\frac{6}{r^4}\p{\theta}^2\\
        &+\frac{1}{r}\p{r}^3+\frac{1}{r}\p{r}^2-\frac{1}{r^3}\p{r}+\frac{1}{r^3}\p{r}\p{\theta}^2-\frac{2}{r^4}\p{\theta}^2\\
        &+\frac{1}{r^2}\p{r}^2\p{\theta}^2+\frac{1}{r^3}\p{r}\p{\theta}^2+\frac{1}{r^4}\p{\theta}^4\\
        &=\p{r}^4+\frac{2}{r}\p{r}^3-\frac{1}{r}\p{r}^2+\frac{1}{r}\p{r}+\frac{2}{r^2}\p{r}^2\p{\theta}^2-\frac{2}{r^3}\p{r}\p{\theta}^2+\frac{4}{r^4}\p{\theta}^2+\frac{1}{r^4}\p{\theta}^2.
    \end{align*}
    Therefore, we have
    \begin{align*}
        &\mathfrak{D}_m=\p{r}^4+\frac{2}{r}\p{r}^3-\frac{1}{r}\p{r}^2+\frac{1}{r}\p{r}+\frac{2}{r^2}\p{r}^2\p{\theta}^2-\frac{2}{r^3}\p{r}\p{\theta}^2+\frac{4}{r^4}\p{\theta}^2+\frac{1}{r^4}\p{\theta}^4\\
        &-6(m-1)^2\left(\p{r}^2+\frac{1}{r}\p{r}+\frac{1}{r^2}\p{\theta}^2\right)+4(m-1)^2\p{r}^2+\frac{8(m-1)^2}{r^3}\p{r}+\frac{(m+1)(m-1)^2(m-3)}{r^4}\\
        &=\p{r}^4+\frac{2}{r}\p{r}^3-\frac{2(m-1)^2+1}{r^2}\p{r}^2+\frac{2(m-1)^2+1}{r}\p{r}+\frac{2}{r^2}\p{r}^2\p{\theta}^2-\frac{2}{r^3}\p{r}\p{\theta}^2+\frac{4-6(m-1)^2}{r^4}\p{\theta}^2+\frac{1}{r^4}\p{\theta}^4\\
        &+\frac{(m+1)(m-1)^2(m-3)}{r^4}.
    \end{align*}
    Therefore, we deduce that
    \begin{align*}
        \Pi_{n^2}(\mathfrak{D}_m)&=\p{r}^4+\frac{2}{r}\p{r}^3-\frac{2(m-1)^2+2n^2+1}{r^2}\p{r}^2+\frac{2(m-1)^2+2n^2+1}{r^3}\p{r}\\
        &+\frac{n^4+n^2(6(m-1)^2-4)+(m+1)(m-1)^2(m-3)}{r^4}.
    \end{align*}
    Make the change of variable $u(r)=Y(\log(r))$. It yields
    \begin{align*}
        \left\{\begin{alignedat}{1}
            \p{r}u&=\frac{1}{r}Y'\\
            \p{r}^2u&=\frac{1}{r^2}(Y''-Y')\\
            \p{r}^3u&=\frac{1}{r^3}(Y'''-Y''-2(Y''-Y'))=\frac{1}{r^3}(Y'''-3Y''+2Y')\\
            \p{r}^4u&=\frac{1}{r^4}(Y''''-3Y'''+2Y''-3(Y'''-3Y''+2Y''))=\frac{1}{r^4}(Y''''-6Y'''+11Y''-6Y').
        \end{alignedat}\right.
    \end{align*}
    Therefore, we get
    \begin{align*}
        r^4\Pi_{n^2}(\mathfrak{D}_m)u&=Y''''-6Y'''+11Y''-6Y'+2\left(Y'''-3Y''+2Y'\right)-(2(m-1)^2+2n^2+1)\left(Y''-Y'\right)\\
        &+(2(m-1)^2+2n^2+1)Y'+\left(n^4+n^2(6(m-1)^2-4)+(m+1)(m-1)^2(m-3)\right)Y\\
        &=Y''''-4Y'''-\left(2(m-1)^2+2n^2-4\right)Y''+(4(m-1)^2+4n^2)Y'\\
        &+\left(n^4+n^2(6(m-1)^2-4)+(m+1)(m-1)^2(m-3)\right)Y.
    \end{align*}
    Write for simplicity $\lambda_{m,n}=n^4+n^2(6(m-1)^2-4)+(m+1)(m-1)^2(m-3)$.     The characteristic polynomial of this ordinary differential equation is given by 
    \begin{align*}
        P(X)=X^4-4X^3-(2(m-1)^2+2n^2-4)X^2+(4(m-1)^2+4n^2)X+\lambda_{m,n}.
    \end{align*}
    Therefore, we have
    \begin{align*}
        Q(X)&=P(X+1)\\
        &=X^4+4X^3+6X^2+4X+1-4(X^3+3X^2+3X+1)-(2(m-1)^2+2n^2-4)(X^2+2X+1)\\
        &+(4(m-1)^2+4n^2)(X+1)+\lambda_{m,n}\\
        &=X^4-2((m-1)^2+n^2+1)X^2+2(m-1)^2+2n^2+1+\lambda_{m,n},
    \end{align*}
    which is a biquadratic equation. The discriminant of the associated quadratic polynomial is equal to
    \begin{align*}
        D_{m,n}=16((m-1)^2-((m-1)^2-1)n^2).
    \end{align*}
    Therefore, for $n=0$, we get $D_{m,0}=16(m-1)^2$, which shows that the roots of $Q(\sqrt{X})$ are equal to 
    \begin{align*}
        r_{0,1}=(m-1)^2+1+2(m-1)=m^2\quad r_{0,2}=(m-1)^2+1-2(m-1)=(m-2)^2,
    \end{align*}
    which shows that a basis of solutions of $\Pi_0(\mathscr{D}_m)u=0$ is given by (assuming that $m>2$)
    \begin{align*}
        r^{m+1}, r^{1-m}, r^{m-1}, r^{3-m}.
    \end{align*}
    For $n=\pm 1$, we have $D_{m,\pm 1}=16$, which yields the roots
    \begin{align*}
        r_{1,1}=(m-1)^2+2+2=(m-1)^2+4\qquad r_{1,2}=(m-1)^2+2-2=(m-1)^2.
    \end{align*}
    As $m>2$, a basis of solutions of $\Pi_{\pm 1}(\mathfrak{D}_m)u=0$ is given by
    \begin{align*}
        r^{1+\sqrt{(m-1)^2+4}}, r^{1-\sqrt{(m-1)^2+4}}, r^{m}, r^{2-m}.
    \end{align*}
    Notice that a real-valued function $v$ satisfies $\p{z}^2v=0$ if and only if $v$ belongs to the $4$-dimensional linear space spanned by
    \begin{align*}
        1,r\cos(\theta),r\sin(\theta),r^2.
    \end{align*}
    Indeed, since $\p{z}v$ is anti-holomorphic, there exists $\ens{a_n}_{n\in \Z}$ such that
    \begin{align*}
        \p{z}v=\sum_{n\in \Z}a_n\z^n.
    \end{align*}
    Therefore, we deduce that there exists $\ens{b_n}_{n\in \Z}$ such that
    \begin{align*}
        v(z)=\sum_{n\in \Z}a_nz\z^n+\sum_{n\in \Z}b_n\z^n.
    \end{align*}
    Since $v$ must be real valued, we deduce that $b_0\in \R$, that $b_1=\bar{a_0}$, and that $a_1\in\R$. All other coefficients must vanish, which shows in the end that
    \begin{align*}
        v(z)=b_0+2\,\Re\left(a_0z\right)+a_1|z|^2.
    \end{align*}
    Now, if $v=|z|^{m-1}u$, this implies that
    \begin{align*}
        u(z)&=b_0|z|^{1-m}+2\,|z|^{1-m}\,\Re(a_0z)+a_1|z|^{3-m}\\
        &=b_0\,r^{1-m}+a_1r^{3-m}+2\,\Re(a_0)r^{2-m}\cos(\theta)-2\,\Im(a_0)r^{2-m}\sin(\theta),
    \end{align*}
    which shows that the Kernel of $\mathfrak{L}_m$ is spanned by the functions
    \begin{align*}
        r^{1-m},r^{3-m},r^{2-m}\cos(\theta),r^{2-m}\sin(\theta).
    \end{align*}
    The result is coherent with the roots found above since we have $r^{1-m},r^{3-m}\in \mathrm{Ker}(\Pi_0(\mathfrak{D}_m))$ and $r^{2-m}\cos(\theta),r^{2-m}\sin(\theta)\in \mathrm{Ker}(\Pi_{\pm 1}(\mathfrak{D}_m))$. Indeed, as those functions belong to $\mathrm{Ker}(\Pi_0(\mathfrak{L}_m)$ and\\
    $\mathrm{Ker}(\Pi_{\pm 1}(\mathfrak{L}_m))$, they must \emph{a fortiori} belong to $\mathrm{Ker}(\Pi_0(\Re(\bar{\mathfrak{L}_m}^{\ast}\mathfrak{L}_m)))$ and $\mathrm{Ker}(\Pi_{\pm 1}(\Re(\bar{\mathfrak{L}_m}^{\ast}\mathfrak{L}_m)))$.
    
    Now, if $n\geq 2$, we have 
    \begin{align*}
        ((m-1)^2-1)n^2-(m-1)^2\geq 3(m-1)^2-4>0\Longleftrightarrow m>1+\frac{2}{\sqrt{3}}=2.154700\cdots .
    \end{align*}
    Therefore, assuming that $m>1+\frac{2}{\sqrt{3}}$, we have $D_{m,n}<0$ for all $|n|\geq 2$. This implies that the roots of $Q(\sqrt{X})$ are given by
    \begin{align*}
        r_{n,1}&=(m-1)^2+n^2+1+2i\sqrt{((m-1)^2-1)n^2-(m-1)^2}\\
        r_{n,2}&=(m-1)^2+n^2+1-2i\sqrt{((m-1)^2-1)n^2-(m-1)^2}.
    \end{align*}
    Recall that if $x>0$, then for all $y>0$, the roots of $x+i\,y$ are given by $\pm(a+i\,b)$, where
    \begin{align*}
        a=\sqrt{\frac{\sqrt{x^2+y^2}+x}{2}}\quad b=\sqrt{\frac{\sqrt{x^2+y^2}-x}{2}},
    \end{align*}
    while for $y<0$, we have
    \begin{align*}
        a=\sqrt{\frac{\sqrt{x^2+y^2}+x}{2}}\quad b=-\sqrt{\frac{\sqrt{x^2+y^2}-x}{2}}
    \end{align*}
    Here, we have $x=(m-1)^2+n^2+1$ and $y=\pm 2\sqrt{((m-1)^2-1)n^2-(m-1)^2}$. 
    
    Since the real part of the roots is positive, we deduce that the roots of $Q$ are given by
    \begin{align*}
    \left\{\begin{alignedat}{1}
        r_{m,n,1}&=\sqrt{\frac{\sqrt{m^2(m-2)^2+n^2(n^2+6m(m-2)+4)}+(m-1)^2+n^2+1}{2}}\\
        &+i\sqrt{\frac{\sqrt{m^2(m-2)^2+n^2(n^2+6m(m-2)+4)}-((m-1)^2+n^2+1)}{2}}\\
        r_{m,n,2}&=-\sqrt{\frac{\sqrt{m^2(m-2)^2+n^2(n^2+6m(m-2)+4)}+(m-1)^2+n^2+1}{2}}\\
        &+i\sqrt{\frac{\sqrt{m^2(m-2)^2+n^2(n^2+6m(m-2)+4)}-((m-1)^2+n^2+1)}{2}}\\
        r_{m,n,3}&=\sqrt{\frac{\sqrt{m^2(m-2)^2+n^2(n^2+6m(m-2)+4)}+(m-1)^2+n^2+1}{2}}\\
        &-i\sqrt{\frac{\sqrt{m^2(m-2)^2+n^2(n^2+6m(m-2)+4)}-((m-1)^2+n^2+1)}{2}}\\
        r_{m,n,4}&=-\sqrt{\frac{\sqrt{m^2(m-2)^2+n^2(n^2+6m(m-2)+4)}+(m-1)^2+n^2+1}{2}}\\
        &-i\sqrt{\frac{\sqrt{m^2(m-2)^2+n^2(n^2+6m(m-2)+4)}-((m-1)^2+n^2+1)}{2}}.
    \end{alignedat}\right.
    \end{align*}
    Finally, the roots of $P$ are given by 
    \begin{align*}
        \left\{\begin{alignedat}{1}
        \lambda_{m,n,1}&=1+\sqrt{\frac{\sqrt{m^2(m-2)^2+n^2(n^2+6m(m-2)+4)}+(m-1)^2+n^2+1}{2}}\\
        &+i\sqrt{\frac{\sqrt{m^2(m-2)^2+n^2(n^2+6m(m-2)+4)}-((m-1)^2+n^2+1)}{2}}\\
        \lambda_{m,n,2}&=1-\sqrt{\frac{\sqrt{m^2(m-2)^2+n^2(n^2+6m(m-2)+4)}+(m-1)^2+n^2+1}{2}}\\
        &+i\sqrt{\frac{\sqrt{m^2(m-2)^2+n^2(n^2+6m(m-2)+4)}-((m-1)^2+n^2+1)}{2}}\\
        \lambda_{m,n,3}&=1+\sqrt{\frac{\sqrt{m^2(m-2)^2+n^2(n^2+6m(m-2)+4)}+(m-1)^2+n^2+1}{2}}\\
        &-i\sqrt{\frac{\sqrt{m^2(m-2)^2+n^2(n^2+6m(m-2)+4)}-((m-1)^2+n^2+1)}{2}}\\
        \lambda_{m,n,4}&=1-\sqrt{\frac{\sqrt{m^2(m-2)^2+n^2(n^2+6m(m-2)+4)}+(m-1)^2+n^2+1}{2}}\\
        &-i\sqrt{\frac{\sqrt{m^2(m-2)^2+n^2(n^2+6m(m-2)+4)}-((m-1)^2+n^2+1)}{2}}.
    \end{alignedat}\right.
    \end{align*}
    Noting for simplicity $\mu_{m,n}=\sqrt{m^2(m-2)^2+n^2(n^2+6m(m-2)+4)}$, we deduce that a basis of real solutions of the equation $\Pi_{n^2}(\mathfrak{D}_m)u=0$ for $|n|\geq 2$ is given by 
    \begin{align*}
        &r^{1+\sqrt{\frac{\mu_{m,n}+(m-1)^2+n^2+1}{2}}}\cos\left(\sqrt{\frac{\mu_{m,n}-((m-1)^2+n^2+1)}{2}}r\right)\\
        &r^{1+\sqrt{\frac{\mu_{m,n}+(m-1)^2+n^2+1}{2}}}\sin\left(\sqrt{\frac{\mu_{m,n}-((m-1)^2+n^2+1)}{2}}r\right)\\
        &r^{1-\sqrt{\frac{\mu_{m,n}+(m-1)^2+n^2+1}{2}}}\cos\left(\sqrt{\frac{\mu_{m,n}-((m-1)^2+n^2+1)}{2}}r\right)\\
        &r^{1-\sqrt{\frac{\mu_{m,n}+(m-1)^2+n^2+1}{2}}}\sin\left(\sqrt{\frac{\mu_{m,n}-((m-1)^2+n^2+1)}{2}}r\right).
    \end{align*}
    To simplify further the notations, write 
    \begin{align*}
        &\alpha_{m,n}=\sqrt{\frac{\mu_{m,n}+(m-1)^2+n^2+1}{2}}\\
        &\beta_{m,n}=\sqrt{\frac{\mu_{m,n}-((m-1)^2+n^2+1)}{2}}.
    \end{align*}
    Then the basis of solutions is simply written as
    \begin{align*}
        r^{1+\alpha_{m,n}}\cos(\beta_{m,n}r),r^{1+\alpha_{m,n}}\sin(\beta_{m,n}r),r^{1-\alpha_{m,n}}\cos(\beta_{m,n}r),r^{1-\alpha_{m,n}}\sin(\beta_{m,n}r).
    \end{align*}
    Notice that we obviously have $\alpha_{m,-n}=\alpha_{m,n}$ and $\beta_{m,n}=\beta_{m,-n}$ for all $n\in \Z\setminus\ens{-1,0,1}$. Finally, if $u$ solves the equation $\mathfrak{D}_mu=0$, it admits an expansion of the form (where $\alpha_0,\alpha_1,\alpha_2,\alpha_3\in \R$ and $\ens{a_n}_{n\in \Z},\ens{b_n}_{n\in \Z},\ens{c_n},\ens{d_n}\in \C$)
    \begin{align*}
        &u(r,\theta)=\alpha_0\,r^{m+1}+\alpha_1r^{m-1}+\alpha_2r^{1-m}+\alpha_3r^{3-m}\\
        &+r^{1+\sqrt{(m-1)^2+4}}\Re(a_1e^{i\theta})+r^{1-\sqrt{(m-1)^2+4}}\Re(b_1e^{i\theta})+r^{m}\Re(c_1e^{i\theta})+r^{2-m}\Re(d_1e^{i\theta})\\
        &+\sum_{n=2}^{\infty}r^{1+\alpha_{m,n}}\cos(\beta_{m,n}r)\Re\left(a_ne^{in\theta}\right)+\sum_{n=2}^{\infty}r^{1+\alpha_{m,n}}\sin(\beta_{m,n}r)\Re\left(b_ne^{in\theta}\right)\\
        &+\sum_{n=2}^{\infty}r^{1-\alpha_{m,n}}\cos(\beta_{m,n}r)\Re\left(c_ne^{in\theta}\right)+\sum_{n=2}^{\infty}r^{1-\alpha_{m,n}}\sin(\beta_{m,n}r)\Re\left(d_ne^{in\theta}\right).
    \end{align*}
    Now, the first issue is that for all $\alpha\in \C\setminus\N$, the function $r^{\alpha}\cos(r)$ does not admit a primitive expressible with standard functions, 
    Notice that
    \begin{align*}
        \alpha_{m,n}&\underset{|n|\rightarrow \infty}{\sim}|n|\\
        \beta_{m,n}&=\frac{4(((m-1)^2-1)n^2-(m-1)^2)}{\sqrt{m^2(m-2)^2+n^2(n^2+6m(m-2)+4)}+(m-1)^2+n^2+1}\\
        &\conv{|n|\rightarrow \infty}2((m-1)^2-1)=2m(m-2).
    \end{align*}
    Now, consider for $a,b,c>0$ the function
    \begin{align*}
        f(x)=\sqrt{x^2+ax+b}-(x+c).
    \end{align*}
    We have
    \begin{align*}
        f'(x)=\frac{2x+a}{\sqrt{x^2+ax+b}}-1\geq 0\Longleftrightarrow (2x+a)^2-(x^2+ax+b)\geq 0 \Longleftrightarrow 3x^2+ax+a^2-b\geq 0.
    \end{align*}
    The discriminant of $3X^2+aX+a^2-b$ is equal to $D=-11a^2+12b$. In our case of interest, we have $a=6m(m-2)+4$, $b=m^2(m-2)^2$ and $c=(m-1)^2+1$, which yields
    \begin{align*}
        D&=-11\left(6m(m-2)+4\right)^2+12m^2(m-2)^2=-16(24m^2(m-2)^2+33m(m-2)+11)<0
    \end{align*}
    for all $m\geq 2$. Therefore, $f$ is increasing and 
    \begin{align*}
        f(x)\geq f(1)=\sqrt{4+2a+b}-(c+2)\quad \text{for all}\;\, x\geq 2.
    \end{align*}
    and we have
    \begin{align*}
        4+2a+b-(c+2)^2=m^{4} - 4 \, m^{3} + 15 \, m^{2} - 22 \, m + 8>0
    \end{align*}
    for all
    \begin{align*}
        m>1+\sqrt{\frac{\sqrt{89}-9}{2}}=1.465822\cdots.
    \end{align*}
    Therefore, we have 
    \begin{align*}
        &\sqrt{m^2(m-2)^2+12m(m-2)+12}-((m-1)^2+2)\\
        &=\frac{m^{4} - 4 \, m^{3} + 15 \, m^{2} - 22 \, m}{\sqrt{m^2(m-2)^2+12m(m-2)+12}+((m-1)^2+2)}
        \leq \beta_{m,n}\leq 2m(m-2)
    \end{align*}
    for all $|n|\geq 2$. By the elementary inequality $\sin(x)\leq x$ for all $x\geq 0$, we deduce that
    \begin{align*}
        \cos(x)-1+\frac{x^2}{2}\geq 0\qquad \text{for all}\;\,x\geq 0,
    \end{align*}
    which shows that 
    \begin{align*}
        \sin(x)\geq x-\dfrac{x^3}{6}\quad \text{for all}\;\, x\geq 0.
    \end{align*}
    Therefore, for all $0\leq x\leq 1$, we have
    \begin{align*}
        &\frac{1}{2}\leq \cos(x)\leq 1\\
        &\frac{x}{2}\leq \sin(x)\leq x.
    \end{align*}
    Then, we need a more precise expansion of $\alpha_{m,n}$. We have
    \begin{align*}
        \mu_{m,n}&=\sqrt{m^2(m-2)^2+n^2(n^2+6m(m-2)+4)}=n^2\sqrt{1+\frac{2(3m(m-2)+2)}{n^2}+\frac{m^2(m-2)^2}{n^4}}\\
        &=n^2+3m(m-2)+2+O\left(\frac{1}{n^2}\right), 
    \end{align*}
    and this implies that
    \begin{align*}
        \alpha_{m,n}&=\sqrt{\frac{\mu_{m,n}+(m-1)^2+n^2+1}{2}}=
        \sqrt{n^2+2m(m-2)+2+O\left(\frac{1}{n^2}\right)}\\
        &=|n|+m(m-2)+1+O\left(\frac{1}{n^2}\right).
    \end{align*}
    This means that there exists $0<\gamma_{m}<\infty$ such that $0\leq r\leq \frac{1}{2m(m-2)}$
    \begin{align*}
        &r^{2+m(m-2)+n-\frac{\gamma_m}{n^2}}\leq r^{1+\alpha_{m,n}}\cos(\beta_{m,n}r)\leq r^{2+m(m-2)+n+\frac{\gamma_m}{n^2}}\\
        &r^{-m(m-2)+n-\frac{\gamma_m}{n^2}}\leq r^{1-\alpha_{m,n}}\cos(\beta_{m,n}r)\leq r^{-m(m-2)+n+\frac{\gamma_m}{n^2}}\\
        &\beta_{m,2}r^{3+m(m-2)+n-\frac{\gamma_m}{n^2}}\leq r^{1+\alpha_{m,n}}\sin(\beta_{m,n}r)\leq 2m(m-2)r^{3+m(m-2)+n+\frac{\gamma_m}{n^2}}\\
        &\beta_{m,2}r^{1-m(m-2)+n-\frac{\gamma_m}{n^2}}\leq r^{1-\alpha_{m,n}}\sin(\beta_{m,n}r)\leq \beta_{m}r^{1-m(m-2)+n+\frac{\gamma_m}{n^2}}.
    \end{align*}
    It means that when we integrate $u$ and its derivatives on $\Omega=B_b\setminus\bar{B}_a(0)$ (provided that $b\leq \dfrac{1}{2m(m-2)}$), $u$ behaves like the function 
    \begin{align*}
        v(z)&=\alpha_0|z|^{m+1}+\alpha_1|z|^{m-1}+\alpha_2|z|^{1-m}+\alpha_3|z|^{3-m}\\
        &+r^{\sqrt{(m-1)^2+4}}\Re(a_1z)+r^{-\sqrt{(m-1)^2+4}}\Re(b_1z)+r^{m-1}\Re(c_1z)+r^{1-m}\Re(d_1z)\\
        &+|z|^{m(m-2)+2}\sum_{n=2}^{\infty}\Re(a_nz^n)+|z|^{m(m-2)+3}\sum_{n=2}^{\infty}\Re(b_nz^n)\\
        &+|z|^{-m(m-2)}\sum_{n=2}^{\infty}\Re\left(\frac{c_n}{z^n}\right)+|z|^{1-m(m-2)}\sum_{n=2}^{\infty}\Re\left(\frac{d_n}{z^n}\right)
    \end{align*}
    which is much more reminiscent than the expression for solutions of $\leb^{\ast}_m\leb_mw=0$, whose solutions can be expanded products by harmonic functions multiplied by a fixed radial function of $m$:
    \begin{align*}
        w(z)=|z|^{1-m}\left(\alpha_0\log|z|+\Re\left(\sum_{n\in \Z}a_nz^n\right)\right)+|z|^{m+1}\left(\alpha_1\log|z|+\Re\left(\sum_{n\in \Z}b_nz^n\right)\right).
    \end{align*}
    Now, let us go back to the expression
    \begin{align*}
        &u(r,\theta)=\alpha_0\,r^{m+1}+\alpha_1r^{m-1}+\alpha_2r^{1-m}+\alpha_3r^{3-m}\\
        &+r^{1+\sqrt{(m-1)^2+4}}\Re(a_1e^{i\theta})+r^{1-\sqrt{(m-1)^2+4}}\Re(b_1e^{i\theta})+r^{m}\Re(c_1e^{i\theta})+r^{2-m}\Re(d_1e^{i\theta})\\
        &+\sum_{n=2}^{\infty}r^{1+\alpha_{m,n}}\cos(\beta_{m,n}r)\Re\left(a_ne^{in\theta}\right)+\sum_{n=2}^{\infty}r^{1+\alpha_{m,n}}\sin(\beta_{m,n}r)\Re\left(b_ne^{in\theta}\right)\\
        &+\sum_{n=2}^{\infty}r^{1-\alpha_{m,n}}\cos(\beta_{m,n}r)\Re\left(c_ne^{in\theta}\right)+\sum_{n=2}^{\infty}r^{1-\alpha_{m,n}}\sin(\beta_{m,n}r)\Re\left(d_ne^{in\theta}\right).
    \end{align*}
    We can now state the main theorem of this section.
    \begin{theorem}\label{weighted_estimate_frak_lm}
        Let $3\leq m<\infty$ and define 
        \begin{align*}
            \mathfrak{L}_m=\p{z}^2+\frac{m-1}{z}\p{z}+\frac{(m-1)(m-3)}{4z^2}.
        \end{align*}
        For all $0<\gamma<1$, for all $0<\beta<1$, and for all $0<\alpha<2$, there exists a universal constant $\Gamma_{m,\alpha,\beta,\gamma}<\infty$ and $C_{\alpha,\beta,\gamma}$ with the following property.
        Let $0<a<b<\infty$ and $\Omega=B_b\setminus\bar{B}_a(0)$. Then, provided that 
        \begin{align}
            \log\left(\frac{b}{a}\right)\geq \Gamma_{m,\alpha,\beta,\gamma},
        \end{align}
        for all $u\in W^{2,2}(\Omega)$ such that $\Re\left(\bar{\mathfrak{L}}_m^{\ast}\mathfrak{L}_m\right)u=0$, or equivalently
        \begin{align*}
            &\Delta^2u-\frac{6(m^2-1)}{|x|^2}\Delta u+4(m^2-1)\left(\frac{x}{|x|^2}\right)^t\cdot \D^2u\cdot \left(\frac{x}{|x|^2}\right)+\frac{8(m-1)^2}{|x|^2}\frac{x}{|x|^2}\cdot \D u\\
            &+\frac{(m+1)(m-1)^2(m-3)}{|x|^4}u=0,
        \end{align*}
        we have
        \begin{align*}
            &\int_{\Omega}\left|\D u+(m-1)\frac{x}{|x|^2}u\right|^2\left(\left(\frac{|x|}{b}\right)^{2\gamma}+\left(\frac{a}{|x|}\right)^{2\gamma}\right)dx\leq C_{\alpha,\beta,\gamma}\left(\int_{\Omega}\frac{u^2}{|x|^4}\left(\left(\frac{|x|}{b}\right)^{4\beta}+\left(\frac{a}{|x|}\right)^{4\beta}\right)dx\right.\\
            &\left.+\int_{\Omega}\left(\leb_mu\right)^2\left(\left(\frac{|x|}{b}\right)^{2\alpha}+\left(\frac{a}{|x|}\right)^{2\alpha}\right)dx+\int_{\Omega}\left|\mathfrak{L}_mu\right|^2\left(\left(\frac{|x|}{b}\right)^{2\alpha}+\left(\frac{a}{|x|}\right)^{2\alpha}\right)dx\right).
        \end{align*}
    \end{theorem}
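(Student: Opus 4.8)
The proof will run along the lines of Theorem~\ref{lm_last_lemma}, reducing everything to the control of the $\ell^2$-norm of the Fourier coefficients of $u$; the genuinely new point is that the radial profiles spanning $\ker\Pi_{n^2}(\mathfrak{D}_m)$ for $|n|\ge 2$ are the transcendental functions $r^{1\pm\alpha_{m,n}}\cos(\beta_{m,n}r)$ and $r^{1\pm\alpha_{m,n}}\sin(\beta_{m,n}r)$, which have no elementary primitive, so exact integration must be replaced by two-sided comparison with powers of $r$. First I would note that both sides of the asserted inequality are invariant under the rescaling $x\mapsto\lambda x$, $u\mapsto u(\cdot/\lambda)$, $(a,b)\mapsto(\lambda a,\lambda b)$, so one may assume $b\le\tfrac1{2m(m-2)}$, which is exactly the range in which the pinching $\tfrac12\le\cos(\beta_{m,n}r)\le1$ and $\tfrac{\beta_{m,2}}{2}\le r^{-1}\sin(\beta_{m,n}r)\le 2m(m-2)$ recorded above holds uniformly in $n$. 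I would then expand $u=\sum_{n\in\Z}u_n(r)e^{in\theta}$, write out $\leb_mu$, $\mathfrak{L}_mu$ and $\D u+(m-1)\tfrac{x}{|x|^2}u$ mode by mode, and use orthogonality of $\{e^{in\theta}\}$ to turn each of the four integrals in the statement into a sum over $n$ of one-dimensional integrals of the radial profiles against the weights $r^{\pm2\gamma-1}$, $r^{\pm4\beta-1}$, $r^{\pm2\alpha-1}$ (with the normalising prefactors $b^{\mp2\gamma}$, etc.). Using the asymptotics $\alpha_{m,n}=|n|+m(m-2)+1+O(n^{-2})$, $\beta_{m,n}\to2m(m-2)$ established above together with the pinching, each profile of frequency $n$ is trapped, up to constants depending only on $m$, between $r^{\,1\pm\alpha_{m,n}+j\pm\gamma_m/n^2}$ with $j\in\{0,1\}$; so up to an error of size $r^{\pm\gamma_m/n^2}$ (absorbed below by a largeness condition on $\log(b/a)$) one may argue as if $u$ were the model function $v$ displayed above, all of whose radial profiles are genuine powers of $|z|$.

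\textbf{Low frequencies and the radial part.} For the finitely many modes $|n|\le N_0=N_0(m)$ --- in particular the radial modes $n=0$, with profiles $r^{m+1},r^{m-1},r^{1-m},r^{3-m}$, and $n=\pm1$, with profiles $r^{1\pm\sqrt{(m-1)^2+4}}$, $r^m$, $r^{2-m}$, all of which are genuine powers \emph{with no logarithm}; this absence of logarithmic terms is precisely the reason for working with $\mathfrak{D}_m$ rather than $\leb_m^{\ast}\leb_m$ --- the one-dimensional integrals are elementary. For each such frequency the diagonal terms of $\int_\Omega\frac{u^2}{|x|^4}(\cdots)^{4\beta}\,dx$ and of $\int_\Omega(\leb_mu)^2(\cdots)^{2\alpha}\,dx+\int_\Omega|\mathfrak{L}_mu|^2(\cdots)^{2\alpha}\,dx$ dominate the finitely many crossed terms once $\log(b/a)$ exceeds an explicit threshold, by the same Cauchy-with-a-parameter arguments as in Steps~4--11 of the proof of Theorem~\ref{lm_last_lemma}, with the logarithm-free radial analysis replacing the delicate $\alpha_1,\alpha_2$ estimates there. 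Finally, the weighted Rellich and Hardy--Rellich inequalities of Theorems~\ref{poincare_weight_m} and~\ref{poincare_weight_m_2}, now carrying the weights $(\frac{|x|}{b})^{2\alpha}$ and $(\frac{a}{|x|})^{2\alpha}$, convert control of the weighted $L^2$-norm of $\mathfrak{L}_mu$ into control of the weighted $L^2$-norms of $\frac{u}{|x|^2}$ and $\frac{\D u}{|x|}$, which is what is needed on the left-hand side.

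\textbf{High frequencies.} For $|n|>N_0$ there are four radial profiles per frequency (the families $a_n,b_n,c_n,d_n$), hence six crossed terms; I would control each by the refined Cauchy--Schwarz inequality of Lemma~\ref{estimée_somme_cauchy-schwarz_theorem}, splitting off $\tfrac12\int_\Omega|\lambda_1f_1+\lambda_2f_2|^2$ for the relevant pair of competing profiles $f_i,f_j$ and bounding the residual Gram-determinant integral $\int_a^b\!\int_a^b|f_i(r)f_j(s)-f_i(s)f_j(r)|^2\,dr\,ds$ from below by a quantity of order $n^{c(m)}$ times a positive power of $b$ (resp.\ a negative power of $a$), exactly as the polynomial lower bounds \eqref{bound_coefficient}--\eqref{borne_an_2} were used for $\leb_m$. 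The error terms produced are of the type $(a/b)^{2(\alpha_{m,n}-1+\cdots)}$ and $(a/b)^{\gamma_m/n^2}$, and are absorbed provided $\log(b/a)$ exceeds an explicit bound of the form $\tfrac1e+\mathrm{const}(m,\alpha,\beta,\gamma)$, using $\sup_{x\ge1}\tfrac{\log x}{x}=\tfrac1e$ and $\sup_{0<x<1}x^\epsilon\log^2x=4e^{-2}/\epsilon$ as in Theorem~\ref{lm_last_lemma}; the added weights $(\cdots)^{2\alpha}$ only help, since they are bounded below by a positive constant on the half of the neck where the relevant profile concentrates. Summing over $n$, invoking Parseval, and setting $\Gamma_{m,\alpha,\beta,\gamma}$ to be the maximum of the finitely many conformal-class conditions collected along the way then yields the inequality with a constant $C_{\alpha,\beta,\gamma}$ independent of $a$ and $b$.

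\textbf{Main obstacle.} The hard part will be the uniform-in-$n$ high-frequency estimate. Because $r\mapsto r^{1\pm\alpha_{m,n}}\cos(\beta_{m,n}r)$ has no elementary antiderivative, the cross integrals between two distinct profiles of the same frequency cannot be computed in closed form, and plain Cauchy--Schwarz between the coefficients loses a factor one cannot afford --- the analogue of the logarithmic loss that already forced the refinement in Theorem~\ref{lm_last_lemma} and Lemma~\ref{log_estimate_sum_theorem}, now compounded by the oscillatory factors. One therefore needs a version of Lemma~\ref{log_estimate_sum_theorem} valid for power profiles perturbed both by $r^{\pm\gamma_m/n^2}$ and by $\cos(\beta_{m,n}r),\sin(\beta_{m,n}r)$, with all constants uniform in $n$; establishing this, and checking that the \emph{four} diagonal scales available from the right-hand side (the $\sin$-profiles carrying an extra power of $r$, the $r^{1-\alpha_{m,n}}$-profiles an extra $a^{-2\alpha_{m,n}}$) are well enough separated to dominate all six crossed terms simultaneously, is where the bulk of the work lies. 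A secondary nuisance is the sheer bookkeeping of the many conformal-class conditions, but each is of the same type as those appearing in Theorem~\ref{lm_last_lemma} and only their maximum matters.
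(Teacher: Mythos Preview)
Your overall strategy — Fourier expansion, separate treatment of the low modes ($n=0,\pm1$, whose profiles are genuine powers of $r$ with no logarithm) and the high modes ($|n|\ge2$), with the refined Cauchy--Schwarz of Lemma~\ref{estimée_somme_cauchy-schwarz_theorem} as the main tool for the latter — is the paper's, and your reading of why $\mathfrak{D}_m$ is preferable to $\leb_m^\ast\leb_m$ (four-dimensional kernel, no logarithmic radial solutions) is exactly the point. The scaling normalisation $b\le(2m(m-2))^{-1}$ is a clean way to justify the pinching on $\cos(\beta_{m,n}r)$ and $\sin(\beta_{m,n}r)$ that the paper uses without further comment.

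The one substantive divergence is your proposed reduction to the model function $v$ with pure-power profiles. The paper does \emph{not} do this, and in fact the reduction as stated does not work: pinching individual oscillatory factors between powers gives two-sided bounds on diagonal terms, but the cross terms in the expanded square can have either sign, so pointwise comparison fails. The paper instead applies Lemma~\ref{estimée_somme_cauchy-schwarz_theorem} \emph{directly} to the oscillatory pair $f_1=r^\alpha\cos(\beta r)$, $f_2=r^\alpha\sin(\beta r)$, and exploits the addition formula
\[
\cos(\beta r)\sin(\beta s)-\cos(\beta s)\sin(\beta r)=\sin\!\big(\beta(s-r)\big)
\]
to collapse the Gram-determinant double integral to $4\pi^2\!\int_a^b\!\int_a^b\sin^2\!\big(\beta(r-s)\big)\,r^{2\alpha+1}s^{2\alpha+1}\,dr\,ds$, which is then bounded below by $\pi^2\beta^2\!\int_a^b\!\int_a^b(r-s)^2r^{2\alpha+1}s^{2\alpha+1}\,dr\,ds$, an elementary integral (see \eqref{cross_term_est1}, \eqref{non_algebrique_dépassé}, \eqref{cauchy_schwarz_alpha_négatif}). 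This is the analogue of Lemma~\ref{log_estimate_sum_theorem} here, and it yields control on $|a_n||b_n|$ (resp.\ $|c_n||d_n|$) which is fed back into the diagonal bound. The remaining four cross terms, mixing $r^{1+\alpha_{m,n}}$- and $r^{1-\alpha_{m,n}}$-profiles, are then handled by plain Cauchy since their exponents are separated by $2\alpha_{m,n}\sim2|n|$, exactly as you anticipate. So your proof goes through once you abandon the model-function detour and apply the refined Cauchy--Schwarz to the genuine oscillatory profiles, using the addition-formula simplification of the determinant; this is what makes the transcendental integrals tractable.
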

    \begin{rem}
        To compare to the previous theorem, recall that the solutions of $\leb_m^{\ast}\leb_mu=0$ satisfy the equation
        \begin{align*}
            \Delta^2u+\frac{2(m^2-1)}{|x|^2}\Delta u-4(m^2-1)\left(\frac{x}{|x|^2}\right)^t\cdot \D^2u\cdot \left(\frac{x}{|x|^2}\right)+\frac{(m-1)^2}{|x|^4}u=0.
        \end{align*}
        Although formally similar, the Taylor expansion of the solution of the former system are significantly more involved. 
    \end{rem}
    \begin{proof}
    \textbf{Step 1.} Expansion and first integral. 
    
        Recall that $u$ admits the following expansion
        \begin{align*}
        &u(r,\theta)=\alpha_0\,r^{m+1}+\alpha_1\,r^{m-1}+\alpha_2\,r^{1-m}+\alpha_3\,r^{3-m}\\
        &+r^{1+\sqrt{(m-1)^2+4}}\,\Re(a_1e^{i\theta})+r^{1-\sqrt{(m-1)^2+4}}\,\Re(b_1e^{i\theta})+r^{m}\,\Re(c_1e^{i\theta})+r^{2-m}\,\Re(d_1e^{i\theta})\\
        &+\sum_{n=2}^{\infty}r^{1+\alpha_{m,n}}\cos(\beta_{m,n}r)\Re\left(a_ne^{in\theta}\right)+\sum_{n=2}^{\infty}r^{1+\alpha_{m,n}}\sin(\beta_{m,n}r)\Re\left(b_ne^{in\theta}\right)\\
        &+\sum_{n=2}^{\infty}r^{1-\alpha_{m,n}}\cos(\beta_{m,n}r)\Re\left(c_ne^{in\theta}\right)+\sum_{n=2}^{\infty}r^{1-\alpha_{m,n}}\sin(\beta_{m,n}r)\Re\left(d_ne^{in\theta}\right).
    \end{align*}
    Furthermore, the following estimates hold true, provided that $0\leq r\leq b\leq \dfrac{1}{2m(m-2)}$:
    \begin{align*}
        &\frac{1}{2}\leq \cos(\beta_{m,n}r)\leq 1\\
        &\frac{\beta_{m,n}r}{2}\leq \sin(\beta_{m,n}r)\leq \beta_{m,n}r.
    \end{align*}
    We now compute
    \begin{align*}
        &\p{r}u=(m+1)\alpha_0\,r^m+(m-1)\alpha_1\,r^{m-2}+(1-m)\alpha_2\,r^{-m}+(3-m)\alpha_3\,r^{2-m}\\
        &+\left(1+\sqrt{(m-1)^2+4}\right)r^{\sqrt{(m-1)^4+4}}\,\Re(a_1e^{i\theta})+\left(1-\sqrt{(m-1)^2+4}\right)r^{-\sqrt{(m-1)^2+4}}\,\Re\left(b_1e^{i\theta}\right)\\
        &+m\,r^{m-1}\,\Re\left(c_1e^{i\theta}\right)+(2-m)r^{1-m}\,\Re\left(d_1e^{i\theta}\right)\\
        &+\sum_{n=2}^{\infty}\left((1+\alpha_{m,n})r^{\alpha_{m,n}}\cos(\beta_{m,n}r)-\beta_{m,n}r^{1+\alpha_{m,n}}\sin(\beta_{m,n}r)\right)\Re\left(a_ne^{in\theta}\right)\\
        &+\sum_{n=2}^{\infty}\left((1+\alpha_{m,n})r^{\alpha_{m,n}}\sin(\beta_{m,n}r)+\beta_{m,n}r^{1+\alpha_{m,n}}\cos(\beta_{m,n}r)\right)\Re\left(b_ne^{in\theta}\right)\\
        &+\sum_{n=2}^{\infty}\left((1-\alpha_{m,n})r^{-\alpha_{m,n}}\cos(\beta_{m,n}r)-\beta_{m,n}r^{1-\alpha_{m,n}}\sin(\beta_{m,n}r)\right)\Re\left(c_ne^{in\theta}\right)\\
        &+\sum_{n=2}^{\infty}\left((1-\alpha_{m,n})r^{-\alpha_{m,n}}\sin(\beta_{m,n}r)+\beta_{m,n}r^{1-\alpha_{m,n}}\cos(\beta_{m,n}r)\right)\Re\left(d_ne^{in\theta}\right).
    \end{align*}
    Therefore, we have
    \begin{align*}
        &\p{r}u+\frac{m-1}{r}u=2m\,\alpha_0r^m+2(m-1)\alpha_1r^{m-2}+2\,\alpha_3r^{2-m}\\
        &+\left(m+\sqrt{(m-1)^2+4}\right)r^{\sqrt{(m-1)^4+4}}\,\Re\left(a_1e^{i\theta}\right)+\left(m-\sqrt{(m-1)^2+4}\right)r^{-\sqrt{(m-1)^4+4}}\,\Re\left(b_1e^{i\theta}\right)\\
        &+(2m-1)r^{m-1}\Re\left(c_1e^{i\theta}\right)+r^{1-m}\Re\left(d_1e^{i\theta}\right)\\
        &+\sum_{n=2}^{\infty}\left((m+\alpha_{m,n})r^{\alpha_{m,n}}\cos(\beta_{m,n}r)-\beta_{m,n}r^{1+\alpha_{m,n}}\sin(\beta_{m,n}r)\right)\Re\left(a_ne^{in\theta}\right)\\
        &+\sum_{n=2}^{\infty}\left((m+\alpha_{m,n})r^{\alpha_{m,n}}\sin(\beta_{m,n}r)+\beta_{m,n}r^{1+\alpha_{m,n}}\cos(\beta_{m,n}r)\right)\Re\left(b_ne^{in\theta}\right)\\
        &+\sum_{n=2}^{\infty}\left((m-\alpha_{m,n})r^{-\alpha_{m,n}}\cos(\beta_{m,n}r)-\beta_{m,n}r^{1-\alpha_{m,n}}\sin(\beta_{m,n}r)\right)\Re\left(c_ne^{in\theta}\right)\\
        &+\sum_{n=2}^{\infty}\left((m-\alpha_{m,n})r^{-\alpha_{m,n}}\sin(\beta_{m,n}r)+\beta_{m,n}r^{1-\alpha_{m,n}}\cos(\beta_{m,n}r)\right)\Re\left(d_ne^{in\theta}\right).
    \end{align*}
    Notice that for $m=\dfrac{5}{2}$, $m-\sqrt{(m-1)^2+4}=0$, so we will not have to estimate this term in this specific case. Thanks to the elementary estimate
    \begin{align*}
        \left(\sum_{i=1}^pa_i\right)^2\leq p\sum_{i=1}^pa_i^2\qquad \text{for all}\;\, (a_1,\cdots,a_p)\in \R^p,
    \end{align*}
    and Parseval's identity from Fourier series, we deduce that 
    \begin{align}\label{norm_op_u}
        &\int_{\Omega}\frac{1}{|x|^2}\left|\p{r}u+\frac{m-1}{r}u\right|^2|x|^{2\gamma}dx\leq 6\pi\left(4m^2\alpha_0^2\int_{a}^br^{2m-1+2\gamma}dr+4(m-1)^2\alpha_1^2\int_{a}^br^{2m-5+2\gamma}\right.\nonumber\\
        &\left.+4\alpha_3^2\int_{a}^{b}r^{3-2m+2\gamma}dr\right)\nonumber\\
        &+4\pi\left(\left(m+\sqrt{(m-1)^2+4}\right)^2|a_1|^2\int_{a}^br^{2\sqrt{(m-1)^2+4}-1+2\gamma}dr\right.\nonumber\\
        &\left.+\left(m-\sqrt{(m-1)^2+4}\right)^2|b_1|^2\int_{a}^br^{-1-2\sqrt{(m-1)^2+4}+2\gamma}dr\right.\nonumber\\
        &\left.+(2m-1)^2|c_1|^2\int_{a}^br^{2m-3+2\gamma}dr+|d_1|^2\int_{a}^br^{1-2m+2\gamma}dr\right)\nonumber\\
        &+8\pi\left(\sum_{n=2}^{\infty}|a_n|^2\int_{a}^b\left((m+\alpha_{m,n})^2r^{2\alpha_{m,n}-1+2\gamma}\cos^2(\beta_{m,n}r)+\beta_{m,n}^2r^{2\alpha_{m,n}+1+2\gamma}\sin^2(\beta_{m,n}r)\right)dr\right.\nonumber\\
        &+\sum_{n=2}^{\infty}|b_n|^2\int_{a}^b\left((m+\alpha_{m,n})^2r^{2\alpha_{m,n}-1+2\gamma}\sin^2(\beta_{m,n}r)+\beta_{m,n}^2r^{2\alpha_{m,n}+1+2\gamma}\cos^2(\beta_{m,n}r)\right)dr\nonumber\\
        &+\sum_{n=2}^{\infty}|c_n|^2\int_{a}^b\left((m+\alpha_{m,n})^2r^{-2\alpha_{m,n}-1+2\gamma}\cos^2(\beta_{m,n}r)+\beta_{m,n}^2r^{-2\alpha_{m,n}+1+2\gamma}\sin^2(\beta_{m,n}r)\right)dr\nonumber\\
        &\left.+\sum_{n=2}^{\infty}|d_n|^2\int_{a}^b\left((m+\alpha_{m,n})^2r^{-2\alpha_{m,n}-1+2\gamma}\sin^2(\beta_{m,n}r)+\beta_{m,n}^2r^{-2\alpha_{m,n}+1+2\gamma}\cos^2(\beta_{m,n}r)\right)dr\right)\nonumber\\
        &\leq 6\pi\left(\frac{2m^2\alpha_0^2}{m+\gamma}b^{2(m+\gamma)}\left(1-\left(\frac{a}{b}\right)^{2(m+\gamma)}\right)+\frac{2(m-1)^2\alpha_1^2}{m-2+\gamma}b^{2(m-2+\gamma)}\left(1-\left(\frac{a}{b}\right)^{2(m-2+\gamma)}\right)\right.\nonumber\\
        &\left.+\frac{2\alpha_3^2}{m-2-\gamma}\frac{1}{a^{2(m-2-\gamma)}}\left(1-\left(\frac{a}{b}\right)^{2(m-2-\gamma)}\right)\right)\nonumber\\
        &+2\pi\left(\frac{\left(m+\sqrt{(m-1)^2+4}\right)^2}{\sqrt{(m-1)^2+4}+\gamma}|a_1|^2b^{2\left(\sqrt{(m-1)^2+4}+\gamma\right)}\left(1-\left(\frac{a}{b}\right)^{2\left(\sqrt{(m-1)^2+4}+\gamma\right)}\right)\right.\nonumber\\
        &+\frac{\left(m-\sqrt{(m-1)^2+4}\right)^2}{\sqrt{(m-1)^2+4}-\gamma}|b_1|^2\frac{1}{a^{2\left(\sqrt{(m-1)^2+4}-\gamma\right)}}\left(1-\left(\frac{a}{b}\right)^{2\left(\sqrt{(m-1)^2+4}-\gamma\right)}\right)\nonumber\\
        &+\frac{(2m-1)^2}{m-1+\gamma}|c_1|^2b^{2(m-1+\gamma)}\left(1-\left(\frac{a}{b}\right)^{2(m-1+\gamma)}\right)+\frac{|d_1|^2}{m-1-\gamma}\frac{1}{a^{2(m-1-\gamma)}}\left(1-\left(\frac{a}{b}\right)^{2(m-1-\gamma)}\right)\nonumber\\
        &+4\pi\left(\sum_{n=2}^{\infty}|a_n|^2\left(\frac{(m+\alpha_{m,n})^2}{\alpha_{m,n}+\gamma}b^{2(\alpha_{m,n}+\gamma)}\left(1-\left(\frac{a}{b}\right)^{2(\alpha_{m,n}+\gamma)}\right)\right.\right.\nonumber\\
        &\left.+\frac{\beta_{m,n}^4}{\alpha_{m,n}+1+\gamma}b^{2(\alpha_{m,n}+1+\gamma)}\left(1-\left(\frac{a}{b}\right)^{2(\alpha_{m,n}+1+\gamma)}\right)\right)\nonumber\\
        &+\sum_{n=2}^{\infty}\frac{(m+\alpha_{m,n})^2+\beta_{m,n}^4}{\alpha_{m,n}+1+\gamma}|b_n|^2b^{2(\alpha_{m,n}+1+\gamma)}\left(1-\left(\frac{a}{b}\right)^{2(\alpha_{m,n}+1+\gamma)}\right)\nonumber\\
        &+\sum_{n=2}^{\infty}|c_n|^2\left(\frac{(m+\alpha_{m,n})^2}{\alpha_{m,n}-\gamma}\frac{1}{a^{2(\alpha_{m,n}-\gamma)}}\left(1-\left(\frac{a}{b}\right)^{2(\alpha_{m,n}-\gamma)}\right)\right.\nonumber\\
        &\left.+\frac{\beta_{m,n}^4}{\alpha_{m,n}-2-\gamma}\frac{1}{a^{2(\alpha_{m,n}-2-\gamma)}}\left(1-\left(\frac{a}{b}\right)^{2(\alpha_{m,n}-2-\gamma)}\right)\right)\nonumber\\
        &\left.+\sum_{n=2}^{\infty}\frac{(m+\alpha_{m,n})^2+\beta_{m,n}^4}{\alpha_{m,n}-2-\gamma}|d_n|^2\frac{1}{a^{2\left(\alpha_{m,n}-2-\gamma\right)}}\left(1-\left(\frac{a}{b}\right)^{2(\alpha_{m,n}-2-\gamma)}\right)\right).
    \end{align}

    \textbf{Step 2.} The radial component.

    Now, make the decomposition $u=\mathrm{Rad}(u)+u_0$, where $\mathrm{Rad}(u)$ is the radial component. 
    We have 
    \begin{align*}
        \mathrm{Rad}(u)=\alpha_0r^{m+1}+\alpha_1r^{m-1}+\alpha_2r^{1-m}+\alpha_3r^{3-m}.
    \end{align*}
    We have
    \begin{align*}
        &\p{r}\mathrm{Rad}(u)=(m+1)\alpha_0r^{m}+(m-1)\alpha_1r^{m-2}+(1-m)\alpha_2r^{-m}+(3-m)\alpha_3r^{2-m}\\
        &\frac{1}{r}\left(\p{r}\mathrm{Rad}(u)+\frac{(m-1)}{r}u\right)=2\left(m\,\alpha_0r^{m-1}+(m-1)\alpha_1r^{m-3}+\alpha_3r^{1-m}\right).
    \end{align*}
    On the other hand, we have for all $\alpha\in \R$ 
    \begin{align*}
        \p{z}r^{\alpha}&=\p{r}(z\z)^{\frac{\alpha}{2}}=\frac{\alpha}{2}\z|z|^{\alpha-2}=\frac{\alpha}{2z}r^{\alpha}\\
        \p{z}^2r^{\alpha}&=\frac{\alpha(\alpha-2)}{4}\z^2|z|^{\alpha-4}=\frac{\alpha(\alpha-2)}{4z^2}r^{\alpha}.
    \end{align*}
    Therefore, we have
    \begin{align*}
        \mathfrak{L}_mr^{\alpha}&=\left(\p{z}^2+\frac{m-1}{z}\p{z}+\frac{(m-1)(m-3)}{4z^2}\right)=\left(\frac{\alpha(\alpha-2)}{4}+\frac{(m-1)\alpha}{2}+\frac{(m-1)(m-3)}{4}\right)\frac{r^{\alpha}}{z^2}\\
        &=\frac{1}{4z^2}\left(\alpha+m-1\right)\left(\alpha+m-3\right)r^{\alpha},
    \end{align*}
    which yields
    \begin{align*}
        \dfrac{4z^2}{|z|^2}\mathfrak{L}_mr^{\alpha}=(\alpha+m-1)(\alpha+m-3)r^{\alpha-2}.
    \end{align*}
    Therefore, $r^{1-m},r^{3-m}\in \mathrm{Ker}(\mathfrak{L}_m)$ and we have
    \begin{align*}
        \dfrac{4z^2}{|z|^2}\mathfrak{L}_m\mathrm{Rad}(u)=(m-1)\left(m\,\alpha_0r^{m-1}+(m-2)\alpha_1r^{m-3}\right).
    \end{align*}
    On the other hand, we have
    \begin{align*}
        \leb_m=\Delta+2(m-1)\frac{x}{|x|^2}\cdot \D+\frac{(m-1)^2}{|x|^2}=\p{r}^2+\frac{(2m-1)}{r}\p{r}+\frac{(m-1)^2}{r^2}+\frac{1}{r^2}\p{\theta}^2.
    \end{align*}
    Therefore, we have
    \begin{align*}
        \leb_mr^{\alpha}&=\left(\alpha(\alpha-1)+(2m-1)\alpha+(m-1)\right)^2r^{\alpha-2}\\
        &=\left(\alpha^2+2(m-1)\alpha+(m-1)^2\right)r^{\alpha-2}=(\alpha+m-1)^2r^{\alpha-2}.
    \end{align*}
    Therefore, we have
    \begin{align*}
        \leb_m\mathrm{Rad}(u)=4\left(m^2\alpha_0r^{m-1}+(m-1)^2\alpha_1r^{m-3}+\alpha_3r^{1-m}\right).
    \end{align*}
    First, we estimate
    \begin{align}\label{estimate_rad_nabla_u}
        &\int_{\Omega}\frac{1}{|x|^2}\left|\p{r}\mathrm{Rad}(u)+\frac{m-1}{r}\mathrm{Rad}(u)\right|^2dx\leq 24\pi\int_{a}^b\left(m^2\alpha_0^2r^{2m-2}+(m-1)^2\alpha_1^2r^{2m-6}+\alpha_3^2r^{2-2m}\right)r\,dr\nonumber\\
        &=12\pi\left(m\alpha_0^2b^{2m}\left(1-\left(\frac{a}{b}\right)^{2m}\right)+(m-1)\alpha_1^2b^{2(m-2)}\left(1-\left(\frac{a}{b}\right)^{2(m-2)}\right)\right.\nonumber\\
        &\left.+\alpha_3^2\frac{1}{a^{2(m-2)}}\left(1-\left(\frac{a}{b}\right)^{2(m-2)}\right)\right).
    \end{align}
    Now, we have
    \begin{align*}
        &\int_{\Omega}\left|\mathfrak{L}_m\mathrm{Rad}(u)\right|^2\left(\frac{|x|}{b}\right)^{2\alpha}dx\\
        &=2\pi(m-1)^2\int_{a}^b\left(m^2\alpha_0^2r^{2m-2}+(m-2)^2\alpha_1^2r^{2m-6}+2m(m-2)\alpha_0\alpha_1r^{2m-4}\right)r^{1+2\alpha}\frac{dx}{b^{2\alpha}}\\
        &=\pi(m-1)^2\left(\frac{m^2}{m+\alpha}\alpha_0^2b^{2m}\left(1-\left(\frac{a}{b}\right)^{2(m+\alpha)}\right)+\frac{(m-2)^2}{m-2+\alpha}\alpha_1^2b^{2(m-2)}\left(1-\left(\frac{a}{b}\right)^{2(m-2+\alpha)}\right)\right.\\
        &\left.+\frac{2m(m-2)}{m-1+\alpha}\alpha_0\alpha_1b^{2(m-1)}\left(1-\left(\frac{a}{b}\right)^{2(m-1+\alpha)}\right)\right).
    \end{align*}
    We estimate
    \begin{align*}
        &\frac{m^2}{m+\alpha}\alpha_0^2b^{2m}\left(1-\left(\frac{a}{b}\right)^{2(m+\alpha)}\right)+\frac{(m-2)^2}{m-2+\alpha}\alpha_1^2b^{2(m-2)}\left(1-\left(\frac{a}{b}\right)^{2(m-2+\alpha)}\right)\\
        &+\frac{2m(m-2)}{m-1+\alpha}\alpha_0\alpha_1b^{2(m-1)}\left(1-\left(\frac{a}{b}\right)^{2(m-1+\alpha)}\right)\\
        &\geq \frac{m^2}{m+\alpha}\alpha_0^2b^{2m}\left(1-\left(\frac{a}{b}\right)^{2(m+\alpha)}\right)-\frac{m^2(m-2)(m-2+\alpha)}{(m-1+\alpha)^2}\alpha_0^2b^{2m}\frac{\left(1-\left(\frac{a}{b}\right)^{2(m-1+\alpha)}\right)^2}{1-\left(\frac{a}{b}\right)^{2(m-2+\alpha)}}\\
        &=\frac{1}{1-\left(\frac{a}{b}\right)^{2(m-2+\alpha)}}\frac{\alpha_0^2}{(m+\alpha)(m-1+\alpha)^2}b^{2m}\left(m^2-2m^2(m-1+\alpha)^2\left(\frac{a}{b}\right)^{2(m-1+\alpha)}\right).
    \end{align*}
    Therefore, we impose the following constraint on the conformal class:
    \begin{align*}
        \log\left(\frac{b}{a}\right)\geq \frac{1}{m-1+\alpha}\log\left(\frac{m-1+\alpha}{2}\right),
    \end{align*}
    which yields
    \begin{align*}
        &\frac{m^2}{m+\alpha}\alpha_0^2b^{2m}\left(1-\left(\frac{a}{b}\right)^{2(m+\alpha)}\right)+\frac{(m-2)^2}{m-2+\alpha}\alpha_1^2b^{2(m-2)}\left(1-\left(\frac{a}{b}\right)^{2(m-2+\alpha)}\right)\\
        &+\frac{2m(m-2)}{m-1+\alpha}\alpha_0\alpha_1b^{2(m-1)}\left(1-\left(\frac{a}{b}\right)^{2(m-1+\alpha)}\right)\geq \frac{m^2\alpha_0^2}{2(m+\alpha)(m-1+\alpha)^2}b^{2m}.
    \end{align*}
    Likewise, if
    \begin{align*}
        \log\left(\frac{b}{a}\right)\geq \frac{1}{m-1+\alpha}\log\left(\frac{m-2+\alpha}{2}\right),
    \end{align*}
    we get
    \begin{align*}
        &\frac{m^2}{m+\alpha}\alpha_0^2b^{2m}\left(1-\left(\frac{a}{b}\right)^{2(m+\alpha)}\right)+\frac{(m-2)^2}{m-2+\alpha}\alpha_1^2b^{2(m-2)}\left(1-\left(\frac{a}{b}\right)^{2(m-2+\alpha)}\right)\\
        &+\frac{2m(m-2)}{m-1+\alpha}\alpha_0\alpha_1b^{2(m-1)}\left(1-\left(\frac{a}{b}\right)^{2(m-1+\alpha)}\right)\geq \frac{(m-2)^2\alpha_1^2}{2(m-2+\alpha)(m-1+\alpha)^2}b^{2(m-2)}
    \end{align*}
    and finally
    \begin{align}\label{frak_lm_1}
        \int_{\Omega}\left|\mathfrak{L}_m\mathrm{Rad}(u)\right|^2\left(\frac{|x|}{b}\right)^{2\alpha}dx\geq \frac{\pi \,m^2(m-1)^2\alpha_0^2}{4(m+\alpha)(m-1+\alpha)^2}b^{2m}+\frac{\pi(m-1)^2(m-2)^2\alpha_1^2}{4(m-2+\alpha)(m-1+\alpha)^2}b^{2(m-2)}.
    \end{align}
    Now, using the elementary inequality
    \begin{align*}
        2ab\leq \epsilon\,a^2+\frac{1}{\epsilon}b^2\qquad \forall\;\, a,b\in \R\,\;\forall\epsilon>0,
    \end{align*}
    we deduce that $(a+b)^2\geq \dfrac{1}{2}a^2-b^2$, which yields
    \begin{align*}
        \left(\leb_m\mathrm{Rad}(u)\right)^2\geq 16\left(\frac{1}{2}\alpha_3^2r^{2-2m}-2m^4\alpha_0^2r^{2m-2}-2(m-1)^4\alpha_1^2r^{2m-6}\right),
    \end{align*}
    and we get
    \begin{align}\label{frak_lm_2}
        &\int_{\Omega}\left(\leb_m\mathrm{Rad}(u)\right)^2\left(\frac{a}{|x|}\right)^{2\alpha}dx\nonumber\\
        &\geq 16\pi\,a^{2\alpha}\int_{a}^b\alpha_3^2r^{3-2m-2\alpha}dr-64\pi\int_{a}^b\left(m^4\alpha_0^2r^{2m-1}+(m-1)^4\alpha_1^2r^{2m-5}\right)dr\nonumber\\
        &=\frac{8\pi\alpha_3^2}{m-2+\alpha}\frac{1}{a^{2(m-2)}}\left(1-\left(\frac{a}{b}\right)^{2(m-2+\alpha)}\right)-32\pi\,m^3\alpha_0^2b^{2m}\left(1-\left(\frac{a}{b}\right)^{2m}\right)\nonumber\\
        &-32\pi(m-1)^3\alpha_1^2b^{2(m-2)}\left(1-\left(\frac{a}{b}\right)^{2(m-2)}\right)\nonumber\\
        &\geq \frac{8\pi\alpha_3^2}{m-2+\alpha}\frac{1}{a^{2(m-2)}}\left(1-\left(\frac{a}{b}\right)^{2(m-2+\alpha)}\right)\nonumber\\
        &-128(m-1+\alpha)^2\left(\frac{m(m+\alpha)}{(m-1)^2}+\frac{(m-1)(m-2+\alpha)}{(m-2)^2}\right)\int_{\Omega}\left|\mathfrak{L}_m\mathrm{Rad}(u)\right|^2\left(\frac{|x|}{b}\right)^{2\alpha}dx,
    \end{align}
    where we used \eqref{frak_lm_1}. Therefore, we impose
    \begin{align*}
        \log\left(\frac{b}{a}\right)\geq \frac{\log(2)}{2(m-2+\alpha)}
    \end{align*}
    to get
    \begin{align}\label{frak_lm_3}
        &\frac{4\pi\alpha_3^2}{m-2+\alpha}\frac{1}{a^{2(m-2)}}\leq \int_{\Omega}\left(\leb_m\mathrm{Rad}(u)\right)^2\left(\frac{a}{|x|}\right)^{2\alpha}dx\nonumber\\
        &+128(m-1+\alpha)^2\left(\frac{m(m+\alpha)}{(m-1)^2}+\frac{(m-1)(m-2+\alpha)}{(m-2)^2}\right)\int_{\Omega}\left|\mathfrak{L}_m\mathrm{Rad}(u)\right|^2\left(\frac{|x|}{b}\right)^{2\alpha}dx.
    \end{align}
    Therefore, we deduce by \eqref{estimate_rad_nabla_u}, \eqref{frak_lm_1}, and \eqref{frak_lm_3} that
    \begin{align}\label{frak_lm_4}
        &\int_{\Omega}\frac{1}{|x|^2}\left|\p{r}\mathrm{Rad}(u)+\frac{m-1}{r}\mathrm{Rad}(u)\right|^2dx\leq 3(m-2+\alpha)\int_{\Omega}\left(\leb_m\mathrm{Rad}(u)\right)^2\left(\frac{a}{|x|}\right)^{2\alpha}dx\nonumber\\
        &+384(m-2+\alpha)(m-1+\alpha)^2\left(\frac{m(m+\alpha)}{(m-1)^2}+\frac{(m-1)(m-2+\alpha)}{(m-2)^2}\right)\int_{\Omega}\left|\mathfrak{L}_m\mathrm{Rad}(u)\right|^2\left(\frac{|x|}{b}\right)^{2\alpha}dx\nonumber\\
        &+48(m-1+\alpha)^2\left(\frac{m+\alpha}{m(m-1)^2}+\frac{m-2+\alpha}{(m-1)(m-2)^2}\right)\int_{\Omega}\left|\mathfrak{L}_m\mathrm{Rad}(u)\right|^2\left(\frac{|x|}{b}\right)^{2\alpha}dx.
    \end{align}

    \textbf{Step 3.} The first Fourier mode.
    
    Now, let us estimate the first frequency since it also has a special status. The difficulty here is that $r^{2-m}\cos(\theta),r^{2-m}\sin(\theta)\in \mathrm{Ker}(\leb_m)\cap\mathrm{Ker}(\mathfrak{L}_m)$, so we will also need the weighted $L^2$ norm of $u$ to control all components.  

    Now, recalling \eqref{elementary_cauchy_riemann}, we have
    \begin{align*}
        \p{z}=\frac{1}{2}\left(e^{-i\theta}\p{r}-i\,e^{-i\theta}\frac{1}{r}\p{\theta}\right).
    \end{align*}
    Therefore, for all $z\in \Z$, we have
    \begin{align*}
        &\p{z}\left(r^{\alpha}e^{in\theta}\right)=\frac{\alpha+n}{2}r^{\alpha-1}e^{i(n-1)\theta}\\
        &\p{z}^2\left(r^{\alpha}e^{in\theta}\right)=\frac{(\alpha+n)(\alpha-2+n)}{4}r^{\alpha-2}e^{i(n-2)\theta},
    \end{align*}
    which yields
    \begin{align*}
        &\mathfrak{L}_m\left(r^{\alpha}e^{in\theta}\right)=\left(\p{z}^2+\frac{m-1}{z}\p{z}+\frac{(m-1)(m-3)}{4z^2}\right)\left(r^{\alpha}e^{in\theta}\right)\\
        &=\frac{(\alpha+n)(\alpha-2+n)}{4}r^{\alpha-2}e^{i(n-2)\theta}+\frac{(m-1)(\alpha+n)}{2}r^{\alpha-2}e^{i(n-2)\theta}+\frac{(m-1)(m-3)}{4}r^{\alpha-2}e^{i(n-2)\theta}\\
        &=\frac{(\alpha+m+n-1)(\alpha+m+n-3)}{4}r^{\alpha-2}e^{i(n-2)\theta},
    \end{align*}
    where we used that
    \begin{align*}
        &(\alpha+n)(\alpha+n-2)+2(m-1)(\alpha+n)+(m-1)(m-3)\\
        &=(\alpha+n)(\alpha+n-2+(m-1))+(m-1)((\alpha+n)+(m-3))=(\alpha+m+n-1)(\alpha+m+n-3).
    \end{align*}
    For Fourier consideration, it is useful to consider instead $e^{2i\theta}\mathfrak{L}_m$, and we compute
    \begin{align*}
        8\frac{z^2}{|z|^2}\mathfrak{L}_m\left(r^{\alpha}\Re\left(a\,e^{in\theta}\right)\right)
        &=(\alpha+m+n-1)(\alpha+m+n-3)a\,r^{\alpha-2}e^{in\theta}\\
        &+(\alpha+m-n-1)(\alpha+m-n-3)\bar{a}\,r^{\alpha-2}e^{-in\theta}.
    \end{align*}
    We see in particular that no element belongs to the Kernel provided that $|n|\geq 2$. Write $u_1$ the function associated with the $1$st Fourier frequency. Explicitly, we have
    \begin{align*}
        u_1(r,\theta)=r^{1+\sqrt{(m-1)^2+4}}\,\Re\left(a_1\,e^{i\theta}\right)+r^{1-\sqrt{(m-1)^2+4}}\,\Re\left(b_1\,e^{i\theta}\right)+r^m\,\Re\left(c_1\,e^{i\theta}\right)+r^{2-m}\,\Re\left(d_1\,e^{i\theta}\right).
    \end{align*}
    Recall that 
    \begin{align*}
        \leb_m&=\p{r}^2+\frac{(2m-1)}{r}\p{r}+\frac{(m-1)^2}{r^2}+\frac{1}{r^2}\p{\theta}^2\\
        \leb_m\,r^{\alpha}&=(\alpha+m-1)^2\,r^{\alpha-2}.
    \end{align*}
    Therefore, we get
    \begin{align*}
         \leb_m\left(r^{\alpha}\Re\left(a\,e^{in\theta}\right)\right)&=(\alpha+m-1)^2\,r^{\alpha-2}\Re\left(a\,e^{in\theta}\right)-n^2\,r^{\alpha-2}\Re\left(a\,e^{in\theta}\right)\\
         &=(\alpha+(m-1)+n)(\alpha+(m-1)-n)r^{\alpha-2}\Re\left(a\,e^{in\theta}\right).
    \end{align*}
    We deduce that 
    \begin{align*}
        \leb_mu_1&=\left(m+1+\sqrt{(m-1)^2+4}\right)\left(m-1+\sqrt{(m-1)^2+4}\right)r^{\sqrt{(m-1)^2+4}-1}\,\Re\left(a_1\,e^{i\theta}\right)\\
        &+\left(\sqrt{(m-1)^2+4}-(m+1)\right)\left(\sqrt{(m-1)^2+4}-(m-1)\right)r^{-1-\sqrt{(m-1)^2+4}}\,\Re\left(b_1\,e^{i\theta}\right)\\
        &+(m^2-1)r^{m-2}\,\Re\left(c_1\,e^{i\theta}\right)\\
        &=\left(m+1+\sqrt{(m-1)^2+4}\right)\left(m-1+\sqrt{(m-1)^2+4}\right)r^{\sqrt{(m-1)^2+4}-1}\,\Re\left(a_1\,e^{i\theta}\right)\\
        &-\frac{16(m-1)}{\left(\sqrt{(m-1)^2+4}+(m-1)^2\right)\left(\sqrt{(m-1)^2+4}+(m+1)^2\right)}r^{-1-\sqrt{(m-1)^2+4}}\,\Re\left(b_1\,e^{i\theta}\right)\\
        &+(m^2-1)\,r^{m-2}\,\Re\left(c_1\,e^{i\theta}\right).
    \end{align*}
    We deduce that
    \small
    \begin{align*}
        &\int_{\Omega}\left(\leb_mu_1\right)^2\left(\frac{|x|}{b}\right)^{2\alpha}dx\\
        &=\pi\left(m+1+\sqrt{(m-1)^2+4}\right)^2\left(m-1+\sqrt{(m-1)^2+4}\right)^2|a_1|^2\int_{a}^br^{2\sqrt{(m-1)^2+4}-1+2\alpha}\frac{dr}{b^{2\alpha}}\\
        &+\frac{256\pi(m-1)^2|b_1|^2}{\left(\sqrt{(m-1)^2+4}+(m-1)^2\right)^2\left(\sqrt{(m-1)^2+4}+(m+1)^2\right)^2}\int_{a}^{b}r^{-1-2\sqrt{(m-1)^2+4}+2\alpha}\frac{dr}{b^{2\alpha}}\\
        &+\pi(m^2-1)^2|c_1|^2\int_{a}^{b}r^{2m-3+2\alpha}\frac{dr}{b^{2\alpha}}\\
        &-32\pi\frac{(m-1)\left(m+1+\sqrt{(m-1)^2+4}\right)\left(m-1+\sqrt{(m-1)^2+4}\right)}{\left(\sqrt{(m-1)^2+4}+(m-1)^2\right)\left(\sqrt{(m-1)^2+4}+(m+1)^2\right)}\Re\left(a_1\bar{b_1}\right)\int_{a}^{b}r^{-1+2\alpha}\frac{dr}{b^{2\alpha}}\\
        &+2\pi(m^2-1)\left(m+1+\sqrt{(m-1)^2+4}\right)\left(m-1+\sqrt{(m-1)^2+4}\right)\Re\left(a_1\bar{c_1}\right)\int_{a}^br^{m-2+2\alpha+\sqrt{(m-1)^2+4}}\frac{dr}{b^{2\alpha}}\\
        &-\frac{32\pi(m-1)(m^2-1)}{\left(\sqrt{(m-1)^2+4}+(m-1)^2\right)\left(\sqrt{(m-1)^2+4}+(m+1)^2\right)}\Re\left(b_1\bar{c_1}\right)\int_{a}^{b}r^{m-2+2\alpha-\sqrt{(m-1)^2+4}}\frac{dr}{b^{2\alpha}}\\
        &=\frac{\pi\left(m+1+\sqrt{(m-1)^2+4}\right)^2\left(m-1+\sqrt{(m-1)^2+4}\right)^2}{2\left(\sqrt{(m-1)^2+4}+\alpha\right)}|a_1|^2b^{2\sqrt{(m-1)^2+4}}\left(1-\left(\frac{a}{b}\right)^{2\left(\sqrt{(m-1)^2+4}+\alpha\right)}\right)\\
        &+\frac{128\pi(m-1)^2|b_1|^2}{\left(\sqrt{(m-1)^2+4}+(m-1)^2\right)^2\left(\sqrt{(m-1)^2+4}+(m+1)^2\right)^2\left(\sqrt{(m-1)^2+4}-\alpha\right)}\\
        &\times \left(\frac{a}{b}\right)^{2\alpha}\frac{1}{a^{2\sqrt{(m-1)^2+4}}}\left(1-\left(\frac{a}{b}\right)^{2\left(\sqrt{(m-1)^2+4}-\alpha\right)}\right)
        +\frac{\pi(m^2-1)^2}{2(m-1+\alpha)}|c_1|^2b^{2(m-1)}\left(1-\left(\frac{a}{b}\right)^{2(m-1+\alpha)}\right)\\
        &-\frac{16\pi}{\alpha}\frac{(m-1)\left(m+1+\sqrt{(m-1)^2+4}\right)\left(m-1+\sqrt{(m-1)^2+4}\right)}{\left(\sqrt{(m-1)^2+4}+(m-1)^2\right)\left(\sqrt{(m-1)^2+4}+(m+1)^2\right)}\Re\left(a_1\bar{b_1}\right)\left(\frac{a}{b}\right)^{2\alpha}\left(1-\left(\frac{a}{b}\right)^{2\alpha}\right)\\
        &+\frac{2\pi\left(m^2-1\right)\left(m+1+\sqrt{(m-1)^2+4}\right)\left(m-1+\sqrt{(m-1)^2+4}\right)}{m-1+2\alpha+\sqrt{(m-1)^2+4}}\Re\left(a_1\bar{c_1}\right)b^{m-1+\sqrt{(m-1)^2+4}}\\
        &\times\left(1-\left(\frac{a}{b}\right)^{m+2\alpha+\sqrt{(m-1)^2+4}}\right)\\
        &-\frac{32\pi(m-1)\left(m^2-1\right)\Re\left(b_1\bar{c_1}\right)}{\left(\sqrt{(m-1)^2+4}+(m-1)^2\right)\left(\sqrt{(m-1)^2+4}+(m+1)^2\right)\left((m-1+2\alpha)-\sqrt{(m-1)^2+4}\right)}\\
        &\times \frac{1}{b^{\sqrt{(m-1)^2+4}-(m-1)}}\left(1-\left(\frac{a}{b}\right)^{2\alpha}\right),
    \end{align*}
    \normalsize
    where we used that $\sqrt{(m-1)^2+4}<m-1+2\alpha$ for all
    \begin{align*}
        \alpha>f(m)=\frac{2}{\sqrt{(m-1)^2+4}+(m-1)^2}.
    \end{align*}
    Since this latter function $f$ of $m$ is strictly decreasing (and converges to $0$ as $m\rightarrow \infty$), we can always find a value $0<\alpha<1$ for which the estimate will be satisfied since
    \begin{align*}
        f(3)=\frac{1}{2+\sqrt{2}}=0.2928932\cdots<1.
    \end{align*}
    Now, we consider the following quantity
    \begin{align*}
        &\frac{\left(m+1+\sqrt{(m-1)^2+4}\right)^2\left(m-1+\sqrt{(m-1)^2+4}\right)^2}{2\left(\sqrt{(m-1)^2+4}+\alpha\right)}|a_1|^2b^{2\sqrt{(m-1)^2+4}}\left(1-\left(\frac{a}{b}\right)^{2\left(\sqrt{(m-1)^2+4}+\alpha\right)}\right)\\
        &+\frac{(m^2-1)}{2(m-1+\alpha)}|c_1|^2b^{2(m-1)}\left(1-\left(\frac{a}{b}\right)^{2(m-1+\alpha)}\right)\\
        &+\frac{2\left(m^2-1\right)\left(m+1+\sqrt{(m-1)^2+4}\right)\left(m-1+\sqrt{(m-1)^2+4}\right)}{m-1+2\alpha+\sqrt{(m-1)^2+4}}\Re\left(a_1\bar{c_1}\right)b^{m-1+\sqrt{(m-1)^2+4}}\\
        &\times\left(1-\left(\frac{a}{b}\right)^{m+2\alpha+\sqrt{(m-1)^2+4}}\right).%\\
        %&\geq \frac{\left(m+1+\sqrt{(m-1)^2+4}\right)\left(m-1+\sqrt{(m-1)^2+4}\right)}{2\left(\sqrt{(m-1)^2+4}+\alpha\right)}|a_1|^2b^{2\sqrt{(m-1)^2+4}}\left(1-\left(\frac{a}{b}\right)^{2\left(\sqrt{(m-1)^2+4}+\alpha\right)}\right)\\
        %&-\frac{2\left(m+1+\sqrt{(m-1)^2+4}\right)^2\left(m-1+\sqrt{(m-1)^2+4}\right)^2}{\left(m-1+2\alpha+\sqrt{(m-1)^2+4}\right)^2}.
    \end{align*}
    In fact, to avoid redoing multiple times the same estimate, let $\alpha_1,\alpha_2>-1$, $a_1,a_2\in \C$, and 
    $n\in \Z^{\ast}$, and define
    \begin{align*}
        f(r,\theta)&=r^{\alpha_1}\Re\left(\mathfrak{a}_1e^{in\theta}\right)+r^{\alpha_2}\Re\left(\mathfrak{a}_2\,e^{in\theta}\right)=\Re\left(\left(\mathfrak{a}_1r^{\alpha_1}+\mathfrak{a}_2r^{\alpha_2}\right)e^{in\theta}\right)\\
        &=\frac{1}{2}\left(\mathfrak{a}_1r^{\alpha_1}+\mathfrak{a}_2r^{\alpha_2}\right)e^{in\theta}+\frac{1}{2}\left(\bar{\mathfrak{a}_1}r^{\alpha_1}+\bar{\mathfrak{a}_2}r^{\alpha_2}\right)e^{-in\theta}.
    \end{align*}
    We have
    \begin{align*}
        |f(r,\theta)|^2&=\frac{1}{2}|\mathfrak{a}_1r^{\alpha_1}+\mathfrak{a}_2r^{\alpha_2}|^2+\frac{1}{2}\Re\left(\left(\mathfrak{a}_1r^{\alpha_1}+\mathfrak{a}_2r^{\alpha_2}\right)e^{2in\theta}\right)\\
        &=\frac{1}{2}\left(|\mathfrak{a}_1|^2r^{2\alpha_1}+|\mathfrak{a}_2|^2r^{2\alpha_2}+2\,\Re\left(\mathfrak{a}_1\bar{\mathfrak{a}_2}\right)r^{\alpha_1+\alpha_2}\right)+\frac{1}{2}\Re\left(\left(\mathfrak{a}_1r^{\alpha_1}+\mathfrak{a}_2r^{\alpha_2}\right)e^{2in\theta}\right).
    \end{align*}
    As $n\in \Z^{\ast}$, we have 
    \begin{align*}
        &\int_{\Omega}|f(x)|^2dx=\pi\int_{a}^b\left(|\mathfrak{a}_1|^2r^{2\alpha_1+1}+|\mathfrak{a}_2|^2r^{2\alpha_2+1}+2\,\Re\left(\mathfrak{a}_1\bar{\mathfrak{a}_2}\right)r^{\alpha_1+\alpha_2+1}\right)dr\\
        &=\pi\left(\frac{|\mathfrak{a}_1|^2}{2(\alpha_1+1)}b^{2(\alpha_1+1)}\left(1-\left(\frac{a}{b}\right)^{2(\alpha_1+1)}\right)+\frac{|\mathfrak{a}_2|^2}{2(\alpha_2+1)}b^{2(\alpha_2+1)}\left(1-\left(\frac{a}{b}\right)^{2(\alpha_2+1)}\right)\right.\\
        &\left.
        +\frac{2\,\Re\left(\mathfrak{a}_1\bar{\mathfrak{a}_2}\right)}{\alpha_1+\alpha_2+2}b^{\alpha_1+\alpha_2+2}\left(1-\left(\frac{a}{b}\right)^{\alpha_1+\alpha_2+2}\right)\right).
    \end{align*}
    Using the elementary inequality
    \begin{align*}
        2ab\geq -\dfrac{1}{2}a^2-2b^2,
    \end{align*}
    we deduce that 
    \begin{align*}
        &\frac{|\mathfrak{a}_1|^2}{2(\alpha_1+1)}b^{2(\alpha_1+1)}\left(1-\left(\frac{a}{b}\right)^{2(\alpha_1+1)}\right)+\frac{|\mathfrak{a}_2|^2}{2(\alpha_2+1)}b^{2(\alpha_2+1)}\left(1-\left(\frac{a}{b}\right)^{2(\alpha_2+1)}\right)\\
        &
        +\frac{2\,\Re\left(\mathfrak{a}_1\bar{\mathfrak{a}_2}\right)}{\alpha_1+\alpha_2+2}b^{\alpha_1+\alpha_2+2}\left(1-\left(\frac{a}{b}\right)^{\alpha_1+\alpha_2+2}\right)\\
        &\geq \frac{|\mathfrak{a}_1|^2}{2(\alpha_1+1)}b^{2(\alpha_1+1)}\left(1-\left(\frac{a}{b}\right)^{2(\alpha_1+1)}\right)-\frac{2(\alpha_2+1)}{(\alpha_1+\alpha_2+2)^2}|\mathfrak{a}_1|^2b^{2(\alpha_1+1)}\frac{\left(1-\left(\frac{a}{b}\right)^{\alpha_1+\alpha_2+2}\right)^2}{1-\left(\frac{a}{b}\right)^{2(\alpha_2+1)}}.
    \end{align*}
    Now, we have
    \begin{align*}
        \frac{1}{2(\alpha_1+1)}-\frac{2(\alpha_2+1)}{(\alpha_1+\alpha_2+2)}=\frac{(\alpha_1+\alpha_2+2)^2-4(\alpha_1+1)(\alpha_2+1)}{2(\alpha_1+1)(\alpha_1+\alpha_2+2)}=\frac{(\alpha_1-\alpha_2)^2}{2(\alpha_1+1)(\alpha_1+\alpha_2+1)^2}.
    \end{align*}
    Therefore, we get
    \begin{align*}
        &\frac{|\mathfrak{a}_1|^2}{2(\alpha_1+1)}b^{2(\alpha_1+1)}\left(1-\left(\frac{a}{b}\right)^{2(\alpha_1+1)}\right)-\frac{2(\alpha_2+1)}{(\alpha_1+\alpha_2+2)^2}|\mathfrak{a}_1|^2b^{2(\alpha_1+1)}\frac{\left(1-\left(\frac{a}{b}\right)^{\alpha_1+\alpha_2+2}\right)^2}{1-\left(\frac{a}{b}\right)^{2(\alpha_2+1)}}\\
        &\geq \frac{1}{1-\left(\frac{a}{b}\right)^{2(\alpha_2+1)}}\frac{|\mathfrak{a}_1|^2}{2(\alpha_1+1)}b^{2(\alpha_1+1)}\left(\frac{(\alpha_1-\alpha_2)^2}{(\alpha_1+\alpha_2+2)^2}-\left(\frac{a}{b}\right)^{2(\alpha_1+1)}-\left(\frac{a}{b}\right)^{2(\alpha_2+1)}\right).
    \end{align*}
    Assuming that $\alpha_1\neq \alpha_2$ (in which case the estimate is trivial since $\Re(\mathfrak{a}_1\bar{\mathfrak{a}_2})=|\mathfrak{a}_1|^2\geq 0$) and that
    \begin{align}\label{est_conf_frak_m}
        \log\left(\frac{b}{a}\right)\geq \frac{1}{\min\ens{\alpha_1+1,\alpha_2+1}}\log\left(\frac{2(\alpha_1+\alpha_2+2)}{|\alpha_1-\alpha_2|}\right),
    \end{align}
    we get
    \begin{align*}
        &\frac{|\mathfrak{a}_1|^2}{2(\alpha_1+1)}b^{2(\alpha_1+1)}\left(1-\left(\frac{a}{b}\right)^{2(\alpha_1+1)}\right)+\frac{|\mathfrak{a}_2|^2}{2(\alpha_2+1)}b^{2(\alpha_2+1)}\left(1-\left(\frac{a}{b}\right)^{2(\alpha_2+1)}\right)\\
        &
        +\frac{2\,\Re\left(\mathfrak{a}_1\bar{\mathfrak{a}_2}\right)}{\alpha_1+\alpha_2+2}b^{\alpha_1+\alpha_2+2}\left(1-\left(\frac{a}{b}\right)^{\alpha_1+\alpha_2+2}\right)\\
        &\geq \frac{(\alpha_1-\alpha_2)^2}{4(\alpha_1+1)(\alpha_1+\alpha_2+2)^2}|\mathfrak{a}_1|^2b^{2(\alpha_1+1)}.
    \end{align*}
    Therefore, using a symmetrical estimate for $\mathfrak{a}_2$, we deduce that provided that provided that \eqref{est_conf_frak_m} holds, we have
    \begin{align}\label{est_gen_f}
        \int_{\Omega}|f(x)|^2dx\geq  \frac{\pi(\alpha_1-\alpha_2)^2}{8(\alpha_1+1)(\alpha_1+\alpha_2+2)^2}|\mathfrak{a}_1|^2b^{2(\alpha_1+1)}+\frac{\pi(\alpha_1-\alpha_2)^2}{8(\alpha_2+1)(\alpha_1+\alpha_2+2)^2}|\mathfrak{a}_2|^2b^{2(\alpha_2+1)}.
    \end{align}
    Applying this estimate to 
    \begin{align}\label{def_frak_a1}
    \left\{\begin{alignedat}{2}
        &\alpha_1=\sqrt{(m-1)^2+4}-1+\alpha\qquad &&\mathfrak{a}_1=\left(m+1+\sqrt{(m-1)^2+4}\right)\left(m-1+\sqrt{(m-1)^2+4}\right)a_1\\
        &\alpha_2=m-2+\alpha\qquad &&\mathfrak{a_2}=(m^2-1)c_1
    \end{alignedat}\right.
    \end{align}
    and $n=1$, we deduce that 
    \begin{align}\label{new_frak_lm1}
        &\frac{\left(m+1+\sqrt{(m-1)^2+4}\right)^2\left(m-1+\sqrt{(m-1)^2+4}\right)^2}{2\left(\sqrt{(m-1)^2+4}+\alpha\right)}|a_1|^2b^{2\sqrt{(m-1)^2+4}}\left(1-\left(\frac{a}{b}\right)^{2\left(\sqrt{(m-1)^2+4}+\alpha\right)}\right)\nonumber\\
        &+\frac{(m^2-1)}{2(m-1+\alpha)}|c_1|^2b^{2(m-1)}\left(1-\left(\frac{a}{b}\right)^{2(m-1+\alpha)}\right)\nonumber\\
        &+\frac{2\left(m^2-1\right)\left(m+1+\sqrt{(m-1)^2+4}\right)\left(m-1+\sqrt{(m-1)^2+4}\right)}{m-1+2\alpha+\sqrt{(m-1)^2+4}}\Re\left(a_1\bar{c_1}\right)b^{m-1+\sqrt{(m-1)^2+4}}\nonumber\\
        &\times\left(1-\left(\frac{a}{b}\right)^{m+2\alpha+\sqrt{(m-1)^2+4}}\right)\nonumber\\
        &\geq \frac{\left(\sqrt{(m-1)^2+4}-(m-1)\right)^2\left(m+1+\sqrt{(m-1)^2+4}\right)\left(m-1+\sqrt{(m-1)^2+4}\right)}{8\left(\sqrt{(m-1)^2+4}+\alpha\right)(\sqrt{(m-1)^2+4}+(m-1)+2\alpha)}|a_1|^2b^{2\sqrt{(m-1)^2+4}}\nonumber\\
        &+\frac{(m^2-1)\left(\sqrt{(m-1)^2+4}-(m-1)\right)^2}{8(m-1+\alpha)\left(\sqrt{(m-1)^2+4}+(m-1)+2\alpha\right)}|c_1|^2b^{2(m-1)}.
    \end{align}
    The other crossed terms are easily estimated. Define
    \begin{align}\label{def_frak_a2}
        \alpha_3=-1-\sqrt{(m-1)^2+4}+\alpha\qquad \mathfrak{a}_3=\left(\sqrt{(m-1)^2+4}-(m+1)\right)\left(\sqrt{(m-1)^2}-(m-1)\right)b_1.
    \end{align}
    We estimate
    \begin{align*}
        &\frac{2\,\Re\left(\mathfrak{a}_1\mathfrak{a}_3\right)}{\alpha}\left(\frac{a}{b}\right)^{2\alpha}\geq -\frac{|\mathfrak{a}_1|^2}{\delta\alpha^2}\left(\frac{a}{b}\right)^{2\alpha}a^{2(-\alpha_3-1)}-\delta|\mathfrak{a}_3|^2\left(\frac{a}{b}\right)^{2\alpha}\frac{1}{a^{2(-\alpha_3-1)}},
    \end{align*}
    while
    \begin{align*}
        &\frac{2\,\Re\left(\mathfrak{a}_2\mathfrak{a}_3\right)}{(m-1+2\alpha)-\sqrt{(m-1)^2+4}}b^{m-1}\frac{1}{b^{\sqrt{(m-1)^2+4}}}\\
        &\geq -\epsilon|\mathfrak{a}_2|^2b^{2(m-1)}-\frac{|\mathfrak{a}_3|^2}{\epsilon\left(m-1+2\alpha-\sqrt{(m-1)^2+4}\right)^2}\frac{1}{b^{2\sqrt{(m-1)^2+4}}}. 
    \end{align*}
    We can take 
    \begin{align*}
        \epsilon=\frac{(\alpha_1-\alpha_2)^2}{16(\alpha_2+1)(\alpha_1+\alpha_2+2)},
    \end{align*}
    and then we simply estimate 
    \begin{align*}
        &\frac{|\mathfrak{a}_3|^2}{2\left(\sqrt{(m-1)^2+4}-\alpha\right)}\left(\frac{a}{b}\right)^{2\alpha}\frac{1}{a^{2(-\alpha_3-1)}}-\delta|\mathfrak{a}_3|^2\left(\frac{a}{b}\right)^{2\alpha}\frac{1}{a^{2(-\alpha_3-1)}}\\
        &-\frac{|\mathfrak{a}_3|^2}{\epsilon\left(m-1+2\alpha-\sqrt{(m-1)^2+4}\right)^2}\frac{1}{b^{2\sqrt{(m-1)^2+4}}}\\
        &\geq \frac{|a_3|^2}{4\left(\sqrt{(m-1)^2+4}-\alpha\right)}\left(\frac{a}{b}\right)^{2\alpha}\frac{1}{a^{2(-\alpha_3-1)}}\left(1-4\left(\sqrt{(m-1)^2+4}-\alpha\right)\left(\frac{a}{b}\right)^{2(-\alpha_3-1-\alpha)}\right)\\
        &\geq \frac{\left(\sqrt{(m-1)^2+4}-(m+1)\right)^2\left(\sqrt{(m-1)^2+4}-(m-1)\right)^2}{8\left(\sqrt{(m-1)^2+4}-\alpha\right)}|b_1|^2\left(\frac{a}{b}\right)^{2\alpha}\frac{1}{a^{2\sqrt{(m-1)^2+4}}}
    \end{align*}
    if we take
    \begin{align*}
        \delta=\frac{1}{4\left(\sqrt{(m-1)^2+4}-\alpha\right)},
    \end{align*}
    and impose
    \begin{align*}
        \log\left(\frac{b}{a}\right)\geq \frac{1}{2\left(\sqrt{(m-1)^2+4}-\alpha\right)}\log\left(8\left(\sqrt{(m-1)^2+4}-\alpha\right)\right).
    \end{align*}
    Notice that this imposes the condition $\alpha<\sqrt{(m-1)^2+4}$, which is satisfied since $m\geq 3$ provided that $\alpha<2\sqrt{2}$. Finally we   estimate
    \begin{align*}
        \frac{(\alpha_1-\alpha_2)^2}{8(\alpha_1+1)(\alpha_1+\alpha_2+2)^2}|\mathfrak{a}_1|^2b^{2(\alpha_1+1)}-\frac{|\mathfrak{a}_1|^2}{\delta\alpha^2}\left(\frac{a}{b}\right)^{2\alpha}a^{2(\alpha_1+1)}
        \geq \frac{(\alpha_1-\alpha_2)^2}{16(\alpha_1+1)(\alpha_1+\alpha_2+2)^2}|\mathfrak{a}_1|^2b^{2(\alpha_1+1)}
    \end{align*}
    provided that 
    \begin{align*}
        \log\left(\frac{b}{a}\right)\geq \frac{1}{2\sqrt{(m-1)^2+4}}\log\left(\frac{8(\alpha_1+1)(\alpha_1+\alpha_2+2)^2}{\delta\alpha^2(\alpha_1-\alpha_2)^2}\right).
    \end{align*}
    Finally, we deduce that (assuming that all previous conditions on the conformal class are satisfied)
    \begin{align}\label{frak_lm_fourier_1}
        &\int_{\Omega}\left(\leb_mu_1\right)^2\left(\frac{|x|}{b}\right)^{2\alpha}dx\nonumber\\
        &\geq 
        \frac{\pi\left(\sqrt{(m-1)^2+4}-(m-1)\right)^2\left(m+1+\sqrt{(m-1)^2+4}\right)\left(m-1+\sqrt{(m-1)^2+4}\right)}{8\left(\sqrt{(m-1)^2+4}+\alpha\right)(\sqrt{(m-1)^2+4}+(m-1)+2\alpha)}\nonumber\\
        &\times |a_1|^2b^{2\sqrt{(m-1)^2+4}}\nonumber\\
        &+\frac{\pi\left(\sqrt{(m-1)^2+4}-(m+1)\right)^2\left(\sqrt{(m-1)^2+4}-(m-1)\right)^2}{8\left(\sqrt{(m-1)^2+4}-\alpha\right)}|b_1|^2\left(\frac{a}{b}\right)^{2\alpha}\frac{1}{a^{2\sqrt{(m-1)^2+4}}}\nonumber\\
        &+\frac{\pi(m^2-1)\left(\sqrt{(m-1)^2+4}-(m-1)\right)^2}{8(m-1+\alpha)\left(\sqrt{(m-1)^2+4}+(m-1)+2\alpha\right)}|c_1|^2b^{2(m-1)}.
    \end{align}
    Writing to simplify
    \begin{align*}
        \leb_mu_1=r^{\sqrt{(m-1)^2+4}-1}\,\Re\left(\mathfrak{a}_1e^{i\theta}\right)+r^{-1-\sqrt{(m-1)^2+4}}\,\Re\left(\mathfrak{a}_3e^{i\theta}\right)+r^{m-2}\,\Re\left(\mathfrak{a}_2e^{i\theta}\right).
    \end{align*}
    Therefore, we get
    \begin{align*}
        &\int_{\Omega}\left(\leb_mu_1\right)^2\left(\frac{a}{|x|}\right)^{2\alpha}dx\geq \frac{1}{2}\int_{\Omega}\left(r^{-1-\sqrt{(m-1)^2+4}}\,\Re\left(\mathfrak{a}_3e^{i\theta}\right)\right)^2\left(\frac{a}{|x|}\right)^{2\alpha}dx\\
        &-4\int_{\Omega}\left(r^{\sqrt{(m-1)^2+4}-1}\,\Re\left(\mathfrak{a}_1e^{i\theta}\right)\right)^2dx-4\int_{\Omega}\left(r^{m-2}\,\Re\left(\mathfrak{a}_2e^{i\theta}\right)\right)^2dx\\
        &\geq \frac{\pi|\mathfrak{a}_3|^2}{4\left(\sqrt{(m-1)^2+4}-\alpha\right)}\frac{1}{a^{2\sqrt{(m-1)^2+4}}}\left(1-\left(\frac{a}{b}\right)^{2\left(\sqrt{(m-1)^2+4}-\alpha\right)}\right)\\
        &-\frac{2\pi|\mathfrak{a}_1|^2}{\sqrt{(m-1)^2+4}}b^{2\sqrt{(m-1)^2}}-\frac{2\pi|\mathfrak{a}_2|^2}{m-1}b^{2(m-1)}.
    \end{align*}
    Therefore, using the estimate of \eqref{frak_lm_fourier_1}, we deduce that there exists a constant $\Gamma_{1,m}<\infty$ such that 
    \begin{align}\label{frak_lm_fourier_2}
        &\frac{\pi\left(\sqrt{(m-1)^2+4}-(m-1)\right)^2\left(m+1+\sqrt{(m-1)^2+4}\right)\left(m-1+\sqrt{(m-1)^2+4}\right)}{8\left(\sqrt{(m-1)^2+4}+\alpha\right)(\sqrt{(m-1)^2+4}+(m-1)+2\alpha)}|a_1|^2b^{2\sqrt{(m-1)^2+4}}\nonumber\\
        &+\frac{\pi\left(\sqrt{(m-1)^2+4}-(m+1)\right)^2\left(\sqrt{(m-1)^2+4}-(m-1)\right)^2}{8\left(\sqrt{(m-1)^2+4}-\alpha\right)}|b_1|^2\frac{1}{a^{2\sqrt{(m-1)^2+4}}}\nonumber\\
        &+\frac{\pi(m^2-1)\left(\sqrt{(m-1)^2+4}-(m-1)\right)^2}{8(m-1+\alpha)\left(\sqrt{(m-1)^2+4}+(m-1)+2\alpha\right)}|c_1|^2b^{2(m-1)}\nonumber\\
        &\leq \Gamma_{1,m}\int_{\Omega}\left(\leb_mu_1\right)^2\left(\left(\frac{|x|}{b}\right)^{2\alpha}+\left(\frac{a}{|x|}\right)^{2\alpha}\right)dx.
    \end{align}
    Then, we have for all $\beta>0$ the estimate
    \begin{align*}
        \int_{\Omega}\frac{1}{|x|^4}\left(r^{2-m}\Re\left(d_1e^{i\theta}\right)\right)^2\left(\frac{a}{|x|}\right)^{4\beta}dx=\frac{\pi}{2(m-1+2\beta)}|d_1|^2\frac{1}{a^{2(m-1)}}\left(1-\left(\frac{a}{b}\right)^{2(m-1+2\beta)}\right)dx.
    \end{align*}
    Therefore, using the previous estimate \eqref{frak_lm_fourier_2}, we deduce that there exists $\Gamma_{2,m}<\infty$ such that for all $0<\alpha<\sqrt{(m-1)^2+4}$ and $\beta>0$
    \begin{align*}
        \int_{\Omega}\frac{1}{|x|^2}\left|\p{r}u_1+\frac{m-1}{r}u_1\right|^2dx\leq  \Gamma_{2,m}\int_{\Omega}\left(\leb_mu_1\right)^2\left(\left(\frac{|x|}{b}\right)^{2\alpha}+\left(\frac{a}{|x|}\right)^{2\alpha}\right)dx+\int_{\Omega}\frac{u_1^2}{|x|^4}\left(\frac{a}{|x|}\right)^{4\beta}dx.
    \end{align*}

    \textbf{Step 4.} The remaining Fourier modes for $|n|\geq 2$. 
    For all $n\geq 2$, we have
    \begin{align}
        &u_n=r^{1+\alpha_{m,n}}\cos\left(\beta_{m,n}r\right)\Re\left(a_ne^{in\theta}\right)+r^{1+\alpha_{m,n}}\sin\left(\beta_{m,n}r\right)\Re\left(b_ne^{in\theta}\right)\nonumber\\
        &+r^{1-\alpha_{m,n}}\cos\left(\beta_{m,n}r\right)\Re\left(c_ne^{in\theta}\right)+r^{1-\alpha_{m,n}}\sin\left(\beta_{m,n}r\right)\Re\left(d_ne^{in\theta}\right).
    \end{align}
    We deduce that
    \begin{align}\label{id_rad_un_square}
        &2\,\mathrm{Rad}(u_n^2)=r^{2+2\alpha_{m,n}}\cos^2(\beta_{m,n}r)|a_n|^2+r^{2+2\alpha_{m,n}}\sin^2(\beta_{m,n}r)|b_n|^2\nonumber\\
        &+r^{2-2\alpha_{m,n}}\cos^2(\beta_{m,n}r)|c_n|^2+r^{2-2\alpha_{m,n}}\sin^2(\beta_{m,n}r)|d_n|^2\nonumber\\
        &+2\,r^{2+2\alpha_{m,n}}\cos(\beta_{m,n}r)\sin(\beta_{m,n}r)\Re\left(a_n\bar{b_n}\right)
        +2\,r^2\cos^2(\beta_{m,n}r)\Re\left(a_n\bar{c_n}\right)+2\,r^{2}\cos(\beta_{m,n}r)\sin(\beta_{m,n}r)\nonumber\\
        &+2\,r^2\cos(\beta_{m,n}r)\sin(\beta_{m,n}r)\Re\left(b_n\bar{c_n}\right)+2\,r^{2}\sin^2(\beta_{m,n}r)\Re\left(b_n\bar{d_n}\right)\nonumber\\
        &+r^{2-2\alpha_{m,n}}\cos(\beta_{m,n}r)\sin(\beta_{m,n}r)\Re\left(c_n\bar{d_n}\right).
    \end{align}
    Therefore, it comes down to estimating the $L^2$ norm of the following function
    \begin{align*}
        f(r,\theta)=r^{\alpha}\left(\mathfrak{a}_1\cos(\beta r)+\mathfrak{a}_2\sin(\beta r)\right)
    \end{align*}
    which is given by
    \begin{align*}
        |f(r,\theta)|^2&=r^{2\alpha}\left|\mathfrak{a}_1\cos(\beta r)+\mathfrak{a}_2\sin(\beta r)\right|^2\\
        &=r^{2\alpha}\left(|\mathfrak{a}_1|^2\cos^2(\beta r)+|\mathfrak{a}_2|^2\sin^2(\beta r)+2\,\Re\left(\mathfrak{a}_1\bar{\mathfrak{a}_2}\right)\cos(\beta r)\sin(\beta r)\right)
    \end{align*}
    on $\Omega=B_b\setminus\bar{B}_a(0)$, where we can take $b$ as small as we want. In such a way that
    \begin{align*}
        &\dfrac{1}{2}\leq \cos(\beta r)\leq 1\\
        &\dfrac{\beta r}{2}\leq \sin(\beta r)\leq \beta r.
    \end{align*}
    The difficulty is that the functions $r^{2\alpha}\cos(\beta r),r^{2\alpha}\sin(\beta r)$ do not possess primitives in terms of elementary functions. Now, we will use ideas coming from the estimates involving solutions of $\leb_m^{\ast}\leb_mu=0$. Assuming without loss of generality that $\mathfrak{a_1}\mathfrak{a}_2\neq 0$, write
    \begin{align*}
        &f_1(r,\theta)=\frac{\mathfrak{a_1}}{|\mathfrak{a}_1|}r^{\alpha}\cos(\beta\,r)\\
        &f_2(r,\theta)=\frac{\mathfrak{a_2}}{|\mathfrak{a}_2|}r^{\alpha}\sin(\beta\,r).
    \end{align*}
    We deduce by Lemma \ref{estimée_somme_Cauchy--Schwarz_theorem} that
    \begin{align*}
        &\frac{|\mathfrak{a}_1||\mathfrak{a}_2|}{2}\int_{\Omega\times \Omega}\left|\det\begin{pmatrix}
            f_1(x) & f_2(x)\\
            f_1(y) & f_2(y)
        \end{pmatrix}\right|^2dx\,dy\leq \np{f_1}{2}{\Omega}\np{f_2}{2}{\Omega}\int_{\Omega}||\mathfrak{a}_1|f_1+|\mathfrak{a}_2|f_2|^2dx\\
        &=\np{f_1}{2}{\Omega}\np{f_2}{2}{\Omega}\int_{\Omega}|f|^2dx.
    \end{align*}
    If $\alpha>-1$, we have
    \begin{align*}
        \np{f_1}{2}{\Omega}^2=2\pi\int_{a}^br^{2\alpha+1}\cos^2(\beta r)dr\leq \frac{\pi}{\alpha+1}b^{2(\alpha+1)}\left(1-\left(\frac{a}{b}\right)^{2(\alpha+1)}\right).
    \end{align*}
    On the other hand, we have
    \begin{align*}
        \np{f_2}{2}{\Omega}^2=2\pi\int_{a}^br^{2\alpha+1}\sin^2(\beta r)dr\leq \frac{\pi \beta^2}{\alpha+2}b^{2(\alpha+2)}\left(1-\left(\frac{a}{b}\right)^{2(\alpha+2)}\right).
    \end{align*}
    Therefore, we get
    \begin{align*}
        \np{f_1}{2}{\Omega}\np{f_2}{2}{\Omega}\leq \frac{\pi\,\beta}{\sqrt{(\alpha+1)(\alpha+2)}}b^{2\alpha+3}.
    \end{align*}
    Then, we have
    \begin{align}\label{cross_term_est1}
        \int_{\Omega\times \Omega}\left|\det\begin{pmatrix}
            f_1(x) & f_2(x)\\
            f_1(y) & f_2(y)
        \end{pmatrix}\right|^2dx\,dy&=4\pi^2\int_{a}^b\int_{a}^b\left(r^{\alpha}\cos(\beta r)s^{\alpha}\sin(\beta s)-s^{\alpha}\cos(\beta s)r^{\alpha}\sin(\beta r)\right)^2rs\,dr\,ds\nonumber\\
        &=4\pi^2\int_{a}^b\int_{a}^b\left(\cos(\beta r)\sin(\beta s)-\cos(\beta s)\sin(\beta r)\right)^2r^{2\alpha+1}s^{2\alpha+1}dr\,ds\nonumber\\
        &=4\pi^2\int_{a}^b\int_{a}^b\sin^2\left(\beta(r-s)\right)r^{2\alpha+1}s^{2\alpha+1}dr\,ds\nonumber\\
        &\geq \pi^2\beta^2\int_{a}^b\int_{a}^b(r-s)^2r^{2\alpha+1}s^{2\alpha+1}dr\,ds.
    \end{align}
    We have by Fubini's theorem
    \begin{align*}
        &\int_{a}^b\int_{a}^b(r-s)^2r^{2\alpha+1}s^{2\alpha+1}dr\,ds=2\left(\int_{a}^br^{2\alpha+3}dr\right)\left(\int_{a}^{b}s^{2\alpha+1}ds\right)-2\left(\int_{a}^br^{2\alpha+2}dr\right)\left(\int_{a}^bs^{2\alpha+2}ds\right)\\
        &=2\left(\frac{b^{2(\alpha+2)}}{2(\alpha+2)}\left(1-\left(\frac{a}{b}\right)^{2( \alpha+2)}\right)\right)\left(\frac{b^{2(\alpha+1)}}{2(\alpha+1)}\left(1-\left(\frac{a}{b}\right)^{2(\alpha+1)}\right)\right)-2\left(\frac{b^{2\alpha+3}}{2\alpha+3}\left(1-\left(\frac{a}{b}\right)^{2\alpha+3}\right)\right)^2\\
        &\geq \frac{1}{2(\alpha+1)(\alpha+2)(2\alpha+3)}b^{2(2\alpha+3)}\left(1-2(2\alpha+3)\left(\frac{a}{b}\right)^{2(\alpha+1)}\right).
    \end{align*}
    Assuming that
    \begin{align*}
        \log\left(\frac{b}{a}\right)\geq \frac{1}{2(\alpha+1)}\log\left(4(2\alpha+3)\right),
    \end{align*}
    we deduce that
    \begin{align*}
        \int_{\Omega\times \Omega}\left|\det\begin{pmatrix}
            f_1(x) & f_2(x)\\
            f_1(y) & f_2(y)
        \end{pmatrix}\right|^2dx\,dy\geq \frac{\pi^2\beta^2}{4(\alpha+1)(\alpha+2)(2\alpha+3)}b^{2(2\alpha+3)}.
    \end{align*}
    In the end, we obtain the estimate
    \begin{align*}
        \frac{|\mathfrak{a_1}||\mathfrak{a}_2|}{2}\frac{\pi^2\beta^2}{4(\alpha+1)(\alpha+2)(2\alpha+3)}b^{2(2\alpha+3)}\leq \frac{\pi\beta}{\sqrt{(\alpha+1)(\alpha+2)}}b^{2\alpha+3}\int_{\Omega}|f|^2dx,
    \end{align*}
    which finally yields
    \begin{align}\label{cauchy_schwarz_fort}
        |\mathfrak{a}_1||\mathfrak{a}_2|b^{2\alpha+3}\leq \frac{8\sqrt{(\alpha+1)(\alpha+2)}(2\alpha+3)}{\pi\beta}\int_{\Omega}|f|^2dx.
    \end{align}
    Then, we have
    \begin{align*}
        &\int_{\Omega}|f|^2dx=2\pi\int_{a}^b\left(|\mathfrak{a}_1|^2r^{2\alpha+1}\cos^2(\beta r)+|\mathfrak{a}_2|^2r^{2\alpha+1}\sin^2(\beta r)+2\,\Re(\mathfrak{a}_1\bar{\mathfrak{a}_2})r^{2\alpha+1}\cos(\beta r)\sin(\beta r)\right)dr\\
        &\geq \pi\int_{a}^b|\mathfrak{a}_1|^2r^{2\alpha+1}dr+2\pi\beta^2\int_{a}^b|\mathfrak{a}_2|^2r^{2\alpha+3}dr-2\left|\Re\left(\mathfrak{a}_1\bar{\mathfrak{a}_2}\right)\right|\int_{a}^br^{2\alpha+1}\cos(\beta r)\sin(\beta r)dr\\
        &\geq \pi\left(\int_{a}^b|\mathfrak{a}_1|^2r^{2\alpha+1}dr+2\beta^2\int_{a}^b|\mathfrak{a}_2|^2r^{2\alpha+3}dr-2\beta\left|\Re\left(\mathfrak{a}_1\bar{\mathfrak{a}_2}\right)\right|\int_{a}^br^{2\alpha+2}dr\right)\\
        &=\pi\left(\frac{1}{2(\alpha+1)}|\mathfrak{a}_1|^2b^{2(\alpha+1)}\left(1-\left(\frac{a}{b}\right)^{2(\alpha+1)}\right)+\frac{\beta^2}{\alpha+2}|\mathfrak{a}_2|^2b^{2(\alpha+2)}\left(1-\left(\frac{a}{b}\right)^{2(\alpha+2)}\right)\right.\\
        &\left.-\frac{2\beta\,\Re\left(\mathfrak{a}_1\bar{\mathfrak{a}_2}\right)}{2\alpha+3}b^{2\alpha+3}\left(1-\left(\frac{a}{b}\right)^{2\alpha+3}\right)\right).
    \end{align*}
    Therefore, \eqref{cauchy_schwarz_fort} implies that 
    \begin{align}\label{non_algebrique_dépassé}
        &\frac{\pi}{2(\alpha+1)}|\mathfrak{a}_1|^2b^{2(\alpha+1)}\left(1-\left(\frac{a}{b}\right)^{2(\alpha+1)}\right)+\frac{\pi\beta^2}{\alpha+2}|\mathfrak{a}_2|^2b^{2(\alpha+2)}\left(1-\left(\frac{a}{b}\right)^{2(\alpha+2)}\right)\nonumber\\
        &\leq \int_{\Omega}|f|^2dx
        +\frac{2\pi\beta\,\Re\left(\mathfrak{a}_1\bar{\mathfrak{a}_2}\right)}{2\alpha+3}b^{2\alpha+3}\left(1-\left(\frac{a}{b}\right)^{2\alpha+3}\right)\nonumber\\
        &\leq \int_{\Omega}|f|^2dx+\frac{2\pi\beta|\mathfrak{a}_1||\mathfrak{a}_2|}{2\alpha+3}b^{2\alpha+3}\leq \left(1+16\sqrt{(\alpha+1)(\alpha+2)}\right)\int_{\Omega}|f|^2dx.
    \end{align}
    Now, for $\alpha<-2$, we have
    \begin{align*}
        &\int_{a}^{b}\int_{a}^{b}(r-s)^2r^{-2\alpha+1}s^{-2\alpha+1}drds=2\left(\frac{1}{2(\alpha-2)}\frac{1}{a^{2(\alpha-2)}}\left(1-\left(\frac{a}{b}\right)^{2(\alpha-2)}\right)\right)\\
        &\times \left(\frac{1}{2(\alpha-1)}\frac{1}{a^{2(\alpha-1)}}\left(1-\left(\frac{a}{b}\right)^{2(\alpha-1)}\right)\right)-2\left(\frac{1}{2\alpha-3}\frac{1}{a^{2\alpha-3}}\left(1-\left(\frac{a}{b}\right)^{2\alpha-3}\right)\right)^2\\
        &\geq \frac{1}{2(\alpha-1)(\alpha-2)(2\alpha-3)}\frac{1}{a^{2(2\alpha-3)}}\left(1-2(2\alpha-3)\left(\frac{a}{b}\right)^{2(\alpha-2)}\right),
    \end{align*}
    and this implies that if
    \begin{align*}
        g(r,\theta)&=r^{-\alpha}\left(\mathfrak{a}_1\cos(\beta r)+\mathfrak{a}_2\sin(\beta r)\right)\\
        &=|\mathfrak{a}_1|g_1(r,\theta)+|\mathfrak{a}_2|g_2(r,\theta),
    \end{align*}
    then
    \begin{align*}
        \np{g_1}{2}{\Omega}^2=2\pi\int_{a}^br^{-2\alpha+1}\cos^2(\beta r)dr\leq \frac{\pi}{\alpha-1}\frac{1}{a^{2(\alpha-1)}}\left(1-\left(\frac{a}{b}\right)^{2(\alpha-1)}\right)
    \end{align*}
    and
    \begin{align*}
        \np{g_2}{2}{\Omega}^2\leq \frac{\pi\beta^2}{\alpha-2}\frac{1}{a^{2(\alpha-2)}}\left(1-\left(\frac{a}{b}\right)^{2(\alpha-1)}\right)
    \end{align*}
    and we get
    \begin{align}\label{cauchy_schwarz_précisé_alpha_négatif}
        |\mathfrak{a}_1||\mathfrak{a}_2|\frac{1}{a^{2\alpha-3}}\leq \frac{8\sqrt{(\alpha-1)(\alpha-2)}(2\alpha-3)}{\pi\beta}\int_{\Omega}|g|^2dx.
    \end{align}
    Therefore, we deduce by the same argument as in \eqref{non_algebrique_dépassé}
    \begin{align}\label{cauchy_schwarz_alpha_négatif}
        &\frac{\pi}{2(\alpha-1)}|\mathfrak{a}_1|^2\frac{1}{a^{2(\alpha-1)}}\left(1-\left(\frac{a}{b}\right)^{2(\alpha-1)}\right)+\frac{\pi\beta^2}{\alpha-2}|\mathfrak{a}_2|^2b^{2(\alpha-2)}\left(1-\left(\frac{a}{b}\right)^{2(\alpha-2)}\right)\nonumber\\
        &\leq \left(1+16\sqrt{(\alpha-1)(\alpha-2)}\right)\int_{\Omega}|g|^2dx.
    \end{align}
    The argument allows us to overcome the fact that $r^{\alpha}\cos(\beta r)$ has no primitive expressible in terms of elementary functions for all $\alpha\neq 0$. Notice that a trivial argument using the inequality $\cos^2(\beta r)\geq 1-\beta^2r^2$ would not be of much help since we would get a factor of the form $1-\beta^2b^2$ which would make the direct application of Cauchy's inequality fail. 
    Now, we rewrite identity \eqref{id_rad_un_square} as
    \begin{align*}
        2\,\mathrm{Rad}(u_n^2)&=\left|r^{1+\alpha_{m,n}}\left(a_n\cos(\beta_{m,n}r)+b_n\sin(\beta_{m,n}r)\right)+r^{1-\alpha_{m,n}}\left(c_n\,\cos(\beta_{m,n}r)+d_n\sin(\beta_{m,n}r)\right)\right|^2\\
        &=\left|r^{1+\alpha_{m,n}}\left(a_n\cos(\beta_{m,n}r)+b_n\sin(\beta_{m,n}r)\right)\right|^2+\left|r^{1-\alpha_{m,n}}\left(c_n\,\cos(\beta_{m,n}r)+d_n\sin(\beta_{m,n}r)\right)\right|^2\\
        &+2\,r^2\cos^2(\beta_{m,n}r)\Re\left(a_n\bar{c_n}\right)+2\,r^{2}\cos(\beta_{m,n}r)\sin(\beta_{m,n}r)\nonumber\\
        &+2\,r^2\cos(\beta_{m,n}r)\sin(\beta_{m,n}r)\Re\left(b_n\bar{c_n}\right)+2\,r^{2}\sin^2(\beta_{m,n}r)\Re\left(b_n\bar{d_n}\right).
    \end{align*}
    Recall that 
    \begin{align*}
        \alpha_{m,n}=\sqrt{\frac{\mu_{m,n}+(m-1)^2+n^2+1}{2}},
    \end{align*}
    where
    \begin{align*}
        \mu_{m,n}&=\sqrt{n^4+\left(6m(m-2)+4\right)n^2+m^2(m-2)^2}\\
        &> \sqrt{n^4+2m(m-2)n^2+m^2(m-2)^2}=n^2+m(m-2).
    \end{align*}
    Therefore, we get
    \begin{align*}
        \alpha_{m,n}>\sqrt{\frac{n^2+m(m-2)+(m-1)^2+n^2+1}{2}}=\sqrt{n^2+(m-1)^2}.
    \end{align*}
    We have (replacing respectively $\alpha$ by $\alpha_{m,n}-1+\alpha$ and $-\alpha$ by $-\alpha_{m,n}-1+\alpha$
    \begin{align*}
        &\int_{\Omega}\frac{u_n^2}{|x|^4}|x|^{2\alpha}dx=2\pi\int_{a}^b\bigg\{\left|r^{\alpha_{m,n}-1+\alpha}\left(a_n\cos(\beta_{m,n}r)+b_n\sin(\beta_{m,n}r)\right)\right|^2\\
        &+\left|r^{\alpha_{m,n}-1+\alpha}\left(c_n\,\cos(\beta_{m,n}r)+d_n\sin(\beta_{m,n}r)\right)\right|^2\\
        &+2\,r^{-2+2\alpha}\cos^2(\beta_{m,n}r)\Re\left(a_n\bar{c_n}\right)+2\,r^{-2+2\alpha}\cos(\beta_{m,n}r)\sin(\beta_{m,n}r)\nonumber\\
        &+2\,r^{-2+2\alpha}\cos(\beta_{m,n}r)\sin(\beta_{m,n}r)\Re\left(b_n\bar{c_n}\right)+2\,r^{-2+2\alpha}\sin^2(\beta_{m,n}r)\Re\left(b_n\bar{d_n}\right)\bigg\}r\,dr\\
        &\geq \frac{1}{1+16\sqrt{(\alpha_{m,n}+\alpha)(\alpha_{m,n}+1+\alpha)}}\left(\frac{\pi}{2(\alpha_{m,n}+\alpha)}b^{2(\alpha_{m,n}+\alpha)}|a_n|^2\left(1-\left(\frac{a}{b}\right)^{2(\alpha_{m,n}+\alpha)}\right)\right.\\
        &\left.+\frac{\pi\beta_{m,n}^2}{\alpha_{m,n}+1+\alpha}|b_n|^2b^{2(\alpha_{m,n}+1+\alpha)}\left(1-\left(\frac{a}{b}\right)^{2(\alpha_{m,n}+1+\alpha)}\right)\right)\\
        &+\frac{1}{1+16\sqrt{(\alpha_{m,n}-\alpha)}(\alpha_{m,n}-1-\alpha)}\left(\frac{\pi}{2(\alpha_{m,n}-\alpha)}|c_n|^2\frac{1}{a^{2(\alpha_{m,n}-\alpha)}}\left(1-\left(\frac{a}{b}\right)^{2(\alpha_{m,n}-\alpha)}\right)\right.\\
        &\left.+\frac{\pi\beta_{m,n}^2}{\alpha_{m,n}-1-\alpha}|d_n|^2\frac{1}{a^{2(\alpha_{m,n}-1-\alpha)}}\left(1-\left(\frac{a}{b}\right)^{2(\alpha_{m,n}-1-\alpha)}\right)\right)\\
        &-\frac{2\pi}{|\alpha|}\left(\left|\mathrm{Re}(a_n\bar{c_n})\right|+\left|\mathrm{Re}(a_n\bar{d_n})\right|+\left|\mathrm{Re}(b_n\bar{c_n})\right|+\left|\mathrm{Re}(b_n\bar{d_n})\right|\right)\left|b^{2\alpha}-a^{2\alpha}\right|.
    \end{align*}
    The crossed terms are easily estimated by Cauchy's inequality as in the proof of Theorem \ref{lm_last_lemma}. Let us only treat the case $\alpha>0$. Then, we have
    \begin{align*}
       &\frac{2}{\alpha}\left|\mathrm{Re}(a_n\bar{c_n})\right|b^{2\alpha}\left(1-\left(\frac{a}{b}\right)^{2\alpha}\right)\leq \epsilon|a_n|^2 b^{2(\alpha_{m,n}+\alpha)}+\frac{1}{\epsilon\alpha^2}|c_n|^2\frac{1}{b^{2(\alpha_{m,n}-\alpha)}}
    \end{align*}
    so we see that for all $0<\epsilon<\infty$ and $0<\delta<1$, there exists $M=M(\epsilon,\alpha)<\infty$ (independent of $|n|\geq 2$) such that 
    if
    \begin{align*}
        \log\left(\frac{b}{a}\right)\geq M(\epsilon,\alpha)
    \end{align*}
    then
    \begin{align*}
        &\frac{1}{1+16\sqrt{(\alpha_{m,n}+\alpha)(\alpha_{m,n}+1+\alpha)}}\frac{\pi}{2(\alpha_{m,n}+\alpha)}b^{2(\alpha_{m,n}+\alpha)}|a_n|^2\left(1-\left(\frac{a}{b}\right)^{2(\alpha_{m,n}+\alpha)}\right)\\
        &+\frac{1}{1+16\sqrt{(\alpha_{m,n}-\alpha)}(\alpha_{m,n}-1-\alpha)}\frac{\pi}{2(\alpha_{m,n}-\alpha)}|c_n|^2\frac{1}{a^{2(\alpha_{m,n}-\alpha)}}\left(1-\left(\frac{a}{b}\right)^{2(\alpha_{m,n}-\alpha)}\right)\\
        &-\frac{2}{\alpha}\left|\mathrm{Re}(a_n\bar{c_n})\right|b^{2\alpha}\left(1-\left(\frac{a}{b}\right)^{2\alpha}\right)\\
        &\geq \left(\frac{1}{1+16\sqrt{(\alpha_{m,n}+\alpha)(\alpha_{m,n}+1+\alpha)}}\frac{\pi}{2(\alpha_{m,n}+\alpha)}-\epsilon\right)b^{2(\alpha_{m,n}+\alpha)}|a_n|^2\left(1-\left(\frac{a}{b}\right)^{2(\alpha_{m,n}+\alpha)}\right)\\
        &+\left(\frac{1}{1+16\sqrt{(\alpha_{m,n}-\alpha)}(\alpha_{m,n}-1-\alpha)}-\delta\right)\frac{\pi}{2(\alpha_{m,n}-\alpha)}|c_n|^2\frac{1}{a^{2(\alpha_{m,n}-\alpha)}}\left(1-\left(\frac{a}{b}\right)^{2(\alpha_{m,n}-\alpha)}\right)
    \end{align*}
    Applying the same argument to the three remaining terms, we finally deduce that there exists $A=A(\alpha)$ such that 
    \begin{align*}
        \log\left(\frac{b}{a}\right)\geq A(\alpha),
    \end{align*}
    then
    \begin{align}\label{est_un_frak_lm}
        &\int_{\Omega}\frac{u_n^2}{|x|^4}|x|^{2\alpha}dx\geq \frac{1}{1+16\sqrt{(\alpha_{m,n}+\alpha)(\alpha_{m,n}+1+\alpha)}}\left(\frac{\pi}{2(\alpha_{m,n}+\alpha)}b^{2(\alpha_{m,n}+\alpha)}|a_n|^2\left(1-\left(\frac{a}{b}\right)^{2(\alpha_{m,n}+\alpha)}\right)\right.\nonumber\\
        &\left.+\frac{\pi\beta_{m,n}^2}{\alpha_{m,n}+1+\alpha}|b_n|^2b^{2(\alpha_{m,n}+1+\alpha)}\left(1-\left(\frac{a}{b}\right)^{2(\alpha_{m,n}+1+\alpha)}\right)\right)\nonumber\\
        &+\frac{1}{1+16\sqrt{(\alpha_{m,n}-\alpha)}(\alpha_{m,n}-1-\alpha)}\left(\frac{\pi}{2(\alpha_{m,n}-\alpha)}|c_n|^2\frac{1}{a^{2(\alpha_{m,n}-\alpha)}}\left(1-\left(\frac{a}{b}\right)^{2(\alpha_{m,n}-\alpha)}\right)\right.\nonumber\\
        &\left.+\frac{\pi\beta_{m,n}^2}{\alpha_{m,n}-1-\alpha}|d_n|^2\frac{1}{a^{2(\alpha_{m,n}-1-\alpha)}}\left(1-\left(\frac{a}{b}\right)^{2(\alpha_{m,n}-1-\alpha)}\right)\right).
    \end{align}
    Therefore, we need only estimate the weighted $L^2$ norm of $\leb_mu$ or $\mathfrak{L}_mu$ for $|n|$ large enough since we need to capture the linear growth in $|n|$ that appears in the Fourier coefficient of the weighted norm of 
    \begin{align*}
        \D u+(m-1)\frac{x}{|x|^2}u.
    \end{align*}

    \textbf{Step 5.} The asymptotic estimate as $|n|\rightarrow \infty$.
    Now, recalling that
    \begin{align*}
        &\leb_m=\p{r}^2+\frac{(2m-1)}{r}\p{r}+\frac{(m-1)^2}{r^2}+\frac{1}{r^2}\p{\theta}^2\\
        &\leb_m\left(r^{\alpha}\Re\left(a\,e^{in\theta}\right)\right)=\left(\alpha+(m-1)+n\right)\left(\alpha+(m-1)-n\right)r^{\alpha-2}\Re\left(a\,e^{in\theta}\right).
    \end{align*}
    Therefore, we deduce that
    \begin{align*}
        &\leb_m\left(r^{\alpha}\cos(\beta r)\Re\left(a\,e^{in\theta}\right)\right)=\bigg(\left(\alpha+(m-1)+n\right)\left(\alpha+(m-1)-n\right)r^{\alpha-2}\cos(\beta r)\nonumber\\
        &+2\,\p{r}\left(r^{\alpha}\right)\p{r}\left(\cos(\beta r)\right)+r^{\alpha}\p{r}^2\left(\cos(\beta r)\right)+r^{\alpha}\frac{2m-1}{r}\p{r}\left(\cos(\beta r)\right)\bigg)\Re\left(a\,e^{in\theta}\right)\\
        &=\Big(\left(\left(\alpha+(m-1)+n\right)\left(\alpha+(m-1)-n\right)-\beta^2\right)r^{\alpha-2}\cos(\beta r)\\
        &+\beta\left(2\alpha+2m-1\right)r^{\alpha-1}\sin(\beta r)\Big)\Re\left(a\,e^{in\theta}\right).
    \end{align*}
    Likewise, we have 
    \begin{align*}
        \leb_m\left(r^{\alpha}\sin(\beta r)\Re\left(a\,e^{in\theta}\right)\right)&=\Big(\left(\left(\alpha+(m-1)+n\right)\left(\alpha+(m-1)-n\right)-\beta^2\right)r^{\alpha-2}\sin(\beta r)\\
        &+\beta\left(2\alpha+2m-1\right)r^{\alpha-1}\cos(\beta r)\Big)\Re\left(a\,e^{in\theta}\right).
    \end{align*}
    Since
    \begin{align*}
        &u_n=r^{1+\alpha_{m,n}}\cos\left(\beta_{m,n}r\right)\Re\left(a_ne^{in\theta}\right)+r^{1+\alpha_{m,n}}\sin\left(\beta_{m,n}r\right)\Re\left(b_ne^{in\theta}\right)\nonumber\\
        &+r^{1-\alpha_{m,n}}\cos\left(\beta_{m,n}r\right)\Re\left(c_ne^{in\theta}\right)+r^{1-\alpha_{m,n}}\sin\left(\beta_{m,n}r\right)\Re\left(d_ne^{in\theta}\right),
    \end{align*}
    we deduce that 
    \begin{align*}
        &\leb_mu_n=\Big\{\big(\left(\alpha_{m,n}+m+n\right)\left(\alpha_{m,n}+m-n\right)-\beta_{m,n}^2\big)r^{\alpha_{m,n}-1}\cos(\beta_{m,n}r)\\
        &+\beta_{m,n}\left(2\alpha_{m,n}+2m+1\right)r^{\alpha_{m,n}}\sin(\beta_{m,n}r)\Big\}
         \Re\left(a_n\,e^{in\theta}\right)\\
         &+\Big\{\big(\left(\alpha_{m,n}+m+n\right)\left(\alpha_{m,n}+m-n\right)-\beta_{m,n}^2\big)r^{\alpha_{m,n}-1}\sin(\beta_{m,n}r)\\
        &+\beta_{m,n}\left(2\alpha_{m,n}+2m+1\right)r^{\alpha_{m,n}}\cos(\beta_{m,n}r)\Big\}
         \Re\left(b_n\,e^{in\theta}\right)\\
         &+\Big\{\big(\left(-\alpha_{m,n}+m+n\right)\left(-\alpha_{m,n}+m-n\right)-\beta_{m,n}^2\big)r^{-\alpha_{m,n}-1}\cos(\beta_{m,n}r)\\
        &+\beta_{m,n}\left(-2\alpha_{m,n}+2m+1\right)r^{\alpha_{m,n}}\sin(\beta_{m,n}r)\Big\}
         \Re\left(c_n\,e^{in\theta}\right)\\
         &+\Big\{\big(\left(-\alpha_{m,n}+m+n\right)\left(-\alpha_{m,n}+m-n\right)-\beta_{m,n}^2\big)r^{-\alpha_{m,n}-1}\sin(\beta_{m,n}r)\\
        &+\beta_{m,n}\left(-2\alpha_{m,n}+2m+1\right)r^{-\alpha_{m,n}}\cos(\beta_{m,n}r)\Big\}
         \Re\left(d_n\,e^{in\theta}\right).
    \end{align*}
    On the other hand, we have $\beta_{m,n}\leq 2m(m-2)$, so we get
    \begin{align*}
        &\left(\alpha_m+m+n\right)\left(\alpha_{m,n}+m-n\right)-\beta_{m,n}^2=\left(\alpha_{m,n}+m\right)^2-\left(n^2+\beta_{m,n}^2\right)\\
        &=\alpha_{m,n}^2+2m\,\alpha_{m,n}+m^2-\left(n^2+\beta_{m,n}^2\right)\\
        &=\frac{\mu_{m,n}+(m-1)^2+n^2+1}{2}+2m\,\alpha_{m,n}+m^2-\left(n^2+\frac{\mu_{m,n}-\left((m-1)^2+n^2+1\right)}{2}\right)\\
        &=2m\,\alpha_{m,n}+m^2+(m-1)^2+1>2m\sqrt{n^2+(m-1)^2}+2(m(m-1)+1)>2m|n|+2(m(m-1)+1).
    \end{align*}
    On the other hand, we have
    \begin{align*}
        &\left(-\alpha_{m,n}+m+n\right)\left(-\alpha_{m,n}+m-n\right)-\beta_{m,n}^2=\alpha_{m,n}^2-2m\alpha_{m,n}+m^2-\left(n^2+\beta_{m,n}^2\right)\\
        &=\frac{\mu_{m,n}+(m-1)^2+n^2+1}{2}-2m\,\alpha_{m,n}+m^2-\left(n^2+\frac{\mu_{m,n}-\left((m-1)^2+n^2+1\right)}{2}\right)\\
        &=-2m\,\alpha_{m,n}+m^2+(m-1)^2+1=\frac{-4m^2\alpha_{m,n}^2+\left(m^2+(m-1)^2+1\right)^2}{4m^2\alpha_{m,n}+m^2+(m-1)^2+1}.
    \end{align*}
    Then, we have
    \begin{align*}
        &-4m^2\alpha_{m,n}^2+\left(m^2+(m-1)^2+1\right)^2%=-2m^2\left(\mu_{m,n}+(m-1)^2+n^2+1\right)+m^4+(m-1)^4+1+2m^2(m-1)^2+2m^2+2(m-1)^2\\
        =-2m^2\mu_{m,n}-2m^2n^2+m^4+(m-1)^4+2(m-1)^2.
    \end{align*}
    Therefore, we have $-4m^2\alpha_{m,n}^2+\left(m^2+(m-1)^2+1\right)^2<0$ if and only if
    \begin{align*}
        2m^2(\mu_{m,n}+n^2)>m^4+(m-1)^4+(m-1)^2.
    \end{align*}
    Using the inequality $\mu_{m,n}>n^2+m(m-2)$, we get 
    \begin{align*}
        2m^2(\mu_{m,n}+n^2)>4m^2n^2+2m^3(m-2)\geq m^4+(m-1)^4+(m-1)^2\Longleftrightarrow n^2>2-\frac{2}{m}+\frac{3}{4m^2}.
    \end{align*}
    Notice that the right-hand side is always lower than $2$ (since $m\geq 3/2$), and since $|n|\geq 2$, we actually get
    \begin{align*}
        -4m^2\alpha_{m,n}^2+\left(m^2+(m-1)^2+1\right)^2<-8m^2<0.
    \end{align*}
    On the other hand, we have
    \begin{align*}
        2(1-\alpha_{m,n})+2m-1=-2\alpha_{m,n}+2m+1<0 \Longleftrightarrow 2\left(\mu_{m,n}+(m-1)^2+n^2+1\right)>(2m+1)^2
    \end{align*}
    and this holds if and only if
    \begin{align*}
        2\mu_{m,n}+2n^2>2m^2+8m-3.
    \end{align*}
    Since $\mu_{m,n}>n^2+m(m-2)$, this inequality is true in particular if 
    \begin{align*}
        4n^2>2m^2+8m-3-2m(m-2)=12m-3
    \end{align*}
    which only holds for $|n|$ large enough. Instead, if $2m^2+8m-3-2n^2>0$, the inequality becomes
    \begin{align*}
        &4\left(n^4+(6m(m-2)+4)n^2+m^2(m-2)^2\right)>(2m^2+8m-3-2n^2)^2 \\
        &\Longleftrightarrow 32 \, m^{2} n^{2} - 48 \, m^{3} - 16 \, m n^{2} - 36 \, m^{2} + 4 \, n^{2} + 48 \, m - 9>0\\
        &\Longleftrightarrow 4(8m^2-4m+1)n^2-12(4m^3+3m^2-4m)-9>0\\
        &\Longleftrightarrow n^2>\frac{12(4m^3+3m^2-4m)+9}{4(8m^2-4m+1)}.
    \end{align*}
    Since $n^2<m^2+4m-\frac{3}{2}$, this inequality holds for some $n$ provided that 
    \begin{align*}
        \frac{12(4m^3+3m^2-4m)+9}{4(8m^2-4m+1)}<m^2+4m-\frac{3}{2}\Longleftrightarrow 32 \, m^{4} + 64 \, m^{3} - 144 \, m^{2} + 88 \, m - 15 > 0
    \end{align*}
    which is always true. Now, it is apparent that we must control the following quantity
    \begin{align*}
        g(r,\theta)&=\mathfrak{a}_1\left(r^{\alpha}\cos(\beta r)+\lambda\,r^{\alpha+1}\sin(\beta r)\right)+\mathfrak{a}_2\left(r^{\alpha}\sin(\beta r)+\lambda\,r^{\alpha+1}\cos(\beta r)\right)\\
        &=r^{\alpha}\left(\mathfrak{a}_1\cos(\beta r)+\mathfrak{a}_2\sin(\beta r)\right)+\lambda\,r^{\alpha+1}\left(\mathfrak{a}_1\sin(\beta r)+\mathfrak{a}_2\cos(\beta r)\right).
    \end{align*}
    where $\lambda\in \R$. First, we want to refine \eqref{non_algebrique_dépassé}. For all $0<\epsilon<1$ and $0<\beta<\infty$, there exists $0<b=b(\epsilon,\beta)<\infty$ such that 
    \begin{align*}
    \left\{\begin{alignedat}{2}
        &1-\epsilon\leq \cos(\beta r)\leq 1\qquad &&\text{for all}\;\, 0<r<b\\
        &(1-\epsilon)\beta r\leq \sin(\beta r)\leq \sin(\beta r)\qquad &&\text{for all}\;\, 0<r<b.
    \end{alignedat}\right.
    \end{align*}
    Then, \eqref{cross_term_est1} becomes
    \begin{align*}
        &\int_{\Omega\times \Omega}\left|\det\begin{pmatrix}
            f_1(x) & f_2(x)\\
            f_1(y) & f_2(y)
        \end{pmatrix}\right|^2dx\,dy=4\pi^2\int_{a}^{b}\int_{a}^{b}\sin^2\left(\beta(r-s)\right)r^{2\alpha+1}s^{\alpha+1}dr\,ds\\
        &\geq 4\pi^2(1-\epsilon)^2\beta^2\int_{a}^{b}\int_{a}^{b}(r-s)^2r^{2\alpha+1}s^{2\alpha+1}dr\,ds. 
    \end{align*}
    The next estimate must also be handled more carefully. We have
    \begin{align*}
        &\frac{1}{2}\int_{a}^{b}\int_{a}^{b}(r-s)^2r^{2\alpha+1}s^{2\alpha+1}dr\,ds=\frac{b^{2(2\alpha+3)}}{4(\alpha+1)(\alpha+2)}\left(1-\left(\frac{a}{b}\right)^{2(\alpha+1)}-\left(\frac{a}{b}\right)^{2(\alpha+2)}+\left(\frac{a}{b}\right)^{2(2\alpha+3)}\right)\\
        &-\frac{b^{2(2\alpha+3)}}{(2\alpha+3)^2}\left(1-2\left(\frac{a}{b}\right)^{2\alpha+3}+\left(\frac{a}{b}\right)^{2(2\alpha+3)}\right)\\
        &=\frac{1}{4(\alpha+1)(\alpha+2)(2\alpha+3)^2}b^{2(2\alpha+3)}\\
        &\times \left(1-(2\alpha+3)^2\left(\left(\frac{a}{b}\right)^{2(\alpha+1)}+\left(\frac{a}{b}\right)^{2(\alpha+2)}\right)+8(\alpha+1)(\alpha+2)\left(\frac{a}{b}\right)^{2\alpha+3}+\left(\frac{a}{b}\right)^{2(2\alpha+3)}\right)
    \end{align*}
    Since we computed the integral of a function positive on a set of full measure (since it only vanishes on the diagonal $\Delta=\Omega\times \Omega\cap\ens{(x,y):x=y}$), we deduce in particular that for all $0<a<b<\infty$, we have 
    \begin{align*}
        \Lambda\left(\alpha,\log\left(\frac{b}{a}\right)\right)&=1-(2\alpha+3)^2\left(\left(\frac{a}{b}\right)^{2(\alpha+1)}+\left(\frac{a}{b}\right)^{2(\alpha+2)}\right)+8(\alpha+1)(\alpha+2)\left(\frac{a}{b}\right)^{2\alpha+3}+\left(\frac{a}{b}\right)^{2(2\alpha+3)}\\
        &>0,
    \end{align*}
    where 
    \begin{align*}
        \Lambda(\alpha,t)=1-(2\alpha+3)^2\left(e^{-2(\alpha+1)t}+e^{-2(\alpha+2)t}\right)+8(\alpha+1)(\alpha+2)e^{-(2\alpha+3)t}+e^{-2(2\alpha+3)t}, \quad t> 0.
    \end{align*}
    Therefore, we deduce that
    \begin{align*}
        \int_{\Omega\times \Omega}\left|\det\begin{pmatrix}
            f_1(x) & f_2(x)\\
            f_1(y) & f_2(y)
        \end{pmatrix}\right|^2dx\,dy&=4\pi^2\beta^2(1-\epsilon)^2\times \frac{1}{2(\alpha+1)(\alpha+2)(2\alpha+3)}b^{2(2\alpha+3)}\Lambda\left(\alpha,\log\left(\frac{b}{a}\right)\right)\\
        &=\frac{2\pi^2\beta^2(1-\epsilon)^2}{(\alpha+1)(\alpha+2)(2\alpha+3)}b^{2(2\alpha+3)}\Lambda\left(\alpha,\log\left(\frac{b}{a}\right)\right),
    \end{align*}
    while
    \begin{align*}
        \np{f_1}{2}{\Omega}\np{f_2}{2}{\Omega}\leq \frac{\pi\beta }{\alpha+1}b^{2\alpha+3}\sqrt{\left(1-\left(\frac{a}{b}\right)^{2(\alpha+1)}\right)\left(1-\left(\frac{a}{b}\right)^{2(\alpha+2)}\right)},
    \end{align*}
    and we finally get
    \begin{align*}
        &\frac{\pi^2\beta^2(1-\epsilon)^2}{(\alpha+1)(\alpha+2)(2\alpha+3)}|\mathfrak{a}_1||\mathfrak{a_2}|b^{2(2\alpha+3)}\Lambda\left(\alpha,\log\left(\frac{b}{a}\right)\right)\\
        &\leq \frac{\pi\beta }{\sqrt{(\alpha+1)(\alpha+2)}}b^{2\alpha+3}\sqrt{\left(1-\left(\frac{a}{b}\right)^{2(\alpha+1)}\right)\left(1-\left(\frac{a}{b}\right)^{2(\alpha+2)}\right)}\int_{\Omega}|f|^2dx,
    \end{align*}
    which becomes
    \begin{align}
        &|\mathfrak{a}_1||\mathfrak{a}_2|b^{2(2\alpha+3)}\label{precised_crossed_non-algébrique}\\
        &\leq \frac{\beta\sqrt{(\alpha+1)(\alpha+2)}(2\alpha+3)\sqrt{\left(1-\left(\frac{a}{b}\right)^{2(\alpha+1)}\right)\left(1-\left(\frac{a}{b}\right)^{2(\alpha+2)}\right)}}{\pi(1-\epsilon)^2\left(1-(2\alpha+3)^2\left(\left(\frac{a}{b}\right)^{2(\alpha+1)}+\left(\frac{a}{b}\right)^{2(\alpha+2)}\right)+8(\alpha+1)(\alpha+2)\left(\frac{a}{b}\right)^{2\alpha+3}+\left(\frac{a}{b}\right)^{2(2\alpha+3)}\right)}\int_{\Omega}|f|^2dx.\nonumber
    \end{align}
    
    Likewise, if $\alpha>2$ and
    \begin{align*}
        g(r,\theta)=r^{-\alpha}\mathfrak{a}_1\cos(\beta r)+r^{-\alpha}\mathfrak{a}_2\sin(\beta r),
    \end{align*}
    we have
    \begin{align}\label{precised_crossed_non-algébrique_2}
        &|\mathfrak{a}_1||\mathfrak{a}_2|\frac{1}{a^{2(2\alpha-3)}}
        \\
        &\leq \frac{\beta\sqrt{(\alpha-1)(\alpha-2)}(2\alpha-3)\sqrt{\left(1-\left(\frac{a}{b}\right)^{2(\alpha-1)}\right)\left(1-\left(\frac{a}{b}\right)^{2(\alpha-2)}\right)}}{\pi(1-\epsilon)^2\left(1-(2\alpha-3)\left(\left(\frac{a}{b}\right)^{2(\alpha-1)}+\left(\frac{a}{b}\right)^{2(\alpha-2)}\right)+8(\alpha-1)(\alpha-2)\left(\frac{a}{b}\right)^{2\alpha-3}\right)+\left(\frac{a}{b}\right)^{2(2\alpha-3)}}\int_{\Omega}|g|^2dx.\nonumber
    \end{align}
    
    Now, for $\lambda>0$, we can actually make a direct estimate since the crossed terms yield an almost positive contribution. Indeed, they are equal to
    \begin{align}
    	&\lambda\,r^{2\alpha+1}\Re\left(\left(\mathfrak{a}_1\cos(\beta r)+\mathfrak{a}_2\sin(\beta r)\right)\left(\bar{\mathfrak{a}_1}\sin(\beta r)+\bar{\mathfrak{a}_2}\cos(\beta r)\right)\right)\nonumber\\
    	&=\lambda\,r^{2\alpha+1}\left(\left(|\mathfrak{a}_1|^2+|\mathfrak{a}_2|^2\right)\cos(\beta r)\sin(\beta r)+\mathrm{Re}\left(\mathfrak{a}_1\bar{\mathfrak{a}_2}\right)\right)\nonumber\\
    	&\geq (1-\epsilon)\beta\lambda\,r^{2\alpha+2}+\lambda\,\mathrm{Re}\left(\mathfrak{a}_1\bar{\mathfrak{a}_2}\right)r^{2\alpha+1}.
    \end{align}
    However, we will not get the growth in $|n|$ in the coefficient so we will instead use $\mathfrak{L}_mu_n$. Recall that
    \begin{align*}
    	8\frac{z^2}{|z|^2}\mathfrak{L}_m\left(r^{\alpha}\Re\left(a\,e^{in\theta}\right)\right)
    	&=(\alpha+m+n-1)(\alpha+m+n-3)a\,r^{\alpha-2}e^{in\theta}\\
    	&+(\alpha+m-n-1)(\alpha+m-n-3)\bar{a}\,r^{\alpha-2}e^{-in\theta}.
    \end{align*}
    Recall also that
    \begin{align}
    	\p{z}=\frac{1}{2}\left(e^{-i\theta}\p{r}-i\,e^{-i\theta}\frac{1}{r}\p{\theta}\right).
    \end{align}
    Therefore, we have
    \begin{align}
    	&\p{z}(r^{\alpha}\cos(\beta r)e^{in\theta})=\frac{1}{2}\left(\left(\alpha+n\right)r^{\alpha-1}\cos(\beta r)-\beta\,r^{\alpha}\sin(\beta r)\right)e^{i(n-1)\theta}\nonumber\\
    	&\p{z}^2\left(r^{\alpha}\cos(\beta r)e^{in\theta}\right)=\frac{1}{4}\left((\alpha+n)(\alpha-1)r^{\alpha-2}\cos(\beta r)+\beta(\alpha+n)r^{\alpha-1}\cos(\beta r)-\alpha\beta r^{\alpha-1}\sin(\beta r)\right.\nonumber\\
    	&\left.-\beta^2r^{\alpha}\cos(\beta r)-n\left((\alpha+n)r^{\alpha-1}\cos(\beta r)-\beta \,r^{\alpha}\sin(\beta r)\right)\right)e^{i(n-2)\theta}\nonumber\\
        &=\frac{1}{4}\left((\alpha+n)(\alpha+n-2)r^{\alpha-2}\cos(\beta r)-\beta(2\alpha+2n-1)\sin(\beta r)-\beta^2r^{\alpha}\cos(\beta r)\right)e^{i(n-2)\theta}.
    \end{align}
    Therefore, we get
    \begin{align*}
        \mathfrak{L}_m\left(r^{\alpha}\cos(\beta r)e^{in\theta}\right)&=\frac{1}{4}\left((\alpha+m+n-2)(\alpha+m+n-3)r^{\alpha-2}\cos(\beta r)\right.\\
        &\left.-\beta(2\alpha+2m+2n-3)r^{\alpha-1}\sin(\beta r)-\beta^2r^{\alpha}\cos(\beta r)\right)e^{i(n-2)\theta}.
    \end{align*}
    Likewise, we have
    \begin{align*}
        \mathfrak{L}_m\left(r^{\alpha}\sin(\beta r)e^{in\theta}\right)&=\frac{1}{4}\left((\alpha+m+n-2)(\alpha+m+n-3)r^{\alpha-2}\sin(\beta r)\right.\\
        &\left.+\beta(2\alpha+2m+2n-3)r^{\alpha-1}\cos(\beta r)-\beta^2r^{\alpha}\sin(\beta r)\right)e^{i(n-2)\theta},
    \end{align*}
    and finally
    \begin{align*}
        &8\frac{z^2}{|z|^2}\mathfrak{L}_m\left(r^{\alpha}\cos(\beta r)\Re\left(a\,e^{in\theta}\right)\right)=\left((\alpha+m+n-1)(\alpha+m+n-3)\cos(\beta r)\right.\\
        &\left.-\beta(2\alpha+2m+2n-3)r\sin(\beta r)-\beta^2r^2\cos(\beta r)\right)a\,r^{\alpha-2}e^{in\theta}\\
        &+\left((\alpha+m-n-1)(\alpha+m-n-3)\cos(\beta r)\right.\\
        &\left.-\beta(2\alpha+2m-2n-3)r\sin(\beta r)-\beta^2r^2\cos(\beta r)\right)a\,r^{\alpha-2}e^{-in\theta}
    \end{align*}
    and
    \begin{align*}
        &8\frac{z^2}{|z|^2}\mathfrak{L}_m\left(r^{\alpha}\sin(\beta r)\Re\left(a\,e^{in\theta}\right)\right)=\left((\alpha+m+n-1)(\alpha+m+n-3)\sin(\beta r)\right.\\
        &\left.+\beta(2\alpha+2m+2n-3)r\cos(\beta r)-\beta^2r^2\sin(\beta r)\right)a\,r^{\alpha-2}e^{in\theta}\\
        &+\left((\alpha+m-n-1)(\alpha+m-n-3)\sin(\beta r)\right.\\
        &\left.+\beta(2\alpha+2m-2n-3)r\cos(\beta r)-\beta^2r^2\sin(\beta r)\right)a\,r^{\alpha-2}e^{-in\theta}.
    \end{align*}
    Therefore, we see that the main coefficient behaves likes $n^2$, which will yield a $n^3$ growth at infinity if it were not for the crossed terms. Taking them into account and using inequalities, as $\alpha_{m,n}=|n|+m(m-2)+1+O\left(\frac{1}{n^2}\right)$, we see by inequalities \eqref{precised_crossed_non-algébrique} and \eqref{precised_crossed_non-algébrique_2} (see also \eqref{non_algebrique_dépassé}) that we will only get an inequality of the type (where $\Gamma$ is a universal constant)
    \begin{align*}
        |a_n||b_n|\leq \Gamma\,n^2\int_{\Omega}|\mathfrak{L}_mu_n|^2\left(\left(\frac{|x|}{b}\right)^{2\alpha}+\left(\frac{a}{|x|}\right)^{2\alpha}\right)dx,
    \end{align*}
    which in turns will imply by the same argument in the \textbf{Step 4} that 
    \begin{align*}
        &\pi|n|\left(|a_n|^2b^{2\alpha_{m,n}}+|b_n|^2b^{2(\alpha_{m,n}+1)}+|c_n|^2\frac{1}{a^{2\alpha_{m,n}}}+|d_n|^2\frac{1}{a^{2(\alpha_{m,n}-1)}}\right)\\
        &\leq \Gamma\int_{\Omega}|\mathfrak{L}_mu_n|^2\left(\left(\frac{|x|}{b}\right)^{2\alpha}+\left(\frac{a}{|x|}\right)^{2\alpha}\right)dx. 
    \end{align*}
    Therefore, the $n^3$ growth turns into a growth in $|n|$, and this matches the growth found in \eqref{norm_op_u}. This estimate for $\dfrac{1}{r}\p{\theta}u$ follows also and this concludes the proof of the theorem. 
    \end{proof}

\section{Bochner's Identity and the Eigenvalue Estimate}

\begin{cor}\label{gagliardo_nirenberg_frak_m}
    Fix $3\leq m<\infty$. For all $0<\gamma<1$ and for all $\gamma/2\leq \beta<\min\ens{\dfrac{m-1}{4},1}$, there exists a constant $\Lambda_{\gamma}<\infty$ such that for all $0<a<b<\infty$ such that
    \begin{align*}
        \log\left(\frac{b}{a}\right)\geq \Lambda_{\gamma},
    \end{align*}
    the following property is verified. Define $\Omega=B_b\setminus\bar{B}_a(0)\subset\R^2$. There exists a constant $C_{m,\beta,\gamma}<\infty $ such that for all $u\in W^{2,2}(\Omega)$, 
    \begin{align}\label{gagliardo_nirenberg_frak_m_ineq}
        &\int_{\Omega}\frac{1}{|x|^2}\left|\D u+(m-1)\frac{x}{|x|^2}u\right|^2\left(\left(\frac{|x|}{b}\right)^{2\gamma}+\left(\frac{a}{|x|}\right)^{2\gamma}\right)dx\leq C_{m,\beta,\gamma}\left(\int_{\Omega}\frac{u^2}{|x|^4}\left(\left(\frac{|x|}{b}\right)^{4\beta}+\left(\frac{a}{|x|}\right)^{4\beta}\right)dx\right.\nonumber\\
        &\left.+\int_{\Omega}\left(\leb_m u\right)^2\left(\left(\frac{|x|}{b}\right)^{2\gamma}+\left(\frac{a}{|x|}\right)^{2\gamma}\right)dx+\int_{\Omega}\left|\mathfrak{L}_mu\right|^2\left(\left(\frac{|x|}{b}\right)^{2\gamma}+\left(\frac{a}{|x|}\right)^{2\gamma}\right)dx\right).
    \end{align}
\end{cor}
\begin{proof}
    Make the decomposition $u=u_1+u_2$, where $u_1$ solves the elliptic system
    \begin{align}\label{elliptic_shift}
        \left\{\begin{alignedat}{2}
            \mathrm{Re}\left(\bar{\mathfrak{L}_m}^{\ast}\left(\left(\frac{|x|}{b}\right)^{2\gamma}+\left(\frac{a}{|x|}\right)^{2\gamma}\right)\mathfrak{L}_m\right)u_1&=0\qquad&&\text{in}\;\,\Omega\\
            u_1&=u\qquad&&\text{on}\;\,\partial \Omega
        \end{alignedat}\right.
    \end{align}
    that we obtain by direct minimisation of 
    \begin{align*}
        \mathfrak{E}_m(v)=\int_{\Omega}|\mathfrak{L}_mv|^2\left(\left(\frac{|x|}{b}\right)^{2\gamma}+\left(\frac{a}{|x|}\right)^{2\gamma}\right)dx
    \end{align*}
    on $W^{2,2}_u(\Omega)$. In particular, $u_2\in W^{2,2}_0(\Omega)$, which implies by Theorem \ref{poincare_weight_m_2} that for all $0<\gamma<1$ (since $\dfrac{m-1}{2}\geq 1$), we have
    \begin{align}\label{final_frak_lm_1}
        &\int_{\Omega}\frac{1}{|x|^2}\left|\D u_2+(m-1)\frac{x}{|x|^2}u_2\right|^2\left(\left(\frac{|x|}{b}\right)^{2\gamma}+\left(\frac{a}{|x|}\right)^{2\gamma}\right)dx\nonumber\\
        &\leq C_{m,\gamma}\int_{\Omega}|\mathfrak{L}_mu_2|^2\left(\left(\frac{|x|}{b}\right)^{2\gamma}+\left(\frac{a}{|x|}\right)^{2\gamma}\right)dx.
    \end{align}
    Furthermore, since $u_1$ minimises $\mathfrak{E}_m$ on $W^{2,2}_u(\Omega)$, we have
    \begin{align*}
        \int_{\Omega}|\mathfrak{L}_mu_1|^2\left(\left(\frac{|x|}{b}\right)^{2\gamma}+\left(\frac{a}{|x|}\right)^{2\gamma}\right)dx\leq \int_{\Omega}|\mathfrak{L}_mu|^2\left(\left(\frac{|x|}{b}\right)^{2\gamma}+\left(\frac{a}{|x|}\right)^{2\gamma}\right)dx.
    \end{align*}
    Therefore, we have
    \begin{align}\label{final_frak_lm_extra}
        &\int_{\Omega}\left|\mathfrak{L}_mu_2\right|^2\left(\left(\frac{|x|}{b}\right)^{2\gamma}+\left(\frac{a}{|x|}\right)^{2\gamma}\right)dx\leq 2\int_{\Omega}\left|\mathfrak{L}_mu_1\right|^2\left(\left(\frac{|x|}{b}\right)^{2\gamma}+\left(\frac{a}{|x|}\right)^{2\gamma}\right)dx\nonumber\\
        &+2\int_{\Omega}\left|\mathfrak{L}_mu\right|^2\left(\left(\frac{|x|}{b}\right)^{2\gamma}+\left(\frac{a}{|x|}\right)^{2\gamma}\right)dx
        \leq 4\int_{\Omega}\left|\mathfrak{L}_mu\right|^2\left(\left(\frac{|x|}{b}\right)^{2\gamma}+\left(\frac{a}{|x|}\right)^{2\gamma}\right)dx.
    \end{align}
    Furthermore, the proof of Theorem \ref{weighted_estimate_frak_lm} shows that the theorem also holds for solutions of \eqref{elliptic_shift} (the explanation is that adding the weight \emph{a priori} in the PDE has the same effect to shift the frequencies as if we added them \emph{a posteriori}) 
    so we estimate for all $0<\gamma<1$ and $0<\beta<\mathrm{min}\ens{\dfrac{m-1}{4},1}$
    \begin{align}\label{final_frak_lm_2}
        &\int_{\Omega}\frac{1}{|x|^2}\left|\D u_1+(m-1)\frac{x}{|x|^2}u_1\right|^2\left(\left(\frac{|x|}{b}\right)^{2\gamma}+\left(\frac{a}{|x|}\right)^{2\gamma}\right)dx\leq C_{m,\gamma}\left(\int_{\Omega}\frac{u_1^2}{|x|^4}\left(\left(\frac{|x|}{b}\right)^{4\beta}+\left(\frac{a}{|x|}\right)^{4\beta}\right)dx\right.\nonumber\\
        &\left.+\int_{\Omega}(\leb_mu_1)^2\left(\left(\frac{|x|}{b}\right)^{2\gamma}+\left(\frac{a}{|x|}\right)^{2\gamma}\right)dx+\int_{\Omega}\left|\mathfrak{L}_mu_1\right|^2\left(\left(\frac{|x|}{b}\right)^{2\gamma}+\left(\frac{a}{|x|}\right)^{2\gamma}\right)dx\right).
    \end{align}
    We first have
    \begin{align*}
        &\int_{\Omega}\frac{u_1^2}{|x|^4}\left(\left(\frac{|x|}{b}\right)^{4\beta}+\left(\frac{a}{|x|}\right)^{4\beta}\right)dx\\
        &\leq 2\int_{\Omega}\frac{u^2}{|x|^4}\left(\left(\frac{|x|}{b}\right)^{4\beta}+\left(\frac{a}{|x|}\right)^{4\beta}\right)dx+2\int_{\Omega}\frac{u_2^2}{|x|^4}\left(\left(\frac{|x|}{b}\right)^{4\beta}+\left(\frac{a}{|x|}\right)^{4\beta}\right)dx\\
        &\leq 2\int_{\Omega}\frac{u^2}{|x|^4}\left(\left(\frac{|x|}{b}\right)^{4\beta}+\left(\frac{a}{|x|}\right)^{4\beta}\right)dx+C_{m,\beta}\int_{\Omega}|\mathfrak{L}_mu_2|^2\left(\left(\frac{|x|}{b}\right)^{4\beta}+\left(\frac{a}{|x|}\right)^{4\beta}\right)dx\\
        &\leq 2\int_{\Omega}\frac{u^2}{|x|^4}\left(\left(\frac{|x|}{b}\right)^{4\beta}+\left(\frac{a}{|x|}\right)^{4\beta}\right)dx+C_{m,\beta}\int_{\Omega}\left|\mathfrak{L}_mu_2\right|^2\left(\left(\frac{|x|}{b}\right)^{2\gamma}+\left(\frac{a}{|x|}\right)^{2\gamma}\right)dx\\
        &\leq 2\int_{\Omega}\frac{u^2}{|x|^4}\left(\left(\frac{|x|}{b}\right)^{4\beta}+\left(\frac{a}{|x|}\right)^{4\beta}\right)dx+4\,C_{m,\beta}\int_{\Omega}\left|\mathfrak{L}_mu\right|^2\left(\left(\frac{|x|}{b}\right)^{2\gamma}+\left(\frac{a}{|x|}\right)^{2\gamma}\right)dx
    \end{align*}
    provided that $4\beta\geq 2\gamma$, where we used \eqref{final_frak_lm_extra}.
    Then, notice that for all $v\in W^{2,2}_0(\Omega)$
    \begin{align*}
        &\int_{\Omega}|\p{z}^2v|^2|z|^{2\alpha}|dz|^2=\int_{\Omega}z^{\alpha}\z^{\alpha}\p{z}^2v\cdot \p{\z}^2v|dz|^2=-\int_{\Omega}z^{\alpha}\z^{\alpha}\p{z}(\p{z\z}^2v)\cdot \p{\z}v\,|dz|^2\\
        &-\alpha\int_{\Omega}z^{\alpha}\z^{\alpha-1}\p{z}^2v\cdot \p{\z}v\,|dz|^2\\
        &=\int_{\Omega}|z|^{2\alpha}|\p{z\z}^2v|^2|dz|^2+\alpha\int_{\Omega}z^{\alpha-1}\z^{\alpha}\p{z\z}^2v\cdot \p{\z}u\,|dz|^2-\alpha\int_{\Omega}z^{\alpha}\z^{\alpha-1}\p{z}^2v\cdot \p{\z}v|dz|^2.
    \end{align*}
    Therefore, we deduce that
    \begin{align*}
        \int_{\Omega}|z|^{2\alpha}|\p{z\z}^2v|^2|dz|^2\leq \frac{1}{2}\int_{\Omega}|z|^{2\alpha}|\p{z\z}^2v|^2|dz|^2+\frac{3\alpha^2}{2}\int_{\Omega}\frac{|\p{z}v|^2}{|z|^2}|z|^{2\alpha}|dz|^2+\frac{3}{2}\int_{\Omega}|z|^{2\alpha}|\p{z}^2v|^2|dz|^2,
    \end{align*}
    and this implies that
    \begin{align*}
        \int_{\Omega}(\Delta v)^2|x|^{2\alpha}dx\leq 12\alpha^2\int_{\Omega}|\p{z}v|^2|z|^{2\alpha}|dz|^2+12\int_{\Omega}|\p{z}^2v|^2|dz|^2,
    \end{align*}
    or
    \begin{align*}
        &\int_{\Omega}\left(\leb_mu_2\right)^2\left(\left(\frac{|x|}{b}\right)^{2\gamma}+\left(\frac{a}{|x|}\right)^{2\gamma}\right)dx\\
        &\leq 12\gamma^2\int_{\Omega}\frac{1}{|x|^2}\left|\D u_2+(m-1)\frac{x}{|x|^2}u_2\right|^2\left(\left(\frac{|x|}{b}\right)^{2\gamma}+\left(\frac{a}{|x|}\right)^{2\gamma}\right)dx\\
        &+12\int_{\Omega}|\mathfrak{L}_mu_2|^2\left(\left(\frac{|x|}{b}\right)^{2\gamma}+\left(\frac{a}{|x|}\right)^{2\gamma}\right)dx\\
        &\leq C_{m,\gamma}\int_{\Omega}\left|\mathfrak{L}_mu\right|^2\left(\left(\frac{|x|}{b}\right)^{2\gamma}+\left(\frac{a}{|x|}\right)^{2\gamma}\right)dx.
    \end{align*}
    Therefore, we have
    \begin{align}\label{final_frak_lm_3}
        &\int_{\Omega}(\leb_mu_1)^2\left(\left(\frac{|x|}{b}\right)^{2\gamma}+\left(\frac{a}{|x|}\right)^{2\gamma}\right)dx\leq 2\int_{\Omega}\left(\leb_mu\right)^2\left(\left(\frac{|x|}{b}\right)^{2\gamma}+\left(\frac{a}{|x|}\right)^{2\gamma}\right)dx\nonumber\\
        &+2\int_{\Omega}\left(\leb_mu_2\right)^2\left(\left(\frac{|x|}{b}\right)^{2\gamma}+\left(\frac{a}{|x|}\right)^{2\gamma}\right)dx\leq 2\int_{\Omega}\left(\leb_mu\right)^2\left(\left(\frac{|x|}{b}\right)^{2\gamma}+\left(\frac{a}{|x|}\right)^{2\gamma}\right)dx\nonumber\\
        &+C_{m,\gamma}\int_{\Omega}\left|\mathfrak{L}_mu\right|^2\left(\left(\frac{|x|}{b}\right)^{2\gamma}+\left(\frac{a}{|x|}\right)^{2\gamma}\right)dx.
    \end{align}
    Therefore, the estimate follows from \eqref{final_frak_lm_1}, \eqref{final_frak_lm_2}, and \eqref{final_frak_lm_3}.
\end{proof}

\section{Morse Triviality of Variations Located in the Neck}

In this section, generalising \cite[Theorem 1.4.1 p. 52]{willmore_scs} (that corresponds to the case $m=1$ in the theorem), we will show that there cannot be negative variations whose support is located in the neck regions. 

\begin{theorem}\label{lower_bound_main_theorem}
    Let $\{\phi_k\}_{k\in\N}$ be a sequence of Willmore immersions such that
    \begin{align*}
        \limsup_{k\rightarrow \infty}W(\phi_k)<\infty.
    \end{align*}
    Let $\Omega_k(\alpha)=B_{\alpha}\setminus\bar{B}_{\alpha^{-1}}(0)$ be a neck region, and let $m\geq 2$ be the integer multiplicity of the branch point of the limiting bubble. Then, for all $0<\beta_1,\beta_2<1$ such that $0<\beta_1+\dfrac{\beta_2}{2}<\min\ens{\dfrac{m-1}{4},1}$ and $\beta_1\neq \dfrac{1}{2}$ and $\beta_2\neq 1$, there exists a universal constant $\lambda_0=\lambda_0(\beta_1,\beta_2)>0$ such that for $k$ large enough, we have for all $u_k\in W^{2,2}_0(\Omega_k(\alpha))$
    \begin{align*}
        Q_{\phi_k}(u_k)&\geq \lambda_0\int_{\Omega_k(\alpha)}\frac{u_k^2}{|x|^{2m+2}}\left(\left(\frac{|x|}{\alpha}\right)^{4\beta_1}+\left(\frac{\alpha^{-1}\rho_k}{|x|}\right)^{4\beta_1}+\frac{1}{\log^2\left(\frac{\alpha^2}{\rho_k}\right)}\right)dx\\
        &+\lambda_0\int_{\Omega_k(\alpha)}\frac{|\D u_k|^2}{|x|^{2m}}\left(\left(\frac{|x|}{\alpha}\right)^{2\beta_2}+\left(\frac{\alpha^{-1}\rho_k}{|x|}\right)^{2\beta_2}+\frac{1}{\log^2\left(\frac{\alpha^2}{\rho_k}\right)}\right)dx
    \end{align*}
\end{theorem}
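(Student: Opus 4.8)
The plan is to follow the strategy of \cite[Theorem 1.4.1]{willmore_scs} but systematically replace the $m=1$ operators by the branched family $\leb_m$ and $\mathfrak{L}_m$ studied in the previous sections. First I would recall the structure of the second variation $Q_{\phi_k}$ of the Willmore energy: on a neck region $\Omega_k(\alpha)$ parametrised conformally with a conical metric $g_k = e^{2\lambda_k}|dx|^2$ where $e^{\lambda_k} \sim |x|^{m-1}$, the quadratic form $Q_{\phi_k}(u_k)$ splits, after passing to the normal component $\w_k = \D u_k$ of the variation, into a leading biharmonic-type term plus lower-order terms controlled by the refined second fundamental form estimates in neck regions (the Rellich and Hardy--Rellich inequalities from \cite[Chapter 2]{willmore_scs}). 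The key point is that, up to the conformal factor, the leading operator acting on $u_k$ is (a perturbation of) $|x|^{2-2m}$ times the operator whose weighted $L^2$ energy is $\int_{\Omega_k(\alpha)} |\p{z}^2 u_k|^2 |x|^{2-2m}|dz|^2 = \int |\mathfrak{L}_m u_k|^2 |dz|^2$, together with the term $\int (\leb_m u_k)^2$. So the first step is a clean algebraic reduction: write $Q_{\phi_k}(u_k)$ as $c\left(\int_{\Omega_k(\alpha)}(\leb_m u_k)^2 dx + \int_{\Omega_k(\alpha)}|\mathfrak{L}_m u_k|^2 |dz|^2\right)$ minus error terms, where the errors are absorbed using the refined pointwise bounds on $\ig$ in the neck.

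Second, I would apply the doubly-weighted inequalities of Theorem \ref{poincare_weight_m} and Theorem \ref{poincare_weight_m_2} to the $W^{2,2}_0(\Omega_k(\alpha))$ function $u_k$: these give, for $\alpha>0$ small and $1/2<\beta<(m-1)/4$, $\gamma<(m-1)/2$, control of $\int u_k^2 |x|^{-4}(\cdots)$ and $\int |\D u_k|^2 |x|^{-2}(\cdots)$ by $\int (\leb_m u_k)^2$ alone. Rescaling by the conical factor $|x|^{-2m+2}$ turns these weighted norms into exactly the quantities $\int u_k^2 |x|^{-2m-2}(\cdots)$ and $\int |\D u_k|^2 |x|^{-2m}(\cdots)$ that appear on the right-hand side of the statement, except for the logarithmic term $\log^{-2}(\alpha^2/\rho_k)$. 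That last term is the delicate one, and it is handled exactly as in \cite{willmore_scs}: the logarithmic weight is needed precisely because the doubly-weighted estimates degenerate when $\beta,\gamma\to 0$, so one combines the power-weighted estimate (good near the two ends of the neck) with a Poincaré-type estimate with the $\log^{-2}$ weight (good in the middle). Here one must also invoke the assumption $m\ge 3$ (or the case analysis around $m=2$ deferred earlier) so that $(m-1)/4 > 1/2$, ensuring that the admissible range of $\beta$ is nonempty; for $n=3$, the Laurain--Rivière parity argument forces $m$ odd, hence $m\ge 3$.

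The main obstacle — and the reason the branched case is genuinely harder than $m=1$ — is the non-flatness of the conical metric, which I would address using the Bochner identity together with Corollary \ref{gagliardo_nirenberg_frak_m} (the weighted Gagliardo--Nirenberg inequality with weights on both fourth-order operators on the right-hand side). Concretely: the cross terms produced when one compares the Bochner identity for the flat metric (with a weight applied \emph{a posteriori}) against the one for the conical metric involve $\int |x|^{-2m}|\D u_k + (m-1)\frac{x}{|x|^2}u_k|^2 (\cdots)$, and to close the estimate one needs this quantity bounded not just by $\int (\leb_m u_k)^2$ but by the \emph{weighted} $L^2$ norms of $\leb_m u_k$ and $\mathfrak{L}_m u_k$ — which is exactly what Corollary \ref{gagliardo_nirenberg_frak_m} and Theorem \ref{weighted_estimate_frak_lm} supply. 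I would therefore: (i) bound the error terms in $Q_{\phi_k}$ using the refined $\ig$-estimates, reducing to showing that a small multiple of $\int |x|^{-2m}|\D u_k + (m-1)\frac{x}{|x|^2}u_k|^2(\cdots)$ plus $\int u_k^2|x|^{-2m-4}(\cdots)$ is controlled by $\int(\leb_m u_k)^2 + \int|\mathfrak{L}_m u_k|^2|dz|^2$; (ii) invoke Theorem \ref{poincare_weight_m_2} / Corollary \ref{gagliardo_nirenberg_frak_m} for $u_k \in W^{2,2}_0$; (iii) handle the logarithmic middle-region term by a separate Poincaré argument; and (iv) choose $\lambda_0 = \lambda_0(\beta_1,\beta_2)$ as a fixed fraction of the constant obtained, valid for all $k$ large since the conformal class $\log(\alpha^2/\rho_k)\to\infty$ eventually exceeds the threshold $\Lambda_\gamma$ required by those theorems. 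The genuinely hard bookkeeping is tracking that all the constants are independent of $k$ and that the weight exponents $\beta_1,\beta_2\in(1/2,1)$ land in the admissible ranges $(1/2,(m-1)/4)$ and $(\gamma/2,(m-1)/2)$ simultaneously — which is where $m\ge 3$ is used once more.
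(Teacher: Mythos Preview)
Your proposal conflates this theorem with the subsequent eigenvalue lower-bound lemma, and as a result imports far more machinery than the paper actually uses here. The paper's proof is direct and elementary by comparison: after the change of variable $v=|x|^{1-m}u_k$, the leading term $\int_{\Omega_k(\alpha)}(\Delta_{g_k}u_k)^2\,d\mathrm{vol}_{g_k}$ reduces (via the bounded conformal factor $e^{\lambda_k}=e^{\mu_k}|x|^{m-1}$) to $\int_{\Omega_k(\alpha)}(\leb_m v)^2\,dx$ alone. To this the paper applies only Theorem~\ref{poincare_weight_m} for the power-weighted terms and the existing Rellich and Hardy--Rellich inequalities from \cite[Corollaries 2.1.2 and 2.1.5]{willmore_scs} for the $\log^{-2}$ terms. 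The error terms in the lower bound for $Q_{\phi_k}$ (inequality (1.2.14) of \cite{willmore_scs}) are then absorbed using the refined pointwise estimates on $|A_k|$ and $|\nabla A_k|$ together with Cauchy--Schwarz and the no-neck energy property. There is no $\mathfrak{L}_m$, no Bochner identity, and no Gagliardo--Nirenberg inequality anywhere in this argument.

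Two concrete consequences of your overcomplication: first, you introduce the restriction $m\ge 3$ (from Theorem~\ref{poincare_weight_m_2} and Corollary~\ref{gagliardo_nirenberg_frak_m}), whereas the paper's proof of this theorem works for every $m>1$ since Theorem~\ref{poincare_weight_m} requires only $m>1$; second, your Step~(i) claim that $Q_{\phi_k}$ reduces to a sum involving $\int|\mathfrak{L}_m u_k|^2$ is not how the reduction actually goes---only $\int(\leb_m v)^2$ appears naturally from $(\Delta_{g_k}u_k)^2$. The non-flatness of the conical metric and the Bochner identity become relevant only in the \emph{next} lemma, where one needs to bound eigenvalues from below and must control $\int|\nabla u|^2|x|^{-2m}(\cdots)$ by quantities carrying weights on the \emph{right-hand side} as well---that is the genuine obstacle your third paragraph describes, but it belongs to a different statement.
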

\begin{proof}
    Recall that by \cite{pointwise}, there exists a bounded function $\mu_k\in C^0(\Omega_k(\alpha))$ such that
    \begin{align}\label{conformal_expansion}
        e^{\lambda_k}=e^{\mu_k}|x|^{m-1}\qquad\text{for all}\;\, x\in \Omega_k(\alpha).
    \end{align}
    In particular, there exists a constant $A\in \R$ such that
    \begin{align*}
        e^{-A}|x|^{m-1}\leq e^{\lambda_k}\leq e^{A}|x|^{m-1}.
    \end{align*}
    Therefore, we deduce in particular that for all $u\in W^{2,2}(\Omega_k(\alpha))$
    \begin{align*}
        \int_{\Omega_k(\alpha)}(\Delta_{g_k}u)^2\,d\mathrm{vol}_{g_k}=\int_{\Omega_k(\alpha)}e^{-2\lambda_k}(\Delta u)^2dx\geq e^{-A}\int_{\Omega_k(\alpha)}|x|^{2-2m}(\Delta u)^2dx.
    \end{align*}
    Letting $v=|x|^{m-1}u\in W^{2,2}_0(\Omega_k(\alpha))$, we get
    \begin{align*}
        \int_{\Omega_k(\alpha)}|x|^{2-2m}(\Delta u_k)^2dx=\int_{\Omega_k(\alpha)}\left(\Delta v+2(m-1)\frac{x}{|x|^2}\cdot \D v+\frac{(m-1)^2}{|x|^2}v\right)^2dx.
    \end{align*}
    Now, thanks to \cite[Corollary 2.1.2 p. 93, Corollary 2.1.5 p. 94]{willmore_scs}, for $k$ large enough, we have
    \begin{align*}
        \int_{\Omega_k(\alpha)}\left(\Delta v+2(m-1)\frac{x}{|x|^2}\cdot \D v+\frac{(m-1)^2}{|x|^2}v\right)^2dx\geq \left(4m^2+\frac{\pi^2}{\log^2\left(\frac{\alpha^2}{\rho_k}\right)}\right)\frac{\pi^2}{\log^2\left(\frac{\alpha^2}{\rho_k}\right)}\int_{\Omega_k(\alpha)}\frac{v^2}{|x|^4}dx
    \end{align*}
    and
    \begin{align*}
        \int_{\Omega_k(\alpha)}\left(\Delta v+2(m-1)\frac{x}{|x|^2}\cdot \D v+\frac{(m-1)^2}{|x|^2}v\right)^2dx\geq \frac{4m^2+\dfrac{\pi^2}{\log^2\left(\frac{\alpha^2}{\rho_k}\right)}}{4(m^2+1)+\dfrac{2\pi^2}{\log^2\left(\frac{\alpha^2}{\rho_k}\right)}}\frac{\pi^2}{\log^2\left(\frac{\alpha^2}{\rho_k}\right)}\int_{\Omega_k(\alpha)}\frac{|\D v|^2}{|x|^2}dx.
    \end{align*}
    On the other hand, Theorem \ref{poincare_weight_m} shows that 
    \begin{align*}
        \int_{\Omega_k(\alpha)}(\leb_m v)^2dx\geq C_{m,\beta_1,1}^{-1}\int_{\Omega_k(\alpha)}\frac{v^2}{|x|^4}\left(\left(\frac{|x|}{\alpha}\right)^{4\beta_1}+\left(\frac{\alpha^{-1}\rho_k}{|x|}\right)^{4\beta_1}+\frac{1}{\log^2\left(\frac{\alpha^2}{\rho_k}\right)}\right)dx
    \end{align*}
    and
    \begin{align*}
        \int_{\Omega_k(\alpha)}(\leb_m v)^2dx\geq C_{m,\beta_2,2}^{-1}\int_{\Omega_k(\alpha)}\frac{|\D v|^2}{|x|^2}\left(\left(\frac{|x|}{\alpha}\right)^{2\beta_2}+\left(\frac{\alpha^{-1}\rho_k}{|x|}\right)^{2\beta_2}+\frac{1}{\log^2\left(\frac{\alpha^2}{\rho_k}\right)}\right)dx.
    \end{align*}
    Noticing that
    \begin{align*}
        \D v=\D\left(|x|^{1-m}u\right)=\frac{\D u_k}{|x|^{m-1}}-(m-1)u\frac{x}{|x|^{m+1}},
    \end{align*}
    all inequalities finally show that for some $C_0>0$, we have
    \begin{align}\label{positive_neck1}
        \int_{\Omega_k(\alpha)}(\Delta_{g_k}u)^2\,d\mathrm{vol}_{g_k}&\geq C_0\int_{\Omega_k(\alpha)}\frac{u^2}{|x|^{2m+2}}\left(\left(\frac{|x|}{\alpha}\right)^{4\beta_1}+\left(\frac{\alpha^{-1}\rho_k}{|x|}\right)^{4\beta_1}+\frac{1}{\log^2\left(\frac{\alpha^2}{\rho_k}\right)}\right)dx \nonumber\\
        &+C_0\int_{\Omega_k(\alpha)}\frac{|\D u|^2}{|x|^{2m}}\left(\left(\frac{|x|}{\alpha}\right)^{2\beta_2}+\left(\frac{\alpha^{-1}\rho_k}{|x|}\right)^{2\beta_2}+\frac{1}{\log^2\left(\frac{\alpha^2}{\rho_k}\right)}\right)dx.
    \end{align}
    Furthermore, inequality \cite[(1.2.14) p. 18]{willmore_scs} shows that for all $u\in W^{2,2}(\Sigma)$, we have
    \begin{align}\label{second_derivative_lower_bound}
        Q_{\phi_k}(u)&\geq \frac{1}{4}\int_{\Sigma}(\Delta_{g_k}u)^2\,d\mathrm{vol}_{g_k}-2\int_{\Sigma}|d u|_{g_k}^2|A_k|^2\,d\mathrm{vol}_{g_k}-2\int_{\Sigma}|A_k|^4u^2\,d\mathrm{vol}_{g_k}\nonumber\\
            &-16\int_{\Sigma}\s{\partial u\otimes \partial H}{h_0}_{\mathrm{WP}}u\,d\mathrm{vol}_{g_k}.
    \end{align}
    Now, for every $0<\beta<1$, and for all $k\in\N$, thanks to \cite[Theorem 1.3.30 p. 51]{willmore_scs} there exists $C_l(\beta)<\infty$ such that for small enough $0<\alpha<1$ and large enough $k\in \N$, we have
    \begin{align*}
        |x|^l|\D^l\n_k(x)|\leq C_k(\beta)\left(\left(\frac{|x|}{\alpha}\right)^{\beta}+\left(\frac{\alpha^{-1}\rho_k}{|x|}\right)^{\beta}+\frac{1}{\log\left(\frac{\alpha^2}{\rho_k}\right)}\right)\np{\D \n_k}{2}{\Omega_k(2\alpha)}\qquad\text{for all}\;\, x\in \Omega_k(\alpha).
    \end{align*}
    Therefore, fix $0<\beta_1,\beta_2<1$ satisfying the bounds of the theorem. We have 
    \begin{align*}
        &|A_k|\leq \frac{C_0'(\beta_1)}{|x|^{m}}\left(\left(\frac{|x|}{\alpha}\right)^{\beta_1}+\left(\frac{\alpha^{-1}\rho_k}{|x|}\right)^{\beta_1}+\frac{1}{\log\left(\frac{\alpha^2}{\rho_k}\right)}\right)\np{\D \n_k}{2}{\Omega_k(2\alpha)}\\
        &|\D A_k|\leq \frac{C_1'(\beta_2)}{|x|^{m+1}}\left(\left(\frac{|x|}{\alpha}\right)^{\beta_2}+\left(\frac{\alpha^{-1}\rho_k}{|x|}\right)^{\beta_2}+\frac{1}{\log\left(\frac{\alpha^2}{\rho_k}\right)}\right)\np{\D \n_k}{2}{\Omega_k(2\alpha)}.
    \end{align*}
    Therefore, we get the following estimates for all $u\in \Omega_k(\alpha)$
    \begin{align}\label{positive_neck2}
        \int_{\Sigma}|d u|_{g_k}^2|A_k|^2\,d\mathrm{vol}_{g_k}&\leq C\np{\D\n_k}{2}{\Omega_k(2\alpha)}^2\int_{\Omega_k(\alpha)}\frac{|\D u_k|^2}{|x|^{2m}}\left(\left(\frac{|x|}{\alpha}\right)^{2\beta_2}+\left(\frac{\alpha^{-1}\rho_k}{|x|}\right)^{2\beta_2}+\frac{1}{\log^2\left(\frac{\alpha^2}{\rho_k}\right)}\right)dx\nonumber\\
        \int_{\Sigma}|A_k|^4u^2\,d\mathrm{vol}_{g_k}&\leq C\np{\D\n_k}{2}{\Omega_k(2\alpha)}^4\int_{\Omega_k(\alpha)}\frac{u^2}{|x|^{2m}}\left(\left(\frac{|x|}{\alpha}\right)^{4\beta_1}+\left(\frac{\alpha^{-1}\rho_k}{|x|}\right)^{4\beta_1}+\frac{1}{\log^4\left(\frac{\alpha^2}{\rho_k}\right)}\right)dx\nonumber\\
        &\leq C\np{\D\n_k}{2}{\Omega_k(2\alpha)}^4\int_{\Omega_k(\alpha)}\frac{u^2}{|x|^{2m}}\left(\left(\frac{|x|}{\alpha}\right)^{4\beta_1}+\left(\frac{\alpha^{-1}\rho_k}{|x|}\right)^{4\beta_1}+\frac{1}{\log^2\left(\frac{\alpha^2}{\rho_k}\right)}\right)dx.
    \end{align}
    Then, we get by Cauchy--Schwarz inequality
    \begin{align}\label{positive_neck3}
        &\left|\int_{\Sigma}\s{\partial u\otimes \partial H_k}{h_0}_{\mathrm{WP}}u\,d\mathrm{vol}_{g_k}\right|\nonumber\\
        &\leq C\np{\D\n_k}{2}{\Omega_k(2\alpha)}^2\int_{\Sigma}\frac{|\D u||u|}{|x|^{2m+1}}\left(\left(\frac{|x|}{\alpha}\right)^{2\beta}+\left(\frac{\alpha^{-1}\rho_k}{|x|}\right)^{2\beta}+\frac{1}{\log^2\left(\frac{\alpha^2}{\rho_k}\right)}\right)dx\nonumber\\
        &\leq C\np{\D\n_k}{2}{\Omega_k(2\alpha)}^2\left(\int_{\Sigma}\frac{u^2}{|x|^{2m+2}}\left(\left(\frac{|x|}{\alpha}\right)^{4(1-\epsilon)\beta}+\left(\frac{\alpha^{-1}\rho_k}{|x|}\right)^{4(1-\epsilon)\beta}+\frac{1}{\log^2\left(\frac{\alpha^2}{\rho_k}\right)}\right)dx\right)^{\frac{1}{2}}\nonumber\\
        &\times\left(\int_{\Omega_k(\alpha)}\frac{|\D u|^2}{|x|^{2m}}\left(\left(\frac{|x|}{\alpha}\right)^{4\epsilon\beta}+\left(\frac{\alpha^{-1}\rho_k}{|x|}\right)^{4\epsilon\beta}+\frac{1}{\log^2\left(\frac{\alpha^2}{\rho_k}\right)}\right)dx\right)^{\frac{1}{2}}.
    \end{align}
    Now, we need to choose $0<\beta<1$ and $0<\epsilon<1$ such that $(1-\epsilon)\beta\geq \beta_1$ and $2\epsilon\beta\geq \beta_2$, which yields
    \begin{align*}
        &\beta\geq \beta_1+\frac{\beta_2}{2}\\
        &\frac{\beta_2}{2\beta}\leq \epsilon\leq 1-\frac{\beta_1}{\beta}.  
    \end{align*}
    Therefore, we can find a solution provided that $0<\beta_1+\dfrac{\beta_2}{2}<\beta<1$. 
    Therefore, we deduce by \eqref{positive_neck1}, \eqref{positive_neck2}, and \eqref{positive_neck3} that 
    \begin{align}\label{positive_neck4}
        Q_{\phi_k}(u)&\geq \left(\frac{1}{4}C_0-C\np{\D\n_k}{2}{\Omega_k(2\alpha)}^2\left(1+\np{\D\n_k}{2}{\Omega_k(2\alpha)}^2\right)\right)\\
        &\times \int_{\Omega_k(\alpha)}\frac{u^2}{|x|^{2m+2}}\left(\left(\frac{|x|}{\alpha}\right)^{4\beta_1}+\left(\frac{\alpha^{-1}\rho_k}{|x|}\right)^{4\beta_1}+\frac{1}{\log^2\left(\frac{\alpha^2}{\rho_k}\right)}\right)dx\\
        &+\left(\frac{1}{4}C_0-C\np{\D\n_k}{2}{\Omega_k(2\alpha)}\right)\int_{\Omega_k(\alpha)}\frac{|\D u|^2}{|x|^{2m}}\left(\left(\frac{|x|}{\alpha}\right)^{2\beta_2}+\left(\frac{\alpha^{-1}\rho_k}{|x|}\right)^{2\beta_2}+\frac{1}{\log^2\left(\frac{\alpha^2}{\rho_k}\right)}\right)dx.
    \end{align}
    Thanks to the quantization of energy (\cite{quanta}, notice that the previous Hölder-type pointwise bounds follow from the improved energy quantization of \cite{pointwise}), the result follows. %Indeed, notice that as $\epsilon\rightarrow \dfrac{1}{4\beta}$, $\beta_2\rightarrow 1/2$, and as $\epsilon\rightarrow 1-\dfrac{1}{2\beta}$, $\beta_1\rightarrow 1/2$, so the inequality is verified for all $1/2<\beta_1,\beta_2<1$ (and need not satisfy any relation between each other).
\end{proof}
Then, recall the formula for the second derivative of the Willmore energy:
\begin{align}\label{formula_second_derivative}
    Q_{\phi}(u)&=\frac{1}{2}\int_{\Sigma}\left(\Delta_gu+|A|^2u\right)^2d\vg+\int_{\Sigma}\left(|du|_g^2+4|h_0|_{\mathrm{WP}}u^2\right)H^2d\vg-8\int_{\Sigma}\s{\partial u\otimes \partial u}{h_0}_{\mathrm{WP}}H\,d\vg\nonumber\\
    &-16\int_{\Sigma}\s{\partial u\otimes \partial H}{h_0}_{\mathrm{WP}}u\,d\vg.
\end{align}
As in \cite[p. 56]{willmore_scs}, fix $0<\beta_1<\min\ens{\dfrac{m-1}{4},1}$ and introduce the weight
\begin{align*}
    \omega_{1,\alpha,k}(x)=\left\{\begin{alignedat}{2}
        &\frac{1}{\alpha^{4m}}\left(1+\left(\frac{\alpha^{-1}\delta_k}{\alpha}\right)^{4\beta_1}+\frac{1}{\log^2\left(\frac{\alpha^2}{\rho_k}\right)}\right)\qquad&&\forall x\in \Sigma\setminus\bar{B}(0,\alpha)\\
        &\frac{1}{|x|^{4m}}\left(\left(\frac{|x|}{\alpha}\right)^{4\beta_1}+\left(\frac{\alpha^{-1}\rho_k}{|x|}\right)^{4\beta_1}+\frac{1}{\log^2\left(\frac{\alpha^2}{\rho_k}\right)}\right)\qquad&&\forall x\in \Omega_k(\alpha)\\
        &\frac{\alpha^{4m}}{\rho_k^{4m}}\left(\frac{1}{\alpha^{4m+4}}\frac{(1+\alpha^2)^{2m+2}}{\left(1+\left(\frac{|x|}{\rho_k}\right)^{2}\right)^{2m+2}}+\left(\frac{\alpha^{-1}\rho_k}{\alpha}\right)^{4\beta_1}+\frac{1}{\log^2\left(\frac{\alpha^2}{\rho_k}\right)}\right)\qquad&&\forall x\in B(0,\alpha^{-1}\rho_k).
    \end{alignedat}\right.
\end{align*}
Notice that $\omega_{1,\alpha,k}\in C^0(\Sigma)$ for all $0<\alpha<1$ and $k\in \N$. 
Introducing the operator
\begin{align*}
    \mathcal{L}_{g_k}&=\Delta_{g_k}^2+|A_k|^2\Delta_{g_k}+2\s{d|A_k|}{d(\,\cdot\,)}_{g_k}+2\,d^{\ast_{g_k}}\left(H_k^2\,d(\,\cdot\,)\right)+\left(|A_k|^4+\Delta_{g_k}|A_k|^2+24|H_k|^2|h_{0,k}|_{\mathrm{WP}}^2\right)\\
    &+16\,\ast_{g_k}\,d\,\Re\left(g_k^{-1}\otimes h_{0,k}\otimes \partial\left(H_k\,\cdot\,\right)\right)-16\s{\partial(\,\cdot\,)\otimes \partial H_k}{h_{0,k}}_{\mathrm{WP}}.
\end{align*}
Then, $\mathcal{L}_{g_k}$ is a self-adjoint operator and the following identity holds for all $u\in W^{2,2}(\Sigma)$
\begin{align*}
    Q_{\phi_k}(u)=\frac{1}{2}\int_{\Sigma}u\,\mathcal{L}_{g_k}u\,d\mathrm{vol}_{g_k}.
\end{align*}
In particular, if we define $\mathcal{L}_{\alpha,k}=\omega_{1,\alpha,k}^{-1}\mathcal{L}_{g_k}$, we deduce that $\mathcal{L}_{\alpha,k}$ is a self-adjoint operator with respect to the weight $\omega_{1,\alpha,k}$ and that
\begin{align*}
    Q_{\phi_k}(u)=\frac{1}{2}\int_{\Sigma}u\,\mathcal{L}_{\alpha,k}u\,\omega_{1,\alpha,k}\,d\mathrm{vol}_{g_k}.
\end{align*}
The next lemma readily follows from the general analysis given in \cite[Lemma IV.3]{riviere_morse_scs}.
\begin{lemme}
    For all $\lambda\in \R$, $0<\alpha<1$, and $k\in \N$ let
    \begin{align*}
        \mathscr{E}_{\alpha,k}(\lambda)=W^{2,2}(\Sigma)\cap \ens{u:\mathcal{L}_{\alpha,k}u=\lambda\,u}.
    \end{align*}
    Then, for all $0<\alpha<1$ and $k\in \N$, the following formula holds true
    \begin{align*}
        \mathrm{Ind}_{W}(\phi_k)=\mathrm{dim}\bigoplus_{\lambda<0}\mathscr{E}_{\alpha,k}(\lambda).
    \end{align*}
\end{lemme}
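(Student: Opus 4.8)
The plan is to derive this identity from the abstract spectral description of the weighted second-variation operator, exactly along the lines of \cite[Lemma IV.3]{riviere_morse_scs}; the only extra remark needed is that, for each fixed $0<\alpha<1$ and $k\in\N$, the weight $\omega_{1,\alpha,k}$ is bounded above and below by positive constants, so that weighting by it is harmless at the level of a single $\phi_k$. First I would record that, by the definition of $\mathcal{L}_{g_k}$ and of $\mathcal{L}_{\alpha,k}=\omega_{1,\alpha,k}^{-1}\mathcal{L}_{g_k}$,
\[
Q_{\phi_k}(u)=\frac12\int_{\Sigma}u\,\mathcal{L}_{g_k}u\,d\mathrm{vol}_{g_k}=\frac12\int_{\Sigma}u\,(\mathcal{L}_{\alpha,k}u)\,\omega_{1,\alpha,k}\,d\mathrm{vol}_{g_k},
\]
so $\mathcal{L}_{\alpha,k}$ is self-adjoint for the inner product $\langle u,v\rangle_{\omega}=\int_{\Sigma}uv\,\omega_{1,\alpha,k}\,d\mathrm{vol}_{g_k}$, and since $\omega_{1,\alpha,k}$ is bounded above and below the Hilbert space $L^2(\omega_{1,\alpha,k}\,d\mathrm{vol}_{g_k})$ coincides with $L^2(\Sigma,g_k)$ up to an equivalent norm. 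Next I would establish a G\aa rding-type estimate: there are $c>0$ and $C_k<\infty$ with $Q_{\phi_k}(u)+C_k\|u\|_{\mathrm{L}^2(\Sigma)}^2\geq c\,\|u\|_{\mathrm{W}^{2,2}(\Sigma)}^2$ for all $u\in W^{2,2}(\Sigma)$. This follows from the pointwise lower bound \eqref{second_derivative_lower_bound} (equivalently from \eqref{formula_second_derivative}): on the closed surface $\Sigma$ the leading term $\tfrac14\int_{\Sigma}(\Delta_{g_k}u)^2\,d\mathrm{vol}_{g_k}$ controls $\|u\|_{\mathrm{W}^{2,2}}^2$ modulo lower-order contributions, while the remaining terms involve at most one derivative of $u$ together with $|A_k|$, $d|A_k|$, $H_k$ and $h_{0,k}$, all bounded since $\phi_k$ is a fixed smooth immersion, hence absorbed by the interpolation inequality $\|u\|_{\mathrm{W}^{1,2}}\leq\varepsilon\|u\|_{\mathrm{W}^{2,2}}+C_{\varepsilon}\|u\|_{\mathrm{L}^2}$.

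Granted the coercivity estimate, for $C_k$ large the operator $\mathcal{L}_{\alpha,k}+C_k$ is an isomorphism from $L^2(\omega_{1,\alpha,k}\,d\mathrm{vol}_{g_k})$ onto itself whose inverse factors through the inclusion $W^{2,2}(\Sigma)\hookrightarrow L^2(\Sigma)$, which is compact by Rellich; thus $(\mathcal{L}_{\alpha,k}+C_k)^{-1}$ is a compact self-adjoint operator on $L^2(\omega_{1,\alpha,k}\,d\mathrm{vol}_{g_k})$. By the spectral theorem, $\mathcal{L}_{\alpha,k}$ has real discrete spectrum, with finite-dimensional eigenspaces $\mathscr{E}_{\alpha,k}(\lambda)\subset W^{2,2}(\Sigma)$ accumulating only at $+\infty$, and an associated $\langle\cdot,\cdot\rangle_{\omega}$-orthonormal basis of eigenfunctions $\{u_j\}_{j\in\N}$ (in fact smooth by elliptic regularity). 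In particular $V_-:=\bigoplus_{\lambda<0}\mathscr{E}_{\alpha,k}(\lambda)$ is finite-dimensional, and $Q_{\phi_k}\big(\sum_j c_j u_j\big)=\tfrac12\sum_j\lambda_j|c_j|^2$ for every $u=\sum_j c_j u_j\in W^{2,2}(\Sigma)$, the series converging in $W^{2,2}$.

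Finally I would match this with $\mathrm{Ind}_W(\phi_k)$, the maximal dimension of a subspace of $W^{2,2}(\Sigma)$ on which $Q_{\phi_k}$ is negative definite. On the one hand $Q_{\phi_k}<0$ on $V_-\setminus\{0\}$, so $\mathrm{Ind}_W(\phi_k)\geq\dim V_-$. On the other hand, if $V\subset W^{2,2}(\Sigma)$ is a subspace with $Q_{\phi_k}<0$ on $V\setminus\{0\}$, then the $\langle\cdot,\cdot\rangle_{\omega}$-orthogonal projection onto $V_-$ is injective on $V$: a nonzero element of its kernel would lie in the $\langle\cdot,\cdot\rangle_{\omega}$-closed span of the eigenfunctions with $\lambda_j\geq 0$ and hence satisfy $Q_{\phi_k}\geq 0$, a contradiction. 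Therefore $\dim V\leq\dim V_-$, and taking the supremum over such $V$ gives $\mathrm{Ind}_W(\phi_k)=\dim\bigoplus_{\lambda<0}\mathscr{E}_{\alpha,k}(\lambda)$, as claimed. The one point that genuinely requires care is the coercivity/G\aa rding inequality, i.e.\ that all zeroth- and first-order terms of $\mathcal{L}_{g_k}$ are relatively compact perturbations of $\Delta_{g_k}^2$; but for a fixed smooth immersion this is routine elliptic theory, and it is precisely what the general analysis of \cite[Lemma IV.3]{riviere_morse_scs} provides, so the lemma follows \emph{mutatis mutandis}.
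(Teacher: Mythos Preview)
Your argument is correct and is precisely the abstract spectral-theoretic route the paper invokes by citing \cite[Lemma IV.3]{riviere_morse_scs}; you have simply spelled out the G\aa rding/compact-resolvent/min-max argument that the paper leaves implicit in that reference. The one observation you rely on that is worth making explicit---that for fixed $\alpha,k$ the weight $\omega_{1,\alpha,k}$ is bounded above and below, so the weighted $L^2$-space agrees with the unweighted one---is indeed the only point specific to this setting, and your treatment of it is fine.
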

We also need to make precise the Bochner identity in conformal coordinates.
\begin{lemme}\label{bochner_metric}
    Let $\Omega_k(\alpha)$ be the neck region as above. Then, for all $u\in W^{2,2}(\Omega_k(\alpha))$, if $g_k=\phi_k^{\ast}g_{\R^3}=e^{2\lambda_k}dx$ is the induced metric, we have
    \begin{align}\label{hessian_pointwise}
        |\mathrm{Hess}(u)|_{g_k}^2=e^{-4\lambda_k}\left(|\D^2u|^2+2(\D u\cdot \D\lambda_k)\Delta u-2\,\D\lambda_k\cdot \D|\D u|^2+2|\D\lambda_k|^2|\D u|^2\right).
    \end{align}
\end{lemme}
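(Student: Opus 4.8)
The plan is to compute the pointwise norm of the Hessian of $u$ with respect to the conformal metric $g_k = e^{2\lambda_k}dx$ directly, using the Christoffel symbols of a conformal metric in dimension $2$ and then rearranging the resulting expression into the claimed form. This is a local computation, so I would work in the conformal coordinates $(x^1,x^2)$ on $\Omega_k(\alpha)$ where $g_k = e^{2\lambda_k}\delta$, and abbreviate $\lambda = \lambda_k$. Recall that for a metric $g_{ij} = e^{2\lambda}\delta_{ij}$ in dimension $2$, the Christoffel symbols are
\begin{align*}
    \Gamma_{ij}^k = \delta_{ik}\partial_j\lambda + \delta_{jk}\partial_i\lambda - \delta_{ij}\partial_k\lambda,
\end{align*}
so that the components of the Hessian are
\begin{align*}
    (\mathrm{Hess}(u))_{ij} = \partial_{ij}^2 u - \Gamma_{ij}^k\partial_k u = \partial_{ij}^2 u - \left(\partial_i\lambda\,\partial_j u + \partial_j\lambda\,\partial_i u - \delta_{ij}\,\D\lambda\cdot\D u\right).
\end{align*}

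First I would raise indices: since $g^{ij} = e^{-2\lambda}\delta^{ij}$, we have $|\mathrm{Hess}(u)|_{g_k}^2 = e^{-4\lambda}\sum_{i,j}\left((\mathrm{Hess}(u))_{ij}\right)^2$ where the inner sum is computed with the Euclidean inner product on symmetric $2$-tensors. Then I would expand the square
\begin{align*}
    \sum_{i,j}\left(\partial_{ij}^2u - \partial_i\lambda\,\partial_j u - \partial_j\lambda\,\partial_i u + \delta_{ij}\,\D\lambda\cdot\D u\right)^2
\end{align*}
into the $|\D^2u|^2$ term plus cross terms plus the terms quadratic in $\D\lambda$. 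The key algebraic observations to carry out are: the cross term $-2\sum_{i,j}\partial_{ij}^2u\left(\partial_i\lambda\,\partial_j u + \partial_j\lambda\,\partial_i u\right) = -4\sum_{i,j}\partial_{ij}^2u\,\partial_i\lambda\,\partial_j u = -2\,\D\lambda\cdot\D|\D u|^2$ (using $\partial_j|\D u|^2 = 2\sum_i\partial_{ij}^2u\,\partial_i u$); the cross term $2\sum_{i,j}\partial_{ij}^2u\,\delta_{ij}\,\D\lambda\cdot\D u = 2(\D\lambda\cdot\D u)\Delta u$; and then the purely $\D\lambda$-quadratic terms, namely $\sum_{i,j}(\partial_i\lambda\,\partial_j u + \partial_j\lambda\,\partial_i u)^2$, $-2\sum_{i,j}(\partial_i\lambda\,\partial_j u+\partial_j\lambda\,\partial_i u)\delta_{ij}\,\D\lambda\cdot\D u$, and $\sum_{i,j}\delta_{ij}^2(\D\lambda\cdot\D u)^2$. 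A short computation shows the first equals $2|\D\lambda|^2|\D u|^2 + 2(\D\lambda\cdot\D u)^2$, the second equals $-4(\D\lambda\cdot\D u)^2$, and the third equals $2(\D\lambda\cdot\D u)^2$ (since $\sum_{i,j}\delta_{ij}^2 = 2$ in dimension $2$), so they sum to $2|\D\lambda|^2|\D u|^2$.

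Combining, the bracket becomes $|\D^2u|^2 + 2(\D u\cdot\D\lambda)\Delta u - 2\,\D\lambda\cdot\D|\D u|^2 + 2|\D\lambda|^2|\D u|^2$. At this point I notice that the claimed identity \eqref{hessian_pointwise} is missing the $2|\D\lambda|^2|\D u|^2$ term, so before finalizing I would re-examine whether the intended statement absorbs this term, whether there is a sign convention making it cancel, or whether the formula in the paper should indeed carry an extra $+2|\D\lambda_k|^2|\D u|^2$ inside the parentheses; I would state the identity in the form that the computation actually yields and flag the discrepancy. The main obstacle is therefore not analytic but bookkeeping: getting every cross term and the dimension-dependent factor $\sum_{i,j}\delta_{ij}^2=2$ exactly right, and reconciling the output with the normalization used downstream in the Bochner argument. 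Once the pointwise identity is settled, no integration or limiting argument is needed — the lemma is purely algebraic and local, valid for any $u$ smooth enough, hence for $u\in W^{2,2}(\Omega_k(\alpha))$ by density.
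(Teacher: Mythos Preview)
Your approach is exactly the one the paper follows: compute the Christoffel symbols of the conformal metric, write out the three squared Hessian components, and expand. Your computation is correct, and in particular your observation about the missing term is accurate. When you expand
\[
\left(\partial_{x_1}^2u-\partial_{x_1}\lambda\,\partial_{x_1}u+\partial_{x_2}\lambda\,\partial_{x_2}u\right)^2
+2\left(\partial_{x_1x_2}^2u-\partial_{x_2}\lambda\,\partial_{x_1}u-\partial_{x_1}\lambda\,\partial_{x_2}u\right)^2
+\left(\partial_{x_2}^2u+\partial_{x_1}\lambda\,\partial_{x_1}u-\partial_{x_2}\lambda\,\partial_{x_2}u\right)^2,
\]
the terms quadratic in $\D\lambda$ do not cancel; they sum to $2|\D\lambda|^2|\D u|^2$, exactly as you found. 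The paper's proof simply writes $|\D^2u|^2$ plus the cross terms linear in $\D\lambda$ and silently drops the quadratic-in-$\D\lambda$ contribution, so both the stated identity \eqref{hessian_pointwise} and its derivation omit the $+2|\D\lambda_k|^2|\D u|^2$ term. Your version of the identity is the correct one. For the downstream application this is harmless, since in the neck region $\D\lambda_k = (m-1)\dfrac{x}{|x|^2} + \D\mu_k$ with $\mu_k$ bounded, so $|\D\lambda_k|^2|\D u|^2$ is controlled by the same weighted $|\D u|^2/|x|^2$ quantities already being estimated; but the pointwise formula as stated is incomplete and you are right to flag it.
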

\begin{proof}
    Since the metric is conformal, we have $g^{i,j}=e^{-2\lambda_k}\delta_{i,j}$, which yields 
    \begin{align*}
        |\mathrm{Hess}(u)|_{g_k}^2=\sum_{i,j=1}^2e^{-4\lambda_k}\left(\frac{\partial^2u}{\partial x_i\partial x_j}-\sum_{l=1}^2\Gamma_{i,j}^l\frac{\partial u}{\partial x_l}\right)^2,
    \end{align*}
    where $\Gamma_{i,j}^l$ are the Christoffel symbols. Explicitly, we have
    \begin{align*}
        \Gamma_{i,j}^l&=\frac{1}{2}g^{1,l}\left(\frac{\partial g_{j,1}}{\partial x_i}+\frac{\partial g_{1,i}}{\partial x_j}-\frac{\partial g_{i,j}}{\partial x_1}\right)+\frac{1}{2}g^{2,l}\left(\frac{\partial g_{j,2}}{\partial x_i}+\frac{\partial g_{2,i}}{\partial x_j}-\frac{\partial g_{i,j}}{\partial x_2}\right)\\
        &=\delta_{1,l}\left(\delta_{1,j}\frac{\partial}{\partial x_i}+\delta_{1,i}\frac{\partial }{\partial x_j}-\delta_{i,j}\frac{\partial}{\partial x_1}\right)\lambda_k+\delta_{2,l}\left(\delta_{2,j}\frac{\partial}{\partial x_i}+\delta_{i,2}\frac{\partial}{\partial x_j}-\delta_{i,j}\frac{\partial}{\partial x_2}\right)\lambda_k.
    \end{align*}
    Therefore, we successively compute
    \begin{align*}
    \left\{\begin{alignedat}{1}
        \Gamma_{1,1}^1&=\frac{\partial \lambda_k}{\partial x_1}\\
        \Gamma_{1,1}^2&=-\frac{\partial \lambda_k}{\partial x_2}\\
        \Gamma_{1,2}^1&=\Gamma_{2,1}^1=\frac{\partial \lambda_k}{\partial x_2}\\
    \end{alignedat}\right.
    \qquad 
        \left\{\begin{alignedat}{1}
        \Gamma_{1,2}^2&=\Gamma_{2,1}^2=\frac{\partial \lambda_k}{\partial x_1}\\
        \Gamma_{2,2}^1&=-\frac{\partial \lambda_k}{\partial x_1}\\
        \Gamma_{2,2}^2&=\frac{\partial \lambda_k}{\partial x_2}.
    \end{alignedat}\right.
    \end{align*}
    Therefore, we deduce (removing the index $k$ on the right-hand side for simplicity of notation) that
    \begin{align*}
        &e^{4\lambda_k}|\mathrm{Hess}(u)|_{g_k}^2=\left(\partial_{x_1}^2u-\partial_{x_1}\lambda\,\partial_{x_1}u+\partial_{x_2}\lambda\,\partial_{x_2}u\right)^2
        +2\left(\partial_{x_1,x_2}^2u-\partial_{x_2}\lambda\,\partial_{x_1}u-\partial_{x_1}\lambda\,\partial_{x_2}u\right)^2\\
        &+\left(\partial_{x_2}^2u+\partial_{x_1}\lambda\,\partial_{x_1}u-\partial_{x_2}\lambda\,\partial_{x_2}u\right)^2\\
        &=|\D^2u|^2-2\,\partial_{x_1}\lambda\,\partial_{x_1}^2u\,\partial_{x_1}u+2\,\partial_{x_2}\lambda\,\partial_{x_1}^2u\,\partial_{x_2}u
        -4\,\partial_{x_2}\lambda\,\partial_{x_1,x_2}^2u\,\partial_{x_1}u-4\,\partial_{x_1}\lambda\,\partial_{x_1,x_2}^2u\,\partial_{x_2}u\\
        &+2\,\partial_{x_1}\lambda\,\partial_{x_2}^2u\,\partial_{x_1}u-2\,\partial_{x_2}\lambda\,\partial_{x_2}^2u\,\partial_{x_2}u+(\p{x_1}\lambda)^2(\p{x_1}u)^2+(\p{x_2}\lambda)^2(\p{x_2}u)^2-\colorcancel{2(\p{x_1}\lambda)(\p{x_2}\lambda)(\p{x_1}u)(\p{x_2}u)}{red}\\
        &+2(\p{x_2}\lambda)(\p{x_1}u)^2+2(\p{x_1}\lambda)^2(\p{x_2}u)^2+\colorcancel{4(\p{x_1}\lambda)(\p{x_2}\lambda)(\p{x_1}u)(\p{x_2}u)}{red}\\
        &+(\p{x_1}\lambda)^2(\p{x_1}u)^2+(\p{x_2}\lambda)^2(\p{x_2}u)^2-\colorcancel{2(\p{x_1}\lambda)(\p{x_2}\lambda)(\p{x_1}u)(\p{x_2}u)}{red}\\
        &=|\D^2u|^2-\partial_{x_1}\lambda\,\partial_{x_1}|\partial_{x_1}u|^2+2\,\partial_{x_2}\lambda\,\Delta u\,\partial_{x_2}u-\partial_{x_2}\lambda\,\partial_{x_2}|\partial_{x_2}u|^2
        -2\,\partial_{x_2}\lambda\,\partial_{x_2}|\partial_{x_1}u|^2-2\,\partial_{x_1}\lambda\,\partial_{x_1}|\partial_{x_2}u|^2\\
        &+2\,\partial_{x_1}\lambda\,\Delta u\,\partial_{x_1}u-\partial_{x_1}\lambda\,\partial_{x_1}|\partial_{x_1}u|^2-\partial_{x_2}\lambda\, \partial_{x_2}|\partial_{x_2}u|^2+2|\D\lambda|^2|\D u|^2\\
        &=|\D^2u|^2+2(\D u\cdot \D \lambda)\Delta u-2\D\lambda\cdot \D |\D u|^2+2|\D\lambda|^2|\D u|^2.
    \end{align*}
\end{proof}

%Next, we generalise \cite[Lemma 1.4.4]{willmore_scs} to the case of branch points. This is where our Gagliardo--Nirenberg inequality proves crucial.
%\begin{cor}\label{gagliardo_nirenberg_frak_m2}
 %   Fix $3\leq m<\infty$. For all $0<\gamma<1$ and for all $\gamma/2\leq \beta<\min\ens{\dfrac{m-1}{4},1}$, there exists a constant $\Lambda_{\gamma}<\infty$ such that for all $0<a<b<\infty$ such that
   % \begin{align*}
  %      \log\left(\frac{b}{a}\right)\geq \Lambda_{\gamma},
    %%\end{align*}
    %the following property is verified. Define $\Omega=B_b\setminus\bar{B}_a(0)\subset\R^2$. There exists a constant $C_{m,\beta,\gamma}<\infty $ such that for all $u\in W^{2,2}(\Omega)$, 
    %\begin{align}\label{gagliardo_nirenberg_frak_m_ineq2}
     %   &\int_{\Omega}\frac{1}{|x|^2}\left|\D u+(m-1)\frac{x}{|x|^2}u\right|^2\left(\left(\frac{|x|}{b}\right)^{2\gamma}+\left(\frac{a}{|x|}\right)^{2\gamma}\right)dx\leq C_{m,\beta,\gamma}\left(\int_{\Omega}\frac{u^2}{|x|^4}\left(\left(\frac{|x|}{b}\right)^{4\beta}+\left(\frac{a}{|x|}\right)^{4\beta}\right)dx\right.\nonumber\\
      %  &\left.+\int_{\Omega}\left(\leb_m u\right)^2\left(\left(\frac{|x|}{b}\right)^{2\gamma}+\left(\frac{a}{|x|}\right)^{2\gamma}\right)dx+\int_{\Omega}\left|\mathfrak{L}_mu\right|^2\left(\left(\frac{|x|}{b}\right)^{2\gamma}+\left(\frac{a}{|x|}\right)^{2\gamma}\right)dx\right).
   % \end{align}
%\end{cor}
The next result contains the most technical proof of the article. Indeed, this is where our weighted Gagliardo--Nirenberg inequality Corollary \ref{gagliardo_nirenberg_frak_m} is used. 
\begin{theorem}\label{theorem_eigenvalue_bounded}
    There exists $0<\alpha_0<\infty$, $0<\mu_0<\infty$, and a family of constants $\ens{\mu_{\alpha,k}}_{0<\alpha<\alpha_0,k\in \N}\subset (0,\mu_0)$ such that
    \begin{align*}
        \lim_{\alpha\rightarrow 0}\limsup_ {k\rightarrow \infty}\mu_{\alpha,k}=0,
    \end{align*}
    with the following property: for all $\lambda\in \R$, 
    \begin{align*}
        \mathrm{dim}\,\mathscr{E}_{\alpha,k}(\lambda)>0\implies \lambda\geq -\mu_{\alpha,k}\geq -\mu_0.
    \end{align*}
\end{theorem}
\begin{proof}
    \textbf{Part 1: Estimates on the Second Fundamental Form and the Metric.}

    Assume without loss of generality (\cite{quantamoduli}) that the multiplicity of the limiting branch point is $m\geq 3$.
    Thanks to the estimate of \cite[Theorem 1.3.30]{willmore_scs}, and the expansion \eqref{conformal_expansion}, we deduce that for all $x\in \Omega_k(\alpha)$, we have
    \begin{align*}
        |A_k(x)|\leq \frac{C(\beta_1)}{|x|^m}\left(\left(\frac{|x|}{\alpha}\right)^{\beta_1}+\left(\frac{\alpha^{-1}\rho_k}{|x|}\right)^{\beta_1}+\frac{1}{\log\left(\frac{\alpha^2}{\rho_k}\right)}\right)\np{\D\n_k}{\alpha}{\Omega_k(2\alpha)}.
    \end{align*}
    Therefore, we deduce that
    \begin{align*}
        \lim_{\alpha\rightarrow 0}\limsup_{k\rightarrow \infty}\np{\frac{A_k}{\omega_{1,\alpha,k}^{\frac{1}{4}}}}{\infty}{\Omega_k(\alpha)}\leq C'(\beta_1)\lim_{\alpha\rightarrow 0}\limsup_{k\rightarrow \infty}\np{\D\n_k}{2}{\Omega_k(2\alpha)}=0
    \end{align*}
    thanks to the no-neck energy property. Then, by the strong convergence on $\Sigma\setminus\bar{B}(0,\alpha)$, we deduce that 
    \begin{align*}
        \limsup_{k\rightarrow \infty}\np{\frac{A_k}{\omega_{1,\alpha,k}^{\frac{1}{4}}}}{\infty}{\Sigma\setminus\bar{B}(0,\alpha)}=\alpha^m\np{A_{\infty}}{\infty}{\Sigma}. 
    \end{align*}
    Now, since $\phi_{\infty}$ has a branch point of order $m$ at $0$, Bernard--Rivière's Taylor expansion (\cite{beriviere}) shows that there exists an integer $\theta_0\leq m-1$ such that
    \begin{align*}
        \H_{\infty}=\Re\left(\frac{\vec{C}_0}{z^{\theta_0}}\right)+O(|z|^{1-\theta_0-\epsilon})\qquad\text{for all}\;\, \epsilon>0.
    \end{align*}
    Likewise, we have $\h_{0,\infty}=O(|z|^{m-1})$, which implies that $A_{\infty}(x)=O(|x|^{1-m})$, so that (as $A_{\infty}$ is smooth in $\Sigma\setminus\ens{0}$)
    \begin{align*}
        \lim_{\alpha\rightarrow 0}\alpha^m\np{A_{\infty}}{\infty}{\Sigma\setminus\bar{B}(0,\alpha)}=0.
    \end{align*}
    Therefore, we need only prove the estimate on the bubble region. The bubble-neck decomposition implies that the sequence of immersions $\vec{\Psi}_k:B(0,\alpha^{-1})\rightarrow \R^3$ such that
    \begin{align*}
        \vec{\Psi}_k(x)=\rho_k^{-m}\left(\phi_k(\rho_kx)-\phi_k(0)\right)
    \end{align*}
    is such that 
    \begin{align*}
        \vec{\Psi}_k\conv{k\rightarrow \infty}\vec{\Psi}_{\infty}\qquad\text{in}\;\, C^l_{\mathrm{loc}}(\widehat{\C})\;\, \text{for all}\;\, l\in \N.
    \end{align*}
    Notice that the choice of scaling in the definition of $\vec{\Psi}_k$ is justified by the following inequalities (\cite{pointwise}; it will be justified at length in proof of the main Theorem \ref{main_theorem} that follows the proof of Theorem \ref{theorem_eigenvalue_bounded})
    \begin{align}\label{bound_lambda_bubble}
        e^{-A}\left(\alpha^{-1}\rho_k\right)^{m-1}\leq e^{\lambda_k(\alpha^{-1}\rho_k)}\leq e^A\left(\alpha^{-1}\rho_k\right)^{m-1}.
    \end{align}
    Now, let $\phi:B(0,1)\rightarrow \R^3$ be a smooth conformal immersion, $\rho,\mu>0$ and $\vec{\Psi}:B(0,\rho^{-1})\rightarrow \R^3$ be such that
    \begin{align*}
        \vec{\Psi}(x)=e^{-\mu}\phi(\rho\,x)\qquad\text{for all}\;\, x\in B(0,\rho^{-1}).
    \end{align*}
    The conformal factor of $\vec{\Psi}$ is given by 
    \begin{align*}
        e^{2\lambda_{\vec{\Psi}}(x)}=\frac{1}{2}|\D \vec{\Psi}(x)|^2=e^{-2\mu}\rho^2e^{2\lambda_{\phi}(\rho\, x)}.
    \end{align*}
    Then, we have
    \begin{align*}
        \Delta\vec{\Psi}(x)=e^{-\mu}\rho^2\Delta\phi(\rho\, x).
    \end{align*}
    Therefore, its mean curvature is given by
    \begin{align*}
        \H_{\vec{\Psi}}(x)=\frac{1}{2}e^{-2\lambda_{\vec{\Psi}}(x)}\Delta\vec{\Psi}(x)=\frac{1}{2}e^{2\mu}\rho^{-2}e^{-2\lambda_{\phi}(\rho\,x)}\,e^{-\mu}\rho^2\Delta\phi(\rho\,x)=e^{\mu}\,\H_{\phi}(\rho\,x),
    \end{align*}
    which shows in particular that
    \begin{align*}
        \int_{B(0,\rho^{-1})}|\H_{\vec{\Psi}}(x)|^2\,d\mathrm{vol}_{g_{\vec{\Psi}}}(x)=\int_{B(0,\rho^{-1})}e^{2\mu}|\vec{H}_{\phi}(\rho\,x)|^2e^{-2\mu}\rho^2\,d\mathrm{vol}_{g_{\phi}}(\rho\,x)=\int_{B(0,1)}|\H_{\phi}|^2\,d\mathrm{vol}_{g_{\phi}},
    \end{align*}
    that also comes from the conformal invariance of the Willmore energy. Therefore, we deduce that in the example above, we have 
    \begin{align*}
        \H_{\vec{\Psi}_k}(x)=\rho_k^m\H_{\phi_{k}}(\rho_kx).
    \end{align*}
    Therefore, we get by the same argument that $A_{\vec{\Psi}_k}(x)=\rho_k^mA_k(\rho_kx)$, which implies that
    \begin{align*}
        \frac{A_k(\rho_kx)}{\omega_{1,\alpha,k}^{\frac{1}{4}}}&=\rho_k^{-m}A_{\vec{\Psi}_k}(x)\left(\frac{1}{\alpha^4\rho_k^{4m}}\left(\frac{(1+\alpha^2)^{2m+2}}{(1+|x|^2)^{2m+2}}+\alpha^{-4m-4}(\alpha^{-2}\rho_k)^{4\beta_1}+\frac{\alpha^{-4m-4}}{\log^2\left(\frac{\alpha^2}{\rho_k}\right)}\right)\right)^{\frac{-1}{4}}\\
        &=\alpha\, A_{\vec{\Psi}_k}(x)\frac{(1+|x|^2)^{\frac{m+1}{2}}}{(1+\alpha^2)^{\frac{m+1}{2}}}\left(1+o_k(1)\right)\conv{k\rightarrow \infty}\alpha\, A_{\vec{\Psi}_{\infty}}(x)\frac{(1+|x|^2)^{\frac{m+1}{2}}}{(1+\alpha^2)^{\frac{m+1}{2}}}.
    \end{align*}
    Now, since $\vec{\Psi}_{\infty}:\C\rightarrow \R^3$ completes as a branched Willmore sphere $\vec{\Xi}_{\infty}=\dfrac{\vec{\Psi}_{\infty}}{|\vec{\Psi}_{\infty}|^2}:S^2\rightarrow \R^3$ (assuming without loss of generality---due to the conformal invariance of the Willmore energy---that $0\notin \vec{\Psi}_{\infty}(\C)$), using the Taylor expansion of Bernard--Rivière \cite{beriviere}, we deduce that as $|x|\rightarrow 0$, we have 
    \begin{align*}
        |A_{\vec{\Xi}_{\infty}}(x)|\leq \frac{C}{|x|^{m-1}}. 
    \end{align*}
    Here, we used a result of \cite{pointwise} that asserts that the end of $\vec{\Psi}_{\infty}$ has multiplicity $m$, which yields a branch point of multiplicity $m$ for $\vec{\Xi}_{\infty}$. Since (\cite[Lemma 10.8]{index4})
    \begin{align*}
    \left\{\begin{alignedat}{1}
        H_{\vec{\Xi}_{\infty}}&=-|\vec{\Psi}_{\infty}|^2H_{\vec{\Psi}_{\infty}}-2\s{\n_{\vec{\Psi}_{\infty}}}{\vec{\Psi}_{\infty}}\\
        h_{\vec{\Xi}_{\infty}}^0&=-\frac{1}{|\vec{\Psi}_{\infty}|}h^0_{\vec{\Psi}_{\infty}}
    \end{alignedat}\right.
    \end{align*}
    we deduce that there exists $A>0$ and $C_0<\infty$ such that for all we have for $|x|>A$ 
    \begin{align*}
        |A_{\vec{\Psi}_{\infty}}(x)|\leq \frac{C_0}{|x|^{m+1}}.
    \end{align*}
    Since $A_{\vec{\Psi}_{\infty}}$ is smooth in $\C$, this implies that there exists $C_1<\infty$ such that for all $x\in \C$, we have
    \begin{align}\label{bound_A_Psi}
        |A_{\vec{\Psi}_{\infty}}(x)|\leq \frac{C_1}{1+|x|^{m+1}}\leq \frac{2^{\frac{m}{2}}C_1}{(1+|x|^2)^{\frac{m+1}{2}}}
    \end{align}
    where we used that
    \begin{align*}
        (1+|x|^2)^{m+1}=1+\sum_{k=1}^mC_{m+1}^k|x|^{2k}+|x|^{2m+2}
        &\leq 1+\sum_{k=1}^mC_{m+1}^k\left(\frac{m-k+1}{m+1}+\frac{k}{m+1}|x|^{2m+2}\right)+|x|^{2m+2}\\
        &\leq (2^{m}-1)(1+|x|^{2m+2})\leq (2^{m}-1)(1+|x|^{m})^2.
    \end{align*}
    Therefore, we have
    \begin{align*}
        \alpha\, |A_{\vec{\Psi}_{\infty}}(x)|\frac{(1+|x|^2)^{\frac{m+1}{2}}}{(1+\alpha^2)^{\frac{m+1}{2}}}\leq 2^{\frac{m}{2}}C_1\frac{\alpha}{(1+\alpha^2)^{\frac{m+1}{2}}}\conv{\alpha\rightarrow 0}0. 
    \end{align*}
    Therefore, we deduce that
    \begin{align}\label{weight_bounded1}
        \lim_{\alpha\rightarrow 0}\limsup_{k\rightarrow \infty}\np{\frac{A_k}{\omega_{1,\alpha,k}^{\frac{1}{4}}}}{\infty}{\Sigma}=0.
    \end{align}
    Notice that this result can be slightly refined. Define  
    \begin{align*}
    \widetilde{\omega}_{1,\alpha,k}(x)=\left\{\begin{alignedat}{2}
        &\frac{1}{\alpha^{m}}\left(1+\left(\frac{\alpha^{-1}\delta_k}{\alpha}\right)^{\beta_1}+\frac{1}{\log\left(\frac{\alpha^2}{\rho_k}\right)}\right)\qquad&&\forall x\in \Sigma\setminus\bar{B}(0,\alpha)\\
        &\frac{1}{|x|^{m}}\left(\left(\frac{|x|}{\alpha}\right)^{\beta_1}+\left(\frac{\alpha^{-1}\rho_k}{\alpha}\right)^{\beta_1}+\frac{1}{\log\left(\frac{\alpha^2}{\rho_k}\right)}\right)\qquad&&\forall x\in \Omega_k(\alpha)\\
        &\frac{1}{(\alpha^{-1}\rho_k)^{m}}\left(\frac{1}{\alpha^{m+1}}\frac{(1+\alpha^2)^{\frac{m+1}{2}}}{\left(1+\left(\frac{|x|}{\rho_k}\right)^{2}\right)^{\frac{m+1}{2}}}+\left(\frac{\alpha^{-1}\rho_k}{\alpha}\right)^{\beta_1}+\frac{1}{\log\left(\frac{\alpha^2}{\rho_k}\right)}\right)\qquad&&\forall x\in B(0,\alpha^{-1}\rho_k).
    \end{alignedat}\right.
\end{align*}
    and 
    \begin{align}\label{muk}
        \mu_{\alpha,k}=\np{\frac{A_k}{\widetilde{\omega}_{1,\alpha,k}^{\frac{1}{4}}}}{\infty}{\Sigma}.
    \end{align}
    The same argument as above shows that there exists $\alpha_0>0$ and $\mu_0<\infty$ such that 
    \begin{align}\label{mu_bound}
        \left\{\begin{alignedat}{1}
            &\lim_{\alpha\rightarrow 0}\limsup_{k\rightarrow \infty}\mu_{\alpha,k}=0\\
            &\text{For all}\;\, k\in \N,\;\,\text{for all}\;\, 0<\alpha<\alpha_0,\;\,\text{we have}\;\, 0<\mu_{\alpha,k}<\mu_0. 
        \end{alignedat}\right.
    \end{align}
    Furthermore, the elementary inequality $(a+b+c)^2\leq 3(a^2+b^2+c^2)$ applied twice yields $\widetilde{\omega}_{1,\alpha,k}^4\leq 27\,\omega_{1,\alpha,k}$.
    Now, define a second weight $\omega_{2,\alpha,k}\in C^0(\Sigma)$ by
    \begin{align}\label{second_weight}
            \omega_{2,\alpha,k}(x)&=\left\{\begin{alignedat}{2}
                &\frac{1}{\alpha^{4m-2}}\left(1+\left(\frac{\alpha^{-1}\delta_k}{\alpha}\right)^{2\beta_2}+\frac{1}{\log^2\left(\frac{\alpha^2}{\rho_k}\right)}\right)\qquad&&\forall x\in \Sigma\setminus\bar{B}(0,\alpha)\\
                &\frac{1}{|x|^{4m-2}}\left(\left(\frac{|x|}{\alpha}\right)^{2\beta_2}+\left(\frac{\alpha^{-1}\rho_k}{\alpha}\right)^{2\beta_2}+\frac{1}{\log^2\left(\frac{\alpha^2}{\rho_k}\right)}\right)\qquad&&\forall x\in \Omega_k(\alpha)\\
                &\frac{1}{(\alpha^{-1}\rho_k)^{4m-2}}\left(1+\left(\frac{\alpha^{-1}\rho_k}{\alpha}\right)^{2\beta_2}+\frac{1}{\log^2\left(\frac{\alpha^2}{\rho_k}\right)}\right)\qquad&&\forall x\in B(0,\alpha^{-1}\rho_k).
            \end{alignedat}\right.
    \end{align}
    We will use this weight later to estimate the third trivial component of 
    \begin{align*}
        Q_{\phi_k}(u)&=\frac{1}{2}\int_{\Sigma}\left(\Delta_{g_k}u+|A_k|^2u\right)^2\,d\mathrm{vol}_{g_k}+\int_{\Sigma}\left(|du|_{g_k}^2+4|h_{0,k}|^2_{\mathrm{WP}}\right)H^2_k\,d\mathrm{vol}_{g_k}\\
        &-8\int_{\Sigma}\bs{\partial u\otimes \partial u}{h_{0,k}}_{\mathrm{WP}}H_k\,d\mathrm{vol}_{g_k}
        -16\int_{\Sigma}\s{\partial u\otimes \partial H_k}{h_{0,k}}_{\mathrm{WP}}\,u\,d\mathrm{vol}_{g_k}.
    \end{align*}
    We first estimate the last component. Fix $0<\beta_3<1$ and introduce the weight
    \begin{align}\label{third_weight}
        \omega_{3,\alpha,k}=\left\{\begin{alignedat}{2}
                &\frac{1}{\alpha^{2m+1}}\left(1+\left(\frac{\alpha^{-1}\delta_k}{\alpha}\right)^{2\beta_3}+\frac{1}{\log^2\left(\frac{\alpha^2}{\rho_k}\right)}\right)\quad&&\forall x\in \Sigma\setminus\bar{B}(0,\alpha)\\
                &\frac{1}{|x|^{2m+1}}\left(\left(\frac{|x|}{\alpha}\right)^{2\beta_3}+\left(\frac{\alpha^{-1}\rho_k}{\alpha}\right)^{2\beta_3}+\frac{1}{\log^2\left(\frac{\alpha^2}{\rho_k}\right)}\right)\quad&&\forall x\in \Omega_k(\alpha)\\
                &\frac{1}{(\alpha^{-1}\rho_k)^{2m+1}}\left(\frac{1}{\alpha^{2m+3}}\frac{(1+\alpha^2)^{\frac{2m+3}{2}}}{\left(1+\left(\frac{|x|}{\rho_k}\right)^{2}\right)^{\frac{2m+3}{2}}}+\left(\frac{\alpha^{-1}\rho_k}{\alpha}\right)^{2\beta_3}+\frac{1}{\log^2\left(\frac{\alpha^2}{\rho_k}\right)}\right)\quad&&\forall x\in B(0,\alpha^{-1}\rho_k).
            \end{alignedat}\right.
    \end{align}
    Now, let us prove that
    \begin{align}\label{weight_bounded2}
        \lim_{\alpha\rightarrow 0}\limsup_{k\rightarrow \infty}\np{\frac{|A_k||\D A_k|}{\omega_{3,\alpha,k}}}{\infty}{\Sigma}=0.
    \end{align}
    First, on the neck region, we have (by \cite{willmore_scs}) for all $0<\beta_3<1$ the estimates for all $x\in \Omega_k(\alpha)$
    \begin{align*}
        |A_k(x)|&\leq \frac{C_{\beta_3}}{|x|^m}\left(\left(\frac{|x|}{\alpha}\right)^{\beta_3}+\left(\frac{\alpha^{-1}\rho_k}{|x|}\right)^{\beta_3}+\frac{1}{\log\left(\frac{\alpha^2}{\rho_k}\right)}\right)\np{\D\n_k}{2}{\Omega_k(2\alpha)}\\
        |\D A_k(x)|&\leq \frac{C_{\beta_3}}{|x|^{m+1}}\left(\left(\frac{|x|}{\alpha}\right)^{\beta_3}+\left(\frac{\alpha^{-1}\rho_k}{|x|}\right)^{\beta_3}+\frac{1}{\log\left(\frac{\alpha^2}{\rho_k}\right)}\right)\np{\D\n_k}{2}{\Omega_k(2\alpha)}.
    \end{align*}
    Therefore, we have for all $x\in \Omega_k(\alpha)$
    \begin{align*}
        \frac{|A_k(x)||\D A_k(x)|}{\omega_{3,\alpha,k}}\leq C_{\beta}'\np{\D\n_k}{2}{\Omega_k(\alpha)}.
    \end{align*}
    Therefore, the no-neck energy property implies that
    \begin{align}\label{second_weight_step1}
        \lim_{\alpha\rightarrow 0}\limsup_{k\rightarrow \infty}\np{\frac{|A_k||\D A_k|}{\omega_{3,\alpha,k}}}{\infty}{\Omega_k(\alpha)}=0.
    \end{align}
    Then, we for all $x\in \Sigma\setminus\bar{B}(0,\alpha)$ a uniform convergence
    \begin{align*}
        \frac{|A_k(x)||\D A_k(x)|}{\omega_{3,\alpha,k}(x)}\conv{k\rightarrow \infty}\alpha^{2m+1}|A_{\infty}(x)||\D A_{\infty}(x)|.
    \end{align*}
    By the previously mentioned second residue analysis of Bernard--Rivière \cite{beriviere}, we have $A_{\infty}(x)=O(|x|^{1-m})$ and $\D A_{\infty}(x)=O(|x|^{-m})$, which shows that $\alpha^{2m+1}|A_{\infty}||\D A_{\infty}|=O(\alpha^2)$ and finally that 
    \begin{align}\label{second_weight_step2}
        \lim_{\alpha\rightarrow 0}\limsup_{k\rightarrow \infty}\np{\frac{|A_k||\D A_k|}{\omega_{3,\alpha,k}}}{\infty}{\Omega_k(\alpha)}=0.
    \end{align}
    Finally, we investigate the behaviour on the bubble domain. We have
    \begin{align*}
        \D\H_{\vec{\Psi}_k}(x)=\rho_k^{m+1}\D\H_{\phi_k}(\rho_kx),
    \end{align*}
    which shows that for all $x\in B(0,\alpha^{-1})$ 
    \begin{align*}
        \frac{|A_k(\rho_kx)||\D A_k(\rho_kx)|}{\omega_{3,\alpha,k}(\rho_k x)}&=\rho_k^{-(2m+1)}|A_{\vec{\Psi}_k}(x)||\D A_{\vec{\Psi}_k}(x)|\\
        &\times \left(\frac{1}{\alpha^2\rho_k^{2m+1}}\left(\left(\frac{(1+\alpha^2)^{\frac{2m+3}{2}}}{\left(1+|x|^2\right)^{\frac{2m+3}{2}}}\right)+\alpha^{-(2m+3)}\left(\frac{\alpha^{-1}\rho_k}{\alpha}\right)^{4\beta_2}+\frac{\alpha^{-(2m+3)}}{\log^2\left(\frac{\alpha^2}{\rho_k}\right)}\right)\right)^{-1}\\
        &=\alpha^2|A_{\vec{\Psi}_k}(x)||\D A_{\vec{\Psi}_k}(x)|\frac{(1+|x|^2)^{\frac{2m+3}{2}}}{\left(1+\alpha^2\right)^{\frac{2m+3}{2}}}\left(1+o_k(1)\right)\\
        &\conv{k\rightarrow \infty}\alpha^2|A_{\vec{\Psi}_{\infty}}(x)||\D A_{\vec{\Psi}_{\infty}}(x)|\frac{(1+|x|^2)^{\frac{2m+3}{2}}}{\left(1+\alpha^2\right)^{\frac{2m+3}{2}}}.
    \end{align*}
    Once more, Bernard--Rivière's estimates show that there exists a constant $C<\infty$ such that for all $|x|$ small enough, 
    \begin{align*}
        |\D A_{\vec{\Xi}_{\infty}}(x)|\leq \frac{C}{|x|^m},
    \end{align*}
    which implies by smoothness of $\vec{\Psi}_{\infty}:\C\rightarrow \R^3$ that there exists $C_2<\infty$ such that
    \begin{align*}
        |\D A_{\vec{\Psi}_{\infty}}(x)|\leq \frac{C_2}{1+|x|^{m+2}}\leq \frac{2^{\frac{m+1}{2}}}{\left(1+|x|^2\right)^{\frac{m+2}{2}}}.
    \end{align*}
    Therefore, we deduce that
    \begin{align*}
        \alpha^2|A_{\vec{\Psi}_{\infty}}(x)||\D A_{\vec{\Psi}_{\infty}}(x)|\frac{(1+|x|^2)^{\frac{2m+3}{2}}}{\left(1+\alpha^2\right)^{\frac{2m+3}{2}}}\leq \frac{2^{\frac{2m+3}{2}}C_1C_2\alpha^2}{(1+\alpha^2)^{\frac{2m+3}{2}}}\conv{\alpha\rightarrow 0}0.
    \end{align*}
    Therefore, we deduce that 
    \begin{align}\label{second_weight_step3}
        \lim_{\alpha\rightarrow 0}\limsup_{k\rightarrow \infty}\np{\frac{|A_k||\D A_k|}{\omega_{3,\alpha,k}}}{\infty}{B(0,\alpha^{-1}\rho_k)}=0
    \end{align}
    and the limit \eqref{weight_bounded2} is established. Therefore, if 
    \begin{align*}
        \nu_{\alpha,k}=\np{\frac{|A_k||\D A_k|}{\omega_{3,\alpha,k}}}{\infty}{B(0,\alpha^{-1}\rho_k)},
    \end{align*}
    the following properties hold true
    \begin{align}\label{kappa_bound}
        \left\{\begin{alignedat}{1}
            &\lim_{\alpha\rightarrow 0}\limsup_{k\rightarrow \infty}\nu_{\alpha,k}=0\\
            &\text{For all}\;\, k\in \N,\;\,\text{for all}\;\, 0<\nu<\nu_0,\;\,\text{we have}\;\, 0<\nu_{\alpha,k}<\nu_0. 
        \end{alignedat}\right.
    \end{align}
    \textbf{Part 2: Estimate of the First Two Components of the Second Derivative.}
    
    Now, recall  formula \eqref{formula_second_derivative}. By Cauchy's inequality, for all $u\in W^{2,2}(\Sigma)$ we have the elementary inequality
    \begin{align*}
        \frac{1}{2}\int_{\Sigma}\left(\Delta_{g_k}u+|A_k|^2\right)^2\,d\mathrm{vol}_{g_k}&=\frac{1}{2}\int_{\Sigma}(\Delta_{g_k}u)^2\,d\mathrm{vol}_{g_k}+\int_{\Sigma}(\Delta_{g_k}u)^2|A_k|^2u\,d\mathrm{vol}_{g_k}+\frac{1}{2}\int_{\Sigma}|A_k|^4u^2\,d\mathrm{vol}_{g_k}\\
        &\geq \frac{1}{4}\int_{\Sigma}(\Delta_{g_k}u)^2\,d\mathrm{vol}_{g_k}-\frac{1}{2}\int_{\Sigma}|A_k|^4u^2\,d\mathrm{vol}_{g_k}.
    \end{align*}
    We estimate directly 
    \begin{align*}
        \int_{\Sigma}|A_k|^4u^2\,d\mathrm{vol}_{g_k}\leq \mu_{\alpha,k}^4\int_{\Sigma}u^2\omega_{1,\alpha,k}\,d\mathrm{vol}_{g_k}.
    \end{align*}
    Therefore, we get
    \begin{align}\label{first_estimate}
        &Q_{\phi_k}(u)\geq \frac{1}{4}\int_{\Sigma}(\Delta_{g_k}u)^2\,d\mathrm{vol}_{g_k}-\frac{1}{2}\mu_{\alpha,k}^4\int_{\Sigma}u^2\,\omega_{1,\alpha,k}\,d\mathrm{vol}_{g_k}\nonumber\\
        &-8\int_{\Sigma}\s{\partial u\otimes \partial u}{h_{0,k}}_{\mathrm{WP}}\,H_k\,d\mathrm{vol}_{g_k}-16\int_{\Sigma}\s{\partial u\otimes \partial H_k}{h_{0,k}}_{\mathrm{WP}}\,u\,d\mathrm{vol}_{g_k}.
    \end{align}
    Likewise, if $0<\gamma<1$ and
    \begin{align*}
    \bar{\omega}_{1,\alpha,k}(x)=\left\{\begin{alignedat}{2}
        &\frac{1}{\alpha^{2m}}\left(1+\left(\frac{\alpha^{-1}\delta_k}{\alpha}\right)^{2\gamma}+\frac{1}{\log^2\left(\frac{\alpha^2}{\rho_k}\right)}\right)\quad&&\forall x\in \Sigma\setminus\bar{B}(0,\alpha)\\
        &\frac{1}{|x|^{2m}}\left(\left(\frac{|x|}{\alpha}\right)^{2\gamma}+\left(\frac{\alpha^{-1}\rho_k}{\alpha}\right)^{2\gamma}+\frac{1}{\log^2\left(\frac{\alpha^2}{\rho_k}\right)}\right)\quad&&\forall x\in \Omega_k(\alpha)\\
        &\frac{1}{(\alpha^{-1}\rho_k)^{2m}}\left(\frac{1}{\alpha^{2m+2}}\frac{(1+\alpha^2)^{m+1}}{\left(1+\left(\frac{|x|}{\rho_k}\right)^{2}\right)^{m+1}}+\left(\frac{\alpha^{-1}\rho_k}{\alpha}\right)^{2\gamma}+\frac{1}{\log^2\left(\frac{\alpha^2}{\rho_k}\right)}\right)\quad&&\forall x\in B(0,\alpha^{-1}\rho_k).
    \end{alignedat}\right.
\end{align*}
    and 
    \begin{align}\label{muk_bis}
        \kappa_{\alpha,k}=\np{\frac{A_k}{\bar{\omega}_{1,\alpha,k}^{\frac{1}{2}}}}{\infty}{\Sigma},
    \end{align}
    The same argument as above shows that there exists $0<\alpha_2<\alpha_1$ and $\kappa_0<\infty$ such that 
    \begin{align}\label{kappa_bound}
        \left\{\begin{alignedat}{1}
            &\lim_{\alpha\rightarrow 0}\limsup_{k\rightarrow \infty}\kappa_{\alpha,k}=0\\
            &\text{For all}\;\, k\in \N,\;\,\text{for all}\;\, 0<\kappa<\kappa_0,\;\,\text{we have}\;\, 0<\kappa_{\alpha,k}<\kappa_0. 
        \end{alignedat}\right.
    \end{align}
    %Likewise, we have
    %\begin{align*}
     %   \left|\int_{\Sigma}\bs{\partial u\otimes \partial H_k}{h_{0,k}}_{\mathrm{WP}}\,u\,d\mathrm{vol}_{g_k}\right|\leq \int_{\Sigma} 
    %\end{align*}
    \textbf{Part 3: The Estimate on the Third Component of the Second Derivative.}
    
    %The Weighted Gagliardo-Nirenberg Inequality.}
    We have
    \begin{align*}
        \left|\int_{\Sigma}\s{\partial u\otimes \partial u}{h_{0,k}}_{\mathrm{WP}}H_k\,d\mathrm{vol}_{g_k}\right|\leq \kappa_{\alpha,k}^2\int_{\Sigma}|du|_{g_k}^2\bar{\omega}_{1,\alpha,k}\,d\mathrm{vol}_{g_k}.
    \end{align*}
  %  Now comes the most technical part of the proof. Let us summarise the main ideas. The weighted $L^2$ norm of $u$ is bounded thanks to our renormalisation while the $L^2$ norm of the metric Laplacian of $u$ is also bounded, so we need only show that the weighted $L^2$ norm of the gradient of $u$ is bounded by the previous two quantities. In other words, we have to establish a weighted Gagliardo--Nirenberg inequality on $\Sigma$. Using that $\Sigma$ is closed, once we integrate by parts, a single additional term involving the gradient of the weight in the neck region (notice that the weight is constant on $\Sigma\setminus\bar{\Omega_k(\alpha)}$, so that only the term in the neck region remains) will appear, and this is this quantity that we will have to bound. The first major ingredient is the weighted Gagliardo--Nirenberg in the neck region $\Omega_k(2\alpha)$ already established in Corollary \eqref{gagliardo_nirenberg_frak_m2}. Using the Bochner identity on the closed surface $\Sigma$, we can replace the estimate involving the Hessian by the Laplacian and additional terms that can be estimated. 
  %   However, this is not exactly straightforward since we have to relate the flat Hessian to the metric Hessian and the additional terms are quite delicate to estimate. 
    Integrating by parts, we get
    \begin{align}\label{ipp_poids_1}
        \int_{\Sigma}|du|_{g_k}^2\bar{\omega}_{1,\alpha,k}\,d\mathrm{vol}_{g_k}&=-\int_{\Sigma}u\,(\Delta_{g_k}u)\,\bar{\omega}_{1,\alpha,k}\,d\mathrm{vol}_{g_k}-\int_{\Omega_k(\alpha)}u\,\D u\cdot \D\bar{\omega}_{1,\alpha,k}\,dx\nonumber\\
        &-\int_{B(0,\alpha^{-1}\rho_k)}u\,\D u\cdot \D\bar{\omega}_{1,\alpha,k}\,dx.
    \end{align}
    We directly estimate by Cauchy--Schwarz inequality (assuming that $\gamma\geq \beta_1$)
    \begin{align}\label{ipp_poids_2}
        \left|\int_{\Sigma}u\,\Delta_{g_k}u\,\bar{\omega}_{1,\alpha,k}\,d\mathrm{vol}_{g_k}\right|&\leq \left(\int_{\Sigma}u^2\,\bar{\omega}_{1,\alpha,k}^2\,d\mathrm{vol}_{g_k}\right)^{\frac{1}{2}}\left(\int_{\Sigma}(\Delta_{g_k}u)^2\,d\mathrm{vol}_{g_k}\right)^{\frac{1}{2}}\nonumber\\
        &\leq \sqrt{3}\left(\int_{\Sigma}u^2\,{\omega}_{1,\alpha,k}\,d\mathrm{vol}_{g_k}\right)^{\frac{1}{2}}\left(\int_{\Sigma}(\Delta_{g_k}u)^2\,d\mathrm{vol}_{g_k}\right)^{\frac{1}{2}}
    \end{align}
    The second term is much more delicate to estimate, so we keep it for later and estimate the component in the bubble region $B(0,\alpha^{-1}\rho_k)$. 

    \textbf{Step 1: Estimate on the Bubble Domain.} First, recall that
    \begin{align*}
        \vec{\Psi}_k(z)=\rho_k^{-m}\left(\phi_k(\rho_kz)-\phi_k(0)\right)\qquad \text{for all}\;\, z\in \C.
    \end{align*}
    Here, we mean that the value of $\vec{\Psi}_k$ at $z\in \C$ is well defined for $k$ large enough. This identity implies in particular that
    \begin{align}\label{conformal_bubbling}
        \lambda_{\vec{\Psi}_k}(z)=\lambda_k(\rho_kz)-(m-1)\log(\rho_k).
    \end{align}
    As the convergence of $\vec{\Psi}_k$ to $\vec{\Psi}_{\infty}$ is smooth in $\C$, we deduce that for all fixed $0<A<\infty$, there exists $N(A)\in \N$ such that for all $k\in \N$ and for all $|z|\leq A$, we have
    \begin{align*}
        \frac{1}{2}e^{2\lambda_{\vec{\Psi}_{\infty}}(z)}\leq e^{2\lambda_{\vec{\Psi}_k}(z)}\leq 2\,e^{2\lambda_{\vec{\Psi}_{\infty}}(z)}.
    \end{align*}
    Here, we used that the conformal parameter of $\vec{\Psi}_{\infty}:\C\rightarrow \R^3$ never vanishes in $\C$ due to the absence of branch points. Since the end of $\vec{\Psi}_{\infty}$ is of multiplicity $m$ by the analysis of \cite{pointwise} (equivalently, $\vec{\Xi}_{\infty}:S^2\rightarrow \R^3$ has a unique branch point of order $m$), there exists $\zeta\in C^0\cap L^{\infty}(\C)$ such that
    \begin{align*}
        e^{2\vec{\Psi}_{\infty}(z)}=e^{\zeta(z)}\left(1+|z|^2\right)^{m-1}\qquad \text{for all}\;\,\C.
    \end{align*}
    Therefore, there exists a universal constant $0<\Gamma<\infty$ independent of $A$ such that for all $|z|\leq A$ and $k\geq N(A)$, we have
    \begin{align*}
        e^{-2\,\Gamma}\left(1+|z|^2\right)^{m-1}\leq e^{2\lambda_{\vec{\Psi}_k}(z)}\leq e^{2\,\Gamma}\left(1+|z|^2\right)^{m-1}.
    \end{align*}
    This implies by the above formula \eqref{conformal_bubbling} that for all $|z|\leq A$ and $k\geq N(A)$, we have
    \begin{align}\label{bubbling_conformal0}
        e^{-2\,\Gamma}\rho_k^{2(m-1)}\left(1+|z|^2\right)^{m-1}\leq e^{2\lambda_k(\rho_kz)}\leq e^{2\,\Gamma}\rho_k^{2(m-1)}\left(1+|z|^2\right)^{m-1},
    \end{align}
    that we finally rewrite as
    \begin{align}\label{bubbling_conformal}
        e^{-2\,\Gamma}\rho_k^{2(m-1)}\left(1+\left(\frac{|x|}{\rho_k}\right)^2\right)^{m-1}\leq e^{2\lambda_k(x)}\leq e^{2\,\Gamma}\rho_k^{2(m-1)}\left(1+\left(\frac{|x|}{\rho_k}\right)^2\right)^{m-1}\qquad\forall x\in B(0,\alpha^{-1}\rho_k).
    \end{align}
    For all $x\in B(0,\alpha^{-1}\rho_k)$, we have 
    \begin{align*}
        \D \bar{\omega}_{1,\alpha,k}=-2(m+1)x\,\frac{1}{\alpha^2\rho_k^{2m+2}}\frac{(1+\alpha^2)^{m+1}}{\left(1+\left(\frac{|x|}{\rho_k}\right)^2\right)^{m+2}}.
    \end{align*}
    Therefore, we get
    \begin{align}\label{bubbling_conformal2}
        &-\int_{B(0,\alpha^{-1}\rho_k)}u\D u\cdot \D\bar{\omega}_{1,\alpha,k}dx=2(m+1)\int_{B(0,\alpha^{-1}\rho_k)}u\,(\D u\cdot x)\frac{1}{\alpha^2\rho_k^{2m+2}}\frac{(1+\alpha^2)^{m+1}}{\left(1+\left(\frac{|x|}{\rho_k}\right)^2\right)^{m+2}}dx\nonumber\\
        &\leq 2(m+1)\left(\int_{B(0,\alpha^{-1}\rho_k)}u^2\frac{1}{\alpha^4\rho_k^{2m+2}}\frac{\left(1+\alpha^2\right)^{2m+2}}{\left(1+\left(\frac{|x|}{\rho_k}\right)^2\right)^{m+3}}dx\right)^{\frac{1}{2}}\nonumber\\
        &\times \left(\int_{B(0,\alpha^{-1}\rho_k)}|\D u|^2\frac{|x|^2}{\rho_k^{2m+2}}\frac{dx}{\left(1+\left(\frac{|x|}{\rho_k}\right)^{2}\right)^{m+1}}\right)^{\frac{1}{2}}.
    \end{align}
    Thanks to \eqref{bubbling_conformal}, we get
    \begin{align}\label{bubbling_conformal3}
        &\int_{B(0,\alpha^{-1}\rho_k)}u^2\frac{1}{\alpha^4\rho_k^{2m+2}}\frac{\left(1+\alpha^2\right)^{2m+2}}{\left(1+\left(\frac{|x|}{\rho_k}\right)^2\right)^{m+3}}\nonumber\\
        &=\int_{B(0,\alpha^{-1}\rho_k)}u^2\frac{1}{\alpha^4\rho_k^{4m}}\frac{\left(1+\alpha^2\right)^{2m+2}}{\left(1+\left(\frac{|x|}{\rho_k}\right)^2\right)^{2m+2}}\times \rho_k^{2m-2}\left(1+\left(\frac{|x|}{\rho_k}\right)^2\right)^{m-1}dx\nonumber\\
        &\leq e^{2\,\Gamma}\int_{B(0,\alpha^{-1}\rho_k)}u^2\,\omega_{1,\alpha,k}\,d\mathrm{vol}_{g_k}.
    \end{align}
    On the other hand, we trivially have 
    \begin{align}\label{bubbling_conformal4}
        &\int_{B(0,\alpha^{-1}\rho_k)}|\D u|^2\frac{|x|^2}{\rho_k^{2m+2}}\frac{dx}{\left(1+\left(\frac{|x|}{\rho_k}\right)^{2}\right)^{m+1}}\leq \int_{B(0,\alpha^{-1}\rho_k)}|\D u|^2\frac{1}{\alpha^2\rho_k^{2m}}\frac{dx}{\left(1+\left(\frac{|x|}{\rho_k}\right)^{2}\right)^{m+1}}\nonumber\\
        &\leq \frac{1}{(1+\alpha^2)^{m+1}}\int_{B(0,\alpha^{-1}\rho_k)}|\D u|^2\bar{\omega}_{1,\alpha,k}\,dx=\frac{1}{(1+\alpha^2)^{m+1}}\int_{B(0,\alpha^{-1}\rho_k)}|du|_{g_k}^2\,\bar{\omega}_{1,\alpha,k}\,d\mathrm{vol}_{g_k}.
    \end{align}
    Thanks to \eqref{bubbling_conformal2}, \eqref{bubbling_conformal3}, and \eqref{bubbling_conformal4}, we have
    \begin{align}\label{bubbling_conformal5}
        &-\int_{B(0,\alpha^{-1}\rho_k)}u\D u\cdot \D\bar{\omega}_{1,\alpha,k}\,dx\nonumber\\
        &\leq \frac{2(m+1)}{\left(1+\alpha^2\right)^{\frac{m+1}{2}}}\left(\int_{B(0,\alpha^{-1}\rho_k)}u^2\,\omega_{1,\alpha,k}\,d\mathrm{vol}_{g_k}\right)^{\frac{1}{2}}\left(\int_{B(0,\alpha^{-1}\rho_k)}|du|_{g_k}^2\,\bar{\omega}_{1,\alpha,k}\,d\mathrm{vol}_{g_k}\right)^{\frac{1}{2}}.
    \end{align}
    
    \textbf{Step 2: Estimate on the Neck Region.} We have for all $x\in \Omega_k(\alpha)$
    \begin{align*}
        \D \bar{\omega}_{1,\alpha,k}=-2m\frac{x}{|x|^2}\bar{\omega_{2,\alpha,k}}+2\gamma\frac{x}{|x|^{2m+2}}\left(\left(\frac{|x|}{\alpha}\right)^{2\gamma}-\left(\frac{\alpha^{-1}\rho_k}{|x|}\right)^{2\gamma}\right)
    \end{align*}
    which implies the elementary inequality
    \begin{align*}
        |\D\bar{\omega}_{1,\alpha,k}|\leq \frac{2(m+2\gamma)}{|x|}\bar{\omega}_{1,\alpha,k}.
    \end{align*}
    We first estimate by the Cauchy--Schwarz inequality
    \begin{align}\label{ipp_poids_3}
        \left|\int_{\Omega_k(\alpha)}u\D u\cdot \frac{x}{|x|^{2m+2}}\frac{1}{\log^2\left(\frac{\alpha^2}{\rho_k}\right)}d\mathrm{vol}_{g_k}\right|&\leq \frac{1}{\log^2\left(\frac{\alpha^2}{\rho_k}\right)}\left(\int_{\Omega_k(\alpha)}\frac{u^2}{|x|^{2m+2}}dx\right)^{\frac{1}{2}}\left(\int_{\Omega_k(\alpha)}\frac{|\D u|^2}{|x|^{2m}}dx\right)^{\frac{1}{2}}\nonumber\\
        &\leq \left(\int_{\Omega_k(\alpha)}u^2\omega_{1,\alpha,k}\,d\mathrm{vol}_{g_k}\right)^{\frac{1}{2}}\left(\int_{\Omega_k(\alpha)}|du|_{g_k}^2\bar{\omega}_{1,\alpha,k}\,d\mathrm{vol}_{g_k}\right)^{\frac{1}{2}}.
    \end{align}
    Using Cauchy--Schwarz inequality once more, we deduce that for all $0<\epsilon<1$
    \begin{align}\label{ipp_poids_4}
        &\int_{\Omega_k(\alpha)}\frac{|\D u|}{|x|^m}\frac{|u|}{|x|^{m+1}}\left(\left(\frac{|x|}{\alpha}\right)^{2\gamma}+\left(\frac{\alpha^{-1}\rho_k}{|x|}\right)^{2\gamma}\right)dx\leq 2\left(\int_{\Omega_k(\alpha)}\frac{|\D u|^2}{|x|^{2m}}\left(\left(\frac{|x|}{\alpha}\right)^{4\epsilon\gamma}+\left(\frac{\alpha^{-1}\rho_k}{|x|}\right)^{4\epsilon\gamma}\right)dx\right)^{\frac{1}{2}}\nonumber\\
        &\times \left(\int_{\Omega_k(\alpha)}\frac{u^2}{|x|^{2m+2}}\left(\left(\frac{|x|}{\alpha}\right)^{4(1-\epsilon)\gamma}+\left(\frac{\alpha^{-1}\rho_k}{|x|}\right)^{4(1-\epsilon)\gamma}\right)dx\right)^{\frac{1}{2}}\nonumber\\
        &\leq 2\left(\int_{\Omega_k(\alpha)}\frac{|\D u|^2}{|x|^{2m}}\left(\left(\frac{|x|}{\alpha}\right)^{4\epsilon\gamma}+\left(\frac{\alpha^{-1}\rho_k}{|x|}\right)^{4\epsilon\gamma}\right)dx\right)^{\frac{1}{2}}\left(\int_{\Omega_k(\alpha)}u^2\omega_{1,\alpha,k}|x|^{2m-2}dx\right)^{\frac{1}{2}}
    \end{align}
    provided that $(1-\epsilon)\gamma\geq \beta_1$.
    Now, using Corollary \ref{gagliardo_nirenberg_frak_m}, we deduce that for all $2\epsilon\gamma\leq \beta<\min\ens{\dfrac{m-1}{4},1}$
\begin{align}\label{ipp_poids_5}
    \int_{\Omega_k(\alpha)}\frac{|\D u|^2}{|x|^{2m}}\left(\left(\frac{|x|}{b}\right)^{4\epsilon\gamma}+\left(\frac{a}{|x|}\right)^{4\epsilon\gamma}\right)dx&\leq C_{m,\beta,2\epsilon\gamma}\left(\int_{\Omega_k(\alpha)}\frac{u^2}{|x|^{2m+2}}\left(\left(\frac{|x|}{\alpha}\right)^{4\beta}+\left(\frac{\alpha^{-1}\rho_k}{|x|}\right)^{4\beta}\right)dx\right.\nonumber\\
    &\left.+\int_{\Omega_k(\alpha)}\frac{|\D^2u|^2}{|x|^{2m-2}}\left(\left(\frac{|x|}{\alpha}\right)^{4\epsilon\gamma}+\left(\frac{\alpha^{-1}\rho_k}{|x|}\right)^{4\epsilon\gamma}\right)dx\right)
\end{align}
At this point, the constraints on $\beta$ and $\gamma$ are given as follows:
\begin{align*}
    \left\{\begin{alignedat}{1}
        &\beta\geq \beta_1\\
        &(1-\epsilon)\gamma\geq \beta_1\\
        &\beta\geq 2\epsilon\gamma.
    \end{alignedat}\right.
\end{align*}
Therefore, we can take $\beta=\beta_1$, $0<\beta_1<\gamma<1$ and
\begin{align*}
    0<\epsilon<\min\ens{1-\frac{\beta_1}{\gamma},\frac{\beta_1}{2\gamma}}.
\end{align*}
Now, recall Bochner's identity
\begin{align*}
    \frac{1}{2}\Delta_{g_k}|du|_{g_k}^2&=|\mathrm{Hess}(u)|_{g_k}^2+\s{du}{d\left(\Delta_{g_k}u\right)}_{g_k}+\mathrm{Ric}_{g_k}(du,du)\\
    &=|\mathrm{Hess}(u)|_{g_k}^2+\s{du}{d\left(\Delta_{g_k}u\right)}_{g_k}+2K_{g_k}|du|_{g_k}^2.
\end{align*}
Integrating by parts, we deduce as $\Sigma$ is closed that
\begin{align*}
    0&=\frac{1}{2}\int_{\Sigma}\Delta_{g_k}|du|_{g_k}^2\,d\mathrm{vol}_{g_k}=\int_{\Sigma}\left(|\mathrm{Hess}(u)|^2_{g_k}+\s{du}{d\left(\Delta_{g_k}u\right)}_{g_k}+2K_{g_k}|du|_{g_k}^2\right)d\mathrm{vol}_{g_k}\\
    &=\int_{\Sigma}\left(|\mathrm{Hess}(u)|^2_{g_k}-(\Delta_{g_k}u)^2+2K_{g_k}|du|_{g_k}^2\right)d\mathrm{vol}_{g_k},
\end{align*}
which implies that
\begin{align}\label{bochner_gk}
    \int_{\Sigma}\left|\mathrm{Hess}(u)\right|^2_{g_k}\,d\mathrm{vol}_{g_k}=\int_{\Sigma}\left(\left(\Delta_{g_k}u\right)^2-2K_{g_k}|du|_{g_k}^2\right)d\mathrm{vol}_{g_k}.
\end{align}
On the other hand, by \eqref{bochner_metric}, we have
\begin{align*}
    e^{-2\lambda_k}|\D^2u|^2&=\left(|\mathrm{Hess}(u)|_{g_k}^2-2\s{du}{d\lambda_k}_{g_k}\Delta_{g_k}u+2\,e^{2\lambda_k}\s{d\lambda_k}{d|\D u|^2}_{g_k}-2|d\lambda_k|_{g_k}^2|du|_{g_k}^2\right)e^{2\lambda_k}\\
    &=\left(|\mathrm{Hess}(u)|_{g_k}^2-2\s{du}{d\lambda_k}_{g_k}\Delta_{g_k}u+2\,e^{-2\lambda_k}\s{d\lambda_k}{d(e^{2\lambda_k}|du|_{g_k}^2)}_{g_k}-2|d\lambda_k|_{g_k}^2|du|_{g_k}^2\right)e^{2\lambda_k}\\
    &=\left(|\mathrm{Hess}(u)|_{g_k}^2-2\s{du}{d\lambda_k}_{g_k}\Delta_{g_k}u+2\,\s{d\lambda_k}{d|du|_{g_k}^2}_{g_k}+2|d\lambda_k|_{g_k}^2|du|_{g_k}^2\right)e^{2\lambda_k}.
\end{align*}
Using the charts of Laurain--Rivière (\cite{lauriv1}) where the conformal parameter is controlled in $L^{2,\infty}$, we can assume without loss of generality that 
\begin{align}\label{laurain_rivière_estimate}
    \np{\D\lambda_k}{2,\infty}{B(0,1)}\leq C_{\mathrm{LR}}\,W(\phi_k),
\end{align}
where $C_{\mathrm{LR}}<\infty$ is a universal constant. Thanks to the main result of \cite{pointwise}, for all $0<\alpha<\alpha_0$ there exists a decomposition $\lambda_k=\mu_k+\nu_k$ in $\Omega_k(\alpha)$, where  $\mu_k\in W^{2,1}(B(0,\alpha)$ is such that
\begin{align*}
    \np{\D^2\mu_k}{1}{B(0,\alpha)}+\np{\D\mu_k}{2,1}{B(0,\alpha)}+\np{\mu_k}{\infty}{B(0,\alpha)}\leq C_{\mathrm{MR}}\,\int_{\Omega_k(\alpha)}|\D\n_k|^2dx,
\end{align*}
and $\nu_k$ is a harmonic function such that
\begin{align*}
    \np{\D(\nu_k-(m-1)\log|x|)}{2,1}{\Omega_k(\alpha)}\leq C_{\mathrm{MR}}\left(\sqrt{\alpha}\np{\D\lambda_k}{2,\infty}{B(0,1)}+\int_{\Omega_k(\alpha)}|\D\n_k|^2dx\right).
\end{align*}
Furthermore, refinements of those results show that there exists a sequence of bounded holomorphic functions $\psi_k:B(0,\alpha_0)\rightarrow \C$ such that
\begin{align*}
    \psi_k(z)=e^{c_k}z^{m-1}(1+O(|z|)),
\end{align*}
where $c_k\conv{k\rightarrow \infty}c$ and 
\begin{align}\label{neck_domain_expansion}
    e^{\lambda_k}=e^{\mu_k}|\psi_k(z)|=e^{\mu_k}e^{c_k}|z|^{m-1}(1+O(|z|))
\end{align}
In particular, there exists a universal constant $0<\Gamma<\infty$ such that
\begin{align}\label{harnack}
    \Gamma^{-1}|z|^{2m-2}\leq e^{2\lambda_k(z)}\leq \Gamma\,|z|^{2m-2}\quad \text{for all}\;\, z\in \Omega_k(\alpha_0).
\end{align}
Therefore, we have the estimate 
\begin{align*}
    &\int_{\Omega_k(\alpha)}\frac{|\D^2u|^2}{|x|^{2m-2}}\left(\left(\frac{|x|}{\alpha}\right)^{4\epsilon\gamma}+\left(\frac{\alpha^{-1}\rho_k}{|x|}\right)^{4\epsilon\gamma}\right)dx\leq \Gamma\int_{\Omega_k(\alpha)}|\D^2u|^2\left(\left(\frac{|x|}{\alpha}\right)^{4\epsilon\gamma}+\left(\frac{\alpha^{-1}\rho_k}{|x|}\right)^{4\epsilon\gamma}\right)d\mathrm{vol}_{g_k}\\
    &=\Gamma\left(\int_{\Omega_k(\alpha)}|\mathrm{Hess}(u)|_{g_k}^2\left(\left(\frac{|x|}{\alpha}\right)^{4\epsilon\gamma}+\left(\frac{\alpha^{-1}\rho_k}{|x|}\right)^{4\epsilon\gamma}\right)d\mathrm{vol}_{g_k}\right.\\
    &-2\int_{\Omega_k(\alpha)}\s{du}{d\lambda_k}_{g_k}\Delta_{g_k}u\,\left(\left(\frac{|x|}{\alpha}\right)^{4\epsilon\gamma}+\left(\frac{\alpha^{-1}\rho_k}{|x|}\right)^{4\epsilon\gamma}\right)d\mathrm{vol}_{g_k}\\
    &+2\int_{\Omega_k(\alpha)}\s{d\lambda_k}{d|du|_{g_k}^2}_{g_k}\left(\left(\frac{|x|}{\alpha}\right)^{4\epsilon\gamma}+\left(\frac{\alpha^{-1}\rho_k}{|x|}\right)^{4\epsilon\gamma}\right)d\mathrm{vol}_{g_k}\\
    &\left.+2\int_{\Omega_k(\alpha)}|d\lambda_k|_{g_k}^2|du|_{g_k}^2\left(\left(\frac{|x|}{\alpha}\right)^{4\epsilon\gamma}+\left(\frac{\alpha^{-1}\rho_k}{|x|}\right)^{4\epsilon\gamma}\right)d\mathrm{vol}_{g_k}\right).
\end{align*}
%Now, define 
%\begin{align*}
 %   \eta_{\alpha,k}=\np{\frac{A_k^2}{\omega_{2,\alpha,k}}}{\infty}{\Sigma},
%\end{align*}
%we deduce as above that there exists $\eta_0<\infty$
       % \begin{align}
      %  \left\{\begin{alignedat}{1}
      %      &\lim_{\alpha\rightarrow 0}\limsup_{k\rightarrow %\infty}\eta_{\alpha,k}=0,\\
       %     &\text{For all}\;\, k\in\N,\;\, \text{for all}\;\,  %0<\alpha<\alpha_0, \;\, \text{we have}\;\,  0<\eta_{\alpha,k}<\eta_0.
      %      \end{alignedat}\right.
      %  \end{align}
Now, using the pointwise bound $2|K_{g_k}|\leq |A_k|^2\leq \kappa_{\alpha,k}^2\,\bar{\omega}_{1,\alpha,k}$, we deduce that
\begin{align}\label{bochner_gk_2}
    \int_{\Sigma}-2K_{g_k}|du|_{g_k}^2\,d\mathrm{vol}_{g_k}\leq \int_{\Sigma}|A_k|^2|du|_{g_k}^2\,d\mathrm{vol}_{g_k}\leq \kappa_{\alpha,k}^2\int_{\Sigma}|du|_{g_k}^2\bar{\omega}_{1,\alpha,k}\,d\mathrm{vol}_{g_k}.
\end{align}
Using the Bochner identity \eqref{bochner_gk} and \eqref{bochner_gk_2}, we deduce that
\begin{align}\label{bochner_gk_3}
    \int_{\Omega_k(\alpha)}|\mathrm{Hess}(u)|_{g_k}^2\left(\left(\frac{|x|}{\alpha}\right)^{4\epsilon\gamma}+\left(\frac{\alpha^{-1}\rho_k}{|x|}\right)^{4\epsilon\gamma}\right)d\mathrm{vol}_{g_k}&\leq 2\int_{\Sigma}|\mathrm{Hess}(u)|_{g_k}^2\,d\mathrm{vol}_{g_k}\nonumber\\
    &=2\int_{\Sigma}\left((\Delta_{g_k}u)^2-2K_{g_k}|du|_{g_k}^2\right)d\mathrm{vol}_{g_k}\nonumber\\
    &\leq 2\int_{\Sigma}(\Delta_{g_k}u)^2\,d\mathrm{vol}_{g_k}+\kappa_{\alpha,k}^2\int_{\Sigma}|du|_{g_k}^2\bar{\omega}_{1,\alpha,k}dx.
\end{align}
The next term is easy to treat using Cauchy--Schwarz inequality
\begin{align}\label{bochner_gk_4}
    &-2\int_{\Omega_k(\alpha)}\s{du}{d\lambda_k}_{g_k}\Delta_{g_k}u\,\left(\left(\frac{|x|}{\alpha}\right)^{4\epsilon\gamma}+\left(\frac{\alpha^{-1}\rho_k}{|x|}\right)^{4\epsilon\gamma}\right)d\mathrm{vol}_{g_k}\nonumber\\
    &\leq 2\sqrt{2}\left(\int_{\Omega_k(\alpha)}|d\lambda_k|_{g_k}^2|du|_{g_k}^2\left(\left(\frac{|x|}{\alpha}\right)^{8\epsilon\gamma}+\left(\frac{\alpha^{-1}\rho_k}{|x|}\right)^{8\epsilon\gamma}\right)d\mathrm{vol}_{g_k} \right)^{\frac{1}{2}}\left(\int_{\Omega_k(\alpha)}(\Delta_{g_k}u)^2\,d\mathrm{vol}_{g_k}\right)^{\frac{1}{2}}.
\end{align}
For the next term, we could integrate by parts (using the Liouville equation) once we replace the integral by the one on the whole domain, but this quantity has no clear sign, so we instead reason as in \cite{willmore_scs}. Using that 
\begin{align}
    |\D \lambda_k\cdot \D |\D u|^2|\leq 2|\D \lambda_k||\D u||\D^2u|\leq \frac{1}{4}|\D^2u|^2+4|\D\lambda_k|^2|\D u|^2,
\end{align}
we obtain that
\begin{align}
    e^{-4\lambda_k}|\D^2u|^2\leq 2|\mathrm{Hess}(u)|_{g_k}^2-4\s{d\lambda_k}{du}_{g_k}\Delta_{g_k}u+16|d\lambda_k|_{g_k}^2|du|_{g_k}^2, 
\end{align}
so we need only estimate the last term. Using the independence on the domain of the Wente constant in the $L^{\infty}$ and $W^{1,2}$ inequality, we could perform another expansion in $\Omega_k(\alpha)$ (with a term that solves the Liouville equation with Dirichlet boundary data), but in this case, one would not be able to derive an estimate that holds up to the boundary for the harmonic component, due to the following lemma (see \cite{pointwise}). 
\begin{lemme}\label{pointwise_l2infty_harmonic}
    Let $0<a<b<\infty$ and $\Omega=B_b\setminus\bar{B}_a(0)$. For all harmonic function $u:\Omega\rightarrow \R$, 
    we have for all $2a<|x|<b/2$
    \begin{align*}
        |\D u(x)|\leq \frac{8}{\log(2)}\sqrt{\frac{3}{\pi}}\frac{1}{|x|}\np{\D u}{2,\infty}{\Omega}.
    \end{align*}
\end{lemme}
Indeed, if one restricts to $x\in \Omega_{\alpha}=B_{\alpha}\setminus\bar{B}_{\alpha^{-1}\rho_k}(0)$, the constant blows up like $(1-\alpha)^{-1}$. Therefore, one could replace this harmonic component by another one defined on a larger domain, but to recover the standard metric on the rest of $\Sigma$, one would need to use a cutoff function and this would yield a new term in the Bochner formula, which seems equally intractable. Instead, we will derive a global pointwise estimate of $\D\lambda_k$ in $\Omega_k(\alpha)$ for all $0<\alpha<\alpha_0/2$.

Let $x\in \Omega_k(\alpha_0/2)$, $r>0$ be such that $B(x,2r)\subset \Omega_k(\alpha_0/2)$. Let $\mu_k:B(x,r)\rightarrow \R$ be the solution to the system
\begin{align*}
    \left\{\begin{alignedat}{2}
    -\Delta \mu_k&=\D^{\perp}\vec{e}_{k,1}\cdot \D\vec{e}_{k,2}\qquad &&\text{in}\;\, B(x,2r)\\
    \mu_k&=0\qquad &&\text{on}\;\,\partial B(x,2r).
    \end{alignedat}\right.
\end{align*}
where $(\vec{e}_{k,1},\vec{e}_{k,2})$ is a controlled moving frame (\cite{helein}) such that
\begin{align*}
    \int_{B(x,2r)}\left(|\D\vec{e}_{k,1}|^2+|\D\vec{e}_{k,2}|^2\right)dx\leq 2\int_{B(x,2r)}|\D\n_k|^2dx.
\end{align*}
Therefore, Wente's inequality shows that
\begin{align*}
    \np{\D\mu_k}{2}{B(x,2r)}&\leq \sqrt{\frac{3}{16\pi}}\np{\D\e_{k,1}}{2}{B(x,2r)}\np{\D\e_{k,2}}{2}{B(x,2r)}\\
    &\leq \frac{1}{2}\sqrt{\frac{3}{16\pi}}\int_{B(x,2r)}\left(|\D\vec{e}_{k,1}|^2+|\D\vec{e}_{k,2}|^2\right)dx\leq \sqrt{\frac{3}{16\pi}}\int_{B(x,2r)}|\D\n_k|^2dx.
\end{align*}
Likewise, we have
\begin{align*}
    \np{\mu_k}{\infty}{B(x,2r)}\leq \frac{1}{2\pi}\int_{B(x,2r)}|\D\n_k|^2dx.
\end{align*}
Therefore, using the bound \eqref{laurain_rivière_estimate}, if $\lambda_k=\mu_k+\nu_k$, we deduce that
\begin{align*}
    \np{\D\nu_k}{2,\infty}{B(x,2r)}&\leq \np{\D\lambda_k}{2,\infty}{B(x,2r)}+\np{\D\mu_k}{2,\infty}{B(x,2r)}\\
    &\leq \np{\D\lambda_k}{2,\infty}{B(x,2r)}+C_{\mathrm{L}}\np{\D\mu_k}{2}{B(x,2r)}\\
    &\leq C_{\mathrm{LR}}W(\phi_k)+C_{\mathrm{L}}\sqrt{\frac{3}{16\pi}}\np{\D\n_k}{2}{B(x,2r)}\leq C(1+W(\phi_k)).
\end{align*}
where $C_{\mathrm{L}}<\infty$ is a constant independent of the domain (here, L stands for Lorentz), and $C<\infty$ is a universal constant. Since $\lambda_k$ also solves the Liouville equation, we deduce that $\nu_k=\lambda_k-\mu_k$ is a harmonic function in $B(x,2r)$. Therefore, Lemma \ref{pointwise_l2infty_harmonic} implies that for all $y\in B(x,r)$, we have
\begin{align*}
    |\D\nu_k(y)|&\leq \frac{8}{\log(2)}\sqrt{\frac{3}{\pi}}\frac{1}{|y|}\np{\D\nu_k}{2,\infty}{B(x,2r)}\leq \frac{8}{\log(2)}\sqrt{\frac{3}{\pi}}C(1+W(\phi_k))\frac{1}{|y|}\\
    &\leq \frac{\Gamma_0}{|y|}=\frac{1}{|y|}\frac{8}{\log(2)}\sqrt{\frac{3}{\pi}}C\left(1+\sup_{k\in \N}W(\phi_k)\right).
\end{align*}
First, we trivially estimate by the decomposition $\lambda_k=\mu_k+\nu_k$
\begin{align*}
    \int_{B(x,r)}|\D\lambda_k|^2dy\leq 2\int_{B(x,r)}|\D\mu_k|^2dy+2\int_{B(x,r)}|\D\nu_k|^2dy.
\end{align*}
Now, recall that by the $\epsilon$-regularity (\cite{riviere1}; we used more precise estimates above and in \cite{willmore_scs}, but they  will not be necessary here) for all $z\in \Omega_k(\alpha_0/2)$, we have
\begin{align*}
    |\D\n_k(z)|\leq \frac{\Gamma_1}{|z|}\np{\D\n_k}{2}{B_{2|z|}\setminus\bar{B}_{|z|/2}(0)}.
\end{align*}
Integrating by parts, we deduce that
\begin{align*}
    \int_{B(x,r)}|\D\mu_k|^2d&y\leq \int_{B(x,2r)}|\D\mu_k|^2dy=-\int_{B(x,2r)}\mu_k\,\Delta\mu_k\,dy=\int_{B(x,2r)}\mu_ke^{2\lambda_k}K_{g_k}dx\\
    &\leq \frac{1}{2}\np{\mu_k}{\infty}{B(x,2r)}\int_{B(x,2r)}|\D\n_k|^2dy\leq \frac{\Gamma^2}{2}\np{\mu_k}{\infty}{B(x,2r)}\frac{4\pi r^2}{(|x|-2r)^2}\\
    &\leq \frac{\Gamma^2}{4\pi}\int_{B(x,2r)}|\D\n_k|^2dy\frac{4\pi r^2}{(|x|-2r)^2}.
\end{align*}
The next term can be directly estimated thanks to the pointwise estimate:
\begin{align*}
    \int_{B(x,r)}|\D\nu_k|^2dy\leq \int_{B(x,r)}\frac{\Gamma_0^2}{|y|^2}dy\leq \frac{\pi\Gamma_0^2r^2}{(|x|-r)^2}.
\end{align*}
Therefore, we deduce that
\begin{align*}
    \frac{1}{\pi r^2}\int_{B(x,r)}|\D\lambda_k|^2dx\leq \frac{\Gamma^2}{\pi(|x|-2r)^2}\int_{B(x,2r)}|\D\n_k|^2dy+\frac{2\Gamma_0^2}{(|x|-r)^2}.
\end{align*}
Therefore, since (here, we could be using Lebesgue's differentiation theorem, but this is not needed) $\lambda_k$ is continuous (recall that all tensors are smooth since $\phi_k$ is a smooth \textbf{immersion} and $\lambda_k=\log\left(|\D\phi_k|/\sqrt{2}\right)$), taking $r\rightarrow 0$, we obtain the estimate
\begin{align}
    |\D\lambda_k(x)|^2\leq \frac{2\Gamma_0^2}{|x|^2}.
\end{align}
Now, coming back to the Bochner formula, we deduce that
\begin{align*}
    \int_{\Omega_k(\alpha)}|x|^{2-2m}|\D u|^2|\D\lambda_k|^2\omega_{4,\alpha,k}dx&\leq 2\Gamma_0^2\int_{\Omega_k(\alpha)}|x|^{2-2m}|\D u|^2\frac{1}{|x|^2}\omega_{4,\alpha,k}^{+}dx\\
    &\leq 2\Gamma_0^2\Gamma\int_{\Omega_k(\alpha)}|du|_{g_k}^2\bar{\omega}_{1,\alpha,k}\,d\mathrm{vol}_{g_k}. 
\end{align*}
Therefore, we deduce that
\begin{align}\label{inequality_finale1}
    \Gamma^{-1}\int_{\Omega_k(\alpha)}|x|^{2-2m}|\D^2u|^2\omega_{4,\alpha,k}^{+}dx&\leq \int_{\Omega_k(\alpha)}\left(2|\mathrm{Hess}(u)|_{g_k}^2+(\Delta_{g_k}u)^2+20|d\lambda_k|^2_{g_k}|du|_{g_k}^2\right)d\mathrm{vol}_{g_k}\nonumber\\
    &\leq 5\int_{\Sigma}(\Delta_{g_k}u)^2\,d\mathrm{vol}_{g_k}+\Gamma'\int_{\Sigma}|du|_{g_k}^2\bar{\omega}_{1,\alpha,k}\,d\mathrm{vol}_{g_k}.
\end{align}
Therefore, by \eqref{ipp_poids_1}, \eqref{ipp_poids_2}, \eqref{bubbling_conformal5}, \eqref{ipp_poids_3}, \eqref{ipp_poids_4}, and \eqref{inequality_finale1}, we deduce that for some universal constant $C<\infty$ (depending only on $0<\beta_1<\min\ens{\dfrac{m-1}{4},1}$, $0<\epsilon<\min\ens{1-\dfrac{\beta_1}{\gamma},\dfrac{\beta_1}{2\gamma}}$, $0<\beta_1<\gamma<1$ and $m\geq 2$)
\begin{align}\label{inequality_finale2}
    &\int_{\Sigma}|du|_{g_k}^2\bar{\omega}_{1,\alpha,k}\,d\mathrm{vol}_{g_k}\leq C\int_{\Sigma}(\Delta_{g_k}u)^2\,d\mathrm{vol}_{g_k}+C\int_{\Sigma}u^2\omega_{1,\alpha,k}\,d\mathrm{vol}_{g_k}\nonumber\\
    &+C\left(\int_{\Sigma}u^2\omega_{1,\alpha,k}\,d\mathrm{vol}_{g_k}\right)^{\frac{1}{2}}\left(\int_{\Sigma}(\Delta_{g_k}u)^2\,d\mathrm{vol}_{g_k}+\int_{\Sigma}u^2\omega_{1,\alpha,k}\,d\mathrm{vol}_{g_k}+\int_{\Sigma}|du|_{g_k}^2\bar{\omega}_{1,\alpha,k}\,d\mathrm{vol}_{g_k}\right)^{\frac{1}{2}}\nonumber\\
    &\leq \left(C+\frac{1}{2}C^2\right)\left(\int_{\Sigma}(\Delta_{g_k}u)^2\,d\mathrm{vol}_{g_k}+\int_{\Sigma}u^2\omega_{1,\alpha,k}\,d\mathrm{vol}_{g_k}\right)+\frac{1}{2}\int_{\Sigma}|du|_{g_k}^2\bar{\omega}_{1,\alpha,k}\,d\mathrm{vol}_{g_k},
\end{align}
which yields
\begin{align}\label{inequality_finale3}
    \int_{\Sigma}|du|_{g_k}^2\bar{\omega}_{1,\alpha,k}\,d\mathrm{vol}_{g_k}\leq \left(2\,C+C^2\right)\left(\int_{\Sigma}(\Delta_{g_k}u)^2\,d\mathrm{vol}_{g_k}+\int_{\Sigma}u^2\omega_{1,\alpha,k}\,d\mathrm{vol}_{g_k}\right).
\end{align}

\textbf{Part 4: Estimate on the Fourth Component of the Second Derivative.}
Thanks to the uniformisation theorem, for all $k\in \N$, there exists a metric $g_{0,k}$ on $\Sigma$ of constant Gauss curvature and volume $1$ and a smooth function $\alpha_k:\Sigma\rightarrow \R$ such that
\begin{align*}
    g_{k}=e^{2\alpha_k}g_{0,k}.
\end{align*}
First assume that the underlying sequence of Riemann surfaces $(\Sigma,g_{0,k})_{k\in \N}$ does not degenerate in the moduli space. Then, we have
\begin{align}\label{fourth_term1}
    &\left|\int_{\Sigma}\s{\partial u\otimes \partial H_k}{h_{0,k}}_{\mathrm{WP}}\,u\,d\mathrm{vol}_{g_k}\right|\leq \nu_{\alpha,k}\int_{\Sigma}|u||du|_{g_{0,k}}\omega_{3,\alpha,k}\,d\mathrm{vol}_{g_{0,k}}.
\end{align}
This estimate follows once one takes the metric $g_{0,k}$ (that smoothly converges everywhere) in the definition of $\omega_{3,\alpha,k}$. Namely, the expression $|\D A_k|$ stands for $|dA_k|_{g_{0,k}}$ in the definition \eqref{third_weight}.

First, we have 
\begin{align*}
    &\int_{\Sigma\setminus\bar{B}(0,\alpha)}|u||du|_{g_{0,k}}\omega_{3,\alpha,k}\,d\mathrm{vol}_{g_{0,k}}\leq \left(\int_{\Sigma\setminus\bar{B}(0,\alpha)}u^2\frac{1}{\alpha^{2m+2}}\left(1+\left(\frac{\alpha^{-1}\delta_k}{\alpha}\right)^{2\beta_3}+\frac{1}{\log^2\left(\frac{\alpha^2}{\rho_k}\right)}\right)d\mathrm{vol}_{g_{0,k}}\right)^{\frac{1}{2}}\\
    &\times \left(\int_{\Sigma\setminus\bar{B}(0,\alpha)}|du|_{g_{0,k}}^2\frac{1}{\alpha^{2m}}\left(1+\left(\frac{\alpha^{-1}\delta_k}{\alpha}\right)^{2\beta_3}+\frac{1}{\log^2\left(\frac{\alpha^2}{\rho_k}\right)}\right)d\mathrm{vol}_{g_{0,k}}\right)^{\frac{1}{2}}.
\end{align*}
Recall that thanks to \eqref{harnack} there exists $0<\alpha_0<1$ such that for all $k\in \N$ large enough, we have (for some $0<\Gamma<\infty$)
\begin{align*}
    \Gamma^{-1}|x|^{2m-2}\leq e^{2\lambda_k(x)}\leq \Gamma|x|^{2m-2}\qquad\text{for all}\;\,x\in \Omega_k(\alpha_0).
\end{align*}
Therefore, we have for all $0<\alpha<_0$
\begin{align*}
    &\int_{B_{\alpha_0}\setminus\bar{B}_{\alpha}(0)}u^2\frac{1}{\alpha^{2m+2}}d\mathrm{vol}_{g_{0,k}}\leq\int_{B_{\alpha_0}\setminus\bar{B}_{\alpha}(0)}u^2\frac{1}{\alpha^{4m}}|x|^{2m-2}\,d\mathrm{vol}_{g_{0,k}}
    &\leq \Gamma'\int_{B_{\alpha_0}\setminus\bar{B}_{\alpha}(0)}u^2\omega_{1,\alpha,k}\,d\mathrm{vol}_{g_k}.
\end{align*}
On the other hand, by absence of branch points of the limiting branched immersion $\phi_{\infty}:\Sigma\rightarrow \R^3$ outside $0$, and by smooth convergence of $\{\phi_k\}_{k\in \N}$ in $C^{\infty}_{\mathrm{loc}}(\Sigma\setminus\ens{0})$, we deduce that for $k\in \N$ large enough, we have
\begin{align*}
    \inf_{\Sigma\setminus\bar{B}(0,\alpha_0)}e^{2\alpha_k}\geq \alpha^{2m-2},
\end{align*}
which shows that 
\begin{align*}
    \int_{\Sigma\setminus\bar{B}(0,\alpha_0)}u^2\frac{1}{\alpha^{2m+2}}d\mathrm{vol}_{g_{0,k}}\leq \int_{\Sigma\setminus\bar{B}(0,\alpha_0)}u^2\frac{1}{\alpha^{4m}}e^{2\alpha_k}\,d\mathrm{vol}_{g_{0,k}}\leq \int_{\Sigma\setminus\bar{B}(0,\alpha_0)}u^2\omega_{1,\alpha,k}\,d\mathrm{vol}_{g_k}.
\end{align*}
On the other hand, for $k$ large enough, we have 
\begin{align*}
    &\int_{\Sigma\setminus\bar{B}(0,\alpha)}|du|_{g_{0,k}}^2\frac{1}{\alpha^{2m}}\left(1+\left(\frac{\alpha^{-1}\delta_k}{\alpha}\right)^{2\beta_3}+\frac{1}{\log^2\left(\frac{\alpha^2}{\rho_k}\right)}\right)d\mathrm{vol}_{g_{0,k}}\\
    &=\int_{\Sigma\setminus\bar{B}(0,\alpha)}|du|_{g_{k}}^2\frac{1}{\alpha^{2m}}\left(1+\left(\frac{\alpha^{-1}\delta_k}{\alpha}\right)^{2\beta_3}+\frac{1}{\log^2\left(\frac{\alpha^2}{\rho_k}\right)}\right)d\mathrm{vol}_{g_{k}}
    \leq 2\int_{\Sigma\setminus\bar{B}(0,\alpha)}|du|_{g_k}^2\bar{\omega}_{1,\alpha,k}\,d\mathrm{vol}_{g_k}.
\end{align*}
independently of the choice of $0<\beta_3<1$. We now move to the estimate in the neck region.
We have
\begin{align*}
    &\int_{\Omega_k(\alpha)}|u||\D u|\omega_{3,\alpha,k}dx=\int_{\Omega_k(\alpha)}|u||\D u|\frac{1}{|x|^{2m+1}}\left(\left(\frac{|x|}{\alpha}\right)^{2\beta_3}+\left(\frac{\alpha^{-1}\rho_k}{|x|}\right)^{2\beta_3}+\frac{1}{\log^2\left(\frac{\alpha^2}{\rho_k}\right)}\right)dx\\
    &\leq 3\left(\int_{\Omega_k(\alpha)}\frac{u^2}{|x|^{2m+2}}\left(\left(\frac{|x|}{\alpha}\right)^{4(1-\epsilon)\beta_3}+\left(\frac{\alpha^{-1}\rho_k}{|x|}\right)^{4(1-\epsilon)\beta_3}+\frac{1}{\log^2\left(\frac{\alpha^2}{\rho_k}\right)}\right)dx\right)^{\frac{1}{2}}\\
    &\times \left(\int_{\Omega_k(\alpha)}\frac{|\D u|^2}{|x|^{2m}}\left(\left(\frac{|x|}{\alpha}\right)^{4\epsilon\beta_3}+\left(\frac{\alpha^{-1}\rho_k}{|x|}\right)^{4\epsilon\beta_3}+\frac{1}{\log^2\left(\frac{\alpha^2}{\rho_k}\right)}\right)dx\right)^{\frac{1}{2}}\\
    &\leq 3\sqrt{\Gamma}\left(\int_{\Omega_k(\alpha)}u^2\,\omega_{1,\alpha,k}\,d\mathrm{vol}_{g_k}\right)^{\frac{1}{2}} \left(\int_{\Omega_k(\alpha)}\frac{|\D u|^2}{|x|^{2m}}\left(\left(\frac{|x|}{\alpha}\right)^{4\epsilon\beta_3}+\left(\frac{\alpha^{-1}\rho_k}{|x|}\right)^{4\epsilon\beta_3}+\frac{1}{\log^2\left(\frac{\alpha^2}{\rho_k}\right)}\right)dx\right)^{\frac{1}{2}}.
\end{align*}
provided that we picked $(1-\epsilon)\beta_3\geq \beta_1$. The second component is estimated as in \textbf{Step 3}, so we need only estimate the term on the bubble region.

We have 
\begin{align*}
    &\int_{B(0,\alpha^{-1}\rho_k)}|u||\D u|\omega_{3,\alpha,k}dx=\int_{B(0,\alpha^{-1}\rho_k)}|u||\D u|\frac{1}{(\alpha^{-1}\rho_k)^{2m+1}}\frac{1}{\alpha^{2m+3}}\frac{(1+\alpha^2)^{\frac{2m+3}{2}}}{\left(1+\left(\frac{|x|}{\rho_k}\right)^2\right)^{\frac{2m+3}{2}}}\left(1+o_k(1)\right)dx\\
    &\leq 2\left(\int_{B(0,\alpha^{-1}\rho_k)}u^2\frac{1}{\alpha^4\rho_k^{2m+2}}\frac{(1+\alpha^2)^{m+3}}{\left(1+\left(\frac{|x|}{\rho_k}\right)^{2}\right)^{m+3}}dx\right)^{\frac{1}{2}}\left(\int_{B(0,\alpha^{-1}\rho_k)}|\D u|^2\frac{1}{\rho_k^{2m}}\frac{(1+\alpha^2)^{m}}{\left(1+\left(\frac{|x|}{\rho_k}\right)^2\right)^m}dx\right)^{\frac{1}{2}}.
\end{align*}
On the one hand, we have by \eqref{bubbling_conformal}
\begin{align*}
    &\int_{B(0,\alpha^{-1}\rho_k)}u^2\frac{1}{\alpha^4\rho_k^{2m+2}}\frac{(1+\alpha^2)^{m+3}}{\left(1+\left(\frac{|x|}{\rho_k}\right)^{2}\right)^{m+3}}dx\\
    &=\int_{B(0,\alpha^{-1}\rho_k)}u^2\frac{1}{\alpha^4\rho_k^{4m}}\frac{(1+\alpha^2)^{m+3}}{\left(1+\left(\frac{|x|}{\rho_k}\right)^{2}\right)^{2m+2}}\,\rho_k^{2m-2}\left(1+\left(\frac{|x|}{\rho_k}\right)^{2}\right)^{m-1}dx\\
    &\leq e^{2\,\Gamma}\int_{B(0,\alpha^{-1}\rho_k)}u^2\,\omega_{1,\alpha,k}\,d\mathrm{vol}_{g_k}.
\end{align*}
On the other hand, we have (for some $C_{\alpha}<\infty$)
\begin{align*}
    \frac{1}{\rho_k^{2m}}\frac{(1+\alpha^2)^m}{\left(1+\left(\frac{|x|}{\rho_k}\right)^{2}\right)^m}\leq \frac{C_{\alpha}}{\alpha^2\rho_k^{2m}}\frac{(1+\alpha^2)^{m+1}}{\left(1+\left(\frac{|x|}{\rho_k}\right)^{2}\right)^{m+1}}
\end{align*}
if and only if
\begin{align*}
    \alpha^2\left(1+\left(\frac{|x|}{\rho_k}\right)^2\right)\leq C_{\alpha}(1+\alpha^2)\qquad\text{for all}\;\, x\in B(0,\alpha^{-1}\rho_k),
\end{align*}
which is satisfied for $C_{\alpha}=1$ since 
\begin{align*}
    \sup_{|x|\leq \alpha^{-1}\rho_k}\alpha^2\left(1+\left(\frac{|x|}{\rho_k}\right)^2\right)=\alpha^2(1+\alpha^{-2})=(1+\alpha^2).
\end{align*}
Therefore, we deduce that 
\begin{align*}
    &\int_{B(0,\alpha^{-1}\rho_k)}|\D u|^2\frac{1}{\rho_k^{2m}}\frac{(1+\alpha^2)^{m}}{\left(1+\left(\frac{|x|}{\rho_k}\right)^2\right)^m}dx\leq \int_{B(0,\alpha^{-1}\rho_k)}|\D u|^2\frac{1}{\alpha^2\rho_k^{2m}}\frac{(1+\alpha^2)^{m+1}}{\left(1+\left(\frac{|x|}{\rho_k}\right)^2\right)^{m+1}}dx\\
    &\leq \int_{B(0,\alpha^{-1}\rho_k)}|\D u|^2\,\bar{\omega}_{1,\alpha,k}dx=\int_{B(0,\alpha^{-1}\rho_k)}|du|_{g_k}^2\,\bar{\omega}_{1,\alpha,k}\,d\mathrm{vol}_{g_k}
\end{align*}
and the theorem follows in the case of non-degenerating conformal class. If the conformal class degenerates, we follow the analysis of Laurain--Rivière \cite{lauriv1}. Let $\ens{\gamma_k^1,\cdots,\gamma_k^N}_{k\in \N}$ be the set of shrinking geodesics of $(\Sigma,g_{0,k})$. According to the compactness theorem of Deligne--Mumford (\cite{deligne_mumford}), the sequence of Riemann surfaces $(\Sigma,g_{0,k})_{k\in \N}$ converges to a nodal surface $(\widetilde{\Sigma},\widetilde{g_0})$ and there exists a sequence of diffeomorphism $\psi_k:\widetilde{\Sigma}\setminus \ens{p_1,\cdots,p_{2N}}\rightarrow \Sigma\setminus\ens{\gamma_k^1,\cdots,\gamma_k^N}$ (where $\ens{p_1,\cdots,p_{2N}}$ are the cusps of $\widetilde{\Sigma}$) that extends to a continuous map $\psi_k:\widetilde{\Sigma}\rightarrow \Sigma$ and such that $\widetilde{g}_{0,k}=\psi_k^{\ast}g_{0,k}$ converges to $\widetilde{g}_0$ in $C^{\infty}_{\mathrm{loc}}$ away from cusps. For all connected component $\Sigma_i$ of $\widetilde{\Sigma}$, consider the map $\phi^i_k=\phi_k\circ \psi_k|\Sigma_{i}:\Sigma_i\rightarrow \R^3$. Thanks to the renormalisation of Laurain--Rivière (\cite[p. 906]{lauriv1}), there exists a sequence of conformal transformations $M_k:\R^3\cup\ens{\infty}\rightarrow \R^3\cup\ens{\infty}$ such that $\ens{M_k\circ \phi_k^i}_{k\in \N}:\Sigma_i\rightarrow \R^3$ converges smoothly away from finitely points to a limiting branched immersion $\phi_{\infty}^i:\Sigma_i\rightarrow \R^3$. Therefore, we can now apply the previous bubbling analysis to $\ens{M_k\circ \phi_k^i}_{k\in \N}$ and this concludes the proof of the theorem.  
\end{proof}

\textbf{We finally move to the proof of the main theorem (Theorem \ref{main_theorem}) of the article.}
\begin{proof}
    We follow the lines of the proof of the main theorem of \cite{willmore_scs}. They are unchanged up to one important point that renormalising the variations to 
    \begin{align*}
        \int_{\Sigma}u_k^2\,\omega_{1,\alpha,k}\,d\mathrm{vol}_{g_k}=1.
    \end{align*}
    Using the previous Theorem \ref{theorem_eigenvalue_bounded}, we deduce in particular that for $0<\alpha<\alpha_0$ small enough and $k\in \N$ large enough, we have
    \begin{align*}
        \int_{\Sigma}(\Delta_{g_k}u_k)^2\,d\mathrm{vol}_{g_k}\leq 4\mu_0.
    \end{align*}
    Therefore, integrating by parts, since $\Sigma$ is closed, we deduce that
    \begin{align*}
        \int_{\Sigma}|du_k|_{g_k}^2\,d\mathrm{vol}_{g_k}=-\int_{\Sigma}u_k\,\Delta_{g_k}u_k\,d\mathrm{vol}_{g_k}&\leq \left(\int_{\Sigma}(\Delta_{g_k}u)^2\,d\mathrm{vol}_{g_k}\right)^{\frac{1}{2}}\left(\int_{\Sigma}u_k^2\,d\mathrm{vol}_{g_k}\right)^{\frac{1}{2}}\\
        &\leq C_{\alpha}\left(\int_{\Sigma}(\Delta_{g_k}u)^2\,d\mathrm{vol}_{g_k}\right)^{\frac{1}{2}}\left(\int_{\Sigma}u_k^2\,\omega_{1,\alpha,k}\,d\mathrm{vol}_{g_k}\right)^{\frac{1}{2}}\\
        &\leq 2C_{\alpha}\sqrt{\mu_0}<\infty. 
    \end{align*}
    In $\Sigma\setminus\bar{B}(0,\alpha)$, the sequence of immersions $\{\phi_k\}_{k\in \N}$ converges locally smoothly. %, while\\ $\vec{\Psi}_k=e^{-\lambda_k(x_k)}\phi_k(\rho_k(\,\cdot\,))$ converges also smooth locally on $B(0,\alpha^{-1})$. 
    Furthermore, we have for a uniformly bounded sequence of functions $\ens{\mu_k}_{k\in\N}$ the expansion
    \begin{align*}
        e^{2\lambda_k}=e^{2\mu_k}|x|^{2m-2}\qquad \text{for all}\;\, x\in \Omega_k(\alpha).
    \end{align*}
    In particular, for any fixed smooth metric $g_0$ on $\Sigma$, we have $\ens{u_k}_{k\in \N}$ bounded in $W^{2,2}_{\mathrm{loc}}(\Sigma\setminus\ens{0},g_0)$. Therefore, up to a subsequence, it converges weakly in $W^{2,2}(\Sigma)$ to $u_{\infty}\in W^{2,2}(\Sigma)$. However, the convergence in the bubble domain is much more delicate due to the singularity of the metric. To define the bubble, we follow the approach of Laurain--Rivière (\cite[p. 2089, 2113]{quantamoduli}). Let $\ens{\mu_k}_{k\in \N}\subset \R$ to be determined later and define \begin{align*}
        \vec{\Psi}_k=e^{-\mu_k}\left(\phi_k(\rho_k(\,\cdot\,))-\phi_k(0)\right):B(0,2\alpha^{-1})\rightarrow\R^3.
    \end{align*}
    Notice that thanks to the Harnack inequality in the bubble domain, for all $0<\alpha<\infty$, there exists $C_{\alpha}<\infty$ such that 
    \begin{align}\label{harnack_bubble_domain}
        0<\sup_{x\in B(0,2\alpha^{-1}\rho_k)}e^{\lambda_k(x)}\leq C_{\alpha}\inf_{x\in B(0,2\alpha^{-1}\rho_k)}e^{\lambda_k(x)}
    \end{align}
    Therefore, thanks to the expansion \eqref{neck_domain_expansion}, we deduce that 
    \begin{align*}
        \Gamma_0^{-1}C_{\alpha}^{-1}(\alpha^{-1}\rho_k)^{m-1}\leq e^{\lambda_k(x)}\leq \Gamma_0C_{\alpha}(\alpha^{-1}\rho_k)^{m-1}\qquad \text{for all}\;\, x\in B(0,2\alpha^{-1}\rho_k),
    \end{align*}
    where $\Gamma_0<\infty$ is a universal constant. In other words, there exists a bounded sequence of functions $\ens{\zeta_{\alpha,k}}_{k\in \N}$ such that
    \begin{align*}
        e^{\lambda_k(x)}=e^{\zeta_{\alpha,k}(x)}(\alpha^{-1}\rho_k)^{m-1}\qquad \text{for all}\;\, x\in B(0,2\alpha^{-1}\rho_k).
    \end{align*}
    Now, we have
    \begin{align*}
        \p{z}\vec{\Psi}_k(z)=\rho_k\,e^{-\mu_k}\p{z}\phi_k(\rho_kz),
    \end{align*}
    which shows that the conformal parameter of $\vec{\Psi}_k$ is given by
    \begin{align*}
        \lambda_{\vec{\Psi}_k}(z)=\lambda_k(\rho_kz)-\mu_k+\log(\rho_k).
    \end{align*}
    The previous estimate shows that
    \begin{align*}
        m\,\log(\rho_k)-\mu_k-\log(\Gamma_0C_{\alpha}\alpha^{1-m})\leq \lambda_{\vec{\Psi}_k}(z)\leq m\,\log(\rho_k)-\mu_k+\log(\Gamma_0C_{\alpha}\alpha^{1-m}).
    \end{align*}
    In order to extract a subsequence, we want to bound this quantity independently of $k$, which prompts us to pick 
    \begin{align*}
        \mu_k=m\,\log(\rho_k).
    \end{align*}
    Therefore, we deduce that for all $0<\alpha<0$, the conformal factor of $\{\vec{\Psi}_k\}_{k\in \N}$ is uniformly bounded in $B(0,2\alpha^{-1})$, so using the analysis of Bernard--Rivière (\cite{quanta}), we deduce that there exists a branched Willmore sphere $\vec{\Psi}_{\infty}:S^2\rightarrow\R^3$ such that
    \begin{align*}
        \vec{\Psi}_k=\rho_k^{-m}\left(\phi_k\left(\rho_k(\,\cdot\,)\right)-\phi_k(0)\right)\conv{k\rightarrow \infty}\vec{\Psi}_{\infty}\circ \pi^{-1}\qquad \text{in}\;\, C^l_{\mathrm{loc}}(\C)\;\,\text{for all}\;\, l\in \N,
    \end{align*}
    where $\pi^{-1}:\C\rightarrow S^2$ is the inverse stereographic projection. Notice that following the afore-mentioned article \cite[p. 2089]{quantamoduli}, we would have chosen
    \begin{align*}
        e^{\mu_k}&=\mathscr{H}^1(\phi_k(2\alpha^{-1}\rho_k\,S^1))=\int_{0}^{2\pi}\left|\frac{d}{d\theta}\phi_k(\rho_ke^{i\theta})\right|d\theta\\
        &=\rho_k\int_{0}^{2\pi}e^{\mu_k(2\alpha^{-1}\rho_ke^{i\theta})}e^{c_k}(\alpha^{-1}\rho_k)^{m-1}(1+O(\alpha^{-1}\rho_k))=e^{\Gamma_k}\alpha^{1-m}e^{c_k}\rho_k^{m}(1+O(\alpha^{-1}\rho_k)),
    \end{align*}
    where $\ens{\Gamma_k}_{k\in \N}\subset \R$ is a bounded sequence. Therefore, the choice leads to a scaled immersion, and by scaling invariance of the Willmore immersion, to the same branched surface. Now, using \eqref{harnack_bubble_domain}, if we define $w_k(z)=u_k(\rho_k z)$ we deduce that
    \begin{align*}
        \int_{B(0,2\alpha^{-1}\rho_k)}(\Delta_{g_k}u_k)^2\,d\mathrm{vol}_{g_k}&=\int_{B(0,2\alpha^{-1}\rho_k)}e^{-2\lambda_k}(\Delta u_k(z))|dz|^2\\
        &=\int_{B(0,2\alpha^{-1}\rho_k)}e^{-2\,\zeta_{\alpha,k}(z)}(\alpha^{-1}\rho_k)^{2-2m}\left(\rho_k^{-2}\Delta w_k(\rho_k^{-1}z)\right)^2|dz|^2\\
        &=\alpha^{2m-2}\rho_k^{-2m}\int_{B(0,2\alpha^{-1})}e^{-2\,\zeta_{\alpha,k}(\rho_kz)}(\Delta w_k(z))^2|dz|^2.
    \end{align*}
    By boundedness of $\ens{\zeta_{\alpha,k}}_{k\in \N}$, we deduce that the correct renormalisation is to define
    \begin{align*}
        w_k(z)=\rho_k^{m}v_k(z), 
    \end{align*}
    which yields
    \begin{align}
        v_k(z)=\rho_k^{-m}u_k(\rho_kz). 
    \end{align}

    Let $\ens{\varphi_{k,j}}_{1\leq j\leq N_k}$ be an orthonormal base of 
        \begin{align*}
            \mathscr{E}_{\alpha,k}^0=\bigoplus_{\lambda\leq 0}\mathscr{E}_{\alpha,k}(\lambda).
        \end{align*}
        For all $1\leq j\leq N_k$, we have
        \begin{align*}
            \mathcal{L}_{g_k}\varphi_{k,j}=\lambda_{k,j}\,\omega_{1,\alpha,k}\,\varphi_{k,j}.
        \end{align*}
        Since $u_k\in \mathscr{S}_{\alpha,k}\simeq S^{N_k-1}$, we have a decomposition
        \begin{align*}
            u_k=\sum_{j=1}^{N_k}a_{k,j}\varphi_{k,j},
        \end{align*}
        where $a=\ens{a_{k,j}}_{1\leq j\leq N_k}\in S^{N_k-1}$. Then, we get as $-\mu_0\leq \lambda_{k,j}\leq 0$ the inequality
        \begin{align}\label{bound_w42}
            \int_{\Sigma}\frac{1}{\omega_{1,\alpha,k}}|\mathcal{L}_{g_k}u_k|^2\,d\mathrm{vol}_{g_k}&=\int_{\Sigma}\sum_{i,j=1}^{N_k}a_{k,i}a_{k,j}\lambda_{k,i}\lambda_{k,j}\varphi_{k,i}\varphi_{k,j}\,\omega_{1,\alpha,k}\,d\mathrm{vol}_{g_k}\nonumber\\
            &=\sum_{j=1}^{N_k}|a_{k,j}|^2|\lambda_{k,j}|^2\leq \mu_0^2.
        \end{align}
        Therefore, there exists a uniformly bounded sequence of functions $\ens{f_k}_{k\in\N}\subset L^2(\Sigma)$ such that
        \begin{align*}
            \mathcal{L}_{g_k}u_k=\sqrt{\omega_{\alpha,k}}f_k.
        \end{align*}
    Since $\{\phi_k\}_{k\in \N}$ converges strongly in $C^l_{\mathrm{loc}}(\Sigma\setminus\ens{0})$ towards $\phi_{\infty}$, the restriction of family of metrics $\ens{g_k}_{k\in \N}$ to $\Sigma\setminus\bar{B}(0,\alpha)$ is uniformly elliptic which implies that there exists a constant $C_{\alpha}<\infty$ such that
    \begin{align*}
        \wp{u_k}{4,2}{\Sigma\setminus\bar{B}(0,\alpha)}\leq C_{\alpha}.
    \end{align*}
    Likewise, thanks to the strong convergence in the bubble region, we also have
    \begin{align*}
        \wp{v_k}{4,2}{B(0,2\alpha^{-1})}\leq C_{\alpha}.
    \end{align*}
    Let us justify this step. Thanks to \eqref{bound_w42}, we have
    \begin{align*}
            &\int_{B(0,2\alpha^{-1}\rho_k)}\frac{1}{\omega_{1,\alpha,k}}|e^{-4\lambda_k}\Delta^2u_k|^2\,d\mathrm{vol}_{g_k}\\
            &=\int_{B(0,2\alpha^{-1}\rho_k)}(\alpha^{-1}\rho_k)^{4m}e^{-6\zeta_{\alpha,k}}(\alpha^{-1}\rho_k)^{-6m+6}\left|\rho_k^{m-4}\Delta^2v_k(\rho_k^{-1}z)\right|^2|dz|^2\\
            &=\alpha^{4m-4}\int_{B(0,2\alpha^{-1}\rho_k)}\rho_k^{-2}|\Delta^2v_k(\rho_k^{-1}z)|^2|dz|^2=\alpha^{4m-4}\int_{B(0,2\alpha^{-1})}|\Delta^2v_k(w)|^2|dw|^2. 
    \end{align*}
    Therefore, $\ens{v_k}_{k\in \N}$ is bounded in $W^{4,2}_{\mathrm{loc}}(B(0,2\alpha^{-1}))$. We deduce that up to a subsequence, we have
    \begin{align}\label{w42_convergence}
         u_k\underset{k\rightarrow \infty}{\rightharpoonup}u_{\infty}\quad \text{in}\;\, W^{4,2}_{\mathrm{loc}}(\Sigma\setminus\ens{0})\quad \text{and}\quad v_k\underset{k\rightarrow \infty}{\rightharpoonup}v_{\infty}\quad \text{in}\;\, W^{4,2}_{\mathrm{loc}}(\C). 
    \end{align}
    Assume by contradiction that $u_{\infty}=0$ and $v_{\infty}=0$. Let $\chi_{\alpha,k}\in C^{\infty}(\Sigma)$ be a cutoff function such that $\chi_{\alpha,k}=1$ in $\Sigma\setminus\bar{\Omega_k(\alpha)}$ and $\chi_{\alpha,k}=0$ in $\Omega_k(\alpha/2)$ and $\widetilde{u}_k=\chi_{\alpha,k}u_k$. We can also assume that for some universal constant $C<\infty$ independent of $\alpha$ and $k$, we have
    \begin{align*}
        |\D^l\chi_{\alpha,k}|\leq \frac{C}{|x|^l}\quad \forall \;\,x\in \left(B_{\alpha}\setminus\bar{B}_{\alpha/2}(0)\right)\cup\;\,\left(B_{2\alpha^{-1}\rho_k}\setminus\bar{B}_{\alpha^{-1}\rho_k}(0)\right), \forall\;\,l=0,1,2.
    \end{align*}
    Recall that by \eqref{formula_second_derivative}, we have
    \begin{align}\label{formula_second_derivative2}
    Q_{\phi_k}(u)&=\frac{1}{2}\int_{\Sigma}\left(\Delta_{g_k}u+|A_k|^2u\right)^2\,d\mathrm{vol}_{g_k}+\int_{\Sigma}\left(|du|_h^2+4|h_{0,k}|_{\mathrm{WP}}u^2\right)H_k^2\,d\mathrm{vol}_{g_k}\nonumber\\
    &-8\int_{\Sigma}\s{\partial u\otimes \partial u}{h_{0,k}}_{\mathrm{WP}}H_k\,d\mathrm{vol}_{g_k}
    -16\int_{\Sigma}\s{\partial u\otimes \partial H_k}{h_{0,k}}_{\mathrm{WP}}u\,d\mathrm{vol}_{g_k}.
\end{align}
    For simplicity of notation, define for all $A\subset \Sigma$
    \begin{align*}
        Q_{\phi_k}(u|A)&=\frac{1}{2}\int_{A}\left(\Delta_{g_k}u+|A_k|^2u\right)^2\,d\mathrm{vol}_{g_k}+\int_{A}\left(|du|_h^2+4|h_{0,k}|_{\mathrm{WP}}u^2\right)H_k^2\,d\mathrm{vol}_{g_k}\nonumber\\
    &-8\int_{A}\s{\partial u\otimes \partial u}{h_{0,k}}_{\mathrm{WP}}H_k\,d\mathrm{vol}_{g_k}
    -16\int_{A}\s{\partial u\otimes \partial H_k}{h_{0,k}}_{\mathrm{WP}}u\,d\mathrm{vol}_{g_k}.
    \end{align*}
    Thanks to \eqref{w42_convergence}, we deduce that for all fixed $0<\beta<1$, up to a subsequence, $\ens{u_k}_{k\in \N}$ converges strongly in $C^{2,\beta}_{\mathrm{loc}}(\Sigma\setminus\ens{0})$ to $0$ and likewise, $\ens{v_k}_{k\in \N}$ converges strongly in $C^{2,\beta}_{\mathrm{loc}}(\C)$ to $0$. 
    In particular, we have
    \begin{align*}
        Q_{\phi_k}(u_k|\Sigma\setminus\bar{B}(0,\alpha))\conv{k\rightarrow \infty}0\qquad Q_{\phi_k}(u_k|B(0,\alpha^{-1}\rho_k))=Q_{\vec{\Psi}_k}(v_k|B(0,\alpha^{-1}))\conv{k\rightarrow \infty}0.
    \end{align*}
    Then, we have
    \begin{align*}
        Q_{\phi_k}(u_k)-Q_{\phi_k}(\widetilde{u}_k)=Q_{\phi_k}(u_k|\Sigma\setminus\bar{B}(0,\alpha))+Q_{\phi_k}(u_k|B(0,\alpha^{-1}\rho_k))+Q_{\phi_k}(u_k|\Omega_k(\alpha))-Q_{\phi_k}(\widetilde{u}_k|\Omega_k(\alpha)).
    \end{align*}
    We have
    \begin{align*}
        &\frac{1}{2}\int_{\Omega_k(\alpha)}\left(\Delta_{g_k}u_k+|A_k|^2u_k\right)^2\,d\mathrm{vol}_{g_k}-\frac{1}{2}\int_{\Omega_k(\alpha)}\left(\Delta_{g_k}\widetilde{u}_k+|A_k|^2\widetilde{u}_k\right)^2\,d\mathrm{vol}_{g_k}\\
        &=\frac{1}{2}\int_{B_{\alpha}\setminus\bar{B}_{\alpha/2}(0)}\left(1-\chi_{\alpha,k}\right)^2\left(\Delta_{g_k}u_k+|A_k|^2u_k\right)^2\,d\mathrm{vol}_{g_k}\\
        &+\frac{1}{2}\int_{B_{2\alpha^{-1}\rho_k}\setminus\bar{B}_{\alpha^{-1}\rho_k}(0)}\left(1-\chi_{\alpha,k}\right)^2\left(\Delta_{g_k}u_k+|A_k|^2u_k\right)^2\,d\mathrm{vol}_{g_k}\\
        &-\frac{1}{2}\int_{B_{\alpha}\setminus\bar{B}_{\alpha/2}(0)}u_k^2\left(\Delta_{g_k}\chi_{\alpha,k}\right)^2\,d\mathrm{vol}_{g_k}-\frac{1}{2}\int_{B_{2\alpha^{-1}\rho_k}\setminus\bar{B}_{\alpha^{-1}\rho_k}(0)}u_k^2\left(\Delta_{g_k}\chi_{\alpha,k}\right)^2\,d\mathrm{vol}_{g_k}\\
        &-2\int_{B_{\alpha}\setminus\bar{B}_{\alpha/2}(0)}\s{du_k}{d\chi_{\alpha,k}}_{g_k}^2\,d\mathrm{vol}_{g_k}-2\int_{B_{2\alpha^{-1}\rho_k}\setminus\bar{B}_{\alpha^{-1}\rho_k}(0)}\s{du_k}{d\chi_{\alpha,k}}_{g_k}^2\,d\mathrm{vol}_{g_k}\\
        &-\int_{B_{\alpha}\setminus\bar{B}_{\alpha/2}(0)}\Big(\chi_{\alpha,k}(\Delta_{g_k}\chi_{\alpha,k})u_k+2\s{du_k}{d\chi_{\alpha,k}}_{g_k}\chi_{\alpha,k}\Big)\left(\Delta_{g_k}u_k+|A_k|^2u_k\right)d\mathrm{vol}_{g_k}\\
        &-\int_{B_{2\alpha^{-1}\rho_k}\setminus\bar{B}_{\alpha^{-1}\rho_k}(0)}\Big(\chi_{\alpha,k}(\Delta_{g_k}\chi_{\alpha,k})u_k+2\s{du_k}{d\chi_{\alpha,k}}_{g_k}\chi_{\alpha,k}\Big)\left(\Delta_{g_k}u_k+|A_k|^2u_k\right)d\mathrm{vol}_{g_k}.
    \end{align*}
    Thanks to the strong local convergence of $\ens{u_k}_{k\in \N}$ and $\ens{v_k}_{k\in \N}$ to $0$, and the strong local convergence of $\{\phi_k\}_{k\in \N}$ to $\phi_{\infty}$ in $C^l(\Sigma\setminus B(0,\alpha/2))$ and $\{\vec{\Psi}_k\}_{k\in \N}$ to $\vec{\Psi}_{\infty}$ in $C^l(B(0,2\alpha^{-1}))$, combined with the pointwise bound on $\ens{\chi_{\alpha,k}}_{k\in \N}$, we deduce that all integrals converge to $0$. Therefore, we deduce that
    \begin{align}\label{penultimate_convergence}
        \lim_{k\rightarrow \infty }\left|Q_{\phi_k}(u_k)-Q_{\phi_k}(\widetilde{u}_k)\right|=0.
    \end{align}    
    Furthermore, the strong convergence shows that 
    \begin{align}\label{last_convergence}
        \lim_{k\rightarrow \infty}\left|\int_{\Sigma}u_k^2\omega_{1,\alpha,k}\,d\mathrm{vol}_{g_k}-\int_{\Sigma}\widetilde{u}_k^2\omega_{1,\alpha,k}\,d\mathrm{vol}_{g_k}\right|=0. 
    \end{align}
    However, as $\widetilde{u}_k\in W^{2,2}_0(\Omega_k(\alpha))$, we have by Theorem \ref{lower_bound_main_theorem} the bound
    \begin{align*}
        Q_{\phi_k}(\widetilde{u}_k)&\geq \lambda_0\int_{\Omega_k(\alpha)}\frac{\widetilde{u}_k^2}{|x|^{2m+2}}\left(\left(\frac{|x|}{\alpha}\right)^{4\beta_1}+\left(\frac{\alpha^{-1}\rho_k}{|x|}\right)^{4\beta_1}+\frac{1}{\log^2\left(\frac{\alpha^2}{\rho_k}\right)}\right)dx\\
        &\geq \lambda_0\Gamma^{-1}\int_{\Omega_k(\alpha)}\widetilde{u}_k^2\,\omega_{1,\alpha,k}\,d\mathrm{vol}_{g_k}
        \geq \frac{1}{2}\lambda_0\Gamma^{-1}>0
    \end{align*}
    for $k$ large enough, where we used the Harnack inequality \eqref{harnack}, the convergence \eqref{last_convergence}, and the fact that $u_k\in \mathscr{S}_{\alpha,k}$. This is a contradiction since $Q_{\phi_k}(u_k)\leq 0$ and using the convergence \eqref{penultimate_convergence}. Therefore, we have $(u_{\infty},v_{\infty})\neq (0,0)$ and we can finish the argument as in \cite{riviere_morse_scs} and \cite{willmore_scs}.     
    Notice that thanks to Fatou's lemma, $u_k\wconv{k\rightarrow \infty}u_{\infty}$ in $W^{2,2}(\Sigma)$ and that
    \begin{align}\label{bounded}
        \int_{\Sigma}u_{\infty}^2\,\omega_{1,\alpha,\infty}\,d\mathrm{vol}_{g_{\infty}}\leq 1.
    \end{align}
    In particular, $u_{\infty}$ is an admissible variation (see \cite{index4}) since it vanishes sufficiently quickly at branch points, which shows that it is a genuine negative variation of the limiting immersion of the bubble. Indeed, notice that by standard elliptic regularity, $u_{\infty}$ is smooth. Since
    \begin{align*}
        \omega_{1,\alpha,\infty}d\mathrm{vol}_{g_{\infty}}=\frac{e^{2\mu_{\infty}}}{\alpha^{4\beta_1}}\frac{1}{|x|^{2m+2-4\beta_1}}dx
    \end{align*}    
    (where $\mu_{\infty}$ is a smooth function) and $1/2<\beta_1<1$, the bound \eqref{bounded} shows that $u(z)=O(|z|^{m-1})$. Then, the bound of $\Delta_{g_{\infty}}u_{\infty}$ in $L^2(\Sigma,g_{\infty})$ shows that 
    \begin{align*}
        u_{\infty}(z)=\Re(\gamma z^m)+O(|z|^{m+1})
    \end{align*}
    which implies that $u_{\infty}$ is indeed admissible. Furthermore, for such variations, making the change of variation $u_{\infty}=(\eta|x|^{m-1}+(1-\eta))v_{\infty}$ for some well-chosen cutoff function on $\Sigma$ (this is easy to see for smooth variations away from branched points), the index operator has a discrete spectrum for the weighted operator (see Theorem \ref{discrete_spectrum}).
    The final argument by contradiction can therefore be reproduced \emph{mutadis mutandis} which concludes the proof of the theorem. 
\end{proof}

\section{Appendix}

\subsection{Discreetness of the Spectrum for the Weighted Index Operator}

In this appendix, we prove that in the class of variations of finite weighted $W^{2,2}$ norm, the index operator has a discrete spectrum. 

\begin{theorem}\label{discrete_spectrum}
    Let $\Sigma$ be a closed Riemann surface and $\phi:\Sigma\rightarrow \R^3$ be a branched Willmore surface whose branched points $p_1,\cdots,p_n$ are of respective multiplicities $m_1,\cdots,m_n\geq 1$. Let $\omega\in C^{\infty}(\Sigma\setminus\ens{p_1,\cdots,p_n})$ be such that there exists $1/2<\beta_i<1$ such that 
    \begin{align*}
        0<C_0^{-1}\leq \omega(x)\leq C_0\sum_{i=1}^n\frac{1}{\mathrm{dist}(x,p_i)^{4(m_i-\beta_i)}},
    \end{align*}
    where $\ens{C_l}_{l\in\N}\subset (0,\infty)$. Denote
    \begin{align}\label{weighted_L2}
        L^2_{\omega}(\Sigma)=L^2(\Sigma)\cap\ens{u:\int_{\Sigma}u^2\,\omega\,d\mathrm{vol}_{g}<\infty},
    \end{align}
    where $g=\phi^{\ast}g_{\R^3}$ is any smooth metric on $\Sigma$. Consider the operator
    \begin{align*}
        \mathcal{L}_{g,\omega}&=\omega^{-1}\mathcal{L}_g=\omega^{-1}\bigg\{\Delta_g^2+|A|^2\Delta_g+2\s{d|A|^2}{d(\,\cdot\,)}_g+2\,d^{\ast_g}\left(H^2\,d(\,\cdot\,)\right)\\
        &+\left(|A|^4+\Delta_g|A|^2+24\,H^2|h_0|_{\mathrm{WP}}^2\right)+16\,\ast_g\,d\,\Re\left(g^{-1}\otimes h_0\otimes\partial(H\,\cdot\,)\right)\nonumber\\
            &-16\s{\partial(\,\cdot\,)\otimes\partial H}{h_0}_{\mathrm{WP}}\bigg\}
    \end{align*}
    acting on 
    \begin{align*}
        W^{2,2}_{\omega}(\Sigma)=W^{2,2}(\Sigma)\cap\ens{u:u\in L^2_{\omega}(\Sigma), \int_{\Sigma}(\Delta_gu)^2d\vg<\infty},
    \end{align*}
    that we equip with the norm
    \begin{align*}
        \Vert u\Vert_{\mathrm{W}^{2,2}_{\omega}(\Sigma)}=\left(\int_{\Sigma}u^2\,\omega\,d\vg\right)^{\frac{1}{2}}+\left(\int_{\Sigma}(\Delta_gu)^2d\vg\right)^{\frac{1}{2}}.
    \end{align*}
    Then, there exists a Hilbertian base of $L^2_{\omega}(\Sigma)$ made of eigenvectors of $\mathcal{L}_{g,\omega}$ whose eigenvalues satisfy
    \begin{align*}
        \lambda_1\leq \lambda_2\leq \cdots\leq \lambda_k\conv{k\rightarrow \infty}\infty. 
    \end{align*}
\end{theorem}
\begin{proof}
    Recall that
    \begin{align}\label{index_form}
        Q_{\phi}(u)&=\frac{1}{2}\int_{\Sigma}(\Delta_gu+|A|^2u)^2d\vg+\int_{\Sigma}\left(|du|_g^2+4|h_0|_{\mathrm{WP}}^2u^2\right)H^2d\vg\nonumber\\
        &-8\int_{\Sigma}\s{\partial u\otimes \partial u}{h_0}_{\mathrm{WP}}H\,d\vg
        -16\int_{\Sigma}\s{\partial u\otimes\partial H}{h_0}_{\mathrm{WP}}u\,d\vg.
    \end{align}
    It suffices to treat the case $n=1$, in which case we write for simplicity $m=m_1$. Fixed a conformal chart centred in $p_1$ in $B(0,1)\subset \R^2$. Thanks to the analysis of \cite{beriviere}, we deduce that there exists a constant $\Gamma_1<\infty$ such that for all $x\in B(0,1)$, we have
    \begin{align*}
        |H(x)|\leq \frac{\Gamma_1}{|x|^{m-1}}\qquad |h_0(x)|_{WP}\leq \frac{\Gamma_1}{|x|^{m-1}}.
    \end{align*}
    Furthermore, up to scaling if $\phi^{\ast}g_{\R^3}=e^{2\lambda}dx$ in our conformal chart, we have
    \begin{align*}
        e^{2\lambda(x)}=|x|^{2m-2}\left(1+O(|z|)\right). 
    \end{align*}
    and we can therefore assume (up to shrinking the domain) that 
    \begin{align*}
        \frac{1}{2}|x|^{2m-2}\leq e^{2\lambda(x)}\leq 2|x|^{2m-2}\qquad \text{for all}\;\, x\in B(0,1).
    \end{align*}
    The hypothesis shows that for all $u\in W^{2,2}_{\omega}(\Sigma)$, we have 
    \begin{align*}
        \int_{B(0,1)}u^2(x)\frac{dx}{|x|^{2m+2-4\beta_1}}<\infty. 
    \end{align*}
    In particular, we have
    \begin{align*}
        \int_{B(0,1)}u^2|A|^4d\mathrm{vol}_{g}\leq 32\,\Gamma_1^4\int_{B(0,1)}u^2(x)\frac{dx}{|x|^{2m-2}}\leq 32\,\Gamma_1^2\int_{B(0,1)}u^2(x)\frac{dx}{|x|^{2m+2-4\beta_1}}<\infty,
    \end{align*}
    where we used that $\beta_1<1$ (notice that the proof would also work for $\beta_1=1$) and $|A|^2=2|H|^2+2|h_0|_{WP}^2$. Then, we have
    \begin{align}\label{final_gagliardo1}
        \int_{\Sigma}|du|_{g}^2|A|^2\,d\mathrm{vol}_{g}=-\int_{\Sigma}u\,\Delta_gu\,|A|^2d\vg-\int_{\Sigma}u\s{du}{d|A|^2}_{g}d\vg.
    \end{align}
    We estimate by Cauchy--Schwarz inequality
    \begin{align}\label{final_gagliardo2}
        -\int_{\Sigma}u\,\Delta_gu\,|A|^2d\vg\leq \left(\int_{\Sigma}u^2|A|^4d\vg\right)^{\frac{1}{2}}\left(\int_{\Sigma}(\Delta_gu)^2d\vg\right)^{\frac{1}{2}}<\infty.
    \end{align}
    The second integral is bounded away from $p$ by smoothness of $\phi$ outside of the branch point. We therefore deduce that for some $C=C(\phi)<\infty$
    \begin{align}\label{final_gagliardo2bis}
        &\left|\int_{\Sigma\setminus B(0,1/2)}u\s{du}{d|A|^2}_gd\vg\right|\leq C\int_{\Sigma\setminus B(0,1/2)}|u||du|_gd\vg\leq C\int_{\Sigma}u^2d\vg+C\int_{\Sigma}|du|_g^2d\vg\nonumber\\
        &\leq C\left(\int_{\Sigma}u^2d\vg\right)^{\frac{1}{2}}\left(\left(\int_{\Sigma}u^2d\vg\right)^{\frac{1}{2}}+\left(\int_{\Sigma}(\Delta_gu)^2d\vg\right)^{\frac{1}{2}}\right)
    \end{align}
    Furthermore, by either the $\epsilon$-regularity of Rivière (\cite{riviere1}) or the higher order Taylor expansion of Bernard--Rivière (\cite{beriviere}; see also \cite{classification}), we have
    \begin{align*}
        |\D A(x)|\leq \frac{\Gamma_1}{|x|^m}\qquad \text{for all}\;\, x\in B(0,1). 
    \end{align*}
    which yields
    \begin{align}\label{final_gagliardo3}
        \int_{B(0,1/2)}|u||\D u||\D |A|^2|dx&\leq 2\,\Gamma_1\Gamma_2\int_{B(0,1/2)}|u||\D u|\frac{dx}{|x|^{2m-1}}\nonumber\\
        &\leq 2\,\Gamma_1\Gamma_2\left(\int_{B(0,1/2)}u^2\frac{dx}{|x|^{2m+2-4\beta_1}}\right)^{\frac{1}{2}}\left(\int_{B(0,1/2)}\frac{|\D u|^2}{|x|^{2m}}|x|^{4(1-\beta_1)}dx\right)^{\frac{1}{2}}.
    \end{align}
    Therefore, we can use Corollary \ref{gagliardo_nirenberg_frak_m} to get for all $1-\beta_1\leq \beta<\min\ens{\dfrac{m-1}{4},1}$
    \begin{align}\label{final_gagliardo4}
        \int_{B(0,1/2)}\frac{|\D u|^2}{|x|^{2m}}|x|^{4(1-\beta_1)}dx\leq C_{m,\beta,2(1-\beta_1)}\left(\int_{B(0,1/2)}\frac{u^2}{|x|^{2m+2}}|x|^{4\beta}dx+\int_{B(0,1/2)}\frac{|\D^2u|^2}{|x|^{2m-2}}|x|^{4(1-\beta_1)}dx\right).
    \end{align}
    Notice that in the inequality \eqref{gagliardo_nirenberg_frak_m_ineq}, we require that $0<\gamma<1$, which is $0<2(1-\beta_1)<1$, or $\beta_1>1/2$, which is not restrictive due to the above choice. Furthermore, we can apply Theorem \ref{gagliardo_nirenberg_frak_m} for $a=0$ for functions in $W^{2,2}_{\omega}(\Sigma)$. Indeed, for all $u\in L^2_{\omega}(\Sigma)$, we have $u(p)=0$ (since the weight $|x|^{-(2m+2-4\beta_1)}$ does not belong to $L^1(B(0,1))$ for all $m\geq 2$ and $0<\beta_1<1$), and using the density of $C^{\infty}_c(\Sigma\setminus\ens{0})$ in
    \begin{align*}
        W^{2,2}(\Sigma)\cap\ens{u:u(p)=0},
    \end{align*}
    the inequality follows for all $u\in W^{2,2}_{\omega}(\Sigma)$. 
    Taking $\beta=\beta_1$ in the inequality, we need only estimate the second term, and this will require a non-trivial variation of the previous argument involving the Bochner identity. First, thanks to the Taylor expansion of \cite{beriviere} (see also \cite{classification}), there exists $\mu\in W^{2,p}(B(0,1))$ for all $p<\infty$ such that
    \begin{align*}
        \lambda(x)=(m-1)\log|x|+\mu.
    \end{align*}
    Then, assume without loss of generality that $u\in C^{\infty}(\Sigma\setminus\ens{p})$. Thanks to Lemma \ref{bochner_metric}, we have
    \begin{align}\label{bochner_appendix}
        \frac{|\D^2u|^2}{|x|^{2m-2}}=|\mathrm{Hess}(u)|_{g}^2\frac{e^{4\lambda}}{|x|^{2m-2}}-\frac{2}{|x|^{2m-2}}\left(\D u\cdot \D\lambda\right)\Delta u+\frac{2}{|x|^{2m-2}}\D\lambda\cdot \D|\D u|^2-\frac{2}{|x|^{2m-2}}|\D\lambda|^2|\D u|^2.
    \end{align}
    Thanks to the expansion of $\lambda$, we have
    \begin{align*}
        \frac{2}{|x|^{2m-2}}\D\lambda\cdot \D|\D u|^2=2(m-1)\frac{x}{|x|^{2m}}\cdot \D|\D u|^2+\frac{2}{|x|^{2m-2}}\D\mu\cdot \D|\D u|^2.
    \end{align*}
    Then, let $\eta\in C^{\infty}_c(B(0,1))$ such that $0\leq \eta\leq 1$ and $\eta=1$ in $B(0,1/2)$
    \begin{align}\label{final_gagliardo5}
        &\int_{B(0,1)}\eta \frac{2}{|x|^{2m-2}}\D\lambda\cdot \D|\D u|^2|x|^{4(1-\beta_1)}dx=2(m-1)\int_{B(0,1)}\eta\, \frac{1}{|x|^{2m-6+4\beta_1}}\dive\left(\frac{x}{|x|^2}\cdot \D u\right)dx\nonumber\\
        &+2\int_{B(0,1)}\eta\frac{1}{|x|^{2m-6+4\beta_1}}\D\mu\cdot \D|\D u|^2dx\nonumber\\
        &=-4(m-1)(m-3+2\beta_1)\int_{B(0,1)}\eta\frac{|\D u|^2}{|x|^{2m}}|x|^{4(1-\beta_1)}dx-2(m-1)\int_{B(0,1)}\left(\frac{x}{|x|^{2m-4+4\beta_1}}\cdot \D\eta\right)|\D u|^2dx\nonumber\\
        &-4(m-3+2\beta_1)\int_{B(0,1)}\eta \frac{x}{|x|^{2m-6+4\beta_1}}\cdot \D\mu |\D u|^2\nonumber\\
        &-2\int_{B(0,1)}\eta \frac{1}{|x|^{2m-6+4\beta_1}}\Delta\mu |\D u|^2dx-2\int_{B(0,1)}\frac{1}{|x|^{2m-6+4\beta_1}}(\D\eta\cdot \D\mu)|\D u|^2dx.
    \end{align}
    We trivially have
    \begin{align}\label{final_gagliardo6}
        &\left|2(m-1)\int_{B(0,1)}\left(\frac{x}{|x|^{2m-4+4\beta_1}}\cdot \D\eta\right)|\D u|^2dx\right|\nonumber\\
        &\leq 2^{2m}(m-1)\np{\D\eta}{\infty}{B(0,1)}\int_{B(0,1)\setminus\bar{B}(0,1/2)}|\D u|^2dx\nonumber\\
        &\leq 2^{2m}(m-1)\np{\D\eta}{\infty}{B(0,1)}\int_{\Sigma}|du|_{g}^2d\vg\nonumber\\
        &\leq 2^{2m}(m-1)\np{\D\eta}{\infty}{B(0,1)}\left(\int_{\Sigma}u^2d\vg\right)^{\frac{1}{2}}\left(\int_{\Sigma}(\Delta_gu)^2d\vg\right)^{\frac{1}{2}},
    \end{align}
    where we integrated by parts and used Cauchy--Schwarz inequality. Likewise, we have
    \begin{align}\label{final_gagliardo7}
        &\left|2\int_{B(0,1)}\frac{1}{|x|^{2m-6+4\beta_1}}(\D\eta\cdot \D\mu)|\D u|^2dx\right|\nonumber\\
        &\leq 2^{2m}\np{\D\eta}{\infty}{B(0,1)}\np{\D\mu}{\infty}{B(0,1)}\left(\int_{\Sigma}u^2d\vg\right)^{\frac{1}{2}}\left(\int_{\Sigma}(\Delta_gu)^2d\vg\right)^{\frac{1}{2}}.
    \end{align}
    Thanks to the Liouville equation, we deduce that
    \begin{align}\label{final_gagliardo8}
        &\left|\int_{B(0,1)}\eta \frac{1}{|x|^{2m-6+4\beta_1}}\Delta\mu |\D u|^2dx\right|=\left|\int_{B(0,1)}\eta\, e^{-2\mu}K_g|\D u|^2|x|^{4(1-\beta_1)}d\vg\right|\nonumber\\
        &\leq 2\np{e^{2\lambda}K_g}{\infty}{B(0,1)}\exp\left(4\np{\mu}{\infty}{B(0,1)}\right)\epsilon^{4(1-\beta_1)}\int_{B(0,\epsilon)}\eta\frac{|\D u|^2}{|x|^{2m}}dx\nonumber\\
        &+\left|\int_{B(0,1)\setminus\bar{B}(0,\epsilon)}\eta\,e^{-2\mu}K_g|\D u|^2|x|^{4(1-\beta_1)}d\mathrm{vol}_g\right|
    \end{align}
    where we used (\cite{beriviere}) that $e^{2\lambda}K_g\in L^{\infty}(\Sigma)$. Furthermore, we have 
    \begin{align}\label{final_gagliardo9}
        \left|\int_{B(0,1)\setminus\bar{B}(0,\epsilon)}\eta\,e^{-2\mu}K_g|\D u|^2|x|^{4(1-\beta_1)}d\mathrm{vol}_g\right|&\leq 2\np{e^{2\lambda}K_g}{\infty}{B(0,1)}\exp\left(4\np{\mu}{\infty}{B(0,1)}\right)\frac{1}{\epsilon^{2m-4+4\beta_1}}\nonumber\\
        &\times \left(\int_{\Sigma}u^2d\vg\right)^{\frac{1}{2}}\left(\int_{\Sigma}(\Delta_gu)^2d\vg\right)^{\frac{1}{2}}.
    \end{align}
    Therefore, we deduce by \eqref{bochner_appendix}, \eqref{final_gagliardo5}, \eqref{final_gagliardo6}, \eqref{final_gagliardo7}, \eqref{final_gagliardo8}, and \eqref{final_gagliardo9} that for some constant $C<\infty$ depending only on $\phi$ that
    \begin{align}\label{final_gagliardo10}
        &\int_{B(0,1/2)}\frac{|\D^2u|^2}{|x|^{2m-2}}|x|^{4(1-\beta_1)}dx\leq \int_{B(0,1)}\eta\frac{|\D^2 u|^2}{|x|^{2m-2}}|x|^{4(1-\beta_1)}dx=\int_{B(0,1)}\eta|\mathrm{Hess}(u)|_{g}^2\frac{e^{4\lambda}}{|x|^{2m-2}}|x|^{4(1-\beta_1)}dx\nonumber\\
        &-2\int_{B(0,1)}\frac{\eta}{|x|^{2m-2}}(\D u\cdot \D\lambda)\Delta u\,|x|^{4(1-\beta_1)}dx+\int_{B(0,1)}\eta\frac{2}{|x|^{2m-2}}\D\lambda\cdot \D|\D u|^2|x|^{4(1-\beta_1)}|x|^{4(1-\beta_1)}dx\nonumber\\
        &-2\int_{B(0,1)}\frac{\eta}{|x|^{2m-2}}|\D\lambda|^2|\D u|^2|x|^{4(1-\beta_1)}dx\nonumber\\
        &\leq C\int_{\Sigma}|\mathrm{Hess}(u)|_{g}^2d\vg\nonumber\\
        &-2\int_{B(0,1)}\frac{\eta}{|x|^{2m-2}}(\D u\cdot \D\lambda)\Delta u\,|x|^{4(1-\beta_1)}dx-2\int_{B(0,1)}\frac{\eta}{|x|^{2m-2}}|\D\lambda|^2|\D u|^2|x|^{4(1-\beta_1)}dx\nonumber\\
        &-\left(4(m-1)(m-3+2\beta_1)-C\,\epsilon^{4(1-\beta_1)}\right)\int_{B(0,1)}\eta\frac{|\D u|^2}{|x|^{2m-2}}|x|^{4(1-\beta_1)}dx\nonumber\\
        &+C\left(1+\frac{1}{\epsilon^{2m-4+4\beta_1}}\right)\left(\int_{\Sigma}u^2d\vg\right)^{\frac{1}{2}}\left(\int_{\Sigma}(\Delta_gu)^2d\vg\right)^{\frac{1}{2}}\nonumber\\
        &\leq C\int_{\Sigma}|\mathrm{Hess}(u)|_{g}^2d\vg+\frac{1}{2}\int_{B(0,1)}\eta |x|^{2-2m}(\Delta u)^2|x|^{4(1-\beta_1)}dx\nonumber\\
        &-\left(4(m-1)(m-3+2\beta_1)-C\,\epsilon^{4(1-\beta_1)}\right)\int_{B(0,1)}\eta\frac{|\D u|^2}{|x|^{2m-2}}|x|^{4(1-\beta_1)}dx\nonumber\\
        &+C\left(1+\frac{1}{\epsilon^{2m-4+4\beta_1}}\right)\left(\int_{\Sigma}u^2d\vg\right)^{\frac{1}{2}}\left(\int_{\Sigma}(\Delta_gu)^2d\vg\right)^{\frac{1}{2}},
    \end{align}
    where we used Cauchy's inequality to estimate
    \begin{align}\label{final_gagliardo11}
        &-2\int_{B(0,1)}\frac{\eta}{|x|^{2m-2}}(\D u\cdot \D\lambda)\Delta u\,dx|x|^{4(1-\beta_1)}\nonumber\\
        &\leq \frac{1}{2}\int_{B(0,1)}\eta|x|^{2-2m}(\Delta u)^2dx+2\int_{B(0,1)}\frac{\eta}{|x|^{2m-2}}(\D u\cdot \D\lambda)^2dx\nonumber\\
        &\leq \frac{1}{2}\int_{B(0,1)}\eta|x|^{2-2m}(\Delta u)^2|x|^{4(1-\beta_1)}dx+2\int_{B(0,1)}\frac{\eta}{|x|^{2m-2}}|\D u|^2| \D\lambda|^2|x|^{4(1-\beta_1)}dx
    \end{align}
    Choosing 
    \begin{align*}
        \epsilon=\left(\frac{2(m-1)(m-3+2\beta_1)}{C}\right)^{\frac{1}{4(1-\beta_1)}},
    \end{align*}
    we deduce that
    \begin{align}\label{final_gagliardo12}
        &2(m-1)(m-3+2\beta_1)\int_{B(0,1)}\eta \frac{|\D u|^2}{|x|^{2m-2}}|x|^{4(1-\beta_1)}dx+\int_{B(0,1)}\eta\frac{|\D^2u|^2}{|x|^{2m-2}}|x|^{4(1-\beta_1)}dx\nonumber\\
        &\leq C\int_{\Sigma}|\mathrm{Hess}(u)|_{g}^2d\vg+C\int_{\Sigma}(\Delta_gu)^2d\vg\nonumber\\
        &+C\left(1+\frac{1}{\big(2(m-1)(m-3+2\beta_1)\big)^{\frac{1}{4(1-\beta_1)}}}\right)\left(\int_{\Sigma}u^2d\vg\right)^{\frac{1}{2}}\left(\int_{\Sigma}(\Delta_gu)^2d\vg\right)^{\frac{1}{2}}.
    \end{align}
    Notice that the estimate is non-trivial even for $m=2$ as $2\beta_1>1$. Now, using Bochner's formula, we get by closeness of $\Sigma$ the estimate
    \begin{align}\label{final_gagliardo13}
        \int_{\Sigma}|\mathrm{Hess}(u)|_g^2d\vg&=\int_{\Sigma}\left(\Delta_gu\right)^2d\vg-2\int_{\Sigma}K_g|du|_g^2d\vg\nonumber\\
        &\leq \int_{\Sigma}(\Delta_gu)^2d\vg+\int_{\Sigma}|du|_g^2|A|^2d\vg.
    \end{align}
    Therefore, we deduce that 
    \begin{align}\label{final_gagliardo14}
        &2(m-1)(m-3+2\beta_1)\int_{B(0,1)}\eta \frac{|\D u|^2}{|x|^{2m-2}}|x|^{4(1-\beta_1)}dx+\int_{B(0,1)}\eta\frac{|\D^2u|^2}{|x|^{2m-2}}|x|^{4(1-\beta_1)}dx\\
        &\leq C\int_{\Sigma}(\Delta_gu)^2d\vg+C\int_{\Sigma}|du|_g^2|A|^2d\vg\nonumber\\
        &+C\left(1+\frac{1}{\big(2(m-1)(m-3+2\beta_1)\big)^{\frac{1}{4(1-\beta_1)}}}\right)\left(\int_{\Sigma}u^2d\vg\right)^{\frac{1}{2}}\left(\int_{\Sigma}(\Delta_gu)^2d\vg\right)^{\frac{1}{2}}
    \end{align}
    Finally, \eqref{final_gagliardo1}, \eqref{final_gagliardo2}, \eqref{final_gagliardo2bis}, \eqref{final_gagliardo3}, \eqref{final_gagliardo4}, \eqref{final_gagliardo5}, and \eqref{final_gagliardo14}
    \begin{align}\label{final_gagliardo15}
        &\int_{\Sigma}|du|_g^2|A|^2d\vg\leq \left(\int_{\Sigma}u^2|A|^4d\vg\right)^{\frac{1}{2}}\left(\int_{\Sigma}(\Delta_gu)^2\right)^{\frac{1}{2}}\nonumber\\
        &+C\left(\int_{\Sigma}u^2d\vg\right)^{\frac{1}{2}}\left(\left(\int_{\Sigma}u^2d\vg\right)^{\frac{1}{2}}+\left(\int_{\Sigma}(\Delta_gu)^2d\vg\right)^{\frac{1}{2}}\right)\nonumber\\
        &+C\left(\int_{B(0,1/2)}\frac{u^2}{|x|^{2m+2}}|x|^{4\beta_1}\right)^{\frac{1}{2}}\left(\int_{B(0,1/2)}\frac{u^2}{|x|^{2m+2}}|x|^{4\beta_1}+\int_{\Sigma}(\Delta_gu)^2d\vg+\int_{\Sigma}|du|_g^2|A|^2d\vg
        \right.\nonumber\\
        &\left.+\left(1+\frac{1}{\big(2(m-1)(m-3+2\beta_1)\big)^{\frac{1}{4(1-\beta_1)}}}\right)\left(\int_{\Sigma}u^2d\vg\right)^{\frac{1}{2}}\left(\int_{\Sigma}(\Delta_gu)^2d\vg\right)^{\frac{1}{2}}\right)^{\frac{1}{2}}\nonumber\\
        &\leq C\left(\int_{\Sigma}u^2\,\omega\,d\vg\right)^{\frac{1}{2}}\left(\int_{\Sigma}u^2\,\omega\,d\vg+\int_{\Sigma}(\Delta_gu)^2d\vg\right)^{\frac{1}{2}}+\frac{1}{2}\int_{\Sigma}|du|_g^2|A|^2d\vg,
    \end{align}
    which furnishes the needed inequality
    \begin{align}\label{final_gagliardo16}
        \int_{\Sigma}|du|_g^2|A|^2d\vg\leq C\left(\int_{\Sigma}u^2\,\omega\,d\vg\right)^{\frac{1}{2}}\left(\int_{\Sigma}u^2\,\omega\,d\vg+\int_{\Sigma}(\Delta_gu)^2d\vg\right)^{\frac{1}{2}}.
    \end{align}
    Therefore, we deduce that for all $u\in W^{2,2}_{\omega}(\Sigma)$, each component of the index form \eqref{index_form} is bounded. Notice that the last term is bounded thanks to \eqref{final_gagliardo3}. Therefore, the estimates \eqref{final_gagliardo3} and \eqref{final_gagliardo16} show that 
    \begin{align}\label{final_gagliardo17}
        Q_{\phi}(u)&\geq \frac{1}{4}\int_{\Sigma}(\Delta_gu)^2d\vg-C\int_{\Sigma}u^2\,\omega\,d\vg-C\left(\int_{\Sigma}u^2\,\omega\,d\vg\right)^{\frac{1}{2}}\left(\int_{\Sigma}u^2\,\omega\,d\vg+\int_{\Sigma}(\Delta_gu)^2d\vg\right)^{\frac{1}{2}}\nonumber\\
        &\geq \frac{1}{8}\int_{\Sigma}(\Delta_gu)^2d\vg-C\int_{\Sigma}u^2\,\omega\,d\vg.
    \end{align}
    We can therefore follow the exact same steps as in \cite[Lemma 1.4.5 p. 96]{willmore_scs} (see also \cite[Lemma B.1]{riviere_morse_scs}) to complete the proof of the theorem. 
\end{proof}

\subsection{The Second Fourth-Order Elliptic Operator Associated to Ends of Multiplicity Two}

In this section, we investigate the behaviour of the operator 
\begin{align*}
    \mathscr{D}_m&=\Delta^2-\frac{6(m^2-1)}{|x|^2}+4(m^2-1)\left(\frac{x}{|x|^2}\right)^t\cdot \D^2(\,\cdot\,)\cdot \left(\frac{x}{|x|^2}\right)+\frac{8(m-1)^2}{|x|^2}\frac{x}{|x|^2}\cdot \D u\\
    &+\frac{(m+1)(m-1)^2(m-3)}{|x|^4}
\end{align*}
for $m=2$, in which case
\begin{align*}
    \mathscr{D}_2=\Delta^2-\frac{18}{|x|^2}\Delta+4\left(\frac{x}{|x|^2}\right)^t\cdot \D^2(\,\cdot\,)\cdot \left(\frac{x}{|x|^2}\right)+\frac{8}{|x|^2}\frac{x}{|x|^2}\cdot \D u-\frac{3}{|x|^4}.
\end{align*}
Recall that the linearised ordinary differential equation associated to the $n$-th Fourier mode is 
\begin{align*}
    &X^4-2((m-1)^2+n^2+1)X^2+2(m-1)^2+2n^2+1\\
    &+\left(n^4+n^2(6(m-1)^2-4)+(m+1)(m-1)^2(m-3)\right)\\
    &=X^4-2(n^2+2)X^2+n^4+4n^2=(X^2-(n^2+2))^2-4=(X^2-n^2)(X^2-(n^2+4)).
\end{align*}
This implies that for $n\neq 0$, a basis of solutions of $\Pi_{n^2}(\mathscr{D}_2)u=0$ is given by
\begin{align*}
    r^{1+n},r^{1-n},r^{1+\sqrt{n^2+4}},r^{1-\sqrt{n^2+4}},
\end{align*}
while for $n=0$, the solutions are given by 
\begin{align*}
    r,r\log(r),r^3,r^{-1}.
\end{align*}
This explains why the Gagliardo--Nirenberg estimate should fail for $m=2$, due to the need of controlling a logarithm as in Theorem \ref{lm_last_lemma}. We will not pursue this direction further. 

\subsection{Example of Bubbling with Branch Point of Multiplicity Two in Codimension Two}

Identify $\R^4$ with $\C^2$, let $p_1,p_2,p_3\in \C$ be two distinct points and let $F:S^2\setminus\ens{p_1,p_2}\rightarrow \C^2$ be a complex minimal sphere of finite total curvature with three embedded planar ends. By the $3$-transitivity of the conformal group, we can fix $p_3=\infty$. Then, there exists $(a_1,b_2),(a_2,b_2),(a_3,b_3)\in \C^2\setminus\ens{0}$ such that
\begin{align*}
    F(z)=\left(\frac{a_1}{z-p_1}+\frac{a_2}{z-p_2}+a_3\,z,\frac{b_1}{z-p_1}+\frac{b_2}{z-p_2}+b_3\,z\right).
\end{align*}
This is a complete minimal surface with total curvature $-8\pi$ (\cite{shiohama,hoffman_gauss}). 
The map $F$ also needs to be an immersion, which imposes open conditions on the parameters. If we impose embeddedness, an elementary analysis (\cite{hoffman_gauss,lee_minimal_surfaces}) shows that the following conditions must hold:
\begin{align*}
	\det\begin{pmatrix}
		a_1 & b_1\\
		a_2 & b_2
	\end{pmatrix}\qquad \mathrm{rank}\begin{pmatrix}
	a_1 & a_2 & a_3\\
	b_1 & b_2 & b_3
	\end{pmatrix}=2.
\end{align*}
To find the example of bubbling of multiplicity two, we proceed as in \cite{marque_minimal_bubbling}. Let $\mu>0$, choose $p_1=\mu$, $p_2=i\mu$ and define
\begin{align}
	F_{\mu}(z)&=\left(\frac{a_1(\mu)}{z-\mu}+\frac{a_2(\mu)}{z-i\mu},z\right)\nonumber\\
	&=\left(\frac{(a_1(\mu)+a_2(\mu))z-(a_2(\mu)+i\,a_1(\mu))\mu}{z^2-(1+i)\mu z+i\mu^2},z\right).
\end{align}
Therefore, we choose $a_2(\mu)=-a_1(\mu)$ and 
\begin{align*}
	a_1(\mu)=-\frac{1}{(-1+i)\mu}=\frac{1+i}{2\mu},
\end{align*}
which yields
\begin{align*}
	F_{\mu}(z)=\left(\frac{1}{z^2-(1+i)\mu\,z+i\mu^2},z\right).
\end{align*}
As $\mu\rightarrow 0$, for all $z\in \C\setminus\ens{0}$, we have
\begin{align*}
	F_{\mu}(z)\conv{\mu\rightarrow 0}F_0(z)=\left(\frac{1}{z^2},z\right),
\end{align*}
which is a minimal surface with two ends, one of multiplicity $2$ and one of multiplicity $1$. Therefore, its total curvature is equal to 
\begin{align*}
    -2\pi(0-2+(2+1)+(1+1))=-6\pi,
\end{align*}
which shows that if  
\begin{align*}
	\phi_{\mu}=\frac{F_{\mu}}{|F_{\mu}|^2}:S^2\rightarrow \R^4,
\end{align*}
then $\{\phi_{\mu}\}_{\mu>0}$ is a family of Willmore spheres of energy $12\pi$ (notice that this is consistent with the Li--Yau inequality) which converges outside $0$ to a branched Willmore sphere $\phi_0:S^2\rightarrow\R^4$. Since $\phi_0$ has a unique branch point of multiplicity $2$, we deduce that
\begin{align*}
    \int_{S^2}K_{\phi_0}d\mathrm{vol}_{g_{\phi_0}}=4\pi+2\pi(2-1)=6\pi. 
\end{align*}
This implies that $W(\phi_0)=6\pi+6\pi=12\pi$, and we deduce that a minimal bubble is formed at $0$, and that it has a single end of multiplicity $2$ and total curvature $-2\pi$. By a classical theorem of Hoffman--Osserman (\cite[Theorem 6.2]{hoffman_gauss}), the bubble is the graph of the function $z\rightarrow z^2$, or in other words, up to conformal transformations, the bubble $\vec{\Psi}:\C\rightarrow \C^2$ is given by $\vec{\Psi}(z)=(z,z^2)$ for all $z\in \C$.

    \nocite{}
	 \bibliographystyle{plain}
	 \bibliography{biblio_full}

\end{document}